\algnewcommand\algorithmicinput{\textbf{Input:}}
\algnewcommand\Input{\item[\algorithmicinput]}
\algnewcommand\algorithmicoutput{\textbf{Output:}}
\algnewcommand\Output{\item[\algorithmicoutput]}
\author{Guillaume Olikier\footnotemark[2] \and Kyle A. Gallivan\footnotemark[3] \and P.-A. Absil\footnotemark[2]}
\title{First-order optimization on stratified sets\footnotemark[1]}
\newcommand{\N}{\mathbb{N}}
\newcommand{\R}{\mathbb{R}}
\DeclareMathOperator{\dom}{dom}
\DeclareMathOperator{\im}{im}
\newcommand{\dist}{d}
\newcommand{\ball}{\mathrm{B}}
\newcommand{\norm}[1]{\Vert #1 \Vert}
\newcommand{\ip}[2]{\langle #1, #2 \rangle}
\DeclareMathOperator*{\argmin}{argmin}
\DeclareMathOperator*{\lip}{Lip}
\newcommand{\dd}{\mathrm{d}}
\DeclareMathOperator{\s}{s}
\DeclareMathOperator*{\inlim}{\underline{Lim}}
\DeclareMathOperator*{\outlim}{\overline{Lim}}
\DeclareMathOperator*{\setlim}{Lim}
\newcommand{\setmapsto}{\multimap}
\newcommand{\proj}[2]{P_{#1}(#2)}
\def\widebreve{\mathpalette\wide@breve}
\def\wide@breve#1#2{\sbox\z@{$#1#2$}%
     \mathop{\vbox{\m@th\ialign{##\crcr
\kern0.08em\brevefill#1{0.8\wd\z@}\crcr\noalign{\nointerlineskip}%
                    $\hss#1#2\hss$\crcr}}}\limits}
\def\brevefill#1#2{$\m@th\sbox\tw@{$#1($}%
  \hss\resizebox{#2}{\wd\tw@}{\rotatebox[origin=c]{90}{\upshape(}}\hss$}
\newcommand{\gencone}[4]{{#1}_{#2}^{#4}(#3)} 
\newcommand{\tancone}[2]{\gencone{T}{#1}{#2}{}} 
\newcommand{\sectancone}[3]{T_{#1}^2(#2|#3)} 
\newcommand{\restancone}[2]{\gencone{\widebreve{T}}{#1}{#2}{}} 
\newcommand{\regnorcone}[2]{\gencone{\widehat{N}}{#1}{#2}{}} 
\newcommand{\norcone}[2]{\gencone{N}{#1}{#2}{}} 
\newcommand{\connorcone}[2]{\gencone{\overline{N}}{#1}{#2}{}} 
\newcommand{\oshort}[1]{\mkern 0.9mu\overline{\mkern-0.9mu#1\mkern-0.9mu}\mkern 0.9mu}
\newcommand{\ushort}[1]{\mkern 0.9mu\underline{\mkern-0.9mu#1\mkern-0.9mu}\mkern 0.9mu}
\newcommand{\sparse}[2]{\R_{\le #2}^{#1}}
\newcommand{\FixedSparsity}[2]{\R_{#2}^{#1}}
\newcommand{\StrictSparsity}[2]{\R_{< #2}^{#1}}
\DeclareMathOperator{\supp}{supp}
\newcommand{\tp}{\top}
\DeclareMathOperator{\rank}{rank}
\DeclareMathOperator{\tr}{tr}
\newcommand{\st}{\mathrm{St}}
\DeclareMathOperator{\diag}{diag}
\newcommand{\pgd}{\mathrm{PGD}}
\newcommand{\ppgd}{\mathrm{P}^2\mathrm{GD}}
\newcommand{\ppgdr}{\mathrm{P}^2\mathrm{GDR}}
\newcommand{\rfd}{\mathrm{RFD}}
\newcommand{\rfdr}{\mathrm{RFDR}}
\newtheorem{theorem}{Theorem}[section]
\newtheorem{proposition}[theorem]{Proposition}
\newtheorem{lemma}[theorem]{Lemma}
\newtheorem{corollary}[theorem]{Corollary}
\theoremstyle{definition}
\newtheorem{assumption}[theorem]{Assumption}
\begin{document}
\renewcommand{\thefootnote}{\fnsymbol{footnote}}
\footnotetext[1]{This work was supported by the Fonds de la Recherche Scientifique -- FNRS and the Fonds Wetenschappelijk Onderzoek -- Vlaanderen under EOS Project no 30468160. K. A. Gallivan is partially supported by the U.S. National Science Foundation under grant CIBR 1934157.}
\footnotetext[2]{ICTEAM Institute, UCLouvain, Avenue Georges Lema\^{\i}tre 4, 1348 Louvain-la-Neuve, Belgium (\href{mailto:guillaume.olikier@uclouvain.be}{\nolinkurl{guillaume.olikier@uclouvain.be}}, \href{mailto:pa.absil@uclouvain.be}{\nolinkurl{pa.absil@uclouvain.be}}).}
\footnotetext[3]{Department of Mathematics, Florida State University, 
1017 Academic Way, Tallahassee, FL 32306-4510, USA (\href{mailto:kgallivan@fsu.edu}{\nolinkurl{kgallivan@fsu.edu}}).}
\renewcommand{\thefootnote}{\arabic{footnote}}

\maketitle

\begin{abstract}
We consider the problem of minimizing a differentiable function with locally Lipschitz continuous gradient on a stratified set and present a first-order algorithm designed to find a stationary point of that problem. Our assumptions on the stratified set are satisfied notably by the determinantal variety (i.e., matrices of bounded rank), its intersection with the cone of positive-semidefinite matrices, and the set of nonnegative sparse vectors. The iteration map of the proposed algorithm applies a step of projected-projected gradient descent with backtracking line search, as proposed by Schneider and Uschmajew (2015), to its input but also to a projection of the input onto each of the lower strata to which it is considered close, and outputs a point among those thereby produced that maximally reduces the cost function. Under our assumptions on the stratified set, we prove that this algorithm produces a sequence whose accumulation points are stationary, and therefore does not follow the so-called apocalypses described by Levin, Kileel, and Boumal (2022). We illustrate the apocalypse-free property of our method through a numerical experiment on the determinantal variety.
\medskip

\noindent
\textbf{Keywords:}
Stationarity $\cdot$ Tangent cones $\cdot$ Steepest descent $\cdot$ Stratified set $\cdot$ Determinantal variety $\cdot$ Positive-semidefinite matrices.
\medskip

\noindent
\textbf{Mathematics Subject Classification:} 65K10, 49J53, 90C26, 90C46, 58A35, 14M12, 15B99.
\end{abstract}

\section{Introduction}
\label{sec:Introduction}
Given a Euclidean vector space $\mathcal{E}$ with inner product and induced norm respectively denoted by $\ip{\cdot}{\cdot}$ and $\norm{\cdot}$, a differentiable function $f : \mathcal{E} \to \R$ with locally Lipschitz continuous gradient, and a nonempty closed subset $C$ of $\mathcal{E}$, we consider the problem
\begin{equation}
\label{eq:OptiProblem}
\min_{x \in C} f(x)
\end{equation}
of minimizing $f$ on $C$.
In general, problem~\eqref{eq:OptiProblem} is intractable and one is thus content with finding a stationary point of that problem, i.e., a point satisfying a first-order necessary condition to be a local minimizer of $f|_C$. Every definition of stationarity is based on a tangent or normal cone. Classic notions of tangent or normal cone include the tangent cone, the regular normal cone, the normal cone, and the Clarke normal cone; they are reviewed in Section~\ref{subsec:TangentNormalConesGeometricDerivability} based on \cite[Chapter~6]{RockafellarWets}. Each of these notions of normal cone yields a definition of stationarity. We review them briefly here and refer to \cite{LiSoMa2020, HosseiniLukeUschmajew2019} for more details.

A point $x \in C$ is said to be \emph{stationary} for~\eqref{eq:OptiProblem} if it satisfies one of the following equivalent conditions:
\begin{enumerate}
\item $\ip{\nabla f(x)}{v} \ge 0$ for all $v \in \tancone{C}{x}$, where $\tancone{C}{x}$ denotes the tangent cone to $C$ at $x$;
\item $-\nabla f(x) \in \regnorcone{C}{x}$, where $\regnorcone{C}{x}$ denotes the regular normal cone to $C$ at $x$;
\item $\s(x; f, C) = 0$, where the function
\begin{equation}
\label{eq:StationarityMeasure}
\s(\cdot; f, C) :
C \to \R :
x \mapsto \norm{\proj{\tancone{C}{x}}{-\nabla f(x)}},
\end{equation}
called the \emph{stationarity measure} of~\eqref{eq:OptiProblem}, returns the norm of any projection of $-\nabla f(x)$ onto $\tancone{C}{x}$.
\end{enumerate}
The point $x$ is said to be \emph{Mordukhovich stationary} for~\eqref{eq:OptiProblem} if $-\nabla f(x) \in \norcone{C}{x}$, where $\norcone{C}{x}$ denotes the normal cone to $C$ at $x$. The inclusion $\regnorcone{C}{x} \subseteq \norcone{C}{x}$ always holds, and $C$ is said to be \emph{Clarke regular} at $x$ if $\regnorcone{C}{x} = \norcone{C}{x}$. Thus, the stationarity of $x$ implies the Mordukhovich stationarity of $x$, and the two conditions are equivalent if and only if $C$ is Clarke regular at $x$.
The point $x$ is said to be \emph{Clarke stationary} for~\eqref{eq:OptiProblem} if $-\nabla f(x) \in \connorcone{C}{x}$, where $\connorcone{C}{x}$ denotes the Clarke normal cone to $C$ at $x$ defined as the closure of the convex hull of $\norcone{C}{x}$.
If $x \in C$ is a local minimizer of $f|_C$, then $x$ is stationary for~\eqref{eq:OptiProblem}, hence Mordukhovich stationary for~\eqref{eq:OptiProblem}, and hence Clarke stationary for~\eqref{eq:OptiProblem}. The stationarity of a point depends only on $f|_C$ since, by \cite[Lemmas~A.7 and A.8]{LevinKileelBoumal2022}, the correspondence
\begin{equation*}
C \setmapsto \mathcal{E} : x \mapsto \proj{\tancone{C}{x}}{-\nabla f(x)}
\end{equation*}
depends on $f$ only through $f|_C$. In contrast, the Mordukhovich stationarity depends on the values taken by $f$ outside $C$.

To the best of our knowledge, without further assumptions, the algorithm in the literature with the strongest convergence guarantee for problem~\eqref{eq:OptiProblem} is the projected gradient descent proposed in \cite[Algorithm~3.1]{JiaEtAl2022} and dubbed $\pgd$ in \cite[\S 1]{LevinKileelBoumal2022}. Given $x \in C$ as input, the iteration map of $\pgd$ performs a projected line search along the direction of $-\nabla f(x)$, i.e., computes a point in $P_C(x-\alpha\nabla f(x))$ for decreasing values of $\alpha \in (0, \infty)$ until an Armijo condition is satisfied. By \cite[Theorem~3.4]{JiaEtAl2022}, $\pgd$ produces a sequence whose accumulation points are Mordukhovich stationary; it is an open question whether these accumulation points can fail to be stationary. Furthermore, as pointed out in \cite[\S 1]{LevinKileelBoumal2022}, a sequence produced by $\pgd$ generally depends on the values taken by $f$ on $\mathcal{E} \setminus C$ which is, at least conceptually, unsatisfying.

A frequently encountered obstacle against guaranteeing convergence to stationary points of~\eqref{eq:OptiProblem} is the possible presence in $C$ of so-called apocalyptic points. By \cite[Definition~2.7]{LevinKileelBoumal2022}, a point $x \in C$ is said to be \emph{apocalyptic} if there exist a sequence $(x_i)_{i \in \N}$ in $C$ converging to $x$ and a continuously differentiable function $\phi : \mathcal{E} \to \R$ such that $\lim_{i \to \infty} \s(x_i; \phi, C) = 0$ whereas $\s(x; \phi, C) > 0$. Such a triplet $(x, (x_i)_{i \in \N}, \phi)$ is called an \emph{apocalypse}. By \cite[Corollary~2.15]{LevinKileelBoumal2022}, if $x \in C$ is apocalyptic, then $C$ is not Clarke regular at $x$. \emph{Apocalyptic sets}, i.e., sets that have at least one apocalyptic point, include:
\begin{enumerate}
\item the \emph{determinantal variety} \cite[Lecture~9]{Harris}
\begin{equation}
\label{eq:RealDeterminantalVariety}
\R_{\le r}^{m \times n} := \{X \in \R^{m \times n} \mid \rank X \le r\},
\end{equation}
$m$, $n$, and $r$ being positive integers such that $r < \min\{m,n\}$;

\item the closed cone
\begin{equation}
\label{eq:PSDconeBoundedRank}
\mathrm{S}_{\le r}^+(n) := \{X \in \R_{\le r}^{n \times n} \mid X^\tp = X,\, X \succeq 0\},
\end{equation}
$n$ and $r$ being positive integers such that $r < n$, of order-$n$ real symmetric positive-semidefinite matrices of rank at most $r$ ;

\item the closed cone of nonnegative sparse vectors, specifically $\sparse{n}{s} \cap \R_+^n$, $n$ and $s$ being positive integers such that $s < n$, where $\sparse{n}{s}$ is the set of $s$-sparse vectors of $\R^n$, i.e., those having at most $s$ nonzero components, and $\R_+^n$ is the nonnegative orthant of $\R^n$.
\end{enumerate}
Problem~\eqref{eq:OptiProblem} with $C$ one of these three sets appears in numerous applications; see Section~\ref{sec:ExamplesStratifiedSetsSatisfyingMainAssumption}.

In this paper, we propose a first-order optimization algorithm (Algorithm~\ref{algo:P2GDR}), called $\ppgdr$, that produces a sequence whose accumulation points are stationary for~\eqref{eq:OptiProblem} (see Theorem~\ref{thm:P2GDRPolakConvergence}) under assumptions on $C$ (Assumption~\ref{assumption:Stratification}) that apply to the three apocalyptic sets listed (see Section~\ref{sec:ExamplesStratifiedSetsSatisfyingMainAssumption}). For a high-level description of the algorithmic strategy underlying the $\ppgdr$ algorithm, see Section~\ref{subsec:OverviewProposedAlgorithm}.

When $C = \R_{\le r}^{m \times n}$, the proposed $\ppgdr$ algorithm competes against two algorithms known to accumulate at stationary points of~\eqref{eq:OptiProblem}: the second-order method given in \cite[Algorithm~1]{LevinKileelBoumal2022} and the first-order method given in \cite[Algorithm~3]{OlikierAbsil2022RFDR} and dubbed $\rfdr$, which are reviewed in Sections~\ref{subsec:OptimizationThroughSmoothLift} and \ref{subsec:RFDRreview}, respectively. These three algorithms and others are compared based on the computational cost per iteration and the convergence guarantees in Section~\ref{subsubsec:ComparisonSixOptimizationAlgorithmsRealDeterminantalVariety}, and numerically on the instance of~\eqref{eq:OptiProblem} from \cite[\S 2.2]{LevinKileelBoumal2022} in Section~\ref{subsubsec:LKB22instance}. A concise overview of the algorithms on $C = \R_{\le r}^{m \times n}$ and their properties can be found in Tables~\ref{tab:IndexAlgorithmsRealDeterminantalVariety}--\ref{tab:ConvergenceAlgorithmsRealDeterminantalVariety}. Numerical experiments indicate that $\ppgdr$ converges faster than \cite[Algorithm~1]{LevinKileelBoumal2022} (see Section~\ref{subsubsec:LKB22instance}) and $\rfdr$ (see Section~\ref{subsec:RFDRreview}).

When $C = \mathrm{S}_{\le r}^+(n)$, \cite[Algorithm~1]{LevinKileelBoumal2022} is the only algorithm known to accumulate at stationary points of~\eqref{eq:OptiProblem} (provided that one can find a suitable hook which, to our knowledge, has not been done yet explicitly in the literature). Indeed, $\rfdr$ does not seem to easily extend to $\mathrm{S}_{\le r}^+(n)$, hence $\ppgdr$ (and his variant using Algorithm~\ref{algo:P2GDRmapPSDconeBoundedRank} instead of Algorithm~\ref{algo:P2GDRmap} in line~\ref{algo:P2GDR:P2GDRmap}) has no competitor in the realm of first-order optimization methods on $\mathrm{S}_{\le r}^+(n)$ that accumulate at stationary points.

When $C = \sparse{n}{s} \cap \R_+^n$ and $f(x) = \norm{Ax-b}^2$ with $A \in \R^{q \times n}$ and $b \in \R^q$, problem~\eqref{eq:OptiProblem} is known as the $s$-sparse nonnegative least squares problem and can be solved exactly; see \cite{NadisicCohenVandaeleGillis} and the references therein. However, we are not aware of algorithms designed to address other cost functions on that set.

This paper gathers, expands, and generalizes results of the technical reports \cite{OlikierGallivanAbsil2022} and \cite{OlikierGallivanAbsil2022Comparison}. $\ppgdr$ on $\R_{\le r}^{m \times n}$ (Algorithm~\ref{algo:P2GDR} using Algorithm~\ref{algo:P2GDRmapRealDeterminantalVariety} in line~\ref{algo:P2GDR:P2GDRmap}) was proposed in \cite{OlikierGallivanAbsil2022} in response to a question raised in \cite[\S 4]{LevinKileelBoumal2021}: ``Is there an algorithm running directly on $\R_{\le r}^{m \times n}$ that only uses first-order information about the cost function and which is guaranteed to converge to a stationary point?''
The proposed $\ppgdr$ algorithm (Algorithm~\ref{algo:P2GDR}) answers positively an open question raised in \cite[\S 4]{LevinKileelBoumal2022}: ``Is there an algorithm running directly on a general class of nonsmooth sets including $\R_{\le r}^{m \times n}$ that only uses first-order information about the cost function, and which is guaranteed to converge to a stationary point?''

This paper is organized as follows. In Section~\ref{sec:AssumptionsFeasibleSetOverviewProposedAlgorithm}, we specify the assumptions on $C$ (Assumption~\ref{assumption:Stratification}) and give an overview of the proposed $\ppgdr$ algorithm. In Section~\ref{sec:PriorWork}, we review prior work on problem~\eqref{eq:OptiProblem}. In Section~\ref{sec:Preliminaries}, we introduce the background material needed to analyze the behavior of the algorithms. In Section~\ref{sec:ProposedAlgorithmConvergenceAnalysis}, we define $\ppgdr$ and analyze its convergence properties under Assumption~\ref{assumption:Stratification}. In Section~\ref{sec:ExamplesStratifiedSetsSatisfyingMainAssumption}, we study the three apocalyptic sets listed and prove that they satisfy Assumption~\ref{assumption:Stratification} (see Theorem~\ref{thm:ExamplesStratifiedSetsSatisfyingMainAssumption}). In Section~\ref{sec:ComplementaryResults}, we propose complementary results that are not needed for the proofs of the main theorems (Theorems~\ref{thm:P2GDRPolakConvergence} and \ref{thm:ExamplesStratifiedSetsSatisfyingMainAssumption}) but are of interest in the context of this work, notably because some of them relate Assumption~\ref{assumption:Stratification} to known concepts of variational analysis or stratification theory.
Section~\ref{sec:Conclusion} contains concluding remarks, and Appendix~\ref{sec:GradientHessianRealValuedFunctionOnHilbertSpace} basic material on the gradient and the Hessian of a real-valued function on a Hilbert space.

\section{Assumptions on the feasible set $C$ and overview of the proposed algorithm}
\label{sec:AssumptionsFeasibleSetOverviewProposedAlgorithm}
In this section, we introduce Assumption~\ref{assumption:Stratification} and give an overview of the $\ppgdr$ algorithm (Algorithm~\ref{algo:P2GDR}). Based on Assumption~\ref{assumption:Stratification}, we prove that $\ppgdr$ produces a sequence whose accumulation points are stationary for~\eqref{eq:OptiProblem} (see Theorem~\ref{thm:P2GDRPolakConvergence}).

\subsection{Assumptions on the feasible set $C$}
\label{subsec:AssumptionsFeasibleSet}
The closure and the boundary of a subset $S$ of $\mathcal{E}$ are respectively denoted by $\overline{S}$ and $\partial S$. The distance from $x \in \mathcal{E}$ to a nonempty subset $S$ of $\mathcal{E}$ is $\dist(x, S) := \inf_{y \in S} \norm{x-y}$. For every $x \in \mathcal{E}$ and every $\rho \in (0,\infty)$, $\ball(x,\rho) := \{y \in \mathcal{E} \mid \norm{x-y} < \rho\}$ and $\ball[x,\rho] := \{y \in \mathcal{E} \mid \norm{x-y} \le \rho\}$ are respectively the open and closed balls of center $x$ and radius $\rho$ in $\mathcal{E}$.

\begin{assumption}
\label{assumption:GlobalSecondOrderUpperBoundDistanceFromTangentLine}
For all $x \in C$,
\begin{equation*}
u(x) := \sup_{v \in \tancone{C}{x} \setminus \{0\}} \frac{\dist(x+v, C)}{\norm{v}^2} < \infty.
\end{equation*}
\end{assumption}

We relate Assumption~\ref{assumption:GlobalSecondOrderUpperBoundDistanceFromTangentLine} to the concept of parabolic derivability in Proposition~\ref{prop:LocalSecondOrderUpperBoundDistanceFromTangentLineParabolicDerivability}.
The concept of continuity for a correspondence which appears in Assumption~\ref{assumption:Stratification} is reviewed in Section~\ref{subsec:InnerOuterLimitsContinuityCorrespondences}.

\begin{assumption}
\label{assumption:Stratification}
The set $C$ satisfies the following conditions:
\begin{enumerate}
\item there exist a positive integer $p$ and nonempty smooth submanifolds $S_0, \dots, S_p$ of $\mathcal{E}$ contained in $C$ such that:
\begin{enumerate}
\item for all $i, j \in \{0, \dots, p\}$, $i \ne j$ implies $S_i \cap S_j =\emptyset$;
\item $\overline{S_p} = C$ and, for all $i \in \{0, \dots, p\}$, $\overline{S_i} = \bigcup_{j=0}^i S_j$;
\item if $p \ge 2$, then, for all $i \in \{2, \dots, p\}$, all $x \in S_i$, and all $j \in \{1, \dots, i-1\}$, $\dist(x, S_j) < \dist(x, S_{j-1})$;
\end{enumerate}
\item Assumption~\ref{assumption:GlobalSecondOrderUpperBoundDistanceFromTangentLine} holds and, for every $i \in \{0, \dots, p\}$, $u|_{S_i}$ is locally bounded;
\item for every $i \in \{0, \dots, p\}$, $\tancone{C}{\cdot}$ is continuous on $S_i$ relative to $S_i$ (see the definition in Section~\ref{subsec:InnerOuterLimitsContinuityCorrespondences}).
\end{enumerate}
\end{assumption}

Assumption~\ref{assumption:Stratification} is related to several important observations.
First, every real algebraic variety in $\mathcal{E}$ can be partitioned into finitely many smooth submanifolds of $\mathcal{E}$ \cite{Whitney1957} and therefore satisfies condition~1(a).
Second, we require $p \ge 1$ because, if $p = 0$, then $C$ is a closed smooth submanifold of $\mathcal{E}$ and is thus Clarke regular by \cite[Example~6.8]{RockafellarWets}, and $\ppgdr$ reduces to the Riemannian gradient descent (a particular case of \cite[Algorithm~1]{AbsilMahonySepulchre}) which accumulates at stationary points of~\eqref{eq:OptiProblem} as proven in \cite[\S 4.3.3]{AbsilMahonySepulchre}.
Third, condition~1(b) implies that, for every $x \in S_p$, $d(x, S_{p-1}) > 0$. Therefore, $C \cap \ball(x, \dist(x, S_{p-1})) = S_p \cap \ball(x, \dist(x, S_{p-1}))$. Thus, $C$ is locally a smooth submanifold of $\mathcal{E}$ around $x \in S_p$. Therefore, by \cite[Example~6.8]{RockafellarWets}, the tangent cone $\tancone{C}{x}$ equals the tangent space $\tancone{S_p}{x}$, and the normal cones $\regnorcone{C}{x}$, $\norcone{C}{x}$, and $\connorcone{C}{x}$ equal the normal space $\norcone{S_p}{x}$. In particular, $C$ is Clarke regular at $x \in S_p$ and hence, by \cite[Corollary~2.15]{LevinKileelBoumal2022}, $x$ is not apocalyptic.
Fourth, by Proposition~\ref{prop:FiniteStratificationsConditionFrontierVsAssumption}, conditions~1(a) and 1(b) imply that $\{S_0, \dots, S_p\}$ is a \emph{stratification} of $C$ satisfying the \emph{condition of the frontier} \cite[\S 5]{Mather}; therefore, $S_0, \dots, S_p$ are called the \emph{strata} of $\{S_0, \dots, S_p\}$, and $C$ is called a \emph{stratified set}.
Fifth, condition~1(c) is added to condition~1(b) only to ensure that, for every $i \in \{0, \dots, p-1\}$, every point in $C \setminus \overline{S_i}$ has a projection onto $S_i$ (Proposition~\ref{prop:Conditions1(b)(c)MainAssumption}).
Sixth, by Proposition~\ref{prop:ConvergenceStationarityMeasureZeroUpperStratumMordukhovichStationary}, if $C$ satisfies Assumption~\ref{assumption:Stratification} and $(x_i)_{i \in \N}$ is a sequence in $S_p$ such that $\lim_{i \to \infty} \s(x_i; f, C) = 0$, then every accumulation point of $(x_i)_{i \in \N}$ is Mordukhovich stationary for~\eqref{eq:OptiProblem}.
Seventh, by Corollary~\ref{coro:ContinuityTangentConeImpliesContinuityStationarityMeasure}, condition~3 implies that, for every $i \in \{0, \dots, p\}$, $\s(\cdot; f, C)|_{S_i}$ is continuous.
Eighth, by Theorem~\ref{thm:ExamplesStratifiedSetsSatisfyingMainAssumption}, if $C$ is one of the three apocalyptic sets listed in Section~\ref{sec:Introduction}, then:
\begin{enumerate}
\item Assumption~\ref{assumption:Stratification} is satisfied;
\item there exists $a \in (0, 1)$ such that, for all $x \in C$,
\begin{equation*}
u(x) = \sup_{v \in \tancone{C}{x} \setminus \{0\}} \frac{\dist(x+v, C)}{\norm{v}^2} \in [a\tilde{u}(x), \tilde{u}(x)],
\end{equation*}
where $\tilde{u}(x) := 0$ if $x \in S_0$ and $\tilde{u}(x) := \frac{1}{\dist(x, S_{i-1})}$ if $x \in S_i$ with $i \in \{1, \dots, p\}$;
\item $u$ is not locally bounded at any point of $C \setminus S_p$;
\item the set of apocalyptic points of $C$ is $C \setminus S_p$;
\item for all $i \in \{0, \dots, p\}$ and all $x \in S_i$, $\connorcone{C}{x} = \norcone{S_i}{x}$.
\end{enumerate}
The fourth and fifth statements respectively imply that, if $C$ is one of the three sets, then $\s(\cdot; f, C)$ is not necessarily lower semicontinuous at a point of $C \setminus S_p$, and a point $x \in S_i$ with $i \in \{0, \dots, p\}$ is Clarke stationary for~\eqref{eq:OptiProblem} if and only if $x$ is stationary for the problem of minimizing $f$ on $\bigcup_{j=0}^i S_j$.

\subsection{Overview of the proposed algorithm}
\label{subsec:OverviewProposedAlgorithm}
In this section, we give an overview of the $\ppgdr$ algorithm (Algorithm~\ref{algo:P2GDR}) which we introduce in Section~\ref{sec:ProposedAlgorithmConvergenceAnalysis}. The iteration map of $\ppgdr$ (Algorithm~\ref{algo:P2GDRmap}), called the $\ppgdr$ map, uses the $\ppgd$ map (Algorithm~\ref{algo:P2GDmap}) as a subroutine. The $\ppgd$ map essentially corresponds to the iteration map of \cite[Algorithm~3]{SchneiderUschmajew2015}, dubbed $\ppgd$ in \cite[\S 1]{LevinKileelBoumal2022}, except that it is defined on any set $C$ satisfying Assumption~\ref{assumption:GlobalSecondOrderUpperBoundDistanceFromTangentLine}, and not only on $\R_{\le r}^{m \times n}$. The name $\ppgd$ comes from the fact that each iteration involves two projections: given $x \in C$ as input, the $\ppgd$ map performs a projected line search along a direction $g \in \proj{\tancone{C}{x}}{-\nabla f(x)}$, i.e., computes a point in $P_C(x+\alpha g)$ for decreasing values of $\alpha \in (0, \infty)$ until an Armijo condition is satisfied. $\ppgd$ has at least three desirable properties that $\pgd$ does not have.
First, each sequence produced by $\ppgd$ depends on $f$ only through $f|_C$ by \cite[Lemmas~A.7 and A.8]{LevinKileelBoumal2022}. Second, as shown in Section~\ref{subsec:P2GDRmap}, if Assumption~\ref{assumption:Stratification} holds, then $\ppgd$ reduces to the Riemannian gradient descent on the upper stratum $S_p$ of $C$. Third, for certain sets $C$, the fact that the search direction is in the tangent cone to $C$ makes the projection onto $C$ easier to compute; this is the case if $C = \R_{\le r}^{m \times n}$ as shown in \cite[\S 3]{Vandereycken2013} and recalled in Section~\ref{subsubsec:TangentConeRealDeterminantalVariety}.

Unfortunately, $\ppgd$ can converge to a point that is Mordukhovich stationary for~\eqref{eq:OptiProblem} but not stationary. Indeed, for some instances of problem~\eqref{eq:OptiProblem}, $\ppgd$ follows an apocalypse, i.e., produces a sequence $(x_i)_{i \in \N}$ in $C$ converging to a point $x \in C$ such that $(x, (x_i)_{i \in \N}, f)$ is an apocalypse; an example is given in \cite[\S 2.2]{LevinKileelBoumal2022} for the case where $C = \R_{\le r}^{m \times n}$. If $C$ satisfies Assumption~\ref{assumption:Stratification}, then, by condition~3 and Corollary~\ref{coro:ContinuityTangentConeImpliesContinuityStationarityMeasure}, an apocalypse can occur only if the sequence $(x_i)_{i \in \N}$ has finitely many elements in the stratum containing its limit.
The $\ppgdr$ map is designed based on that fact. Given $x \in C$ as input, it applies the $\ppgd$ map to $x$ but also to a projection of $x$ onto each of the lower strata to which it is considered close, and outputs a point among those thereby produced that maximally decreases $f$.
The R in $\ppgdr$ comes from the fact that, on $\R_{\le r}^{m \times n}$, a projection onto a lower stratum is a rank reduction.

\section{Prior work}
\label{sec:PriorWork}
In this section, we review related work. In Section~\ref{subsec:StratifiedSetsOptimization}, we list several papers using the concept of stratified set in optimization. Then, in Section~\ref{subsec:OptimizationThroughSmoothLift}, we review \cite[Algorithm~1]{LevinKileelBoumal2022} and, more generally, the main results of \cite{LevinKileelBoumal2022SmoothLifts} which concern optimization through a smooth lift. Finally, in Section~\ref{subsec:RFDRreview}, we review the $\rfdr$ algorithm.

\subsection{Stratified sets in optimization}
\label{subsec:StratifiedSetsOptimization}
The concept of stratification has been used in optimization but, to the best of our knowledge, only in nonsmooth optimization with the goal of finding a Clarke stationary point.

First, several works including \cite{BolteEtAl, Ioffe, DavisEtAl, BianchiHachemSchechtman} concern the problem of minimizing a function whose graph admits a Whitney stratification.
They consider Clarke stationarity: see \cite[Definition~2]{BolteEtAl} for the unconstrained case and \cite[(6.2)]{DavisEtAl} for the constrained case.
For example, \cite[Theorem~6.2]{DavisEtAl} ensures, under suitable assumptions, that, almost surely, the proximal stochastic subgradient method produces a sequence whose accumulation points are Clarke stationary. It is an open question whether those points may fail to be stationary.

Second, the authors of \cite{HosseiniUschmajew2019} consider the problem of minimizing a locally Lipschitz continuous function on a real algebraic variety for which they propose a gradient sampling method. Being an algebraic variety, the feasible set admits a Whitney stratification \cite{Whitney1965}. By \cite[Theorem~3.3]{HosseiniUschmajew2019}, \cite[Algorithm~1]{HosseiniUschmajew2019} accumulates at Clarke stationary points. Again, it is not known whether those points can fail to be stationary.

\subsection{Optimization through a smooth lift}
\label{subsec:OptimizationThroughSmoothLift}
In \cite{LevinKileelBoumal2022SmoothLifts}, the authors study problem~\eqref{eq:OptiProblem} under the assumption that there exist a smooth manifold $\mathcal{M}$ and a smooth map $\varphi : \mathcal{M} \to \mathcal{E}$ such that $\varphi(\mathcal{M}) = C$; they call $\varphi$ a \emph{smooth lift} of $C$. Specifically, they investigate how desirable points of the problem
\begin{equation}
\label{eq:OptiSmoothLift}
\min_{y \in \mathcal{M}} (f \circ \varphi)(y)
\end{equation}
map to desirable points of~\eqref{eq:OptiProblem}:
\begin{itemize}
\item \cite[Theorem~2.8]{LevinKileelBoumal2022SmoothLifts} gives a necessary and sufficient condition on $\varphi$ for the property ``for all continuous $f$, if $y \in \mathcal{M}$ is a local minimum of~\eqref{eq:OptiSmoothLift}, then $\varphi(y)$ is a local minimum of~\eqref{eq:OptiProblem}'' to hold;
\item \cite[Theorem~2.10]{LevinKileelBoumal2022SmoothLifts} implies that, for every $y \in \mathcal{M}$, if $\tancone{C}{\varphi(y)}$ is not a linear subspace of $\mathcal{E}$, then there exists a differentiable $f$ such that $y$ is a stationary point of~\eqref{eq:OptiSmoothLift} and $\varphi(y)$ is not a stationary point of~\eqref{eq:OptiProblem};
\item \cite[Theorem~2.12]{LevinKileelBoumal2022SmoothLifts} gives two sufficient conditions and one necessary condition on $\varphi$ for the property ``for all twice differentiable $f$, if $y \in \mathcal{M}$ is a second-order stationary point of~\eqref{eq:OptiSmoothLift}, then $\varphi(y)$ is a stationary point of~\eqref{eq:OptiProblem}'' to hold.
\end{itemize}
As can be seen in \cite[Table~1]{LevinKileelBoumal2022SmoothLifts}, for many feasible sets $C$ of interest, there exist smooth lifts $\varphi$ mapping each second-order stationary point of~\eqref{eq:OptiSmoothLift} to a stationary point of~\eqref{eq:OptiProblem}. For example, \cite[Table~1]{LevinKileelBoumal2022SmoothLifts} gives such lifts for two of the three apocalyptic sets listed in Section~\ref{sec:Introduction}:
\begin{itemize}
\item the map
\begin{equation*}
\varphi : \R^{m \times r} \times \R^{n \times r} \to \R^{m \times n} : (L, R) \mapsto LR^\tp
\end{equation*}
is a smooth lift of $\R_{\le r}^{m \times n}$ called the rank factorization lift;
\item the map
\begin{equation*}
\varphi : \R^{n \times r} \to \R^{n \times n} : Y \mapsto YY^\tp
\end{equation*}
is a smooth lift of $\mathrm{S}_{\le r}^+(n)$ called the Burer--Monteiro lift.
\end{itemize}
For such feasible sets, one can find a stationary point of~\eqref{eq:OptiProblem} by running on~\eqref{eq:OptiSmoothLift} an algorithm guaranteed to accumulate at second-order stationary points; the trust-region method given in \cite[Algorithm~1]{LevinKileelBoumal2022} is an example of such an algorithm. This approach was successfully implemented in \cite{LevinKileelBoumal2022} for $\R_{\le r}^{m \times n}$.

\subsection{The $\rfdr$ algorithm}
\label{subsec:RFDRreview}
In this section, we review the $\rfdr$ algorithm \cite[Algorithm~3]{OlikierAbsil2022RFDR}. $\rfdr$ is defined on $\R_{\le r}^{m \times n}$ in \cite[\S 5]{OlikierAbsil2022RFDR} and its convergence properties are analyzed in \cite[\S 6]{OlikierAbsil2022RFDR}. More generally, it can be defined on the set $C$ while preserving the convergence properties under Assumption~\ref{assumption:StratificationRFDR}, as proven in Section~\ref{subsec:ConvergenceAnalysisRFDR}.

\begin{assumption}
\label{assumption:StratificationRFDR}
The set $C$ satisfies the following conditions:
\begin{enumerate}
\item conditions~1(a) and 1(b) of Assumption~\ref{assumption:Stratification} hold and, if $p \ge 2$, then, for all $x \in S_p$, $\dist(x, S_{p-1}) < \dist(x, S_{p-2})$;
\item $\inf_{x \in C \setminus S_p, z \in \mathcal{E} \setminus \{0\}} \frac{\norm{\proj{\tancone{C}{x}}{z}}}{\norm{z}} > 0$;
\item $C$ admits a \emph{restricted tangent cone}, i.e., a correspondence $C \setmapsto \mathcal{E} : x \mapsto \restancone{C}{x}$ such that:
\begin{enumerate}
\item for every $x \in C$, $\restancone{C}{x}$ is a closed cone contained in $\tancone{C}{x}$;
\item for all $x \in C$ and all $z \in \restancone{C}{x}$, $x+z \in C$;
\item there exists $\mu \in (0, 1]$ such that, for all $x \in C$ and all $z \in \mathcal{E}$, $\norm{\proj{\restancone{C}{x}}{z}} \ge \mu \norm{\proj{\tancone{C}{x}}{z}}$.
\end{enumerate}
\end{enumerate}
\end{assumption}

Observe that condition~1 of Assumption~\ref{assumption:StratificationRFDR} is weaker than condition~1 of Assumption~\ref{assumption:Stratification}.
The paper \cite{OlikierAbsil2022RFDR} is based on the fact that $\R_{\le r}^{m \times n}$ satisfies Assumption~\ref{assumption:StratificationRFDR}:
\begin{itemize}
\item we prove in Section~\ref{subsubsec:StratificationRealDeterminantalVariety} that condition~1 of Assumption~\ref{assumption:StratificationRFDR} holds;
\item by \eqref{eq:NormProjectionTangentConeDeterminantalVariety}, condition~2 of Assumption~\ref{assumption:StratificationRFDR} holds and the infimum equals $(\min\{m,n\}-r+1)^{-\frac{1}{2}}$;
\item by \cite[Proposition~3.2]{OlikierAbsil2022RFDR}, which is based on \cite[\S 3]{SchneiderUschmajew2015}, condition~3 of Assumption~\ref{assumption:StratificationRFDR} holds with $\mu := 2^{-\frac{1}{2}}$ for the restricted tangent cone from \cite[Definition~3.1]{OlikierAbsil2022RFDR}.
\end{itemize}
We prove in Section~\ref{subsec:SparseVectors} that $\sparse{n}{s}$ satisfies Assumption~\ref{assumption:StratificationRFDR}, its tangent cone being itself a restricted tangent cone.
However, to the best of our knowledge, $\R_{\le r}^{m \times n}$ and $\sparse{n}{s}$ are the only known examples of a set $C$ that satisfies Assumption~\ref{assumption:StratificationRFDR}.
In the rest of this section, we discuss the $\rfdr$ algorithm on a set $C$ satisfying Assumption~\ref{assumption:StratificationRFDR}.

The iteration map of $\rfdr$ \cite[Algorithm~2]{OlikierAbsil2022RFDR}, called the $\rfdr$ map, uses the $\rfd$ map \cite[Algorithm~1]{OlikierAbsil2022RFDR} as a subroutine. The $\rfd$ map essentially corresponds to the iteration map of \cite[Algorithm~4]{SchneiderUschmajew2015}, dubbed $\rfd$ in \cite[\S 1]{OlikierAbsil2022RFDR}. The name $\rfd$ comes from the fact that it is a retraction-free descent method, i.e., it performs each update along a straight line: given $x \in C$ as input, the $\rfd$ map performs a line search along a direction selected in $\proj{\restancone{C}{x}}{-\nabla f(x)}$, which does not involve any projection onto $C$. This retraction-free property has two advantages. First, it is fundamental to define and analyze $\rfdr$. Second, it saves the cost of computing a retraction, which, as pointed out in \cite[\S 3.4]{SchneiderUschmajew2015}, does not confer to $\rfd$ a significant advantage over $\ppgd$ if $C = \R_{\le r}^{m \times n}$ and $r \ll \min\{m,n\}$ since every point produced by the $\ppgd$ map is in $\R_{\le 2r}^{m \times n}$ (see Section~\ref{subsubsec:TangentConeRealDeterminantalVariety}) and reducing the rank from $2r$ to $r$ is typically much less expensive than evaluating the cost function or its gradient.

By \cite[Theorem~3.10]{SchneiderUschmajew2015}, if $C = \R_{\le r}^{m \times n}$ and $f$ is real-analytic and bounded from below, then $\rfd$ either produces a convergent sequence along which the stationarity measure $\s(\cdot; f, \R_{\le r}^{m \times n})$ goes to zero or produces a sequence diverging to infinity. We do not have such a guarantee for $\ppgd$; see \cite[Theorem~3.9]{SchneiderUschmajew2015}. However, as $\ppgd$, $\rfd$ can converge to a point that is Mordukhovich stationary for~\eqref{eq:OptiProblem} but not stationary. For example, by \cite[(19)]{OlikierAbsil2022RFDR}, $\rfd$ produces the same sequence as $\ppgd$ on the instance of~\eqref{eq:OptiProblem} from \cite[\S 2.2]{LevinKileelBoumal2022}.

Given $x \in C$ as input, the $\rfdr$ map applies the $\rfd$ map to $x$ but also, if $x \in S_p$ and $x$ is considered close to $S_{p-1}$, to a projection of $x$ onto $S_{p-1}$, and outputs a point among those thereby produced that maximally decreases $f$. Based on Assumption~\ref{assumption:StratificationRFDR}, \cite[Theorem~6.2]{OlikierAbsil2022RFDR}  (see also Theorem~\ref{thm:RFDRPolakConvergence}) states that $\rfdr$ produces a sequence whose accumulation points are stationary for~\eqref{eq:OptiProblem}.

Both $\ppgdr$ and $\rfdr$ provably accumulate at stationary points of~\eqref{eq:OptiProblem}. The main advantage of $\rfdr$ over $\ppgdr$ is that it does so by projecting its input onto at most one lower stratum while $\ppgdr$ can project its input onto each of the $p$ lower strata in the worst case. On the other hand, $\ppgdr$ has two advantages over $\rfdr$. First, as Assumption~\ref{assumption:Stratification} is less restrictive than Assumption~\ref{assumption:StratificationRFDR}, $\ppgdr$ can be defined on a broader class of feasible sets. For example, no restricted tangent cone to $\sparse{n}{s} \cap \R_+^n$ or $\mathrm{S}_{\le r}^+(n)$ is known to us. Second, the directions in $\proj{\tancone{C}{x}}{-\nabla f(x)}$ are more closely related to $-\nabla f(x)$ than those in $\proj{\restancone{C}{x}}{-\nabla f(x)}$; while this does not imply that $\ppgd$ converges faster than $\rfd$, such an observation was made experimentally in \cite[\S 3.4]{SchneiderUschmajew2015} for $C = \R_{\le r}^{m \times n}$.

\section{Preliminaries}
\label{sec:Preliminaries}
This section, mostly based on \cite{RockafellarWets} and \cite{LevinKileelBoumal2022}, introduces the background material needed in Sections~\ref{sec:ProposedAlgorithmConvergenceAnalysis}, \ref{sec:ExamplesStratifiedSetsSatisfyingMainAssumption}, and \ref{sec:ComplementaryResults}. In Section~\ref{subsec:ProjectionOntoClosedCones}, we recall basic properties of the projection onto closed cones. In Section~\ref{subsec:InnerOuterLimitsContinuityCorrespondences}, we review the concepts of inner and outer limits and continuity of correspondences, and prove Proposition~\ref{prop:DistanceToSetJointlyContinuous} on which Corollary~\ref{coro:ContinuityTangentConeImpliesContinuityStationarityMeasure} is based. In Section~\ref{subsec:TangentNormalConesGeometricDerivability}, we review the concepts of tangent and normal cones mentioned in Section~\ref{sec:Introduction} and related notions such as geometric derivability. In Section~\ref{subsec:StationarityMeasure}, we review basic properties of the stationarity measure $\s(\cdot;f, C)$ defined in~\eqref{eq:StationarityMeasure}, and prove that it is continuous if the tangent cone $\tancone{C}{\cdot}$ is continuous (Corollary~\ref{coro:ContinuityTangentConeImpliesContinuityStationarityMeasure}), a result which we use in Section~\ref{sec:ExamplesStratifiedSetsSatisfyingMainAssumption} to prove that $\R_{\le s}^n \cap \R_+^n$, $\R_{\le r}^{m \times n}$, and $\mathrm{S}_{\le r}^+(n)$ satisfy condition~3 of Assumption~\ref{assumption:Stratification}.

\subsection{Projection onto closed cones}
\label{subsec:ProjectionOntoClosedCones}
A set $S \subseteq \mathcal{E}$ is said to be \emph{locally closed} at $x \in \mathcal{E}$ if there exists $\delta \in (0, \infty)$ such that $S \cap \ball[x, \delta]$ is closed; $S$ is closed if and only if it is locally closed at every $x \in \mathcal{E}$.
For every nonempty subset $S$ of $\mathcal{E}$ and every $x \in \mathcal{E}$, $\proj{S}{x} := \argmin_{y \in S} \norm{x-y}$ is the projection of $x$ onto $S$. The set $\proj{S}{x}$ can be empty in general but not if $S$ is closed, as formulated in Proposition~\ref{prop:ProjectionOntoClosedSet}. If $\proj{S}{x}$ is a singleton, we also use $\proj{S}{x}$ to denote the element of the singleton.

\begin{proposition}[{\cite[Example~1.20]{RockafellarWets}}]
\label{prop:ProjectionOntoClosedSet}
For every nonempty closed subset $S$ of $\mathcal{E}$ and every $x \in \mathcal{E}$, $\proj{S}{x}$ is nonempty and compact.
\end{proposition}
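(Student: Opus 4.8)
The plan is to establish nonemptiness and compactness of $\proj{S}{x}$ for a nonempty closed set $S \subseteq \mathcal{E}$ by a standard direct argument, exploiting the fact that $\mathcal{E}$ is a finite-dimensional Euclidean space so that closed bounded sets are compact and the Heine--Borel property is available. First I would fix $x \in \mathcal{E}$ and set $\delta := \dist(x, S) = \inf_{y \in S}\norm{x-y}$, which is a finite nonnegative number since $S$ is nonempty. The point is that minimizing $\norm{x - y}$ over $y \in S$ is the same as minimizing it over the smaller set $S \cap \ball[x, \delta + 1]$, because any $y \in S$ with $\norm{x-y} > \delta + 1$ cannot be a minimizer (the infimum $\delta$ is approached by points at distance at most $\delta + 1$). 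So I would replace $S$ by $K := S \cap \ball[x, \delta+1]$.

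Next I would observe that $K$ is nonempty (by definition of infimum there is a point of $S$ within distance $\delta + \tfrac12$ of $x$), closed (intersection of the closed set $S$ with the closed ball), and bounded; hence by the Heine--Borel theorem $K$ is compact. The function $y \mapsto \norm{x - y}$ is continuous on the compact set $K$, so by the extreme value theorem it attains its minimum on $K$ at some point $y^\star$, and $\norm{x - y^\star} = \dist(x, K) = \dist(x, S) = \delta$. This shows $\proj{S}{x} \neq \emptyset$. For compactness of the whole set $\proj{S}{x}$, I would note that $\proj{S}{x} = \{y \in K : \norm{x - y} = \delta\}$ is the preimage of the closed set $\{\delta\}$ under the continuous map $y \mapsto \norm{x-y}$ restricted to $K$, hence closed in $K$; being a closed subset of the compact set $K$, it is itself compact.

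I do not anticipate any real obstacle here: the statement is a classical fact and the only thing to be careful about is the reduction to a bounded set, which is what makes compactness (and hence the attainment of the infimum) available in the first place. One could alternatively invoke a minimizing sequence $(y_i)_{i\in\N}$ in $S$ with $\norm{x - y_i} \to \delta$, note that it is eventually bounded, extract a convergent subsequence by Bolzano--Weierstrass, and use closedness of $S$ to place the limit in $S$ and continuity of the norm to identify it as a minimizer; this is the same argument repackaged. Since the result is cited verbatim from \cite[Example~1.20]{RockafellarWets}, a one- or two-line justification of this kind suffices.
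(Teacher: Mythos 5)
Your argument is correct: the paper itself gives no proof, simply citing \cite[Example~1.20]{RockafellarWets}, and your reduction to the compact set $S \cap \ball[x,\delta+1]$ followed by the extreme value theorem (or equivalently the minimizing-sequence/Bolzano--Weierstrass argument) is exactly the standard justification, valid here because $\mathcal{E}$ is a finite-dimensional Euclidean space. The identification of $\proj{S}{x}$ as the preimage of $\{\delta\}$ in a compact set correctly yields compactness, so nothing is missing.
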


A nonempty subset $S$ of $\mathcal{E}$ is said to be a \emph{cone} if, for all $x \in S$ and all $\lambda \in [0,\infty)$, $\lambda x \in S$. In this paper, we mostly project onto closed cones and, in that case, Proposition~\ref{prop:ProjectionOntoClosedSet} can be completed as follows.

\begin{proposition}[{\cite[Proposition~A.6]{LevinKileelBoumal2022}}]
\label{prop:NormProjectionOntoClosedCone}
Let $S \subseteq \mathcal{E}$ be a closed cone. For all $x \in \mathcal{E}$ and all $y \in \proj{S}{x}$,
\begin{equation*}
\ip{x}{y} = \norm{y}^2
\end{equation*}
and, in particular,
\begin{equation*}
\norm{y}^2 = \norm{x}^2 - \dist(x, S)^2.
\end{equation*}
\end{proposition}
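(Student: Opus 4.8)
# Proof Proposal for Proposition~\ref{prop:NormProjectionOntoClosedCone}

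The plan is to exploit the two structural facts available: that $S$ is a cone (so it is closed under nonnegative scaling) and that $y$ is a minimizer of $z \mapsto \norm{x-z}^2$ over $S$. The first identity $\ip{x}{y} = \norm{y}^2$ will come from a first-order/comparison argument along the ray $\{\lambda y : \lambda \ge 0\}$, and the second identity will then be immediate algebra using the first.

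For the first identity, I would fix $y \in \proj{S}{x}$ and consider the scalar function $g(\lambda) := \norm{x - \lambda y}^2 = \norm{x}^2 - 2\lambda \ip{x}{y} + \lambda^2 \norm{y}^2$ for $\lambda \in [0, \infty)$. Since $S$ is a cone, $\lambda y \in S$ for every $\lambda \ge 0$, so by optimality of $y$ we have $g(1) = \norm{x - y}^2 \le g(\lambda)$ for all $\lambda \ge 0$; that is, $\lambda = 1$ minimizes $g$ on $[0, \infty)$. If $y = 0$, both identities are trivial (the first reads $0 = 0$, and then $\norm{y}^2 = 0$ while $\norm{x}^2 - \dist(x,S)^2 = 0$ since $0 \in S$ forces $\dist(x,S) \le \norm{x}$ and $\dist(x,S)^2 \ge \norm{x}^2$ would contradict... — actually here one must check $\dist(x,S) = \norm{x}$ when the nearest point is $0$, which holds because $y=0 \in \proj{S}{x}$ means $\dist(x,S) = \norm{x-0} = \norm{x}$). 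If $y \ne 0$, then $g$ is a strictly convex quadratic in $\lambda$ with a unique unconstrained minimizer at $\lambda^* = \ip{x}{y}/\norm{y}^2$; since the constrained minimizer on $[0,\infty)$ is at an interior point $\lambda = 1 > 0$, it must coincide with the unconstrained minimizer, giving $1 = \ip{x}{y}/\norm{y}^2$, i.e., $\ip{x}{y} = \norm{y}^2$.

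For the second identity, expand $\dist(x, S)^2 = \norm{x - y}^2 = \norm{x}^2 - 2\ip{x}{y} + \norm{y}^2$ and substitute $\ip{x}{y} = \norm{y}^2$ to obtain $\dist(x,S)^2 = \norm{x}^2 - 2\norm{y}^2 + \norm{y}^2 = \norm{x}^2 - \norm{y}^2$, which rearranges to $\norm{y}^2 = \norm{x}^2 - \dist(x,S)^2$. Note that Proposition~\ref{prop:ProjectionOntoClosedSet} guarantees $\proj{S}{x}$ is nonempty so there is indeed such a $y$ to argue about.

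I do not anticipate a serious obstacle here; this is a short standard argument. The only point requiring mild care is the degenerate case $y = 0$, where the "unique quadratic minimizer" reasoning breaks down because $g$ is then affine (in fact constant), so one handles it separately by observing directly that $y = 0 \in \proj{S}{x}$ forces $\dist(x,S) = \norm{x}$, making both asserted identities reduce to $0 = 0$. Alternatively, one can avoid the case split entirely: from $g(1) \le g(\lambda)$ for all $\lambda \ge 0$, evaluate at $\lambda = 1 + t$ for small $t$ of either sign (legitimate since $1 \pm t \ge 0$ for $|t|$ small) to get $0 \le g(1+t) - g(1) = -2t(\ip{x}{y} - \norm{y}^2) + t^2 \norm{y}^2$; dividing by $t$ and letting $t \to 0^+$ and $t \to 0^-$ yields $\ip{x}{y} - \norm{y}^2 = 0$ directly, with no reference to whether $y$ vanishes.
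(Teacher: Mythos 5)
Your proof is correct. Note that the paper does not prove this statement at all; it simply cites \cite[Proposition~A.6]{LevinKileelBoumal2022}, and your self-contained argument (optimality of $\lambda = 1$ for $\lambda \mapsto \norm{x-\lambda y}^2$ along the ray $\{\lambda y : \lambda \ge 0\} \subseteq S$, plus the expansion of $\dist(x,S)^2 = \norm{x-y}^2$) is exactly the standard route used in that reference, with the degenerate case $y = 0$ correctly handled — indeed your two-sided perturbation $\lambda = 1+t$ disposes of the case split altogether.
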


For every nonempty subset $S$ of $\mathcal{E}$,
\begin{equation*}
S^* := \{y \in \mathcal{E} \mid \ip{y}{x} \le 0 \; \forall x \in S\}
\end{equation*}
is a closed convex cone called the \emph{(negative) polar} of $S$.
If $S$ is a linear subspace of $\mathcal{E}$, then $S^*$ equals the orthogonal complement $S^\perp$ of $S$. If $\emptyset \ne S_1 \subseteq S_2 \subseteq \mathcal{E}$, then $S_1^* \supseteq S_2^*$.
Moreover, we have Proposition~\ref{prop:PolarAmbientSpace} which we use to prove Propositions~\ref{prop:TangentNormalConesAmbientSpace} and \ref{prop:PolarPSDconeBoundedRank}.

\begin{proposition}
\label{prop:PolarAmbientSpace}
If $S$ is contained in a linear subspace $V$ of $\mathcal{E}$, then
\begin{equation*}
S^* = (S^* \cap V) + V^\perp.
\end{equation*}
\end{proposition}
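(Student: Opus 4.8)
The plan is to prove the two inclusions separately, using the orthogonal decomposition of $\mathcal{E}$ with respect to $V$. Since $V$ is a linear subspace of the Euclidean space $\mathcal{E}$, every $y \in \mathcal{E}$ can be written uniquely as $y = \proj{V}{y} + \proj{V^\perp}{y}$ with $\proj{V}{y} \in V$ and $\proj{V^\perp}{y} \in V^\perp$, and for every $x \in V$ one has $\ip{\proj{V^\perp}{y}}{x} = 0$. This elementary fact is the only ingredient needed.

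First I would establish the inclusion $(S^* \cap V) + V^\perp \subseteq S^*$. Let $y = y_1 + y_2$ with $y_1 \in S^* \cap V$ and $y_2 \in V^\perp$. For any $x \in S$, the hypothesis $S \subseteq V$ gives $x \in V$, hence $\ip{y_2}{x} = 0$; combining this with $\ip{y_1}{x} \le 0$ (which holds because $y_1 \in S^*$) yields $\ip{y}{x} = \ip{y_1}{x} + \ip{y_2}{x} \le 0$. Since $x \in S$ was arbitrary, $y \in S^*$.

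Next I would establish the reverse inclusion $S^* \subseteq (S^* \cap V) + V^\perp$. Let $y \in S^*$ and write $y = \proj{V}{y} + \proj{V^\perp}{y}$. It suffices to show $\proj{V}{y} \in S^*$, since $\proj{V}{y} \in V$ and $\proj{V^\perp}{y} \in V^\perp$ by construction. For any $x \in S \subseteq V$ we have $\ip{\proj{V^\perp}{y}}{x} = 0$, so $\ip{\proj{V}{y}}{x} = \ip{y}{x} - \ip{\proj{V^\perp}{y}}{x} = \ip{y}{x} \le 0$, where the last inequality uses $y \in S^*$. Hence $\proj{V}{y} \in S^* \cap V$ and $y \in (S^* \cap V) + V^\perp$.

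There is no genuine obstacle here: the statement reduces entirely to the orthogonal splitting along $V$ and the observation that vectors of $V^\perp$ are annihilated by the elements of $S$ (because $S \subseteq V$). The only point that warrants a line of care is making explicit, in both directions, that $S \subseteq V$ is exactly what allows the $V^\perp$-component to be discarded when testing membership in $S^*$.
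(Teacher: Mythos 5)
Your proof is correct and follows essentially the same route as the paper: decompose $y$ orthogonally along $V$ and use $S \subseteq V$ to see that the $V^\perp$-component is irrelevant to the polarity condition. The paper simply writes this as one chain of set equalities rather than two separate inclusions.
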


\begin{proof}
Observe that
\begin{align*}
S^*
&= \{y \in \mathcal{E} \mid \ip{y}{x} \le 0 \; \forall x \in S\}\\
&= \{y_\parallel+y_\perp \mid y_\parallel \in V,\, y_\perp \in V^\perp,\, \ip{y_\parallel+y_\perp}{x} \le 0 \; \forall x \in S\}\\
&= \{y_\parallel+y_\perp \mid y_\parallel \in V,\, y_\perp \in V^\perp,\, \ip{y_\parallel}{x} \le 0 \; \forall x \in S\}\\
&= \{y_\parallel \in V \mid \ip{y_\parallel}{x} \le 0 \; \forall x \in S\} + V^\perp\\
&= (S^* \cap V) + V^\perp.
\qedhere
\end{align*}
\end{proof}

\subsection{Inner and outer limits, continuity of correspondences}
\label{subsec:InnerOuterLimitsContinuityCorrespondences}
This section is based on \cite[Chapters 4 and 5]{RockafellarWets}.
For every sequence $(S_i)_{i \in \N}$ of sets in a metric space $(\mathcal{M}, \dist_\mathcal{M})$, the two sets
\begin{align*}
\inlim_{i \to \infty} S_i := \big\{x \in \mathcal{M} \mid \lim_{i \to \infty} \dist_\mathcal{M}(x, S_i) = 0\big\},&&
\outlim_{i \to \infty} S_i := \big\{x \in \mathcal{M} \mid \liminf_{i \to \infty} \dist_\mathcal{M}(x, S_i) = 0\big\}
\end{align*}
are closed and respectively called the \emph{inner} and \emph{outer limits} of $(S_i)_{i \in \N}$ \cite[Definition~4.1, Exercise~4.2(a), and Proposition~4.4]{RockafellarWets}. If $S_i \ne \emptyset$ for all $i \in \N$, then $\inlim_{i \to \infty} S_i$ and $\outlim_{i \to \infty} S_i$ are respectively the sets of all possible limits and of all possible accumulation points of sequences $(x_i)_{i \in \N}$ such that $x_i \in S_i$ for all $i \in \N$.
It is always true that $\inlim_{i \to \infty} S_i \subseteq \outlim_{i \to \infty} S_i$; if the inclusion is an equality, then $(S_i)_{i \in \N}$ is said to \emph{converge in the sense of Painlev\'{e}} and $\setlim_{i \to \infty} S_i := \inlim_{i \to \infty} S_i = \outlim_{i \to \infty} S_i$ is called the \emph{limit} of $(S_i)_{i \in \N}$.

A \emph{correspondence}, or a \emph{set-valued mapping}, is a triplet $F := (A, B, G)$ where $A$ and $B$ are sets respectively called the \emph{set of departure} and the \emph{set of destination} of $F$, and $G$ is a subset of $A \times B$ called the \emph{graph} of $F$.
If $F := (A, B, G)$ is a correspondence, written $F : A \setmapsto B$, then the \emph{image} of $x \in A$ by $F$ is $F(x) := \{y \in B \mid (x,y) \in G\}$ and the \emph{domain} of $F$ is $\dom F := \{x \in A \mid F(x) \ne \emptyset\}$.

We now review a notion of continuity for correspondences $F : \mathcal{M}_1 \setmapsto \mathcal{M}_2$ where $(\mathcal{M}_1, \dist_{\mathcal{M}_1})$ and $(\mathcal{M}_2, \dist_{\mathcal{M}_2})$ are two metric spaces. Let $S$ be a nonempty subset of $\dom F$ and $x$ be in $S$. The two sets
\begin{align}
\label{eq:InnerLimitCorrespondence}
\inlim_{S \ni z \to x} F(z)
&:= \bigcap_{S \ni x_i \to x} \inlim_{i \to \infty} F(x_i)
= \big\{y \in \mathcal{M}_2 \mid \lim_{S \ni z \to x} \dist_{\mathcal{M}_2}(y, F(z)) = 0\big\},\\
\label{eq:OuterLimitCorrespondence}
\outlim_{S \ni z \to x} F(z)
&:= \bigcup_{S \ni x_i \to x} \outlim_{i \to \infty} F(x_i)
= \big\{y \in \mathcal{M}_2 \mid \liminf_{S \ni z \to x} \dist_{\mathcal{M}_2}(y, F(z)) = 0\big\}
\end{align}
are closed and respectively called the inner and outer limits of $F$ at $x$ relative to $S$ \cite[5(1)]{RockafellarWets}. Clearly, $\inlim_{S \ni z \to x} F(z) \subseteq \outlim_{S \ni z \to x} F(z)$; if the inclusion is an equality, then $\setlim_{S \ni z \to x} F(z) := \inlim_{S \ni z \to x} F(z) = \outlim_{S \ni z \to x} F(z)$ is called the limit of $F$ at $x$ relative to $S$.
By \cite[Definition~5.4]{RockafellarWets}, $F$ is said to be \emph{inner semicontinuous} at $x$ relative to $S$ if $\inlim_{S \ni z \to x} F(z) \supseteq F(x)$, \emph{outer semicontinuous} at $x$ relative to $S$ if $\outlim_{S \ni z \to x} F(z) \subseteq F(x)$, and \emph{continuous} at $x$ relative to $S$ if $F$ is both inner and outer semicontinuous at $x$ relative to $S$, i.e., $\setlim_{S \ni z \to x} F(z) = F(x)$. Thus, $F$ is continuous at $x$ relative to $S$ if and only if, for every sequence $(x_i)_{i \in \N}$ in $S$ converging to $x$, it holds that $\setlim_{i \to \infty} F(x_i) = F(x)$, i.e., $\outlim_{i \to \infty} F(x_i) \subseteq F(x) \subseteq \inlim_{i \to \infty} F(x_i)$.

We close this section by proving Proposition~\ref{prop:DistanceToSetJointlyContinuous} on which Corollary~\ref{coro:ContinuityTangentConeImpliesContinuityStationarityMeasure} is based.

\begin{proposition}
\label{prop:DistanceToSetJointlyContinuous}
Let $g : \mathcal{E} \to \mathcal{E}$ be continuous, $F : \mathcal{E} \setmapsto \mathcal{E}$ be closed-valued, and $S$ be a nonempty subset of $\dom F$. If $F$ is continuous at $x \in S$ relative to $S$, then the function
\begin{equation*}
\dom F \to \R : y \mapsto \dist(g(y), F(y))
\end{equation*}
is continuous at $x$ relative to $S$.
\end{proposition}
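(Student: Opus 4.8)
The plan is to fix a sequence $(y_k)_{k \in \N}$ in $S$ converging to $x$ and show that $\dist(g(y_k), F(y_k)) \to \dist(g(x), F(x))$. Write $\ell_k := \dist(g(y_k), F(y_k))$ and $\ell := \dist(g(x), F(x))$; since $F$ is closed-valued and $\ell < \infty$ (because $x \in \dom F$, so $F(x) \ne \emptyset$), all these quantities are finite. I expect the proof to split naturally into two inequalities: $\limsup_{k \to \infty} \ell_k \le \ell$, which will use inner semicontinuity of $F$ at $x$, and $\liminf_{k \to \infty} \ell_k \ge \ell$, which will use outer semicontinuity. Throughout I would also use that $g$ is continuous, so $g(y_k) \to g(x)$, together with the elementary $1$-Lipschitz estimate $|\dist(a, T) - \dist(b, T)| \le \norm{a-b}$ for any nonempty $T$, which lets me replace $g(y_k)$ by $g(x)$ up to an error tending to $0$; so it suffices to control $\dist(g(x), F(y_k))$.

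For the $\limsup$ bound: pick any $w \in F(x)$ with $\norm{g(x) - w} = \ell$ (possible since $F(x)$ is closed, actually compact intersected with a ball). Inner semicontinuity means $F(x) \subseteq \inlim_{k \to \infty} F(y_k)$, which by the characterization in~\eqref{eq:InnerLimitCorrespondence} gives $\dist(w, F(y_k)) \to 0$. Then $\dist(g(x), F(y_k)) \le \norm{g(x) - w} + \dist(w, F(y_k)) = \ell + o(1)$, and adding the $o(1)$ from swapping $g(y_k)$ for $g(x)$ yields $\limsup_k \ell_k \le \ell$.

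For the $\liminf$ bound: for each $k$ choose $z_k \in F(y_k)$ with $\norm{g(y_k) - z_k} = \ell_k$ (using Proposition~\ref{prop:ProjectionOntoClosedSet}, as $F(y_k)$ is closed and nonempty for $k$ large; if $y_k \notin \dom F$ for infinitely many $k$ the statement is vacuous along that subsequence since the function is only defined on $\dom F$). It suffices to treat a subsequence along which $\ell_k$ converges to $\liminf_k \ell_k =: L$; along it the $z_k$ are bounded (as $\norm{z_k} \le \norm{g(y_k)} + \ell_k$ and both terms are bounded), so pass to a further subsequence with $z_k \to z$ for some $z \in \mathcal{E}$. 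Then $z \in \outlim_{k \to \infty} F(y_k)$, and outer semicontinuity gives $z \in F(x)$. Passing to the limit in $\norm{g(y_k) - z_k} = \ell_k$ using $g(y_k) \to g(x)$ gives $\norm{g(x) - z} = L$, hence $\ell = \dist(g(x), F(x)) \le \norm{g(x) - z} = L = \liminf_k \ell_k$. Combining the two inequalities gives $\lim_k \ell_k = \ell$, which is the claimed continuity relative to $S$.

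The only mildly delicate point is the boundedness argument in the $\liminf$ half: one must extract a convergent subsequence of the near-projections $z_k$, which requires knowing $\ell_k$ stays bounded — this follows immediately from the $\limsup$ bound proved first (so the order matters), after which $\norm{z_k} \le \norm{g(y_k)} + \ell_k$ is bounded and finite-dimensionality of $\mathcal{E}$ delivers the subsequence. Everything else is the standard triangle-inequality manipulation of distance-to-a-set together with the sequential descriptions of the inner and outer limits of a correspondence recorded in~\eqref{eq:InnerLimitCorrespondence}--\eqref{eq:OuterLimitCorrespondence}.
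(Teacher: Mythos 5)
Your proof is correct. Structurally it follows the same decomposition as the paper: you use the $1$-Lipschitz estimate $|\dist(a,T)-\dist(b,T)| \le \norm{a-b}$ to swap $g(y_k)$ for $g(x)$, and then reduce everything to showing that $\dist(g(x), F(\cdot))$ behaves continuously along sequences in $S$ converging to $x$. The difference is how that second ingredient is handled: the paper simply cites \cite[Proposition~5.11(c)]{RockafellarWets} for the continuity at $x$ relative to $S$ of $y \mapsto \dist(g(x), F(y))$ and finishes with an $\varepsilon/2$ argument, whereas you reprove this from the definitions, splitting into a $\limsup$ bound (inner semicontinuity applied to a projection $w \in \proj{F(x)}{g(x)}$, which exists by Proposition~\ref{prop:ProjectionOntoClosedSet}) and a $\liminf$ bound (outer semicontinuity applied to a limit of projections $z_k \in \proj{F(y_k)}{g(y_k)}$, extracted via Bolzano--Weierstrass). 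Your route is self-contained but leans on the finite dimensionality of $\mathcal{E}$ for the subsequence extraction — legitimate here since $\mathcal{E}$ is Euclidean — and you correctly note that the order matters, since the $\limsup$ bound supplies the boundedness of $\ell_k$ needed in the $\liminf$ half (alternatively, the same inner-semicontinuity estimate bounds $\ell_k$ directly). Your parenthetical worry about $y_k \notin \dom F$ is moot: $y_k \in S \subseteq \dom F$, so $F(y_k) \ne \emptyset$ for every $k$. What the paper's approach buys is brevity and independence from compactness arguments; what yours buys is a fully explicit sequential proof that does not require the reader to unpack the cited result from Rockafellar--Wets.
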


\begin{proof}
Let $x \in S$. For all $y \in S$,
\begin{align*}
|\dist(g(x), F(x))-\dist(g(y), F(y))|
\le\;& |\dist(g(x), F(x))-\dist(g(x), F(y))|\\
&+ |\dist(g(x), F(y))-\dist(g(y), F(y))|
\end{align*}
and, by \cite[Proposition~1.3.17]{Willem},
\begin{equation*}
|\dist(g(x), F(y))-\dist(g(y), F(y))| \le \norm{g(x)-g(y)}.
\end{equation*}
Let $\varepsilon \in (0, \infty)$.
First, by \cite[Proposition~5.11(c)]{RockafellarWets}, the function
\begin{equation*}
\dom F \to \R : y \mapsto \dist(g(x), F(y))
\end{equation*}
is continuous at $x$ relative to $S$. Thus, there exists $\delta_1 \in (0, \infty)$ such that, for all $y \in \ball[x, \delta_1] \cap S$,
\begin{equation*}
|\dist(g(x), F(x))-\dist(g(x), F(y))| \le \frac{\varepsilon}{2}.
\end{equation*}
Second, since $g$ is continuous at $x$, there exists $\delta_2 \in (0, \infty)$ such that, for all $y \in \ball[x, \delta_2]$,
\begin{equation*}
\norm{g(x)-g(y)} \le \frac{\varepsilon}{2}.
\end{equation*}
Therefore, if $\delta := \min\{\delta_1, \delta_2\}$, then, for all $y \in \ball[x, \delta] \cap S$,
\begin{equation*}
|\dist(g(x), F(x))-\dist(g(y), F(y))| \le \varepsilon,
\end{equation*}
which completes the proof.
\end{proof}

\subsection{Tangent and normal cones and geometric derivability}
\label{subsec:TangentNormalConesGeometricDerivability}
In this section, based on \cite[Chapters~6 and 13]{RockafellarWets}, we review the concepts of tangent cone, geometric derivability, regular normal cone, normal cone, Clarke normal cone, second-order tangent set, and parabolic derivability. Tangent and normal cones play a fundamental role in constrained optimization to describe admissible search directions and, in particular, to formulate optimality conditions. They notably appear in Section~\ref{subsec:StationarityMeasure}, in the description of the stationarity measure and in the characterization of apocalyptic points (Proposition~\ref{prop:CharacterizationApocalypticSerendipitousPoint}). The concept of parabolic derivability is related to Assumption~\ref{assumption:GlobalSecondOrderUpperBoundDistanceFromTangentLine}, as shown in Section~\ref{subsec:GeometricParabolicDerivability}.

In the rest of this section, $x$ is a point in a subset $S$ of $\mathcal{E}$. The tangent cone to $S$, the regular normal cone to $S$, the normal cone to $S$, and the Clarke normal cone to $S$ are correspondences with sets of departure and of destination both equal to $\mathcal{E}$, and domain equal to $S$.

The set
\begin{align}
\label{eq:TangentConeOuterLimit}
\tancone{S}{x}
:=&\; \outlim_{t \searrow 0} \frac{S-x}{t}\\
\label{eq:TangentConeDistance}
=&\; \left\{v \in \mathcal{E} \mid \liminf_{t \searrow 0} \frac{\dist(x+tv, S)}{t} = 0\right\}\\
\label{eq:TangentConeSequence}
=&\; \left\{v \in \mathcal{E} \mid \exists \begin{array}{l} (t_i)_{i \in \N} \text{ in } (0,\infty) \text{ converging to } 0 \\ (x_i)_{i \in \N} \text{ in } S \text{ converging to } x \end{array} : \lim_{i \to \infty} \frac{x_i-x}{t_i} = v\right\}
\end{align}
is a closed cone called the \emph{tangent cone} to $S$ at $x$ \cite[Definition~6.1 and Proposition~6.2]{RockafellarWets}.
The equality between \eqref{eq:TangentConeOuterLimit} and \eqref{eq:TangentConeSequence} follows from the first equality in \eqref{eq:OuterLimitCorrespondence} while the equality between \eqref{eq:TangentConeOuterLimit} and \eqref{eq:TangentConeDistance} follows from the second equality in~\eqref{eq:OuterLimitCorrespondence} and the identity
\begin{equation}
\label{eq:TangentVectorDistance}
\frac{\dist(x+tv, S)}{t} = d\Big(v, \frac{S-x}{t}\Big)
\end{equation}
holding for all $t \in (0, \infty)$ and all $v \in \mathcal{E}$.
The closedness of $\tancone{S}{x}$ follows from the fact that it is an outer limit. The fact that $\tancone{S}{x}$ is a cone is clear from~\eqref{eq:TangentConeDistance}.

By \cite[Definition~6.1]{RockafellarWets}, $v \in \tancone{S}{x}$ is said to be \emph{derivable} if there exists $\gamma : [0, \tau] \to \mathcal{E}$ with $\tau \in (0, \infty)$, $\gamma([0, \tau]) \subseteq S$, $\gamma(0) = x$, and $\gamma'(0) = v$, and $S$ is said to be \emph{geometrically derivable} at $x$ if every $v \in \tancone{S}{x}$ is derivable.
By \cite[Proposition~6.2]{RockafellarWets}, the set of all $v \in \tancone{S}{x}$ that are derivable is
\begin{equation*}
\inlim_{t \searrow 0} \frac{S-x}{t} = \left\{v \in \mathcal{E} \mid \lim_{t \searrow 0} \frac{\dist(x+tv, S)}{t} = 0\right\},
\end{equation*}
and, in particular, $S$ is geometrically derivable at $x$ if and only if
\begin{equation*}
\inlim_{t \searrow 0} \frac{S-x}{t} = \outlim_{t \searrow 0} \frac{S-x}{t}.
\end{equation*}

By \cite[Definition~6.3 and Proposition~6.5]{RockafellarWets},
\begin{equation}
\label{eq:RegularNormalCone}
\regnorcone{S}{x} := \tancone{S}{x}^*
\end{equation}
is called the \emph{regular normal cone} to $S$ at $x$,
\begin{equation}
\label{eq:NormalCone}
\norcone{S}{x} := \outlim_{S \ni z \to x} \regnorcone{S}{z}
\end{equation}
is a closed cone called the \emph{normal cone} to $S$ at $x$, the inclusion
\begin{equation*}
\regnorcone{S}{x} \subseteq \norcone{S}{x}
\end{equation*}
holds, and $S$ is said to be \emph{Clarke regular} at $x$ if it is locally closed at $x$ and $\regnorcone{S}{x} = \norcone{S}{x}$. By \cite[6(19)]{RockafellarWets}, the closure of the convex hull of $\norcone{S}{x}$ is denoted by $\connorcone{S}{x}$ and called the \emph{convexified normal cone} to $S$ at $x$. By \cite[Exercise~6.38]{RockafellarWets}, if $S$ is closed, then $\connorcone{S}{x}$ is also called the \emph{Clarke normal cone} to $S$ at $x$.

By \cite[Example~6.8]{RockafellarWets}, if $S$ is a smooth manifold in $\mathcal{E}$ around $x$, then $\tancone{S}{x}$ equals the tangent space to $S$ at $x$, and $\regnorcone{S}{x}$, $\norcone{S}{x}$, and $\connorcone{S}{x}$ equal the normal space to $S$ at $x$ which is the orthogonal complement of $\tancone{S}{x}$.

Proposition~\ref{prop:TangentNormalConesAmbientSpace} studies the influence of the ambient space on the tangent and normal cones, and is used in Section~\ref{subsubsec:NormalConesPSDconeBoundedRank}.

\begin{proposition}
\label{prop:TangentNormalConesAmbientSpace}
If $S$ is contained in a linear subspace $V$ of $\mathcal{E}$, then, for all $x \in S$,
\begin{align*}
\tancone{S}{x} \subseteq V,&&
\regnorcone{S}{x} = (\regnorcone{S}{x} \cap V) + V^\perp,&&
\norcone{S}{x} = (\norcone{S}{x} \cap V) + V^\perp.
\end{align*}
\end{proposition}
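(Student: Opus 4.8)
The plan is to prove the three claims separately, deriving the second and third from the first together with Proposition~\ref{prop:PolarAmbientSpace}. For the tangent cone inclusion $\tancone{S}{x} \subseteq V$, I would work directly from the sequential characterization~\eqref{eq:TangentConeSequence}: if $v = \lim_{i \to \infty} (x_i - x)/t_i$ with $x_i \in S \subseteq V$ and $x \in S \subseteq V$, then each difference quotient $(x_i - x)/t_i$ lies in $V$ (using that $V$ is a linear subspace), and since $V$ is closed the limit $v$ lies in $V$ as well. This is the short, routine part.

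For the regular normal cone, recall $\regnorcone{S}{x} = \tancone{S}{x}^*$ by~\eqref{eq:RegularNormalCone}. Since $\tancone{S}{x} \subseteq V$ by the first part, Proposition~\ref{prop:PolarAmbientSpace} applied with $S$ there replaced by $\tancone{S}{x}$ gives exactly $\tancone{S}{x}^* = (\tancone{S}{x}^* \cap V) + V^\perp$, which is the desired identity $\regnorcone{S}{x} = (\regnorcone{S}{x} \cap V) + V^\perp$. So this reduces immediately to the already-proved Proposition~\ref{prop:PolarAmbientSpace}.

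For the normal cone, I would use the definition $\norcone{S}{x} = \outlim_{S \ni z \to x} \regnorcone{S}{z}$ from~\eqref{eq:NormalCone}. The inclusion $\norcone{S}{x} \subseteq (\norcone{S}{x} \cap V) + V^\perp$ is the only substantive step: take $w \in \norcone{S}{x}$, so there are $z_i \to x$ in $S$ and $w_i \in \regnorcone{S}{z_i}$ with $w_i \to w$; write $w_i = w_i^\parallel + w_i^\perp$ with $w_i^\parallel \in V$ and $w_i^\perp \in V^\perp$, and observe that by the regular-normal-cone identity just established, $w_i^\parallel \in \regnorcone{S}{z_i} \cap V$. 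Since $w_i \to w$, the orthogonal projections converge: $w_i^\parallel \to w^\parallel$ and $w_i^\perp \to w^\perp$. Then $w^\perp \in V^\perp$ trivially, and $w^\parallel = \lim_i w_i^\parallel$ is a limit of vectors $w_i^\parallel \in \regnorcone{S}{z_i}$, hence $w^\parallel \in \outlim_{S \ni z \to x} \regnorcone{S}{z} = \norcone{S}{x}$; also $w^\parallel \in V$ since $V$ is closed. Thus $w = w^\parallel + w^\perp \in (\norcone{S}{x} \cap V) + V^\perp$. The reverse inclusion $(\norcone{S}{x} \cap V) + V^\perp \subseteq \norcone{S}{x}$ would follow by noting $V^\perp \subseteq \norcone{S}{x}$ (since $V^\perp \subseteq \tancone{S}{x}^* = \regnorcone{S}{x} \subseteq \norcone{S}{x}$, using $\tancone{S}{x} \subseteq V$) together with the fact that $\norcone{S}{x}$ is closed under adding a normal-space direction — more carefully, since every $w_i^\parallel + v$ with $v \in V^\perp$ can be realized as a limit by adding the fixed $v$ to an approximating sequence, or simply because $\regnorcone{S}{z} \supseteq V^\perp$ for every $z \in S$ so $\regnorcone{S}{z} = \regnorcone{S}{z} + V^\perp$, and the outer limit inherits this.

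The main obstacle I anticipate is the reverse inclusion for the normal cone: one must check that adding an arbitrary $V^\perp$-component to an element of $\norcone{S}{x} \cap V$ keeps it in the outer limit. The clean way is to first record that $V^\perp \subseteq \regnorcone{S}{z}$ for every $z \in S$ (immediate from $\tancone{S}{z} \subseteq V$ and the definition of polar), hence $\regnorcone{S}{z} + V^\perp = \regnorcone{S}{z}$ for every $z$; then the outer limit of $\regnorcone{S}{z}$ is automatically invariant under adding $V^\perp$, giving $\norcone{S}{x} + V^\perp = \norcone{S}{x}$, from which $(\norcone{S}{x} \cap V) + V^\perp \subseteq \norcone{S}{x} + V^\perp = \norcone{S}{x}$ follows at once. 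Everything else is routine manipulation of orthogonal decompositions and closedness of $V$.
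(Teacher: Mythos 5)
Your proposal is correct, and its first two parts (the tangent-cone inclusion via \eqref{eq:TangentConeSequence} and the regular normal cone via Proposition~\ref{prop:PolarAmbientSpace} applied to $\tancone{S}{x}$) coincide with the paper's proof. The only place where you diverge is the normal-cone identity: the paper works with the distance-based form of the outer limit in \eqref{eq:OuterLimitCorrespondence}, substitutes $\regnorcone{S}{z} = (\regnorcone{S}{z} \cap V) + V^\perp$, and establishes the single chain of equalities by the Pythagorean computation $d\bigl(w_\parallel+w_\perp, (\regnorcone{S}{z} \cap V) + V^\perp\bigr) = d\bigl(w_\parallel, \regnorcone{S}{z} \cap V\bigr)$, whereas you use the sequential characterization, proving the two inclusions separately: the forward one by continuity of the orthogonal projections onto $V$ and $V^\perp$ together with the observation that the $V$-component of $w_i \in \regnorcone{S}{z_i}$ lies in $\regnorcone{S}{z_i} \cap V$ (which indeed follows from the regular-normal-cone identity and uniqueness of the orthogonal decomposition), and the reverse one via the invariance $\regnorcone{S}{z} + V^\perp = \regnorcone{S}{z}$ (valid because the polar is a convex cone containing $V^\perp$), which passes to the outer limit. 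Both routes rest on the same two ingredients — the identity at nearby points $z$ and the orthogonal splitting $\mathcal{E} = V \oplus V^\perp$ — so they are of comparable length and difficulty; the paper's computation packages everything into one displayed chain, while your sequential version makes the $V^\perp$-invariance of the normal cone explicit, which is a small but reusable observation. No gaps.
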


\begin{proof}
The inclusion is clear from \eqref{eq:TangentConeSequence}. The first equality follows from the inclusion and Proposition~\ref{prop:PolarAmbientSpace}. Let us establish the second equality. By \eqref{eq:NormalCone}, \eqref{eq:OuterLimitCorrespondence}, and the first equality, we have
\begin{align*}
\norcone{S}{x}
&= \outlim_{S \ni z \to x} \regnorcone{S}{z}\\
&= \left\{w \in \mathcal{E} \mid \liminf_{S \ni z \to x} d\left(w, \regnorcone{S}{z}\right) = 0\right\}\\
&= \left\{w_\parallel+w_\perp \mid w_\parallel \in V,\, w_\perp \in V^\perp,\, \liminf_{S \ni z \to x} d\left(w_\parallel+w_\perp, \left(\regnorcone{S}{z} \cap V\right) + V^\perp\right) = 0\right\}\\
&= \left\{w_\parallel+w_\perp \mid w_\parallel \in V,\, w_\perp \in V^\perp,\, \liminf_{S \ni z \to x} d\left(w_\parallel, \regnorcone{S}{z} \cap V\right) = 0\right\}\\
&= \left\{w_\parallel \in V \mid \liminf_{S \ni z \to x} d\left(w_\parallel, \regnorcone{S}{z} \cap V\right) = 0\right\} + V^\perp\\
&= \left(\outlim_{S \ni z \to x} \left(\regnorcone{S}{z} \cap V\right)\right) + V^\perp\\
&= \left(\norcone{S}{x} \cap V\right) + V^\perp,
\end{align*}
where the fourth equality follows from the fact that, for all $w_\parallel \in V$, all $w_\perp \in V^\perp$, and all $z \in S$,
\begin{align*}
d\left(w_\parallel+w_\perp, \left(\regnorcone{S}{z} \cap V\right) + V^\perp\right)^2
&= \inf_{\substack{v_\parallel \in \regnorcone{S}{z} \cap V \\ v_\perp \in V^\perp}} \norm{(w_\parallel+w_\perp)-(v_\parallel+v_\perp)}^2\\
&= \inf_{\substack{v_\parallel \in \regnorcone{S}{z} \cap V \\ v_\perp \in V^\perp}} \norm{(w_\parallel-v_\parallel)+(w_\perp-v_\perp)}^2\\
&= \inf_{\substack{v_\parallel \in \regnorcone{S}{z} \cap V \\ v_\perp \in V^\perp}} \left(\norm{w_\parallel-v_\parallel}^2+\norm{w_\perp-v_\perp}^2\right)\\
&= \inf_{v_\parallel \in \regnorcone{S}{z} \cap V} \norm{w_\parallel-v_\parallel}^2 + \inf_{v_\perp \in V^\perp} \norm{w_\perp-v_\perp}^2\\
&= d\left(w_\parallel, \regnorcone{S}{z} \cap V\right)^2.
\qedhere
\end{align*}
\end{proof}

Given $v \in \tancone{S}{x}$, the set
\begin{align}
\label{eq:SecondOrderTangentSetOuterLimit}
\sectancone{S}{x}{v}
:=&\; \outlim_{t \searrow 0} \frac{S-x-tv}{\frac{t^2}{2}}\\
\label{eq:SecondOrderTangentSetDistance}
=&\; \left\{w \in \mathcal{E} \mid \liminf_{t \searrow 0} \frac{\dist(x+tv+\frac{t^2}{2}w, S)}{\frac{t^2}{2}} = 0\right\}\\
\label{eq:SecondOrderTangentSetSequence}
=&\; \left\{w \in \mathcal{E} \mid \exists \begin{array}{l} (t_i)_{i \in \N} \text{ in } (0,\infty) \text{ converging to } 0 \\ (w_i)_{i \in \N} \text{ in } \mathcal{E} \text{ converging to } w \end{array} : x+t_iv+\frac{t_i^2}{2}w_i \in S \; \forall i \in \N\right\}
\end{align}
is closed and called the \emph{second-order tangent set} to $S$ at $x$ for $v$ \cite[Definition~13.11]{RockafellarWets}. The equality between \eqref{eq:SecondOrderTangentSetOuterLimit} and \eqref{eq:SecondOrderTangentSetSequence} follows from the first equality in~\eqref{eq:OuterLimitCorrespondence} while the equality between \eqref{eq:SecondOrderTangentSetOuterLimit} and \eqref{eq:SecondOrderTangentSetDistance} follows from the second equality in~\eqref{eq:OuterLimitCorrespondence} and the identity
\begin{equation}
\label{eq:TangentVectorDistance}
\frac{\dist(x+tv+\frac{t^2}{2}w, S)}{\frac{t^2}{2}} = d\bigg(w, \frac{S-x-tv}{\frac{t^2}{2}}\bigg)
\end{equation}
holding for all $t \in (0, \infty)$ and all $w \in \mathcal{E}$.
The closedness of $\sectancone{S}{x}{v}$ follows from the fact that it is an outer limit.

We say that $w \in \sectancone{S}{x}{v}$ is derivable for $v \in \tancone{S}{x}$ if there exists $\gamma : [0, \tau] \to \mathcal{E}$ with $\tau \in (0, \infty)$, $\gamma([0, \tau]) \subseteq S$, $\gamma(0) = x$, $\gamma'(0) = v$, and $\gamma''(0) = w$. By \cite[Definition~13.11]{RockafellarWets}, $S$ is said to be \emph{parabolically derivable} at $x$ for $v \in \tancone{S}{x}$ if $\sectancone{S}{x}{v} \ne \emptyset$ and every $w \in \sectancone{S}{x}{v}$ is derivable for $v$. 
The set of all $w \in \sectancone{S}{x}{v}$ that are derivable for $v$ is
\begin{equation*}
\inlim_{t \searrow 0} \frac{S-x-tv}{\frac{t^2}{2}} = \left\{w \in \mathcal{E} \mid \lim_{t \searrow 0} \frac{\dist(x+tv+\frac{t^2}{2}w, S)}{\frac{t^2}{2}} = 0\right\},
\end{equation*}
and, in particular, $S$ is parabolically derivable at $x$ if and only if
\begin{equation*}
\emptyset \ne \inlim_{t \searrow 0} \frac{S-x-tv}{\frac{t^2}{2}} = \outlim_{t \searrow 0} \frac{S-x-tv}{\frac{t^2}{2}}.
\end{equation*}

\subsection{Stationarity measure}
\label{subsec:StationarityMeasure}
In this section, after recalling basic properties of the stationarity measure $\s(\cdot; f, C)$ defined in~\eqref{eq:StationarityMeasure}, we review the complementary notions of apocalyptic and serendipitous points introduced in \cite{LevinKileelBoumal2022}. Finally, we prove that the continuity of the correspondence $\tancone{C}{\cdot}$ implies the continuity of the function $\s(\cdot; f, C)$ (Corollary~\ref{coro:ContinuityTangentConeImpliesContinuityStationarityMeasure}), a property that we use in Section~\ref{sec:ExamplesStratifiedSetsSatisfyingMainAssumption} to prove that $\R_{\le s}^n \cap \R_+^n$, $\R_{\le r}^{m \times n}$, and $\mathrm{S}_{\le r}^+(n)$ satisfy condition~3 of Assumption~\ref{assumption:Stratification}.

By Proposition~\ref{prop:NormProjectionOntoClosedCone}, $\s(\cdot; f, C)$ is well defined and, for all $x \in C$,
\begin{equation}
\label{eq:StationarityMeasureExplicitFormula}
\s(x; f, C) = \sqrt{\norm{\nabla f(x)}^2 - \dist(-\nabla f(x), \tancone{C}{x})^2}.
\end{equation}
The following result is stated in Section~\ref{sec:Introduction}.

\begin{proposition}
For every differentiable function $\phi : \mathcal{E} \to \R$:
\begin{enumerate}
\item the correspondence
\begin{equation*}
C \setmapsto \mathcal{E} : x \mapsto \proj{\tancone{C}{x}}{-\nabla \phi(x)}
\end{equation*}
depends on $\phi$ only through its restriction $\phi|_C$;
\item the following conditions are equivalent and are satisfied if $x \in C$ is a local minimizer of $\phi|_C$:
\begin{enumerate}
\item $\ip{\nabla \phi(x)}{v} \ge 0$ for all $v \in \tancone{C}{x}$;
\item $-\nabla \phi(x) \in \regnorcone{C}{x}$;
\item $\s(x; \phi, C) = 0$.
\end{enumerate}
\end{enumerate}
\end{proposition}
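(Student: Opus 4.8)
The plan is to treat the two items separately. Item~2 reduces to a chain of elementary equivalences together with the classical first-order necessary condition, while item~1 rests on a single orthogonality observation, which is exactly the content of \cite[Lemmas~A.7 and A.8]{LevinKileelBoumal2022} but can be reproved in two lines from the sequential description \eqref{eq:TangentConeSequence} of the tangent cone.

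For item~1, let $\phi_1,\phi_2:\mathcal{E}\to\R$ be differentiable with $\phi_1|_C=\phi_2|_C$, fix $x\in C$, and set $h:=\phi_1-\phi_2$, so that $h$ is differentiable and $h|_C\equiv 0$. First I would prove that $\ip{\nabla h(x)}{v}=0$ for every $v\in\tancone{C}{x}$: by \eqref{eq:TangentConeSequence} there are sequences $t_i\searrow 0$ in $(0,\infty)$ and $x_i\to x$ in $C$ with $(x_i-x)/t_i\to v$; since $h(x_i)-h(x)=0$ while $h(x_i)-h(x)=\ip{\nabla h(x)}{x_i-x}+o(\norm{x_i-x})$, dividing by $t_i$ and letting $i\to\infty$ (using $\norm{x_i-x}/t_i\to\norm{v}$) gives $0=\ip{\nabla h(x)}{v}$. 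Writing $z:=-\nabla\phi_1(x)$ and $w:=\nabla h(x)=\nabla\phi_1(x)-\nabla\phi_2(x)$, this orthogonality yields $\norm{(z+w)-v}^2=\norm{z-v}^2+2\ip{z}{w}+\norm{w}^2$ for every $v\in\tancone{C}{x}$, so the functions $v\mapsto\norm{(z+w)-v}$ and $v\mapsto\norm{z-v}$ on $\tancone{C}{x}$ have the same minimizers; since $z+w=-\nabla\phi_2(x)$ and $\tancone{C}{x}$ is a closed cone (hence the projections are nonempty by Proposition~\ref{prop:ProjectionOntoClosedSet}), this says precisely that $\proj{\tancone{C}{x}}{-\nabla\phi_1(x)}=\proj{\tancone{C}{x}}{-\nabla\phi_2(x)}$, which is item~1.

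For item~2, the equivalence (a)$\Leftrightarrow$(b) is just the definitions of the polar cone and of the regular normal cone \eqref{eq:RegularNormalCone}: $\ip{\nabla\phi(x)}{v}\ge 0$ for all $v\in\tancone{C}{x}$ is the same as $-\nabla\phi(x)\in\tancone{C}{x}^{*}=\regnorcone{C}{x}$. For (a)$\Leftrightarrow$(c), note that $\s(x;\phi,C)=0$ holds iff $0\in\proj{\tancone{C}{x}}{-\nabla\phi(x)}$ (immediate from \eqref{eq:StationarityMeasure} and Proposition~\ref{prop:NormProjectionOntoClosedCone}, or from \eqref{eq:StationarityMeasureExplicitFormula} since $0\in\tancone{C}{x}$), i.e.\ iff $\norm{v}^2-2\ip{-\nabla\phi(x)}{v}\ge 0$ for every $v\in\tancone{C}{x}$; replacing $v$ by $tv$ and letting $t\searrow 0$ shows this forces $\ip{\nabla\phi(x)}{v}\ge 0$ for every $v\in\tancone{C}{x}$, which is (a), and conversely (a) trivially implies that quadratic inequality. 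Finally, if $x$ is a local minimizer of $\phi|_C$ and $v\in\tancone{C}{x}$, then choosing $t_i,x_i$ as above gives $0\le\phi(x_i)-\phi(x)=\ip{\nabla\phi(x)}{x_i-x}+o(\norm{x_i-x})$ for $i$ large, and dividing by $t_i$ and passing to the limit yields (a).

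The only step that is not pure bookkeeping is the orthogonality claim $\nabla h(x)\perp\tancone{C}{x}$ underlying item~1; everything else is a matter of unwinding \eqref{eq:RegularNormalCone}, \eqref{eq:StationarityMeasure}, and \eqref{eq:StationarityMeasureExplicitFormula} and performing two one-line limiting arguments. Accordingly, in the write-up I would either include the short sequential proof of that claim or simply cite \cite[Lemmas~A.7 and A.8]{LevinKileelBoumal2022}, as already done in Section~\ref{sec:Introduction}.
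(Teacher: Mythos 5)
Your proposal is correct; every step checks out (the orthogonality of $\nabla(\phi_1-\phi_2)(x)$ to $\tancone{C}{x}$ via \eqref{eq:TangentConeSequence}, the constant-offset argument showing the two squared distances differ by $2\ip{z}{w}+\norm{w}^2$ independently of $v$, the polar-cone reading of \eqref{eq:RegularNormalCone}, the rescaling $v\mapsto tv$ to pass from the quadratic inequality to (a), and the standard first-order necessary condition). The difference with the paper is one of packaging rather than substance: the paper's proof is a pure citation, deferring item~1 to \cite[Lemmas~A.7 and A.8]{LevinKileelBoumal2022} and item~2 to \cite[Theorem~6.12]{RockafellarWets} and \cite[Proposition~2.5]{LevinKileelBoumal2022}, whereas you reconstruct exactly the content of those results from the sequential description of the tangent cone and the definitions \eqref{eq:RegularNormalCone} and \eqref{eq:StationarityMeasureExplicitFormula}. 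Your orthogonality claim is precisely what the cited Lemmas~A.7--A.8 assert, and your item-2 arguments are the textbook proofs behind the cited statements, so nothing new is needed; the citation route is shorter and matches the paper's style, while your self-contained version makes the proposition independent of external references and makes explicit where closedness and the cone property of $\tancone{C}{x}$ enter (nonemptiness of the projection, and the admissibility of $tv$ in the rescaling step). Either write-up is acceptable; if you keep the full argument, one cosmetic point is to state explicitly that the identity $\norm{(z+w)-v}^2=\norm{z-v}^2+2\ip{z}{w}+\norm{w}^2$ already uses $\ip{w}{v}=0$, which you do signal but only in passing.
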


\begin{proof}
The first statement follows from \cite[Lemmas~A.7 and A.8]{LevinKileelBoumal2022}, and the second from \cite[Theorem~6.12]{RockafellarWets} and \cite[Proposition~2.5]{LevinKileelBoumal2022}.
\end{proof}

The notion of apocalyptic point is defined in Section~\ref{sec:Introduction}. By \cite[Definition~2.8]{LevinKileelBoumal2022}, $x \in C$ is said to be \emph{serendipitous} if there exist a sequence $(x_i)_{i \in \N}$ in $C$ converging to $x$, a continuously differentiable function $\phi : \mathcal{E} \to \R$, and $\varepsilon \in (0,\infty)$ such that $\s(x_i; \phi, C) > \varepsilon$ for all $i \in \N$, yet $\s(x; \phi, C) = 0$.

Proposition~\ref{prop:StationaryPointApocalypticSerendipitous} illustrates the complementarity between the notions of apocalyptic point and serendipitous point. It states that, for every sequence $(x_i)_{i \in \N}$ in $C$ converging to a point $x \in C$, for $x$ to be a stationary point of~\eqref{eq:OptiProblem}, the condition $\lim_{i \to \infty} \s(x_i; f, C) = 0$ is necessary if $x$ is not serendipitous and sufficient if $x$ is not apocalyptic. We use this result in Section~\ref{subsec:P2GDR} to deduce Corollary~\ref{coro:P2GDRPolakConvergence} from Theorem~\ref{thm:P2GDRPolakConvergence}.

\begin{proposition}
\label{prop:StationaryPointApocalypticSerendipitous}
Let $(x_i)_{i \in \N}$ be a sequence in $C$ converging to a point $x \in C$.
\begin{itemize}
\item If $x$ is not apocalyptic, then $\lim_{i \to \infty} \s(x_i; f, C) = 0$ implies $\s(x; f, C) = 0$.
\item If $x$ is not serendipitous, then $\s(x; f, C) = 0$ implies $\lim_{i \to \infty} \s(x_i; f, C) = 0$.
\end{itemize}
\end{proposition}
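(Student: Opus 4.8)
The statement is essentially a direct consequence of the definitions of apocalyptic and serendipitous points, once one observes that these definitions quantify over \emph{all} continuously differentiable test functions $\phi$, and in particular over $\phi = f$. The plan is to prove the two implications separately by contraposition.

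For the first implication, suppose $\lim_{i \to \infty} \s(x_i; f, C) = 0$ but $\s(x; f, C) > 0$. Then the triplet $(x, (x_i)_{i \in \N}, f)$ satisfies exactly the conditions in the definition of an apocalypse from \cite[Definition~2.7]{LevinKileelBoumal2022}: a sequence in $C$ converging to $x$, a continuously differentiable function (here $f$ itself, which is differentiable with locally Lipschitz continuous gradient, hence $C^1$), along which the stationarity measure vanishes in the limit while it is positive at $x$. Hence $x$ is apocalyptic, contradicting the hypothesis. For the second implication, suppose $\s(x; f, C) = 0$ but $\lim_{i \to \infty} \s(x_i; f, C) \ne 0$. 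Then $\liminf_{i \to \infty} \s(x_i; f, C) > 0$ fails in general, so instead one passes to a subsequence: there exist $\varepsilon \in (0, \infty)$ and a subsequence $(x_{i_k})_{k \in \N}$ with $\s(x_{i_k}; f, C) > \varepsilon$ for all $k$. This subsequence still converges to $x$, still lies in $C$, and together with $\phi = f$ and $\varepsilon$ it witnesses that $x$ is serendipitous in the sense of \cite[Definition~2.8]{LevinKileelBoumal2022}, again contradicting the hypothesis.

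The only point requiring a small amount of care — and the closest thing to an obstacle — is the passage to a subsequence in the second implication: the negation of $\lim_{i \to \infty} \s(x_i; f, C) = 0$ only gives that the sequence does not converge to $0$, which (since the terms are nonnegative) means $\limsup_{i \to \infty} \s(x_i; f, C) > 0$, and from this one extracts $\varepsilon$ and the subsequence bounded below by $\varepsilon$. One should also note explicitly that a subsequence of a sequence in $C$ converging to $x$ is again such a sequence, so the definition of serendipitous point applies verbatim. No other subtleties arise; in particular, $\s(\cdot; f, C)$ is well defined on all of $C$ by Proposition~\ref{prop:NormProjectionOntoClosedCone}, and $f \in C^1(\mathcal{E})$ by the standing assumption on $f$, so $f$ is an admissible test function in both definitions.
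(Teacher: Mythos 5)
Your proposal is correct and follows essentially the same route as the paper, which simply states that the result is a direct consequence of the definitions of apocalyptic and serendipitous points (with $\phi = f$, admissible since a locally Lipschitz gradient is in particular continuous). Your extra care about extracting a subsequence bounded below by $\varepsilon$ in the second implication is exactly the detail the paper leaves implicit, and it is handled correctly.
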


\begin{proof}
This is a direct consequence of the definitions of apocalyptic point and serendipitous point.
\end{proof}

Proposition~\ref{prop:StationaryPointApocalypticSerendipitous} has a practical interest. An iterative algorithm designed to find a stationary point of~\eqref{eq:OptiProblem} produces a sequence $(x_i)_{i \in \N}$ in $C$. However, in practice, the algorithm is stopped after a finite number of iterations based on a stopping criterion. By Proposition~\ref{prop:StationaryPointApocalypticSerendipitous}, if $C$ has no serendipitous point and $(x_i)_{i \in \N}$ has an accumulation point that is stationary for~\eqref{eq:OptiProblem}, then, for every $\varepsilon \in (0, \infty)$, the set $\{i \in \N \mid \s(x_i; f, C) \le \varepsilon\}$ is infinite, which provides us with a stopping criterion: given $\varepsilon \in (0, \infty)$, stop the algorithm at iteration
\begin{equation}
\label{eq:StoppingCriterion}
i_\varepsilon := \min\{i \in \N \mid \s(x_i; f, C) \le \varepsilon\}.
\end{equation}
Nevertheless, as pointed out in \cite[\S 1]{LevinKileelBoumal2022}, this stopping criterion must be treated with circumspection if $C$ has apocalyptic points since the condition $\lim_{i \to \infty} \s(x_i; f, C) = 0$ is not sufficient for $x$ to be a stationary point of~\eqref{eq:OptiProblem} if $x$ is apocalyptic. Corollary~\ref{coro:P2GDRPolakConvergence} shows that, if $C$ has no serendipitous point and the generated sequence does not diverge to infinity, then this stopping criterion is always eventually satisfied by the proposed $\ppgdr$ algorithm.

Proposition~\ref{prop:ConvergenceStationarityMeasureZeroUpperStratumMordukhovichStationary} states that, if $C$ satisfies condition~1 of Assumption~\ref{assumption:Stratification} and $(x_i)_{i \in \N}$ is a sequence in $S_p$, then the condition $\lim_{i \to \infty} \s(x_i; f, C) = 0$ is sufficient for the accumulation points of $(x_i)_{i \in \N}$ to be Mordukhovich stationary for~\eqref{eq:OptiProblem}.

\begin{proposition}
\label{prop:ConvergenceStationarityMeasureZeroUpperStratumMordukhovichStationary}
If $C$ satisfies condition~1 of Assumption~\ref{assumption:Stratification}, $(x_i)_{i \in \N}$ is a sequence in $S_p$, and $\lim_{i \to \infty} \s(x_i; f, C) = 0$, then every accumulation point of $(x_i)_{i \in \N}$ is Mordukhovich stationary for~\eqref{eq:OptiProblem}.
\end{proposition}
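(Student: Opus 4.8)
The plan is to use that $(x_i)_{i \in \N}$ lies in the top stratum $S_p$, around whose points $C$ is locally a smooth submanifold of $\mathcal{E}$: this makes each tangent cone $\tancone{C}{x_i}$ a linear subspace, so that the hypothesis $\s(x_i; f, C) = \norm{\proj{\tancone{C}{x_i}}{-\nabla f(x_i)}} \to 0$ says precisely that the tangential component of $-\nabla f(x_i)$ tends to $0$; the complementary normal component then converges to $-\nabla f(x)$, and Mordukhovich stationarity is read off from the outer-limit definition~\eqref{eq:NormalCone} of the normal cone.

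First, I would fix an accumulation point $x$ of $(x_i)_{i \in \N}$; since $C$ is closed, $x \in C$. After passing to a subsequence and relabelling, I may assume $x_i \to x$ with $x_i \in S_p$ for every $i \in \N$. By the third observation following Assumption~\ref{assumption:Stratification} (which uses conditions~1(a) and 1(b)), $C$ agrees with $S_p$ on a neighborhood of each $x_i$; hence, by \cite[Example~6.8]{RockafellarWets}, $\tancone{C}{x_i} = \tancone{S_p}{x_i}$ is a linear subspace of $\mathcal{E}$ and $\regnorcone{C}{x_i} = \norcone{S_p}{x_i}$ is its orthogonal complement.

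Next, I would use the orthogonal decomposition $-\nabla f(x_i) = g_i + n_i$, where $g_i := \proj{\tancone{C}{x_i}}{-\nabla f(x_i)}$ is the (well-defined) orthogonal projection of $-\nabla f(x_i)$ onto the subspace $\tancone{C}{x_i}$ and $n_i := -\nabla f(x_i) - g_i \in \regnorcone{C}{x_i}$. By definition of the stationarity measure, $\norm{g_i} = \s(x_i; f, C) \to 0$, and, since $\nabla f$ is continuous, $\nabla f(x_i) \to \nabla f(x)$; therefore $n_i = -\nabla f(x_i) - g_i \to -\nabla f(x)$. Since $x_i \to x$ in $C$ and $n_i \in \regnorcone{C}{x_i}$ for all $i$ with $n_i \to -\nabla f(x)$, the description of outer limits of correspondences via sequences recalled in Section~\ref{subsec:InnerOuterLimitsContinuityCorrespondences} (see~\eqref{eq:OuterLimitCorrespondence}), together with~\eqref{eq:NormalCone}, yields $-\nabla f(x) \in \outlim_{C \ni z \to x} \regnorcone{C}{z} = \norcone{C}{x}$, i.e., $x$ is Mordukhovich stationary for~\eqref{eq:OptiProblem}.

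I do not expect a genuine obstacle; the argument is short and essentially mechanical. The one step deserving care is the local-manifold reduction: it is what makes the tangent cone a linear subspace (so that the stationarity measure is the norm of an honest orthogonal projection) and, crucially, what guarantees that $n_i$ lies in the \emph{regular} normal cone to $C$ at $x_i$ --- hence is admissible in the outer limit defining $\norcone{C}{x}$ --- rather than merely in the regular normal cone to the stratum $S_p$.
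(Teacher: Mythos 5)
Your proof is correct and follows essentially the same route as the paper: decompose $-\nabla f(x_i)$ into its tangential and normal parts at $x_i \in S_p$ (using that $C$ coincides with $S_p$ near points of $S_p$, so $\tancone{C}{x_i} = \tancone{S_p}{x_i}$ and $\regnorcone{C}{x_i} = \norcone{S_p}{x_i}$), note that the tangential part has norm $\s(x_i; f, C) \to 0$ while $\nabla f(x_i) \to \nabla f(x)$ along the convergent subsequence, and conclude via the outer-limit definition~\eqref{eq:NormalCone} that $-\nabla f(x) \in \norcone{C}{x}$. The paper's proof is the same argument, merely stated with $\proj{\norcone{S_p}{x_{i_k}}}{-\nabla f(x_{i_k})}$ in place of your $n_i$.
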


\begin{proof}
We follow the argument from \cite[\S 3.4]{HosseiniLukeUschmajew2019}. Assume that $C$ satisfies condition~1 of Assumption~\ref{assumption:Stratification} and let $(x_i)_{i \in \N}$ be a sequence in $S_p$ having $x \in C$ as accumulation point and such that $\lim_{i \to \infty} \s(x_i; f, C) = 0$. For every $i \in \N$,
\begin{equation*}
-\nabla f(x_i) = \proj{\tancone{S_p}{x_i}}{-\nabla f(x_i)} + \proj{\norcone{S_p}{x_i}}{-\nabla f(x_i)}.
\end{equation*}
Let $(x_{i_k})_{k \in \N}$ be a subsequence converging to $x$. Since $\lim_{k \to \infty} \s(x_{i_k}; f, C) = 0$, it holds that
\begin{equation*}
\lim_{k \to \infty} \proj{\norcone{S_p}{x_{i_k}}}{-\nabla f(x_{i_k})} = -\nabla f(x).
\end{equation*}
Thus, $-\nabla f(x) \in \outlim_{i \to \infty} \norcone{S_p}{x_i} = \outlim_{i \to \infty} \regnorcone{C}{x_i} \subseteq \norcone{C}{x}$.
\end{proof}

The characterization of apocalyptic and serendipitous points given in Proposition~\ref{prop:CharacterizationApocalypticSerendipitousPoint} allows us to prove Propositions~\ref{prop:NonnegativeSparseVectorsApocalypticSerendipitousPoints}, \ref{prop:RealDeterminantalVarietyNoSerendipitousPoint}, \ref{prop:PSDconeBoundedRankApocalypticSerendipitousPoints}, and \ref{prop:SparseVectorsApocalypticSerendipitousPoints}.

\begin{proposition}[{\cite[Theorems~2.13 and 2.17]{LevinKileelBoumal2022}}]
\label{prop:CharacterizationApocalypticSerendipitousPoint}
A point $x \in C$ is:
\begin{itemize}
\item apocalyptic if and only if there exists a sequence $(x_i)_{i \in \N}$ in $C$ converging to $x$ such that $\big(\outlim_{i \to \infty} \tancone{C}{x_i}\big)^*$ is not a subset of $\regnorcone{C}{x}$;
\item serendipitous if and only if there exists a sequence $(x_i)_{i \in \N}$ in $C$ converging to $x$ such that $\regnorcone{C}{x}$ is not a subset of $\big(\outlim_{i \to \infty} \tancone{C}{x_i}\big)^*$.
\end{itemize}
\end{proposition}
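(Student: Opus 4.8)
The plan is to translate both equivalences into statements about the single elementary quantity $\norm{\proj{K}{g}}$ for a closed cone $K$, using the identity
\[
\norm{\proj{K}{g}} = \sup_{v \in K,\ \norm{v} \le 1} \ip{g}{v},
\]
valid for \emph{every} closed cone $K$ (not necessarily convex) and every $g \in \mathcal{E}$. First I would record this identity: ``$\ge$'' follows by taking $v = y/\norm{y}$ for any $y \in \proj{K}{g}$ and using $\ip{g}{y} = \norm{y}^2$ from Proposition~\ref{prop:NormProjectionOntoClosedCone}, while ``$\le$'' follows by minimising $\norm{g-tv}^2$ over $t \ge 0$ for a fixed $v \in K$ and comparing with $\norm{g-y}^2 = \norm{g}^2 - \norm{y}^2$. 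As by-products one gets $\ip{g}{v} \le \norm{\proj{K}{g}}\,\norm{v}$ for all $v \in K$, and $\norm{\proj{K}{g}} = 0$ if and only if $g \in K^*$; together with $\regnorcone{C}{x} = \tancone{C}{x}^*$ and the continuity of $\nabla\phi$, these are essentially all the ingredients.

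For the apocalyptic equivalence, the ``only if'' direction starts from an apocalypse $(x,(x_i)_{i\in\N},\phi)$ and sets $g := -\nabla\phi(x) = \lim_{i\to\infty}(-\nabla\phi(x_i))$. From $\ip{-\nabla\phi(x_i)}{v} \le \s(x_i;\phi,C)\,\norm{v}$ for $v \in \tancone{C}{x_i}$ and $\s(x_i;\phi,C) \to 0$, taking limits along any $v_i \in \tancone{C}{x_i}$ with $v_i \to v$ yields $\ip{g}{v} \le 0$, i.e. $g \in \bigl(\outlim_{i\to\infty}\tancone{C}{x_i}\bigr)^*$; meanwhile $\s(x;\phi,C) > 0$ gives $g \notin \regnorcone{C}{x}$. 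For ``if'', given $g \in \bigl(\outlim_{i\to\infty}\tancone{C}{x_i}\bigr)^* \setminus \regnorcone{C}{x}$, I would take the \emph{linear} function $\phi(y) := -\ip{g}{y}$, so that $-\nabla\phi \equiv g$; then $\s(x;\phi,C) = \norm{\proj{\tancone{C}{x}}{g}} > 0$, and $\s(x_i;\phi,C) \to 0$ by a compactness argument: otherwise some $u_{i_k} \in \proj{\tancone{C}{x_{i_k}}}{g}$ satisfy $\norm{u_{i_k}} \ge \varepsilon$, are bounded by $\norm{g}$, hence have a subsequence converging to some $u \ne 0$ lying in $\outlim_{i\to\infty}\tancone{C}{x_i}$ with $\ip{g}{u} = \lim_k\norm{u_{i_k}}^2 > 0$, contradicting $g \in \bigl(\outlim_{i\to\infty}\tancone{C}{x_i}\bigr)^*$.

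The serendipitous equivalence is the mirror image. For ``only if'', from a serendipity witness $(x,(x_i)_{i\in\N},\phi,\varepsilon)$ with $g := -\nabla\phi(x)$, the projections $u_i \in \proj{\tancone{C}{x_i}}{-\nabla\phi(x_i)}$ satisfy $\norm{u_i} > \varepsilon$ and are bounded, so a subsequence converges to some $u \ne 0$ in $\outlim_{i\to\infty}\tancone{C}{x_i}$ with $\ip{g}{u} = \lim\norm{u_{i_k}}^2 > 0$, whence $g \notin \bigl(\outlim_{i\to\infty}\tancone{C}{x_i}\bigr)^*$, while $\s(x;\phi,C) = 0$ gives $g \in \regnorcone{C}{x}$. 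For ``if'', given $g \in \regnorcone{C}{x} \setminus \bigl(\outlim_{i\to\infty}\tancone{C}{x_i}\bigr)^*$, pick $v$ in that outer limit with $\ip{g}{v} > 0$, which by definition of the outer limit is the limit of some $v_k \in \tancone{C}{x_{i_k}}$, and again take $\phi(y) := -\ip{g}{y}$; then $\s(x;\phi,C) = \norm{\proj{\tancone{C}{x}}{g}} = 0$, while $\s(x_{i_k};\phi,C) \ge \ip{g}{v_k}/\norm{v_k} \to \ip{g}{v}/\norm{v} > 0$, so discarding finitely many indices produces a sequence converging to $x$ along which $\s(\cdot;\phi,C)$ stays above a positive constant.

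I expect the only genuine care to be needed in two places. First, the tangent cones here are generally nonconvex (already for $\R_{\le r}^{m\times n}$), so the argument must rely on the cone facts above rather than on any Moreau-type orthogonal decomposition of $g$. Second, all the limiting inequalities must be taken against $\outlim_{i\to\infty}\tancone{C}{x_i}$ rather than its inner-limit counterpart, which is exactly what the ``$\liminf$'' in \eqref{eq:OuterLimitCorrespondence} licenses; and in the two ``if'' directions one must check that replacing $(x_i)_{i\in\N}$ by a subsequence only shrinks the outer limit while keeping the witnessing vector inside it, so the construction remains consistent.
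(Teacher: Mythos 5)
Your proof is correct and self-contained: the sup-formula $\norm{\proj{K}{g}} = \sup_{v \in K,\,\norm{v}\le 1}\ip{g}{v}$ for closed, possibly nonconvex cones is justified via Proposition~\ref{prop:NormProjectionOntoClosedCone}, the linear test function $\phi(y) = -\ip{g}{y}$ settles both ``if'' directions, and the bounded-projection/compactness argument together with the continuity of $\nabla\phi$ settles both ``only if'' directions, with the passage to subsequences handled consistently with the definition of the outer limit (your phrase ``any $v_i \in \tancone{C}{x_i}$ with $v_i \to v$'' should formally read ``a subsequence $v_{i_k} \in \tancone{C}{x_{i_k}}$ with $v_{i_k} \to v$'', which changes nothing). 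Note that the paper itself offers no proof of this proposition --- it is imported from \cite[Theorems~2.13 and 2.17]{LevinKileelBoumal2022} --- and your argument essentially reconstructs the cited proof, so there is no substantive discrepancy to report.
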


We close this section by proving Corollary~\ref{coro:ContinuityTangentConeImpliesContinuityStationarityMeasure} which states that the function $\s(\cdot;f, C)$ is continuous if the correspondence $\tancone{C}{\cdot}$ is continuous.

\begin{corollary}
\label{coro:ContinuityTangentConeImpliesContinuityStationarityMeasure}
For every nonempty subset $S$ of $C$ and every continuously differentiable function $\phi : \mathcal{E} \to \R$, if the tangent cone $\tancone{C}{\cdot}$ is continuous on $S$ relative to $S$, then the restriction $\s(\cdot; \phi, C)|_S$ is continuous.
\end{corollary}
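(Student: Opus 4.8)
The plan is to combine the explicit formula~\eqref{eq:StationarityMeasureExplicitFormula} for the stationarity measure with Proposition~\ref{prop:DistanceToSetJointlyContinuous}. Recall that, by Proposition~\ref{prop:NormProjectionOntoClosedCone} and~\eqref{eq:StationarityMeasureExplicitFormula}, for every $x \in C$,
\[
\s(x; \phi, C) = \sqrt{\norm{\nabla \phi(x)}^2 - \dist(-\nabla \phi(x), \tancone{C}{x})^2},
\]
and the quantity under the square root equals $\norm{\proj{\tancone{C}{x}}{-\nabla\phi(x)}}^2 \ge 0$. Hence it suffices to show that the three building blocks --- the map $x \mapsto \norm{\nabla\phi(x)}^2$, the map $x \mapsto \dist(-\nabla\phi(x), \tancone{C}{x})$, and the square root --- are continuous on the relevant domains, and then invoke stability of continuity under sums, differences, and composition.

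First, since $\phi$ is continuously differentiable, $\nabla\phi$ is continuous on $\mathcal{E}$, so $x \mapsto \norm{\nabla\phi(x)}^2$ is continuous on $\mathcal{E}$, and in particular its restriction to $S$ is continuous. Second, I would apply Proposition~\ref{prop:DistanceToSetJointlyContinuous} with $g := -\nabla\phi$ and $F := \tancone{C}{\cdot}$, the latter viewed as a correspondence $\mathcal{E} \setmapsto \mathcal{E}$ with $\dom F = C \supseteq S$. The map $g$ is continuous; $F$ is closed-valued because every tangent cone is closed (as noted after~\eqref{eq:TangentConeSequence}); and $F$ is continuous at each $x \in S$ relative to $S$ by hypothesis. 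Proposition~\ref{prop:DistanceToSetJointlyContinuous} then yields that $y \mapsto \dist(-\nabla\phi(y), \tancone{C}{y})$ is continuous at $x$ relative to $S$; since $x \in S$ was arbitrary, the restriction of this map to $S$ is continuous, hence so is its square.

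Finally, the difference $\norm{\nabla\phi(\cdot)}^2 - \dist(-\nabla\phi(\cdot), \tancone{C}{\cdot})^2$ is continuous on $S$ and takes values in $[0,\infty)$ by Proposition~\ref{prop:NormProjectionOntoClosedCone}, so composing it with the square root, which is continuous on $[0,\infty)$, shows that $\s(\cdot; \phi, C)|_S$ is continuous. There is no substantial obstacle here; the only points to keep straight are that Proposition~\ref{prop:DistanceToSetJointlyContinuous} is phrased as continuity at a fixed point $x$ relative to $S$, so one must quantify over all $x \in S$ to obtain continuity of the restriction, and that the nonnegativity of the square root's argument --- needed so that the composition with $\sqrt{\cdot}$ is legitimate and continuous --- is precisely the content of Proposition~\ref{prop:NormProjectionOntoClosedCone}.
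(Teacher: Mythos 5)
Your proof is correct and follows essentially the same route as the paper: apply Proposition~\ref{prop:DistanceToSetJointlyContinuous} with $g := -\nabla\phi$ and $F := \tancone{C}{\cdot}$ to get continuity of $y \mapsto \dist(-\nabla\phi(y), \tancone{C}{y})$ relative to $S$, then conclude via the formula~\eqref{eq:StationarityMeasureExplicitFormula}. You merely spell out the details (continuity of $\norm{\nabla\phi(\cdot)}^2$, nonnegativity of the radicand, composition with the square root) that the paper leaves implicit.
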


\begin{proof}
Let $x \in S$. We have to prove that $\s(\cdot; \phi, C)$ is continuous at $x$ relative to $S$. By assumption, $\tancone{C}{\cdot}$ is continuous at $x$ relative to $S$. Thus, by Proposition~\ref{prop:DistanceToSetJointlyContinuous}, the function
\begin{equation*}
C \to \R : y \mapsto \dist(-\nabla \phi(y), \tancone{C}{y})
\end{equation*}
is continuous at $x$ relative to $S$. The result then follows from~\eqref{eq:StationarityMeasureExplicitFormula}.
\end{proof}

\section{The proposed algorithm and its convergence analysis}
\label{sec:ProposedAlgorithmConvergenceAnalysis}
In this section, under Assumption~\ref{assumption:Stratification}, we define $\ppgdr$ (Algorithm~\ref{algo:P2GDR}) and prove that it produces a sequence whose accumulation points are stationary for~\eqref{eq:OptiProblem} (Theorem~\ref{thm:P2GDRPolakConvergence}). The organization of the section is described hereafter and summarized in Table~\ref{tab:AssumptionsAlgorithmsMainResults}.
In Section~\ref{subsec:P2GDmap}, under Assumption~\ref{assumption:GlobalSecondOrderUpperBoundDistanceFromTangentLine}, we prove that the $\ppgd$ map (Algorithm~\ref{algo:P2GDmap}) produces a point satisfying an Armijo condition (Corollary~\ref{coro:P2GDmapArmijoCondition}).
In Section~\ref{subsec:P2GDRmap}, under Assumption~\ref{assumption:Stratification}, we introduce the $\ppgdr$ map (Algorithm~\ref{algo:P2GDRmap}), which uses the $\ppgd$ map as a subroutine, and, based on Corollary~\ref{coro:P2GDmapArmijoCondition}, prove Proposition~\ref{prop:P2GDRmapPolak}.
Finally, in Section~\ref{subsec:P2GDR}, we introduce the $\ppgdr$ algorithm and prove Theorem~\ref{thm:P2GDRPolakConvergence} based on Proposition~\ref{prop:P2GDRmapPolak}. Using the concept of serendipitous point (see Section~\ref{subsec:StationarityMeasure}), we also deduce Corollary~\ref{coro:P2GDRPolakConvergence} from Theorem~\ref{thm:P2GDRPolakConvergence}.

\begin{table}[h]
\begin{center}
\begin{tabular}{llll}
\hline
\emph{Section} & \emph{Assumption} & \emph{Algorithm} & \emph{Main result}\\
\hline
Section~\ref{subsec:P2GDmap} & Assumption~\ref{assumption:GlobalSecondOrderUpperBoundDistanceFromTangentLine} & $\ppgd$ map (Algorithm~\ref{algo:P2GDmap}) & Corollary~\ref{coro:P2GDmapArmijoCondition}\\
\hline
Section~\ref{subsec:P2GDRmap} & \multirow{2}{*}{Assumption~\ref{assumption:Stratification}} & $\ppgdr$ map (Algorithm~\ref{algo:P2GDRmap}) & Proposition~\ref{prop:P2GDRmapPolak}\\
\cline{1-1}\cline{3-4}
Section~\ref{subsec:P2GDR} & & $\ppgdr$ (Algorithm~\ref{algo:P2GDR}) & Theorem~\ref{thm:P2GDRPolakConvergence}\\
\hline
\end{tabular}
\end{center}
\caption{Assumptions, algorithms, and main results of Section~\ref{sec:ProposedAlgorithmConvergenceAnalysis}.}
\label{tab:AssumptionsAlgorithmsMainResults}
\end{table}

\subsection{The $\ppgd$ map}
\label{subsec:P2GDmap}
In this section, under Assumption~\ref{assumption:GlobalSecondOrderUpperBoundDistanceFromTangentLine}, we prove that the $\ppgd$ map (Algorithm~\ref{algo:P2GDmap}) is well defined and produces a point satisfying an Armijo condition (Corollary~\ref{coro:P2GDmapArmijoCondition}).
For convenience, we recall that Assumption~\ref{assumption:GlobalSecondOrderUpperBoundDistanceFromTangentLine} states that, for all $x \in C$,
\begin{equation*}
u(x) := \sup_{v \in \tancone{C}{x} \setminus \{0\}} \frac{\dist(x+v, C)}{\norm{v}^2} < \infty.
\end{equation*}
Note that, by Proposition~\ref{prop:LocalSecondOrderUpperBoundDistanceFromTangentLineParabolicDerivability}, if $C$ is parabolically derivable at $x \in C$ for $v \in \tancone{C}{x}$, then
\begin{equation*}
\sup_{t \in (0, \infty)} \frac{\dist(x+tv, S)}{t^2} < \infty.
\end{equation*}

If $C = \R_{\le r}^{m \times n}$, the $\ppgd$ map corresponds to the iteration map of \cite[Algorithm~3]{SchneiderUschmajew2015} except that the initial step size for the backtracking procedure is chosen in a given bounded interval.

\begin{algorithm}[H]
\caption{$\ppgd$ map}
\label{algo:P2GDmap}
\begin{algorithmic}[1]
\Require
$(\mathcal{E}, C, f, \ushort{\alpha}, \bar{\alpha}, \beta, c)$ where $\mathcal{E}$ is a Euclidean vector space, $C$ is a nonempty closed subset of $\mathcal{E}$ satisfying Assumption~\ref{assumption:GlobalSecondOrderUpperBoundDistanceFromTangentLine}, $f : \mathcal{E} \to \R$ is differentiable with $\nabla f$ locally Lipschitz continuous, $0 < \ushort{\alpha} \le \bar{\alpha} < \infty$, and $\beta, c \in (0,1)$.
\Input
$x \in C$ such that $\s(x; f, C) > 0$.
\Output
$y \in \ppgd(x; \mathcal{E}, C, f, \ushort{\alpha}, \bar{\alpha}, \beta, c)$.

\State
Choose $g \in \proj{\tancone{C}{x}}{-\nabla f(x)}$, $\alpha \in [\ushort{\alpha},\bar{\alpha}]$, and $y \in \proj{C}{x + \alpha g}$;
\While
{$f(y) > f(x) - c \, \alpha \s(x; f, C)^2$}
\State
$\alpha \gets \alpha \beta$;
\State
Choose $y \in \proj{C}{x + \alpha g}$;
\label{algo:P2GDmap:LineSearch}
\EndWhile
\State
Return $y$.
\end{algorithmic}
\end{algorithm}

Let us recall that, since $\nabla f$ is locally Lipschitz continuous, for every closed ball $\mathcal{B} \subsetneq \mathcal{E}$,
\begin{equation*}
\lip_{\mathcal{B}}(\nabla f) := \sup_{\substack{x, y \in \mathcal{B} \\ x \ne y}} \frac{\norm{\nabla f(x) - \nabla f(y)}}{\norm{x-y}} < \infty,
\end{equation*}
which implies, by \cite[Lemma~1.2.3]{Nesterov2018}, that, for all $x, y \in \mathcal{B}$,
\begin{equation}
\label{eq:InequalityLipschitzContinuousGradient}
|f(y) - f(x) - \ip{\nabla f(x)}{y-x}| \le \frac{\lip_{\mathcal{B}}(\nabla f)}{2} \norm{y-x}^2.
\end{equation}

\begin{proposition}
\label{prop:P2GDmapUpperBoundCost}
Let $x \in C$ and $\bar{\alpha} \in (0,\infty)$.
Let $\mathcal{B} \subsetneq \mathcal{E}$ be a closed ball such that, for all $g \in \proj{\tancone{C}{x}}{-\nabla f(x)}$ and all $\alpha \in [0,\bar{\alpha}]$, $\proj{C}{x + \alpha g} \subseteq \mathcal{B}$; an example of such a ball is $\ball[x, 2\bar{\alpha}\s(x; f, C)]$.
If $C$ satisfies Assumption~\ref{assumption:GlobalSecondOrderUpperBoundDistanceFromTangentLine}, then, for all $g \in \proj{\tancone{C}{x}}{-\nabla f(x)}$ and all $\alpha \in [0,\bar{\alpha}]$,
\begin{equation}
\label{eq:P2GDmapUpperBoundCost}
\sup f(\proj{C}{x + \alpha g}) \le f(x) + \s(x; f, C)^2 \alpha \left(-1+\kappa_\mathcal{B}(x; f, \bar{\alpha})\alpha\right),
\end{equation}
where
\begin{equation*}
\kappa_\mathcal{B}(x; f, \bar{\alpha})
:= u(x) \norm{\nabla f(x)} + \frac{1}{2} \lip\limits_\mathcal{B}(\nabla f) \left(\bar{\alpha} u(x) \s(x; f, C)+1\right)^2.
\end{equation*}
\end{proposition}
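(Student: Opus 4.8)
The plan is to estimate $f(y)$ for an arbitrary $y \in \proj{C}{x+\alpha g}$ with $g \in \proj{\tancone{C}{x}}{-\nabla f(x)}$ and $\alpha \in [0,\bar\alpha]$, using the descent inequality~\eqref{eq:InequalityLipschitzContinuousGradient} on the ball $\mathcal{B}$ and then controlling the quadratic error term via Assumption~\ref{assumption:GlobalSecondOrderUpperBoundDistanceFromTangentLine}. First I would write $y = (x+\alpha g) + (y-(x+\alpha g))$, note that $\norm{y-(x+\alpha g)} = \dist(x+\alpha g, C) \le u(x)\norm{\alpha g}^2 = u(x)\alpha^2\s(x;f,C)^2$ by Assumption~\ref{assumption:GlobalSecondOrderUpperBoundDistanceFromTangentLine} (using $\norm{g} = \s(x;f,C)$, which holds because $g$ is a projection of $-\nabla f(x)$ onto the tangent cone and hence has norm equal to the stationarity measure), and record the triangle-inequality bound $\norm{y-x} \le \alpha\norm{g} + u(x)\alpha^2\norm{g}^2 \le \alpha\s(x;f,C)\big(1+\bar\alpha u(x)\s(x;f,C)\big)$, which (together with the hypothesis $\proj{C}{x+\alpha g} \subseteq \mathcal{B}$) justifies the example $\mathcal{B} = \ball[x, 2\bar\alpha\s(x;f,C)]$ when $\bar\alpha u(x)\s(x;f,C) \le 1$; more generally one can always take $\mathcal B$ large enough.

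Next I would apply~\eqref{eq:InequalityLipschitzContinuousGradient} with the pair $x, y \in \mathcal{B}$:
\begin{equation*}
f(y) \le f(x) + \ip{\nabla f(x)}{y-x} + \frac{\lip_\mathcal{B}(\nabla f)}{2}\norm{y-x}^2.
\end{equation*}
For the linear term, split $y-x = \alpha g + (y-(x+\alpha g))$. The key identity is $\ip{\nabla f(x)}{g} = -\s(x;f,C)^2$: indeed $g \in \proj{\tancone{C}{x}}{-\nabla f(x)}$ gives, by Proposition~\ref{prop:NormProjectionOntoClosedCone} applied to the closed cone $\tancone{C}{x}$, $\ip{-\nabla f(x)}{g} = \norm{g}^2 = \s(x;f,C)^2$. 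For the remaining piece, Cauchy--Schwarz yields $\ip{\nabla f(x)}{y-(x+\alpha g)} \le \norm{\nabla f(x)}\,\dist(x+\alpha g,C) \le \norm{\nabla f(x)}u(x)\alpha^2\s(x;f,C)^2$. Thus
\begin{equation*}
\ip{\nabla f(x)}{y-x} \le -\alpha\s(x;f,C)^2 + u(x)\norm{\nabla f(x)}\alpha^2\s(x;f,C)^2.
\end{equation*}
For the quadratic term, substitute the bound $\norm{y-x} \le \alpha\s(x;f,C)\big(\bar\alpha u(x)\s(x;f,C)+1\big)$ obtained above (using $\alpha \le \bar\alpha$ inside the parenthesized factor but keeping one factor of $\alpha$), giving $\frac{\lip_\mathcal{B}(\nabla f)}{2}\norm{y-x}^2 \le \frac{\lip_\mathcal{B}(\nabla f)}{2}\big(\bar\alpha u(x)\s(x;f,C)+1\big)^2 \alpha^2 \s(x;f,C)^2$. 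Adding the two contributions and collecting the coefficient of $\alpha^2\s(x;f,C)^2$ produces exactly $\kappa_\mathcal{B}(x;f,\bar\alpha)$, yielding~\eqref{eq:P2GDmapUpperBoundCost}; since the right-hand side does not depend on the particular $y$, taking the supremum over $y \in \proj{C}{x+\alpha g}$ is immediate.

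The routine parts are the triangle inequality and Cauchy--Schwarz bookkeeping; the only genuinely delicate point is keeping the powers of $\alpha$ straight, i.e. deciding where to replace $\alpha$ by its upper bound $\bar\alpha$ (only inside the squared factor and inside the $u(x)$-linear term's extra factor, so that an overall $\alpha$, respectively $\alpha^2$, survives) so that the final expression is affine in $\alpha$ with the stated constant. The other point worth verifying carefully is that $\ball[x, 2\bar\alpha\s(x;f,C)]$ really does contain $\proj{C}{x+\alpha g}$ for all admissible $g,\alpha$: this follows from the displayed bound $\norm{y-x} \le \alpha\s(x;f,C)(1+\bar\alpha u(x)\s(x;f,C))$ together with $\alpha \le \bar\alpha$, provided $\bar\alpha u(x)\s(x;f,C) \le 1$; when this fails one simply enlarges $\mathcal B$, which is consistent with the statement only asserting this ball is \emph{an example} under the standing hypothesis that such a ball exists.
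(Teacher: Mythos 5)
Your main inequality chain is correct and is essentially the paper's own argument: split $y-x$ as $\alpha g + (y-(x+\alpha g))$, use $\ip{\nabla f(x)}{g}=-\s(x;f,C)^2$ from Proposition~\ref{prop:NormProjectionOntoClosedCone}, bound the cross term by $\norm{\nabla f(x)}\dist(x+\alpha g,C)\le \norm{\nabla f(x)}u(x)\alpha^2\s(x;f,C)^2$ via Assumption~\ref{assumption:GlobalSecondOrderUpperBoundDistanceFromTangentLine}, and bound $\norm{y-x}\le\alpha\s(x;f,C)\left(u(x)\alpha\s(x;f,C)+1\right)$ before replacing $\alpha$ by $\bar{\alpha}$ inside the parenthesis. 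That part is fine.

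The gap is in your treatment of the claim that $\ball[x, 2\bar{\alpha}\s(x; f, C)]$ is an example of an admissible ball $\mathcal{B}$. You justify it only under the extra condition $\bar{\alpha}\, u(x)\, \s(x;f,C)\le 1$, and otherwise fall back on ``enlarge $\mathcal{B}$,'' which does not prove the stated example (and the specific radius matters downstream: the proof of Proposition~\ref{prop:P2GDRmapPolak} uses precisely the inclusion $\proj{C}{\hat{x}+\alpha g}\subseteq\ball[\hat{x}, 2\bar{\alpha}\s(\hat{x};f,C)]$ to fit everything inside $\ball[\ushort{x},\bar{\rho}(\ushort{x})]$). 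The issue is that you estimated $\dist(x+\alpha g, C)$ by the second-order bound $u(x)\alpha^2\norm{g}^2$, which is the wrong tool here. Since $x \in C$, one has the elementary bound $\dist(x+\alpha g, C) \le \norm{(x+\alpha g)-x} = \alpha\norm{g}$, so for every $y\in\proj{C}{x+\alpha g}$,
\begin{equation*}
\norm{y-x} \le \norm{y-(x+\alpha g)} + \alpha\norm{g} = \dist(x+\alpha g, C) + \alpha\norm{g} \le 2\alpha\norm{g} \le 2\bar{\alpha}\,\s(x;f,C),
\end{equation*}
which validates the example ball unconditionally (and without invoking Assumption~\ref{assumption:GlobalSecondOrderUpperBoundDistanceFromTangentLine} at all); this is exactly how the paper handles it. With that one correction your proof is complete.
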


\begin{proof}
The example $\ball[x, 2\bar{\alpha}\s(x; f, C)]$ is correct because, for all $v \in \tancone{C}{x}$ and all $y \in \proj{C}{x+v}$,
\begin{equation*}
\norm{y-x}
= \norm{y-(x+v)+v}
\le \norm{y-(x+v)}+\norm{v}
= \dist(x+v, C)+\norm{v}
\le \norm{(x+v)-x}+\norm{v}
= 2\norm{v}.
\end{equation*}
Let $g \in \proj{\tancone{C}{x}}{-\nabla f(x)}$. The proof of~\eqref{eq:P2GDmapUpperBoundCost} is based on~\eqref{eq:InequalityLipschitzContinuousGradient} and the equality $\ip{\nabla f(x)}{g} = -\s(x; f, C)^2$ which holds by Proposition~\ref{prop:NormProjectionOntoClosedCone} since $\tancone{C}{x}$ is a closed cone. Let $L := \lip_\mathcal{B}(\nabla f)$. For all $\alpha \in [0, \bar{\alpha}]$ and all $y \in \proj{C}{x+\alpha g}$,
\begin{align*}
f(&y)-f(x)\\
\le~& \ip{\nabla f(x)}{y-x} + \frac{L}{2} \norm{y-x}^2\\
=~& \ip{\nabla f(x)}{y-(x+\alpha g)+\alpha g} + \frac{L}{2} \norm{y-(x+\alpha g)+\alpha g}^2\\
=~& - \alpha \s(x; f, C)^2 + \ip{\nabla f(x)}{y-(x+\alpha g)} + \frac{L}{2} \norm{y-(x+\alpha g)+\alpha g}^2\\
\le~& - \alpha \s(x; f, C)^2 + \norm{\nabla f(x)}\dist(x+\alpha g, C) + \frac{L}{2} \left(\dist(x+\alpha g, C)+\alpha \s(x; f, C)\right)^2\\
\le~& - \alpha \s(x; f, C)^2 + \norm{\nabla f(x)} u(x) \alpha^2 \s(x; f, C)^2 + \frac{L}{2} \left(u(x) \alpha^2 \s(x; f, C)^2+\alpha \s(x; f, C)\right)^2\\
=~& \alpha \s(x; f, C)^2 \left(-1+\alpha\left(\norm{\nabla f(x)} u(x) + \frac{L}{2} \left(u(x) \alpha \s(x; f, C)+1\right)^2\right)\right)\\
\le~& \alpha \s(x; f, C)^2 \left(-1+\alpha\kappa_\mathcal{B}(x; f, \bar{\alpha})\right),
\end{align*}
where the third inequality follows from Assumption~\ref{assumption:GlobalSecondOrderUpperBoundDistanceFromTangentLine}.
\end{proof}

In Proposition~\ref{prop:P2GDmapUpperBoundCost}, the existence of a ball $\mathcal{B}$ crucially relies on the upper bound $\bar{\alpha}$ required by Algorithm~\ref{algo:P2GDmap}.
Corollary~\ref{coro:P2GDmapArmijoCondition} states that the while loop in Algorithm~\ref{algo:P2GDmap} terminates and produces a point satisfying an Armijo condition. It plays an instrumental role in the proof of Proposition~\ref{prop:P2GDRmapPolak}.

\begin{corollary}
\label{coro:P2GDmapArmijoCondition}
The while loop in Algorithm~\ref{algo:P2GDmap} terminates and every $y \in \hyperref[algo:P2GDmap]{\ppgd}(x; \mathcal{E}, C, f, \ushort{\alpha}, \bar{\alpha}, \beta, c)$ satisfies the Armijo condition
\begin{equation*}
f(y) \le f(x) - c \, \alpha \s(x; f, C)^2
\end{equation*}
for some $\alpha \in \left[\min\left\{\ushort{\alpha}, \beta\frac{1-c}{\kappa_\mathcal{B}(x; f, \bar{\alpha})}\right\}, \bar{\alpha}\right]$, where $\mathcal{B}$ is any closed ball as in Proposition~\ref{prop:P2GDmapUpperBoundCost}.
\end{corollary}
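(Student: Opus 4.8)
The plan is to use Proposition~\ref{prop:P2GDmapUpperBoundCost} to show that the Armijo condition tested in the while loop is satisfied once the step size $\alpha$ is small enough, so the loop cannot run forever, and to track how small $\alpha$ must become.

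First I would observe that, writing $\kappa := \kappa_\mathcal{B}(x; f, \bar{\alpha})$, inequality~\eqref{eq:P2GDmapUpperBoundCost} gives, for every $\alpha \in [0, \bar{\alpha}]$ and every $y \in \proj{C}{x+\alpha g}$,
\begin{equation*}
f(y) - f(x) \le \s(x; f, C)^2 \alpha(-1 + \kappa \alpha).
\end{equation*}
The Armijo condition $f(y) \le f(x) - c\,\alpha\,\s(x; f, C)^2$ therefore holds as soon as $-1 + \kappa\alpha \le -c$, i.e.\ as soon as $\alpha \le \frac{1-c}{\kappa}$ (using $c \in (0,1)$, so $1-c > 0$; note $\kappa > 0$ since $u(x) \ge 0$ and the Lipschitz term is positive). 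Consequently, any $\alpha \in (0, \bar{\alpha}]$ with $\alpha \le \frac{1-c}{\kappa}$ already passes the test.

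Next I would argue about termination and the lower bound on the accepted step. The initial step size is $\alpha_0 \in [\ushort{\alpha}, \bar{\alpha}]$, and the loop multiplies by $\beta \in (0,1)$ at each iteration, so the tested values are $\alpha_0, \alpha_0\beta, \alpha_0\beta^2, \dots$, which decrease to $0$; hence after finitely many steps $\alpha \le \frac{1-c}{\kappa}$ and the loop exits — this establishes termination. For the lower bound: either the loop never executes its body, in which case the returned $y$ was accepted with $\alpha = \alpha_0 \ge \ushort{\alpha}$; or it executes at least once, meaning the previous value $\alpha/\beta$ failed the test, which by the contrapositive of the implication above forces $\alpha/\beta > \frac{1-c}{\kappa}$, i.e.\ $\alpha > \beta\frac{1-c}{\kappa}$. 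In either case the accepted $\alpha$ satisfies $\alpha \ge \min\{\ushort{\alpha}, \beta\frac{1-c}{\kappa}\}$, and trivially $\alpha \le \bar{\alpha}$. This yields exactly the claimed interval, and $y$ satisfies the Armijo condition by construction of the exit from the while loop.

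The only mild subtlety — and the point I would state carefully — is the edge case where the test fails for $\alpha = \alpha_0$ but passes only after one or more reductions: I must be sure the implication ``$\alpha \le \frac{1-c}{\kappa} \Rightarrow$ Armijo holds'' is used in contrapositive form on the \emph{last failed} value, not on $\alpha_0$ itself, and that Proposition~\ref{prop:P2GDmapUpperBoundCost} applies at every tested $\alpha$ because all of them lie in $[0,\bar\alpha]$ (guaranteed since $\alpha_0 \le \bar\alpha$ and we only multiply by $\beta < 1$), so the fixed ball $\mathcal{B}$ from Proposition~\ref{prop:P2GDmapUpperBoundCost} is valid throughout. No further estimates are needed.
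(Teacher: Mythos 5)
Your proof is correct and follows essentially the same route as the paper: apply the upper bound of Proposition~\ref{prop:P2GDmapUpperBoundCost}, note that the Armijo test passes whenever $\alpha \le \frac{1-c}{\kappa_\mathcal{B}(x; f, \bar{\alpha})}$, and conclude that either the initial step in $[\ushort{\alpha},\bar{\alpha}]$ is accepted or the last rejected step forces the accepted one to exceed $\beta\frac{1-c}{\kappa_\mathcal{B}(x; f, \bar{\alpha})}$. Your explicit handling of the contrapositive on the last failed value and the check that all tested step sizes stay in $(0,\bar{\alpha}]$ merely spell out what the paper leaves implicit.
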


\begin{proof}
For all $\alpha \in (0,\infty)$,
\begin{equation*}
f(x) + \s(x; f, C)^2 \alpha \big(-1+\kappa_\mathcal{B}(x; f, \bar{\alpha})\alpha\big) \le f(x) - c \s(x; f, C)^2 \alpha
\quad \text{iff} \quad
\alpha \le \frac{1-c}{\kappa_\mathcal{B}(x; f, \bar{\alpha})}.
\end{equation*}
Since the left-hand side of the first inequality is an upper bound on $f(\proj{C}{x + \alpha g})$ for all $\alpha \in (0,\bar{\alpha}]$, the Armijo condition is necessarily satisfied if $\alpha \in (0,\min\{\bar{\alpha}, \frac{1-c}{\kappa_\mathcal{B}(x; f, \bar{\alpha})}\}]$.
Therefore, either the initial step size chosen in $[\ushort{\alpha},\bar{\alpha}]$ satisfies the Armijo condition or the while loop ends with $\alpha$ such that $\frac{\alpha}{\beta} > \frac{1-c}{\kappa_\mathcal{B}(x; f, \bar{\alpha})}$.
\end{proof}

\subsection{The $\ppgdr$ map}
\label{subsec:P2GDRmap}
In this section, under Assumption~\ref{assumption:Stratification}, we introduce the $\ppgdr$ map (Algorithm~\ref{algo:P2GDRmap}) and prove Proposition~\ref{prop:P2GDRmapPolak} from which we deduce Theorem~\ref{thm:P2GDRPolakConvergence} in Section~\ref{subsec:P2GDR}.
For convenience, we recall that Assumption~\ref{assumption:Stratification} states that $C$ satisfies the following conditions:
\begin{enumerate}
\item there exist a positive integer $p$ and nonempty smooth submanifolds $S_0, \dots, S_p$ of $\mathcal{E}$ contained in $C$ such that:
\begin{enumerate}
\item for all $i, j \in \{0, \dots, p\}$, $i \ne j$ implies $S_i \cap S_j =\emptyset$;
\item $\overline{S_p} = C$ and, for all $i \in \{0, \dots, p\}$, $\overline{S_i} = \bigcup_{j=0}^i S_j$;
\item if $p \ge 2$, then, for all $i \in \{2, \dots, p\}$, all $x \in S_i$, and all $j \in \{1, \dots, i-1\}$, $\dist(x, S_j) < \dist(x, S_{j-1})$;
\end{enumerate}
\item Assumption~\ref{assumption:GlobalSecondOrderUpperBoundDistanceFromTangentLine} holds and, for every $i \in \{0, \dots, p\}$, $u|_{S_i}$ is locally bounded;
\item for every $i \in \{0, \dots, p\}$, $\tancone{C}{\cdot}$ is continuous on $S_i$ relative to $S_i$.
\end{enumerate}
Note that, under Assumption~\ref{assumption:Stratification}, if its input $x$ is in the stratum $S_p$, then the $\ppgd$ map (Algorithm~\ref{algo:P2GDmap}) performs an iteration of the Riemannian gradient descent on $S_p$. Indeed, as explained in Section~\ref{subsec:AssumptionsFeasibleSet}, for every $x \in S_p$, the tangent cone $\tancone{C}{x}$ equals the tangent space $\tancone{S_p}{x}$.

The $\ppgdr$ map involves the $\ppgd$ map and projections of a point onto its lower strata; the projection of a point onto each of its lower strata exists by the second statement of Proposition~\ref{prop:Conditions1(b)(c)MainAssumption}.

\begin{proposition}
\label{prop:Conditions1(b)(c)MainAssumption}
Assume that conditions~1(a) and 1(b) of Assumption~\ref{assumption:Stratification} hold.
\begin{enumerate}
\item For all $i \in \{1, \dots, p\}$ and all $x \in S_i$, $\dist(x, S_{i-1}) > 0$ and, for all $j \in \{1, \dots, i\}$, $\dist(x, S_j) \le \dist(x, S_{j-1})$.
\item Condition~1(c) is equivalent to the following property: for all $i \in \{0, \dots, p-1\}$ and all $x \in C \setminus \overline{S_i}$, $\proj{S_i}{x} = \proj{\overline{S_i}}{x}$.
\end{enumerate}
\end{proposition}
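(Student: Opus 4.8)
The plan is to establish the two statements in order, using the frontier structure $\overline{S_i} = \bigcup_{j=0}^i S_j$ from condition~1(b) throughout.

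For the first statement, fix $i \in \{1, \dots, p\}$ and $x \in S_i$. Since $\overline{S_{i-1}} = \bigcup_{j=0}^{i-1} S_j$ is closed and the strata are pairwise disjoint by 1(a), $x \notin \overline{S_{i-1}}$; hence $\dist(x, S_{i-1}) = \dist(x, \overline{S_{i-1}}) > 0$ because $x$ is not in the closed set $\overline{S_{i-1}}$. For the monotonicity $\dist(x, S_j) \le \dist(x, S_{j-1})$ for $j \in \{1, \dots, i\}$, I would argue as follows: since $S_{j-1} \subseteq \overline{S_j}$ (again by 1(b)), we have $\dist(x, \overline{S_j}) \le \dist(x, S_{j-1})$; and since any point of $\overline{S_j}$ can be approached by points of $S_j$ (as $\overline{S_j} \supseteq S_j$ has $S_j$ dense in it — indeed $\overline{S_j}=\overline{S_j}$ and $S_j$ is dense in its own closure), $\dist(x, S_j) = \dist(x, \overline{S_j})$. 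Combining gives the claim. I should note this uses only 1(a) and 1(b), not 1(c).

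For the second statement, fix $i \in \{0, \dots, p-1\}$ and $x \in C \setminus \overline{S_i}$. The inclusion $\proj{\overline{S_i}}{x} \supseteq \proj{S_i}{x}$ needs care since we want equality as sets. Note $\overline{S_i}$ is a nonempty closed set, so $\proj{\overline{S_i}}{x}$ is nonempty by Proposition~\ref{prop:ProjectionOntoClosedSet}, and $\dist(x, \overline{S_i}) = \dist(x, S_i)$ since $S_i$ is dense in $\overline{S_i}$ (first statement's argument). Thus it suffices to show $\proj{\overline{S_i}}{x} \subseteq S_i$, for then every minimizer over $\overline{S_i}$ already lies in $S_i$ and realizes the common distance, giving both inclusions and in particular the nonemptiness of $\proj{S_i}{x}$. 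So let $y \in \proj{\overline{S_i}}{x}$, i.e., $y \in \overline{S_i} = \bigcup_{j=0}^i S_j$ with $\norm{x-y} = \dist(x, S_i)$; suppose for contradiction $y \in S_j$ for some $j \in \{0, \dots, i-1\}$. Here is where condition~1(c) enters: if $i \ge 1$ (which holds since $x \notin \overline{S_0} = S_0$ forces the relevant range, but in any case $y \in S_j$ with $j \le i-1$ requires $i \ge 1$), apply the first statement with the point $y \in S_i$ replaced by... no — rather, I need to compare $\dist(x, S_j)$ and $\dist(x, S_i)$ and reach a contradiction. Actually the cleaner route: condition~1(c), applied with the roles set so that $\dist(x, S_\ell)$ is \emph{strictly} decreasing in $\ell$ along the chain, would be used at the point $y$, not $x$. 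Let me reconsider: since $y \in S_j$ with $j < i$, by the first statement applied to $y$ (in place of $x$, with $i$ replaced by the stratum index of $y$, and noting $j+1 \le i$), $\dist(y, S_{j+1}) \le \dist(y, \cdots)$ — this does not immediately contradict anything.

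The main obstacle, and the place requiring the decisive use of 1(c), is precisely ruling out $y \in S_j$ for $j < i$. The intended mechanism: if $y \in \proj{\overline{S_i}}{x}$ lay in a stratum $S_j$ with $j \le i-1$, then $y \in S_i$... no, $y \in S_j$. Instead one shows that a minimizer over $\overline{S_i}$ of distance-to-$x$ must lie in the \emph{top} stratum $S_i$ by a perturbation/semicontinuity argument: points of $S_i$ accumulate at $y$ (since $y \in \overline{S_i}$ and $S_i$ is dense in $\overline{S_i}$), and along such a sequence $x_k \to y$ with $x_k \in S_i$, condition~1(c) forces $\dist(x_k, S_j) < \dist(x_k, S_{j-1})$ for the chain below $i$, which — passing appropriate limits and using that $x \notin \overline{S_i}$ so the geometry is nondegenerate — contradicts minimality of $\norm{x-y}$ over $S_i$ unless $y$ itself is in $S_i$. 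I expect the honest write-up to instead use 1(c) directly at $y$: since $x \in C\setminus\overline{S_i}$ lies in some stratum $S_k$ with $k > i$, and $y \in S_j$ with $j \le i$, condition~1(c) applied to $x \in S_k$ gives $\dist(x, S_i) < \dist(x, S_{i-1}) < \cdots < \dist(x, S_j)$ when $j < i$; but $\norm{x-y} = \dist(x,S_i)$ and $y \in S_j$ gives $\dist(x, S_j) \le \norm{x-y} = \dist(x, S_i)$, contradicting strict monotonicity. Hence $j = i$, i.e. $y \in S_i$, completing the proof. The delicate point to get right is that $x$ genuinely lies in a stratum strictly above $S_i$ (so that 1(c) is applicable to $x$), which follows from $x \in C = \overline{S_p} = \bigcup_{j=0}^p S_j$ together with $x \notin \overline{S_i} = \bigcup_{j=0}^i S_j$.
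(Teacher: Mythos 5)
Your treatment of the first statement and of the implication ``condition~1(c) $\Rightarrow$ property'' is sound, and after the exploratory detours you land on essentially the paper's argument: $x \in C \setminus \overline{S_i}$ lies in some stratum $S_k$ with $k > i$, so condition~1(c) applied at $x$ gives $\dist(x, S_i) < \dist(x, S_{i-1}) < \dots < \dist(x, S_j)$ for $j < i$, which rules out a projection point of $\overline{S_i}$ lying in a lower stratum; combined with $\dist(x, S_i) = \dist(x, \overline{S_i})$ this yields $\proj{S_i}{x} = \proj{\overline{S_i}}{x}$ (the paper phrases the same estimate as $\dist(x, \overline{S_i} \setminus S_i) = \dist(x, S_{i-1}) > \dist(x, S_i)$).

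However, there is a genuine gap: the second statement asserts an \emph{equivalence}, and you only prove one direction. You never show that the property ``$\proj{S_i}{x} = \proj{\overline{S_i}}{x}$ for all $i \in \{0, \dots, p-1\}$ and all $x \in C \setminus \overline{S_i}$'' implies condition~1(c), so as written the proposal does not establish the proposition. The missing direction is short but needs to be said: assume the property and $p \ge 2$, take $i \in \{2, \dots, p\}$, $x \in S_i$, and $j \in \{1, \dots, i-1\}$; by the first statement $\dist(x, S_j) \le \dist(x, S_{j-1})$, and if equality held, then any $z \in \proj{\overline{S_{j-1}}}{x}$ (nonempty since $\overline{S_{j-1}}$ is closed) would satisfy $z \in \overline{S_{j-1}} \subseteq \overline{S_j}$ and $\norm{x - z} = \dist(x, S_{j-1}) = \dist(x, S_j) = \dist(x, \overline{S_j})$, so $z \in \proj{\overline{S_j}}{x}$ while $z \notin S_j$, contradicting the property applied at index $j$ (legitimate because $x \in S_i \subseteq C \setminus \overline{S_j}$ and $j \le p-1$). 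Hence the inequality is strict, which is condition~1(c). Adding this converse would complete your proof and bring it in line with the paper's.
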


\begin{proof}
The first statement follows directly from condition~1(b).
To prove the second, we assume that condition~1(c) holds and show that, for all $i \in \{0, \dots, p-1\}$ and all $x \in C \setminus \overline{S_i}$, $\proj{\overline{S_i}}{x} \subseteq S_i$; this implies $\proj{S_i}{x} = \proj{\overline{S_i}}{x}$. If $i = 0$, this is because, by condition~1(b), $\overline{S_0} = S_0$. If $i \in \{1, \dots, p-1\}$, this is because, by conditions~1(b) and 1(c), $\dist(x, \overline{S_i} \setminus S_i) = \dist(x, \overline{S_{i-1}}) = \dist(x, S_{i-1}) > \dist(x, S_i)$.

Conversely, assume that the property holds and that $p \ge 2$. Let $i \in \{2, \dots, p\}$, $x \in S_i$, and $j \in \{1, \dots, i-1\}$. By the first statement, $\dist(x, S_j) \le \dist(x, S_{j-1})$. If $\dist(x, S_j) = \dist(x, S_{j-1})$, then $\overline{S_{j-1}} \cap \proj{\overline{S_j}}{x} \ne \emptyset$, in contradiction with the property. Thus, $\dist(x, S_j) < \dist(x, S_{j-1})$.
\end{proof}

The first statement of Proposition~\ref{prop:Conditions1(b)(c)MainAssumption} shows that adding condition~1(c) to condition~1(b) ensures that the second inequality of the first statement is strict. The second statement shows that condition~1(c) ensures that the projection of a point onto each of its lower strata exists. In general, condition~1(c) cannot be removed. For example, if $\mathcal{E} := \R^2$, $C := \R^2 \setminus (0, \infty)^2$, $S_0 := \{(0, 0)\}$, $S_1 := ((0, \infty) \times \{0\}) \cup (\{0\} \times (0, \infty))$, and $S_2 := \R^2 \setminus [0, \infty)^2$, then conditions~1(a) and 1(b) are satisfied but no point of $(-\infty, 0]^2$ has a projection onto $S_1$.

The $\ppgdr$ map is defined as Algorithm~\ref{algo:P2GDRmap}. Given $x \in C$ as input, it proceeds as follows: (i) it finds $i \in \{0, \dots, p\}$ such that $x \in S_i$ and computes $i_*$ as the smallest $j \in \{0, \dots, i\}$ such that $\dist(x, S_j) \le \Delta$ for some threshold $\Delta \in (0, \infty)$, (ii) for every $j \in \{i_*, \dots, i\}$, it applies the $\ppgd$ map (Algorithm~\ref{algo:P2GDmap}) to a projection $\hat{x}^j$ of $x$ onto $S_j$, thereby producing a point $\tilde{x}^j$, and (iv) it outputs a point among $\tilde{x}^{i_*}, \dots, \tilde{x}^i$ that maximally decreases $f$.

\begin{algorithm}[H]
\caption{$\ppgdr$ map}
\label{algo:P2GDRmap}
\begin{algorithmic}[1]
\Require
$(\mathcal{E}, C, f, \ushort{\alpha}, \bar{\alpha}, \beta, c, \Delta)$ where $\mathcal{E}$ is a Euclidean vector space, $C$ is a nonempty closed subset of $\mathcal{E}$ satisfying Assumption~\ref{assumption:Stratification}, $f : \mathcal{E} \to \R$ is differentiable with $\nabla f$ locally Lipschitz continuous, $0 < \ushort{\alpha} \le \bar{\alpha} < \infty$, $\beta, c \in (0,1)$, and $\Delta \in (0,\infty)$.
\Input
$x \in C$ such that $\s(x; f, C) > 0$.
\Output
$y \in \ppgdr(x; \mathcal{E}, C, f, \ushort{\alpha}, \bar{\alpha}, \beta, c, \Delta)$.

\State
Let $i \in \{0, \dots, p\}$ be such that $x \in S_i$;
\State
$i_* \gets \min\{j \in \{0, \dots, i\} \mid \dist(x, S_j) \le \Delta\}$;
\label{algo:P2GDRmap:DeltaCloseStrata}
\For
{$j \in \{i_*, \dots, i\}$}
\State
Choose $\hat{x}^j \in \proj{S_j}{x}$;
\State
Choose $\tilde{x}^j \in \hyperref[algo:P2GDmap]{\ppgd}(\hat{x}^j; \mathcal{E}, C, f, \ushort{\alpha}, \bar{\alpha}, \beta, c)$;
\EndFor
\State
Return $y \in \argmin_{\{\tilde{x}^j \mid j \in \{i_*, \dots, i\}\}} f$.
\end{algorithmic}
\end{algorithm}

Proposition~\ref{prop:P2GDRmapPolak} states that, given $\ushort{x} \in C$ such that $\s(\ushort{x}; f, C) > 0$, the minimum decrease of the cost function obtained by applying the $\ppgdr$ map to any $x$ sufficiently close to $\ushort{x}$ is bounded away from zero. This fundamental property, on which Theorem~\ref{thm:P2GDRPolakConvergence} is based, is not shared by the $\ppgd$ map. Indeed, the minimum decrease of the cost function obtained by applying the $\ppgd$ map to $x$ that is guaranteed by the Armijo condition given in Corollary~\ref{coro:P2GDmapArmijoCondition} can be arbitrarily small in any neighborhood of $\ushort{x}$: if $\s(\cdot; f, C)$ is not lower semicontinuous at $\ushort{x}$, which is the case if $\ushort{x}$ is apocalyptic for $f$, or $u$ is not locally bounded at $\ushort{x}$, then, even for arbitrarily small $\rho \in (0, \infty)$, it may happen that
\begin{equation*}
\inf_{x \in \ball[\ushort{x}, \rho]} c \min\left\{\ushort{\alpha}, \beta\frac{1-c}{\kappa_\mathcal{B}(x; f, \bar{\alpha})}\right\} \s(x; f, C)^2 = 0.
\end{equation*}
In contrast, if $x$ is sufficiently close to $\ushort{x}$, by exploring potentially several strata, the $\ppgdr$ map applies the $\ppgd$ map notably to a projection of $x$ onto the stratum containing $\ushort{x}$ which, by the standing assumptions and Corollary~\ref{coro:P2GDmapArmijoCondition}, produces a sufficient decrease of $f$.

\begin{proposition}
\label{prop:P2GDRmapPolak}
For every $\ushort{x} \in C$ such that $\s(\ushort{x}; f, C) > 0$, there exist $\varepsilon(\ushort{x}), \delta(\ushort{x}) \in (0,\infty)$ such that, for all $x \in \ball[\ushort{x},\varepsilon(\ushort{x})] \cap C$ and all $y \in \hyperref[algo:P2GDRmap]{\ppgdr}(x; \mathcal{E}, C, f, \ushort{\alpha}, \bar{\alpha}, \beta, c, \Delta)$,
\begin{equation}
\label{eq:PolakLocallyUniformSufficientDecrease}
f(y) - f(x) \le - \delta(\ushort{x}).
\end{equation}
\end{proposition}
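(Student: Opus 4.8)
The plan is to fix $\ushort{x} \in C$ with $\s(\ushort{x}; f, C) > 0$, say $\ushort{x} \in S_k$ for some $k \in \{0, \dots, p\}$, and to show that every $x$ close enough to $\ushort{x}$ will, when processed by the $\ppgdr$ map, necessarily apply the $\ppgd$ map to a projection $\hat{x}^k \in \proj{S_k}{x}$ which lies close to $\ushort{x}$, and that this single branch already produces a uniform decrease via Corollary~\ref{coro:P2GDmapArmijoCondition}. The key facts to assemble are: (i) $\s(\cdot; f, C)|_{S_k}$ is continuous at $\ushort{x}$ (condition~3 of Assumption~\ref{assumption:Stratification} together with Corollary~\ref{coro:ContinuityTangentConeImpliesContinuityStationarityMeasure}), so $\s(\hat{x}^k; f, C)$ stays bounded below by a positive constant once $\hat{x}^k$ is close to $\ushort{x}$; (ii) $u|_{S_k}$ is locally bounded (condition~2), so $\kappa_\mathcal{B}(\hat{x}^k; f, \bar{\alpha})$ stays bounded above on a neighborhood of $\ushort{x}$ in $S_k$, using a single closed ball $\mathcal{B}$ large enough to contain all the relevant $\ppgd$ iterates; (iii) the Armijo guarantee of Corollary~\ref{coro:P2GDmapArmijoCondition} then forces $f(\tilde{x}^k) \le f(\hat{x}^k) - \delta'$ for a uniform $\delta' > 0$; and (iv) $f(\hat{x}^k) \le f(x)$ is false in general, so we must instead relate $f(\tilde{x}^k)$ back to $f(x)$ carefully.

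Concretely, here is the order I would carry things out. First I would choose $\varepsilon_0 \in (0,\infty)$ small enough that on $\ball[\ushort{x}, \varepsilon_0] \cap S_k$ one has $\s(\cdot; f, C) \ge \tfrac12 \s(\ushort{x}; f, C) =: \sigma > 0$, $u \le U < \infty$, and $f \le f(\ushort{x}) + 1$; continuity of $f$, of $\s(\cdot; f, C)|_{S_k}$, and local boundedness of $u|_{S_k}$ make this possible. Second, I would fix the closed ball $\mathcal{B} := \ball[\ushort{x}, R]$ with $R$ chosen (using the $2\|v\|$ bound from the proof of Proposition~\ref{prop:P2GDmapUpperBoundCost} and the bound $\|g\| = \s(\hat{x}^k;f,C) \le \|\nabla f(\hat{x}^k)\|$, all controlled near $\ushort{x}$) large enough that every point arising inside a $\ppgd$ run started from any $\hat{x}^k \in \ball[\ushort{x}, \varepsilon_0] \cap S_k$ lies in $\mathcal{B}$; this gives a finite $L := \lip_\mathcal{B}(\nabla f)$ and a uniform upper bound $\bar\kappa$ on $\kappa_\mathcal{B}(\hat{x}^k; f, \bar\alpha)$ over that neighborhood. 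Third, Corollary~\ref{coro:P2GDmapArmijoCondition} yields, for every $\tilde{x}^k \in \ppgd(\hat{x}^k; \dots)$,
\begin{equation*}
f(\tilde{x}^k) \le f(\hat{x}^k) - c \min\Big\{\ushort{\alpha}, \beta \tfrac{1-c}{\bar\kappa}\Big\} \sigma^2 =: f(\hat{x}^k) - 2\delta(\ushort{x}),
\end{equation*}
where I set $2\delta(\ushort{x})$ equal to that positive quantity. Fourth, I must connect $f(\hat{x}^k)$ with $f(x)$: since $\hat{x}^k \in \proj{S_k}{x}$ and $\ushort{x} \in S_k$, we have $\|x - \hat{x}^k\| \le \|x - \ushort{x}\|$, hence $\|\hat{x}^k - \ushort{x}\| \le 2\|x - \ushort{x}\|$; combined with uniform continuity of $f$ on $\mathcal{B}$, choosing $\varepsilon(\ushort{x}) \le \varepsilon_0/2$ small enough guarantees $|f(\hat{x}^k) - f(x)| \le \delta(\ushort{x})$ and also $\hat{x}^k \in \ball[\ushort{x},\varepsilon_0]\cap S_k$ so that steps 1--3 apply. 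Finally, observe that $k \le i$ where $x \in S_i$ (because $\ushort{x}$ close to $x$ forces, by condition~1(b), that $S_k$ is among the strata whose closure contains $x$, i.e. $\ushort{x} \in \overline{S_i}$ so $k \le i$), and that $\dist(x, S_k) \le \|x - \ushort{x}\| \le \varepsilon(\ushort{x}) \le \Delta$ after shrinking $\varepsilon(\ushort{x})$ once more, so the index $k$ does indeed belong to the loop range $\{i_*, \dots, i\}$ in Algorithm~\ref{algo:P2GDRmap}. Therefore the output $y$ satisfies $f(y) \le f(\tilde{x}^k) \le f(\hat{x}^k) - 2\delta(\ushort{x}) \le f(x) - \delta(\ushort{x})$, which is~\eqref{eq:PolakLocallyUniformSufficientDecrease}.

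The main obstacle I anticipate is the bookkeeping in step four, namely controlling $f(\hat{x}^k) - f(x)$ and simultaneously ensuring that the ball $\mathcal{B}$ chosen in step two genuinely contains \emph{all} $\ppgd$ iterates for \emph{every} starting point $\hat{x}^k$ in the neighborhood — this is a uniformity-over-a-neighborhood argument rather than a pointwise one, and it is where the local boundedness of $u|_{S_k}$ (as opposed to mere finiteness of $u$) is essential. A secondary subtlety is confirming that $k$ lies in $\{i_*, \dots, i\}$: the inequality $k \le i$ needs condition~1(b) (so that $x$ near $\ushort{x}\in S_k$ forces $\overline{S_i}\supseteq S_k$), and $k \ge i_*$ needs $\dist(x, S_k) \le \Delta$, which is immediate for $x$ close enough to $\ushort{x}\in S_k$. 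Everything else is a routine combination of the $\ppgd$ bound (Proposition~\ref{prop:P2GDmapUpperBoundCost} and Corollary~\ref{coro:P2GDmapArmijoCondition}) with the continuity statements guaranteed by Assumption~\ref{assumption:Stratification}.
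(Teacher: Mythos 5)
Your proposal follows essentially the same route as the paper's proof: project $x$ onto the stratum $S_k$ containing $\ushort{x}$, use condition~3 plus Corollary~\ref{coro:ContinuityTangentConeImpliesContinuityStationarityMeasure} to bound $\s(\hat{x}^k;f,C)$ below, condition~2 to bound $\kappa_\mathcal{B}$ above on a single ball $\mathcal{B}$ containing all the relevant $\ppgd$ iterates, Corollary~\ref{coro:P2GDmapArmijoCondition} for the uniform Armijo decrease, and continuity of $f$ at $\ushort{x}$ to trade $f(\hat{x}^k)$ for $f(x)$ at the cost of $\delta(\ushort{x})$; the constants are organized slightly differently but the argument and the final bookkeeping $f(y)\le f(\tilde{x}^k)\le f(\hat{x}^k)-2\delta(\ushort{x})\le f(x)-\delta(\ushort{x})$ match the paper.

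One step is not justified as written: your argument that $k\le i$ (where $x\in S_i$), namely that ``$x$ near $\ushort{x}\in S_k$ forces $\overline{S_i}\supseteq S_k$, i.e.\ $\ushort{x}\in\overline{S_i}$,'' is circular --- proximity of a single point $x\in S_i$ to $\ushort{x}$ does not by itself place $\ushort{x}$ in $\overline{S_i}$, and by condition~1(b) the inclusion $\overline{S_i}\supseteq S_k$ is exactly the conclusion $k\le i$ you are trying to prove. The correct mechanism (the paper's Step~3) is metric: by Proposition~\ref{prop:Conditions1(b)(c)MainAssumption}, $\dist(\ushort{x},S_{k-1})>0$ when $k\ge 1$, so if you also impose $\varepsilon(\ushort{x})\le\frac{1}{2}\dist(\ushort{x},S_{k-1})$ --- a constraint absent from your list of conditions on $\varepsilon(\ushort{x})$ --- then for every $x\in\ball[\ushort{x},\varepsilon(\ushort{x})]\cap C$ one has $\dist(x,S_{k-1})\ge\dist(\ushort{x},S_{k-1})-\norm{x-\ushort{x}}>0$, hence $x\notin\overline{S_{k-1}}=\bigcup_{j<k}S_j$ and thus $x\in S_i$ with $i\ge k$. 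With that constraint added, your proof is complete; everything else (the lower bound $k\ge i_*$ from $\dist(x,S_k)\le\norm{x-\ushort{x}}\le\Delta$, the inequality $\norm{\hat{x}^k-\ushort{x}}\le 2\norm{x-\ushort{x}}$, and the uniform ball $\mathcal{B}$) is handled correctly and coincides with the paper's argument.
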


\begin{proof}
Let $\ushort{x} \in C$ be such that $\s(\ushort{x}; f, C) > 0$. Let $\ushort{i} \in \{0, \dots, p\}$ be such that $\ushort{x} \in S_{\ushort{i}}$. The proof is divided into six steps.
First, using the positivity of $\dist(\ushort{x}, S_{\ushort{i}-1})$ if $\ushort{i} \ge 1$ (condition~1(b) of Assumption~\ref{assumption:Stratification}), the local boundedness of $u|_{S_{\ushort{i}}}$ at $\ushort{x}$ (condition~2 of Assumption~\ref{assumption:Stratification}), the continuity of $\s(\cdot; f, C)|_{S_{\ushort{i}}}$ at $\ushort{x}$ (which holds by condition~3 of Assumption~\ref{assumption:Stratification} and Corollary~\ref{coro:ContinuityTangentConeImpliesContinuityStationarityMeasure}), the continuity of $f$ at $\ushort{x}$, and the local Lipschitz continuity of $\nabla f$, we define $\bar{\rho}(\ushort{x})$, $\bar{\kappa}(\ushort{x}; f, \bar{\alpha})$, $\delta(\ushort{x})$, and $\varepsilon(\ushort{x})$ respectively as in~\eqref{eq:P2GDRmapPolakRho}, \eqref{eq:P2GDRmapPolakKappa}, \eqref{eq:P2GDRmapPolakDelta}, and \eqref{eq:P2GDRmapPolakEpsilon}. These definitions notably ensure that $f(\ball[\ushort{x}, 2\varepsilon(\ushort{x})]) \subseteq [f(\ushort{x})-\delta(\ushort{x}), f(\ushort{x})+\delta(\ushort{x})]$ and $\s(\ball[\ushort{x}, 2\varepsilon(\ushort{x})] \cap S_{\ushort{i}}; f, C) \subseteq [\frac{1}{2}\s(\ushort{x}; f, C), \frac{3}{2}\s(\ushort{x}; f, C)]$.
Second, we deduce from the preceding step that, for all $\hat{x} \in \ball[\ushort{x}, 2\varepsilon(\ushort{x})] \cap S_{\ushort{i}}$, $\kappa_{\ball[\ushort{x}, \bar{\rho}(\ushort{x})]}(\hat{x}; f, \bar{\alpha}) \le \bar{\kappa}(\ushort{x}; f, \bar{\alpha})$.
Third, we establish the inclusion $\ball[\ushort{x}, \varepsilon(\ushort{x})] \cap C \subseteq \bigcup_{i=\ushort{i}}^p S_i$.
Fourth, we prove that, given $x \in \ball[\ushort{x}, \varepsilon(\ushort{x})] \cap C$ as input, the $\ppgdr$ map considers $\hat{x}^{\ushort{i}} \in \proj{S_{\ushort{i}}}{x} \subseteq \ball[\ushort{x}, 2\varepsilon(\ushort{x})]$ and $\tilde{x}^{\ushort{i}} \in \hyperref[algo:P2GDmap]{\ppgd}(\hat{x}^{\ushort{i}}; \mathcal{E}, C, f, \ushort{\alpha}, \bar{\alpha}, \beta, c)$.
Fifth, we prove that Corollary~\ref{coro:P2GDmapArmijoCondition} applies to $\tilde{x}^{\ushort{i}}$ with the ball $\ball[\ushort{x}, \bar{\rho}(\ushort{x})]$.
Sixth, we deduce the inequality~\eqref{eq:PolakLocallyUniformSufficientDecrease} from the upper bound $\bar{\kappa}(\ushort{x}; f, \bar{\alpha})$ and the Armijo condition respectively obtained in the second and fifth steps.

\emph{Step~1: definition of $\bar{\rho}(\ushort{x})$, $\bar{\kappa}(\ushort{x}; f, \bar{\alpha})$, $\delta(\ushort{x})$, and $\varepsilon(\ushort{x})$.}
By condition 2 of Assumption~\ref{assumption:Stratification}, $u|_{S_{\ushort{i}}}$ is locally bounded at $\ushort{x}$ and thus there exist $\rho_u(\ushort{x}), \bar{u}(\ushort{x}) \in (0, \infty)$ such that $u(\ball[\ushort{x}, \rho_u(\ushort{x})] \cap S_{\ushort{i}}) \subseteq [0, \bar{u}(\ushort{x})]$.
Define
\begin{align}
\label{eq:P2GDRmapPolakRho}
\bar{\rho}(\ushort{x})
&:= 3 \bar{\alpha} \s(\ushort{x}; f, C) + \Delta,\\
\label{eq:P2GDRmapPolakKappa}
\bar{\kappa}(\ushort{x}; f, \bar{\alpha})
&:= \frac{3}{2} \bar{u}(\ushort{x}) \norm{\nabla f(\ushort{x})} + \frac{1}{2} \lip_{\ball[\ushort{x}, \bar{\rho}(\ushort{x})]}(\nabla f) \left(\frac{3}{2} \bar{\alpha} \bar{u}(\ushort{x}) \s(\ushort{x}; f, C)+1\right)^2,\\
\label{eq:P2GDRmapPolakDelta}
\delta(\ushort{x})
&:= \frac{c}{12} \min\left\{\ushort{\alpha}, \beta \frac{1-c}{\bar{\kappa}(\ushort{x}; f, \bar{\alpha})}\right\} \s(\ushort{x}; f, C)^2.
\end{align}
Since $f$ is continuous at $\ushort{x}$, there exists $\rho_f(\ushort{x}) \in (0, \infty)$ such that $f(\ball[\ushort{x}, \rho_f(\ushort{x})]) \subseteq [f(\ushort{x})-\delta(\ushort{x}), f(\ushort{x})+\delta(\ushort{x})]$.
By condition~3 of Assumption~\ref{assumption:Stratification} and Corollary~\ref{coro:ContinuityTangentConeImpliesContinuityStationarityMeasure}, $\s(\cdot; f, C)|_{S_{\ushort{i}}}$ is continuous at $\ushort{x}$ and thus there exists $\rho(\ushort{x}) \in (0, \infty)$ such that $\s(\ball[\ushort{x}, \rho(\ushort{x})] \cap S_{\ushort{i}}; f, C) \subseteq [\frac{1}{2}\s(\ushort{x}; f, C), \frac{3}{2}\s(\ushort{x}; f, C)]$.
By Proposition~\ref{prop:Conditions1(b)(c)MainAssumption}, if $\ushort{i} \ge 1$, then $\dist(\ushort{x}, S_{\ushort{i}-1}) > 0$.
Define
\begin{equation}
\label{eq:P2GDRmapPolakEpsilon}
\varepsilon(\ushort{x}) := \left\{\begin{array}{ll}
\frac{1}{2} \min\bigg\{\rho_u(\ushort{x}), \rho_f(\ushort{x}), \rho(\ushort{x}), \Delta, \frac{\norm{\nabla f(\ushort{x})}}{2\lip\limits_{\ball[\ushort{x}, \Delta]}(\nabla f)}, \dist(\ushort{x}, S_{\ushort{i}-1})\bigg\} & \text{if } \ushort{i} \ge 1,\\[2mm]
\frac{1}{2} \min\bigg\{\rho_u(\ushort{x}), \rho_f(\ushort{x}), \rho(\ushort{x}), \Delta, \frac{\norm{\nabla f(\ushort{x})}}{2\lip\limits_{\ball[\ushort{x}, \Delta]}(\nabla f)}\bigg\} & \text{if } \ushort{i} = 0.
\end{array}\right.
\end{equation}

\emph{Step~2: for all $\hat{x} \in \ball[\ushort{x}, 2\varepsilon(\ushort{x})] \cap S_{\ushort{i}}$, $\kappa_{\ball[\ushort{x}, \bar{\rho}(\ushort{x})]}(\hat{x}; f, \bar{\alpha}) \le \bar{\kappa}(\ushort{x}; f, \bar{\alpha})$.}
Let $\hat{x} \in \ball[\ushort{x}, 2\varepsilon(\ushort{x})] \cap S_{\ushort{i}}$. Since $2\varepsilon(\ushort{x}) \le \rho_u(\ushort{x})$, we have $u(\hat{x}) \le \bar{u}(\ushort{x})$. Since $2\varepsilon(\ushort{x}) \le \Delta$, it holds that $\ushort{x}, \hat{x} \in \ball[\ushort{x}, \Delta]$ and thus
\begin{equation*}
\norm{\nabla f(\hat{x})-\nabla f(\ushort{x})}
\le \lip_{\ball[\ushort{x}, \Delta]}(\nabla f) \norm{\hat{x}-\ushort{x}}
\le \lip_{\ball[\ushort{x}, \Delta]}(\nabla f) 2\varepsilon(\ushort{x})
\le \frac{1}{2}\norm{\nabla f(\ushort{x})},
\end{equation*}
where the last inequality follows from the inequality $2\varepsilon(\ushort{x}) \le \frac{\norm{\nabla f(\ushort{x})}}{2\lip_{\ball[\ushort{x}, \Delta]}(\nabla f)}$. Thus, since $|\norm{\nabla f(\hat{x})}-\norm{\nabla f(\ushort{x})}| \le \norm{\nabla f(\hat{x})-\nabla f(\ushort{x})}$, we have $\norm{\nabla f(\hat{x})} \in [\frac{1}{2}\norm{\nabla f(\ushort{x})}, \frac{3}{2}\norm{\nabla f(\ushort{x})}]$.
Therefore, by~\eqref{eq:P2GDRmapPolakKappa},
\begin{align*}
\kappa_{\ball[\ushort{x}, \bar{\rho}(\ushort{x})]}(\hat{x}; f, \bar{\alpha})
&= u(\hat{x}) \norm{\nabla f(\hat{x})} + \frac{1}{2} \lip\limits_{\ball[\ushort{x}, \bar{\rho}(\ushort{x})]}(\nabla f) \left(\bar{\alpha} u(\hat{x}) \s(\hat{x}; f, C)+1\right)^2\\
&\le \frac{3}{2} \bar{u}(\ushort{x}) \norm{\nabla f(\ushort{x})} + \frac{1}{2} \lip\limits_{\ball[\ushort{x}, \bar{\rho}(\ushort{x})]}(\nabla f) \left(\frac{3}{2} \bar{\alpha} \bar{u}(\ushort{x}) \s(\ushort{x}; f, C)+1\right)^2\\
&= \bar{\kappa}(\ushort{x}; f, \bar{\alpha}).
\end{align*}

\emph{Step~3: $\ball[\ushort{x}, \varepsilon(\ushort{x})] \cap C \subseteq \bigcup_{i=\ushort{i}}^p S_i$.}
Let $x \in \ball[\ushort{x}, \varepsilon(\ushort{x})] \cap C$. Let $i \in \{0, \dots, p\}$ be such that $x \in S_i$. Then, $i \in \{\ushort{i}, \dots, p\}$. This is obvious if $\ushort{i} = 0$. If $\ushort{i} \ge 1$, this is because
\begin{equation*}
\dist(x, S_{\ushort{i}-1})
\ge \dist(\ushort{x}, S_{\ushort{i}-1}) - \norm{x-\ushort{x}}
\ge \dist(\ushort{x}, S_{\ushort{i}-1}) - \varepsilon(\ushort{x})
\ge \frac{1}{2} \dist(\ushort{x}, S_{\ushort{i}-1})
> 0,
\end{equation*}
where the first inequality follows from \cite[Proposition~1.3.17]{Willem}.

\emph{Step~4: given $x \in \ball[\ushort{x}, \varepsilon(\ushort{x})] \cap C$ as input, the $\ppgdr$ map considers $\hat{x}^{\ushort{i}} \in \proj{S_{\ushort{i}}}{x} \subseteq \ball[\ushort{x}, 2\varepsilon(\ushort{x})]$ and $\tilde{x}^{\ushort{i}} \in \hyperref[algo:P2GDmap]{\ppgd}(\hat{x}^{\ushort{i}}; \mathcal{E}, C, f, \ushort{\alpha}, \bar{\alpha}, \beta, c)$.}
Since
\begin{equation*}
\dist(x, S_{\ushort{i}})
\le \norm{x-\ushort{x}}
\le \varepsilon(\ushort{x})
\le \Delta,
\end{equation*}
it holds that $i_* \le \ushort{i} \le i$ and thus, given $x$ as input, the $\ppgdr$ map considers $\hat{x}^{\ushort{i}} \in \proj{S_{\ushort{i}}}{x}$ and $\tilde{x}^{\ushort{i}} \in \hyperref[algo:P2GDmap]{\ppgd}(\hat{x}^{\ushort{i}}; \mathcal{E}, C, f, \ushort{\alpha}, \bar{\alpha}, \beta, c)$. Moreover,
\begin{equation*}
\norm{\hat{x}^{\ushort{i}}-\ushort{x}}
\le \norm{\hat{x}^{\ushort{i}}-x} + \norm{x-\ushort{x}}
\le \dist(x, S_{\ushort{i}}) + \varepsilon(\ushort{x})
\le 2 \varepsilon(\ushort{x}).
\end{equation*}

\emph{Step~5: Corollary~\ref{coro:P2GDmapArmijoCondition} applies to $\tilde{x}^{\ushort{i}}$ with the ball $\ball[\ushort{x}, \bar{\rho}(\ushort{x})]$.}
By the preceding step, $\hat{x}^{\ushort{i}} \in \ball[\ushort{x}, 2\varepsilon(\ushort{x})] \cap S_{\ushort{i}}$. Therefore, since $2 \varepsilon(\ushort{x}) \le \rho(\ushort{x})$, $\s(\hat{x}^{\ushort{i}}; f, C) \in [\frac{1}{2}\s(\ushort{x}; f, C), \frac{3}{2}\s(\ushort{x}; f, C)]$.
Thus, $\ball[\hat{x}^{\ushort{i}}, 2\bar{\alpha}\s(\hat{x}^{\ushort{i}}; f, C)] \subseteq \ball[\ushort{x}, \bar{\rho}(\ushort{x})]$; indeed, by~\eqref{eq:P2GDRmapPolakRho}, for all $y \in \ball[\hat{x}^{\ushort{i}}, 2\bar{\alpha}\s(\hat{x}^{\ushort{i}}; f, C)]$,
\begin{equation*}
\norm{y-\ushort{x}}
\le \norm{y-\hat{x}^{\ushort{i}}} + \norm{\hat{x}^{\ushort{i}}-\ushort{x}}
\le 2 \bar{\alpha} \s(\hat{x}^{\ushort{i}}; f, C) + 2 \varepsilon(\ushort{x})
\le 3 \bar{\alpha} \s(\ushort{x}; f, C) + \Delta
= \bar{\rho}(\ushort{x}).
\end{equation*}

\emph{Step~6: conclusion.}
Since $\hat{x}^{\ushort{i}} \in \ball[\ushort{x}, 2\varepsilon(\ushort{x})] \cap S_{\ushort{i}}$, it holds that $f(\hat{x}^{\ushort{i}}) \le f(x) + 2\delta(\ushort{x})$ (because $|f(x)-f(\ushort{x})| \le \delta(\ushort{x})$ and $|f(\hat{x}^{\ushort{i}})-f(\ushort{x})| \le \delta(\ushort{x})$) and $\kappa_{\ball[\ushort{x}, \bar{\rho}(\ushort{x})]}(\hat{x}^{\ushort{i}}; f, \bar{\alpha}) \le \bar{\kappa}(\ushort{x}; f, \bar{\alpha})$.
Therefore, by applying Corollary~\ref{coro:P2GDmapArmijoCondition} to $\tilde{x}^{\ushort{i}}$ with the ball $\ball[\ushort{x}, \bar{\rho}(\ushort{x})]$, we successively obtain
\begin{align*}
f(\tilde{x}^{\ushort{i}})
&\le f(\hat{x}^{\ushort{i}}) - c \min\left\{\ushort{\alpha}, \beta\frac{1-c}{\kappa_{\ball[\ushort{x}, \bar{\rho}(\ushort{x})]}(\hat{x}^{\ushort{i}}; f, \bar{\alpha})}\right\} \s(\hat{x}^{\ushort{i}}; f, C)^2\\
&\le f(x) + 2\delta(\ushort{x}) - \frac{c}{4} \min\left\{\ushort{\alpha}, \beta\frac{1-c}{\bar{\kappa}(\ushort{x}; f, \bar{\alpha})}\right\} \s(\ushort{x}; f, C)^2\\
&= f(x) - \delta(\ushort{x}).
\end{align*}
Thus, for all $y \in \hyperref[algo:P2GDRmap]{\ppgdr}(x; \mathcal{E}, C, f, \ushort{\alpha}, \bar{\alpha}, \beta, c, \Delta)$,
\begin{align*}
f(y)
\le f(\tilde{x}^{\ushort{i}})
\le f(x) - \delta(\ushort{x}),
\end{align*}
which completes the proof.
\end{proof}

\subsection{The $\ppgdr$ algorithm}
\label{subsec:P2GDR}
The $\ppgdr$ algorithm is defined as Algorithm~\ref{algo:P2GDR}. It produces a sequence along which $f$ is strictly decreasing.

\begin{algorithm}[H]
\caption{$\ppgdr$}
\label{algo:P2GDR}
\begin{algorithmic}[1]
\Require
$(\mathcal{E}, C, f, \ushort{\alpha}, \bar{\alpha}, \beta, c, \Delta)$ where $\mathcal{E}$ is a Euclidean vector space, $C$ is a nonempty closed subset of $\mathcal{E}$ satisfying Assumption~\ref{assumption:Stratification}, $f : \mathcal{E} \to \R$ is differentiable with $\nabla f$ locally Lipschitz continuous, $0 < \ushort{\alpha} \le \bar{\alpha} < \infty$, $\beta, c \in (0,1)$, and $\Delta \in (0,\infty)$.
\Input
$x_0 \in C$.
\Output
a sequence in $C$.

\State
$i \gets 0$;
\While
{$\s(x_i; f, C) > 0$}
\State
Choose $x_{i+1} \in \hyperref[algo:P2GDRmap]{\ppgdr}(x_i; \mathcal{E}, C, f, \ushort{\alpha}, \bar{\alpha}, \beta, c, \Delta)$;
\label{algo:P2GDR:P2GDRmap}
\State
$i \gets i+1$;
\EndWhile
\end{algorithmic}
\end{algorithm}

Theorem~\ref{thm:P2GDRPolakConvergence} states that $\ppgdr$ accumulates at stationary points of~\eqref{eq:OptiProblem} and is thus apocalypse-free. However, it does not state that an accumulation point necessarily exists.

\begin{theorem}
\label{thm:P2GDRPolakConvergence}
Consider a sequence constructed by $\ppgdr$ (Algorithm~\ref{algo:P2GDR}). If this sequence is finite, then its last element is stationary for~\eqref{eq:OptiProblem}, i.e., is a zero of the stationarity measure $\s(\cdot; f, C)$ defined in~\eqref{eq:StationarityMeasure}. If it is infinite, then all of its accumulation points are stationary for~\eqref{eq:OptiProblem}.
\end{theorem}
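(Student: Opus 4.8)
The plan is to run the classical Polak-type convergence argument. The hard part has already been carried out: it is Proposition~\ref{prop:P2GDRmapPolak}, which provides a \emph{locally uniform} sufficient decrease of $f$ around any non-stationary point. Granting it, the rest is a routine monotonicity argument; the only subtlety is that we do not, and need not, assume $f$ bounded below on $C$.

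First I would settle the finite case. Under Assumption~\ref{assumption:Stratification}, whenever its input $x \in S_i$ satisfies $\s(x; f, C) > 0$ the $\ppgdr$ map (Algorithm~\ref{algo:P2GDRmap}) returns a point: for $j < i$ the projection $\proj{S_j}{x}$ is nonempty since, by Proposition~\ref{prop:Conditions1(b)(c)MainAssumption}, $\proj{S_j}{x} = \proj{\overline{S_j}}{x}$ with $\overline{S_j}$ nonempty and closed, while $\proj{S_i}{x} = \{x\}$; the inner calls to the $\ppgd$ map terminate by Corollary~\ref{coro:P2GDmapArmijoCondition}; and the concluding $\argmin$ is taken over a nonempty finite set. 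Hence a sequence produced by $\ppgdr$ (Algorithm~\ref{algo:P2GDR}) can be finite only because the while loop condition fails, that is, because its last element $x_N$ satisfies $\s(x_N; f, C) = 0$; by definition $x_N$ is then stationary for~\eqref{eq:OptiProblem}.

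Next I would address the infinite case. If $(x_i)_{i \in \N}$ is infinite then $\s(x_i; f, C) > 0$ and $x_{i+1} \in \ppgdr(x_i; \mathcal{E}, C, f, \ushort{\alpha}, \bar{\alpha}, \beta, c, \Delta)$ for every $i$, and all iterates lie in $C$. Since running the $\ppgdr$ map on $x_i$ includes running the $\ppgd$ map on $x_i$ itself (the index $j = i$ gives $\hat{x}^i = x_i$), Corollary~\ref{coro:P2GDmapArmijoCondition} yields $f(x_{i+1}) \le f(\tilde{x}^i) < f(x_i)$, so $(f(x_i))_{i \in \N}$ is strictly decreasing. Let $x_*$ be an accumulation point and fix a subsequence $x_{i_k} \to x_*$. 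By continuity of $f$, $f(x_{i_k}) \to f(x_*)$; a monotone real sequence with a convergent subsequence converges to the same limit, so $f(x_i) \to f(x_*)$ and, in particular, $f(x_i) \ge f(x_*)$ for all $i \in \N$. This is where the mere existence of an accumulation point supplies the lower bound on $f$ for free, so that boundedness below of $f$ is not needed.

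Finally I would derive a contradiction from $\s(x_*; f, C) > 0$. Applying Proposition~\ref{prop:P2GDRmapPolak} with $\ushort{x} = x_*$ yields $\varepsilon(x_*), \delta(x_*) \in (0, \infty)$ such that $f(y) - f(x) \le -\delta(x_*)$ whenever $x \in \ball[x_*, \varepsilon(x_*)] \cap C$ and $y \in \ppgdr(x; \mathcal{E}, C, f, \ushort{\alpha}, \bar{\alpha}, \beta, c, \Delta)$. Since $x_*$ is an accumulation point, $I := \{i \in \N \mid x_i \in \ball[x_*, \varepsilon(x_*)]\}$ is infinite, and for each $i \in I$ the step $x_{i+1} \in \ppgdr(x_i; \mathcal{E}, C, f, \ushort{\alpha}, \bar{\alpha}, \beta, c, \Delta)$ gives $f(x_{i+1}) \le f(x_i) - \delta(x_*)$. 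Together with $f(x_{i+1}) \le f(x_i)$ for all $i \in \N$ and the infinitude of $I$, this forces $\lim_{i \to \infty} f(x_i) = -\infty$, contradicting $f(x_i) \ge f(x_*)$. Therefore $\s(x_*; f, C) = 0$, i.e., every accumulation point of $(x_i)_{i \in \N}$ is stationary for~\eqref{eq:OptiProblem}.
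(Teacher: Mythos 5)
Your proposal is correct and follows essentially the same route as the paper: both rest on Proposition~\ref{prop:P2GDRmapPolak} and the strict monotone decrease of $f$, deriving a contradiction at a non-stationary accumulation point (the paper passes to the limit in a telescoped decrease inequality, while you let the accumulated decrease drive $f(x_i)\to-\infty$ against the lower bound $f(x_*)$ — a cosmetic difference). Your added details on the finite case (well-posedness of the $\ppgdr$ map via Proposition~\ref{prop:Conditions1(b)(c)MainAssumption} and Corollary~\ref{coro:P2GDmapArmijoCondition}) merely expand what the paper dismisses as clear.
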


\begin{proof}
We use the framework proposed in \cite[\S 1.3]{Polak1971}.
Clearly, if $\ppgdr$ produces a finite sequence, then its last element is stationary. Let us therefore assume that $\ppgdr$ produces an infinite sequence $(x_i)_{i \in \N}$ and that a subsequence $(x_{i_k})_{k \in \N}$ converges to $x \in C$. For the sake of contradiction, assume that $x$ is not stationary for~\eqref{eq:OptiProblem} and let $\varepsilon(x)$ and $\delta(x)$ be given by Proposition~\ref{prop:P2GDRmapPolak}. There exists $K \in \N$ such that, for all integers $k \ge K$, $x_{i_k} \in \ball[x, \varepsilon(x)]$ and thus $f(x_{i_k+1})-f(x_{i_k}) \le -\delta(x)$. Thus, since $(f(x_i))_{i \in \N}$ is decreasing, for all integers $k \ge K$,
\begin{equation}
\label{eq:P2GDRPolakConvergence}
f(x_{i_{k+1}})-f(x_{i_k}) \le -\delta(x).
\end{equation}
Since $f$ is continuous, $(f(x_{i_k}))_{k \in \N}$ converges to $f(x)$. Therefore, letting $k$ tend to infinity in~\eqref{eq:P2GDRPolakConvergence} yields a contradiction.
\end{proof}

Corollary~\ref{coro:P2GDRPolakConvergence} considers a sequence $(x_i)_{i \in \N}$ produced by $\ppgdr$. It guarantees that, if $C$ has no serendipitous point, which is notably the case of $\R_{\le r}^{m \times n}$ (Proposition~\ref{prop:RealDeterminantalVarietyNoSerendipitousPoint}), and the sublevel set $\{x \in C \mid f(x) \le f(x_0)\}$ is bounded, then $\lim_{i \to \infty} \s(x_i; f, C) = 0$, and all accumulation points, of which there exists at least one, have the same image by $f$.

\begin{corollary}
\label{coro:P2GDRPolakConvergence}
Let $(x_i)_{i \in \N}$ be a sequence produced by $\ppgdr$ (Algorithm~\ref{algo:P2GDR}).
The sequence has at least one accumulation point if and only if $\liminf_{i \to \infty} \norm{x_i} < \infty$. If $C$ has no serendipitous point, then, for every convergent subsequence $(x_{i_k})_{k \in \N}$, $\lim_{k \to \infty} \s(x_{i_k}; f, C) = 0$.
If, moreover, $(x_i)_{i \in \N}$ is bounded, which is the case notably if the sublevel set $\{x \in C \mid f(x) \le f(x_0)\}$ is bounded, then $\lim_{i \to \infty} \s(x_i; f, C) = 0$, and all accumulation points have the same image by $f$.
\end{corollary}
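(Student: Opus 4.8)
The plan is to dispatch the three assertions separately, using Theorem~\ref{thm:P2GDRPolakConvergence} and Proposition~\ref{prop:StationaryPointApocalypticSerendipitous} together with elementary sequential-compactness arguments in $\mathcal{E}$ and the fact that $C$ is closed. If the sequence generated by $\ppgdr$ (Algorithm~\ref{algo:P2GDR}) is finite, then by Theorem~\ref{thm:P2GDRPolakConvergence} its last element is stationary and every assertion holds trivially, so from now on I would assume the sequence $(x_i)_{i \in \N}$ is infinite. For the first equivalence: if $(x_i)_{i \in \N}$ has an accumulation point $x$, then some subsequence converges to $x$, whence $\liminf_{i \to \infty} \norm{x_i} \le \norm{x} < \infty$; conversely, if $\liminf_{i \to \infty} \norm{x_i} < \infty$, then $(x_i)_{i \in \N}$ has a bounded subsequence, which by the Bolzano--Weierstrass theorem has a convergent sub-subsequence whose limit, since $C$ is closed, lies in $C$ and is an accumulation point of $(x_i)_{i \in \N}$.

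For the second assertion, I would take a convergent subsequence $(x_{i_k})_{k \in \N}$, with limit $x \in C$ (using again that $C$ is closed). By Theorem~\ref{thm:P2GDRPolakConvergence}, $x$ is stationary for~\eqref{eq:OptiProblem}, i.e.\ $\s(x; f, C) = 0$. Since $C$ has no serendipitous point, $x$ is not serendipitous, and applying the second item of Proposition~\ref{prop:StationaryPointApocalypticSerendipitous} to the sequence $(x_{i_k})_{k \in \N}$ (viewed as a sequence in $C$ converging to $x$) yields $\lim_{k \to \infty} \s(x_{i_k}; f, C) = 0$.

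For the third assertion, suppose $(x_i)_{i \in \N}$ is bounded. This holds in particular when the sublevel set $\{x \in C \mid f(x) \le f(x_0)\}$ is bounded, because $f$ is nonincreasing along the sequence produced by $\ppgdr$ and hence every $x_i$ lies in that sublevel set. To show $\lim_{i \to \infty} \s(x_i; f, C) = 0$, I would argue by contradiction: if this fails, there are $\varepsilon \in (0, \infty)$ and a subsequence with $\s(x_{i_k}; f, C) > \varepsilon$ for all $k$; by boundedness and Bolzano--Weierstrass, this subsequence has a convergent sub-subsequence, with limit in $C$, along which, by the second assertion, the stationarity measure tends to $0$, a contradiction. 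Finally, since $f$ is continuous and $(x_i)_{i \in \N}$ is bounded, $(f(x_i))_{i \in \N}$ is bounded below; being also nonincreasing, it converges to some $L \in \R$, and continuity of $f$ then forces $f(x) = L$ for every accumulation point $x$ of $(x_i)_{i \in \N}$, at least one of which exists by the first assertion.

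The argument is essentially routine. The only points calling for some care are the nested passage to sub-subsequences in the third assertion and the clean use of the ``no serendipitous point'' hypothesis — but the latter is entirely absorbed into Proposition~\ref{prop:StationaryPointApocalypticSerendipitous}, so the main thing to keep track of is that every (sub)sequence limit remains in $C$, which is guaranteed by the closedness of $C$.
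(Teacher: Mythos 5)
Your proof is correct and follows essentially the same route as the paper: the classical Bolzano--Weierstrass argument for the accumulation-point equivalence, Theorem~\ref{thm:P2GDRPolakConvergence} combined with Proposition~\ref{prop:StationaryPointApocalypticSerendipitous} for the convergent-subsequence claim, a subsequence-of-a-subsequence argument for $\lim_{i \to \infty} \s(x_i; f, C) = 0$, and monotonicity plus continuity of $f$ for the equality of the values at accumulation points. The only cosmetic difference is that you phrase the third step as a contradiction while the paper examines accumulation points of the scalar sequence $(\s(x_i; f, C))_{i \in \N}$, which is the same argument.
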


\begin{proof}
The ``if and only if'' statement is a classical result.
Assume that $C$ has no serendipitous point. Then, Proposition~\ref{prop:StationaryPointApocalypticSerendipitous} implies that $\s(\cdot; f, C)$ goes to zero along every convergent subsequence of $(x_i)_{i \in \N}$.
Assume further that $(x_i)_{i \in \N}$ is bounded and let us prove that $\lim_{i \to \infty} \s(x_i; f, C) = 0$. Observe that $(\s(x_i; f, C))_{i \in \N}$ is bounded and let $s \in [0, \infty)$ be an accumulation point. It suffices to prove that $s = 0$. There exists a subsequence $(x_{i_k})_{k \in \N}$ such that $s = \lim_{k \to \infty} \s(x_{i_k}; f, C)$. Since $(x_{i_k})_{k \in \N}$ is bounded, it contains a convergent subsequence $(x_{i_{k_l}})_{l \in \N}$ and, by Proposition~\ref{prop:StationaryPointApocalypticSerendipitous}, $\lim_{l \to \infty} \s(x_{i_{k_l}}; f, C) = 0$, which establishes the result.
The final claim follows from the argument given in the proof of \cite[Theorem~65]{Polak1971}. Specifically, if $(x_i)_{i \in \N}$ is bounded, then it contains at least one convergent subsequence. Assume that $(x_{i_k})_{k \in \N}$ and $(x_{j_k})_{k \in \N}$ converge respectively to $\ushort{x}$ and $\oshort{x}$. The sequence $(f(x_i))_{i \in \N}$ is decreasing and, since $(x_i)_{i \in \N}$ is bounded and $f$ is continuous, it converges to $\inf_{i \in \N} f(x_i)$. Therefore, $f(\ushort{x}) = \lim_{k \to \infty} f(x_{i_k}) = \lim_{i \to \infty} f(x_i) = \lim_{k \to \infty} f(x_{j_k}) = f(\oshort{x})$.
\end{proof}

Corollary~\ref{coro:P2GDRPolakConvergence} shows that the stopping criterion defined by \eqref{eq:StoppingCriterion} is always eventually satisfied by $\ppgdr$ if $C$ has no serendipitous point and the generated sequence has an accumulation point.
Indeed, if $C$ has no serendipitous point and $\ppgdr$ produces a sequence $(x_i)_{i \in \N}$ that has an accumulation point, i.e., that does not diverge to infinity, then, for every $\varepsilon \in (0, \infty)$, the set $\{i \in \N \mid \s(x_i; f, C) \le \varepsilon\}$ is nonempty and thus its minimum $i_\varepsilon$ exists. We further discuss this stopping criterion in Section~\ref{subsubsec:ComparisonSixOptimizationAlgorithmsRealDeterminantalVariety} and use it in Section~\ref{subsubsec:LKB22instance}.

\section{Examples of stratified sets satisfying Assumption~\ref{assumption:Stratification}}
\label{sec:ExamplesStratifiedSetsSatisfyingMainAssumption}
In this section, we prove Theorem~\ref{thm:ExamplesStratifiedSetsSatisfyingMainAssumption}.

\begin{theorem}
\label{thm:ExamplesStratifiedSetsSatisfyingMainAssumption}
If $C$ is $\sparse{n}{s} \cap \R_+^n$, $\R_{\le r}^{m \times n}$, or $\mathrm{S}_{\le r}^+(n)$, then:
\begin{enumerate}
\item Assumption~\ref{assumption:Stratification} is satisfied;
\item there exists $a \in (0, 1)$ such that, for all $x \in C$,
\begin{equation*}
u(x) = \sup_{v \in \tancone{C}{x} \setminus \{0\}} \frac{\dist(x+v, C)}{\norm{v}^2} \in [a\tilde{u}(x), \tilde{u}(x)],
\end{equation*}
where $\tilde{u}(x) := 0$ if $x \in S_0$ and $\tilde{u}(x) := \frac{1}{\dist(x, S_{i-1})}$ if $x \in S_i$ with $i \in \{1, \dots, p\}$;
\item $u$ is not locally bounded at any point of $C \setminus S_p$;
\item the set of apocalyptic points of $C$ is $C \setminus S_p$;
\item for all $i \in \{0, \dots, p\}$ and all $x \in S_i$, $\connorcone{C}{x} = \norcone{S_i}{x}$.
\end{enumerate}
Moreover, $\R_{\le r}^{m \times n}$ has no serendipitous point while, if $C$ is $\sparse{n}{s} \cap \R_+^n$ or $\mathrm{S}_{\le r}^+(n)$, then the set of serendipitous points of $C$ is $C \setminus S_p$.
\end{theorem}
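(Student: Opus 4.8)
The plan is to treat the three sets in parallel, since each requires the same list of ingredients, and to fix first the stratification. For $\R_{\le r}^{m\times n}$ (resp.\ $\mathrm{S}_{\le r}^+(n)$) let $S_k$ be the set of matrices (resp.\ positive-semidefinite matrices) of rank exactly $k$, $k\in\{0,\dots,r\}$, so $p=r$; for $\sparse{n}{s}\cap\R_+^n$ let $S_k:=\{x\in\R_+^n\mid\lvert\supp x\rvert=k\}$, $k\in\{0,\dots,s\}$, so $p=s$. Each $S_k$ is a smooth embedded submanifold of $\mathcal{E}$ contained in $C$ (for the sparse set, a finite disjoint union of relatively open faces of $\R_+^n$), which is condition~1(a), and condition~1(b) is the classical identity $\overline{\R_{\le k}^{m\times n}}=\bigcup_{j\le k}S_j$ and its two analogues. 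Condition~1(c) follows from explicit distance formulas: $\dist(X,\overline{S_j})^2=\sum_{l>j}\sigma_l(X)^2$ in the matrix cases (Eckart--Young), and $\dist(x,\overline{S_j})^2$ equals the sum of the $\lvert\supp x\rvert-j$ smallest squared entries of $x$ in the sparse case; these formulas show that on $S_i$, for $j\le i$, every nearest point of $\overline{S_j}$ already lies in $S_j$ (so $\dist(\cdot,S_j)=\dist(\cdot,\overline{S_j})$) and that $\dist(\cdot,S_{j-1})^2-\dist(\cdot,S_j)^2$ is strictly positive (it is $\sigma_j(X)^2$, resp.\ the $j$-th smallest nonzero entry of $x$, squared), giving the strict inequality.

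The quantitative core is statement~2, and I expect it to be the main obstacle; condition~2 of Assumption~\ref{assumption:Stratification} and statement~3 will then come cheaply. For $\R_{\le r}^{m\times n}$, write $X\in S_k$ as a thin SVD $U\Sigma V^\tp$ and, using the description $\tancone{\R_{\le r}^{m\times n}}{X}=T_XS_k\oplus\{U_\perp BV_\perp^\tp\mid\rank B\le r-k\}$ recalled in Section~\ref{subsubsec:TangentConeRealDeterminantalVariety}, block-decompose a tangent vector $v$ in the bases $[U\mid U_\perp],[V\mid V_\perp]$ as $\bigl(\begin{smallmatrix}A&D\\C&B\end{smallmatrix}\bigr)$ with $\rank B\le r-k$ and $\lVert A\rVert^2+\lVert B\rVert^2+\lVert C\rVert^2+\lVert D\rVert^2=\lVert v\rVert^2$. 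When $\Sigma+A$ is invertible, replacing the $(2,2)$-block $B$ by $B+C(\Sigma+A)^{-1}D$ yields a matrix whose Schur complement with respect to the $(1,1)$-block equals $B$, hence of rank $\le r$, at distance $\lVert C(\Sigma+A)^{-1}D\rVert\le\lVert C\rVert\lVert D\rVert/\sigma_k(\Sigma+A)$ from $X+v$; combined with an Eckart--Young estimate for large $\lVert v\rVert$ (where the ratio vanishes, since $0\in C$) and suitable monotonicity arguments this gives $u(X)\le 1/\sigma_k(X)=1/\dist(X,S_{k-1})$, while a small explicit $v$ (a size-$t$ two-by-two block straddling the $(k{+}1)$-st diagonal slot of $\Sigma$) realises a fixed fraction $a$ of this bound. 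The $\mathrm{S}_{\le r}^+(n)$ case is the symmetric specialisation ($V=U$, $B$ positive-semidefinite of rank $\le r-k$), and the $\sparse{n}{s}\cap\R_+^n$ case is the elementary analogue: for $x$ with $\supp x=I$, $\lvert I\rvert=i$, the tangent cone is $\R^I\times(\sparse{n-i}{s-i}\cap\R_+^{I^c})$, the nearest point of $C$ to $x+v$ is obtained by zeroing the smallest-magnitude excess coordinates, and one estimates against $\dist(x,S_{i-1})=\min_{l\in I}\lvert x_l\rvert$. Getting the \emph{exact} constant $1$ in the upper bound — rather than the easy $u(X)\le\mathrm{const}/\dist(X,S_{k-1})$, which already suffices for Assumption~\ref{assumption:Stratification} — is where the real work lies, since the naive Schur-complement and Eckart--Young bounds leave a middle range of $\lVert v\rVert$ uncovered and a sharper uniform argument is needed. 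Once statement~2 holds, $u$ is finite everywhere (Assumption~\ref{assumption:GlobalSecondOrderUpperBoundDistanceFromTangentLine}), and $u|_{S_i}$ — bounded above by $x\mapsto 1/\dist(x,S_{i-1})$, continuous and positive on $S_i$ by~1(b), and below by $0$ — is locally bounded, which is condition~2; and statement~3 follows because for $\ushort{x}\in S_i$ with $i<p$ one picks $x_j\to\ushort{x}$ with $x_j\in S_{i+1}$, whence $u(x_j)\ge a/\dist(x_j,S_i)\to\infty$.

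For condition~3 of Assumption~\ref{assumption:Stratification} — continuity of $\tancone{C}{\cdot}$ on each $S_i$ relative to $S_i$ — I would again use the explicit descriptions. On a stratum of the sparse set the tangent cone depends on $x$ only through $\supp x$, hence is \emph{locally constant}, so continuity is immediate. On a stratum $S_k$ of $\R_{\le r}^{m\times n}$ (resp.\ $\mathrm{S}_{\le r}^+(n)$) the tangent cone depends on $X$ only through the pair $(\im X,\im X^\tp)$ (resp.\ $\im X$), which varies continuously on $S_k$; outer semicontinuity follows from lower semicontinuity of rank, and inner semicontinuity by transporting $Z\in\tancone{C}{X}$ along $X_i\to X$ to $Z_i:=Z-P_{(U_i)_\perp}ZP_{(V_i)_\perp}+\operatorname{trunc}_{r-k}\bigl(P_{(U_i)_\perp}ZP_{(V_i)_\perp}\bigr)$, using that best rank-$(r{-}k)$ truncation is continuous at every matrix of rank $\le r-k$. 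This finishes statement~1; together with Corollary~\ref{coro:ContinuityTangentConeImpliesContinuityStationarityMeasure} it also gives continuity of $\s(\cdot;f,C)|_{S_i}$.

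Finally, statements~4, 5 and 6 are read off from the tangent-cone descriptions via the characterisations of Section~\ref{subsec:StationarityMeasure}. For $x\in S_p$, $C$ is locally the manifold $S_p$ (Section~\ref{subsec:AssumptionsFeasibleSet}), hence Clarke regular there, so by \cite[Corollary~2.15]{LevinKileelBoumal2022} it is not apocalyptic, and — since every sequence in $C$ converging to $x$ lies eventually in $S_p$, where $\tancone{C}{\cdot}$ is continuous — $\bigl(\outlim_j\tancone{C}{x_j}\bigr)^*=\regnorcone{C}{x}$ for every such sequence, so $x$ is not serendipitous and $\connorcone{C}{x}=\norcone{S_p}{x}$. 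For $x\in S_i$ with $i<p$, computing $\regnorcone{C}{x}=\tancone{C}{x}^*$ exhibits the structural dichotomy: for $\R_{\le r}^{m\times n}$ the tangent cone contains $\pm W$ for every rank-$(r{-}k)$ matrix $W\in U_\perp\R^{(m-k)\times(n-k)}V_\perp^\tp$ (and $r-k\ge1$), so $\regnorcone{C}{x}=\{0\}$; whereas for $\sparse{n}{s}\cap\R_+^n$ and $\mathrm{S}_{\le r}^+(n)$ the part of the tangent cone transverse to $T_xS_i$ is only \emph{one-sided} (an $\R_+$-face, resp.\ a positive-semidefinite cone), so $\regnorcone{C}{x}$ is a nontrivial cone ($-\R_+^{I^c}$ off the support, resp.\ minus a positive-semidefinite cone on $U_\perp$). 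By Proposition~\ref{prop:CharacterizationApocalypticSerendipitousPoint}, $\regnorcone{C}{x}=\{0\}$ forces $x$ to be non-serendipitous via every sequence, which gives that $\R_{\le r}^{m\times n}$ has no serendipitous point; for the other two sets the nontriviality of $\regnorcone{C}{x}$, together with the sequence $x_j\to x$ inside $S_p$ having a \emph{fixed} range/support containing that of $x$ (so $\bigl(\outlim_j\tancone{C}{x_j}\bigr)^*$ is too small to contain $\regnorcone{C}{x}$), shows every point of $C\setminus S_p$ is serendipitous. That every point of $C\setminus S_p$ is apocalyptic uses the \emph{same} family of sequences but with the limiting range/support now varying: $\outlim_j\tancone{C}{x_j}$ becomes large enough that its polar escapes $\regnorcone{C}{x}$ — for $\R_{\le r}^{m\times n}$ this is the Levin--Kileel--Boumal construction of \cite[\S2.2]{LevinKileelBoumal2022} written at a general rank-$k$ point, the positive-semidefinite and sparse cases being analogous. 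Statement~5 then follows by taking convex-hull closures: $\norcone{C}{x}=\outlim_{z\to x}\regnorcone{C}{z}\subseteq\norcone{S_i}{x}$ always, while already the rank-one (resp.\ coordinate-axis) directions present in $\norcone{C}{x}$ generate $\norcone{S_i}{x}$ as a convex cone, so $\connorcone{C}{x}=\norcone{S_i}{x}$.
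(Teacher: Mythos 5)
Your overall architecture matches the paper's (stratify by rank/support, explicit tangent and normal cones, the characterization of Proposition~\ref{prop:CharacterizationApocalypticSerendipitousPoint} for statements~4--6, and the local-boundedness/unboundedness of $u$ deduced from statement~2), but there is a genuine gap exactly where you flag it: the two-sided estimate of statement~2 for the matrix cases. The theorem requires $u(X)\le\tilde u(X)=1/\sigma_{\min}(X)$ with constant $1$, and your Schur-complement correction only yields $\dist(X+v,C)\le\norm{C}\,\norm{D}/\sigma_{\min}(\Sigma+A)$, which degenerates as $\Sigma+A$ loses invertibility; combined with the crude Eckart--Young bound for large $\norm{v}$ this leaves, as you admit, an uncovered middle range, so statement~2 is not proved. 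The paper closes this by a different device (Propositions~\ref{prop:GlobalSecondOrderUpperBoundDistanceToRealDeterminantalVarietyFromTangentLine} and~\ref{prop:GlobalSecondOrderUpperBoundDistanceToPSDconeBoundedRankFromTangentLine}): for each tangent $Z$ it exhibits an explicit global curve in $C$, a product of a rank-$\le\ushort{r}$ factor through $X$ and a rank-$\le r-\ushort{r}$ remainder, whose deviation from $X+tZ$ is \emph{exactly} quadratic in $t$ with coefficient bounded by $\norm{Z}^2/(2\sigma_{\min}(X))$; evaluating at $t=1$ gives the uniform bound $\dist(X+Z,C)\le\norm{Z}^2/(2\sigma_{\min}(X))$ for all tangent $Z$, with no case split on $\norm{v}$ and in fact a constant $\tfrac12$ better than needed. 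If you want to keep your route you must supply that missing uniform argument; otherwise only the weaker ``$\mathrm{const}/\dist(X,S_{\ushort{r}-1})$'' bound is available, which suffices for Assumption~\ref{assumption:GlobalSecondOrderUpperBoundDistanceFromTangentLine} and statements~1 and~3 but not for statement~2 as stated.

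A second, smaller flaw is your lower-bound witness: a lone $2\times 2$ block straddling the $(\ushort{r},\ushort{r}+1)$ slots produces a matrix of rank at most $\ushort{r}+1$, so whenever $\rank X<r$ one has $X+v\in\R_{\le r}^{m\times n}$ and the distance is zero; the test vector must in addition saturate the remaining rank budget, as the paper does by adding $\sigma_{\ushort{r}}I_{r-\ushort{r}}$ in the free lower-right block (and analogously in the positive-semidefinite case), which is what yields $a=\frac{\sqrt{5}-1}{2(r+1)}$. The remaining parts of your sketch (condition~1, condition~3 via locally constant or frame-transported tangent cones, the apocalyptic/serendipitous dichotomy driven by $\regnorcone{C}{x}=\{0\}$ versus a one-sided normal cone, and statement~5 by convexifying the rank-one or coordinate directions inside $\norcone{C}{x}$) are consistent with the paper's proofs, modulo routine details such as the continuity of the chosen orthonormal frames along a convergent sequence (the paper's Lemma~\ref{lemma:ConvergenceTangentConeDecompositionConstantRank}) and the trivial case $x\in S_0$ of statement~2.
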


\begin{proof}
The sets $\sparse{n}{s} \cap \R_+^n$, $\R_{\le r}^{m \times n}$, and $\mathrm{S}_{\le r}^+(n)$ are studied in Sections~\ref{subsec:NonnegativeSparseVectors}, \ref{subsec:RealDeterminantalVariety}, and \ref{subsec:ConePSDmatricesBoundedRank}, respectively. As shown in Table~\ref{tab:ProofExamplesStratifiedSetsSatisfyingMainAssumption}, most of the statements of the theorem are proven in those three sections. We complete the proof here.
First, we establish the second statement for $x \in S_0$. If $C$ is one of the three sets, then $C$ is a closed cone, which implies $\tancone{C}{0} = C$, and $S_0 = \{0\}$. The result follows.
Second, we prove that condition~2 of Assumption~\ref{assumption:Stratification} is satisfied. For every $i \in \{0, \dots, p\}$, $\tilde{u}|_{S_i}$ is locally bounded. This is clear if $i = 0$ since $\tilde{u}|_{S_0} = 0$.
If $i \in \{1, \dots, p\}$, then $\tilde{u}|_{S_i}$ is continuous (as the inverse of $d(\cdot, S_{i-1})|_{S_i}$ which is continuous by \cite[Proposition~1.3.17]{Willem} and positive by Proposition~\ref{prop:Conditions1(b)(c)MainAssumption}) and thus locally bounded. The result then follows from the second statement.
Third, we prove that $u$ is not locally bounded at $\ushort{x} \in S_i$ if $i \in \{0, \dots, p-1\}$. Let $M, \delta \in (0, \infty)$. By condition~1(b) of Assumption~\ref{assumption:Stratification}, $\ushort{x} \in \overline{S_{i+1}}$ and thus there exists $x \in \ball[\ushort{x}, \min\{\delta, \frac{a}{M}\}] \cap S_{i+1}$. It follows that $u(x) \ge \frac{a}{d(x, S_i)} \ge \frac{a}{\norm{x-\ushort{x}}} \ge M$.
\begin{table}
\begin{center}
\begin{spacing}{1.4}
\begin{tabular}{*{4}{l}}
\hline
\emph{Result} & $C = \R_{\le s}^n \cap \R_+^n$ & $C = \R_{\le r}^{m \times n}$ & $C = \mathrm{S}_{\le r}^+(n)$\\[1mm]
\hline
Condition~1 of Assumption~\ref{assumption:Stratification} & Proposition~\ref{prop:StratificationNonnegativeSparseVectorsCondition1MainAssumption} & Section~\ref{subsubsec:StratificationRealDeterminantalVariety} & Section~\ref{subsubsec:StratificationPSDconeBoundedRank}\\
\hline
Second statement for $x \in C \setminus S_0$ & Proposition~\ref{prop:GlobalSecondOrderUpperBoundDistanceToNonnegativeSparseVectorsFromTangentLine} & Proposition~\ref{prop:GlobalSecondOrderUpperBoundDistanceToRealDeterminantalVarietyFromTangentLine} & Proposition~\ref{prop:GlobalSecondOrderUpperBoundDistanceToPSDconeBoundedRankFromTangentLine}\\
\hline
Condition~3 of Assumption~\ref{assumption:Stratification} & Proposition~\ref{prop:ContinuityTangentConeStratumNonnegativeSparseVectors} & \cite[Theorem~4.1]{OlikierAbsil2022} & Proposition~\ref{prop:ContinuityTangentConeStratumPSDconeBoundedRank}\\
\hline
Fourth and ``Moreover'' & \multirow{2}{*}{Proposition~\ref{prop:NonnegativeSparseVectorsApocalypticSerendipitousPoints}} & \cite[Proposition~2.10]{LevinKileelBoumal2022} & \multirow{2}{*}{Proposition~\ref{prop:PSDconeBoundedRankApocalypticSerendipitousPoints}}\\
statements & & Proposition~\ref{prop:RealDeterminantalVarietyNoSerendipitousPoint} &\\
\hline
Fifth statement & Corollary~\ref{coro:ClarkeNormalConeNonnegativeSparseVectors} & Proposition~\ref{prop:NormalConesRealDeterminantalVariety} & Corollary~\ref{coro:ClarkeNormalConePSDconeBoundedRank}\\
\hline
\end{tabular}
\vspace*{-8mm}
\end{spacing}
\end{center}
\caption{Proof of Theorem~\ref{thm:ExamplesStratifiedSetsSatisfyingMainAssumption}.}
\label{tab:ProofExamplesStratifiedSetsSatisfyingMainAssumption}
\end{table}
\end{proof}

Applications of problem~\eqref{eq:OptiProblem} with $C$ being $\sparse{n}{s} \cap \R_+^n$, $\R_{\le r}^{m \times n}$, or $\mathrm{S}_{\le r}^+(n)$ abound; we give some examples here and refer to \cite[\S 5]{JiaEtAl2022}, \cite{NadisicCohenVandaeleGillis}, and the references therein.
If $C = \R_{\le r}^{m \times n}$, applications include matrix equations, model reduction, matrix sensing, and matrix completion; see, e.g., \cite{SchneiderUschmajew2015, HaLiuBarber2020} and the references therein.
Problem~\eqref{eq:OptiProblem} with $C = \mathrm{S}_{\le r}^+(n)$ appears as the relaxation of combinatorial optimization problems. Motivated by that application, the feasible set $\mathrm{S}_{\le r}^+(n)$ is studied in \cite{BurerMonteiro2003, BurerMonteiro2005} with a linear cost function and in \cite{JourneeBachAbsilSepulchre2010} with a smooth cost function.
Problem~\eqref{eq:OptiProblem} with $C = \R_{\le s}^n \cap \R_+^n$ includes the sparse nonnegative least squares problem as a particular instance; see \cite{NadisicCohenVandaeleGillis} and the references therein.

In what follows, for all integers $i$ and $j$, we write $\delta_{i, j} := 1$ if $i = j$ and $\delta_{i, j} := 0$ if $i \ne j$.

\subsection{The set of nonnegative sparse vectors}
\label{subsec:NonnegativeSparseVectors}
In this section, $\mathcal{E} := \R^n$ and $C := \sparse{n}{s} \cap \R_+^n$ for some positive integers $n$ and $s < n$, and $\R^n$ is endowed with the \emph{dot product} $\ip{x}{y} := \sum_{i=1}^n x_i y_i$. We use the following notation throughout the section. If $x \in \R^n$, then, for every $i \in \{1, \dots, n\}$, $x_i$ denotes the $i$th component of $x$, and we also write $x$ as $(x_i)_{i \in \{1, \dots, n\}}$. Thus,
\begin{align*}
\R_+^n = \{x \in \R^n \mid x_i \ge 0 \; \forall i \in \{1, \dots, n\}\},&&
\R_-^n = \{x \in \R^n \mid x_i \le 0 \; \forall i \in \{1, \dots, n\}\}.
\end{align*}
A sequence in $\R^n$ is denoted by $(x^i)_{i \in \N}$. By \cite[Definition~2.1]{FoucartRauhut}, the \emph{support} of $x \in \R^n$ is defined as
\begin{align*}
\supp(x) := \{i \in \{1, \dots, n\} \mid x_i \ne 0\}.
\end{align*}
Then,
\begin{equation*}
\sparse{n}{s} = \{x \in \R^n \mid |\supp(x)| \le s\}.
\end{equation*}
We also write
\begin{equation*}
\StrictSparsity{n}{s} := \{x \in \R^n \mid |\supp(x)| < s\}
\end{equation*}
and, for every $i \in \{0, \dots, s\}$,
\begin{equation*}
\FixedSparsity{n}{i} := \{x \in \R^n \mid |\supp(x)| = i\}.
\end{equation*}
For all $x, y \in \R^n$, it holds that
\begin{equation}
\label{eq:SubadditivitySupport}
\supp(x+y) \subseteq \supp(x) \cup \supp(y).
\end{equation}

In Section~\ref{subsubsec:StratificationNonnegativeSparseVectors}, projection onto $\sparse{n}{s} \cap \R_+^n$ is constructed (Proposition~\ref{prop:ProjectionNonnegativeSparseVectors}) and it is proven that $\sparse{n}{s} \cap \R_+^n$ admits a stratification satisfying condition~1 of Assumption~\ref{assumption:Stratification} (Proposition~\ref{prop:StratificationNonnegativeSparseVectorsCondition1MainAssumption}). In Section~\ref{subsubsec:TangentConeNonnegativeSparseVectors}, we determine the tangent cone to $\sparse{n}{s} \cap \R_+^n$ (Proposition~\ref{prop:TangentConeNonnegativeSparseVectors}) and deduce that $\sparse{n}{s} \cap \R_+^n$ satisfies the second statement of Theorem~\ref{thm:ExamplesStratifiedSetsSatisfyingMainAssumption} (Proposition~\ref{prop:GlobalSecondOrderUpperBoundDistanceToNonnegativeSparseVectorsFromTangentLine}) and condition~3 of Assumption~\ref{assumption:Stratification} (Proposition~\ref{prop:ContinuityTangentConeStratumNonnegativeSparseVectors}). In Section~\ref{subsubsec:NormalConesNonnegativeSparseVectors}, we deduce the regular normal cone, the normal cone, and the Clarke normal cone to $\sparse{n}{s} \cap \R_+^n$, and prove that the sets of apocalyptic and serendipitous points of $\sparse{n}{s} \cap \R_+^n$ both equal $\StrictSparsity{n}{s} \cap \R_+^n$ (Proposition~\ref{prop:NonnegativeSparseVectorsApocalypticSerendipitousPoints}). Finally, in Section~\ref{subsubsec:NonnegativeSparseVectorsP2GDapocalypseExample}, we present an example of $\ppgd$ following an apocalypse on $\sparse{n}{s} \cap \R_+^n$.

\subsubsection{Stratification of the set of nonnegative sparse vectors}
\label{subsubsec:StratificationNonnegativeSparseVectors}
In this section, we prove that $\sparse{n}{s} \cap \R_+^n$ admits a stratification satisfying condition~1 of Assumption~\ref{assumption:Stratification}.
The number of nonzero components stratifies $\sparse{n}{s} \cap \R_+^n$:
\begin{equation*}
\sparse{n}{s} \cap \R_+^n = \bigcup_{i=0}^s \FixedSparsity{n}{i} \cap \R_+^n.
\end{equation*}

Proposition~\ref{prop:ProjectionNonnegativeSparseVectors} shows how to project onto $\sparse{n}{s} \cap \R_+^n$ and is used in the proof that $\sparse{n}{s} \cap \R_+^n$ satisfies condition~1(c) of Assumption~\ref{assumption:Stratification}.

\begin{proposition}[{projection onto the set of nonnegative sparse vectors \cite[Proposition~3.2]{Tam2017}}]
\label{prop:ProjectionNonnegativeSparseVectors}
For every $x \in \R^n$, $\proj{\sparse{n}{s} \cap \R_+^n}{x}$ is the set of all possible outputs of Algorithm~\ref{algo:ProjectionNonnegativeSparseVectors}, and $\dist(x, \sparse{n}{s} \cap \R_+^n)$ is the sum of the absolute values of the components of $x$ that have been set to zero by the projection.
\end{proposition}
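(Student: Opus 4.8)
The plan is to turn the projection problem into a finite optimization over the support of the candidate and then match the minimizers with the outputs of Algorithm~\ref{algo:ProjectionNonnegativeSparseVectors}. Fix $x \in \R^n$. The set $C := \sparse{n}{s} \cap \R_+^n$ is closed (a finite union of coordinate subspaces intersected with the closed orthant), so $\proj{C}{x} \ne \emptyset$ by Proposition~\ref{prop:ProjectionOntoClosedSet}. For any $y \in C$, writing $\Omega := \supp(y)$ (so $|\Omega| \le s$ and $y_i \ge 0$ for all $i$), one has
\begin{equation*}
\norm{x-y}^2 = \sum_{i \in \Omega} (x_i-y_i)^2 + \sum_{i \notin \Omega} x_i^2 \ge \sum_{i \in \Omega,\, x_i < 0} x_i^2 + \sum_{i \notin \Omega} x_i^2 = \norm{x}^2 - \sum_{i \in \Omega,\, x_i > 0} x_i^2,
\end{equation*}
where the inequality uses that $t \mapsto (x_i-t)^2$ is minimized over $t \ge 0$ at $t = \max\{x_i,0\}$ (minimum value $x_i^2$ if $x_i<0$ and $0$ otherwise), with equality exactly when $y_i = \max\{x_i,0\}$ for every $i \in \Omega$. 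Hence, for each admissible support $\Omega$, the best feasible $y$ is $y_i = \max\{x_i,0\}$ for $i \in \Omega$ and $y_i = 0$ otherwise, and $\dist(x,C)^2 = \norm{x}^2 - \max\{\sum_{i \in \Omega,\, x_i > 0} x_i^2 : |\Omega| \le s\}$.

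Next I would solve the combinatorial problem of maximizing $\sum_{i \in \Omega,\, x_i > 0} x_i^2$ over $|\Omega| \le s$. An optimal $\Omega$ may be taken to contain only indices with $x_i > 0$, since discarding a nonpositive index never hurts the objective and frees up budget; and among the positive indices it must collect a set of $\min\{s, |\{i : x_i > 0\}|\}$ of them carrying the largest values $x_i^2$, while conversely every such set is optimal. Combined with the equality case above, this characterizes $\proj{C}{x}$ as exactly the set of vectors obtained by zeroing out all nonpositive components of $x$ together with all but $\min\{s, |\{i : x_i > 0\}|\}$ of the largest positive ones, each admissible tie-breaking corresponding to one run of Algorithm~\ref{algo:ProjectionNonnegativeSparseVectors}. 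The distance statement then follows by reading off $x-y$, which vanishes on $\supp(y)$ and equals $x$ on its complement, so $\dist(x,C)$ is determined by the components of $x$ that the projection sets to zero (equivalently, since $C$ is a closed cone, by Proposition~\ref{prop:NormProjectionOntoClosedCone}, $\dist(x,C)^2 = \norm{x}^2 - \norm{y}^2 = \sum_{i \notin \supp(y)} x_i^2$).

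The main thing to be careful about is showing that \emph{every} minimizer — not merely the algorithm's output — has this form, which is where the tie-breaking and the two regimes $|\{i : x_i > 0\}| \le s$ versus $|\{i : x_i > 0\}| > s$ enter. Concretely one must verify a strict-suboptimality statement: a support that omits some positive index while including another one of strictly smaller value $x_i^2$, or that includes a nonpositive index when a positive one is available within budget, yields a strictly larger $\dist(x,C)^2$. Once this observation is in place, the correspondence between maximizing supports and runs of Algorithm~\ref{algo:ProjectionNonnegativeSparseVectors}, and hence the equality between $\proj{C}{x}$ and the set of outputs of Algorithm~\ref{algo:ProjectionNonnegativeSparseVectors}, is routine, as is the distance formula.
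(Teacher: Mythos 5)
Your argument is sound, but note that the paper itself does not prove this proposition: it is imported verbatim from \cite[Proposition~3.2]{Tam2017}, so there is no in-paper proof to compare against, and what you supply is a self-contained elementary derivation. Your route --- fix a support budget $\Omega$ with $|\Omega|\le s$, minimize componentwise over $y_i\ge 0$ to get $y_i=\max\{x_i,0\}$ on $\Omega$ and $0$ elsewhere, then solve the combinatorial problem of maximizing $\sum_{i\in\Omega,\,x_i>0}x_i^2$, and finally match the optimal supports (with tie-breaking) to runs of Algorithm~\ref{algo:ProjectionNonnegativeSparseVectors} --- is exactly the standard argument and establishes both claims. Two small points. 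First, a bookkeeping wrinkle: you set $\Omega:=\supp(y)$ but then invoke the equality case $y_i=\max\{x_i,0\}$, which is incompatible with $i\in\supp(y)$ when $x_i\le 0$; the cleaner phrasing is to range over sets $\Omega\supseteq\supp(y)$ with $|\Omega|\le s$, after which your characterization of all minimizers goes through unchanged. Second, what your computation actually yields is $\dist(x,\sparse{n}{s}\cap\R_+^n)=\bigl(\sum_{i\notin\supp(y)}x_i^2\bigr)^{1/2}$, i.e., the Euclidean norm of the zeroed components; the proposition's literal wording ``the sum of the absolute values of the components \dots set to zero'' agrees with this only when at most one component is zeroed (which is the only situation in which the paper later uses the formula, e.g., for distances between consecutive strata). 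You were right to state the root-sum-of-squares version; just be aware that it is a correction of, not a verification of, the statement's literal phrasing. The remaining steps you label routine (strict suboptimality of omitting a strictly larger positive entry, and the equivalence between optimal supports and greedy smallest-first removals with arbitrary tie-breaking) are indeed routine and complete the proof.
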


\begin{algorithm}[H]
\caption{Projection onto the set of nonnegative sparse vectors}
\label{algo:ProjectionNonnegativeSparseVectors}
\begin{algorithmic}[1]
\Require
$(n, s)$ where $n$ and $s$ are positive integers such that $s < n$.
\Input
$x \in \R^n$.
\Output
$y \in \proj{\sparse{n}{s} \cap \R_+^n}{x}$.

\State
$y \gets x$;
\For
{$i \in \supp(x)$}
	\If
	{$x_i < 0$}
		\State
		$y_i \gets 0$;
	\EndIf
\EndFor
\While
{$|\supp(y)| > s$}
	\State
	Choose $i \in \argmin_{j \in \supp(y)} |y_j|$;
	\State
	$y_i \gets 0$;
\EndWhile
\State
Return $y$.
\end{algorithmic}
\end{algorithm}

Based on Proposition~\ref{prop:ProjectionNonnegativeSparseVectors}, we now prove that $\sparse{n}{s} \cap \R_+^n$ satisfies condition~1 of Assumption~\ref{assumption:Stratification}.

\begin{proposition}[stratification of the set of nonnegative sparse vectors]
\label{prop:StratificationNonnegativeSparseVectorsCondition1MainAssumption}
The stratification $\{\FixedSparsity{n}{0} \cap \R_+^n, \dots, \FixedSparsity{n}{s} \cap \R_+^n\}$ of $\sparse{n}{s} \cap \R_+^n$ satisfies condition~1 of Assumption~\ref{assumption:Stratification}.
\end{proposition}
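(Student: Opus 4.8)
The plan is to verify condition~1 of Assumption~\ref{assumption:Stratification} directly, taking $p := s$ and $S_i := \FixedSparsity{n}{i} \cap \R_+^n$ for $i \in \{0, \dots, s\}$. First I would record that each $S_i$ is a nonempty smooth submanifold of $\R^n$ of dimension $i$: writing $S_i = \bigsqcup_{|T| = i} \{x \in \R^n \mid \supp(x) = T,\ x_k > 0 \text{ for all } k \in T\}$, each piece is a relatively open subset of the $i$-dimensional coordinate subspace $\{x \mid \supp(x) \subseteq T\}$, and since every point of $S_i$ has support of cardinality exactly $i$, each piece is open in $S_i$; nonemptiness follows from $i \le s < n$. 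Condition~1(a) is then immediate because $\lvert\supp(\cdot)\rvert$ is constantly equal to $i$ on $S_i$. For condition~1(b) I would show $\overline{S_i} = \bigcup_{j=0}^i S_j = \sparse{n}{i} \cap \R_+^n$: the inclusion ``$\subseteq$'' holds because $\sparse{n}{i}$ (a finite union of coordinate subspaces) and $\R_+^n$ are both closed, and ``$\supseteq$'' follows by a density argument — given $x \in \sparse{n}{i} \cap \R_+^n$ with $\lvert\supp(x)\rvert = k \le i$, choose $i-k$ indices outside $\supp(x)$ (possible because $i \le s < n$) and perturb $x$ by adding $\varepsilon$ on each of them to obtain points of $S_i$ converging to $x$ as $\varepsilon \searrow 0$. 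Taking $i = s$ then gives $\overline{S_s} = \sparse{n}{s} \cap \R_+^n = C$.

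The remaining and most delicate point is condition~1(c). Assuming $s \ge 2$, fix $i \in \{2, \dots, s\}$, $x \in S_i$, and $j \in \{1, \dots, i-1\}$. Since $\dist(x, A) = \dist(x, \overline{A})$ for every set $A$, and $\overline{S_j} = \sparse{n}{j} \cap \R_+^n$ by 1(b), I would invoke Proposition~\ref{prop:ProjectionNonnegativeSparseVectors} with $j$ in place of $s$ (legitimate since $1 \le j \le s-1 < n$): because $x$ has exactly $i > j$ strictly positive entries and no negative entries, a projection of $x$ onto $\overline{S_j}$ is obtained by zeroing the $i-j$ entries of $x$ of smallest magnitude, so $\dist(x, S_j)$ depends only on those $i-j$ smallest positive entries; likewise $\dist(x, S_{j-1})$ depends on the $i-j+1$ smallest entries (for $j = 1$ this reads $\dist(x, S_0) = \norm{x}$, consistent with the pattern). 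Ordering the nonzero entries of $x$ as $0 < x_{(1)} \le \dots \le x_{(i)}$, one then has, for every $j' \in \{0, \dots, i-1\}$, $\dist(x, S_{j'})^2 = \sum_{\ell=1}^{i-j'} x_{(\ell)}^2$ in the Euclidean norm used throughout, so $\dist(x, S_{j-1})^2$ exceeds $\dist(x, S_j)^2$ by the single term $x_{(i-j+1)}^2 > 0$. Hence $\dist(x, S_j) < \dist(x, S_{j-1})$, which establishes condition~1(c).

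I expect the main obstacle to be precisely this last step: one must (i) pass from $\dist(x, S_j)$ to $\dist(x, \overline{S_j})$ so as to apply the explicit projection description of Proposition~\ref{prop:ProjectionNonnegativeSparseVectors}, and (ii) translate ``the projection onto the lower stratum zeros one further strictly positive coordinate'' into a \emph{strict} inequality, which requires keeping careful track of which coordinates are zeroed and using that every nonzero coordinate of a point of $S_i$ is genuinely positive. By contrast, the submanifold structure, condition~1(a), and condition~1(b) are routine; the one thing to keep in mind is that the hypothesis $s < n$ is exactly what leaves room to create the extra positive coordinates needed in the density argument and to keep $j$ and $j-1$ within the valid range for Proposition~\ref{prop:ProjectionNonnegativeSparseVectors}.
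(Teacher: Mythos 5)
Your proposal is correct and takes essentially the same route as the paper: a direct verification of condition~1(a), the same density and positive-distance arguments for condition~1(b), and condition~1(c) via Proposition~\ref{prop:ProjectionNonnegativeSparseVectors} (whose consequence, $\dist(x,S_{j'})^2=\sum_{\ell=1}^{i-j'}x_{(\ell)}^2$, you spell out explicitly where the paper merely cites the proposition). The only cosmetic difference is your submanifold argument, which presents each stratum as a union of relatively open pieces of coordinate subspaces instead of the paper's explicit chart construction via the submanifold property.
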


\begin{proof}
Using the submanifold property \cite[Proposition~3.3.2]{AbsilMahonySepulchre}, we first prove that, for every $i \in \{0, \dots, s\}$, $\FixedSparsity{n}{i} \cap \R_+^n$ is an $i$-dimensional embedded submanifold of $\R^n$.
For $\FixedSparsity{n}{0} \cap \R_+^n = \{0\}^n$, we take $U := \R^n$ and $\varphi : \R^n \to \R^n : x \mapsto x$, and we have
\begin{equation*}
\{x \in U \mid \varphi(x) \in \{0\}^n\}
= \{0\}^n
= (\FixedSparsity{n}{0} \cap \R_+^n) \cap U.
\end{equation*}
Let $\ushort{x} \in \FixedSparsity{n}{i} \cap \R_+^n$ with $i \in \{1, \dots, s\}$. For $U := \ball(\ushort{x}, \dist(\ushort{x}, \FixedSparsity{n}{i-1} \cap \R_+^n))$ and
\begin{equation*}
\varphi : U \to \R^n : x \mapsto ((x_j)_{j \in \supp(\ushort{x})}, (x_j)_{j \in \{1, \dots, n\} \setminus \supp(\ushort{x})}),
\end{equation*}
we have
\begin{equation*}
\{x \in U \mid \varphi(x) \in \R^i \times \{0\}^{n-i}\}
= \{x \in U \mid x_j = 0 \, \forall j \in \{1, \dots, n\} \setminus \supp(\ushort{x})\}
= (\FixedSparsity{n}{i} \cap \R_+^n) \cap U.
\end{equation*}
Thus, condition~1(a) is satisfied. By Proposition~\ref{prop:ProjectionNonnegativeSparseVectors}, condition~1(c) is satisfied too. To establish condition~1(b), it suffices to prove that, for all $i, j \in \{0, \dots, s\}$, $(\FixedSparsity{n}{j} \cap \R_+^n) \cap \overline{\FixedSparsity{n}{i} \cap \R_+^n} = \emptyset$ if $j > i$ and $\FixedSparsity{n}{j} \cap \R_+^n \subseteq \overline{\FixedSparsity{n}{i} \cap \R_+^n}$ if $j \le i$.
Let $i, j \in \{0, \dots, s\}$. If $j > i$, then, by Proposition~\ref{prop:ProjectionNonnegativeSparseVectors}, for all $x \in \FixedSparsity{n}{j} \cap \R_+^n$, $\ball(x, \dist(x, \FixedSparsity{n}{j-1} \cap \R_+^n)) \cap (\FixedSparsity{n}{i} \cap \R_+^n) = \emptyset$ and thus $x \not\in \overline{\FixedSparsity{n}{i} \cap \R_+^n}$. If $j \le i$, then, for all $x \in \FixedSparsity{n}{j} \cap \R_+^n$ and all $\varepsilon \in (0, \infty)$, $\ball[x, \varepsilon] \cap (\FixedSparsity{n}{i} \cap \R_+^n) \ne \emptyset$. This is clear if $j = i$. Let us prove it in the case where $j < i$. Let $x \in \FixedSparsity{n}{j} \cap \R_+^n$ and $\varepsilon \in (0, \infty)$. Let $I(x) \subseteq \{1, \dots, n\} \setminus \supp(x)$ such that $|I(x)| = i-j$. Define $y \in \R^n$ by $y_k := \frac{\varepsilon}{\sqrt{n}}$ if $k \in I(x)$ and $y_k := 0$ otherwise. Then, $x+y \in \ball[x, \varepsilon] \cap (\FixedSparsity{n}{i} \cap \R_+^n)$.
\end{proof}

\subsubsection{Tangent cone to the set of nonnegative sparse vectors}
\label{subsubsec:TangentConeNonnegativeSparseVectors}
In this section, we give an explicit description of the tangent cone to $\sparse{n}{s} \cap \R_+^n$ and show how to project onto it (Proposition~\ref{prop:TangentConeNonnegativeSparseVectors}). Then, we prove that $\sparse{n}{s} \cap \R_+^n$ satisfies the second statement of Theorem~\ref{thm:ExamplesStratifiedSetsSatisfyingMainAssumption} (Proposition~\ref{prop:GlobalSecondOrderUpperBoundDistanceToNonnegativeSparseVectorsFromTangentLine}) and condition~3 of Assumption~\ref{assumption:Stratification} (Proposition~\ref{prop:ContinuityTangentConeStratumNonnegativeSparseVectors}).

\begin{proposition}[tangent cone to the set of nonnegative sparse vectors]
\label{prop:TangentConeNonnegativeSparseVectors}
For every $x \in \sparse{n}{s} \cap \R_+^n$,
\begin{equation*}
\tancone{\sparse{n}{s} \cap \R_+^n}{x} = \left\{v \in \R^n \mid |\supp(x) \cup \supp(v)| \le s,\, v_i \ge 0 \; \forall i \in \{1, \dots, n\} \setminus \supp(x)\right\},
\end{equation*}
$\sparse{n}{s} \cap \R_+^n$ is geometrically derivable at $x$, and, for every $v \in \R^n$, $\proj{\tancone{\sparse{n}{s} \cap \R_+^n}{x}}{v}$ is the set of all possible outputs of Algorithm~\ref{algo:ProjectionTangentConeNonnegativeSparseVectors}.
\end{proposition}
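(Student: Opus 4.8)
Write $C := \sparse{n}{s} \cap \R_+^n$, fix $x \in C$, put $I := \supp(x)$ so that $|I| \le s$ and $x_i > 0$ for $i \in I$, and let $T$ denote the set claimed to equal $\tancone{C}{x}$. The plan is to establish the identity by proving the two inclusions separately, to read off geometric derivability from the proof of the ``easy'' inclusion, and then to handle the projection formula by a coordinatewise separation argument.

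For $T \subseteq \tancone{C}{x}$ I would argue directly with line segments: given $v \in T$, for all sufficiently small $t > 0$ the point $x + tv$ lies in $C$, because $\supp(x+tv) \subseteq I \cup \supp(v)$ has cardinality at most $s$, the entries indexed by $I$ remain positive, and each entry indexed by $i \notin I$ equals $t v_i \ge 0$. Hence $\gamma(t) := x + tv$ defines a curve in $C$ on some interval $[0, \tau]$ with $\gamma(0) = x$ and $\gamma'(0) = v$, which shows both that $v \in \tancone{C}{x}$ and that $v$ is a derivable tangent vector. Once the reverse inclusion is in hand, this gives that every element of $\tancone{C}{x}$ is derivable, i.e.\ $C$ is geometrically derivable at $x$.

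For $\tancone{C}{x} \subseteq T$ I would use the sequential characterization~\eqref{eq:TangentConeSequence}: if $v \in \tancone{C}{x}$, take $t_k \searrow 0$ and $y^k \to x$ in $C$ with $(y^k - x)/t_k \to v$. For $i \notin I$ one has $x_i = 0$ and $(y^k)_i \ge 0$, so $v_i \ge 0$. For the joint sparsity bound, convergence $y^k \to x$ forces $(y^k)_i \ne 0$ for $i \in I$ and $k$ large, so $I \subseteq \supp(y^k)$ eventually; and for each of the finitely many $i \in \supp(v) \setminus I$, the ratio $(y^k)_i / t_k \to v_i \ne 0$ forces $i \in \supp(y^k)$ eventually; hence $I \cup \supp(v) \subseteq \supp(y^k)$ for large $k$ and therefore $|I \cup \supp(v)| \le s$, i.e.\ $v \in T$.

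Finally, for the projection onto $\tancone{C}{x} = T$, I would minimize $\norm{v-w}^2$ over $w \in T$ by separating the coordinates in $I$ (which are unconstrained, so the optimum takes $w_i = v_i$ there) from those outside $I$, where the constraint reads $w_i \ge 0$ and $|\supp(w) \setminus I| \le s - |I|$. For a fixed admissible support $J \subseteq \{1,\dots,n\} \setminus I$ the best $w$ has $w_i = \max\{v_i, 0\}$ for $i \in J$ and $w_i = 0$ otherwise, with squared error equal to $\sum_{i \notin I} (\max\{-v_i,0\})^2 + \sum_{i \notin I \cup J} (\max\{v_i,0\})^2$; since the first sum does not depend on $J$, the optimum picks $J$ to collect the $s - |I|$ largest values of $\max\{v_i,0\}$, which is precisely what Algorithm~\ref{algo:ProjectionTangentConeNonnegativeSparseVectors} returns, with ties resolved arbitrarily. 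The bulk of the work is routine; the step I expect to require the most care is this greedy reduction in the projection argument (together with the degenerate case $|I| = s$, in which no coordinate outside $I$ is retained), and, to a lesser extent, the bookkeeping guaranteeing $I \cup \supp(v) \subseteq \supp(y^k)$ for all large $k$ in the second inclusion.
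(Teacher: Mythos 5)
Your proof is correct, and it diverges from the paper's argument in one place worth noting: the inclusion $\tancone{C}{x} \subseteq T$. The paper proves this by contraposition using the distance characterization~\eqref{eq:TangentConeDistance}: it first shows $\supp(x+tv) = \supp(x) \cup \supp(v)$ for all small $t>0$, and then, when $v$ violates one of the two defining conditions of $T$, bounds $\dist(x+tv, C)/t$ away from zero (by $-v_i$ in the sign case, by $\min_{i \in \supp(v)\setminus\supp(x)}|v_i|$ in the cardinality case). You instead argue directly from the sequential characterization~\eqref{eq:TangentConeSequence}, showing that nonnegativity off $\supp(x)$ passes to the limit and that $\supp(x)\cup\supp(v) \subseteq \supp(y^k)$ for large $k$, so the sparsity bound is inherited from $|\supp(y^k)| \le s$; this avoids any distance estimates and is arguably the cleaner route for this inclusion, while the paper's version has the side benefit of exhibiting explicit quantitative lower bounds on the distance quotient. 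Your treatment of the other inclusion and of geometric derivability (straight-line curves $t \mapsto x+tv$ staying in $C$ for small $t$) coincides with the paper's. For the projection formula the paper simply states that it ``follows'' from the description of the cone; your coordinatewise separation plus greedy support-selection argument (free coordinates on $\supp(x)$, $w_i=\max\{v_i,0\}$ on the chosen support, keep the $s-|\supp(x)|$ largest positive entries, ties arbitrary, degenerate case $|\supp(x)|=s$) supplies exactly the missing details and matches the set of outputs of Algorithm~\ref{algo:ProjectionTangentConeNonnegativeSparseVectors}, so no gap there either.
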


\begin{proof}
We establish the equation for the tangent cone; the projection onto it follows.
Since $\sparse{n}{s} \cap \R_+^n$ is a closed cone, $\tancone{\sparse{n}{s} \cap \R_+^n}{0} = \sparse{n}{s} \cap \R_+^n$, in agreement with the equation, and $\sparse{n}{s} \cap \R_+^n$ is geometrically derivable at $0$.
Let $x \in \FixedSparsity{n}{j} \cap \R_+^n$ with $j \in \{1, \dots, s\}$ and $v \in \R^n \setminus \{0\}$.
Then, $\dist(x, \FixedSparsity{n}{j-1} \cap \R_+^n) = \min_{i \in \supp(x)} x_i$ and for all $t \in \Big(0, \frac{\dist(x, \FixedSparsity{n}{j-1} \cap \R_+^n)}{2\norm{v}_\infty}\Big)$:
\begin{itemize}
\item for all $i \in \supp(x) \setminus \supp(v)$, $x_i+tv_i = x_i \ge \dist(x, \FixedSparsity{n}{j-1} \cap \R_+^n)$;
\item for all $i \in \supp(v) \setminus \supp(x)$, $|x_i+tv_i| = t|v_i| \in (0, \frac{1}{2}\dist(x, \FixedSparsity{n}{j-1} \cap \R_+^n))$;
\item for all $i \in \supp(x) \cap \supp(v)$, $x_i+tv_i \ge x_i-t|v_i| > \frac{1}{2}\dist(x, \FixedSparsity{n}{j-1} \cap \R_+^n)$;
\item for all $i \in \{1, \dots, n\} \setminus (\supp(x) \cup \supp(v))$, $x_i+tv_i = 0$.
\end{itemize}
Thus, for all $t \in \Big(0, \frac{\dist(x, \FixedSparsity{n}{j-1} \cap \R_+^n)}{2\norm{v}_\infty}\Big)$, $\supp(x+tv) = \supp(x) \cup \supp(v)$.

We establish the inclusion $\supseteq$. Assume that $|\supp(x) \cup \supp(v)| \le s$ and $v_i \ge 0$ for all $i \in \{1, \dots, n\} \setminus \supp(x)$. Then, for all $t \in \Big(0, \frac{\dist(x, \FixedSparsity{n}{j-1} \cap \R_+^n)}{2\norm{v}_\infty}\Big)$, $x+tv \in \sparse{n}{s} \cap \R_+^n$ and thus $\dist(x+tv, \sparse{n}{s} \cap \R_+^n)/t = 0$. Therefore, $\lim_{t \searrow 0} \dist(x+tv, \sparse{n}{s} \cap \R_+^n)/t = 0$ and it follows that $v \in \tancone{\sparse{n}{s} \cap \R_+^n}{x}$ and $v$ is geometrically derivable.

We now establish the inclusion $\subseteq$. Assume that $v$ is not in the right-hand side.
We first consider the case where there exists $i \in \{1, \dots, n\} \setminus \supp(x)$ such that $v_i < 0$. Then, for all $t \in \Big(0, \frac{\dist(x, \FixedSparsity{n}{j-1} \cap \R_+^n)}{2\norm{v}_\infty}\Big)$, $x+tv \not\in \sparse{n}{s} \cap \R_+^n$ and $\dist(x+tv, \sparse{n}{s} \cap \R_+^n)/t \ge -v_i$. Therefore, $\lim_{t \searrow 0} \dist(x+tv, \sparse{n}{s} \cap \R_+^n)/t = -v_i > 0$ and it follows that $v \not\in \tancone{\sparse{n}{s} \cap \R_+^n}{x}$.
We now consider the case where $|\supp(x) \cup \supp(v)| > s$. Then, $\supp(v) \setminus \supp(x) \ne \emptyset$ and, for all $t \in \Big(0, \frac{\dist(x, \FixedSparsity{n}{j-1} \cap \R_+^n)}{2\norm{v}_\infty}\Big)$, $x+tv \not\in \sparse{n}{s} \cap \R_+^n$ and $\dist(x+tv, \sparse{n}{s} \cap \R_+^n)/t \ge \min_{i \in \supp(v) \setminus \supp(x)} |v_i|$. Therefore, $\lim_{t \searrow 0} \dist(x+tv, \sparse{n}{s} \cap \R_+^n)/t \ge \min_{i \in \supp(v) \setminus \supp(x)} |v_i| > 0$, which shows that $v \not\in \tancone{\sparse{n}{s} \cap \R_+^n}{x}$.
\end{proof}

\begin{algorithm}[H]
\caption{Projection onto the tangent cone to the set of nonnegative sparse vectors}
\label{algo:ProjectionTangentConeNonnegativeSparseVectors}
\begin{algorithmic}[1]
\Require
$(n, s, x)$ where $n$ and $s$ are positive integers such that $s < n$, and $x \in \sparse{n}{s} \cap \R_+^n$.
\Input
$v \in \R^n$.
\Output
$w \in \proj{\tancone{\sparse{n}{s} \cap \R_+^n}{x}}{v}$.

\State
$w \gets v$;
\For
{$i \in \supp(v) \setminus \supp(x)$}
	\If
	{$v_i < 0$}
		\State
		$w_i \gets 0$;
	\EndIf
\EndFor
\While
{$|\supp(x) \cup \supp(w)| > s$}
	\State
	Choose $i \in \argmin_{j \in \supp(w) \setminus \supp(x)} |w_j|$;
	\State
	$w_i \gets 0$;
\EndWhile
\State
Return $w$.
\end{algorithmic}
\end{algorithm}

Proposition~\ref{prop:GlobalSecondOrderUpperBoundDistanceToNonnegativeSparseVectorsFromTangentLine} shows that $\sparse{n}{s} \cap \R_+^n$ satisfies the second statement of Theorem~\ref{thm:ExamplesStratifiedSetsSatisfyingMainAssumption}.

\begin{proposition}
\label{prop:GlobalSecondOrderUpperBoundDistanceToNonnegativeSparseVectorsFromTangentLine}
For all $x \in \sparse{n}{s} \cap \R_+^n \setminus \{0\}^n$,
\begin{equation*}
\sup_{v \in \tancone{\sparse{n}{s} \cap \R_+^n}{x} \setminus \{0\}^n} \frac{\dist(x+v, \sparse{n}{s} \cap \R_+^n)}{\norm{v}^2} \in \left[\frac{1}{4}\tilde{u}(x), \tilde{u}(x)\right],
\end{equation*}
where $\tilde{u}(x) := \frac{1}{\dist(x, \FixedSparsity{n}{i-1} \cap \R_+^n)}$ if $x \in \FixedSparsity{n}{i} \cap \R_+^n$ with $i \in \{1, \dots, s\}$.
\end{proposition}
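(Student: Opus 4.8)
The plan is to work directly from the explicit description of $\tancone{C}{x}$ with $C:=\sparse{n}{s}\cap\R_+^n$ given by Proposition~\ref{prop:TangentConeNonnegativeSparseVectors}, and from the identity $\dist(x,\FixedSparsity{n}{i-1}\cap\R_+^n)=\min_{k\in\supp(x)}x_k$ recalled in its proof. Write $I:=\supp(x)$ and $\delta:=\min_{k\in I}x_k$, which is positive since $x\ne 0$, so that $\tilde{u}(x)=\tfrac{1}{\delta}$. The crucial preliminary observation is that, for every $v\in\tancone{C}{x}$, the vector $x+v$ already belongs to $\sparse{n}{s}$: indeed $\supp(x+v)\subseteq I\cup\supp(v)$ by~\eqref{eq:SubadditivitySupport}, and $\lvert I\cup\supp(v)\rvert\le s$ by the tangent cone formula. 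Hence the componentwise nonnegative part of $x+v$ also lies in $\sparse{n}{s}$, thus in $C$, which forces $\dist(x+v,C)=\dist(x+v,\R_+^n)$ and therefore $\dist(x+v,C)^2=\sum_{k\in N}(x_k+v_k)^2$, where $N:=\{k\in\{1,\dots,n\}\mid (x+v)_k<0\}$. Finally, since $v_k\ge 0$ for $k\notin I$ (again by the tangent cone formula), one has $N\subseteq I$; and for $k\in N$, $v_k<-x_k<0$, so $\lvert v_k\rvert>x_k\ge\delta$ and $(x_k+v_k)^2=(\lvert v_k\rvert-x_k)^2$.

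For the upper inequality, I would bound each term of the sum by $\lvert v_k\rvert-x_k\le\lvert v_k\rvert\le\tfrac{\lvert v_k\rvert^2}{x_k}\le\tfrac{\lvert v_k\rvert^2}{\delta}$, which is valid because $0<\delta\le x_k<\lvert v_k\rvert$ for $k\in N$. Squaring, summing over $N$, and using $\sqrt{\sum_k b_k^2}\le\sum_k b_k$ for nonnegative reals $b_k$ (here $b_k:=\lvert v_k\rvert^2$) yields $\dist(x+v,C)\le\tfrac{1}{\delta}\sum_{k\in N}\lvert v_k\rvert^2\le\tfrac{\norm{v}^2}{\delta}=\tilde{u}(x)\norm{v}^2$, hence $\sup_{v}\dist(x+v,C)/\norm{v}^2\le\tilde{u}(x)$.

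For the lower inequality, I would exhibit one explicit tangent direction that saturates the bound up to the factor $\tfrac14$. Pick $k_0\in\argmin_{k\in I}x_k$ and set $v:=-2\delta\,e_{k_0}$, where $e_{k_0}$ is the $k_0$th vector of the canonical basis of $\R^n$; since $\supp(v)=\{k_0\}\subseteq I$, we have $v\in\tancone{C}{x}$. Then $x+v$ has a single negative component, $(x+v)_{k_0}=\delta-2\delta=-\delta$, so $\dist(x+v,C)=\delta$ while $\norm{v}^2=4\delta^2$, giving $\dist(x+v,C)/\norm{v}^2=\tfrac{1}{4\delta}=\tfrac14\tilde{u}(x)$; thus the supremum is at least $\tfrac14\tilde{u}(x)$. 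The factor $\tfrac14$ is the best attainable with these one-coordinate directions because $\lambda\mapsto(\lambda-\delta)/\lambda^2$ is maximized at $\lambda=2\delta$. I do not anticipate a genuine difficulty: the only two points needing care are the reduction $\dist(x+v,C)=\dist(x+v,\R_+^n)$ — that is, that a tangent perturbation never consumes sparsity budget, so the projection onto $C$ is simply the projection onto the nonnegative orthant — and the elementary scalar estimate $\lvert v_k\rvert-x_k\le\lvert v_k\rvert^2/\delta$; the rest is bookkeeping with supports.
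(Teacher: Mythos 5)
Your proof is correct, and your lower bound is exactly the paper's: the witness $v=-2\delta e_{k_0}$ with $\delta=\min_{k\in\supp(x)}x_k=\dist(x,\FixedSparsity{n}{i-1}\cap\R_+^n)$ yields the ratio $\tfrac{1}{4}\tilde{u}(x)$. For the upper bound you take a somewhat different route: the paper splits into the cases $\norm{v}<\delta$, where it checks componentwise that $x+v\in\sparse{n}{s}\cap\R_+^n$ so the ratio is $0$, and $\norm{v}\ge\delta$, where it simply uses $\dist(x+v,\sparse{n}{s}\cap\R_+^n)\le\norm{(x+v)-x}=\norm{v}\le\norm{v}^2/\delta$; you instead note that a tangent direction never consumes sparsity budget, so $\dist(x+v,\sparse{n}{s}\cap\R_+^n)=\dist(x+v,\R_+^n)$, and then estimate coordinatewise on the negative coordinates via $\lvert v_k\rvert-x_k\le\lvert v_k\rvert^2/\delta$ (valid because there $\lvert v_k\rvert>x_k\ge\delta$). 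Both arguments are elementary and give the same bound $\tilde{u}(x)$; yours is uniform in $v$ (no case split) and produces the exact value of the distance, which also makes the lower-bound computation immediate, while the paper's dichotomy avoids the coordinate bookkeeping. One small point: when asserting $\dist(x+v,C)=\dist(x+v,\R_+^n)$ with $C=\sparse{n}{s}\cap\R_+^n$, state explicitly that the inequality $\ge$ comes from $C\subseteq\R_+^n$; your text only justifies $\le$ (the nonnegative part of $x+v$ lies in $C$), and the $\ge$ direction is what the lower-bound witness relies on.
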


\begin{proof}
Let $x \in \FixedSparsity{n}{j} \cap \R_+^n$ with $j \in \{1, \dots, s\}$. We first establish the upper bound. If $\norm{v} < \frac{1}{\tilde{u}(x)}$, then $x+v \in \sparse{n}{s} \cap \R_+^n$. Indeed:
\begin{itemize}
\item for all $i \in \supp(x) \setminus \supp(v)$, $x_i+v_i = x_i \ge \dist(x, \FixedSparsity{n}{j-1} \cap \R_+^n)$;
\item for all $i \in \supp(v) \setminus \supp(x)$, $x_i+v_i = v_i \in (0, \dist(x, \FixedSparsity{n}{j-1} \cap \R_+^n))$;
\item for all $i \in \supp(x) \cap \supp(v)$, $x_i+v_i \ge x_i-|v_i| > 0$;
\item for all $i \in \{1, \dots, n\} \setminus (\supp(x) \cup \supp(v))$, $x_i+v_i = 0$.
\end{itemize}
Thus, if $\norm{v} < \frac{1}{\tilde{u}(x)}$, then $\dist(x+v, \sparse{n}{s} \cap \R_+^n) = 0$ and $\dist(x+v, \sparse{n}{s} \cap \R_+^n)/\norm{v}^2 = 0 \le \frac{1}{\tilde{u}(x)}$. If $\norm{v} \ge \frac{1}{\tilde{u}(x)}$, then
\begin{equation*}
\frac{\dist(x+v, \sparse{n}{s} \cap \R_+^n)}{\norm{v}^2}
\le \frac{\norm{(x+v)-x}}{\norm{v}^2}
= \frac{1}{\norm{v}}
\le \tilde{u}(x).
\end{equation*}
The lower bound follows from the fact that, if $i \in \supp(x)$, $x_i = \dist(x, \FixedSparsity{n}{j-1} \cap \R_+^n)$, and $v := (-2x_i\delta_{i, k})_{k \in \{1, \dots, n\}} \in \tancone{\sparse{n}{s} \cap \R_+^n}{x} \setminus \{0\}^n$, then $\dist(x+v, \sparse{n}{s} \cap \R_+^n)/\norm{v}^2 = \tilde{u}(x)/4$.
\end{proof}

Proposition~\ref{prop:ContinuityTangentConeStratumNonnegativeSparseVectors} states that $\sparse{n}{s} \cap \R_+^n$ satisfies condition~3 of Assumption~\ref{assumption:Stratification}.

\begin{proposition}
\label{prop:ContinuityTangentConeStratumNonnegativeSparseVectors}
For every $i \in \{0, \dots, s\}$, the correspondence $\tancone{\sparse{n}{s} \cap \R_+^n}{\cdot}$ is continuous at every $x \in \FixedSparsity{n}{i} \cap \R_+^n$ relative to $\FixedSparsity{n}{i} \cap \R_+^n$.
\end{proposition}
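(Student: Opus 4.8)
The plan is to prove something slightly stronger: that the correspondence $\tancone{\sparse{n}{s} \cap \R_+^n}{\cdot}$ is \emph{locally constant} on each stratum $\FixedSparsity{n}{i} \cap \R_+^n$, which immediately yields continuity relative to the stratum. The engine is Proposition~\ref{prop:TangentConeNonnegativeSparseVectors}: the tangent cone at a point $x$ is determined entirely by $\supp(x)$, and $\supp(\cdot)$ is locally constant on a fixed-sparsity stratum because the nonzero entries of a point of $\FixedSparsity{n}{i} \cap \R_+^n$ are bounded away from $0$.

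First I would fix $i \in \{0, \dots, s\}$ and $x \in \FixedSparsity{n}{i} \cap \R_+^n$. The case $i = 0$ is trivial since $\FixedSparsity{n}{0} \cap \R_+^n = \{0\}^n$ is a singleton, so assume $i \ge 1$ and set $\mu := \min_{j \in \supp(x)} x_j$, which is positive because $x$ has exactly $i$ nonnegative, hence positive, entries on $\supp(x)$. I would then check that every $z \in \ball(x, \mu) \cap (\FixedSparsity{n}{i} \cap \R_+^n)$ satisfies $\supp(z) = \supp(x)$: for $j \in \supp(x)$, the bound $|z_j - x_j| \le \norm{z - x} < \mu \le x_j$ forces $z_j > 0$, so $\supp(x) \subseteq \supp(z)$, and equality follows from $|\supp(z)| = i = |\supp(x)|$.

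Next I would feed this into the tangent-cone formula of Proposition~\ref{prop:TangentConeNonnegativeSparseVectors}: since $\tancone{\sparse{n}{s} \cap \R_+^n}{z}$ depends on $z$ only through $\supp(z)$, the previous step gives $\tancone{\sparse{n}{s} \cap \R_+^n}{z} = \tancone{\sparse{n}{s} \cap \R_+^n}{x}$ for every $z \in \ball(x, \mu) \cap (\FixedSparsity{n}{i} \cap \R_+^n)$. To conclude continuity at $x$ relative to $\FixedSparsity{n}{i} \cap \R_+^n$ in the sense of Section~\ref{subsec:InnerOuterLimitsContinuityCorrespondences}, I would take an arbitrary sequence $(x^k)_{k \in \N}$ in $\FixedSparsity{n}{i} \cap \R_+^n$ converging to $x$; eventually $x^k \in \ball(x, \mu)$, hence eventually $\tancone{\sparse{n}{s} \cap \R_+^n}{x^k} = \tancone{\sparse{n}{s} \cap \R_+^n}{x}$, which forces $\setlim_{k \to \infty} \tancone{\sparse{n}{s} \cap \R_+^n}{x^k} = \tancone{\sparse{n}{s} \cap \R_+^n}{x}$, i.e., the required equality of inner and outer limits.

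There is essentially no obstacle here: the whole argument reduces to the elementary observation that $\supp(\cdot)$ is locally constant on $\FixedSparsity{n}{i} \cap \R_+^n$, the only mild care being the degenerate stratum $i = 0$ and the bookkeeping with the definition of continuity of a correspondence relative to a subset. (An equivalent route is to note that for each fixed index set $T$ with $|T| = i$ the slice $\{v \in \R^n \mid \supp(v) \subseteq T\} \cap \R_+^n$ contains a relative neighborhood of $x$ in $\FixedSparsity{n}{i} \cap \R_+^n$, on which the tangent cone is constant; the sequential version above is just slightly cleaner.)
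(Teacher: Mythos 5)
Your proof is correct and follows essentially the same route as the paper: show that the support, and hence by Proposition~\ref{prop:TangentConeNonnegativeSparseVectors} the tangent cone, is locally constant on each stratum $\FixedSparsity{n}{i} \cap \R_+^n$, then conclude via the sequential characterization of continuity of a correspondence. The only cosmetic difference is that you take the radius $\min_{j \in \supp(x)} x_j$ explicitly, while the paper uses $\dist(x, \FixedSparsity{n}{i-1} \cap \R_+^n)$, which is the same quantity.
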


\begin{proof}
The result is clear if $i = 0$ since $\FixedSparsity{n}{0} \cap \R_+^n = \{0\}^n$. Let us therefore consider $i \in \{1, \dots, s\}$. We have to prove that, for every sequence $(x^j)_{j \in \N}$ in $\FixedSparsity{n}{i} \cap \R_+^n$ converging to $x \in \FixedSparsity{n}{i} \cap \R_+^n$, it holds that
\begin{equation*}
\outlim_{j \to \infty} \tancone{\sparse{n}{s} \cap \R_+^n}{x^j}
\subseteq \tancone{\sparse{n}{s} \cap \R_+^n}{x}
\subseteq \inlim_{j \to \infty} \tancone{\sparse{n}{s} \cap \R_+^n}{x^j}.
\end{equation*}
Let $x \in \FixedSparsity{n}{i} \cap \R_+^n$. Then, for all $y \in \ball(x, \dist(x, \FixedSparsity{n}{i-1} \cap \R_+^n)) \cap (\FixedSparsity{n}{i} \cap \R_+^n)$, $\supp(y) = \supp(x)$ and thus, by Proposition~\ref{prop:TangentConeNonnegativeSparseVectors}, $\tancone{\sparse{n}{s} \cap \R_+^n}{y} = \tancone{\sparse{n}{s} \cap \R_+^n}{x}$. Thus, the result follows from the fact that a sequence $(x^j)_{j \in \N}$ in $\FixedSparsity{n}{i} \cap \R_+^n$ converging to $x$ contains finitely many elements in $\FixedSparsity{n}{i} \cap \R_+^n \setminus \ball(x, \dist(x, \FixedSparsity{n}{i-1} \cap \R_+^n))$.
\end{proof}

\subsubsection{Normal cones to the set of nonnegative sparse vectors}
\label{subsubsec:NormalConesNonnegativeSparseVectors}
In this section, we determine the regular normal cone to $\sparse{n}{s} \cap \R_+^n$ (Proposition~\ref{prop:RegularNormalConeNonnegativeSparseVectors}). Based on that, we deduce the normal cone and the Clarke normal cone to $\sparse{n}{s} \cap \R_+^n$ (Proposition~\ref{prop:NormalConeNonnegativeSparseVectors} and Corollary~\ref{coro:ClarkeNormalConeNonnegativeSparseVectors}), and we prove that the sets of apocalyptic and serendipitous points of $\sparse{n}{s} \cap \R_+^n$ both equal $\StrictSparsity{n}{s} \cap \R_+^n$ (Proposition~\ref{prop:NonnegativeSparseVectorsApocalypticSerendipitousPoints}).

\begin{proposition}[regular normal cone to the set of nonnegative sparse vectors]
\label{prop:RegularNormalConeNonnegativeSparseVectors}
For all $x \in \sparse{n}{s} \cap \R_+^n$,
\begin{equation*}
\regnorcone{\sparse{n}{s} \cap \R_+^n}{x}
= \left\{\begin{array}{ll}
\{w \in \R^n \mid \supp(w) \subseteq \{1, \dots, n\} \setminus \supp(x)\} & \text{if } x \in \FixedSparsity{n}{s} \cap \R_+^n,\\[1mm]
\{w \in \R_-^n \mid \supp(w) \subseteq \{1, \dots, n\} \setminus \supp(x)\} & \text{if } x \in \StrictSparsity{n}{s} \cap \R_+^n.
\end{array}\right.
\end{equation*}
\end{proposition}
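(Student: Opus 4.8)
The plan is to compute $\regnorcone{\sparse{n}{s} \cap \R_+^n}{x}$ directly from the definition~\eqref{eq:RegularNormalCone}, namely as the polar of the tangent cone, whose explicit description is provided by Proposition~\ref{prop:TangentConeNonnegativeSparseVectors}. Throughout, write $I := \supp(x)$ and $I^c := \{1, \dots, n\} \setminus I$, and let $j := |I|$, so that $x \in \FixedSparsity{n}{j} \cap \R_+^n$ with $j \in \{0, \dots, s\}$; by Proposition~\ref{prop:TangentConeNonnegativeSparseVectors}, the tangent cone is the set of $v \in \R^n$ with $|I \cup \supp(v)| \le s$ and $v_i \ge 0$ for all $i \in I^c$.

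First I would dispose of the case $x \in \FixedSparsity{n}{s} \cap \R_+^n$, i.e., $j = s$: then $|I \cup \supp(v)| \le s$ forces $\supp(v) \subseteq I$, so the sign conditions on $I^c$ are vacuous and the tangent cone equals the coordinate subspace $V_I := \{v \in \R^n \mid \supp(v) \subseteq I\}$. Its polar is the orthogonal complement $V_I^\perp = \{w \in \R^n \mid \supp(w) \subseteq I^c\}$, which is exactly the claimed formula.

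Next I would treat $x \in \StrictSparsity{n}{s} \cap \R_+^n$, i.e., $j < s$ (the case $x = 0$ included, since $s \ge 1$). The key observation is that the tangent cone contains $V_I$ (take any $v$ with $\supp(v) \subseteq I$) as well as each ray $\R_{\ge 0} e_i$, $i \in I^c$ (since $|I \cup \{i\}| = j+1 \le s$). From $V_I \subseteq \tancone{\sparse{n}{s} \cap \R_+^n}{x}$ and the fact that $V_I$ is a linear subspace, any $w$ in the polar satisfies $\ip{w}{v} = 0$ for all $v \in V_I$, hence $\supp(w) \subseteq I^c$; testing $w$ against $e_i$, $i \in I^c$, then gives $w_i \le 0$, so $w \in \R_-^n$. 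Conversely, if $w \in \R_-^n$ with $\supp(w) \subseteq I^c$, then for every $v$ in the tangent cone one has $\ip{w}{v} = \sum_{i \in I^c} w_i v_i \le 0$, since each $w_i \le 0$ and each $v_i \ge 0$; hence $w$ lies in the polar. This yields $\regnorcone{\sparse{n}{s} \cap \R_+^n}{x} = \{w \in \R_-^n \mid \supp(w) \subseteq I^c\}$.

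The argument is elementary; the only point requiring a moment of care — hardly a genuine obstacle — is the dichotomy itself: one must notice that the nonnegativity constraints along $I^c$ collapse the tangent cone to a full coordinate subspace precisely when $|I| = s$, which is why the regular normal cone jumps from its ``negative orthant'' form to the whole orthogonal complement as $x$ moves into the top stratum $\FixedSparsity{n}{s} \cap \R_+^n$.
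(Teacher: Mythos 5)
Your proposal is correct and follows essentially the same route as the paper: it computes $\regnorcone{\sparse{n}{s} \cap \R_+^n}{x}$ as the polar of the tangent cone described in Proposition~\ref{prop:TangentConeNonnegativeSparseVectors}, splitting into the cases $|\supp(x)| = s$ and $|\supp(x)| < s$ and testing against coordinate directions (the paper tests $\pm e_i$ individually where you polarize the subspace $V_I$ at once, and you spell out the reverse inclusion that the paper leaves implicit, but these are cosmetic differences).
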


\begin{proof}
The proof is based on Proposition~\ref{prop:TangentConeNonnegativeSparseVectors}. Let $x \in \sparse{n}{s} \cap \R_+^n$. By \eqref{eq:RegularNormalCone},
\begin{equation*}
\regnorcone{\sparse{n}{s} \cap \R_+^n}{x}
= \left\{w \in \R^n \mid \ip{w}{v} \le 0 \; \forall v \in \tancone{\sparse{n}{s} \cap \R_+^n}{x}\right\}.
\end{equation*}
Assume that $x \in \StrictSparsity{n}{s} \cap \R_+^n$. Then, for all $i \in \{1, \dots, n\}$, $v := (\delta_{i, j})_{j \in \{1, \dots, n\}} \in \tancone{\sparse{n}{s} \cap \R_+^n}{x}$ and, for all $w \in \R^n$, $\ip{w}{v} = w_i$. Moreover, for all $i \in \supp(x)$, $v := (-\delta_{i, j})_{j \in \{1, \dots, n\}} \in \tancone{\sparse{n}{s} \cap \R_+^n}{x}$ and, for all $w \in \R^n$, $\ip{w}{v} = -w_i$. Thus, $\regnorcone{\sparse{n}{s} \cap \R_+^n}{x} = \{w \in \R_-^n \mid \supp(w) \subseteq \{1, \dots, n\} \setminus \supp(x)\}$.
Assume now that $x \in \FixedSparsity{n}{s} \cap \R_+^n$. Then, for all $v \in \tancone{\sparse{n}{s} \cap \R_+^n}{x}$ and all $i \in \{1, \dots, n\} \setminus \supp(x)$, $v_i = 0$. Thus, for all $w \in \R^n$ and all $v \in \tancone{\sparse{n}{s} \cap \R_+^n}{x}$, $\ip{w}{v} = \sum_{i \in \supp(x)} w_i v_i$. Since, for all $i \in \supp(x)$, $v := (\delta_{i, j})_{j \in \{1, \dots, n\}} \in \tancone{\sparse{n}{s} \cap \R_+^n}{x}$, $-v \in \tancone{\sparse{n}{s} \cap \R_+^n}{x}$, and, for all $w \in \R^n$, $\ip{w}{v} = w_i$, we have $\regnorcone{\sparse{n}{s} \cap \R_+^n}{x} \subseteq \{w \in \R^n \mid \supp(w) \subseteq \{1, \dots, n\} \setminus \supp(x)\}$. The converse inclusion also holds, and the result follows.
\end{proof}

\begin{proposition}[{normal cone to the set of nonnegative sparse vectors \cite[Theorem~3.4]{Tam2017}}]
\label{prop:NormalConeNonnegativeSparseVectors}
For all $x \in \sparse{n}{s} \cap \R_+^n$,
\begin{equation*}
\norcone{\sparse{n}{s} \cap \R_+^n}{x} = \regnorcone{\sparse{n}{s} \cap \R_+^n}{x} \cup \{w \in \sparse{n}{n-s} \mid \supp(w) \subseteq \{1, \dots, n\} \setminus \supp(x)\}.
\end{equation*}
In particular, $\sparse{n}{s} \cap \R_+^n$ is not Clarke regular on $\StrictSparsity{n}{s} \cap \R_+^n$.
\end{proposition}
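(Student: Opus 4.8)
The plan is to evaluate the outer limit in $\norcone{\sparse{n}{s} \cap \R_+^n}{x} = \outlim_{\sparse{n}{s} \cap \R_+^n \ni z \to x} \regnorcone{\sparse{n}{s} \cap \R_+^n}{z}$ (see~\eqref{eq:NormalCone}) using the explicit formula for $\regnorcone{\sparse{n}{s} \cap \R_+^n}{\cdot}$ from Proposition~\ref{prop:RegularNormalConeNonnegativeSparseVectors}. The one geometric fact I would use repeatedly is that, because each coordinate map is continuous and $x_i \ne 0$ for $i \in \supp(x)$, every $z \in \sparse{n}{s} \cap \R_+^n$ close enough to $x$ satisfies $\supp(x) \subseteq \supp(z)$. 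If $x \in \FixedSparsity{n}{s} \cap \R_+^n$, this forces $\supp(z) = \supp(x)$, so $\regnorcone{\sparse{n}{s} \cap \R_+^n}{z} = \regnorcone{\sparse{n}{s} \cap \R_+^n}{x}$ for all $z$ near $x$ in $\sparse{n}{s} \cap \R_+^n$, whence the outer limit equals $\regnorcone{\sparse{n}{s} \cap \R_+^n}{x}$; moreover $\{w \in \sparse{n}{n-s} \mid \supp(w) \subseteq \{1,\dots,n\} \setminus \supp(x)\}$ then coincides with $\regnorcone{\sparse{n}{s} \cap \R_+^n}{x}$ (the bound $|\supp(w)| \le n-s$ being automatic when $|\supp(x)| = s$), so the asserted identity holds. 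The remaining work is entirely for $x \in \StrictSparsity{n}{s} \cap \R_+^n$.

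For the inclusion $\supseteq$ in that case, $\regnorcone{\sparse{n}{s} \cap \R_+^n}{x} \subseteq \norcone{\sparse{n}{s} \cap \R_+^n}{x}$ always holds (Section~\ref{subsec:TangentNormalConesGeometricDerivability}), so it suffices to put each $w$ with $w \in \sparse{n}{n-s}$ and $\supp(w) \subseteq \{1,\dots,n\} \setminus \supp(x)$ into the outer limit. Here I would approximate $x$ by points of the top stratum whose supports avoid $\supp(w)$: since $\supp(x) \cap \supp(w) = \emptyset$ and $|\supp(w)| \le n-s$, there are at least $s - |\supp(x)|$ indices outside $\supp(x) \cup \supp(w)$, hence a set $T$ with $\supp(x) \subseteq T$, $T \cap \supp(w) = \emptyset$, and $|T| = s$. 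Taking $z^j$ equal to $x$ on $\supp(x)$, equal to $1/j$ on $T \setminus \supp(x)$, and $0$ elsewhere gives $z^j \in \FixedSparsity{n}{s} \cap \R_+^n$ with $z^j \to x$ and $\supp(z^j) = T$; by Proposition~\ref{prop:RegularNormalConeNonnegativeSparseVectors}, $\regnorcone{\sparse{n}{s} \cap \R_+^n}{z^j} = \{u \in \R^n \mid \supp(u) \subseteq \{1,\dots,n\} \setminus T\}$, which contains $w$, so $w$ lies in the outer limit.

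For the reverse inclusion, I would take $w \in \norcone{\sparse{n}{s} \cap \R_+^n}{x}$ with a witnessing pair $z^j \to x$ in $\sparse{n}{s} \cap \R_+^n$ and $w^j \to w$ with $w^j \in \regnorcone{\sparse{n}{s} \cap \R_+^n}{z^j}$. For $j$ large, $\supp(x) \subseteq \supp(z^j)$, and $\supp(w^j) \subseteq \{1,\dots,n\} \setminus \supp(z^j)$ then gives $w^j_i = 0$ for $i \in \supp(x)$, so $\supp(w) \subseteq \{1,\dots,n\} \setminus \supp(x)$ in the limit. Passing to a subsequence, either all $z^j \in \StrictSparsity{n}{s}$, in which case $w^j \in \R_-^n$ (Proposition~\ref{prop:RegularNormalConeNonnegativeSparseVectors}), hence $w \in \R_-^n$ and therefore $w \in \regnorcone{\sparse{n}{s} \cap \R_+^n}{x}$; or all $z^j \in \FixedSparsity{n}{s}$, in which case $|\supp(w^j)| \le n - |\supp(z^j)| = n-s$, so $w \in \sparse{n}{n-s}$ by closedness of $\sparse{n}{n-s}$, placing $w$ in the second set. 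Either way $w$ is in the claimed union, which finishes the computation of the normal cone.

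For the non--Clarke-regular statement, $\sparse{n}{s} \cap \R_+^n$ is closed and, for $x \in \StrictSparsity{n}{s} \cap \R_+^n$, picking $i_0 \in \{1,\dots,n\} \setminus \supp(x)$ (possible since $|\supp(x)| < s < n$) gives $(\delta_{i_0,j})_{j \in \{1,\dots,n\}} \in \sparse{n}{n-s}$ with support $\{i_0\} \subseteq \{1,\dots,n\} \setminus \supp(x)$, so it lies in $\norcone{\sparse{n}{s} \cap \R_+^n}{x}$ but not in $\R_-^n \supseteq \regnorcone{\sparse{n}{s} \cap \R_+^n}{x}$, whence $\regnorcone{\sparse{n}{s} \cap \R_+^n}{x} \subsetneq \norcone{\sparse{n}{s} \cap \R_+^n}{x}$. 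The main obstacle is the $\supseteq$ inclusion: one must construct, for an arbitrary target $w$ in the second set, an explicit approximating sequence in the top stratum whose supports simultaneously contain $\supp(x)$ and avoid $\supp(w)$ — this is precisely where the sparsity bound $|\supp(w)| \le n-s$ is consumed.
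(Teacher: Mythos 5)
Your proof is correct and follows essentially the same route as the paper's: the normal cone is computed as the outer limit~\eqref{eq:NormalCone} of the regular normal cones from Proposition~\ref{prop:RegularNormalConeNonnegativeSparseVectors}, with the inclusion $\subseteq$ handled by the same two-case (strict versus full support) subsequence argument and the inclusion $\supseteq$ by the same explicit approximating sequence in $\FixedSparsity{n}{s} \cap \R_+^n$ whose support contains $\supp(x)$ and avoids $\supp(w)$. Your only additions are cosmetic: you justify the $\FixedSparsity{n}{s} \cap \R_+^n$ case by local constancy of the support rather than citing the local-manifold fact, and you spell out the failure of Clarke regularity (via $e_{i_0}$), which the paper leaves implicit.
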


\begin{proof}
We provide an alternative proof to the one of \cite[Theorem~3.4]{Tam2017}. This argument is based on the definition \eqref{eq:NormalCone} of the normal cone and is used again in the proof of Proposition~\ref{prop:NonnegativeSparseVectorsApocalypticSerendipitousPoints}.

By \cite[Example~6.8]{RockafellarWets}, the result follows from Proposition~\ref{prop:RegularNormalConeNonnegativeSparseVectors} if $x \in \FixedSparsity{n}{s} \cap \R_+^n$.
Let $x \in \StrictSparsity{n}{s} \cap \R_+^n$.
We first establish the inclusion $\subseteq$. Let $(x^k)_{k \in \N}$ be a sequence in $\sparse{n}{s} \cap \R_+^n$ converging to $x$. We have to prove that
\begin{equation*}
\outlim_{k \to \infty} \regnorcone{\sparse{n}{s} \cap \R_+^n}{x^k} \subseteq  \regnorcone{\sparse{n}{s} \cap \R_+^n}{x} \cup \{w \in \sparse{n}{n-s} \mid \supp(w) \subseteq \{1, \dots, n\} \setminus \supp(x)\}.
\end{equation*}
Let $w \in \outlim_{k \to \infty} \regnorcone{\sparse{n}{s} \cap \R_+^n}{x^k}$. Then, $w$ is an accumulation point of a sequence $(w^k)_{k \in \N}$ such that, for all $k \in \N$, $w^k \in \regnorcone{\sparse{n}{s} \cap \R_+^n}{x^k}$.
If $(x^k)_{k \in \N}$ contains finitely many elements in $\FixedSparsity{n}{s}$, then $w \in \regnorcone{\sparse{n}{s} \cap \R_+^n}{x}$. Indeed, in that case, there exists a strictly increasing sequence $(k_l)_{l \in \N}$ in $\N$ such that $(w^{k_l})_{l \in \N}$ converges to $w$ and, for all $l \in \N$, $x^{k_l} \in \StrictSparsity{n}{s}$, $\supp(x) \subseteq \supp(x^{k_l})$, and $\supp(w) \subseteq \supp(w^{k_l})$. Thus, for all $l \in \N$, since $w^{k_l} \in \regnorcone{\sparse{n}{s} \cap \R_+^n}{x^{k_l}}$, it holds that $w^{k_l} \in \R_-^n$ and $\supp(w^{k_l}) \subseteq \{1, \dots, n\} \setminus \supp(x^{k_l})$. Therefore, $w \in \R_-^n$ and $\supp(w) \subseteq \supp(w^{k_0}) \subseteq \{1, \dots, n\} \setminus \supp(x^{k_0}) \subseteq \{1, \dots, n\} \setminus \supp(x)$, which shows that $w \in \regnorcone{\sparse{n}{s} \cap \R_+^n}{x}$.
If $(x^k)_{k \in \N}$ contains infinitely many elements in $\FixedSparsity{n}{s}$, then $w \in \sparse{n}{n-s}$ and $\supp(w) \subseteq \{1, \dots, n\} \setminus \supp(x)$. Indeed, in that case, there exists a strictly increasing sequence $(k_l)_{l \in \N}$ in $\N$ such that $(w^{k_l})_{l \in \N}$ converges to $w$ and, for all $l \in \N$, $x^{k_l} \in \FixedSparsity{n}{s}$, $\supp(x) \subseteq \supp(x^{k_l})$, and $\supp(w) \subseteq \supp(w^{k_l})$. Thus, for all $l \in \N$, since $w^{k_l} \in \regnorcone{\sparse{n}{s} \cap \R_+^n}{x^{k_l}}$, it holds that $\supp(w^{k_l}) \subseteq \{1, \dots, n\} \setminus \supp(x^{k_l})$. Therefore, $\supp(w) \subseteq \supp(w^{k_0}) \subseteq \{1, \dots, n\} \setminus \supp(x^{k_0}) \subseteq \{1, \dots, n\} \setminus \supp(x)$ and $|\supp(w)| \le n-s$.

We now establish the inclusion $\supseteq$. The inclusion $\norcone{\sparse{n}{s} \cap \R_+^n}{x} \supseteq \regnorcone{\sparse{n}{s} \cap \R_+^n}{x}$ holds by definition of the normal cone. Let $w \in \sparse{n}{n-s}$ such that $\supp(w) \subseteq \{1, \dots, n\} \setminus \supp(x)$. Let $I(x) \subseteq \{1, \dots, n\} \setminus (\supp(w) \cup \supp(x))$ such that $|I(x)| = s-|\supp(x)|$; this is possible since $|\{1, \dots, n\} \setminus (\supp(w) \cup \supp(x))| = n-|\supp(w)|-|\supp(x)| \ge s-|\supp(x)|$. For all $k \in \N$ and all $i \in \{1, \dots, n\}$, let
\begin{equation*}
x_i^k := \left\{\begin{array}{ll}
\frac{1}{k+1} & \text{if } i \in I(x),\\
x_i & \text{otherwise}.
\end{array}\right.
\end{equation*}
Then, for all $k \in \N$, $\supp(x^k) = \supp(x) \cup I(x)$, thus $\supp(w) \subseteq \{1, \dots, n\} \setminus \supp(x^k)$, and therefore $w \in \regnorcone{\sparse{n}{s} \cap \R_+^n}{x^k}$. It follows that $w \in \outlim_{k \to \infty} \regnorcone{\sparse{n}{s} \cap \R_+^n}{x^k}$.
\end{proof}

\begin{corollary}[Clarke normal cone to the set of nonnegative sparse vectors]
\label{coro:ClarkeNormalConeNonnegativeSparseVectors}
For all $x \in \sparse{n}{s} \cap \R_+^n$,
\begin{equation*}
\connorcone{\sparse{n}{s} \cap \R_+^n}{x} = \{w \in \R^n \mid \supp(w) \subseteq \{1, \dots, n\} \setminus \supp(x)\}.
\end{equation*}
\end{corollary}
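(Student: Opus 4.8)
The plan is to unwind the definition of $\connorcone{\sparse{n}{s} \cap \R_+^n}{x}$ as the closure of the convex hull of $\norcone{\sparse{n}{s} \cap \R_+^n}{x}$, and to observe first that the claimed right-hand side $V := \{w \in \R^n \mid \supp(w) \subseteq \{1, \dots, n\} \setminus \supp(x)\}$ is a linear subspace of $\R^n$ (the orthogonal complement of the span of the standard basis vectors indexed by $\supp(x)$), hence a closed convex cone.

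For the inclusion $\connorcone{\sparse{n}{s} \cap \R_+^n}{x} \subseteq V$, I would simply invoke Propositions~\ref{prop:RegularNormalConeNonnegativeSparseVectors} and \ref{prop:NormalConeNonnegativeSparseVectors}: every element of $\norcone{\sparse{n}{s} \cap \R_+^n}{x}$ has support contained in $\{1, \dots, n\} \setminus \supp(x)$, so $\norcone{\sparse{n}{s} \cap \R_+^n}{x} \subseteq V$; since $V$ is convex and closed, it contains the closure of the convex hull of the normal cone.

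For the reverse inclusion, I would first check that, for every $i \in \{1, \dots, n\} \setminus \supp(x)$, both $e_i := (\delta_{i, j})_{j \in \{1, \dots, n\}}$ and $-e_i$ belong to $\norcone{\sparse{n}{s} \cap \R_+^n}{x}$. The vector $-e_i$ lies in $\regnorcone{\sparse{n}{s} \cap \R_+^n}{x}$ in either case of Proposition~\ref{prop:RegularNormalConeNonnegativeSparseVectors}, since $-e_i \in \R_-^n$ and $\supp(-e_i) = \{i\} \subseteq \{1, \dots, n\} \setminus \supp(x)$. As for $e_i$: if $x \in \FixedSparsity{n}{s} \cap \R_+^n$ then $e_i \in \regnorcone{\sparse{n}{s} \cap \R_+^n}{x}$ directly; if $x \in \StrictSparsity{n}{s} \cap \R_+^n$ then $|\supp(e_i)| = 1 \le n - s$ (because $s < n$) and $\supp(e_i) \subseteq \{1, \dots, n\} \setminus \supp(x)$, so $e_i$ lies in the set $\{w \in \sparse{n}{n-s} \mid \supp(w) \subseteq \{1, \dots, n\} \setminus \supp(x)\}$ appearing in Proposition~\ref{prop:NormalConeNonnegativeSparseVectors}. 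Then I would use that the convex hull of a cone is a convex cone, and that a convex cone is stable under addition and under multiplication by nonnegative scalars: writing each real coefficient as $c_i = c_i^+ - c_i^-$ with $c_i^\pm \ge 0$ and combining $c_i^+ e_i$ with $c_i^-(-e_i)$, the vector $\sum_{i \notin \supp(x)} c_i e_i$ belongs to the convex hull of $\norcone{\sparse{n}{s} \cap \R_+^n}{x}$. Since every element of $V$ has this form, $V$ is contained in that convex hull, hence in $\connorcone{\sparse{n}{s} \cap \R_+^n}{x}$.

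I do not expect a genuine obstacle. The single point I would make explicit rather than leave implicit is that the convex hull of a cone is automatically closed under addition, which is exactly what lets one reconstruct the whole subspace $V$ from the finitely many generators $\pm e_i$; in fact this argument shows that the plain convex hull of the normal cone already equals $V$, so the closure in the definition of the Clarke normal cone is superfluous in this case.
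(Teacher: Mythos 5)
Your argument is correct and is exactly the (implicit) route the paper takes: the corollary is stated as an immediate consequence of Propositions~\ref{prop:RegularNormalConeNonnegativeSparseVectors} and \ref{prop:NormalConeNonnegativeSparseVectors} by taking the closed convex hull of the normal cone, which you carry out in detail. Your observations that the right-hand side is a linear subspace generated by the vectors $\pm e_i$, $i \notin \supp(x)$, all of which lie in $\norcone{\sparse{n}{s} \cap \R_+^n}{x}$, and that the closure operation is therefore superfluous here, are accurate.
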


By Proposition~\ref{prop:NormalConeNonnegativeSparseVectors}, $\sparse{n}{s} \cap \R_+^n$ is not Clarke regular on $\StrictSparsity{n}{s} \cap \R_+^n$. Proposition~\ref{prop:NonnegativeSparseVectorsApocalypticSerendipitousPoints} states that every point of $\StrictSparsity{n}{s} \cap \R_+^n$ is apocalyptic, which is a stronger result by \cite[Corollary~2.15]{LevinKileelBoumal2022}.

\begin{proposition}
\label{prop:NonnegativeSparseVectorsApocalypticSerendipitousPoints}
The set of apocalyptic points of $\sparse{n}{s} \cap \R_+^n$ and the set of serendipitous points of $\sparse{n}{s} \cap \R_+^n$ both equal $\StrictSparsity{n}{s} \cap \R_+^n$.
\end{proposition}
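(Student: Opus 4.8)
The plan is to derive both assertions from the characterization of apocalyptic and serendipitous points in Proposition~\ref{prop:CharacterizationApocalypticSerendipitousPoint}, fed with the explicit tangent cone formula of Proposition~\ref{prop:TangentConeNonnegativeSparseVectors} and the regular normal cone formula of Proposition~\ref{prop:RegularNormalConeNonnegativeSparseVectors}. I would split the argument according to whether the base point lies in the top stratum $\FixedSparsity{n}{s} \cap \R_+^n$ or in a lower stratum $\StrictSparsity{n}{s} \cap \R_+^n$.

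First I would dispose of a point $x \in \FixedSparsity{n}{s} \cap \R_+^n$. As recalled in Section~\ref{subsec:AssumptionsFeasibleSet}, $C$ coincides with the smooth submanifold $\FixedSparsity{n}{s} \cap \R_+^n$ on $\ball(x, \dist(x, \FixedSparsity{n}{s-1} \cap \R_+^n))$, so $\tancone{C}{\cdot}$ is continuous at $x$ relative to $C$ and $\tancone{C}{x}$ is a linear subspace. Hence for every sequence $(x^k)_{k \in \N}$ in $C$ converging to $x$ one has $\outlim_{k \to \infty} \tancone{C}{x^k} = \tancone{C}{x}$, so $\big(\outlim_{k \to \infty} \tancone{C}{x^k}\big)^* = \tancone{C}{x}^* = \regnorcone{C}{x}$; by Proposition~\ref{prop:CharacterizationApocalypticSerendipitousPoint} neither of the two defining non-inclusions can occur, so $x$ is neither apocalyptic nor serendipitous.

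Next, for $x \in \FixedSparsity{n}{j} \cap \R_+^n$ with $j \le s-1$, I would exhibit a single sequence witnessing both properties at once. Pick a set $I \subseteq \{1, \dots, n\} \setminus \supp(x)$ with $|I| = s-j$ (possible since $n - j \ge s - j$) and define $x^k$ by $x^k_\ell := \frac{1}{k+1}$ if $\ell \in I$ and $x^k_\ell := x_\ell$ otherwise. Then $x^k \in \FixedSparsity{n}{s} \cap \R_+^n$ and $x^k \to x$; since $\supp(x^k) = \supp(x) \cup I$ has cardinality $s$, Proposition~\ref{prop:TangentConeNonnegativeSparseVectors} gives $\tancone{C}{x^k} = V := \{v \in \R^n \mid \supp(v) \subseteq \supp(x) \cup I\}$ for every $k$, so $\outlim_{k \to \infty} \tancone{C}{x^k} = V$ and $\big(\outlim_{k \to \infty} \tancone{C}{x^k}\big)^* = V^\perp = \{w \in \R^n \mid \supp(w) \cap (\supp(x) \cup I) = \emptyset\}$. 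For apocalypticity, choose $\ell_0 \in \{1, \dots, n\} \setminus (\supp(x) \cup I)$, which is nonempty because $s < n$: the standard basis vector $e_{\ell_0} = (\delta_{\ell_0,k})_{k \in \{1,\dots,n\}}$ lies in $V^\perp$ but not in $\regnorcone{C}{x}$, the latter being contained in $\R_-^n$ by Proposition~\ref{prop:RegularNormalConeNonnegativeSparseVectors}; hence the first bullet of Proposition~\ref{prop:CharacterizationApocalypticSerendipitousPoint} applies and $x$ is apocalyptic. For serendipity, choose $m_0 \in I$: the vector $-e_{m_0}$ lies in $\R_-^n$ with support $\{m_0\} \subseteq \{1, \dots, n\} \setminus \supp(x)$, hence in $\regnorcone{C}{x}$, but $-e_{m_0} \notin V^\perp$ since $m_0 \in I$; hence the second bullet applies and $x$ is serendipitous. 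Combining the two cases shows that both the set of apocalyptic points and the set of serendipitous points of $\sparse{n}{s} \cap \R_+^n$ equal $\StrictSparsity{n}{s} \cap \R_+^n$.

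The only delicate points I anticipate are (i) confirming that $\outlim_{k \to \infty} \tancone{C}{x^k}$ is exactly the fixed subspace $V$ — which is immediate here precisely because the perturbation keeps $\supp(x^k)$ constant, so the tangent cones form a single fixed linear subspace — and (ii) the treatment of the top stratum, which rests only on the tangent cone correspondence being continuous at points of $\FixedSparsity{n}{s} \cap \R_+^n$ relative to $C$, i.e. on the local smooth-manifold structure already established. Everything else is routine bookkeeping with supports, exploiting that $\regnorcone{C}{x}$ enforces a sign constraint on the lower strata while $V^\perp$ does not, and conversely that $V^\perp$ allows arbitrary sign on coordinates outside $\supp(x)\cup I$ while $\regnorcone{C}{x}$ still reaches the coordinates in $I$.
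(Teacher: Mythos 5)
Your proof is correct and follows essentially the same route as the paper's: the top stratum is handled via the local smooth-manifold structure (equivalently, continuity of $\tancone{\sparse{n}{s} \cap \R_+^n}{\cdot}$ on the stratum, Proposition~\ref{prop:ContinuityTangentConeStratumNonnegativeSparseVectors}), and lower strata via the same constant-support sequence $x^k$ together with Proposition~\ref{prop:CharacterizationApocalypticSerendipitousPoint}. Your explicit witnesses $e_{\ell_0}$ and $-e_{m_0}$ merely spell out the paper's closing observation that, by Proposition~\ref{prop:RegularNormalConeNonnegativeSparseVectors}, neither of $\regnorcone{\sparse{n}{s} \cap \R_+^n}{x}$ and $\regnorcone{\sparse{n}{s} \cap \R_+^n}{x^0}$ contains the other.
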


\begin{proof}
We use Proposition~\ref{prop:CharacterizationApocalypticSerendipitousPoint}.
Let $x \in \FixedSparsity{n}{s} \cap \R_+^n$ and $(x^k)_{k \in \N}$ be a sequence in $\sparse{n}{s} \cap \R_+^n$ converging to $x$. Since $(x^k)_{k \in \N}$ contains finitely many elements not in $\ball(x, \dist(x, \FixedSparsity{n}{s-1} \cap \R_+^n)) \subseteq \FixedSparsity{n}{s} \cap \R_+^n$, we can assume that $(x^k)_{k \in \N}$ is in $\FixedSparsity{n}{s} \cap \R_+^n$. Therefore, by Proposition~\ref{prop:ContinuityTangentConeStratumNonnegativeSparseVectors}, $\outlim_{k \to \infty} \tancone{\sparse{n}{s} \cap \R_+^n}{x^k} = \tancone{\sparse{n}{s} \cap \R_+^n}{x}$, and thus $\big(\outlim_{k \to \infty} \tancone{\sparse{n}{s} \cap \R_+^n}{x^k}\big)^* = \regnorcone{\sparse{n}{s} \cap \R_+^n}{x}$. It follows that $x$ is neither apocalyptic nor serendipitous.

Let $x \in \FixedSparsity{n}{j} \cap \R_+^n$ with $j \in \{0, \dots, s-1\}$. Let $I(x) \subseteq \{1, \dots, n\} \setminus \supp(x)$ such that $|I(x)| = s-j$. For all $k \in \N$, define
\begin{equation*}
x_i^k := \left\{\begin{array}{ll}
x_i & \text{if } i \in \{1, \dots, n\} \setminus I(x),\\
\frac{1}{k+1} & \text{if } i \in I(x).
\end{array}\right.
\end{equation*}
Then, $(x^k)_{k \in \N}$ converges to $x$. Moreover, for all $k \in \N$, since $\supp(x^k) = \supp(x) \cup I(x)$, it holds that $x^k \in \FixedSparsity{n}{s} \cap \R_+^n$ and
\begin{equation*}
\tancone{\sparse{n}{s} \cap \R_+^n}{x^k} = \tancone{\sparse{n}{s} \cap \R_+^n}{x^0}.
\end{equation*}
Thus,
\begin{equation*}
\setlim_{k \to \infty} \tancone{\sparse{n}{s} \cap \R_+^n}{x^k} = \tancone{\sparse{n}{s} \cap \R_+^n}{x^0}.
\end{equation*}
Therefore,
\begin{equation*}
\Big(\setlim_{k \to \infty} \tancone{\sparse{n}{s} \cap \R_+^n}{x^k}\Big)^* = \regnorcone{\sparse{n}{s} \cap \R_+^n}{x^0}.
\end{equation*}
The result follows from Proposition~\ref{prop:CharacterizationApocalypticSerendipitousPoint} since, by Proposition~\ref{prop:RegularNormalConeNonnegativeSparseVectors}, neither of $\regnorcone{\sparse{n}{s} \cap \R_+^n}{x}$ and $\regnorcone{\sparse{n}{s} \cap \R_+^n}{x^0}$ is a subset of the other.
\end{proof}

\subsubsection{$\ppgd$ following an apocalypse on $\sparse{n}{s} \cap \R_+^n$}
\label{subsubsec:NonnegativeSparseVectorsP2GDapocalypseExample}
On $\sparse{n}{s} \cap \R_+^n$, $\ppgd$ follows the same apocalypse as the one on $\sparse{n}{s}$ described in Proposition~\ref{prop:SparseVectorsP2GDapocalypseExample}. This is because, for all $k \in \N$ and all $j \in \{1, \dots ,s\}$, it holds that $x^{k, j} \in \sparse{n}{s} \cap \R_+^n$ and $\proj{\tancone{\sparse{n}{s}}{x^{k, j}}}{-\nabla f(x^{k, j})} = \proj{\tancone{\sparse{n}{s} \cap \R_+^n}{x^{k, j}}}{-\nabla f(x^{k, j})}$.

\subsection{The real determinantal variety}
\label{subsec:RealDeterminantalVariety}
In this section, $\mathcal{E} := \R^{m \times n}$ and $C := \R_{\le r}^{m \times n}$ for some positive integers $m$, $n$, and $r < \min\{m,n\}$, $\R^{m \times n}$ is endowed with the \emph{Frobenius inner product} $\ip{X}{Y} := \tr Y^\tp X$, and $\norm{\cdot}$ denotes the Frobenius norm. The spectral norm is denoted by $\norm{\cdot}_2$.

In Section~\ref{subsubsec:StratificationRealDeterminantalVariety}, we review the stratification of $\R_{\le r}^{m \times n}$ by the rank, which satisfies conditions~1(a) and 1(b) of Assumption~\ref{assumption:Stratification}. We also recall basic facts about singular values showing that the stratification satisfies condition~1(c) and providing a formula to project onto $\R_{\le r}^{m \times n}$ and its strata. In Section~\ref{subsubsec:TangentConeRealDeterminantalVariety}, we review an explicit description of the tangent cone to $\R_{\le r}^{m \times n}$ and a formula to project onto it (Proposition~\ref{prop:ProjectionOntoTangentConeRealDeterminantalVariety}). Based on this description, we prove that the second statement of Theorem~\ref{thm:ExamplesStratifiedSetsSatisfyingMainAssumption} holds (Proposition~\ref{prop:GlobalSecondOrderUpperBoundDistanceToRealDeterminantalVarietyFromTangentLine}). In Section~\ref{subsubsec:NormalConesRealDeterminantalVariety}, we review the regular normal cone, the normal cone, and the Clarke normal cone to $\R_{\le r}^{m \times n}$ (Proposition~\ref{prop:NormalConesRealDeterminantalVariety}) and prove that $\R_{\le r}^{m \times n}$ has no serendipitous point (Proposition~\ref{prop:RealDeterminantalVarietyNoSerendipitousPoint}). In Section~\ref{subsubsec:P2GDRrealDeterminantalVariety}, we present an alternative version of the $\ppgdr$ map on $\R_{\le r}^{m \times n}$ (Algorithm~\ref{algo:P2GDRmapRealDeterminantalVariety}) and show that the general theory developed in Section~\ref{sec:ProposedAlgorithmConvergenceAnalysis} also applies to that version (Proposition~\ref{prop:P2GDRmapRealDeterminantalVarietyPolak}). We notably deduce Corollary~\ref{coro:P2GDRrealDeterminantalVarietyPolakConvergence}. In Section~\ref{subsubsec:PracticalImplementationLKB22Algo1RealDeterminantalVariety}, we discuss the practical implementation of \cite[Algorithm~1]{LevinKileelBoumal2022}. In Section~\ref{subsubsec:ComparisonSixOptimizationAlgorithmsRealDeterminantalVariety}, we compare $\pgd$, $\ppgd$, $\rfd$, $\ppgdr$, $\rfdr$, and \cite[Algorithm~1]{LevinKileelBoumal2022} based on the computational cost per iteration (Table~\ref{tab:ComparisonComputationalCostPerIterationAlgorithmsRealDeterminantalVariety}) and the convergence guarantees (Table~\ref{tab:ConvergenceAlgorithmsRealDeterminantalVariety}). In Section~\ref{subsubsec:LKB22instance}, we numerically compare $\pgd$, $\ppgd$, $\ppgdr$, and \cite[Algorithm~1]{LevinKileelBoumal2022} on the example of apocalypse presented in \cite[\S 2.2]{LevinKileelBoumal2022}. Finally, in Section~\ref{subsubsec:P2GDfollowingApocalypseSize2*2}, we give an example of $\ppgd$ following an apocalypse on $\R_{\le 1}^{2 \times 2}$.

\subsubsection{Stratification of the determinantal variety}
\label{subsubsec:StratificationRealDeterminantalVariety}
The rank stratifies the determinantal variety $\R_{\le r}^{m \times n}$:
\begin{equation*}
\R_{\le r}^{m \times n} = \bigcup_{i=0}^r \R_i^{m \times n}
\end{equation*}
where, for every $i \in \{0, \dots, r\}$,
\begin{equation}
\label{eq:RealFixedRankManifold}
\R_i^{m \times n} := \{X \in \R^{m \times n} \mid \rank X = i\}
\end{equation}
is the smooth manifold of real $m \times n$ rank-$i$ matrices \cite[Proposition~4.1]{HelmkeShayman1995}. Observe that $\R_{\le 0}^{m \times n} = \R_0^{m \times n} = \{0_{m \times n}\}$. Thus, $\R_{\le r}^{m \times n}$ satisfies condition~1(a) of Assumption~\ref{assumption:Stratification}. By \cite[Proposition~2.1]{OlikierAbsil2022}, condition~1(b) is satisfied too.
To establish condition~1(c), we first review basic facts about singular values and rank reduction.

In what follows, the singular values of $X \in \R^{m \times n}$ are denoted by $\sigma_1(X) \ge \dots \ge \sigma_{\min\{m,n\}}(X) \ge 0$, as in \cite[\S 2.4.1]{GolubVanLoan}. Moreover, if $X \ne 0_{m \times n}$, then $\sigma_1(X)$ and $\sigma_{\rank X}(X)$ are respectively denoted by $\sigma_{\max}(X)$ and $\sigma_{\min}(X)$.
By reducing the rank of $X \in \R^{m \times n} \setminus \{0_{m \times n}\}$, we mean computing an element of $\proj{\R_{\le \ushort{r}}^{m \times n}}{X}$ for some nonnegative integer $\ushort{r} < \rank X$. According to the Eckart--Young theorem \cite{EckartYoung1936}, this can be achieved by truncating an SVD of $X$. In particular, for every nonnegative integer $\ushort{r} < \min\{m,n\}$ and every $X \in \R^{m \times n}$:
\begin{enumerate}
\item if $\rank X \le \ushort{r}$, then $\proj{\R_{\le \ushort{r}}^{m \times n}}{X} = X$;
\item if $\rank X > \ushort{r}$, then $\dist(X, \R_{\le \ushort{r}}^{m \times n}) = \dist(X, \R_{\ushort{r}}^{m \times n}) = \sqrt{\sum_{j=\ushort{r}+1}^{\rank X} \sigma_j^2(X)}$ and $\proj{\R_{\le \ushort{r}}^{m \times n}}{X} = \proj{\R_{\ushort{r}}^{m \times n}}{X}$.
\end{enumerate}
In particular, condition~1(c) of Assumption~\ref{assumption:Stratification} is satisfied.

\subsubsection{Tangent cone to the determinantal variety}
\label{subsubsec:TangentConeRealDeterminantalVariety}
We review in Proposition~\ref{prop:ProjectionOntoTangentConeRealDeterminantalVariety} formulas describing $\tancone{\R_{\le r}^{m \times n}}{X}$ and $\proj{\tancone{\R_{\le r}^{m \times n}}{X}}{Z}$ for every $X \in \R_{\le r}^{m \times n}$ and every $Z \in \R^{m \times n}$ based on orthonormal bases of $\im X$, $\im X^\tp$, and their orthogonal complements.
Those formulas can be obtained from~\eqref{eq:TangentConeSequence}.
For every $i, q \in \N$ such that $i \le q$,
\begin{equation}
\label{eq:StiefelManifold}
\st(i, q) := \{U \in \R^{q \times i} \mid U^\tp U = I_i\}
\end{equation}
is a Stiefel manifold \cite[\S 3.3.2]{AbsilMahonySepulchre}. For every $q \in \N$, $\mathrm{O}(q) := \st(q, q)$ is an orthogonal group.

\begin{proposition}[{tangent cone to $\R_{\le r}^{m \times n}$ \cite[Theorem~3.2 and Corollary~3.3]{SchneiderUschmajew2015}}]
\label{prop:ProjectionOntoTangentConeRealDeterminantalVariety}
Let $\ushort{r} \in \{0, \dots, r\}$, $X \in \R_{\ushort{r}}^{m \times n}$, $U \in \st(\ushort{r},m)$, $U_\perp \in \st(m-\ushort{r},m)$, $V \in \st(\ushort{r},n)$, $V_\perp \in \st(n-\ushort{r},n)$, $\im U = \im X$, $\im U_\perp = (\im X)^\perp$, $\im V = \im X^\tp$, and $\im V_\perp = (\im X^\tp)^\perp$.
Then,
\begin{equation*}
\tancone{\R_{\le r}^{m \times n}}{X} = [U \; U_\perp] \begin{bmatrix} \R^{\ushort{r} \times \ushort{r}} & \R^{\ushort{r} \times n-\ushort{r}} \\ \R^{m-\ushort{r} \times \ushort{r}} & \R_{\le r-\ushort{r}}^{m-\ushort{r} \times n-\ushort{r}} \end{bmatrix} [V \; V_\perp]^\tp.
\end{equation*}
Moreover, if $Z \in \R^{m \times n}$ is written as
\begin{equation*}
Z = [U \; U_\perp] \begin{bmatrix} A & B \\ D & E \end{bmatrix} [V \; V_\perp]^\tp
\end{equation*}
with $A = U^\tp Z V$, $B = U^\tp Z V_\perp$, $D = U_\perp^\tp Z V$, and $E = U_\perp^\tp Z V_\perp$, then
\begin{equation*}
\proj{\tancone{\R_{\le r}^{m \times n}}{X}}{Z} = [U \; U_\perp] \begin{bmatrix} A & B \\ D & \proj{\R_{\le r-\ushort{r}}^{m-\ushort{r} \times n-\ushort{r}}}{E} \end{bmatrix} [V \; V_\perp]^\tp
\end{equation*}
and
\begin{equation}
\label{eq:NormProjectionTangentConeDeterminantalVariety}
\norm{Z}
\ge \norm{\proj{\tancone{\R_{\le r}^{m \times n}}{X}}{Z}}
\ge \sqrt{\frac{r-\ushort{r}}{\min\{m,n\}-\ushort{r}}} \norm{Z}.
\end{equation}
\end{proposition}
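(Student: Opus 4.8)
The plan is to reduce to a block normal form by orthogonal invariance and then argue directly from the sequential description \eqref{eq:TangentConeSequence} of the tangent cone together with the lower semicontinuity of the rank; alternatively, one may simply invoke \cite[Theorem~3.2 and Corollary~3.3]{SchneiderUschmajew2015}, on which the statement is based. First I would note that the linear map $\Phi \colon Z \mapsto [U\ U_\perp]^\tp Z\, [V\ V_\perp]$ is an isometry of $(\R^{m \times n}, \ip{\cdot}{\cdot})$, maps $\R_{\le r}^{m \times n}$ onto itself (left and right multiplication by orthogonal matrices preserves the rank), and sends $X$ to the matrix having $\Sigma := U^\tp X V \in \R^{\ushort{r} \times \ushort{r}}$ in the top-left block---invertible, since $\im U = \im X$ and $\im V = \im X^\tp$---and zeros elsewhere. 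Since tangent cones, projections, and Frobenius norms all transform equivariantly under isometries, it suffices to prove the three assertions in the case where $X$ has this block-diagonal normal form and $U, U_\perp, V, V_\perp$ are the corresponding coordinate injections; then the right-hand side of the claimed tangent cone formula is
\begin{equation*}
\mathcal{T} := \left\{ \begin{bmatrix} G & H \\ K & F \end{bmatrix} : F \in \R_{\le r-\ushort{r}}^{m-\ushort{r} \times n-\ushort{r}} \right\},
\end{equation*}
where $G$, $H$, $K$ range over all real matrices of sizes conformal with the blocks of $X$.

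Next I would prove $\tancone{\R_{\le r}^{m \times n}}{X} = \mathcal{T}$. For the inclusion $\mathcal{T} \subseteq \tancone{\R_{\le r}^{m \times n}}{X}$, I would take a velocity $W \in \mathcal{T}$ with blocks $G$, $H$, $K$ and bottom-right block $F$ (so $\rank F \le r - \ushort{r}$) and consider, for $t$ in a neighborhood of $0$ on which $\Sigma + tG$ is invertible, the curve
\begin{equation*}
\gamma(t) := \begin{bmatrix} \Sigma + tG & tH \\ tK & tF + t^2\, K(\Sigma + tG)^{-1}H \end{bmatrix}.
\end{equation*}
Its Schur complement with respect to the invertible top-left block is exactly $tF$, hence $\rank \gamma(t) = \ushort{r} + \rank(tF) \le r$ for every such $t$; moreover $\gamma$ is smooth near $0$, $\gamma(0) = X$, and $\gamma'(0) = W$ because the correction term $t^2 K(\Sigma + tG)^{-1}H$ is $O(t^2)$. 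Thus $W$ is a derivable element of $\tancone{\R_{\le r}^{m \times n}}{X}$, which incidentally proves that $\R_{\le r}^{m \times n}$ is geometrically derivable at $X$. For the inclusion $\tancone{\R_{\le r}^{m \times n}}{X} \subseteq \mathcal{T}$, I would take $W \in \tancone{\R_{\le r}^{m \times n}}{X}$; by \eqref{eq:TangentConeSequence} there exist $t_i \searrow 0$ and $X_i \to X$ in $\R_{\le r}^{m \times n}$ with $(X_i - X)/t_i \to W$. Partitioning $X_i$ conformally with $X$, its top-left block converges to $\Sigma$, hence is invertible for large $i$, so the Schur complement of $X_i$ with respect to that block has rank at most $r - \ushort{r}$; its $1/t_i$-rescaling converges to the bottom-right block of $W$, because the off-diagonal blocks of $X_i$ tend to $0$ while their $1/t_i$-rescalings stay bounded, so the rescaled cross term vanishes in the limit. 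Since $\R_{\le r-\ushort{r}}^{m-\ushort{r} \times n-\ushort{r}}$ is closed, the bottom-right block of $W$ has rank at most $r - \ushort{r}$, i.e., $W \in \mathcal{T}$.

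Finally I would derive the projection formula and the bound \eqref{eq:NormProjectionTangentConeDeterminantalVariety}. Because $\Phi$ is an isometry and the Frobenius norm is additive over the four blocks, for $Z$ with blocks $A = U^\tp Z V$, $B = U^\tp Z V_\perp$, $D = U_\perp^\tp Z V$, $E = U_\perp^\tp Z V_\perp$ and any element of $\mathcal{T}$, the squared distance splits into the sum of the squared differences of the corresponding blocks; it is minimized by matching the top-left, top-right, and bottom-left blocks to $A$, $B$, $D$ and taking the bottom-right block in $\proj{\R_{\le r-\ushort{r}}^{m-\ushort{r} \times n-\ushort{r}}}{E}$, which is nonempty by the truncated-SVD (Eckart--Young) fact recalled in Section~\ref{subsubsec:StratificationRealDeterminantalVariety}; this is the claimed projection formula. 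The left inequality in \eqref{eq:NormProjectionTangentConeDeterminantalVariety} is immediate from Proposition~\ref{prop:NormProjectionOntoClosedCone}: since $\tancone{\R_{\le r}^{m \times n}}{X}$ is a closed cone, $\norm{\proj{\tancone{\R_{\le r}^{m \times n}}{X}}{Z}}^2 = \norm{Z}^2 - \dist(Z, \tancone{\R_{\le r}^{m \times n}}{X})^2 \le \norm{Z}^2$. For the right inequality, put $q := \min\{m,n\} - \ushort{r}$ and $\lambda := (r - \ushort{r})/q$; the projection formula yields $\norm{\proj{\tancone{\R_{\le r}^{m \times n}}{X}}{Z}}^2 = \norm{Z}^2 - \norm{E}^2 + \norm{\proj{\R_{\le r-\ushort{r}}^{m-\ushort{r} \times n-\ushort{r}}}{E}}^2$, and the last term is the sum of the $r - \ushort{r}$ largest squared singular values of $E$ among its (at most) $q$ singular values. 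Since the average of the $k$ largest entries of a finite nonincreasing nonnegative sequence is at least the average of all of them, this term is at least $\lambda \norm{E}^2$; combining this with $\norm{E}^2 \le \norm{Z}^2$ gives $\norm{\proj{\tancone{\R_{\le r}^{m \times n}}{X}}{Z}}^2 \ge \norm{Z}^2 - (1 - \lambda)\norm{E}^2 \ge \lambda \norm{Z}^2$, and taking square roots gives the bound, with $\lambda \in [0,1)$ since $r < \min\{m,n\}$. I expect the inclusion $\mathcal{T} \subseteq \tancone{\R_{\le r}^{m \times n}}{X}$ to be the main obstacle: one must produce a curve lying in $\R_{\le r}^{m \times n}$ with prescribed velocity $W$, and the naive choice $t \mapsto X + tW$ fails in general because its rank may exceed $r$; the fix is the quadratic correction in $\gamma$, designed precisely so that the Schur complement is exactly $tF$. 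All the remaining steps are routine bookkeeping once the block normal form is in place.
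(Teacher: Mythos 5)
Your proof is correct. Note, however, that the paper does not prove this proposition at all: it is stated as a review of \cite[Theorem~3.2 and Corollary~3.3]{SchneiderUschmajew2015}, with only the remark that the formulas ``can be obtained from~\eqref{eq:TangentConeSequence}''. So what you have written is a genuinely self-contained argument rather than a reproduction of the paper's (non-existent) proof. Your route is the natural one and it checks out in every step: the orthogonal change of coordinates $\Phi$ is a linear isometry mapping $\R_{\le r}^{m \times n}$ onto itself, so tangent cones and metric projections are equivariant and the reduction to the block normal form is legitimate; the curve $\gamma$ with the quadratic correction $t^2 K(\Sigma+tG)^{-1}H$ has Schur complement exactly $tF$, so Guttman rank additivity gives $\rank\gamma(t) = \ushort{r} + \rank(tF) \le r$, and $\gamma'(0) = W$ since the correction is $O(t^2)$ --- this is the same device the paper itself uses later (the curve in the proof of Proposition~\ref{prop:GlobalSecondOrderUpperBoundDistanceToRealDeterminantalVarietyFromTangentLine}, inspired by \cite[(13)]{ZhouEtAl2016}); the converse inclusion via the rescaled Schur complement $S_i/t_i \to W_{22}$ and closedness of $\R_{\le r-\ushort{r}}^{m-\ushort{r} \times n-\ushort{r}}$ is sound, the key point being that $(C_i/t_i)A_i^{-1}B_i \to 0$ because $B_i \to 0$ while $C_i/t_i$ and $A_i^{-1}$ stay bounded; the projection formula follows from the blockwise additivity of the Frobenius norm; the left inequality in \eqref{eq:NormProjectionTangentConeDeterminantalVariety} is indeed Proposition~\ref{prop:NormProjectionOntoClosedCone}, and your averaging argument $\frac{1}{r-\ushort{r}}\sum_{j\le r-\ushort{r}}\sigma_j^2(E) \ge \frac{1}{q}\norm{E}^2$ with $q=\min\{m,n\}-\ushort{r}$ yields exactly the stated constant. (The degenerate cases $\ushort{r}=0$ and $\ushort{r}=r$ go through trivially, as you implicitly use.) Compared with simply citing \cite{SchneiderUschmajew2015}, your argument buys a transparent, elementary derivation of both the cone description and the norm bound from the sequential definition \eqref{eq:TangentConeSequence} plus one linear-algebra fact (rank additivity over an invertible block), at the cost of some bookkeeping the paper chose to outsource.
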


For efficiency note the following.
\begin{enumerate}
\item In practice, the projection onto $\tancone{\R_{\le r}^{m \times n}}{X}$ can be computed by \cite[Algorithm~2]{SchneiderUschmajew2015}. This does not rely on $U_\perp$ and $V_\perp$, which are huge in the frequently encountered case where $r \ll \min\{m,n\}$.

\item For all $X \in \R_{\le r}^{m \times n}$ and all $Z \in \tancone{\R_{\le r}^{m \times n}}{X}$, $X+Z \in \R_{\le 2r}^{m \times n}$. Indeed, if
\begin{equation*}
X = [U \; U_\perp] \diag(\Sigma, 0_{m-\ushort{r} \times n-\ushort{r}}) [V \; V_\perp]^\tp
\end{equation*}
is an SVD and $Z \in \tancone{\R_{\le r}^{m \times n}}{X}$ is written as
\begin{equation*}
Z = [U \; U_\perp] \begin{bmatrix} A & B \\ D & E \end{bmatrix} [V \; V_\perp]^\tp
\end{equation*}
with $A \in \R^{\ushort{r} \times \ushort{r}}$, $B \in \R^{\ushort{r} \times n-\ushort{r}}$, $D \in \R^{m-\ushort{r} \times \ushort{r}}$, and $E \in \R_{\le r-\ushort{r}}^{m-\ushort{r} \times n-\ushort{r}}$, then, by \cite[Proposition~3.1]{OlikierAbsil2022},
\begin{equation*}
X+Z = [U \; U_\perp] \begin{bmatrix} \Sigma+A & B \\ D & E \end{bmatrix} [V \; V_\perp]^\tp \in \R_{\le r+\ushort{r}}^{m \times n} \subseteq \R_{\le 2r}^{m \times n}.
\end{equation*}
\end{enumerate}

Proposition~\ref{prop:GlobalSecondOrderUpperBoundDistanceToRealDeterminantalVarietyFromTangentLine} shows that $\R_{\le r}^{m \times n}$ satisfies the second statement of Theorem~\ref{thm:ExamplesStratifiedSetsSatisfyingMainAssumption}.

\begin{proposition}
\label{prop:GlobalSecondOrderUpperBoundDistanceToRealDeterminantalVarietyFromTangentLine}
For all $X \in \R_{\le r}^{m \times n} \setminus \{0_{m \times n}\}$,
\begin{equation*}
\frac{\sqrt{5}-1}{r+1} \frac{1}{2\sigma_{\min}(X)}
\le \sup_{Z \in \tancone{\R_{\le r}^{m \times n}}{X} \setminus \{0_{m \times n}\}} \frac{\dist(X+Z, \R_{\le r}^{m \times n})}{\norm{Z}^2}
\le \frac{1}{2\sigma_{\min}(X)}.
\end{equation*}
\end{proposition}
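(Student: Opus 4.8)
The plan is to fix a compact singular value decomposition $X=\sum_{i=1}^{\ushort{r}}\sigma_i(X)\,u_iv_i^\top$ with $\ushort{r}:=\rank X\in\{1,\dots,r\}$ and $\sigma:=\sigma_{\min}(X)=\sigma_{\ushort{r}}(X)$, to complete $\{u_i\}$ and $\{v_i\}$ to orthonormal bases of $\R^m$ and $\R^n$, and to use throughout the block description of $\tancone{\R_{\le r}^{m\times n}}{X}$ from Proposition~\ref{prop:ProjectionOntoTangentConeRealDeterminantalVariety}, so that any $Z\in\tancone{\R_{\le r}^{m\times n}}{X}$ is encoded by blocks $A\in\R^{\ushort{r}\times\ushort{r}}$, $B$, $D$, and $E$ with $\rank E\le r-\ushort{r}$. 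Note that $\ushort{r}\ge1$ since $X\ne0_{m\times n}$, and that the hypothesis $r<\min\{m,n\}$ leaves room for $r+1-\ushort{r}$ unit vectors orthogonal to $\im X$ and as many orthogonal to $\im X^\top$.

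For the lower bound I would exhibit one admissible $Z$. Choosing orthonormal $u_{\ushort{r}+1},\dots,u_{r+1}\in(\im X)^\perp$ and $v_{\ushort{r}+1},\dots,v_{r+1}\in(\im X^\top)^\perp$, set
\[
Z:=\sigma\,u_{\ushort{r}}v_{\ushort{r}+1}^\top+\sigma\,u_{\ushort{r}+1}v_{\ushort{r}}^\top+\sigma\sum_{j=1}^{r-\ushort{r}}u_{\ushort{r}+1+j}v_{\ushort{r}+1+j}^\top,
\]
which lies in $\tancone{\R_{\le r}^{m\times n}}{X}$ because its $E$-block has rank $r-\ushort{r}$. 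In the bases $\{u_i\},\{v_i\}$ the matrix $X+Z$ is block diagonal, equal to $\diag\bigl(\sigma_1(X),\dots,\sigma_{\ushort{r}-1}(X)\bigr)\oplus\sigma\bigl[\begin{smallmatrix}1&1\\1&0\end{smallmatrix}\bigr]\oplus\sigma I_{r-\ushort{r}}$; hence $\rank(X+Z)=r+1$ and its singular values are the $\sigma_i(X)$ with $i<\ushort{r}$, the numbers $\tfrac{1\pm\sqrt5}{2}\,\sigma$ from the $2\times2$ block, and $\sigma$ with multiplicity $r-\ushort{r}$. All of these are $\ge\sigma$ except $\tfrac{\sqrt5-1}{2}\,\sigma$, so by the Eckart--Young theorem $\dist(X+Z,\R_{\le r}^{m\times n})=\sigma_{r+1}(X+Z)=\tfrac{\sqrt5-1}{2}\,\sigma$, while $\norm{Z}^2=(r-\ushort{r}+2)\,\sigma^2\le(r+1)\,\sigma^2$; dividing gives the claimed lower bound.

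For the upper bound I would first reduce to the case $\ushort{r}=r$: writing $Z=Z_0+Z_E$, where $Z_0$ deletes the $E$-block and $Z_E$ retains only it, one has $Z_0\in\tancone{\R_{\le\ushort{r}}^{m\times n}}{X}$, $\norm{Z_0}\le\norm{Z}$, and for any $W_0\in\proj{\R_{\le\ushort{r}}^{m\times n}}{X+Z_0}$ the matrix $W_0+Z_E$ has rank at most $\ushort{r}+(r-\ushort{r})=r$, so $\dist(X+Z,\R_{\le r}^{m\times n})\le\norm{(X+Z)-(W_0+Z_E)}=\dist(X+Z_0,\R_{\le\ushort{r}}^{m\times n})$; thus it suffices to prove the inequality with $\R_{\le r}^{m\times n}$ replaced by $\R_{\le\ushort{r}}^{m\times n}$, i.e.\ when $X$ has maximal rank $\ushort{r}$. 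In that case $X+Z$ has block form $\bigl[\begin{smallmatrix}\Sigma+A&B\\ D&0\end{smallmatrix}\bigr]$ with $\Sigma=\diag(\sigma_1(X),\dots,\sigma_{\ushort{r}}(X))$. When $\norm{Z}\ge2\sigma$ the claim is immediate from $\dist(X+Z,\R_{\le\ushort{r}}^{m\times n})\le\norm{(X+Z)-X}=\norm{Z}\le\norm{Z}^2/(2\sigma)$. When $\norm{Z}<2\sigma$, I would build a rank-$\le\ushort{r}$ approximant by a Schur-complement completion $\bigl[\begin{smallmatrix}G&B\\ D&DG^{-1}B\end{smallmatrix}\bigr]$, with pivot $G=\Sigma+A$ when $\sigma_{\min}(\Sigma+A)\ge\sigma$, and otherwise $G$ obtained from $\Sigma+A$ by lifting its singular values below $\sigma$ up to $\sigma$ --- a perturbation of Frobenius norm at most $\norm{A}$ (Mirsky's inequality comparing the singular values of $\Sigma$ and $\Sigma+A$) and with $\sigma_{\min}(G)\ge\sigma$. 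This gives a distance bound of $\sqrt{\norm{A}^2+\norm{D}^2\norm{B}^2/\sigma^2}$, and the inequality $4\sigma^2\norm{A}^2+4\norm{D}^2\norm{B}^2\le(\norm{A}^2+\norm{B}^2+\norm{D}^2)^2$, which follows from AM--GM, closes the estimate on the sub-regime where it holds.

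The main obstacle is the remaining bounded regime, in which $\norm{A},\norm{B},\norm{D}$ are all of order $\sigma$: there the approximants above need not deliver the constant $1/(2\sigma)$, which is sharp (it is approached by tangent vectors with small transverse part), so one must estimate $\dist(X+Z,\R_{\le\ushort{r}}^{m\times n})^2=\sum_{k\ge1}\sigma_{\ushort{r}+k}(Y)^2$, with $Y=\bigl[\begin{smallmatrix}\Sigma+A&B\\ D&0\end{smallmatrix}\bigr]$, more sharply --- for instance by combining $\dist(X+Z,\R_{\le\ushort{r}}^{m\times n})\le\min\{\norm{B},\norm{D}\}$ with the bound $\sigma_{\ushort{r}+k}(Y)^2\le\lambda_k\bigl((Y^\top Y)/[Y^\top Y]_{11}\bigr)$ (Cauchy interlacing applied to the Schur complement of the leading block of $Y^\top Y$, whose operator norm is in turn controlled through $I-(\Sigma+A)\bigl((\Sigma+A)^\top(\Sigma+A)+D^\top D\bigr)^{-1}(\Sigma+A)^\top$), and then verifying the resulting scalar inequality. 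I expect this last estimation to carry the bulk of the technical work.
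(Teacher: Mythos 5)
Your lower bound is correct and is in fact the paper's own construction (the tangent vector with coupling block $\sigma\left[\begin{smallmatrix}0&1\\1&0\end{smallmatrix}\right]$ and $\sigma I_{r-\ushort{r}}$ in the $E$-block, giving $\dist(X+Z,\R_{\le r}^{m\times n})=\frac{\sqrt5-1}{2}\sigma$ and $\norm{Z}^2=(r-\ushort{r}+2)\sigma^2$), and your preliminary reductions for the upper bound are sound: stripping the $E$-block and re-attaching it to a projection of $X+Z_0$ onto $\R_{\le\ushort{r}}^{m\times n}$ is valid, as is the trivial case $\norm{Z}\ge2\sigma$. The genuine gap is the remaining core case of the upper bound, which you do not prove. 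Your Schur-complement approximant $\left[\begin{smallmatrix}G&B\\D&DG^{-1}B\end{smallmatrix}\right]$ cannot deliver the constant $\frac{1}{2\sigma}$, and the uncovered region is much larger than the ``all of order $\sigma$'' regime you flag: take $\ushort{r}=r=1$, $A=-a$ with $a$ small, and $\norm{B}=\norm{D}=\sqrt{\sigma a}$. Then $\norm{Z}\ll\sigma$, your AM--GM inequality $4\sigma^2\norm{A}^2+4\norm{D}^2\norm{B}^2\le(\norm{A}^2+\norm{B}^2+\norm{D}^2)^2$ fails ($8\sigma^2a^2>\approx4\sigma^2a^2$), and the exact error of the unlifted Schur approximant is $\frac{\sigma a}{\sigma-a}\approx a+\frac{a^2}{\sigma}$, which exceeds the target $\frac{\norm{Z}^2}{2\sigma}=a+\frac{a^2}{2\sigma}$; the lifted variant is worse still, costing an additive $O(\norm{A})$ where $O(\norm{A}^2/\sigma)$ is needed. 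Since the constant $\frac{1}{2\sigma}$ at small $Z$ encodes the largest principal curvature $1/\sigma_{\min}(X)$ of the fixed-rank manifold, any proof must produce an approximant that is second-order accurate jointly in $(A,B,D)$; your fallback (Cauchy interlacing on the Schur complement of $Y^\tp Y$ plus an unverified ``scalar inequality'') is left as a plan, and you yourself note it would carry the bulk of the work, so the hard part of the proposition remains unproven.

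The missing idea, which is what the paper uses, is to choose as comparison point not a Schur-complement completion but a perturbation of both factors of $X=U\Sigma V^\tp$, splitting $A$ half-and-half between them: $\gamma:=\big(U+(U_\perp D+\tfrac12UA)\Sigma^{-1}\big)\Sigma\big(V+(V_\perp B^\tp+\tfrac12VA^\tp)\Sigma^{-1}\big)^\tp+U_\perp EV_\perp^\tp$, which lies in $\R_{\le r}^{m\times n}$ and satisfies $\gamma-(X+Z)=\frac14[U\;U_\perp]\left[\begin{smallmatrix}A\\2D\end{smallmatrix}\right]\Sigma^{-1}\left[\begin{smallmatrix}A&2B\end{smallmatrix}\right][V\;V_\perp]^\tp$. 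Bounding the Frobenius norm of this explicit quadratic correction (the elementary maximization of $x^4+4x^2y^2+4x^2z^2+16y^2z^2$ on the unit sphere gives $4$) yields $\dist(X+Z,\R_{\le r}^{m\times n})\le\frac{1}{2\sigma_{\min}(X)}\norm{Z}^2$ in all regimes at once, with no case distinction and no need for your $E$-block reduction. If you want to salvage your structure, replacing your completion by this factored one in the case $\norm{Z}<2\sigma$ (or simply dropping the case split) closes the gap.
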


\begin{proof}
Let $\ushort{r} := \rank X$. We first establish the upper bound. Let
\begin{equation*}
X =
[U \; U_\perp]
\begin{bmatrix}
\Sigma & \\ & 0_{m-\ushort{r} \times n-\ushort{r}}
\end{bmatrix}
[V \; V_\perp]^\tp
\end{equation*}
be an SVD, and $Z \in \tancone{\R_{\le r}^{m \times n}}{X} \setminus \{0_{m \times n}\}$. By Proposition~\ref{prop:ProjectionOntoTangentConeRealDeterminantalVariety}, there are $A \in \R^{\ushort{r} \times \ushort{r}}$, $B \in \R^{\ushort{r} \times n-\ushort{r}}$, $D \in \R^{m-\ushort{r} \times \ushort{r}}$, and $E \in \R_{\le r-\ushort{r}}^{m-\ushort{r} \times n-\ushort{r}}$ such that
\begin{equation*}
Z =
[U \; U_\perp]
\begin{bmatrix}
A & B \\ D & E
\end{bmatrix}
[V \; V_\perp]^\tp.
\end{equation*}
Define the function
\begin{equation*}
\gamma : [0,\infty) \to \R_{\le r}^{m \times n} : t \mapsto \big(U+t(U_\perp D + {\textstyle\frac{1}{2}}UA)\Sigma^{-1}\big) \Sigma \big(V+t(V_\perp B^\tp + {\textstyle\frac{1}{2}}VA^\tp)\Sigma^{-1}\big)^\tp + tU_\perp E V_\perp^\tp,
\end{equation*}
where the first term is inspired from \cite[(13)]{ZhouEtAl2016}; $\gamma$ is well defined since the ranks of the two terms are respectively upper bounded by $\ushort{r}$ and $r-\ushort{r}$.
For all $t \in [0,\infty)$,
\begin{equation*}
\gamma(t)
= X + t Z + \frac{t^2}{4}
[U \; U_\perp]
\begin{bmatrix}
A \\ 2D
\end{bmatrix}
\Sigma^{-1}
\begin{bmatrix}
A & 2B
\end{bmatrix}
[V \; V_\perp]^\tp
\end{equation*}
thus
\begin{equation*}
\dist(X+tZ, \R_{\le r}^{m \times n})
\le \norm{(X+tZ)-\gamma(t)}
= \frac{t^2}{4} \left\|\begin{bmatrix} A\Sigma^{-1}A & 2A\Sigma^{-1}B \\ 2D\Sigma^{-1}A & 4D\Sigma^{-1}B\end{bmatrix}\right\|.
\end{equation*}
Observe that
\begin{align*}
\left\|\begin{bmatrix} A\Sigma^{-1}A & 2A\Sigma^{-1}B \\ 2D\Sigma^{-1}A & 4D\Sigma^{-1}B\end{bmatrix}\right\|^2
&= \norm{A\Sigma^{-1}A}^2 + 4 \norm{A\Sigma^{-1}B}^2 + 4 \norm{D\Sigma^{-1}A}^2 + 16 \norm{D\Sigma^{-1}B}^2\\
&\le \norm{\Sigma^{-1}}_2^2 \left(\norm{A}^4 + 4 \norm{A}^2 \norm{B}^2 + 4 \norm{A}^2 \norm{D}^2 + 16 \norm{B}^2 \norm{D}^2\right)\\
&\le \norm{\Sigma^{-1}}_2^2 \norm{Z}^4 \max_{\substack{x, y, z \in \R \\ x^2+y^2+z^2 = 1}} x^4 + 4 x^2 y^2 + 4 x^2 z^2 + 16 y^2 z^2\\
&= 4 \norm{\Sigma^{-1}}_2^2 \norm{Z}^4\\
&= \frac{4}{\sigma_{\ushort{r}}^2(X)} \norm{Z}^4.
\end{align*}
Therefore, for all $t \in [0,\infty)$,
\begin{equation*}
\dist(X+tZ, \R_{\le r}^{m \times n}) \le t^2 \frac{1}{2 \sigma_{\ushort{r}}(X)} \norm{Z}^2.
\end{equation*}
Choosing $t = 1$ yields the upper bound.

We now establish the lower bound. Let
\begin{equation*}
X = [U \; U_\perp] \diag(\sigma_1, \dots, \sigma_{\ushort{r}}, 0_{m-\ushort{r} \times n-\ushort{r}}) [V \; V_\perp]^\tp
\end{equation*}
be an SVD, and observe that
\begin{equation*}
Z
:= [U \; U_\perp] \sigma_{\ushort{r}} \diag(0_{\ushort{r}-1}, \left[\begin{smallmatrix} 0 & 1 \\ 1 & 0 \end{smallmatrix}\right], I_{r-\ushort{r}}, 0_{m-r-1 \times n-r-1}) [V \; V_\perp]^\tp
\in \tancone{\R_{\le r}^{m \times n}}{X}.
\end{equation*}
The nonzero singular values of
\begin{equation*}
X+Z = [U \; U_\perp] \diag(\sigma_1, \dots, \sigma_{\ushort{r}-1}, \sigma_{\ushort{r}} \left[\begin{smallmatrix} 1 & 1 \\ 1 & 0 \end{smallmatrix}\right], \sigma_{\ushort{r}} I_{r-\ushort{r}}, 0_{m-r-1 \times n-r-1}) [V \; V_\perp]^\tp
\end{equation*}
are $\sigma_1, \dots, \sigma_{\ushort{r}}$ and the absolute values of the eigenvalues of $\left[\begin{smallmatrix} 1 & 1 \\ 1 & 0 \end{smallmatrix}\right]$ multiplied by $\sigma_{\ushort{r}}$, i.e., $\frac{\sqrt{5}+1}{2}\sigma_{\ushort{r}}$ and $\frac{\sqrt{5}-1}{2}\sigma_{\ushort{r}}$. Thus, $\dist(X+Z, \R_{\le r}^{m \times n}) = \frac{\sqrt{5}-1}{2}\sigma_{\ushort{r}}$, $\norm{Z}^2 = (r-\ushort{r}+2)\sigma_{\ushort{r}}^2 \le (r+1)\sigma_{\ushort{r}}^2$, and the lower bound follows.
\end{proof}

Proposition~\ref{prop:GlobalSecondOrderUpperBoundDistanceToRealDeterminantalVarietyFromTangentLine} can be related to geometric principles. Let $\gamma$ be a curve on the submanifold $\R_r^{m \times n}$ of $\R^{m \times n}$. In view of the Gauss formula along a curve~\cite[Corollary~8.3]{Lee2018}, the normal part of the acceleration of $\gamma$ is given by $\mathrm{I\!I}(\gamma',\gamma')$, where $\mathrm{I\!I}$ denotes the second fundamental form. In view of~\cite[\S 4]{FepponLermusiaux2018}, the largest principal curvature of $\R_r^{m \times n}$ at $X$ is $1/\sigma_r(X)$; hence $\|\mathrm{I\!I}(\gamma'(t),\gamma'(t))\| \leq \|\gamma'(t)\|^2/\sigma_r(\gamma(t))$, and the bound is attained when $\gamma'(t)$ is along the corresponding principal direction.

\subsubsection{Normal cones to the determinantal variety}
\label{subsubsec:NormalConesRealDeterminantalVariety}
In Proposition~\ref{prop:NormalConesRealDeterminantalVariety}, we review formulas describing $\regnorcone{\R_{\le r}^{m \times n}}{X}$, $\norcone{\R_{\le r}^{m \times n}}{X}$, and $\connorcone{\R_{\le r}^{m \times n}}{X}$ for every $X \in \R_{\le r}^{m \times n}$ based on orthonormal bases of $(\im X)^\perp$ and $(\im X^\tp)^\perp$. Then, in Proposition~\ref{prop:RealDeterminantalVarietyNoSerendipitousPoint}, we deduce that $\R_{\le r}^{m \times n}$ has no serendipitous point.

Proposition~\ref{prop:NormalConesRealDeterminantalVariety} shows that a point $X \in \R_{\ushort{r}}^{m \times n}$ with $\ushort{r} \in \{0, \dots, r\}$ is Mordukhovich stationary if and only if it is stationary for the problem of minimizing $f$ on $\R_{\le \ushort{r}}^{m \times n}$ and $\rank \nabla f(X) \le \min\{m,n\} - r$.

\begin{proposition}[normal cones to $\R_{\le r}^{m \times n}$]
\label{prop:NormalConesRealDeterminantalVariety}
Let $\ushort{r} \in \{0, \dots, r\}$, $X \in \R_{\ushort{r}}^{m \times n}$, $U_\perp \in \st(m-\ushort{r},m)$, $V_\perp \in \st(n-\ushort{r},n)$, $\im U_\perp = (\im X)^\perp$, and $\im V_\perp = (\im X^\tp)^\perp$.
If $\ushort{r} = r$, then
\begin{equation*}
\regnorcone{\R_{\le r}^{m \times n}}{X}
= \norcone{\R_{\le r}^{m \times n}}{X}
= \connorcone{\R_{\le r}^{m \times n}}{X}
= \norcone{\R_r^{m \times n}}{X}
= U_\perp \R^{m-r \times n-r} V_\perp^\tp.
\end{equation*}
If $\ushort{r} < r$, then
\begin{align}
\label{eq:RegularNormalConeRealDeterminantalVariety}
\regnorcone{\R_{\le r}^{m \times n}}{X}
&= \{0_{m \times n}\},\\
\nonumber
\norcone{\R_{\le r}^{m \times n}}{X}
&= \norcone{\R_{\ushort{r}}^{m \times n}}{X} \cap \R_{\le \min\{m,n\}-r}^{m \times n},\\
\nonumber
\connorcone{\R_{\le r}^{m \times n}}{X}
&= \norcone{\R_{\ushort{r}}^{m \times n}}{X}.
\end{align}
\end{proposition}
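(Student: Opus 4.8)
The plan is to split on whether the rank $\ushort{r}$ of $X$ equals $r$ or is strictly less, and in the second case to treat the three cones in order of increasing difficulty: the regular normal cone directly from the tangent cone of Proposition~\ref{prop:ProjectionOntoTangentConeRealDeterminantalVariety}, then the normal cone (the bulk of the work), then the Clarke normal cone by a short convex-hull argument layered on top. When $\ushort{r}=r$, I would invoke the remark following Assumption~\ref{assumption:Stratification}: $X$ lies in the top stratum $\R_r^{m\times n}$, around which $\R_{\le r}^{m\times n}$ is a smooth submanifold of $\R^{m\times n}$, so by \cite[Example~6.8]{RockafellarWets} the three determinantal-variety cones and $\norcone{\R_r^{m\times n}}{X}$ all coincide with the normal space of $\R_r^{m\times n}$ at $X$; that normal space is $U_\perp\R^{m-r\times n-r}V_\perp^\tp$, being the orthogonal complement of the tangent space read off from Proposition~\ref{prop:ProjectionOntoTangentConeRealDeterminantalVariety} with $\ushort{r}=r$, whose $(2,2)$-block is $\R_{\le 0}^{m-r\times n-r}=\{0_{m-r\times n-r}\}$.

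Assume now $\ushort{r}<r$. For \eqref{eq:RegularNormalConeRealDeterminantalVariety} I would observe that, by Proposition~\ref{prop:ProjectionOntoTangentConeRealDeterminantalVariety}, $\tancone{\R_{\le r}^{m\times n}}{X}$ is symmetric (closed under $Z\mapsto-Z$, since $\R_{\le r-\ushort{r}}^{m-\ushort{r}\times n-\ushort{r}}$ is), and since $r-\ushort{r}\ge1$ that $(2,2)$-block already contains all rank-one matrices and hence linearly spans $\R^{m-\ushort{r}\times n-\ushort{r}}$; so $\tancone{\R_{\le r}^{m\times n}}{X}$ linearly spans $\R^{m\times n}$, forcing $\regnorcone{\R_{\le r}^{m\times n}}{X}=\tancone{\R_{\le r}^{m\times n}}{X}^*=\{0_{m\times n}\}$ via \eqref{eq:RegularNormalCone}. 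Before the normal cone I would record that $\norcone{\R_{\ushort{r}}^{m\times n}}{X}=U_\perp\R^{m-\ushort{r}\times n-\ushort{r}}V_\perp^\tp$ (the normal space of the smooth manifold $\R_{\ushort{r}}^{m\times n}$, by the previous paragraph's reasoning applied with $\ushort{r}$ in place of $r$), so that the asserted formula reads $\norcone{\R_{\le r}^{m\times n}}{X}=\{U_\perp E V_\perp^\tp:\rank E\le\min\{m,n\}-r\}$.

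For the normal cone I would prove both inclusions from $\norcone{\R_{\le r}^{m\times n}}{X}=\outlim_{\R_{\le r}^{m\times n}\ni Z\to X}\regnorcone{\R_{\le r}^{m\times n}}{Z}$ in \eqref{eq:NormalCone}. For ``$\subseteq$'', take $W=\lim W_k$ with $W_k\in\regnorcone{\R_{\le r}^{m\times n}}{Z_k}$ and $Z_k\to X$ in $\R_{\le r}^{m\times n}$; by lower semicontinuity of rank, along a subsequence $\rank Z_k$ is a constant $q\in\{\ushort{r},\dots,r\}$, and if $q<r$ then $W_k=0$ by the regular-normal-cone formula just proved, so $W=0$. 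If $q=r$, then by the $\ushort{r}=r$ case $\regnorcone{\R_{\le r}^{m\times n}}{Z_k}=\norcone{\R_r^{m\times n}}{Z_k}$, so $\im W_k\subseteq(\im Z_k)^\perp$ and $\im W_k^\tp\subseteq(\im Z_k^\tp)^\perp$; in particular $\rank W_k\le\min\{m,n\}-r$, a bound inherited by $W$. To pin down the limiting subspaces I would write a compact SVD $Z_k=P_kD_kQ_k^\tp$ with $P_k\in\st(r,m)$, $Q_k\in\st(r,n)$ and pass to a further subsequence with $P_k\to\bar P$, $Q_k\to\bar Q$; then $\bar P\bar D\bar Q^\tp=X$ with $\bar D$ having exactly $\ushort{r}$ nonzero diagonal entries, so $\im X\subseteq\im\bar P$, $\im X^\tp\subseteq\im\bar Q$, and letting $k\to\infty$ in the orthogonality relations gives $\im W\subseteq(\im X)^\perp$, $\im W^\tp\subseteq(\im X^\tp)^\perp$, i.e.\ $W\in\norcone{\R_{\ushort{r}}^{m\times n}}{X}$. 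For ``$\supseteq$'', given $W=U_\perp E V_\perp^\tp$ with $\rank E\le\min\{m,n\}-r$, I note $\im U\subseteq(\im W)^\perp$, $\im V\subseteq(\im W^\tp)^\perp$, and $\dim(\im W)^\perp=m-\rank W\ge r$, $\dim(\im W^\tp)^\perp\ge r$; extend $U,V$ to $\tilde U=[U\ \hat U]\in\st(r,m)$, $\tilde V=[V\ \hat V]\in\st(r,n)$ with the columns of $\hat U,\hat V$ lying in $(\im W)^\perp$, $(\im W^\tp)^\perp$ respectively, and set $Z_k:=\tilde U\diag(\Sigma,\frac{1}{k}I_{r-\ushort{r}})\tilde V^\tp$ where $X=U\Sigma V^\tp$. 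Then $\rank Z_k=r$, $Z_k\to\tilde U\diag(\Sigma,0)\tilde V^\tp=X$, and $\im Z_k\perp\im W$, $\im Z_k^\tp\perp\im W^\tp$, so $W\in\norcone{\R_r^{m\times n}}{Z_k}=\regnorcone{\R_{\le r}^{m\times n}}{Z_k}$, hence $W\in\norcone{\R_{\le r}^{m\times n}}{X}$.

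Finally, for $\connorcone{\R_{\le r}^{m\times n}}{X}$ with $\ushort{r}<r$: since $r<\min\{m,n\}$ we have $\min\{m,n\}-r\ge1$, so the normal cone contains every matrix $U_\perp(ab^\tp)V_\perp^\tp$; these span the linear subspace $U_\perp\R^{m-\ushort{r}\times n-\ushort{r}}V_\perp^\tp$, and as $\norcone{\R_{\le r}^{m\times n}}{X}$ is symmetric its convex hull equals that (already closed) subspace, which by the previous paragraph is $\norcone{\R_{\ushort{r}}^{m\times n}}{X}$. The step I expect to be the main obstacle is the inclusion ``$\subseteq$'' for the normal cone when $\ushort{r}<r$: one must simultaneously preserve the rank bound $\min\{m,n\}-r$ under limits and control the convergence of the column and row spaces of $Z_k$, which is exactly where the compactness of the Stiefel manifolds carrying the SVD factors of $Z_k$ is used.
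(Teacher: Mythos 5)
Your proof is correct, but it takes a genuinely different route from the paper: the paper disposes of this proposition in three lines by citation, taking the normal space of $\R_{\ushort{r}}^{m \times n}$ from \cite[Proposition~4.1]{HelmkeShayman1995} and the regular, limiting, and Clarke normal cones to $\R_{\le r}^{m \times n}$ from \cite[Corollary~2.3, Theorem~3.1, Corollary~3.2]{HosseiniLukeUschmajew2019}, whereas you rederive everything from the tangent-cone description of Proposition~\ref{prop:ProjectionOntoTangentConeRealDeterminantalVariety} and the definitions \eqref{eq:RegularNormalCone}--\eqref{eq:NormalCone}. Your key steps all check out: symmetry plus linear spanning of the tangent cone gives \eqref{eq:RegularNormalConeRealDeterminantalVariety}; in the inclusion ``$\subseteq$'' for the normal cone, extracting a constant-rank subsequence, using $\regnorcone{\R_{\le r}^{m \times n}}{Z_k}=\{0\}$ when $\rank Z_k<r$ and $=\norcone{\R_r^{m \times n}}{Z_k}$ when $\rank Z_k=r$, and passing the orthogonality relations $P_k^\tp W_k=0$, $W_kQ_k=0$ to the limit via compactness of the Stiefel factors (together with lower semicontinuity of the rank for the bound $\min\{m,n\}-r$) is sound — the only detail worth making explicit is that you also extract convergence of the diagonal factors $D_k$, which is immediate from boundedness (or continuity of singular values), and that $X=\bar P\bar D\bar Q^\tp$ alone already gives $\im X\subseteq\im\bar P$; the construction $Z_k=\tilde U\diag(\Sigma,\tfrac1k I_{r-\ushort{r}})\tilde V^\tp$ for ``$\supseteq$'' and the convex-hull argument for the Clarke cone are likewise correct, the latter using $\min\{m,n\}-r\ge1$ and symmetry of the normal cone. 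What your approach buys is a self-contained argument, interesting in its own right because it mirrors the sequence-based proofs the paper does write out for $\sparse{n}{s}$ and $\sparse{n}{s}\cap\R_+^n$ (Propositions~\ref{prop:NormalConeSparseVectors} and \ref{prop:NormalConeNonnegativeSparseVectors}); what the paper's citation buys is brevity and reliance on results already established in the low-rank literature.
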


\begin{proof}
The tangent space to $\R_{\ushort{r}}$ is given in \cite[Proposition 4.1]{HelmkeShayman1995} and the normal space to $\R_{\ushort{r}}$ is its orthogonal complement. The regular normal cone, the normal cone, and the Clarke normal cone to $\R_{\le r}^{m \times n}$ are described respectively in \cite[Corollary~2.3]{HosseiniLukeUschmajew2019}, \cite[Theorem~3.1]{HosseiniLukeUschmajew2019}, and \cite[Corollary~3.2]{HosseiniLukeUschmajew2019}.
\end{proof}

\begin{proposition}
\label{prop:RealDeterminantalVarietyNoSerendipitousPoint}
No point of $\R_{\le r}^{m \times n}$ is serendipitous.
\end{proposition}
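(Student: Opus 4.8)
The plan is to apply the characterization of serendipitous points in Proposition~\ref{prop:CharacterizationApocalypticSerendipitousPoint}: a point $X \in \R_{\le r}^{m \times n}$ is \emph{not} serendipitous exactly when, for every sequence $(X_i)_{i \in \N}$ in $\R_{\le r}^{m \times n}$ converging to $X$, the inclusion $\regnorcone{\R_{\le r}^{m \times n}}{X} \subseteq \big(\outlim_{i \to \infty} \tancone{\R_{\le r}^{m \times n}}{X_i}\big)^*$ holds. I would fix an arbitrary $X \in \R_{\le r}^{m \times n}$ and an arbitrary such sequence, set $\ushort{r} := \rank X$, and verify this inclusion by distinguishing whether $\ushort{r} < r$ or $\ushort{r} = r$.

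If $\ushort{r} < r$, then by~\eqref{eq:RegularNormalConeRealDeterminantalVariety} in Proposition~\ref{prop:NormalConesRealDeterminantalVariety} one has $\regnorcone{\R_{\le r}^{m \times n}}{X} = \{0_{m \times n}\}$; since $0_{m \times n}$ belongs to the polar of every subset of $\mathcal{E}$, the inclusion is immediate. If $\ushort{r} = r$, then $X$ lies in the top stratum $\R_r^{m \times n}$, which is open in $\R_{\le r}^{m \times n}$ because its complement $\R_{\le r-1}^{m \times n}$ is closed by lower semicontinuity of the rank (equivalently, $\R_{\le r}^{m \times n} \cap \ball(X, \dist(X, \R_{\le r-1}^{m \times n})) = \R_r^{m \times n} \cap \ball(X, \dist(X, \R_{\le r-1}^{m \times n}))$, as noted after Assumption~\ref{assumption:Stratification}). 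Hence $X_i \in \R_r^{m \times n}$ for all large $i$, so the tail of $(X_i)_{i \in \N}$ lies in $\R_r^{m \times n}$ and converges to $X \in \R_r^{m \times n}$. By condition~3 of Assumption~\ref{assumption:Stratification}, which holds for $\R_{\le r}^{m \times n}$ by \cite[Theorem~4.1]{OlikierAbsil2022}, the correspondence $\tancone{\R_{\le r}^{m \times n}}{\cdot}$ is continuous on $\R_r^{m \times n}$ relative to $\R_r^{m \times n}$; therefore $\setlim_{i \to \infty} \tancone{\R_{\le r}^{m \times n}}{X_i} = \tancone{\R_{\le r}^{m \times n}}{X}$ (outer and inner limits of a sequence of sets depend only on its tail), and in particular $\outlim_{i \to \infty} \tancone{\R_{\le r}^{m \times n}}{X_i} = \tancone{\R_{\le r}^{m \times n}}{X}$. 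Taking polars and using the definition~\eqref{eq:RegularNormalCone} of the regular normal cone gives $\big(\outlim_{i \to \infty} \tancone{\R_{\le r}^{m \times n}}{X_i}\big)^* = \tancone{\R_{\le r}^{m \times n}}{X}^* = \regnorcone{\R_{\le r}^{m \times n}}{X}$, so the inclusion holds (with equality). Combining the two cases proves that no $X \in \R_{\le r}^{m \times n}$ is serendipitous.

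I do not expect a genuine obstacle. The only step needing a little care is the passage to the top stratum in the case $\ushort{r} = r$, namely the observation that a sequence in $\R_{\le r}^{m \times n}$ converging to a rank-$r$ matrix is eventually of rank exactly $r$; once this is in place, the result is essentially a restatement of the continuity of the tangent cone on $\R_r^{m \times n}$ together with the triviality of the regular normal cone at points of lower rank, both already available in the excerpt.
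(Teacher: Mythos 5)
Your proof is correct and follows essentially the same route as the paper: the characterization of Proposition~\ref{prop:CharacterizationApocalypticSerendipitousPoint}, the case split on $\rank X$, triviality of $\regnorcone{\R_{\le r}^{m \times n}}{X}$ via \eqref{eq:RegularNormalConeRealDeterminantalVariety} when $\rank X < r$, and identification of the outer limit of tangent cones with $\tancone{\R_{\le r}^{m \times n}}{X}$ when $\rank X = r$. The only cosmetic difference is that the paper invokes \cite[Proposition~4.3]{OlikierAbsil2022} directly for that outer-limit equality along arbitrary sequences in the variety, whereas you derive it from the eventual membership of the sequence in the top stratum together with the continuity of the tangent cone relative to $\R_r^{m \times n}$ — the same device the paper uses elsewhere (e.g.\ in Proposition~\ref{prop:PSDconeBoundedRankApocalypticSerendipitousPoints}).
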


\begin{proof}
We use Proposition~\ref{prop:CharacterizationApocalypticSerendipitousPoint}. Let $X \in \R_{\le r}^{m \times n}$. Let us prove that $X$ is not serendipitous. Let $(X_i)_{i \in \N}$ be a sequence in $\R_{\le r}^{m \times n}$ converging to $X$.
If $\rank X = r$, then, by \cite[Proposition~4.3]{OlikierAbsil2022}, $\outlim_{i \to \infty} \tancone{\R_{\le r}^{m \times n}}{X_i} = \tancone{\R_{\le r}^{m \times n}}{X}$, and thus $\big(\outlim_{i \to \infty} \tancone{\R_{\le r}^{m \times n}}{X_i}\big)^* = \regnorcone{\R_{\le r}^{m \times n}}{X}$.
If $\rank X < r$, then
\begin{equation*}
\regnorcone{\R_{\le r}^{m \times n}}{X}
= \{0_{m \times n}\}
\subseteq \Big(\outlim_{i \to \infty} \tancone{\R_{\le r}^{m \times n}}{X_i}\Big)^*,
\end{equation*}
where the equality follows from \eqref{eq:RegularNormalConeRealDeterminantalVariety}.
\end{proof}

\subsubsection{A variant of the $\ppgdr$ map on the determinantal variety}
\label{subsubsec:P2GDRrealDeterminantalVariety}
In this section, we propose as Algorithm~\ref{algo:P2GDRmapRealDeterminantalVariety} the variant of the $\ppgdr$ map on $\R_{\le r}^{m \times n}$ obtained by measuring the distance from the input to the lower strata using the the spectral norm instead of the Frobenius norm. To this end, we recall from \cite[(5.4.5)]{GolubVanLoan} that, given $\Delta \in [0, \infty)$, the \emph{$\Delta$-rank} of $X \in \R^{m \times n}$, denoted $\rank_\Delta X$, is defined as the number of singular values of $X$ that are larger than $\Delta$:
\begin{equation*}
\rank_\Delta X := |\{i \in \{1, \dots, \min\{m,n\}\} \mid \sigma_i(X) > \Delta\}|.
\end{equation*}
Proposition~\ref{prop:P2GDRrealDeterminantalVariety} states that the minimum computed in line~\ref{algo:P2GDRmap:DeltaCloseStrata} of Algorithm~\ref{algo:P2GDRmap} equals the $\Delta$-rank if the distance is measured with respect to the spectral norm.
For every nonempty subset $\mathcal{S}$ of $\R^{m \times n}$ and every $X \in \R^{m \times n}$, let $d_2(X, \mathcal{S}) := \inf_{Y \in \mathcal{S}} \norm{X-Y}_2$.

\begin{proposition}
\label{prop:P2GDRrealDeterminantalVariety}
For all $\Delta \in [0, \infty)$ and all $X \in \R^{m \times n}$,
\begin{equation*}
\min\{j \in \{0, \dots, \rank X\} \mid d_2(X, \R_j^{m \times n}) \le \Delta\} = \rank_\Delta X.
\end{equation*}
\end{proposition}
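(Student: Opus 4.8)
The plan is to reduce the left-hand side to a condition on the singular values of $X$ via the Eckart--Young theorem and then read off the minimum directly, mirroring the facts about Frobenius-norm rank reduction recalled in Section~\ref{subsubsec:StratificationRealDeterminantalVariety}.

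First I would establish that, for every $j \in \{0, \dots, \rank X\}$,
\begin{equation*}
d_2(X, \R_j^{m \times n}) = \sigma_{j+1}(X),
\end{equation*}
where we set $\sigma_{\rank X + 1}(X) := 0$. The case $j = \rank X$ is immediate since $X \in \R_{\rank X}^{m \times n}$. For $j < \rank X$, the inclusion $\R_j^{m \times n} \subseteq \R_{\le j}^{m \times n}$ gives $d_2(X, \R_j^{m \times n}) \ge d_2(X, \R_{\le j}^{m \times n}) = \sigma_{j+1}(X)$ by the Eckart--Young theorem \cite{EckartYoung1936} for the spectral norm; conversely, truncating an SVD of $X$ to its $j$ largest singular values yields a matrix $Y$ with $\norm{X - Y}_2 = \sigma_{j+1}(X)$ and $\rank Y = j$ (since $\sigma_j(X) > 0$ because $1 \le j \le \rank X$, the case $j = 0$ being the trivial $Y = 0_{m \times n}$), so $d_2(X, \R_j^{m \times n}) \le \sigma_{j+1}(X)$.

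Next I would translate the defining condition: for $j \in \{0, \dots, \rank X\}$, $d_2(X, \R_j^{m \times n}) \le \Delta$ holds if and only if $\sigma_{j+1}(X) \le \Delta$. Since the singular values are nonincreasing, one has $\sigma_i(X) > \Delta$ precisely for $i \in \{1, \dots, \rank_\Delta X\}$, so $\sigma_{j+1}(X) \le \Delta$ if and only if $j + 1 > \rank_\Delta X$, that is, if and only if $j \ge \rank_\Delta X$. Hence
\begin{equation*}
\{j \in \{0, \dots, \rank X\} \mid d_2(X, \R_j^{m \times n}) \le \Delta\} = \{j \in \{0, \dots, \rank X\} \mid j \ge \rank_\Delta X\}.
\end{equation*}

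Finally, $\rank_\Delta X \le \rank X$ because any $i$ with $\sigma_i(X) > \Delta \ge 0$ satisfies $\sigma_i(X) > 0$, hence $i \le \rank X$; so the set on the right is nonempty (it contains $\rank X$, reflecting that $\Delta \ge 0$ makes $j = \rank X$ always admissible) and its minimum equals $\rank_\Delta X$, which is the claim. I do not expect a genuine obstacle here: the only points needing care are the distinction between rank exactly $j$ and rank at most $j$ (ensuring the infimum over $\R_j^{m \times n}$ is attained and equals $\sigma_{j+1}(X)$, not merely bounded by it) and the degenerate cases $j = 0$, $X = 0_{m \times n}$, and $\rank_\Delta X = 0$, all of which are absorbed uniformly by the identity $d_2(X, \R_j^{m \times n}) = \sigma_{j+1}(X)$ together with the convention $\sigma_{\rank X + 1}(X) = 0$.
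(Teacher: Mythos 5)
Your proof is correct and follows essentially the same route as the paper's: both reduce the claim to the Eckart--Young identity $d_2(X, \R_j^{m \times n}) = \sigma_{j+1}(X)$ (for $j < \rank X$, and $0$ otherwise) and then read off the minimum from the monotonicity of the singular values. Your explicit verification that the infimum over the exact-rank stratum $\R_j^{m \times n}$, rather than over $\R_{\le j}^{m \times n}$, is attained at the truncated SVD is a point the paper leaves implicit, but it does not change the argument.
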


\begin{proof}
Let $\Delta \in [0, \infty)$ and $X \in \R^{m \times n}$. By the Eckart--Young theorem \cite{EckartYoung1936}, for all $j \in \{0, \dots, \min\{m,n\}\}$,
\begin{equation*}
d_2(X, \R_j^{m \times n}) = \left\{\begin{array}{ll}
0 & \text{if } \rank X \le j,\\
\sigma_{j+1}(X) & \text{if } \rank X > j.
\end{array}\right.
\end{equation*}
Thus, the result is clear if $\rank_\Delta X = \rank X$. If $\rank_\Delta X < \rank X$, then
\begin{align*}
\rank_\Delta X
&= |\{j \in \{1, \dots, \rank X\} \mid \sigma_j(X) > \Delta\}|\\
&= \rank X - |\{j \in \{1, \dots, \rank X\} \mid \sigma_j(X) \le \Delta\}|\\
&= \rank X - |\{j \in \{0, \dots, \rank X-1\} \mid \sigma_{j+1}(X) \le \Delta\}|\\
&= \min\{j \in \{0, \dots, \rank X - 1\} \mid \sigma_{j+1}(X) \le \Delta\}\\
&= \min\{j \in \{0, \dots, \rank X - 1\} \mid d_2(X, \R_j^{m \times n}) \le \Delta\}\\
&= \min\{j \in \{0, \dots, \rank X\} \mid d_2(X, \R_j^{m \times n}) \le \Delta\}.
\qedhere
\end{align*}
\end{proof}

Given $X \in \R_{\le r}^{m \times n}$ as input, Algorithm~\ref{algo:P2GDRmapRealDeterminantalVariety} proceeds as follows: (i) it applies the $\ppgd$ map to $X$, thereby producing a point $\tilde{X}^0$, (ii) if $\rank_\Delta X < \rank X$, it applies the $\ppgd$ map to $\hat{X}^j \in \proj{\R_{\rank X - j}^{m \times n}}{X}$ for every $j \in \{1, \dots, \rank X - \rank_\Delta X\}$, then producing points $\tilde{X}^1, \dots, \tilde{X}^{\rank X - \rank_\Delta X}$, and (iii) it outputs a point among $\tilde{X}^0, \dots, \tilde{X}^{\rank X - \rank_\Delta X}$ that maximally decreases $f$.

\begin{algorithm}[H]
\caption{Variant of the $\ppgdr$ map on $\R_{\le r}^{m \times n}$}
\label{algo:P2GDRmapRealDeterminantalVariety}
\begin{algorithmic}[1]
\Require
$(f, r, \ushort{\alpha}, \bar{\alpha}, \beta, c, \Delta)$ where $f : \R^{m \times n} \to \R$ is differentiable with $\nabla f$ locally Lipschitz continuous, $r < \min\{m,n\}$ is a positive integer, $0 < \ushort{\alpha} \le \bar{\alpha} < \infty$, $\beta, c \in (0,1)$, and $\Delta \in (0,\infty)$.
\Input
$X \in \R_{\le r}^{m \times n}$ such that $\s(X; f, \R_{\le r}^{m \times n}) > 0$.
\Output
$Y \in \text{Algorithm~\ref{algo:P2GDRmapRealDeterminantalVariety}}(X; f, r, \ushort{\alpha}, \bar{\alpha}, \beta, c, \Delta)$.

\For
{$j \in \{0, \dots, \rank X - \rank_\Delta X\}$}
\State
Choose $\hat{X}^j \in \proj{\R_{\rank X - j}^{m \times n}}{X}$;
\State
Choose $\tilde{X}^j \in \hyperref[algo:P2GDmap]{\ppgd}(\hat{X}^j; \R^{m \times n}, \R_{\le r}^{m \times n}, f, \ushort{\alpha}, \bar{\alpha}, \beta, c)$;
\EndFor
\State
Return $Y \in \argmin_{\{\tilde{X}^j \mid j \in \{0, \dots, \rank X - \rank_\Delta X\}\}} f$.
\end{algorithmic}
\end{algorithm}

Proposition~\ref{prop:P2GDRmapRealDeterminantalVarietyPolak} states that this variant satisfies the same decrease guarantee as the original version.

\begin{proposition}
\label{prop:P2GDRmapRealDeterminantalVarietyPolak}
Proposition~\ref{prop:P2GDRmapPolak} holds for Algorithm~\ref{algo:P2GDRmapRealDeterminantalVariety}.
\end{proposition}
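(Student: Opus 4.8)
The plan is to run the proof of Proposition~\ref{prop:P2GDRmapPolak} essentially verbatim, with $C = \R_{\le r}^{m \times n}$ and strata $S_i = \R_i^{m \times n}$ (these satisfy Assumption~\ref{assumption:Stratification} by Theorem~\ref{thm:ExamplesStratifiedSetsSatisfyingMainAssumption}), keeping the very same quantities $\bar{\rho}(\ushort{X})$, $\bar{\kappa}(\ushort{X}; f, \bar{\alpha})$, $\delta(\ushort{X})$, and $\varepsilon(\ushort{X})$ defined in~\eqref{eq:P2GDRmapPolakRho}--\eqref{eq:P2GDRmapPolakEpsilon}. Steps~1, 2, 3, 5, and 6 of that proof use only Assumption~\ref{assumption:Stratification} and the behavior of the $\ppgd$ map; they do not refer to the rule by which the lower strata to be explored are selected. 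Hence, writing $\ushort{i} := \rank \ushort{X}$, these steps transfer unchanged. In particular, Step~3 still gives $\ball[\ushort{X}, \varepsilon(\ushort{X})] \cap \R_{\le r}^{m \times n} \subseteq \bigcup_{i=\ushort{i}}^{r} \R_i^{m \times n}$, so every input $X \in \ball[\ushort{X}, \varepsilon(\ushort{X})] \cap \R_{\le r}^{m \times n}$ satisfies $\rank X \ge \ushort{i}$.

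The only step needing adaptation is Step~4: I must check that, given such an $X$ as input, Algorithm~\ref{algo:P2GDRmapRealDeterminantalVariety} considers some $\hat{X} \in \proj{\R_{\ushort{i}}^{m \times n}}{X} \subseteq \ball[\ushort{X}, 2\varepsilon(\ushort{X})]$ together with the corresponding point in $\ppgd(\hat{X}; \R^{m \times n}, \R_{\le r}^{m \times n}, f, \ushort{\alpha}, \bar{\alpha}, \beta, c)$. Since the loop of Algorithm~\ref{algo:P2GDRmapRealDeterminantalVariety} ranges over $j \in \{0, \dots, \rank X - \rank_\Delta X\}$ with $\hat{X}^j \in \proj{\R_{\rank X - j}^{m \times n}}{X}$, it suffices to establish $\rank_\Delta X \le \ushort{i} \le \rank X$: the upper bound is Step~3, and the lower bound follows from $\ushort{X} \in \R_{\ushort{i}}^{m \times n}$, the norm inequality $\norm{\cdot}_2 \le \norm{\cdot}$, the bound $\varepsilon(\ushort{X}) \le \tfrac{1}{2}\Delta$ from~\eqref{eq:P2GDRmapPolakEpsilon}, and Proposition~\ref{prop:P2GDRrealDeterminantalVariety}, via
\begin{equation*}
d_2(X, \R_{\ushort{i}}^{m \times n}) \le \norm{X - \ushort{X}}_2 \le \norm{X - \ushort{X}} \le \varepsilon(\ushort{X}) \le \Delta .
\end{equation*}
Then $\hat{X} := \hat{X}^{\rank X - \ushort{i}} \in \proj{\R_{\ushort{i}}^{m \times n}}{X}$ is considered; the rank-reduction facts of Section~\ref{subsubsec:StratificationRealDeterminantalVariety} show $\proj{\R_{\ushort{i}}^{m \times n}}{X} = \proj{\overline{\R_{\ushort{i}}^{m \times n}}}{X} = \proj{S_{\ushort{i}}}{X}$, so this is exactly the projection appearing in the original Step~4, and $\norm{\hat{X} - X} = \dist(X, \R_{\ushort{i}}^{m \times n}) \le \norm{X - \ushort{X}} \le \varepsilon(\ushort{X})$ gives $\hat{X} \in \ball[\ushort{X}, 2\varepsilon(\ushort{X})]$ as before.

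With Step~4 in hand, Steps~5 and 6 apply $\ppgd$ to $\hat{X}$ and Corollary~\ref{coro:P2GDmapArmijoCondition} with the ball $\ball[\ushort{X}, \bar{\rho}(\ushort{X})]$ to get $f(\tilde{X}) \le f(X) - \delta(\ushort{X})$, whence $f(Y) \le f(\tilde{X}) \le f(X) - \delta(\ushort{X})$ for every output $Y$ of Algorithm~\ref{algo:P2GDRmapRealDeterminantalVariety}, which is~\eqref{eq:PolakLocallyUniformSufficientDecrease}. I do not expect a genuine obstacle here: the variant explores at least the strata $\R_{\rank_\Delta X}^{m \times n}, \dots, \R_{\rank X}^{m \times n}$, and $\rank_\Delta X \le \ushort{i} \le \rank X$ guarantees that the stratum of $\ushort{X}$ is among them, which is all Step~4 needs. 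The only care required is bookkeeping — matching the generic stratum index $j$ of Algorithm~\ref{algo:P2GDRmap} with the complementary index $\rank X - j$ of Algorithm~\ref{algo:P2GDRmapRealDeterminantalVariety}, and invoking Section~\ref{subsubsec:StratificationRealDeterminantalVariety} to reconcile the projection onto the exact-rank manifold $\R_{\ushort{i}}^{m \times n}$ with the projection onto the closed stratum $\overline{\R_{\ushort{i}}^{m \times n}} = \R_{\le \ushort{i}}^{m \times n}$ implicit in the original argument.
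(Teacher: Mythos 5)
Your proposal is correct and follows essentially the same route as the paper: the whole point is the inequality $\norm{\cdot}_2 \le \norm{\cdot}$, which guarantees that Algorithm~\ref{algo:P2GDRmapRealDeterminantalVariety} explores (at least) the strata that Algorithm~\ref{algo:P2GDRmap} would explore, in particular the stratum $\R_{\ushort{i}}^{m \times n}$ containing $\ushort{X}$, after which the argument of Proposition~\ref{prop:P2GDRmapPolak} applies. The paper states this as a one-line comparison of the two index sets $\{j \mid d_2(X, \R_j^{m \times n}) \le \Delta\} \supseteq \{j \mid \dist(X, \R_j^{m \times n}) \le \Delta\}$, whereas you unfold the same reduction inside Step~4 of the original proof via Proposition~\ref{prop:P2GDRrealDeterminantalVariety}; the content is identical.
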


\begin{proof}
For all $X \in \R^{m \times n}$, since $\norm{X}_2 \le \norm{X}$, it holds that
\begin{equation*}
\{j \in \{0, \dots, \rank X\} \mid d_2(X, \R_j^{m \times n}) \le \Delta\} \supseteq \{j \in \{0, \dots, \rank X\} \mid \dist(X, \R_j^{m \times n}) \le \Delta\}
\end{equation*}
and thus
\begin{equation*}
\min\{j \in \{0, \dots, \rank X\} \mid d_2(X, \R_j^{m \times n}) \le \Delta\} \le \min\{j \in \{0, \dots, \rank X\} \mid \dist(X, \R_j^{m \times n}) \le \Delta\}.
\end{equation*}
Therefore, given $X \in \R_{\le r}^{m \times n}$ as input, Algorithm~\ref{algo:P2GDRmapRealDeterminantalVariety} explores the strata explored by Algorithm~\ref{algo:P2GDRmap}, and hence produces a point of cost not larger than the cost of the point produced by Algorithm~\ref{algo:P2GDRmap}.
\end{proof}

It is possible to prove Proposition~\ref{prop:P2GDRmapRealDeterminantalVarietyPolak} without relying on condition~3 of Assumption~\ref{assumption:Stratification} and Corollary~\ref{coro:ContinuityTangentConeImpliesContinuityStationarityMeasure}. Indeed, using the same notation as in the proof of Proposition~\ref{prop:P2GDRmapPolak}, if $\rank \ushort{X} = r$, then $\s(\cdot; f, \R_{\le r}^{m \times n})$ is continuous at $\ushort{X}$ since $\R_{\le r}^{m \times n}$ is identical to the smooth manifold $\R_r^{m \times n}$ around $\ushort{X}$, i.e., $\R_{\le r}^{m \times n} \cap \ball(\ushort{X}, \sigma_r(\ushort{X})) = \R_r^{m \times n} \cap \ball(\ushort{X}, \sigma_r(\ushort{X}))$, and, on $\R_r^{m \times n} \cap \ball(\ushort{X}, \sigma_r(\ushort{X}))$, $\s(\cdot; f, \R_{\le r}^{m \times n})$ therefore coincides with the norm of the Riemannian gradient of $f|_{\R_r^{m \times n}}$, which is continuous. If $\rank \ushort{X} < r$, then, in view of~\eqref{eq:NormProjectionTangentConeDeterminantalVariety} and the continuity of $\nabla f$, $\s(\cdot; f, \R_{\le r}^{m \times n})$ is bounded away from zero on the intersection of $\R_{< r}^{m \times n}$ and a sufficiently small ball centered at $\ushort{X}$. However, we chose to keep the proof of Proposition~\ref{prop:P2GDRmapRealDeterminantalVarietyPolak} as it stands to highlight that it follows from Proposition~\ref{prop:P2GDRmapPolak} in our abstract setting.

We can now state the main result of this section.

\begin{corollary}
\label{coro:P2GDRrealDeterminantalVarietyPolakConvergence}
Consider $\ppgdr$ on $\R_{\le r}^{m \times n}$, i.e., Algorithm~\ref{algo:P2GDR} using either Algorithm~\ref{algo:P2GDRmap} or Algorithm~\ref{algo:P2GDRmapRealDeterminantalVariety} in line~\ref{algo:P2GDR:P2GDRmap}. Then, Theorem~\ref{thm:P2GDRPolakConvergence} holds.
Let $(X_i)_{i \in \N}$ be a sequence produced by $\ppgdr$.
The sequence has at least one accumulation point if and only if $\liminf_{i \to \infty} \norm{X_i} < \infty$. For every convergent subsequence $(X_{i_k})_{k \in \N}$, $\lim_{k \to \infty} \s(X_{i_k}; f, \R_{\le r}^{m \times n}) = 0$.
If $(X_i)_{i \in \N}$ is bounded, which is the case notably if the sublevel set $\{X \in \R_{\le r}^{m \times n} \mid f(X) \le f(X_0)\}$ is bounded, then $\lim_{i \to \infty} \s(X_i; f, \R_{\le r}^{m \times n}) = 0$, and all accumulation points have the same image by $f$.
\end{corollary}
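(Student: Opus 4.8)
Corollary~\ref{coro:P2GDRrealDeterminantalVarietyPolakConvergence} is a specialization of Theorem~\ref{thm:P2GDRPolakConvergence} and Corollary~\ref{coro:P2GDRPolakConvergence} to $C = \R_{\le r}^{m \times n}$, so the plan is to check that the hypotheses of those two results are met and then invoke them. First I would note that $\R_{\le r}^{m \times n}$ satisfies Assumption~\ref{assumption:Stratification} by Theorem~\ref{thm:ExamplesStratifiedSetsSatisfyingMainAssumption}, so the $\ppgdr$ algorithm (Algorithm~\ref{algo:P2GDR} using Algorithm~\ref{algo:P2GDRmap}) is well defined on this set and Theorem~\ref{thm:P2GDRPolakConvergence} applies verbatim. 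For the variant using Algorithm~\ref{algo:P2GDRmapRealDeterminantalVariety}, I would invoke Proposition~\ref{prop:P2GDRmapRealDeterminantalVarietyPolak}, which states that Proposition~\ref{prop:P2GDRmapPolak} holds for that variant; since the proof of Theorem~\ref{thm:P2GDRPolakConvergence} uses the $\ppgdr$ map only through the conclusion of Proposition~\ref{prop:P2GDRmapPolak} (the locally uniform sufficient-decrease inequality~\eqref{eq:PolakLocallyUniformSufficientDecrease}) together with the fact that the cost sequence is nonincreasing, the same argument goes through unchanged. This establishes the first sentence of the corollary.

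Next I would address the convergence-of-the-stationarity-measure statements. By Proposition~\ref{prop:RealDeterminantalVarietyNoSerendipitousPoint}, $\R_{\le r}^{m \times n}$ has no serendipitous point; hence Corollary~\ref{coro:P2GDRPolakConvergence} applies directly with $C = \R_{\le r}^{m \times n}$. The ``at least one accumulation point if and only if $\liminf_{i \to \infty} \norm{X_i} < \infty$'' equivalence is the classical Bolzano--Weierstrass statement already recorded there. The claim that $\lim_{k \to \infty} \s(X_{i_k}; f, \R_{\le r}^{m \times n}) = 0$ along every convergent subsequence, and that $\lim_{i \to \infty} \s(X_i; f, \R_{\le r}^{m \times n}) = 0$ when $(X_i)_{i \in \N}$ is bounded, together with the common-cost-value conclusion, are exactly the three assertions of Corollary~\ref{coro:P2GDRPolakConvergence} once the ``no serendipitous point'' hypothesis is discharged. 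The remark that boundedness of the sublevel set $\{X \in \R_{\le r}^{m \times n} \mid f(X) \le f(X_0)\}$ forces $(X_i)_{i \in \N}$ to be bounded follows because $(f(X_i))_{i \in \N}$ is nonincreasing (so every iterate lies in that sublevel set), which is again already noted in Corollary~\ref{coro:P2GDRPolakConvergence}.

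\textbf{Main obstacle.} There is essentially no analytic difficulty here; the corollary is a bookkeeping consequence of results already proved. The only point that deserves a sentence of care is that the decrease guarantee, and hence Theorem~\ref{thm:P2GDRPolakConvergence} and Corollary~\ref{coro:P2GDRPolakConvergence}, must be seen to hold for \emph{both} iteration maps on $\R_{\le r}^{m \times n}$ — the generic Algorithm~\ref{algo:P2GDRmap} and the spectral-norm variant Algorithm~\ref{algo:P2GDRmapRealDeterminantalVariety} — and for the latter this is precisely the content of Proposition~\ref{prop:P2GDRmapRealDeterminantalVarietyPolak}, which in turn relies on the inclusion of explored strata established there. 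So the proof reduces to: (i) cite Theorem~\ref{thm:ExamplesStratifiedSetsSatisfyingMainAssumption} for Assumption~\ref{assumption:Stratification}; (ii) cite Theorem~\ref{thm:P2GDRPolakConvergence} together with Proposition~\ref{prop:P2GDRmapRealDeterminantalVarietyPolak} for the apocalypse-free property of both variants; (iii) cite Proposition~\ref{prop:RealDeterminantalVarietyNoSerendipitousPoint} and Corollary~\ref{coro:P2GDRPolakConvergence} for the remaining claims.
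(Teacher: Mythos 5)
Your proposal is correct and follows essentially the same route as the paper's proof: invoke Proposition~\ref{prop:P2GDRmapRealDeterminantalVarietyPolak} so that Theorem~\ref{thm:P2GDRPolakConvergence} covers both iteration maps, then obtain the remaining claims from Corollary~\ref{coro:P2GDRPolakConvergence} and Proposition~\ref{prop:RealDeterminantalVarietyNoSerendipitousPoint}, using that $\R_{\le r}^{m \times n}$ satisfies Assumption~\ref{assumption:Stratification}. Your extra remark that the proof of Theorem~\ref{thm:P2GDRPolakConvergence} uses the iteration map only through the sufficient-decrease inequality is a valid and slightly more explicit justification of the same step.
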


\begin{proof}
The first claim follows from Proposition~\ref{prop:P2GDRmapRealDeterminantalVarietyPolak}. The rest follows from Corollary~\ref{coro:P2GDRPolakConvergence} and Proposition~\ref{prop:RealDeterminantalVarietyNoSerendipitousPoint} since $\R_{\le r}^{m \times n}$ satisfies Assumption~\ref{assumption:Stratification}.
\end{proof}

\subsubsection{Practical implementation of \cite[Algorithm~1]{LevinKileelBoumal2022} on the determinantal variety}
\label{subsubsec:PracticalImplementationLKB22Algo1RealDeterminantalVariety}
In this section, we detail the implementation of \cite[Algorithm~1]{LevinKileelBoumal2022} with the rank factorization lift
\begin{equation*}
\varphi : \R^{m \times r} \times \R^{n \times r} \to \R^{m \times n} : (L, R) \mapsto LR^\tp
\end{equation*}
given in \cite[(1.1)]{LevinKileelBoumal2022} and the hook of \cite[Example 3.11]{LevinKileelBoumal2022}.

We consider the lifted cost function $g := f \circ \varphi$.
We assume that $\nabla f : \R^{m \times n} \to \R^{m \times n}$ is continuously differentiable and let $\nabla^2 f$ denote the Hessian of $f$, defined as the derivative of $\nabla f$, i.e., $\nabla^2 f : \R^{m \times n} \to \mathcal{L}(\R^{m \times n}) : X \mapsto (\nabla f)'(X)$, where $\mathcal{L}(\R^{m \times n})$ denotes the Banach space of all continuous linear operators on $\R^{m \times n}$ (see Appendix~\ref{sec:GradientHessianRealValuedFunctionOnHilbertSpace}).

\begin{proposition}
For all $(L, R), (\dot{L}, \dot{R}) \in \R^{m \times r} \times \R^{n \times r}$,
\begin{align*}
\nabla g(L, R)
=~& (\nabla f(LR^\tp) R, \nabla f(LR^\tp)^\tp L),\\
\nabla^2 g(L, R)(\dot{L}, \dot{R})
=~& (\nabla^2 f(LR^\tp)(\dot{L}R^\tp+L\dot{R}^\tp)R+\nabla f(LR^\tp)\dot{R},\\
&\hphantom{(}\nabla^2 f(LR^\tp)(\dot{L}R^\tp+L\dot{R}^\tp)^\tp L+\nabla f(LR^\tp)^\tp\dot{L}).
\end{align*}
\end{proposition}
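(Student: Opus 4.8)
The plan is a direct application of the chain rule and the product rule, exploiting that $\varphi$ is bilinear. First I would note that, since $\varphi(L,R) = LR^\tp$ is bilinear in $(L,R)$, it is smooth with differential
\[
\mathrm{D}\varphi(L,R)[\dot L,\dot R] = \dot L R^\tp + L\dot R^\tp
\]
for all $(\dot L,\dot R)\in\R^{m\times r}\times\R^{n\times r}$. The chain rule (cf.\ Appendix~\ref{sec:GradientHessianRealValuedFunctionOnHilbertSpace}) then gives, for every direction,
\[
\ip{\nabla g(L,R)}{(\dot L,\dot R)} = \ip{\nabla f(LR^\tp)}{\dot L R^\tp + L\dot R^\tp}.
\]
Expanding the right-hand side with the cyclic invariance of the trace, $\ip{\nabla f(LR^\tp)}{\dot L R^\tp} = \ip{\nabla f(LR^\tp)R}{\dot L}$ and $\ip{\nabla f(LR^\tp)}{L\dot R^\tp} = \ip{\nabla f(LR^\tp)^\tp L}{\dot R}$; identifying the gradient of $g$ on the product Euclidean space with the pair of its partial gradients (with respect to the Frobenius inner product on each factor) yields the stated formula for $\nabla g(L,R)$.

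For the Hessian, recall that $\nabla^2 g(L,R)$ is by definition the derivative of the map $(L,R)\mapsto\nabla g(L,R)$, so $\nabla^2 g(L,R)(\dot L,\dot R) = \frac{\mathrm{d}}{\mathrm{d}t}\big|_{t=0}\nabla g(L+t\dot L, R+t\dot R)$, and I would differentiate the two blocks of $\nabla g$ separately. Writing $X(t) := (L+t\dot L)(R+t\dot R)^\tp$, so that $X(0)=LR^\tp$ and $X'(0) = \dot L R^\tp + L\dot R^\tp$, the first block $(L,R)\mapsto\nabla f(LR^\tp)R$ becomes $t\mapsto[\nabla f(X(t))](R+t\dot R)$, whose derivative at $t=0$ is, by the product rule and the chain rule, $[\nabla^2 f(LR^\tp)(\dot L R^\tp + L\dot R^\tp)]R + \nabla f(LR^\tp)\dot R$. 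The second block $(L,R)\mapsto\nabla f(LR^\tp)^\tp L$ is handled identically and gives $[\nabla^2 f(LR^\tp)(\dot L R^\tp + L\dot R^\tp)]^\tp L + \nabla f(LR^\tp)^\tp\dot L$. Assembling the two blocks produces the claimed expression, and the standing hypothesis that $\nabla f$ is continuously differentiable guarantees that $g$ is twice continuously differentiable, so the computation is legitimate.

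There is no genuine obstacle here: the argument is routine. The only points needing care are the correct identification of the gradient of a function on $\R^{m\times r}\times\R^{n\times r}$ with the pair of its partial gradients, and the transpose bookkeeping when cycling $R$, $L$, $\dot R$, $\dot L$ through the trace in the inner products. Note in particular that symmetry of $\nabla^2 f$ is never used: it suffices to treat $\nabla^2 f$ as the plain Fréchet derivative of $\nabla f$, exactly as defined in Appendix~\ref{sec:GradientHessianRealValuedFunctionOnHilbertSpace}.
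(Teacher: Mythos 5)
Your proposal is correct and follows essentially the same route as the paper: the chain rule applied to $g = f\circ\varphi$ with $\varphi'(L,R)(\dot L,\dot R)=\dot L R^\tp + L\dot R^\tp$, trace manipulations to identify the pair of partial gradients, and then the product rule plus chain rule (which you spell out via the curve $t\mapsto(L+t\dot L,R+t\dot R)$) for the Hessian. You in fact give slightly more detail than the paper on the Hessian computation, and your remarks about identifying partial gradients and not needing symmetry of $\nabla^2 f$ are accurate.
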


\begin{proof}
We have
\begin{equation*}
\varphi'(L, R)(\dot{L}, \dot{R}) = \dot{L}R^\tp+L\dot{R}^\tp.
\end{equation*}
By the chain rule,
\begin{equation*}
g'(L, R) = f'(\varphi(L, R)) \circ \varphi'(L, R).
\end{equation*}
Thus,
\begin{align*}
g'(L, R)(\dot{L}, \dot{R})
&= f'(\varphi(L, R))(\varphi'(L, R)(\dot{L}, \dot{R}))\\
&= \ip{\nabla f(\varphi(L, R))}{\varphi'(L, R)(\dot{L}, \dot{R})}\\
&= \ip{\nabla f(\varphi(L, R))}{\dot{L}R^\tp} + \ip{\nabla f(\varphi(L, R))}{L\dot{R}^\tp}\\
&= \ip{\nabla f(\varphi(L, R))R}{\dot{L}} + \ip{\nabla f(\varphi(L, R))^\tp L}{\dot{R}}\\
&= \ip{(((\nabla f) \circ \varphi)(L, R)R, ((\nabla f) \circ \varphi)(L, R)^\tp L)}{(\dot{L}, \dot{R})}
\end{align*}
and we deduce that
\begin{equation*}
\nabla g(L, R) = (((\nabla f) \circ \varphi)(L, R)R, ((\nabla f) \circ \varphi)(L, R)^\tp L).
\end{equation*}
The formula for $\nabla^2 g(L, R)(\dot{L}, \dot{R})$ follows from the product rule and the chain rule.
\end{proof}

Given $(L, R) \in \R^{m \times r} \times \R^{n \times r}$, the eigenvalues of $\nabla^2 g(L, R)$, which is a self-adjoint linear operator on $\R^{m \times r} \times \R^{n \times r}$, are the eigenvalues of the matrix representing it in any basis of $\R^{m \times r} \times \R^{n \times r}$. Here, we use the orthonormal basis formed by the concatenation of the sequences $((\delta_{p,l})_{i,k=1}^{m,r}, 0_{n \times r})_{p,l=1}^{m,r}$ and $(0_{m \times r}, (\delta_{q,l})_{j,k=1}^{n,r})_{q,l=1}^{n,r}$. The matrix $H(L, R)$ representing $\nabla^2 g(L, R)$ in that basis can be formed as follows. For every $(i,k) \in \{1, \dots, m\} \times \{1, \dots, r\}$, the $(i-1)r+k$th column of $H(L, R)$ is the vector formed by concatenating the rows of $\nabla^2 f(LR^\tp)((\delta_{i,k})_{p,l=1}^{m,r}R^\tp)R$ and those of $\nabla^2 f(LR^\tp)((\delta_{i,k})_{p,l=1}^{m,r}R^\tp)^\tp L+\nabla f(LR^\tp)^\tp(\delta_{i,k})_{p,l=1}^{m,r}$. Then, for every $(j,k) \in \{1, \dots, n\} \times \{1, \dots, r\}$, the $mr+(j-1)r+k$th column of $H(L, R)$ is the vector formed by concatenating the rows of $\nabla^2 f(LR^\tp)(L{(\delta_{j,k})_{q,l=1}^{n,r}}^\tp)R+\nabla f(LR^\tp)(\delta_{j,k})_{q,l=1}^{n,r}$ and those of $\nabla^2 f(LR^\tp)(L{(\delta_{j,k})_{q,l=1}^{n,r}}^\tp)^\tp L$.

\subsubsection{Comparison of six optimization algorithms on the determinantal variety}
\label{subsubsec:ComparisonSixOptimizationAlgorithmsRealDeterminantalVariety}
In this section, we compare the six algorithms listed in Table~\ref{tab:IndexAlgorithmsRealDeterminantalVariety} based on the computational cost per iteration (Table~\ref{tab:ComparisonComputationalCostPerIterationAlgorithmsRealDeterminantalVariety}) and the convergence guarantees (Table~\ref{tab:ConvergenceAlgorithmsRealDeterminantalVariety}). As in Section~\ref{subsubsec:PracticalImplementationLKB22Algo1RealDeterminantalVariety}, we consider \cite[Algorithm~1]{LevinKileelBoumal2022} with the rank factorization lift and the hook of \cite[Example 3.11]{LevinKileelBoumal2022}.

\begin{table}[h]
\begin{center}
\begin{spacing}{1.2}
\begin{tabular}{*{3}{l}}
\hline
\emph{Algorithm} & \emph{Original paper} & \emph{Citations in this paper}\\
\hline
$\pgd$ & \cite[Algorithm~3.1]{JiaEtAl2022} & Section~\ref{sec:Introduction}\\
\hline
$\ppgd$ & \cite[Algorithm~3]{SchneiderUschmajew2015} & Sections~\ref{subsec:OverviewProposedAlgorithm} and \ref{subsec:P2GDmap}\\
\hline
$\rfd$ & \cite[Algorithm~4]{SchneiderUschmajew2015} & Sections~\ref{subsec:RFDRreview} and \ref{subsubsec:RFDmap}\\
\hline
$\ppgdr$ & Algorithm~\ref{algo:P2GDR} & Sections~\ref{subsec:OverviewProposedAlgorithm}, \ref{subsec:P2GDRmap}, and \ref{subsec:P2GDR}\\
\hline
$\rfdr$ & \cite[Algorithm~3]{OlikierAbsil2022RFDR} & Sections~\ref{subsec:RFDRreview}, \ref{subsubsec:RFDRmap}, and \ref{subsubsec:RFDR}\\
\hline
\cite[Algorithm~1]{LevinKileelBoumal2022} & \cite[\S 3]{LevinKileelBoumal2022} & Sections~\ref{subsec:OptimizationThroughSmoothLift} and \ref{subsubsec:PracticalImplementationLKB22Algo1RealDeterminantalVariety}\\
\hline
\end{tabular}
\vspace*{-8mm}
\end{spacing}
\end{center}
\caption{Index of algorithms aiming to solve problem~\eqref{eq:OptiProblem} with $C = \R_{\le r}^{m \times n}$.}
\label{tab:IndexAlgorithmsRealDeterminantalVariety}
\end{table}

The respective computational costs per iteration of $\ppgd$, $\rfd$, $\ppgdr$, and $\rfdr$ are compared in \cite[\S 7]{OlikierAbsil2022RFDR} based on detailed implementations of these algorithms involving only evaluations of $f$ and $\nabla f$ and some operations from linear algebra:
\begin{enumerate}
\item matrix multiplication;
\item thin QR factorization with column pivoting (see, e.g., \cite[Algorithm~5.4.1]{GolubVanLoan});
\item \emph{small scale} (truncated) SVD, i.e., the smallest dimension of the matrix to decompose is at most $2r$;
\item \emph{large scale} truncated SVD, i.e., truncated SVD that is not small scale.
\end{enumerate}
In this list, only the (truncated) SVD cannot be executed within a finite number of arithmetic operations. Before including $\pgd$ and \cite[Algorithm~1]{LevinKileelBoumal2022} in the comparison, we recall available upper bounds on the number of iterations in the backtracking procedures of $\ppgd$, $\rfd$, $\ppgdr$, and $\rfdr$.
By \cite[(17)]{OlikierAbsil2022RFDR}, given $X \in \R_{\le r}^{m \times n}$ as input and using $\alpha \in (0, \infty)$ as initial step size for the backtracking procedure, the $\ppgd$ map evaluates $f$ from $1$ to
\begin{equation}
\label{eq:MaxNumEvaluationCostP2GDmapRealDeterminantalVariety}
1+\max\left\{0, \left\lceil\ln\left(\frac{1-c}{\alpha\kappa_{\ball[X, \alpha\s(X; f, \R_{\le r}^{m \times n})]}(X; f, \alpha)}\right)/\ln\beta\right\rceil\right\}
\end{equation}
times, where the second term is the maximum number of iterations in the backtracking loop.
By \cite[(18)]{OlikierAbsil2022RFDR}, given $X \in \R_{\le r}^{m \times n}$ as input and using $\alpha \in (0, \infty)$ as initial step size for the backtracking procedure, the $\rfd$ map evaluates $f$ from $1$ to
\begin{equation}
\label{eq:MaxNumEvaluationCostRFDmapRealDeterminantalVariety}
1+\max\left\{0, \left\lceil\ln\left(\frac{2(1-c)}{\alpha\lip_{\ball[X, \alpha\s(X; f, \R_{\le r}^{m \times n})]}(\nabla f)}\right)/\ln\beta\right\rceil\right\}
\end{equation}
times, where the second term is the maximum number of iterations in the backtracking loop.

We now analyze the computational cost per iteration of $\pgd$ and \cite[Algorithm~1]{LevinKileelBoumal2022}.
The paper \cite{JiaEtAl2022} gives no explicit bound on the number of inner iterations performed by $\pgd$. Nevertheless, every (outer) iteration of $\pgd$ requires projecting $X - \alpha \nabla f(X)$ onto $\R_{\le r}^{m \times n}$, $X \in \R_{\le r}^{m \times n}$ and $\alpha \in (0, \infty)$ being respectively the current iterate and the step size, which, in general, involves the computation of a large scale truncated SVD.

From the analysis conducted in Section~\ref{subsubsec:PracticalImplementationLKB22Algo1RealDeterminantalVariety}, we know that every iteration of \cite[Algorithm~1]{LevinKileelBoumal2022} requires the computation of a smallest eigenvalue of the order-$(m+n)r$ matrix representing $\nabla^2 g(L, R)$ in a given basis of $\R^{m \times r} \times \R^{n \times r}$, $(L, R)$ being the current iterate, which involves the computation of a large scale truncated SVD. Moreover, each iteration updating the current iterate requires a hook. The hook of \cite[Example 3.11]{LevinKileelBoumal2022} involves the computation of a thin QR factorization of $LR^\tp$ and of an SVD of the R factor, which is a small scale SVD.

\begin{table}[h]
\begin{center}
\begin{spacing}{1.2}
\begin{tabular}{l*{6}{c}}
\hline
\emph{Algorithm} & $f$ & $\nabla f$ & $\nabla^2 f$ & QR & small SVD & large SVD \\
\hline
$\pgd$ & $1+i_*$ & $1$ & $0$ & $0$ & $0$ & $1+i_*$\\
\hline
\multirow{2}{*}{$\ppgd$} & $1$ & \multirow{2}{*}{$1$} & \multirow{2}{*}{$0$} & $2$ & $1$ & $0$\\
\cline{2-2}\cline{5-7}
& \eqref{eq:MaxNumEvaluationCostP2GDmapRealDeterminantalVariety} & & & $4$ & \eqref{eq:MaxNumEvaluationCostP2GDmapRealDeterminantalVariety} & $1$\\
\hline
\multirow{2}{*}{$\rfd$} & $1$ & \multirow{2}{*}{$1$} & \multirow{2}{*}{$0$} & \multirow{2}{*}{$1$} & \multirow{2}{*}{$0$} & $0$\\
\cline{2-2}\cline{7-7}
& \eqref{eq:MaxNumEvaluationCostRFDmapRealDeterminantalVariety} & & & & & $1$\\
\hline
\multirow{2}{*}{$\ppgdr$} & $1$ & $1$ & \multirow{2}{*}{$0$} & $2$ & $1$ & $0$\\
\cline{2-3}\cline{5-7}
& $\eqref{eq:MaxNumEvaluationCostRFDmapRealDeterminantalVariety} + r \cdot \eqref{eq:MaxNumEvaluationCostP2GDmapRealDeterminantalVariety}$ & $r+1$ & & $4r-2$ & $r \cdot \eqref{eq:MaxNumEvaluationCostP2GDmapRealDeterminantalVariety}$ & $r$ \\
\hline
\multirow{2}{*}{$\rfdr$} & $1$ & $1$ & \multirow{2}{*}{$0$} & \multirow{2}{*}{$0$} & $1$ & $0$\\
\cline{2-3}\cline{6-7}
& $2 \cdot \eqref{eq:MaxNumEvaluationCostRFDmapRealDeterminantalVariety}$ & $2$ & & & $2$ & $1$\\
\hline
\multirow{2}{*}{\cite[Algorithm~1]{LevinKileelBoumal2022}} & \multirow{2}{*}{$1$} & \multirow{2}{*}{$1$} & \multirow{2}{*}{$1$} & $0$ & $0$ & \multirow{2}{*}{$1$}\\
\cline{5-6}
& & & & $1$ & $1$ & \\
\hline
\end{tabular}
\vspace*{-8mm}
\end{spacing}
\end{center}
\caption{Operations required by six algorithms aiming to solve problem~\eqref{eq:OptiProblem} with $C = \R_{\le r}^{m \times n}$ to perform one iteration, matrix multiplication excluded. The fields ``$f$'', ``$\nabla f$'', ``$\nabla^2 f$'', ``QR'', ``small SVD'', and ``large SVD'' respectively correspond to ``evaluation of $f$'', ``evaluation of $\nabla f$'', ``evaluation of $\nabla^2 f$'', ``QR factorization with column pivoting'', ``small scale (truncated) SVD'', and ``large scale truncated SVD''. When two subrows appear in a row, the upper entry corresponds the the best case and the lower one to the worst case. In the ``$\pgd$'' line, $i_*$ denotes the number of inner iteration(s) performed. An iteration of \cite[Algorithm~1]{LevinKileelBoumal2022} performs no QR and no small SVD if and only if it does not change the iterate, i.e., the algorithm does not progress.}
\label{tab:ComparisonComputationalCostPerIterationAlgorithmsRealDeterminantalVariety}
\end{table}

The convergence guarantees offered by the six algorithms are compared in Table~\ref{tab:ConvergenceAlgorithmsRealDeterminantalVariety}. We make three remarks.

\begin{table}[h]
\begin{center}
\begin{spacing}{1.2}
\begin{tabular}{l*{4}{c}}
\hline
\emph{Algorithm} & \emph{Order} & $\s(\cdot; f, \R_{\le r}^{m \times n}) \to 0$ & Mordukhovich & Stationary \\
\hline
$\pgd$ & $1$ & ? & \ding{51} & ?\\
\hline
$\ppgd$ & $1$ & ? & ? & \ding{55}\\
\hline
$\rfd$ & $1$ & \ding{51} & ? & \ding{55}\\
\hline
$\ppgdr$ & $1$ & \ding{51} & \ding{51} & \ding{51}\\
\hline
$\rfdr$ & $1$ & \ding{51} & \ding{51} & \ding{51}\\
\hline
\cite[Algorithm~1]{LevinKileelBoumal2022} & $2$ & \ding{51} & \ding{51} & \ding{51}\\
\hline
\end{tabular}
\vspace*{-8mm}
\end{spacing}
\end{center}
\caption{Comparison of the convergence guarantees of six algorithms aiming to solve problem~\eqref{eq:OptiProblem} with $C = \R_{\le r}^{m \times n}$. The property ``$\s(\cdot; f, \R_{\le r}^{m \times n}) \to 0$'' means that $\s(\cdot; f, \R_{\le r}^{m \times n})$ goes to zero along every convergent subsequence of the generated sequence. The properties ``Mordukhovich'' and ``Stationary'' mean that the algorithm accumulates at Mordukhovich stationary points and stationary points, respectively. The symbols ``\ding{51}'', ``\ding{55}'', and ``?'' respectively mean ``yes'', ``no'', and ``open question''.}
\label{tab:ConvergenceAlgorithmsRealDeterminantalVariety}
\end{table}

First, we recall from Proposition~\ref{prop:NormalConesRealDeterminantalVariety} that Mordukhovich stationarity is weaker than stationarity at every $X \in \R_{\ushort{r}}^{m \times n}$ with $\ushort{r} \in \{0, \dots, r-1\}$. Indeed, while the latter amounts to $\nabla f(X) = 0_{m \times n}$, the former only amounts to the stationarity of $X$ on $\R_{\le \ushort{r}}^{m \times n}$ together with the inequality $\rank \nabla f(X) \le \min\{m,n\}-r$.

Second, we recall from Section~\ref{subsec:RFDRreview} that the property ``$\s(\cdot; f, \R_{\le r}^{m \times n}) \to 0$'' holds for $\rfd$ if $f$ is real-analytic and bounded from below.
By Proposition~\ref{prop:ConvergenceStationarityMeasureZeroUpperStratumMordukhovichStationary}, if, moreover, the sequence generated by $\rfd$ is contained in $\R_r^{m \times n}$, then the property ``Mordukhovich'' also holds.

Third, Table~\ref{tab:ConvergenceAlgorithmsRealDeterminantalVariety} does not give any information on the performance of the algorithms after a finite number of iterations, which depends on the stopping criterion. In the rest of this section, we discuss this for the three apocalypse-free algorithms, namely $\ppgdr$, $\rfdr$, and \cite[Algorithm~1]{LevinKileelBoumal2022}.

Since $\R_{\le r}^{m \times n}$ has no serendipitous point (Proposition~\ref{prop:RealDeterminantalVarietyNoSerendipitousPoint}), if $(X_i)_{i \in \N}$ is a sequence generated by any of the three algorithms and does not diverge to infinity, then, for every $\varepsilon \in (0, \infty)$, the set $\{i \in \N \mid \s(X_i; f, \R_{\le r}^{m \times n}) \le \varepsilon\}$ is nonempty and thus the stopping criterion defined by \eqref{eq:StoppingCriterion} is eventually satisfied: stop the algorithm at iteration
\begin{equation*}
i_\varepsilon := \min\{i \in \N \mid \s(X_i; f, \R_{\le r}^{m \times n}) \le \varepsilon\}.
\end{equation*}
Two facts should be pointed out, however.
First, no a priori upper bound on $i_\varepsilon$ is known, as explained in the next paragraph.
Second, since the set of apocalyptic points of $\R_{\le r}^{m \times n}$ is $\R_{< r}^{m \times n}$, $X_{i_\varepsilon}$ can be close either to an apocalyptic point or a stationary point, as explained after Proposition~\ref{prop:StationaryPointApocalypticSerendipitous}, and none of the two cases can be excluded a priori, which makes the choice of the parameter $\Delta \in (0, \infty)$ of $\ppgdr$ significant, as illustrated in Sections~\ref{subsubsec:LKB22instance} and \ref{subsubsec:P2GDfollowingApocalypseSize2*2}.
 
No a priori upper bound on $i_\varepsilon$ is known for $\ppgdr$ and $\rfdr$. In contrast, \cite[Algorithm~1]{LevinKileelBoumal2022} enjoys the guarantees given in \cite[Theorems~3.4 and 3.16]{LevinKileelBoumal2022}. However, those guarantees do not yield the upper bound sought, as explained next.
Given $\varepsilon_1, \varepsilon_2 \in (0, \infty)$, \cite[Theorem~3.4]{LevinKileelBoumal2022} provides, under reasonable conditions, an upper bound on the number of iterations required by \cite[Algorithm~1]{LevinKileelBoumal2022} to produce a point $(L, R) \in \R^{m \times r} \times \R^{n \times r}$ that is $(\varepsilon_1, \varepsilon_2)$-approximate second-order stationary for $\min_{\R^{m \times r} \times \R^{n \times r}} g$, i.e., $\norm{\nabla g(L, R)}_{\R^{m \times r} \times \R^{n \times r}} \le \varepsilon_1$ and $\lambda_{\min}(\nabla^2 g(L, R)) \ge -\varepsilon_2$. Moreover, by \cite[Theorem~3.16]{LevinKileelBoumal2022}, if such a point $(L, R)$ is obtained by \cite[Algorithm~1]{LevinKileelBoumal2022} and $X = LR^\tp$, then
\begin{equation}
\label{eq:LKB22(3.16)}
\s(X; f, \R_{\le r}^{m \times n}) \le \min\left\{\sqrt{\frac{2}{\sigma_r(X)}}\varepsilon_1, \sqrt{\rank \nabla f(X)}\bigg(\varepsilon_2+2\lip_{\R^{m \times n}}(\nabla f)\sigma_r(X)\bigg)\right\}.
\end{equation}
Unfortunately, this does not give us an upper bound on $i_\varepsilon$ since $X_{i_\varepsilon}$ is unknown. Indeed, based on~\eqref{eq:LKB22(3.16)}, if nothing is known on $\sigma_r(X)$, in particular whether it is zero or not, then the only way to ensure that $\s(X; f, \R_{\le r}^{m \times n}) \le \varepsilon$ is to take $\varepsilon_1 \le \varepsilon \sqrt{\sigma_r(X)/2}$ and $\varepsilon_2 \le \varepsilon/\sqrt{\min\{m,n\}}$. The upper bound on $\varepsilon_2$ is exploitable but not the one on $\varepsilon_1$ since $\sigma_r(X)$ is unknown, even in a bounded sublevel set.

\subsubsection{Numerical comparison of four algorithms on the instance of \cite[\S 2.2]{LevinKileelBoumal2022}}
\label{subsubsec:LKB22instance}
In this section, we compare numerically $\pgd$, $\ppgd$, $\ppgdr$ (Algorithm~\ref{algo:P2GDR} using Algorithm~\ref{algo:P2GDRmapRealDeterminantalVariety} in line~\ref{algo:P2GDR:P2GDRmap}), and \cite[Algorithm~1]{LevinKileelBoumal2022} on the problem presented in \cite[\S 2.2]{LevinKileelBoumal2022}. We recall from Section~\ref{subsec:RFDRreview} that, on this problem, $\rfd$ produces the same sequence as $\ppgd$.

In \cite[\S 2.2]{LevinKileelBoumal2022}, the following instance of~\eqref{eq:OptiProblem} with $C = \R_{\le 2}^{3 \times 3}$ is considered: minimizing
\begin{equation*}
f : \R^{3 \times 3} \to \R : X \mapsto Q(X(1\mathord{:}2, 1\mathord{:}2)) + \phi(X(3, 3))
\end{equation*}
on $\R_{\le 2}^{3 \times 3}$, where $X(1\mathord{:}2, 1\mathord{:}2)$ is the upper-left $2 \times 2$ submatrix of $X$, $X(3, 3)$ its bottom-right entry, $\phi : \R \to \R : x \mapsto \frac{x^4}{4}-\frac{(x+1)^2}{2}$, $Q : \R^{2 \times 2} \to \R : Y \mapsto \frac{1}{2}\norm{D(Y-Y^*)}^2$, $D := \diag(1,\frac{1}{2})$, and $Y^* := \diag(1,0)$.
First, it is observed that $\argmin \phi = x_0 \approx 1.32471795724475$,
\begin{align*}
\argmin f &= \left\{\begin{bmatrix}
1 & 0 & a_1 \\ 0 & 0 & a_2 \\ a_3 & a_4 & x_0
\end{bmatrix} \mid a_1, a_2, a_3, a_4 \in \R\right\},\\
\argmin_{\R_{\le 2}^{3 \times 3}} f &= \left\{\begin{bmatrix}
1 & 0 & a_1 \\ 0 & 0 & a_2 \\ a_3 & a_4 & x_0
\end{bmatrix} \mid a_1, a_2, a_3, a_4 \in \R, a_2a_4 = 0\right\},
\end{align*}
and $f^* := \min f = \min_{\R_{\le 2}^{3 \times 3}} f = \phi(x_0) \approx -1.932257884495233$.
Second, it is proven analytically that $\ppgd$ follows the apocalypse $(\diag(1, 0, 0), (\diag(1+(-\frac{3}{5})^i, (\frac{3}{5})^i, 0))_{i \in \N}, f)$ if used on this problem with $X_0 := \diag(2,1,0)$, $\ushort{\alpha} := \bar{\alpha} := \frac{8}{5}$, any $\beta \in (0,1)$, and $c := \frac{1}{5}$.

In this section, we compare numerically $\pgd$, $\ppgd$, $\ppgdr$, and \cite[Algorithm~1]{LevinKileelBoumal2022} on that instance, based on our Matlab implementations of these algorithms.\footnote{Those implementations are available at \url{https://github.com/golikier/ApocalypseFreeLowRankOptimization/blob/main/README.md}.} 
We use the parameters described in Table~\ref{tab:AlgorithmsParameters}. Furthermore, we use \cite[Algorithm~1]{LevinKileelBoumal2022} with the rank factorization lift, i.e., $\varphi : \R^{m \times r} \times \R^{n \times r} \to \R^{m \times n} : (L, R) \mapsto LR^\tp$, the hook of \cite[Example~3.11]{LevinKileelBoumal2022}, and the Cauchy point at each iteration. Thus, we apply \cite[Algorithm~1]{LevinKileelBoumal2022} to $g := f \circ \varphi$.
For $\pgd$, $\ppgd$, and $\ppgdr$, we use the stopping criterion defined by \eqref{eq:StoppingCriterion}; specifically, we stop them as soon as the stationarity measure $\s(\cdot;f, \R_{\le 2}^{3 \times 3})$ becomes smaller than or equal to $3 \cdot 10^{-9}$. In contrast, we let \cite[Algorithm~1]{LevinKileelBoumal2022} run for $5.5\cdot10^5$ iterations. We obtain the plots in Figure~\ref{subfig:PGDvsP2GDvsP2GDRvsLKB22Algo1_LKB22} where we observe that \cite[Algorithm~1]{LevinKileelBoumal2022} stagnates after less than $5.5\cdot10^5$ iterations.
\begin{table}[h]
\begin{center}
\begin{spacing}{1.8}
\begin{tabular}{ll}
\hline
$\pgd$ \cite[Algorithm~3.1]{JiaEtAl2022} & $X_0 := \diag(2, 1, 0)$, $\gamma_{\min} := \gamma_{\max} := \frac{5}{8}$, $\tau := 2$, $\sigma := \frac{1}{5}$, $m := 0$ \\[2mm]
\hline
$\ppgd$ and $\ppgdr$ & $X_0 := \diag(2,1,0)$, $\ushort{\alpha} := \bar{\alpha} := \frac{8}{5}$, $\beta := \frac{1}{2}$, $c := \frac{1}{5}$, $\Delta := \frac{1}{10}$\\[2mm]
\hline
\cite[Algorithm~1]{LevinKileelBoumal2022} & $L_0 := R_0 := \left(\begin{smallmatrix} \sqrt{2} & 0 \\ 0 & 1 \\ 0 & 0 \end{smallmatrix}\right)$, $\ushort{\gamma} := \oshort{\gamma} := 1$, $\gamma_\text{c} := \frac{1}{2}$, $\eta := \frac{1}{10}$\\[2mm]
\hline
\end{tabular}
\vspace*{-8mm}
\end{spacing}
\end{center}
\caption{Parameters of the algorithms for the numerical comparison of Section~\ref{subsubsec:LKB22instance}.}
\label{tab:AlgorithmsParameters}
\end{table}

We observe that $\ppgd$ behaves as predicted in \cite[\S 2.2]{LevinKileelBoumal2022} and is the only algorithm among the four to follow an apocalypse. In particular, $\ppgdr$ behaves in agreement with Theorem~\ref{thm:P2GDRPolakConvergence} and Corollary~\ref{coro:P2GDRPolakConvergence}, and \cite[Algorithm~1]{LevinKileelBoumal2022} in agreement with \cite[Theorem~1.1]{LevinKileelBoumal2022}; these results hold since all sublevel sets of $f$ are bounded.
We also observe that \cite[Algorithm~1]{LevinKileelBoumal2022} converges much more slowly than $\pgd$ and $\ppgdr$. The slow convergence of \cite[Algorithm~1]{LevinKileelBoumal2022} is in agreement with \cite[Theorem~3.4]{LevinKileelBoumal2022}. Indeed, for the parameters of Table~\ref{tab:AlgorithmsParameters}, $k(\varepsilon_1, \varepsilon_2) := \lfloor 160\max\{\varepsilon_1^{-2}, \varepsilon_2^{-3}\}\rfloor$ is a lower bound on the upper bound on the number of iterations required by \cite[Algorithm~1]{LevinKileelBoumal2022} to produce an $(\varepsilon_1, \varepsilon_2)$-approximate second-order stationary point given in \cite[(3.11)]{LevinKileelBoumal2022}, and we observe in Figure~\ref{subfig:LKB22Algo1_LKB22_BoundConvergence} that every iteration number $i$ satisfies $i \le k_i := k(\norm{\nabla g(L_i, R_i)}, -\lambda_i)$, where $\lambda_i$ denotes a smallest eigenvalue of $\nabla^2 g(L_i, R_i)$, which implies that the upper bound given in \cite[(3.11)]{LevinKileelBoumal2022} is respected. Figure~\ref{subfig:LKB22Algo1_LKB22_BoundConvergence} also shows that the upper bound given in \cite[(3.11)]{LevinKileelBoumal2022} is very pessimistic in this experiment.

The only iteration of $\ppgdr$ that differs from a $\ppgd$ iteration is the fifth one, where $\rank_\Delta X_5 = 1$, $\hat{X}_5^1 = \diag(1, 0, 0)$, and $X_6$ is selected in $\ppgd(\hat{X}_5^1; \R^{3 \times 3}, \R_{\le 2}^{3 \times 3}, f, \ushort{\alpha}, \bar{\alpha}, \beta, c)$. This unique intervention of the rank reduction mechanism prevents $\ppgdr$ from following the apocalypse.

\begin{figure}
\begin{center}
\subfloat[{Evolutions of the stationarity measure, the cost function, and the distance to a global minimizer along the sequences produced by $\pgd$, $\ppgd$, $\ppgdr$, and \cite[Algorithm~1]{LevinKileelBoumal2022}.}]{
\includegraphics[scale=0.75]{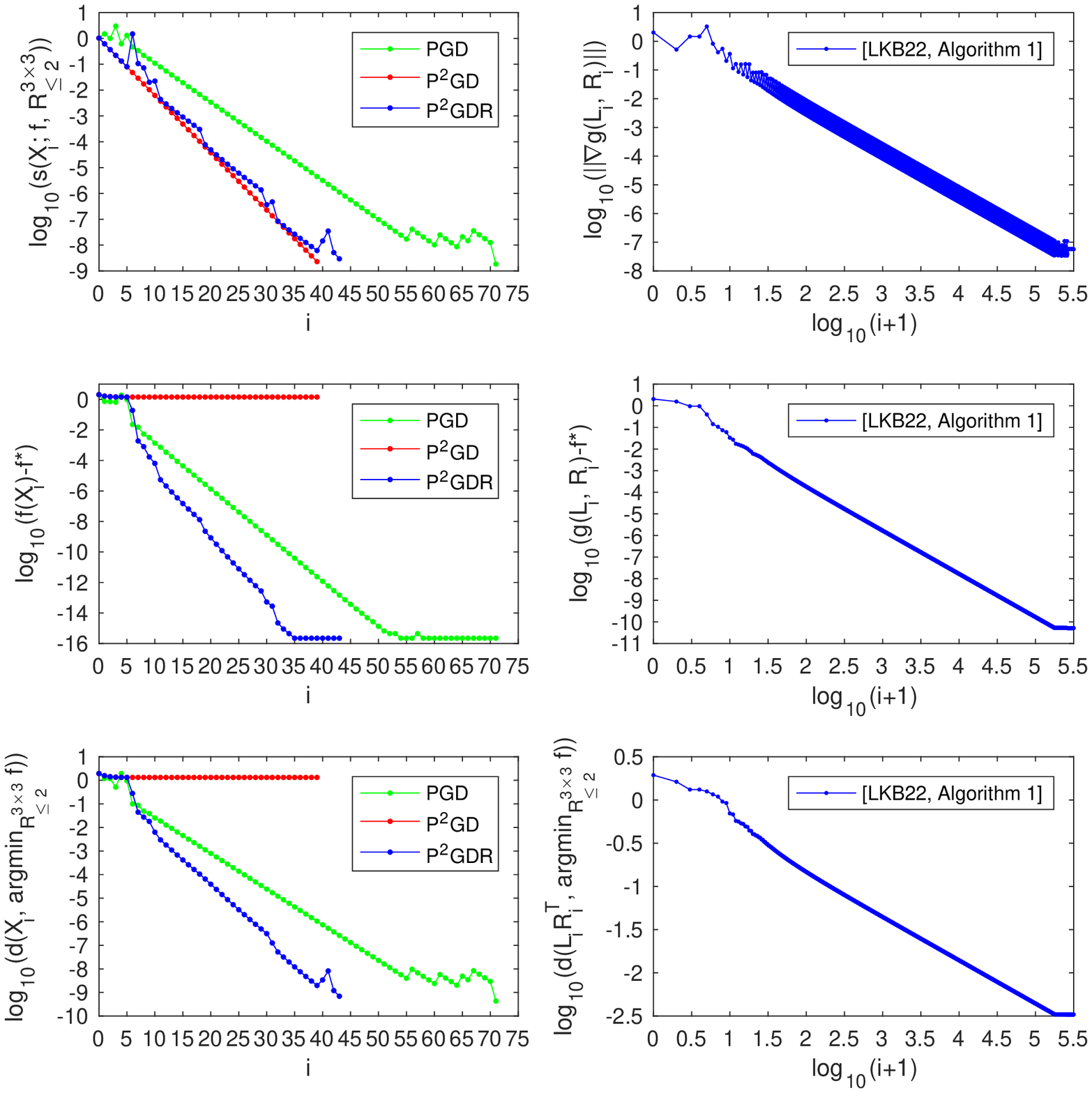}
\label{subfig:PGDvsP2GDvsP2GDRvsLKB22Algo1_LKB22}}

\subfloat[{Evolutions of the smallest eigenvalue of the Hessian and the lower bound on the right-hand side of \cite[(3.11)]{LevinKileelBoumal2022} along the sequence produced by \cite[Algorithm~1]{LevinKileelBoumal2022}.}]{
\includegraphics[scale=0.75]{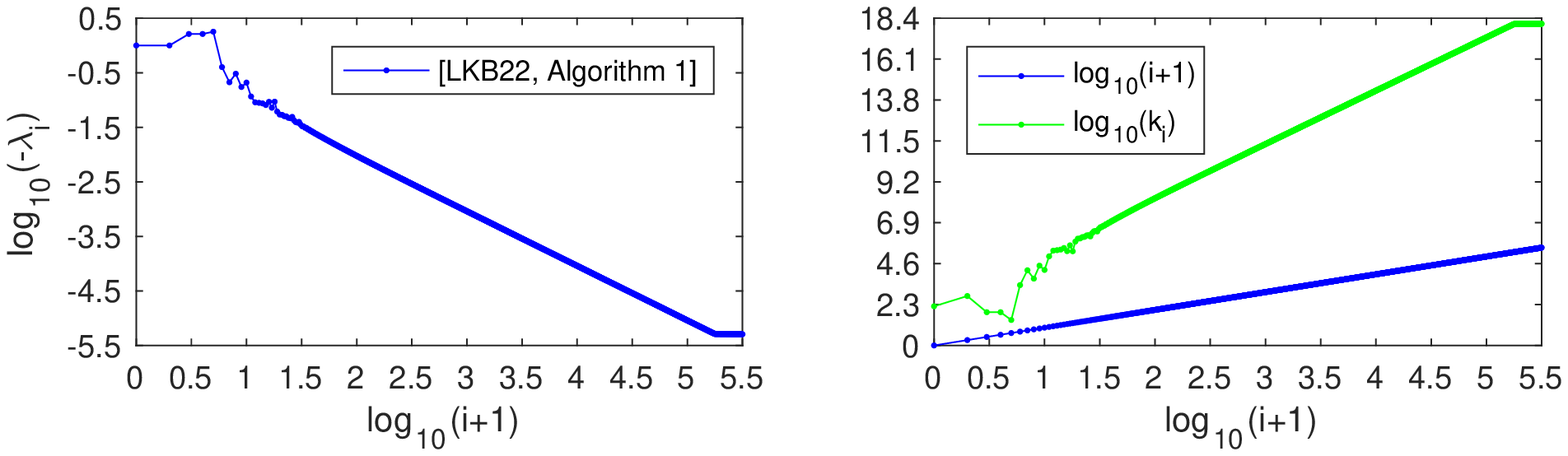}
\label{subfig:LKB22Algo1_LKB22_BoundConvergence}}
\end{center}
\caption{Numerical comparison of $\pgd$, $\ppgd$, $\ppgdr$, and \cite[Algorithm~1]{LevinKileelBoumal2022} on the problem of Section~\ref{subsubsec:LKB22instance} with the parameters of Table~\ref{tab:AlgorithmsParameters}.}
\label{fig:PGDvsP2GDvsP2GDRvsLKB22Algo1_LKB22}
\end{figure}

If the behavior of $\ppgdr$ on this example seems satisfying, it should however be noted that, if $\Delta < (\frac{3}{5})^{38}$, then $\ppgdr$ produces the exact same (finite) sequence of iterates as $\ppgd$ because $\rank_\Delta X_{38} = 2$ and $\s(X_{39}; f, \R_{\le 2}^{3 \times 3}) \le 3\cdot10^{-9}$. This shows that, in a practical implementation of $\ppgdr$ where the stopping criterion defined by \eqref{eq:StoppingCriterion} is used, i.e., the algorithm is stopped as soon as the stationarity measure becomes smaller than or equal to some threshold $\varepsilon \in (0, \infty)$, it is important to choose $\Delta$ in such a way that the algorithm does not stop while it is heading towards an apocalyptic point, which is $\diag(1, 0, 0)$ in this case, in the sense that, if we had continued with $\varepsilon := \Delta := 0$, an apocalypse would have occurred.

\subsubsection{$\ppgd$ following an apocalypse on $\R_{\le 1}^{2 \times 2}$}
\label{subsubsec:P2GDfollowingApocalypseSize2*2}
In this section, we present an example of $\ppgd$ following an apocalypse on $\R_{\le 1}^{2 \times 2}$.
For the function
\begin{equation*}
f : \R^{2 \times 2} \to \R : X \mapsto \frac{X(1, 1)^2+(X(2, 2)-1)^2+(X(1, 2)-X(2, 1))^2}{2},
\end{equation*}
we have $\min_{\R_{\le 1}^{2 \times 2}} f = 0$ and $\argmin_{\R_{\le 1}^{2 \times 2}} f = \diag(0,1)$.
Proposition~\ref{prop:ExampleApocalypseSmallestSize} states that $\ppgd$ used with an initial step size for the backtracking procedure smaller than $1$ can follow an apocalypse by trying to minimize $f$ on $\R_{\le 1}^{2 \times 2}$. Before introducing that proposition, we give an intuitive explanation of the result. Given any point $\diag(x_0,0)$ with $x_0 \in (0,\infty)$, $\ppgd$ produces a sequence converging to $0_{2 \times 2}$, thereby minimizing the first term of $f$. However, no iteration affects the second term because the search direction $\diag(0,1)$, which would enable the minimization of the second term, is not available until $0_{2 \times 2}$ is reached, which never happens. The third term of $f$ makes its global minimizer on $\R_{\le 1}^{2 \times 2}$ unique without affecting the iterations.

\begin{proposition}
\label{prop:ExampleApocalypseSmallestSize}
Let $x_0 \in (0,\infty)$ and $\alpha \in (0,1)$.
With $f$ on $\R_{\le 1}^{2 \times 2}$ as defined above, starting from $X_0 := \diag(x_0,0)$, and using $\ushort{\alpha} := \bar{\alpha} := \alpha$, $\beta \in (0,1)$, and $c \in (0,\frac{1}{2}]$, $\ppgd$ produces the sequence $(X_i)_{i \in \N}$ defined by
\begin{equation}
\label{eq:ExampleApocalypseSmallestSize}
X_i := \diag((1-\alpha)^ix_0,0)
\end{equation}
for all $i \in \N$. Moreover, $\s(X_i; f, \R_{\le 1}^{2 \times 2}) = (1-\alpha)^ix_0$ for all $i \in \N$. In particular, since $\s(0_{2 \times 2}; f, \R_{\le 1}^{2 \times 2}) = \norm{\nabla f(0_{2 \times 2})} = 1$, $(0_{2 \times 2}, (X_i)_{i \in \N}, f)$ is an apocalypse.
\end{proposition}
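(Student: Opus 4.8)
The plan is to verify by induction that, when $\ppgd$ is applied to $f$ on $\R_{\le 1}^{2\times 2}$ starting from $X_0=\diag(x_0,0)$, the iterate at step $i$ is $X_i=\diag((1-\alpha)^i x_0,0)$, and that at each step the initial step size $\alpha$ already satisfies the Armijo condition, so that no backtracking occurs. The base case is immediate. For the inductive step, fix $i$ and write $x:=(1-\alpha)^i x_0>0$, so $X_i=\diag(x,0)$. First I would compute $\nabla f(X_i)$: from the explicit formula, $\nabla f(X)$ has entries $X(1,1)$, $X(2,2)-1$ on the diagonal and $\pm(X(1,2)-X(2,1))$ off-diagonal; evaluated at $\diag(x,0)$ this gives $\nabla f(X_i)=\diag(x,-1)$. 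Then I would use Proposition~\ref{prop:ProjectionOntoTangentConeRealDeterminantalVariety} with $\ushort r=1$ to compute $\proj{\tancone{\R_{\le 1}^{2\times 2}}{X_i}}{-\nabla f(X_i)}$: taking $U=V=e_1$, $U_\perp=V_\perp=e_2$, the blocks of $-\nabla f(X_i)=\diag(-x,1)$ are $A=-x$, $B=D=0$, $E=1$, and since $r-\ushort r=0$ the projection of $E$ onto $\R_{\le 0}^{1\times 1}$ is $0$; hence the projected gradient is $g=\diag(-x,0)$ and $\s(X_i;f,\R_{\le 1}^{2\times 2})=\norm g=x=(1-\alpha)^i x_0$, which is the claimed formula for the stationarity measure.

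Next I would compute the candidate next iterate. Since $g=\diag(-x,0)$ lies in the tangent cone and in fact $X_i+\alpha g=\diag((1-\alpha)x,0)\in\R_{\le 1}^{2\times 2}$, the projection onto $C$ is trivial: $\proj{C}{X_i+\alpha g}=X_i+\alpha g=\diag((1-\alpha)^{i+1}x_0,0)=X_{i+1}$. It remains to check the Armijo condition $f(X_{i+1})\le f(X_i)-c\,\alpha\,\s(X_i;f,C)^2$. Here $f(\diag(y,0))=\tfrac{y^2+1}{2}$, so $f(X_i)=\tfrac{x^2+1}{2}$ and $f(X_{i+1})=\tfrac{(1-\alpha)^2x^2+1}{2}$; the required inequality becomes $\tfrac{(1-\alpha)^2x^2+1}{2}\le\tfrac{x^2+1}{2}-c\alpha x^2$, i.e. $(1-(1-\alpha)^2)/2\ge c\alpha$, i.e. $\alpha(2-\alpha)/2\ge c\alpha$, i.e. $1-\tfrac{\alpha}{2}\ge c$, which holds because $\alpha<1$ gives $1-\tfrac\alpha2>\tfrac12\ge c$. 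Thus the initial step size passes the test, the while loop terminates immediately with $y=X_{i+1}$, and the induction closes. Finally, I would note $\s(0_{2\times2};f,\R_{\le1}^{2\times2})=\norm{\proj{\tancone{\R_{\le1}^{2\times2}}{0}}{-\nabla f(0_{2\times2})}}$; since $\R_{\le1}^{2\times2}$ is a cone its tangent cone at $0$ is all of $\R_{\le1}^{2\times2}$, which contains $-\nabla f(0_{2\times2})=\diag(0,1)$, so the projection is the identity and the measure equals $\norm{\nabla f(0_{2\times2})}=1>0$; since $X_i\to 0_{2\times2}$ and $\s(X_i;f,\R_{\le1}^{2\times2})=(1-\alpha)^ix_0\to 0$, the triple $(0_{2\times2},(X_i)_{i\in\N},f)$ is an apocalypse.

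I do not anticipate a genuine obstacle here; the only point requiring mild care is confirming that the $\ppgd$ map as specified (with $\ushort\alpha=\bar\alpha=\alpha$) must choose the step size $\alpha$ initially and then only shrinks it, so that once the Armijo inequality holds at $\alpha$ the map is forced to return $X_{i+1}$ — in particular the claimed sequence is the \emph{only} sequence $\ppgd$ can produce, not merely one possible sequence. This follows because the projected gradient $g=\diag(-x,0)$ is the \emph{unique} element of $\proj{\tancone{C}{X_i}}{-\nabla f(X_i)}$ (the projection onto a closed cone being a singleton-valued map precisely when, as here, the target decomposes with a zero block over a trivial subspace), and $\proj{C}{X_i+\alpha g}$ is likewise the singleton $\{X_{i+1}\}$, so there is no freedom in the choices made inside the map.
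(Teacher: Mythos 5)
Your proposal is correct and follows essentially the same route as the paper's proof: compute $\nabla f(X_i)=\diag((1-\alpha)^i x_0,-1)$, use the tangent-cone projection formula to get $g=\diag(-(1-\alpha)^i x_0,0)$ and hence the stationarity measure, note that $X_i+\alpha g$ lies in $\R_{\le 1}^{2\times 2}$ so no projection or backtracking is needed, verify the Armijo inequality (which the paper leaves as a "simple computation" and you spell out via $1-\tfrac{\alpha}{2}\ge c$), and conclude with $\s(0_{2\times 2};f,\R_{\le 1}^{2\times 2})=1$ from the cone property. Your added remarks on the uniqueness of the choices inside the $\ppgd$ map are a harmless refinement of the same argument.
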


\begin{proof}
The formula~\eqref{eq:ExampleApocalypseSmallestSize} holds for $i = 0$. 
Furthermore, for all $X \in \R^{2 \times 2}$,
\begin{equation*}
\nabla f(X) = X - \begin{bmatrix} 0 & X(2, 1) \\ X(1, 2) & 1 \end{bmatrix}.
\end{equation*}
Therefore, for every $i \in \N$,
\begin{align*}
-\nabla f(X_i) = \diag(-(1-\alpha)^ix_0,1),&&
\proj{\tancone{\R_{\le 1}^{2 \times 2}}{X_i}}{-\nabla f(X_i)} = \diag(-(1-\alpha)^ix_0,0),
\end{align*}
and the formula for $\s(X_i; f, \R_{\le 1}^{2 \times 2})$ is valid.
Thus, for every $i \in \N$,
\begin{align*}
X_{i+1} &= X_i + \alpha \proj{\tancone{\R_{\le 1}^{2 \times 2}}{X_i}}{-\nabla f(X_i)},\\
f(X_{i+1}) &\le f(X_i) - c \, \alpha \, \s(X_i; f, \R_{\le 1}^{2 \times 2})^2,
\end{align*}
which shows that the sequence defined by~\eqref{eq:ExampleApocalypseSmallestSize} is indeed the one produced by $\ppgd$.
The expression for $\s(0_{2 \times 2}; f, \R_{\le 1}^{2 \times 2})$ follows from the fact that $-\nabla f(0_{2 \times 2}) = \diag(0,1) \in \R_{\le 1}^{2 \times 2} = \tancone{\R_{\le 1}^{2 \times 2}}{0_{2 \times 2}}$.
\end{proof}

Proposition~\ref{prop:ExampleApocalypseSmallestSizeP2GDR} shows that $\ppgdr$ (Algorithm~\ref{algo:P2GDR} using Algorithm~\ref{algo:P2GDRmapRealDeterminantalVariety} in line~\ref{algo:P2GDR:P2GDRmap}) escapes the apocalypse due to its rank reduction mechanism. During the first iterations, $\ppgdr$ produces the same iterates as $\ppgd$. However, when the numerical rank of the iterate becomes smaller than its rank, i.e., when its smallest singular value becomes smaller than or equal to $\Delta$, $\ppgdr$ realizes that a stronger decrease of $f$ is obtained by first reducing the rank and then applying an iteration of $\ppgd$. As a result, the first term of $f$ is minimized within a finite number of iterations, after which the minimization of the second term can start.

\begin{proposition}
\label{prop:ExampleApocalypseSmallestSizeP2GDR}
Consider the same problem as in Proposition~\ref{prop:ExampleApocalypseSmallestSize} with the same parameters and $\Delta \in (0,\infty)$.
Then, $\ppgdr$ produces the sequence $(X_i)_{i \in \N}$ defined by
\begin{equation}
\label{eq:ExampleApocalypseSmallestSizeP2GDR}
X_i := \left\{\begin{array}{ll}
\diag((1-\alpha)^ix_0,0) & \text{if } i \le i_\Delta\\
\diag(0,1-(1-\alpha)^{i-i_\Delta}) & \text{if } i > i_\Delta\\
\end{array}\right.
\end{equation}
where $i_\Delta := \max\bigg\{\bigg\lceil\frac{\ln(\frac{\Delta}{x_0})}{\ln(1-\alpha)}\bigg\rceil,0\bigg\}$.
In particular, $(X_i)_{i \in \N}$ converges to $\diag(0,1)$ and $\lim_{i \to \infty} \s(X_i; f, \R_{\le 1}^{2 \times 2}) = 0$.
\end{proposition}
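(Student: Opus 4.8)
The plan is to establish the formula \eqref{eq:ExampleApocalypseSmallestSizeP2GDR} by induction on $i$, splitting the argument into the three regimes $i < i_\Delta$, $i = i_\Delta$, and $i > i_\Delta$, and to read off the two limits afterwards. The central reduction is that, given an iterate $X$, Algorithm~\ref{algo:P2GDRmapRealDeterminantalVariety} applies the $\ppgd$ map (Algorithm~\ref{algo:P2GDmap}) to a point $\hat{X}^j \in \proj{\R_{\rank X - j}^{2 \times 2}}{X}$ for each $j \in \{0, \dots, \rank X - \rank_\Delta X\}$ and returns a point of least cost among the outputs; so everything comes down to (i) computing $\rank_\Delta X_i$, (ii) running the $\ppgd$ map from the relevant projections of $X_i$, and (iii) comparing the resulting costs. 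Since every iterate in \eqref{eq:ExampleApocalypseSmallestSizeP2GDR} is a diagonal $2 \times 2$ matrix with a single (possibly zero) nonzero diagonal entry, $\rank_\Delta \diag(t, 0) = \rank_\Delta \diag(0, t)$ equals $1$ if $|t| > \Delta$ and $0$ otherwise, which makes (i) immediate.

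For $i < i_\Delta$, I would first check from the definition of $i_\Delta$ that $(1-\alpha)^i x_0 > \Delta$, so $\rank_\Delta X_i = 1 = \rank X_i$ and the loop in Algorithm~\ref{algo:P2GDRmapRealDeterminantalVariety} ranges only over $j = 0$; thus the $\ppgdr$ map reduces to a single $\ppgd$ step applied to $X_i = \diag((1-\alpha)^i x_0, 0)$, which by (the proof of) Proposition~\ref{prop:ExampleApocalypseSmallestSize} returns $\diag((1-\alpha)^{i+1} x_0, 0) = X_{i+1}$. Along the way $\s(X_i; f, \R_{\le 1}^{2 \times 2}) = (1-\alpha)^i x_0 > 0$, so the while loop of Algorithm~\ref{algo:P2GDR} does not stop. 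This proves the first line of \eqref{eq:ExampleApocalypseSmallestSizeP2GDR} and identifies $X_{i_\Delta} = \diag((1-\alpha)^{i_\Delta} x_0, 0)$.

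At $i = i_\Delta$ the definition of $i_\Delta$ gives $(1-\alpha)^{i_\Delta} x_0 \le \Delta$, hence $\rank_\Delta X_{i_\Delta} = 0 < 1 = \rank X_{i_\Delta}$, and Algorithm~\ref{algo:P2GDRmapRealDeterminantalVariety} now also applies the $\ppgd$ map to $\hat{X}^1 = 0_{2 \times 2} \in \proj{\R_0^{2 \times 2}}{X_{i_\Delta}}$. Here I would compute the $\ppgd$ step from $0_{2 \times 2}$: since $\tancone{\R_{\le 1}^{2 \times 2}}{0_{2 \times 2}} = \R_{\le 1}^{2 \times 2}$ and $-\nabla f(0_{2 \times 2}) = \diag(0, 1)$ has rank one, the search direction is $\diag(0, 1)$, the trial point at step size $\alpha$ is $\diag(0, \alpha)$, and the Armijo test $f(\diag(0,\alpha)) \le f(0_{2\times 2}) - c\,\alpha$ collapses to $c \le 1 - \tfrac{\alpha}{2}$, which holds because $c \le \tfrac12$ and $\alpha < 1$; thus $\tilde{X}^1 = \diag(0,\alpha)$ with $f(\tilde{X}^1) = \tfrac{(1-\alpha)^2}{2} < \tfrac12$. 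The competing branch $j = 0$ runs $\ppgd$ from $\diag((1-\alpha)^{i_\Delta} x_0, 0)$ and, by Proposition~\ref{prop:ExampleApocalypseSmallestSize}, returns $\diag((1-\alpha)^{i_\Delta+1} x_0, 0)$, whose cost is at least $\tfrac12 > f(\tilde{X}^1)$; hence $\ppgdr$ selects $X_{i_\Delta+1} = \diag(0, 1-(1-\alpha))$. For the inductive step at $i > i_\Delta$, write $X_i = \diag(0, y_i)$ with $y_i := 1-(1-\alpha)^{i-i_\Delta} \in (0,1)$. Then $X_i$ has rank one, $\tancone{\R_{\le 1}^{2 \times 2}}{X_i} = \{A \in \R^{2\times 2} \mid A(1,1) = 0\}$ by Proposition~\ref{prop:ProjectionOntoTangentConeRealDeterminantalVariety}, the search direction of the $j = 0$ branch is $\diag(0, 1-y_i)$, and (again the Armijo test at step size $\alpha$ reduces to $c \le 1 - \tfrac{\alpha}{2}$) the $\ppgd$ step returns $\tilde{X}^0 = \diag(0, 1-(1-\alpha)(1-y_i)) = \diag(0, y_{i+1})$. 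If moreover $y_i \le \Delta$, the reduction branch $j = 1$ again yields $\tilde{X}^1 = \diag(0,\alpha)$, but $f(\tilde{X}^0) = \tfrac{(1-y_i)^2(1-\alpha)^2}{2} < \tfrac{(1-\alpha)^2}{2} = f(\tilde{X}^1)$ since $y_i \in (0,1)$; so in every case $X_{i+1} = \diag(0, y_{i+1})$, closing the induction. Moreover $\s(X_i; f, \R_{\le 1}^{2 \times 2}) = \norm{\diag(0, 1-y_i)} = (1-\alpha)^{i-i_\Delta} > 0$, so Algorithm~\ref{algo:P2GDR} does not terminate.

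Finally, $y_i = 1 - (1-\alpha)^{i-i_\Delta} \to 1$, so $(X_i)_{i \in \N} \to \diag(0,1)$, and $\s(X_i; f, \R_{\le 1}^{2 \times 2}) = (1-\alpha)^{i-i_\Delta} \to 0$. I expect the bulk of the work to be the bookkeeping of the backtracking line search — checking in each $\ppgd$ subcall that the prescribed initial step size $\alpha$ already satisfies the Armijo condition, which conveniently reduces every time to the single inequality $c \le 1 - \tfrac{\alpha}{2}$ — together with the tangent-cone identifications at $0_{2 \times 2}$, at $\diag(t, 0)$, and at $\diag(0, t)$, and the (elementary but repeated) cost comparisons selecting the branch of Algorithm~\ref{algo:P2GDRmapRealDeterminantalVariety}; the degenerate case $i_\Delta = 0$ (i.e. $\Delta \ge x_0$) is covered by the same reasoning with the first regime empty.
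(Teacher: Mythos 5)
Your proposal is correct and follows essentially the same route as the paper's proof: induction split at $i_\Delta$, reduction of the pre-$i_\Delta$ regime to Proposition~\ref{prop:ExampleApocalypseSmallestSize}, explicit computation of the $\ppgd$ step from $0_{2 \times 2}$ and from $\diag(0,y_i)$, and the cost comparisons $f(\tilde{X}^1) < f(\tilde{X}^0)$ at $i = i_\Delta$ and $f(\tilde{X}^0) < f(\tilde{X}^1)$ for $i > i_\Delta$. The only difference is that you spell out the Armijo verification (reducing it each time to $c \le 1 - \tfrac{\alpha}{2}$) and the tangent-cone identifications, which the paper asserts without detail.
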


\begin{proof}
The formula~\eqref{eq:ExampleApocalypseSmallestSizeP2GDR} is correct for $i = 0$.
If $i_\Delta > 0$, then $(1-\alpha)^ix_0 > \Delta$ for every $i \in \{0, \dots, i_\Delta-1\}$, and \eqref{eq:ExampleApocalypseSmallestSizeP2GDR} thus holds for every $i \in \{1, \dots, i_\Delta\}$ in view of Proposition~\ref{prop:ExampleApocalypseSmallestSize}.
It remains to prove~\eqref{eq:ExampleApocalypseSmallestSizeP2GDR} for every integer $i > i_\Delta$.
Let us look at iteration $i_\Delta$. Since $\hat{X}_{i_\Delta}^1 = 0_{2 \times 2}$, $-\nabla f(0_{2 \times 2}) = \diag(0,1) \in \R_{\le 1}^{2 \times 2} = \tancone{\R_{\le 1}^{2 \times 2}}{0_{2 \times 2}}$, $\s(0_{2 \times 2}; f, \R_{\le 1}^{2 \times 2}) = 1$, $\hat{X}_{i_\Delta}^1 - \alpha \nabla f(\hat{X}_{i_\Delta}^1) = \diag(0,\alpha)$, and
\begin{equation*}
f(\hat{X}_{i_\Delta}^1)-f(\diag(0,\alpha)) \ge c \, \alpha \, \s(\hat{X}_{i_\Delta}^1; f, \R_{\le 1}^{2 \times 2})^2,
\end{equation*}
we have $\tilde{X}_{i_\Delta}^1 = \diag(0,\alpha)$.
As $\hat{X}_{i_\Delta}^0 = X_{i_\Delta}$, Proposition~\ref{prop:ExampleApocalypseSmallestSize} yields $\tilde{X}_{i_\Delta}^0 = \diag((1-\alpha)^{i_\Delta+1}x_0,0)$. Since
\begin{equation*}
f(\tilde{X}_{i_\Delta}^1)
= \frac{(1-\alpha)^2}{2}
< \frac{(1-\alpha)^{2(i_\Delta+1)}x_0^2+1}{2}
= f(\tilde{X}_{i_\Delta}^0),
\end{equation*}
we have $X_{i_\Delta+1} = \tilde{X}_{i_\Delta}^1$, in agreement with~\eqref{eq:ExampleApocalypseSmallestSizeP2GDR}.
Let us now assume that \eqref{eq:ExampleApocalypseSmallestSizeP2GDR} holds for some integer $i > i_\Delta$ and prove that it also holds for $i+1$. As $\hat{X}_i^0 = X_i$, $-\nabla f(X_i) = \diag(0,(1-\alpha)^{i-i_\Delta}) \in \tancone{\R_{\le 1}^{2 \times 2}}{X_i}$, $\s(X_i; f, \R_{\le 1}^{2 \times 2}) = (1-\alpha)^{i-i_\Delta}$, $X_i - \alpha \nabla f(X_i) = \diag(0,1-(1-\alpha)^{i+1-i_\Delta}) \in \R_{\le 1}^{2 \times 2}$, and $f(X_i) - f(\diag(0,1-(1-\alpha)^{i+1-i_\Delta})) \ge c \, \alpha \, \s(X_i; f, \R_{\le 1}^{2 \times 2})^2$,
we have $\tilde{X}_i^0 = \diag(0,1-(1-\alpha)^{i+1-i_\Delta})$. If $\rank_\Delta X_i = 0$, then $\ppgdr$ also considers $\hat{X}_i^1 = 0_{2 \times 2}$ and, from what precedes, $\tilde{X}_i^1 = \diag(0,\alpha)$. Since $f(\tilde{X}_i^0) < f(\tilde{X}_i^1)$, we have $X_{i+1} = \tilde{X}_i^0$, as wished. The other two claims follow.
\end{proof}

The iterates of $\ppgdr$ computed in Proposition~\ref{prop:ExampleApocalypseSmallestSizeP2GDR} are represented in Figure~\ref{fig:ExampleApocalypseSmallestSize}, which illustrates how the apocalypse is avoided. As explained, $\ppgd$ follows an apocalypse because, at any point $\diag(x,0)$ with $x \in (0,\infty)$, the projection of $-\nabla f$ onto the tangent cone to $\R_{\le 1}^{2 \times 2}$ is parallel to the $x$-axis, and can thus minimize only the first term of $f$. The descent direction $\diag(0,1)$, which enables the minimization of the second term of $f$, becomes accessible only at $\diag(0,0)$.

\begin{figure}[h]
\begin{center}
\begin{tikzpicture}
\def\x{1}
\def\a{0.6}
\def\D{0.2}
\pgfmathsetmacro\iD{ceil(ln(\D/\x)/ln(1-\a))}
\draw [->] (0,0) -- (2,0) node [right] {$x$};
\draw [->] (0,0) -- (0,3) node [above] {$y$};
\foreach \i in {0, 1, ..., 4}
{
\draw [dashed] (0,1) circle ({0.4*\i});
}
\foreach \i in {0, 1, ..., \iD}
{
\draw ({\x*(1-\a)^\i},0) node {$\boldsymbol\cdot$};
\draw [->] ({\x*(1-\a)^\i},0) -- ({\x*(1-\a)^(\i+1)},\a);
}
\foreach \i in {1, 2, ..., 3}
{
\draw (0,{1-(1-\a)^\i},0) node {$\boldsymbol\cdot$};
}
\end{tikzpicture}
\end{center}
\caption{Iterates $X_i$ produced by $\ppgdr$ for the problem of Section~\ref{subsubsec:P2GDfollowingApocalypseSize2*2} with $x_0 := 1$, $\alpha := \frac{3}{5}$, and $\Delta := \frac{1}{5}$ in the $xy$-plane of $\diag(x,y)$ matrices. The arrows represent $-\alpha\nabla f(X_i)$. The circles represent sublevel sets of $f$.}
\label{fig:ExampleApocalypseSmallestSize}
\end{figure}
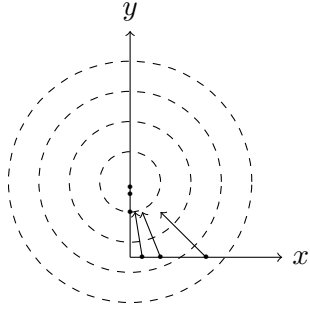

Although $\ppgdr$ avoids the apocalypse for every $\Delta \in (0, \infty)$, it should be noted that, if $\Delta \ge \alpha$, then its rank reduction mechanism makes it apply the $\ppgd$ map to $0_{2 \times 2}$ in at least one iteration from iteration $i_\Delta+1$, thereby constructing points that are not used, as shown in the proof of Proposition~\ref{prop:ExampleApocalypseSmallestSizeP2GDR}. For those iterations, $\ppgdr$ therefore produces the same iterates as $\ppgd$ at a higher computational cost.

We close this section by discussing how $\ppgdr$ behaves on this problem if it is used with the stopping criterion defined by \eqref{eq:StoppingCriterion}, i.e., if it is stopped when the stationarity measure $\s(\cdot; f, \R_{\le 1}^{2 \times 2})$ becomes smaller than or equal to some threshold $\varepsilon \in (0, \infty)$. If $\Delta := 0$, it returns the sequence $(X_i)_{i=0}^{i=i_\varepsilon}$ defined by~\eqref{eq:ExampleApocalypseSmallestSize}, where $i_\varepsilon := \max\bigg\{\bigg\lceil\frac{\ln(\frac{\varepsilon}{x_0})}{\ln(1-\alpha)}\bigg\rceil,0\bigg\}$. Thus, in view of~\eqref{eq:ExampleApocalypseSmallestSizeP2GDR}, for $\ppgdr$ to avoid stopping while it is heading towards the apocalyptic point, we must have $i_\Delta < i_\varepsilon$, i.e., $\Delta \ge (1-\alpha)^{i_\varepsilon-1}x_0$.

\subsection{The cone of symmetric positive-semidefinite matrices of bounded rank}
\label{subsec:ConePSDmatricesBoundedRank}
In this section, $\mathcal{E} := \R^{n \times n}$ and
\begin{equation*}
C
:= \mathrm{S}_{\le r}^+(n)
:= \{X \in \R_{\le r}^{n \times n} \mid X^\tp = X,\, X \succeq 0\}
\end{equation*}
for some positive integers $n$ and $r < n$, $\R^{n \times n}$ is endowed with the Frobenius inner product, and $\norm{\cdot}$ denotes the Frobenius norm. For every $q \in \N$, we let $\mathrm{S}(q) := \{X \in \R^{q \times q} \mid X^\tp = X\}$ denote the linear subspace of $\R^{q \times q}$ consisting of all real $q \times q$ symmetric matrices, $\mathrm{S}^+(q) := \{X \in \mathrm{S}(q) \mid X \succeq 0\}$ the closed convex cone of real $q \times q$ symmetric positive-semidefinite matrices, and $\mathrm{S}^-(q) := \{X \in \mathrm{S}(q) \mid X \preceq 0\}$ the closed convex cone of real $q \times q$ symmetric negative-semidefinite matrices. We also write $\R_*^{n \times r} := \R_r^{n \times r}$ and $\mathrm{S}_{< r}^+(n) := \mathrm{S}^+(n) \cap \R_{< r}^{n \times n}$.

In Section~\ref{subsubsec:StratificationPSDconeBoundedRank}, we review the stratification of $\mathrm{S}_{\le r}^+(n)$ by the rank, which satisfies conditions~1(a) and 1(b) of Assumption~\ref{assumption:Stratification}. We also recall basic facts about real symmetric positive-semidefinite matrices showing that condition~1(c) is satisfied and providing a formula to project onto $\mathrm{S}_{\le r}^+(n)$ and its strata. In Section~\ref{subsubsec:TangentConePSDconeBoundedRank}, we review an explicit description of the tangent cone to $\mathrm{S}_{\le r}^+(n)$ and derive a formula to project onto it (Proposition~\ref{prop:ProjectionOntoTangentConePSDconeBoundedRank}). Based on this description, we prove that $\mathrm{S}_{\le r}^+(n)$ satisfies the second statement of Theorem~\ref{thm:ExamplesStratifiedSetsSatisfyingMainAssumption} (Proposition~\ref{prop:GlobalSecondOrderUpperBoundDistanceToPSDconeBoundedRankFromTangentLine}) and condition~3 of Assumption~\ref{assumption:Stratification} (Proposition~\ref{prop:ContinuityTangentConeStratumPSDconeBoundedRank}). In Section~\ref{subsubsec:NormalConesPSDconeBoundedRank}, we deduce the regular normal cone, the normal cone, and the Clarke normal cone to $\mathrm{S}_{\le r}^+(n)$ and show that the sets of apocalyptic and serendipitous points of $\mathrm{S}_{\le r}^+(n)$ both equal $\mathrm{S}_{< r}^+(n)$ (Proposition~\ref{prop:PSDconeBoundedRankApocalypticSerendipitousPoints}). Finally, in Section~\ref{subsubsec:P2GDRpsdConeBoundedRank}, we present an alternative version of the $\ppgdr$ map on $\mathrm{S}_{\le r}^+(n)$ (Algorithm~\ref{algo:P2GDRmapPSDconeBoundedRank}) and show that the general theory developed in Section~\ref{sec:ProposedAlgorithmConvergenceAnalysis} also applies to this version (Proposition~\ref{prop:P2GDRmapPSDconeBoundedRankPolak}). We notably deduce Corollary~\ref{coro:P2GDRpsdConeBoundedRankPolakConvergence}.

\subsubsection{Stratification of the cone of positive-semidefinite matrices of bounded rank}
\label{subsubsec:StratificationPSDconeBoundedRank}
The rank stratifies $\mathrm{S}_{\le r}^+(n)$:
\begin{equation*}
\mathrm{S}_{\le r}^+(n) = \bigcup_{i=0}^r \mathrm{S}_i^+(n)
\end{equation*}
where, for every $i \in \{0, \dots, r\}$,
\begin{equation*}
\mathrm{S}_i^+(n) := \mathrm{S}^+(n) \cap \R_i^{n \times n}
\end{equation*}
is the smooth manifold of $n \times n$ rank-$i$ symmetric positive-semidefinite matrices \cite[Proposition~2.1]{HelmkeShayman1995}. Observe that $\mathrm{S}_{\le 0}^+(n) = \mathrm{S}_0^+(n) = \{0_{n \times n}\}$.
The stratification of $\mathrm{S}^+(n)$ by the rank follows from the fact that $\mathrm{S}^+(n)$ is in bijection with the orbit space $\R^{n \times n}/\mathrm{O}(n)$, as shown in \cite[\S 3.1]{ThanwerdasPennec2022} based on \cite{AlekseevskyKrieglLosikMichor}.

It follows that $\mathrm{S}_{\le r}^+(n)$ satisfies condition~1(a) of Assumption~\ref{assumption:Stratification}. By \cite[Proposition~2.1]{HelmkeShayman1995}, condition~1(b) is satisfied too.
To establish condition~1(c), we first review basic facts about the eigenvalues of a real symmetric matrix.

In what follows, the eigenvalues of $X \in \mathrm{S}(n)$, which are real \cite[Theorem~4.1.3]{HornJohnson}, are denoted by $\lambda_1(X) \ge \dots \ge \lambda_n(X)$, as in \cite[2.1.7]{GolubVanLoan}; moreover, $\lambda_1(X)$ and $\lambda_n(X)$ are respectively denoted by $\lambda_{\max}(X)$ and $\lambda_{\min}(X)$.
By the spectral theorem for real symmetric matrices \cite[Theorem~8.1.1]{GolubVanLoan}, for every $X \in \mathrm{S}(n)$, there exists $U \in \mathrm{O}(n)$ such that
\begin{equation}
\label{eq:EigendecompositionSymmetric}
X = U \diag(\lambda_1(X), \dots, \lambda_n(X)) U^\tp.
\end{equation}
Moreover, if $X \succeq 0$, then $\lambda_{\min}(X) \ge 0$ \cite[Theorem~4.1.8]{HornJohnson}, thus the eigendecomposition~\eqref{eq:EigendecompositionSymmetric} is an SVD, and the singular values of $X$ are its eigenvalues.

Proposition~\ref{prop:ProjectionOntoPSDconeBoundedRank} shows how to project onto $\mathrm{S}_{\le r}^+(n)$ and implies that $\mathrm{S}_{\le r}^+(n)$ satisfies condition~1(c) of Assumption~\ref{assumption:Stratification}.

\begin{proposition}[{projection onto $\mathrm{S}_{\le r}^+(n)$ \cite[Corollary~17]{Dax2014}}]
\label{prop:ProjectionOntoPSDconeBoundedRank}
For every $X \in \R^{n \times n}$, $\proj{\mathrm{S}_{\le r}^+(n)}{X}$ is the set of all possible outputs of Algorithm~\ref{algo:ProjectionPSDconeBoundedRank}; in particular, if $X_\mathrm{sym} := \frac{1}{2}(X+X^\tp)$ and
\begin{equation*}
i := \left\{\begin{array}{ll}
\max\{j \in \{1, \dots, r\} \mid \lambda_j(X_\mathrm{sym}) > 0\} & \text{if } \lambda_1(X_\mathrm{sym}) > 0,\\
0 & \text{otherwise},
\end{array}\right.
\end{equation*}
then
\begin{equation*}
\dist(X, \mathrm{S}_{\le r}^+(n))
= \sqrt{\norm{X}^2 - \sum_{j=1}^i \lambda_j^2(X_\mathrm{sym})}.
\end{equation*}
\end{proposition}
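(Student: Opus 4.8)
The plan is to reduce the projection onto $\mathrm{S}_{\le r}^+(n)$ to two successive projections whose effect is diagonalized by an eigendecomposition of the symmetric part of $X$. First I would observe that, since $\mathrm{S}_{\le r}^+(n) \subseteq \mathrm{S}(n)$ and $\mathrm{S}(n)$ is a linear subspace of $\R^{n \times n}$, for every $X \in \R^{n \times n}$ the Pythagorean decomposition $X = X_\mathrm{sym} + X_\mathrm{skew}$ with $X_\mathrm{sym} := \frac12(X+X^\tp)$ and $X_\mathrm{skew} := \frac12(X-X^\tp)$ gives $\norm{X-Y}^2 = \norm{X_\mathrm{sym}-Y}^2 + \norm{X_\mathrm{skew}}^2$ for all $Y \in \mathrm{S}(n)$; hence $\proj{\mathrm{S}_{\le r}^+(n)}{X} = \proj{\mathrm{S}_{\le r}^+(n)}{X_\mathrm{sym}}$ and it suffices to treat the case $X = X^\tp$. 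Then I would fix an eigendecomposition $X_\mathrm{sym} = U\diag(\lambda_1(X_\mathrm{sym}),\dots,\lambda_n(X_\mathrm{sym}))U^\tp$ with $U \in \mathrm{O}(n)$ as in~\eqref{eq:EigendecompositionSymmetric}, and use the orthogonal invariance of the Frobenius norm together with the fact that $\mathrm{S}_{\le r}^+(n)$ is invariant under $Y \mapsto U^\tp Y U$ to reduce to the case where $X_\mathrm{sym} = \diag(\lambda_1,\dots,\lambda_n)$ is diagonal with nonincreasing entries.

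Next, for the diagonal case, I would argue that an optimal $Y$ can be taken diagonal. One route: any $Y \in \mathrm{S}_{\le r}^+(n)$ has its own eigendecomposition $Y = V\diag(\mu_1,\dots,\mu_n)V^\tp$ with $\mu_j \ge 0$ and at most $r$ of them nonzero; by the Hoffman--Wielandt / von Neumann trace inequality (or Ky Fan's maximum principle), $\norm{\diag(\lambda)-Y}^2 = \sum\lambda_j^2 + \sum\mu_j^2 - 2\tr(\diag(\lambda)Y)$ and $\tr(\diag(\lambda)Y) \le \sum_j \lambda_j^{\downarrow}\mu_j^{\downarrow}$, with equality when $Y$ is diagonal with entries sorted compatibly; so minimizing over $\mathrm{S}_{\le r}^+(n)$ is equivalent to choosing a set $S \subseteq \{1,\dots,n\}$ with $|S| \le r$ and values $\mu_j \ge 0$ for $j \in S$ minimizing $\sum_{j\in S}(\lambda_j-\mu_j)^2 + \sum_{j\notin S}\lambda_j^2$. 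The inner minimization over $\mu_j \ge 0$ gives $\mu_j = \max\{\lambda_j,0\} = (\lambda_j)_+$, contributing $\sum_{j\in S}(\lambda_j)_-^2$ in place of $\lambda_j^2$ for $j \in S$; so the objective is $\norm{X_\mathrm{sym}}^2 - \sum_{j\in S}(\lambda_j)_+^2$, which is minimized by taking $S$ to consist of the indices of the (at most $r$) largest positive eigenvalues — precisely the index set of size $i$ in the statement. This yields both the description of the minimizers (set to zero all eigenvalues except the $i$ largest positive ones, the value $i$ and tie-breaking among equal eigenvalues being exactly what Algorithm~\ref{algo:ProjectionPSDconeBoundedRank} enumerates) and the distance formula $\dist(X,\mathrm{S}_{\le r}^+(n)) = \sqrt{\norm{X}^2 - \sum_{j=1}^i \lambda_j^2(X_\mathrm{sym})}$ after adding back $\norm{X_\mathrm{skew}}^2$ via the first step and noting $\norm{X}^2 = \norm{X_\mathrm{sym}}^2 + \norm{X_\mathrm{skew}}^2$.

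Finally I would transport this back through $U$ and match it to Algorithm~\ref{algo:ProjectionPSDconeBoundedRank}, checking that the algorithm's freedom in choosing $U$ and in breaking ties among equal eigenvalues realizes exactly the set $\proj{\mathrm{S}_{\le r}^+(n)}{X}$ — i.e. every output of the algorithm is a minimizer, and conversely every minimizer arises from some run. The main obstacle is the reduction to simultaneously diagonalizable $X_\mathrm{sym}$ and $Y$: one must justify rigorously that no non-diagonal $Y$ does strictly better, which is where the trace inequality (von Neumann's, or equivalently the fact that $\tr(AB) \le \sum_j \sigma_j(A)\sigma_j(B)$ for symmetric matrices with eigenvalues in place of singular values when the matrices are PSD) is needed; the rest is elementary one-dimensional optimization and bookkeeping. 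Since this is exactly \cite[Corollary~17]{Dax2014}, I would in practice cite that reference for the core computation and only spell out the symmetrization step and the translation to Algorithm~\ref{algo:ProjectionPSDconeBoundedRank}.
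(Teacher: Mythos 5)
Your proposal is correct, and it is worth noting that the paper itself gives no argument for this statement: it is stated as an import of \cite[Corollary~17]{Dax2014}, exactly as you say you would do in practice. What you add on top of the citation is a sound standalone sketch: the symmetrization step (orthogonality of $X_\mathrm{skew}$ to $\mathrm{S}(n)$, so that $\proj{\mathrm{S}_{\le r}^+(n)}{X} = \proj{\mathrm{S}_{\le r}^+(n)}{X_\mathrm{sym}}$ and the squared distance splits off $\norm{X_\mathrm{skew}}^2$), the reduction to a diagonal matrix by orthogonal invariance, the trace inequality $\tr(\Lambda Y) \le \sum_j \lambda_j^{\downarrow}\mu_j^{\downarrow}$ for symmetric $\Lambda$ and positive-semidefinite $Y$, and the elementary optimization over supports and nonnegative values, which correctly yields the distance formula $\sqrt{\norm{X}^2 - \sum_{j=1}^i \lambda_j^2(X_\mathrm{sym})}$. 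The only part that your sketch does not carry out — and that is genuinely needed for the full claim that $\proj{\mathrm{S}_{\le r}^+(n)}{X}$ equals the \emph{entire} set of outputs of Algorithm~\ref{algo:ProjectionPSDconeBoundedRank} (not just that every output is a minimizer) — is the equality case of the trace inequality, i.e., that any minimizer must be simultaneously diagonalizable with $X_\mathrm{sym}$ with compatibly ordered eigenvalues, so that the freedom in the eigendecomposition and in tie-breaking among equal positive eigenvalues at the rank cut exhausts all minimizers; you identify this as the main obstacle and defer it to \cite{Dax2014}, which is consistent with what the paper does. (Minor quibble: the relevant inequality is the eigenvalue version for a symmetric/PSD pair, sometimes attributed to Ruhe or derived from von Neumann's singular-value inequality; your parenthetical conflating eigenvalues and singular values should be cleaned up if you write this out, since $X_\mathrm{sym}$ may have negative eigenvalues.)
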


\begin{algorithm}[H]
\caption{Projection onto $\mathrm{S}_{\le r}^+(n)$}
\label{algo:ProjectionPSDconeBoundedRank}
\begin{algorithmic}[1]
\Require
$(n, r)$ where $n, r \in \N \setminus \{0\}$ and $r < n$.
\Input
$X \in \R^{n \times n}$.
\Output
$Y \in \proj{\mathrm{S}_{\le r}^+(n)}{X}$.

\State
$X_\mathrm{sym} \gets \frac{1}{2}(X+X^\tp)$;
\State
Choose $U \in \mathrm{O}(n)$ such that $X_\mathrm{sym} = U \diag(\lambda_1(X_\mathrm{sym}), \dots, \lambda_n(X_\mathrm{sym})) U^\tp$;
\If
{$\lambda_1(X_\mathrm{sym}) > 0$}
	\State
	$i \gets \max\{j \in \{1, \dots, r\} \mid \lambda_j(X_\mathrm{sym}) > 0\}$;
	\State
	$Y \gets U(:, 1\mathord{:}i) \diag(\lambda_1(X_\mathrm{sym}), \dots, \lambda_i(X_\mathrm{sym})) U(:, 1\mathord{:}i)^\tp$;
\Else
	\State
	$Y \gets 0_{n \times n}$;
\EndIf
\State
Return $Y$.
\end{algorithmic}
\end{algorithm}

\subsubsection{Tangent cone to the cone of positive-semidefinite matrices of bounded rank}
\label{subsubsec:TangentConePSDconeBoundedRank}
In Proposition~\ref{prop:ProjectionOntoTangentConePSDconeBoundedRank}, we review a formula describing $\tancone{\mathrm{S}_{\le r}^+(n)}{X}$ for every $X \in \mathrm{S}_{\le r}^+(n)$ based on orthonormal bases of $\im X$ and $(\im X)^\perp$, and we deduce a formula to project onto $\tancone{\mathrm{S}_{\le r}^+(n)}{X}$.

\begin{proposition}[tangent cone to $\mathrm{S}_{\le r}^+(n)$]
\label{prop:ProjectionOntoTangentConePSDconeBoundedRank}
Let $\ushort{r} \in \{0, \dots, r\}$, $X \in \mathrm{S}_{\ushort{r}}^+(n)$, $U \in \st(\ushort{r}, n)$, $U_\perp \in \st(n-\ushort{r}, n)$, $\im U = \im X$, and $\im U_\perp = (\im X)^\perp$.
Then,
\begin{equation*}
\tancone{\mathrm{S}_{\le r}^+(n)}{X} = \left\{[U \; U_\perp] \begin{bmatrix} A & B \\ B^\tp & E \end{bmatrix} [U \; U_\perp]^\tp \mid A \in \mathrm{S}(\ushort{r}),\, B \in \R^{\ushort{r} \times n-\ushort{r}},\, E \in \mathrm{S}_{\le r-\ushort{r}}^+(n-\ushort{r})\right\}.
\end{equation*}
Moreover, if $Z \in \R^{n \times n}$ is written as
\begin{equation*}
Z = [U \; U_\perp] \begin{bmatrix} A & B \\ D & E \end{bmatrix} [U \; U_\perp]^\tp
\end{equation*}
with $A = U^\tp Z U$, $B = U^\tp Z U_\perp$, $D = U_\perp^\tp Z U$, and $E = U_\perp^\tp Z U_\perp$, then
\begin{equation*}
\proj{\tancone{\mathrm{S}_{\le r}^+(n)}{X}}{Z} = [U \; U_\perp] \begin{bmatrix} \frac{1}{2}(A+A^\top) & \frac{1}{2}(B+D^\tp) \\[1mm] \frac{1}{2}(B^\tp+D) & \proj{\mathrm{S}_{\le r-\ushort{r}}^+(n-\ushort{r})}{E} \end{bmatrix} [U \; U_\perp]^\tp.
\end{equation*}
\end{proposition}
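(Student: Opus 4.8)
The plan is to reduce to a canonical frame, read off the tangent cone from the sequential description~\eqref{eq:TangentConeSequence}, and then obtain the projection formula by an orthogonal block decomposition. Since $\mathrm{S}_{\le r}^+(n)$ is invariant under the congruence $Y \mapsto QYQ^\tp$, $Q \in \mathrm{O}(n)$, the tangent cone transforms the same way, and conjugation by the orthogonal matrix $[U \; U_\perp]$ is an isometry of $(\R^{n \times n}, \norm{\cdot})$; since moreover both sides of the claimed identity are unchanged if $(U, U_\perp)$ is replaced by $(UQ_1, U_\perp Q_2)$ with $Q_1, Q_2$ orthogonal, it suffices to treat $X = \diag(\Sigma, 0_{n-\ushort{r}})$ with $\Sigma := \diag(\lambda_1(X), \dots, \lambda_{\ushort{r}}(X)) \succ 0$, $U = \bigl[\begin{smallmatrix} I_{\ushort{r}} \\ 0 \end{smallmatrix}\bigr]$ and $U_\perp = \bigl[\begin{smallmatrix} 0 \\ I_{n-\ushort{r}} \end{smallmatrix}\bigr]$. (The case $\ushort{r} = 0$, i.e.\ $X = 0_{n \times n}$, is immediate because $\mathrm{S}_{\le r}^+(n)$ is a closed cone, so assume $\ushort{r} \ge 1$.) For the inclusion $\supseteq$, given $A \in \mathrm{S}(\ushort{r})$, $B \in \R^{\ushort{r} \times n-\ushort{r}}$ and $E \in \mathrm{S}_{\le r-\ushort{r}}^+(n-\ushort{r})$, I would write $E = FF^\tp$ with $F \in \R^{n-\ushort{r} \times r-\ushort{r}}$ and use the polynomial curve
\[
\gamma(t) := \begin{bmatrix} \Sigma^{1/2} + \tfrac{t}{2}A\Sigma^{-1/2} \\ t B^\tp\Sigma^{-1/2} \end{bmatrix}\begin{bmatrix} \Sigma^{1/2} + \tfrac{t}{2}A\Sigma^{-1/2} \\ t B^\tp\Sigma^{-1/2} \end{bmatrix}^\tp + t\begin{bmatrix} 0 \\ F \end{bmatrix}\begin{bmatrix} 0 \\ F \end{bmatrix}^\tp ,
\]
which lies in $\mathrm{S}_{\le r}^+(n)$ for $t \ge 0$ (a sum of two positive-semidefinite matrices of ranks at most $\ushort{r}$ and $r-\ushort{r}$), satisfies $\gamma(0) = X$, and expands to $\gamma(t) = X + t\bigl[\begin{smallmatrix} A & B \\ B^\tp & E \end{smallmatrix}\bigr] + O(t^2)$, so the displayed block matrix is a tangent vector by~\eqref{eq:TangentConeSequence} (and $\mathrm{S}_{\le r}^+(n)$ is in fact geometrically derivable at $X$).

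The inclusion $\subseteq$ is the main point. Let $V \in \tancone{\mathrm{S}_{\le r}^+(n)}{X}$; by~\eqref{eq:TangentConeSequence} there are $t_i \searrow 0$ and $X_i \to X$ in $\mathrm{S}_{\le r}^+(n)$ with $(X_i-X)/t_i \to V$, and $V$ is symmetric, so write $V = \bigl[\begin{smallmatrix} A & B \\ B^\tp & E \end{smallmatrix}\bigr]$; the task is to show $E \succeq 0$ and $\rank E \le r-\ushort{r}$. Positive-semidefiniteness of $E$ is clear because the trailing $(n-\ushort{r})\times(n-\ushort{r})$ block of each $X_i$ is positive-semidefinite while that of $X$ vanishes, so $E$ is a limit of positive-semidefinite matrices. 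For the rank bound I would diagonalize $X_i = Q_i\Lambda_i Q_i^\tp$, $\Lambda_i = \diag(\lambda_1(X_i),\dots,\lambda_n(X_i))$, split off the first $\ushort{r}$ columns $P_i$ of $Q_i$ (whose span converges to $\im X$, there being a spectral gap since $\lambda_{\ushort{r}}(X) > 0 = \lambda_{\ushort{r}+1}(X)$) and the next $r-\ushort{r}$ columns $R_i$ (noting $\lambda_{r+1}(X_i) = \cdots = \lambda_n(X_i) = 0$ since $\rank X_i \le r$), so that $X_i = P_i\Lambda_i^{\mathrm{big}}P_i^\tp + R_i\Lambda_i^{\mathrm{small}}R_i^\tp$ with $\Lambda_i^{\mathrm{big}} \to \Sigma$ and, by Weyl's inequality, $\norm{\Lambda_i^{\mathrm{small}}} = O(t_i)$. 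The trailing block of $X_i$ is then $(U_\perp^\tp P_i)\Lambda_i^{\mathrm{big}}(U_\perp^\tp P_i)^\tp + (U_\perp^\tp R_i)\Lambda_i^{\mathrm{small}}(U_\perp^\tp R_i)^\tp$; the second summand is positive-semidefinite of rank at most $r-\ushort{r}$, and the crucial estimate is $\norm{U_\perp^\tp P_i} = O(t_i)$ — obtained from $U_\perp^\tp X_i U = O(t_i)$, the identity $(U_\perp^\tp P_i)\Lambda_i^{\mathrm{big}}(U^\tp P_i)^\tp = U_\perp^\tp X_i U - (U_\perp^\tp R_i)\Lambda_i^{\mathrm{small}}(U^\tp R_i)^\tp$, and the fact that $\Lambda_i^{\mathrm{big}}$ and $U^\tp P_i$ are eventually invertible with bounded inverses — so the first summand is $O(t_i^2)$. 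Dividing by $t_i$ and letting $i \to \infty$ exhibits $E$ as a limit of positive-semidefinite matrices of rank at most $r-\ushort{r}$; since $\{M \succeq 0 : \rank M \le r-\ushort{r}\}$ is closed, $E \in \mathrm{S}_{\le r-\ushort{r}}^+(n-\ushort{r})$. (Alternatively one could adapt~\cite[Theorem~3.2]{SchneiderUschmajew2015}, proved there for $\R_{\le r}^{m\times n}$, to the symmetric positive-semidefinite setting.)

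For the projection formula, conjugation by $[U \; U_\perp]$ being an isometry, $\proj{\tancone{\mathrm{S}_{\le r}^+(n)}{X}}{Z}$ equals $[U \; U_\perp]$ applied to the projection of $\bigl[\begin{smallmatrix} A & B \\ D & E \end{smallmatrix}\bigr]$ onto the set of $\bigl[\begin{smallmatrix} A' & B' \\ B'^\tp & E' \end{smallmatrix}\bigr]$ with $A' \in \mathrm{S}(\ushort{r})$, $B' \in \R^{\ushort{r} \times n-\ushort{r}}$ and $E' \in \mathrm{S}_{\le r-\ushort{r}}^+(n-\ushort{r})$. That set is, in the Frobenius inner product, an orthogonal direct sum of a constraint on the $(1,1)$ block, a constraint linking the off-diagonal blocks, and a constraint on the $(2,2)$ block; hence the projection decomposes blockwise: $\proj{\mathrm{S}(\ushort{r})}{A} = \tfrac{1}{2}(A+A^\tp)$; minimizing $\norm{B-C}^2 + \norm{D-C^\tp}^2$ over $C$ gives $C = \tfrac{1}{2}(B+D^\tp)$ for the off-diagonal blocks; and Proposition~\ref{prop:ProjectionOntoPSDconeBoundedRank} handles the $(2,2)$ block. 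Assembling these pieces yields the stated formula. The only genuinely delicate step is the estimate $\norm{U_\perp^\tp P_i} = O(t_i)$ that bounds the contribution of the dominant eigenspace of $X_i$ to its trailing block after rescaling by $t_i$; everything else reduces, via the orthogonal-invariance normalizations, to routine linear algebra.
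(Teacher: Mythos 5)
Your proposal is correct, and for the first claim it takes a genuinely different (more self-contained) route than the paper. The paper does not prove the description of $\tancone{\mathrm{S}_{\le r}^+(n)}{X}$ at all: it simply cites \cite[Proposition~3.4]{LevinKileelBoumal2022SmoothLifts} and devotes its proof entirely to the projection formula, which it obtains exactly as you do, by expanding $\norm{Z-\tilde{Z}}^2$ blockwise and minimizing separately over $\tilde{A} \in \mathrm{S}(\ushort{r})$, $\tilde{B} \in \R^{\ushort{r} \times n-\ushort{r}}$, and $\tilde{E} \in \mathrm{S}_{\le r-\ushort{r}}^+(n-\ushort{r})$ (your phrase ``orthogonal direct sum of constraints'' is slightly loose, but the separable least-squares computation you describe is the paper's argument verbatim). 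Your direct proof of the cone description is sound: the orthogonal-invariance reduction and the case $\ushort{r}=0$ via $\tancone{C}{0}=C$ for a closed cone are fine; the inclusion $\supseteq$ via the curve that is a sum of a rank-at-most-$\ushort{r}$ factorized term and $t$ times a rank-at-most-$(r-\ushort{r})$ positive-semidefinite term is essentially the same construction the paper uses later, for a different purpose, in Proposition~\ref{prop:GlobalSecondOrderUpperBoundDistanceToPSDconeBoundedRankFromTangentLine}; and the inclusion $\subseteq$ via the spectral split $X_i = P_i\Lambda_i^{\mathrm{big}}P_i^\tp + R_i\Lambda_i^{\mathrm{small}}R_i^\tp$, Weyl's inequality giving $\norm{\Lambda_i^{\mathrm{small}}}=O(t_i)$, and the key estimate $\norm{U_\perp^\tp P_i}=O(t_i)$ (extracted, as you do, from $U_\perp^\tp X_i U = O(t_i)$ together with the eventual bounded invertibility of $\Lambda_i^{\mathrm{big}}$ and $U^\tp P_i$, the latter justified by the spectral gap) correctly exhibits $E$ as a limit of positive-semidefinite matrices of rank at most $r-\ushort{r}$, a closed set; this is a correct symmetric analogue of \cite[Theorem~3.2]{SchneiderUschmajew2015}. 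What your route buys is independence from the external reference, at the cost of an eigenvector perturbation argument; what the paper's route buys is brevity, outsourcing the cone description and keeping only the blockwise projection computation.
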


\begin{proof}
The description of $\tancone{\mathrm{S}_{\le r}^+(n)}{X}$ is given in \cite[Proposition~3.4]{LevinKileelBoumal2022SmoothLifts}.
All $\tilde{Z} \in \tancone{\mathrm{S}_{\le r}^+(n)}{X}$ can be written as
\begin{equation*}
\tilde{Z} = [U \; U_\perp] \begin{bmatrix} \tilde{A} & \tilde{B} \\ \tilde{B}^\tp & \tilde{E} \end{bmatrix} [U \; U_\perp]^\tp
\end{equation*}
with $\tilde{A} \in \mathrm{S}(\ushort{r})$, $\tilde{B} \in \R^{\ushort{r} \times n-\ushort{r}}$, and $\tilde{E} \in \mathrm{S}_{\le r-\ushort{r}}^+(n-\ushort{r})$, and
\begin{equation*}
\norm{Z-\tilde{Z}}^2 = \norm{A-\tilde{A}}^2 + \norm{B-\tilde{B}}^2 + \norm{D-\tilde{B}^\tp}^2 + \norm{E-\tilde{E}}^2
\end{equation*}
is minimized if and only if $\tilde{A} \in \proj{\mathrm{S}(\ushort{r})}{A} = \frac{1}{2}(A+A^\tp)$, $\tilde{B} = \frac{1}{2}(B+D^\tp)$, and $\tilde{E} \in \proj{\mathrm{S}_{\le r-\ushort{r}}^+(n-\ushort{r})}{E}$.
\end{proof}

Proposition~\ref{prop:GlobalSecondOrderUpperBoundDistanceToPSDconeBoundedRankFromTangentLine} shows that $\mathrm{S}_{\le r}^+(n)$ satisfies the second statement of Theorem~\ref{thm:ExamplesStratifiedSetsSatisfyingMainAssumption}.

\begin{proposition}
\label{prop:GlobalSecondOrderUpperBoundDistanceToPSDconeBoundedRankFromTangentLine}
For all $X \in \mathrm{S}_{\ushort{r}}^+(n)$ with $\ushort{r} \in \{1, \dots, r\}$,
\begin{equation*}
\frac{\sqrt{5}-1}{r+1} \frac{1}{2\lambda_{\ushort{r}}(X)}
\le \sup_{Z \in \tancone{\mathrm{S}_{\le r}^+(n)}{X} \setminus \{0_{n \times n}\}} \frac{\dist(X+Z, \mathrm{S}_{\le r}^+(n))}{\norm{Z}^2}
\le \frac{1}{2\lambda_{\ushort{r}}(X)}.
\end{equation*}
\end{proposition}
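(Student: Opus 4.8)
The plan is to adapt, essentially line by line, the proof of Proposition~\ref{prop:GlobalSecondOrderUpperBoundDistanceToRealDeterminantalVarietyFromTangentLine}, using the fact that an eigendecomposition of a symmetric positive-semidefinite matrix is an SVD. Fix $X \in \mathrm{S}_{\ushort{r}}^+(n)$ with $\ushort{r} \in \{1, \dots, r\}$ and write $X = [U \; U_\perp] \diag(\Lambda, 0_{n-\ushort{r} \times n-\ushort{r}}) [U \; U_\perp]^\tp$ with $U \in \st(\ushort{r}, n)$, $U_\perp \in \st(n-\ushort{r}, n)$, $\im U = \im X$, $\im U_\perp = (\im X)^\perp$, and $\Lambda := \diag(\lambda_1(X), \dots, \lambda_{\ushort{r}}(X))$ positive definite (invertible since $\ushort{r} = \rank X$). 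By Proposition~\ref{prop:ProjectionOntoTangentConePSDconeBoundedRank}, every $Z \in \tancone{\mathrm{S}_{\le r}^+(n)}{X}$ has the form $Z = [U \; U_\perp] \left[\begin{smallmatrix} A & B \\ B^\tp & E \end{smallmatrix}\right] [U \; U_\perp]^\tp$ with $A \in \mathrm{S}(\ushort{r})$, $B \in \R^{\ushort{r} \times n-\ushort{r}}$, and $E \in \mathrm{S}_{\le r-\ushort{r}}^+(n-\ushort{r})$, so that $\norm{Z}^2 = \norm{A}^2 + 2\norm{B}^2 + \norm{E}^2$.

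For the upper bound, I would set $W := (U_\perp B^\tp + \tfrac{1}{2}UA)\Lambda^{-1}$ and define the symmetric counterpart of the curve used in the determinantal case, $\gamma : [0, \infty) \to \R^{n \times n} : t \mapsto (U + tW)\Lambda(U + tW)^\tp + t\,U_\perp E U_\perp^\tp$. Since $\Lambda \succeq 0$ and $E \succeq 0$, the matrix $\gamma(t)$ is symmetric positive-semidefinite with rank at most $\ushort{r} + (r - \ushort{r}) = r$, hence $\gamma([0,\infty)) \subseteq \mathrm{S}_{\le r}^+(n)$. Expanding $\gamma(t)$ and using $\Lambda\Lambda^{-1} = I_{\ushort{r}}$ and $A^\tp = A$, one checks that $\gamma(t) = X + tZ + \frac{t^2}{4}[U \; U_\perp]\left[\begin{smallmatrix} A\Lambda^{-1}A & 2A\Lambda^{-1}B \\ 2B^\tp\Lambda^{-1}A & 4B^\tp\Lambda^{-1}B \end{smallmatrix}\right][U \; U_\perp]^\tp$, so $\dist(X+tZ, \mathrm{S}_{\le r}^+(n)) \le \norm{(X+tZ)-\gamma(t)}$ equals $\frac{t^2}{4}$ times the Frobenius norm of that block matrix. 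Bounding each block by $\norm{MN} \le \norm{M}_2\norm{N}$ with $\norm{\Lambda^{-1}}_2 = 1/\lambda_{\ushort{r}}(X)$, and using $\norm{B^\tp\Lambda^{-1}A} = \norm{A\Lambda^{-1}B}$, the squared norm is at most $\lambda_{\ushort{r}}(X)^{-2}\big(\norm{A}^4 + 8\norm{A}^2\norm{B}^2 + 16\norm{B}^4\big) = \lambda_{\ushort{r}}(X)^{-2}(\norm{A}^2 + 4\norm{B}^2)^2$; since $\norm{A}^2 + 4\norm{B}^2 \le 2(\norm{A}^2 + 2\norm{B}^2) \le 2\norm{Z}^2$, this is at most $4\lambda_{\ushort{r}}(X)^{-2}\norm{Z}^4$. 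Hence $\dist(X+tZ, \mathrm{S}_{\le r}^+(n)) \le \frac{t^2}{2\lambda_{\ushort{r}}(X)}\norm{Z}^2$, and $t = 1$ yields the upper bound.

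For the lower bound, take the eigendecomposition $X = [U \; U_\perp]\diag(\lambda_1(X), \dots, \lambda_{\ushort{r}}(X), 0_{n-\ushort{r} \times n-\ushort{r}})[U \; U_\perp]^\tp$ and, exactly as in the determinantal case, $Z := [U \; U_\perp]\,\lambda_{\ushort{r}}(X)\diag\big(0_{\ushort{r}-1}, \left[\begin{smallmatrix} 0 & 1 \\ 1 & 0 \end{smallmatrix}\right], I_{r-\ushort{r}}, 0_{n-r-1 \times n-r-1}\big)[U \; U_\perp]^\tp$. This $Z$ lies in $\tancone{\mathrm{S}_{\le r}^+(n)}{X}$ by Proposition~\ref{prop:ProjectionOntoTangentConePSDconeBoundedRank}, because its trailing $(n-\ushort{r})$-block $\diag(0, I_{r-\ushort{r}})$ is symmetric positive-semidefinite of rank $r-\ushort{r}$, and this block makes sense since $n - \ushort{r} \ge r - \ushort{r} + 1$ (as $r < n$). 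The matrix $X + Z$ then has eigenvalues $\lambda_1(X), \dots, \lambda_{\ushort{r}-1}(X)$, the two eigenvalues $\frac{\sqrt{5}+1}{2}\lambda_{\ushort{r}}(X) > 0$ and $-\frac{\sqrt{5}-1}{2}\lambda_{\ushort{r}}(X) < 0$ of $\lambda_{\ushort{r}}(X)\left[\begin{smallmatrix} 1 & 1 \\ 1 & 0 \end{smallmatrix}\right]$, and $\lambda_{\ushort{r}}(X)$ with multiplicity $r-\ushort{r}$; it therefore has exactly $r$ positive eigenvalues, so by Proposition~\ref{prop:ProjectionOntoPSDconeBoundedRank} the projection onto $\mathrm{S}_{\le r}^+(n)$ keeps all of them, giving $\dist(X+Z, \mathrm{S}_{\le r}^+(n)) = \frac{\sqrt{5}-1}{2}\lambda_{\ushort{r}}(X)$, while $\norm{Z}^2 = (r-\ushort{r}+2)\lambda_{\ushort{r}}(X)^2 \le (r+1)\lambda_{\ushort{r}}(X)^2$. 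Dividing yields $\frac{\sqrt{5}-1}{r+1}\,\frac{1}{2\lambda_{\ushort{r}}(X)}$, the claimed lower bound.

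The argument is routine once Propositions~\ref{prop:ProjectionOntoTangentConePSDconeBoundedRank} and~\ref{prop:ProjectionOntoPSDconeBoundedRank} are in hand: the only point needing care is that here the off-diagonal block appears with weight $2$ in $\norm{Z}^2 = \norm{A}^2 + 2\norm{B}^2 + \norm{E}^2$, owing to the symmetry constraint $D = B^\tp$, rather than with the weight-one splitting $\norm{A}^2+\norm{B}^2+\norm{D}^2+\norm{E}^2$ of the nonsymmetric case. One must confirm that the worst case $A = 0$ still produces the same clean constant $1/(2\lambda_{\ushort{r}}(X))$ on the right-hand side — which it does, since $\norm{A}^2 + 4\norm{B}^2 \le 2(\norm{A}^2 + 2\norm{B}^2)$ is exactly what is needed. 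The verification that $\gamma$ takes values in $\mathrm{S}_{\le r}^+(n)$ and the block-by-block norm estimate are the bulk of the write-up but are mechanical.
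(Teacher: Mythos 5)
Your proposal is correct and follows essentially the same route as the paper: the upper bound uses exactly the same curve $\gamma(t) = \big(U+t(U_\perp B^\tp + \tfrac12 UA)\Lambda^{-1}\big)\Lambda\big(\cdot\big)^\tp + tU_\perp E U_\perp^\tp$ with the identical block estimate (the paper phrases the final step as $\max_{x^2+2y^2=1} x^2+4y^2 = 2$, which is your inequality $\norm{A}^2+4\norm{B}^2 \le 2(\norm{A}^2+2\norm{B}^2)$), and your lower-bound computation is precisely the adaptation of the determinantal-variety example that the paper invokes by reference, with the eigenvalue count and Proposition~\ref{prop:ProjectionOntoPSDconeBoundedRank} correctly giving $\dist(X+Z,\mathrm{S}_{\le r}^+(n)) = \frac{\sqrt{5}-1}{2}\lambda_{\ushort{r}}(X)$.
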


\begin{proof}
We prove only the upper bound; the lower bound can be obtained as the one in Proposition~\ref{prop:GlobalSecondOrderUpperBoundDistanceToRealDeterminantalVarietyFromTangentLine}. Let
\begin{equation*}
X = [U \; U_\perp] \diag(\Lambda, 0_{n-\ushort{r} \times n-\ushort{r}}) [U \; U_\perp]^\tp
\end{equation*}
be an eigendecomposition, and $Z \in \tancone{\mathrm{S}_{\le r}^+(n)}{X} \setminus \{0_{n \times n}\}$. By Proposition~\ref{prop:ProjectionOntoTangentConePSDconeBoundedRank}, there are $A \in \mathrm{S}(\ushort{r})$, $B \in \R^{\ushort{r} \times n-\ushort{r}}$, and $E \in \mathrm{S}_{\le r-\ushort{r}}^+(n-\ushort{r})$ such that
\begin{equation*}
Z =
[U \; U_\perp]
\begin{bmatrix}
A & B \\ B^\tp & E
\end{bmatrix}
[U \; U_\perp]^\tp.
\end{equation*}
Define the function
\begin{equation*}
\gamma : [0,\infty) \to \mathrm{S}_{\le r}^+(n) : t \mapsto \big(U+t(U_\perp B^\tp + {\textstyle\frac{1}{2}}UA)\Lambda^{-1}\big) \Lambda \big(U+t(U_\perp B^\tp + {\textstyle\frac{1}{2}}UA^\tp)\Lambda^{-1}\big)^\tp + tU_\perp E U_\perp^\tp;
\end{equation*}
$\gamma$ is well defined since the ranks of the two terms are respectively upper bounded by $\ushort{r}$ and $r-\ushort{r}$, and the sum of two positive-semidefinite matrices is positive-semidefinite.
For all $t \in [0,\infty)$,
\begin{equation*}
\gamma(t)
= X + t Z + \frac{t^2}{4}
[U \; U_\perp]
\begin{bmatrix}
A \\ 2B^\tp
\end{bmatrix}
\Lambda^{-1}
\begin{bmatrix}
A & 2B
\end{bmatrix}
[U \; U_\perp]^\tp
\end{equation*}
thus
\begin{equation*}
\dist(X+tZ, \mathrm{S}_{\le r}^+(n))
\le \norm{(X+tZ)-\gamma(t)}
= \frac{t^2}{4} \left\|\begin{bmatrix} A\Lambda^{-1}A & 2A\Lambda^{-1}B \\ 2B^\tp\Lambda^{-1}A & 4B^\tp\Lambda^{-1}B\end{bmatrix}\right\|.
\end{equation*}
Observe that
\begin{align*}
\left\|\begin{bmatrix} A\Lambda^{-1}A & 2A\Lambda^{-1}B \\ 2B^\tp\Lambda^{-1}A & 4B^\tp\Lambda^{-1}B\end{bmatrix}\right\|
&= \sqrt{\norm{A\Lambda^{-1}A}^2 + 4 \norm{A\Lambda^{-1}B}^2 + 4 \norm{B^\tp\Lambda^{-1}A}^2 + 16 \norm{B^\tp\Lambda^{-1}B}^2}\\
&= \sqrt{\norm{A\Lambda^{-1}A}^2 + 8 \norm{A\Lambda^{-1}B}^2 + 16 \norm{B^\tp\Lambda^{-1}B}^2}\\
&\le \norm{\Lambda^{-1}}_2 \left(\norm{A}^2 + 4 \norm{B}^2\right)\\
&\le \norm{\Lambda^{-1}}_2 \norm{Z}^2 \max_{\substack{x, y \in \R \\ x^2+2y^2 = 1}} x^2 + 4y^2\\
&= 2 \norm{\Lambda^{-1}}_2 \norm{Z}^2\\
&= \frac{2}{\lambda_{\ushort{r}}(X)} \norm{Z}^2.
\end{align*}
Therefore, for all $t \in [0,\infty)$,
\begin{equation*}
\dist(X+tZ, \mathrm{S}_{\le r}^+(n)) \le t^2 \frac{1}{2 \lambda_{\ushort{r}}(X)} \norm{Z}^2.
\end{equation*}
Choosing $t = 1$ yields the result.
\end{proof}

We now prove Proposition~\ref{prop:ContinuityTangentConeStratumPSDconeBoundedRank} which states that $\mathrm{S}_{\le r}^+(n)$ satisfies condition~3 of Assumption~\ref{assumption:Stratification}. To this end, we need some preliminary results.
Proposition~\ref{prop:ContinuityProjectionStiefel} allows us to deduce Lemma~\ref{lemma:ConvergenceTangentConeDecompositionConstantRank} from \cite[Lemma~4.1]{OlikierAbsil2022}.

\begin{proposition}
\label{prop:ContinuityProjectionStiefel}
For every $W \in \R_*^{n \times r}$, $\proj{\st(r,n)}{W} = U$ and $\im U = \im W$. Moreover, the function
\begin{equation*}
\proj{\st(r,n)}{\cdot} : \R_*^{n \times r} \to \st(r,n)
\end{equation*}
is continuous.
\end{proposition}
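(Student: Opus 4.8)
The plan is to realize $\proj{\st(r,n)}{W}$ as the orthogonal factor of the polar decomposition of $W$, to prove that this minimizer is unique by a trace inequality, and then to obtain continuity from continuity of the matrix square root and of matrix inversion. Concretely, since $W \in \R_*^{n \times r}$ has full column rank, $W^\tp W$ is symmetric positive definite; I would let $P := (W^\tp W)^{1/2}$ be its unique symmetric positive-definite square root and set $U := W P^{-1}$. Then $U^\tp U = P^{-1}(W^\tp W)P^{-1} = I_r$, so $U \in \st(r,n)$, and $\im U = \im(W P^{-1}) = \im W$ since $P^{-1}$ is invertible; this is the polar decomposition $W = UP$ (equivalently, if $W = U_1 \Sigma V_1^\tp$ is a thin SVD, then $U = U_1 V_1^\tp$ and $P = V_1 \Sigma V_1^\tp$).

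Next I would show that $U$ is the unique closest point of $\st(r,n)$ to $W$. For any $V \in \st(r,n)$, expanding $\norm{W-V}^2 = \norm{W}^2 + r - 2\ip{W}{V}$ reduces the problem to maximizing $\ip{W}{V} = \ip{UP}{V} = \ip{P}{U^\tp V}$. Writing $Q := U^\tp V$, one has $\norm{Q}_2 \le \norm{U^\tp}_2\,\norm{V}_2 \le 1$; diagonalizing $P = R\Lambda R^\tp$ with $R \in \mathrm{O}(r)$ and $\Lambda = \diag(\mu_1,\dots,\mu_r)$, $\mu_i > 0$, and putting $\tilde Q := R^\tp Q R$ (so $\norm{\tilde Q}_2 \le 1$ and $\tilde Q_{ii}$ agrees with the diagonal of $R^\tp Q^\tp R$), one gets $\ip{P}{Q} = \sum_{i=1}^r \mu_i\,\tilde Q_{ii} \le \sum_{i=1}^r \mu_i$, with equality if and only if $\tilde Q_{ii} = 1$ for every $i$. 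Since $\norm{\tilde Q}_2 \le 1$, the inequalities $\sum_j \tilde Q_{ji}^2 = \norm{\tilde Q e_i}^2 \le 1$ together with $\tilde Q_{ii} = 1$ force $\tilde Q = I_r$, hence $U^\tp V = I_r$, and then $\norm{U - V}^2 = 2r - 2\tr(U^\tp V) = 0$, i.e. $V = U$. Thus $\proj{\st(r,n)}{W}$ is the singleton $\{U\}$, which gives both stated identities.

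For the continuity statement, I would argue that $W \mapsto W^\tp W$ is polynomial, hence continuous with values in the open cone $\mathrm{S}^{++}(r)$ of symmetric positive-definite matrices; that $A \mapsto A^{1/2}$ is continuous on $\mathrm{S}^{++}(r)$ — which follows, for instance, from the inverse function theorem applied to the smooth bijection $B \mapsto B^2$ of $\mathrm{S}^{++}(r)$, whose differential $H \mapsto BH+HB$ acts in the eigenbasis of a diagonal $B = \diag(\mu_1,\dots,\mu_r) \succ 0$ by $H_{ij} \mapsto (\mu_i+\mu_j)H_{ij}$ and is therefore invertible; and that inversion is continuous on $\mathrm{GL}(r)$. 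Composing, $W \mapsto P = (W^\tp W)^{1/2}$, then $W \mapsto P^{-1}$, then $W \mapsto U = WP^{-1}$ are continuous on $\R_*^{n \times r}$, as required. The steps needing the most care are the rigidity argument in the equality case of the trace inequality and the brief justification that the matrix square root is continuous on $\mathrm{S}^{++}(r)$; everything else is routine.
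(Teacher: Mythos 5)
Your proof is correct, but it follows a genuinely different route from the paper, which disposes of the proposition in two citations: the identification of $\proj{\st(r,n)}{W}$ with the orthogonal polar factor of $W$ (and its uniqueness for full-rank $W$) is taken from a known result \cite[Theorem~10.2]{Bhattacharya2}, and continuity is then deduced from a general fact \cite[Theorem~2.26]{FletcherMoors2015} that a single-valued metric projection onto a compact set is automatically continuous. You instead reconstruct everything by hand: the explicit polar factor $U = W(W^\tp W)^{-1/2}$, the reduction of the projection problem to maximizing $\ip{W}{V} = \ip{P}{U^\tp V}$ over $V \in \st(r,n)$, the trace bound $\ip{P}{Q} \le \tr P$ via $\norm{U^\tp V}_2 \le 1$, and a clean rigidity argument in the equality case ($\widetilde{Q}_{ii}=1$ together with $\norm{\widetilde{Q}e_i} \le 1$ forces $\widetilde{Q} = I_r$, hence $V = U$); your continuity step, via smoothness of $B \mapsto B^2$ on the positive-definite cone and the inverse function theorem, is also sound. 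The trade-off is instructive: your argument is self-contained and actually yields more than the statement asks for, namely smoothness (not just continuity) of $W \mapsto W(W^\tp W)^{-1/2}$ on $\R_*^{n \times r}$, whereas the paper's compactness-based argument is softer and would apply verbatim to any compact set on which the projection happens to be single-valued, without needing an explicit formula. Either proof is acceptable; yours is longer but eliminates the external dependencies.
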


\begin{proof}
The first part follows from \cite[Theorem~10.2]{Bhattacharya2}. The second part then follows from \cite[Theorem~2.26]{FletcherMoors2015} since $\st(r, n)$ is compact.
\end{proof}

\begin{lemma}
\label{lemma:ConvergenceTangentConeDecompositionConstantRank}
Let $\ushort{r} \in \{0, \dots, n\}$, $(X_i)_{i \in \N}$ be a sequence in $\mathrm{S}_{\ushort{r}}^+(n)$ converging to $X \in \mathrm{S}_{\ushort{r}}^+(n)$, $U \in \st(\ushort{r}, n)$, $\im U = \im X$, $U_\perp \in \st(n-\ushort{r}, n)$, and $\im U_\perp = (\im X)^\perp$.
Then, there exist sequences $(U_i)_{i \in \N}$ in $\st(\ushort{r}, n)$ and $(U_{i\perp})_{i \in \N}$ in $\st(n-\ushort{r}, n)$ respectively converging to $U$ and $U_\perp$, and such that, for all $i \in \N$, $\im U_i = \im X_i$ and $\im U_{i\perp} = (\im X_i)^\perp$.
\end{lemma}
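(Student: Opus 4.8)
The plan is to produce the two sequences by applying a fixed linear recipe to the matrices $X_i$ and then orthonormalizing via the continuous projection onto a Stiefel manifold supplied by Proposition~\ref{prop:ContinuityProjectionStiefel}; an alternative route would be to invoke \cite[Lemma~4.1]{OlikierAbsil2022}, which proves the analogous statement on $\R_{\ushort{r}}^{n \times n}$, and to merge its column- and row-space bases using the identity $\im X_i = \im X_i^\tp$ that holds because the $X_i$ are symmetric. I describe the direct route.

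I would first dispose of the trivial cases $\ushort{r} = 0$ and $\ushort{r} = n$, where one of the two sequences consists of empty matrices and the other of a constant sequence, so there is nothing to prove. Assume now $\ushort{r} \in \{1, \dots, n-1\}$ and set $\Lambda := U^\tp X U \in \mathrm{S}(\ushort{r})$; since $\im U = \im X$ and $\rank X = \ushort{r}$, the matrix $\Lambda$ is positive definite and $X U = U \Lambda$. Define $W_i := X_i U \in \R^{n \times \ushort{r}}$, so that $W_i \to X U = U \Lambda$ as $i \to \infty$. Because $\rank(U\Lambda) = \ushort{r}$ and rank is lower semicontinuous, $\rank W_i \ge \ushort{r}$ for all large $i$; on the other hand $\im W_i \subseteq \im X_i$ forces $\rank W_i \le \rank X_i = \ushort{r}$. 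Hence there is $i_0$ with $\rank W_i = \ushort{r}$, and therefore $\im W_i = \im X_i$, for all $i \ge i_0$. For these $i$ I set $U_i := \proj{\st(\ushort{r},n)}{W_i}$; by Proposition~\ref{prop:ContinuityProjectionStiefel} this is an orthonormal basis of $\im W_i = \im X_i$, and $U_i \to \proj{\st(\ushort{r},n)}{U\Lambda}$. A short polar-factor computation, using $(U\Lambda)^\tp(U\Lambda) = \Lambda^2$ and $\Lambda \succ 0$, shows $\proj{\st(\ushort{r},n)}{U\Lambda} = U$, so $U_i \to U$. For the finitely many indices $i < i_0$ I would pick $U_i$ to be any element of $\st(\ushort{r},n)$ with $\im U_i = \im X_i$ (possible since $\rank X_i = \ushort{r}$); altering finitely many terms does not change the limit.

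For the complementary sequence I would reuse the $U_i$ just constructed: for $i \ge i_0$ the matrix $I_n - U_i U_i^\tp$ is the orthogonal projector onto $(\im X_i)^\perp$, and it converges to $I_n - U U^\tp$. Put $W_{i\perp} := (I_n - U_i U_i^\tp) U_\perp$. Since $\im U_\perp = (\im X)^\perp$ is orthogonal to $\im U$, we have $(I_n - U U^\tp) U_\perp = U_\perp$, so $W_{i\perp} \to U_\perp$, a matrix of rank $n - \ushort{r}$. As before, lower semicontinuity of rank together with $\im W_{i\perp} \subseteq \im(I_n - U_i U_i^\tp) = (\im X_i)^\perp$, a space of dimension $n - \ushort{r}$, gives $\im W_{i\perp} = (\im X_i)^\perp$ for all large $i$; then Proposition~\ref{prop:ContinuityProjectionStiefel}, applied with $n - \ushort{r}$ in the role of $r$, yields $U_{i\perp} := \proj{\st(n-\ushort{r},n)}{W_{i\perp}} \to \proj{\st(n-\ushort{r},n)}{U_\perp} = U_\perp$ (the polar factor of an orthonormal matrix is itself), with $\im U_{i\perp} = (\im X_i)^\perp$; the remaining finitely many indices are patched exactly as above. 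The polar-factor identities and the ``finitely many bad indices'' bookkeeping are routine; the one step that genuinely uses the constant-rank hypothesis $X_i \in \mathrm{S}_{\ushort{r}}^+(n)$, and the main thing to get right, is the passage from $\rank W_i = \ushort{r}$ to $\im W_i = \im X_i$ (and likewise for $W_{i\perp}$), which rests on the dimension count $\dim \im X_i = \ushort{r}$ and would fail if $\rank X_i$ were merely bounded by $\ushort{r}$ rather than equal to it.
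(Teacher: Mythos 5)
Your argument is correct, and it differs from the paper's proof in one substantive way: the paper obtains the converging full-rank (not yet orthonormal) bases of $\im X_i$ and $(\im X_i)^\perp$ by simply invoking \cite[Lemma~4.1]{OlikierAbsil2022} (the analogous statement on fixed-rank matrices), and then orthonormalizes exactly as you do, via the continuity of $\proj{\st(\ushort{r},n)}{\cdot}$ from Proposition~\ref{prop:ContinuityProjectionStiefel}. You instead construct those bases explicitly — $W_i := X_i U \to U\Lambda$ with $\Lambda := U^\tp X U \succ 0$, and $W_{i\perp} := (I_n - U_iU_i^\tp)U_\perp \to U_\perp$ — and use lower semicontinuity of the rank plus the constant-rank hypothesis to identify $\im W_i = \im X_i$ and $\im W_{i\perp} = (\im X_i)^\perp$ for large $i$, patching finitely many initial indices by hand. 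What your route buys is self-containedness (no appeal to the external lemma) and an exploitation of symmetry that makes the column-space construction one line; what the paper's route buys is brevity and the fact that the cited lemma already handles the bookkeeping you do manually (the exceptional indices, the rank argument). Your polar-factor identifications $\proj{\st(\ushort{r},n)}{U\Lambda} = U$ (since $U\Lambda$ has polar decomposition with orthogonal factor $U$ and positive factor $\Lambda$) and $\proj{\st(n-\ushort{r},n)}{U_\perp} = U_\perp$ are consistent with Proposition~\ref{prop:ContinuityProjectionStiefel}, and the step you flag as essential — passing from $\rank W_i = \ushort{r}$ to $\im W_i = \im X_i$ via the dimension count — is indeed where the hypothesis $X_i \in \mathrm{S}_{\ushort{r}}^+(n)$ (rather than merely $X_i \in \mathrm{S}_{\le \ushort{r}}^+(n)$) is used, so no gap remains.
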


\begin{proof}
By \cite[Lemma~4.1]{OlikierAbsil2022}, there exist sequences $(\tilde{U}_i)_{i \in \N}$ in $\R_*^{n \times \ushort{r}}$ and $(\tilde{U}_{i\perp})_{i \in \N}$ in $\R_*^{n \times n-\ushort{r}}$ respectively converging to $U$ and $U_\perp$, and such that, for all $i \in \N$, $\im \tilde{U}_i = \im X_i$ and $\im \tilde{U}_{i\perp} = (\im X_i)^\perp$. By Proposition~\ref{prop:ContinuityProjectionStiefel}, we can take $U_i := \proj{\st(\ushort{r}, n)}{\tilde{U}_i}$ and $U_{i\perp} := \proj{\st(n-\ushort{r}, n)}{\tilde{U}_{i\perp}}$ for all $i \in \N$.
\end{proof}

Lemma~\ref{lemma:ConvergenceTangentConeDecompositionConstantRank} allows us to prove Proposition~\ref{prop:ContinuityTangentConeStratumPSDconeBoundedRank} which states that $\mathrm{S}_{\le r}^+(n)$ satisfies condition~3 of Assumption~\ref{assumption:Stratification}.

\begin{proposition}
\label{prop:ContinuityTangentConeStratumPSDconeBoundedRank}
For every $\ushort{r} \in \{0, \dots, r\}$, the correspondence $\tancone{\mathrm{S}_{\le r}^+(n)}{\cdot}$ is continuous at every $X \in \mathrm{S}_{\ushort{r}}^+(n)$ relative to $\mathrm{S}_{\le \ushort{r}}^+(n)$.
\end{proposition}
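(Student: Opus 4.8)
The plan is to unfold the definition of continuity of a correspondence from Section~\ref{subsec:InnerOuterLimitsContinuityCorrespondences} and reduce the statement to the continuity of conjugation of a fixed closed cone by a convergent sequence of orthogonal matrices, mirroring the treatment of $\R_{\le r}^{m \times n}$ in \cite[Theorem~4.1]{OlikierAbsil2022}. The case $\ushort{r} = 0$ is immediate since $\mathrm{S}_{\le 0}^+(n) = \{0_{n \times n}\}$, so I will assume $\ushort{r} \in \{1, \dots, r\}$. Fix $X \in \mathrm{S}_{\ushort{r}}^+(n)$ and a sequence $(X_i)_{i \in \N}$ in $\mathrm{S}_{\le \ushort{r}}^+(n)$ converging to $X$; I must show that $\outlim_{i \to \infty} \tancone{\mathrm{S}_{\le r}^+(n)}{X_i} \subseteq \tancone{\mathrm{S}_{\le r}^+(n)}{X} \subseteq \inlim_{i \to \infty} \tancone{\mathrm{S}_{\le r}^+(n)}{X_i}$. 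Since the rank function is lower semicontinuous (a nonzero $\ushort{r} \times \ushort{r}$ minor of $X$ stays nonzero near $X$) while $\rank X_i \le \ushort{r} = \rank X$, we get $\rank X_i = \ushort{r}$ for all large $i$; as the inner and outer limits of a sequence of sets are unaffected by deleting finitely many terms, I may assume from now on that $(X_i)_{i \in \N}$ lies in $\mathrm{S}_{\ushort{r}}^+(n)$.

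Next I would fix $U \in \st(\ushort{r}, n)$ and $U_\perp \in \st(n-\ushort{r}, n)$ with $\im U = \im X$ and $\im U_\perp = (\im X)^\perp$, and invoke Lemma~\ref{lemma:ConvergenceTangentConeDecompositionConstantRank} to obtain sequences $(U_i)_{i \in \N}$ in $\st(\ushort{r}, n)$ and $(U_{i\perp})_{i \in \N}$ in $\st(n-\ushort{r}, n)$ converging to $U$ and $U_\perp$ respectively and satisfying $\im U_i = \im X_i$ and $\im U_{i\perp} = (\im X_i)^\perp$ for all $i$. Writing $Q_i := [U_i \; U_{i\perp}]$ and $Q := [U \; U_\perp]$, each $Q_i$ lies in $\mathrm{O}(n)$ (its columns are orthonormal and span $\R^n$), as does $Q$, and $Q_i \to Q$. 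Let $K := \tancone{\mathrm{S}_{\le r}^+(n)}{\diag(I_{\ushort{r}}, 0_{n-\ushort{r}})}$, which is closed, being a tangent cone. Applying Proposition~\ref{prop:ProjectionOntoTangentConePSDconeBoundedRank} at $\diag(I_{\ushort{r}}, 0_{n-\ushort{r}})$ with orthogonal factor $I_n$ identifies $K$ with the block-matrix model cone appearing in that proposition, and applying the same proposition at each $X_i$ (resp. at $X$) with the factor $Q_i$ (resp. $Q$) yields $\tancone{\mathrm{S}_{\le r}^+(n)}{X_i} = Q_i K Q_i^\tp$ for all $i$ and $\tancone{\mathrm{S}_{\le r}^+(n)}{X} = Q K Q^\tp$.

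It then remains to prove that $\setlim_{i \to \infty} Q_i K Q_i^\tp = Q K Q^\tp$. For the outer inclusion, let $Z \in \outlim_{i \to \infty} Q_i K Q_i^\tp$, so that some subsequence $(Z_{i_k})_{k \in \N}$ converges to $Z$ with $Z_{i_k} = Q_{i_k} M_{i_k} Q_{i_k}^\tp$ for some $M_{i_k} \in K$; then $M_{i_k} = Q_{i_k}^\tp Z_{i_k} Q_{i_k} \to Q^\tp Z Q$, and closedness of $K$ gives $Q^\tp Z Q \in K$, i.e. $Z = Q (Q^\tp Z Q) Q^\tp \in Q K Q^\tp$. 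For the inner inclusion, let $Z = Q M Q^\tp$ with $M \in K$; then $Q_i M Q_i^\tp \in Q_i K Q_i^\tp$ for every $i$ and $Q_i M Q_i^\tp \to Q M Q^\tp = Z$, so $\dist(Z, Q_i K Q_i^\tp) \to 0$, i.e. $Z \in \inlim_{i \to \infty} Q_i K Q_i^\tp$. Combining these with the general inclusion $\inlim_{i \to \infty} Q_i K Q_i^\tp \subseteq \outlim_{i \to \infty} Q_i K Q_i^\tp$ gives $\outlim_{i \to \infty} Q_i K Q_i^\tp \subseteq Q K Q^\tp \subseteq \inlim_{i \to \infty} Q_i K Q_i^\tp$, which is exactly what was needed. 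The only step that is not purely mechanical is the reduction ensuring that the sequence eventually enters the fixed-rank stratum $\mathrm{S}_{\ushort{r}}^+(n)$ so that Lemma~\ref{lemma:ConvergenceTangentConeDecompositionConstantRank} applies; the conjugation-continuity argument itself is routine, relying only on the closedness of $K$ and the continuity of $M \mapsto Q_i M Q_i^\tp$.
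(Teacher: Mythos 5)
Your proof is correct and follows essentially the same route as the paper's: after reducing to a sequence in the fixed-rank stratum $\mathrm{S}_{\ushort{r}}^+(n)$, it combines Lemma~\ref{lemma:ConvergenceTangentConeDecompositionConstantRank} with the block description of Proposition~\ref{prop:ProjectionOntoTangentConePSDconeBoundedRank} to pass to the limit in both inclusions, exactly as in the paper. The only (harmless) differences are cosmetic: you justify the reduction via lower semicontinuity of the rank instead of the projection formula of Proposition~\ref{prop:ProjectionOntoPSDconeBoundedRank}, and you package the blockwise limit argument as conjugation of the fixed closed cone $K$ by the convergent orthogonal matrices $Q_i$, which is the same computation the paper performs entrywise on $A_i$, $B_i$, $E_i$.
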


\begin{proof}
The result is clear if $\ushort{r} = 0$ since $\mathrm{S}_{\le 0}^+(n) = \mathrm{S}_0^+(n) = \{0_{n \times n}\}$. Let us therefore consider $\ushort{r} \in \{1, \dots, r\}$ and $X \in \mathrm{S}_{\ushort{r}}^+(n)$. We must prove that, for every sequence $(X_i)_{i \in \N}$ in $\mathrm{S}_{\le \ushort{r}}^+(n)$ converging to $X \in \mathrm{S}_{\ushort{r}}^+(n)$, it holds that
\begin{equation*}
\outlim_{i \to \infty} \tancone{\mathrm{S}_{\le r}^+(n)}{X_i}
\subseteq \tancone{\mathrm{S}_{\le r}^+(n)}{X}
\subseteq \inlim_{i \to \infty} \tancone{\mathrm{S}_{\le r}^+(n)}{X_i}.
\end{equation*}
We recall that the concepts of inner and outer limits of a sequence of sets have been reviewed in Section~\ref{subsec:InnerOuterLimitsContinuityCorrespondences}.
By Proposition~\ref{prop:ProjectionOntoPSDconeBoundedRank}, every sequence in $\mathrm{S}_{\le \ushort{r}}^+(n)$ converging to a point in $\mathrm{S}_{\ushort{r}}^+(n)$ contains finitely many elements in $\mathrm{S}_{< \ushort{r}}^+(n)$. Thus, it suffices to consider a sequence $(X_i)_{i \in \N}$ in $\mathrm{S}_{\ushort{r}}^+(n)$ converging to $X \in \mathrm{S}_{\ushort{r}}^+(n)$. Let $U \in \st(\ushort{r}, n)$, $\im U = \im X$, $U_\perp \in \st(n-\ushort{r}, n)$, and $\im U_\perp = (\im X)^\perp$. We apply Lemma~\ref{lemma:ConvergenceTangentConeDecompositionConstantRank} to $(X_i)_{i \in \N}$ and $X$.

Let us establish the first inclusion. Let $Z \in \outlim_{i \to \infty} \tancone{\mathrm{S}_{\le r}^+(n)}{X_i}$, i.e., $Z$ is an accumulation point of a sequence $(Z_i)_{i \in \N}$ such that, for all $i \in \N$, $Z_i \in \tancone{\mathrm{S}_{\le r}^+(n)}{X_i}$. We need to prove that $Z \in \tancone{\mathrm{S}_{\le r}^+(n)}{X}$.
By Proposition~\ref{prop:ProjectionOntoTangentConePSDconeBoundedRank}, for all $i \in \N$,
$
Z_i = [U_i \; U_{i\perp}] \left[\begin{smallmatrix}
A_i & B_i \\ B_i^\tp & E_i
\end{smallmatrix}\right] [U_i \; U_{i\perp}]^\tp
$
with $A_i = U_i^\tp Z_i U_i \in \mathrm{S}(\ushort{r})$, $B_i = U_i^\tp Z_i U_{i\perp} \in \R^{\ushort{r} \times n-\ushort{r}}$, and $E_i = U_{i\perp}^\tp Z_i U_{i\perp} \in \mathrm{S}_{\le r-\ushort{r}}^+(n-\ushort{r})$.
Let $(Z_{i_k})_{k \in \N}$ be a subsequence of $(Z_i)_{i \in \N}$ converging to $Z$.
Then, for all $k \in \N$,
$
Z_{i_k} = [U_{i_k} \; U_{i_k\perp}] \left[\begin{smallmatrix}
A_{i_k} & B_{i_k} \\ B_{i_k}^\tp & E_{i_k}
\end{smallmatrix}\right] [U_{i_k} \; U_{i_k\perp}]^\tp,
$
and, since the subsequences $(A_{i_k})_{k \in \N}$, $(B_{i_k})_{k \in \N}$, and $(E_{i_k})_{k \in \N}$ respectively converge to $A := U^\tp Z U \in \mathrm{S}(\ushort{r})$, $B := U^\tp Z U_\perp$, and $E := U_\perp^\tp Z U_\perp \in \mathrm{S}_{\le r-\ushort{r}}^+(n-\ushort{r})$, we have
$
Z = [U \; U_\perp] \left[\begin{smallmatrix}
A & B \\ B^\tp & E
\end{smallmatrix}\right] [U \; U_\perp]^\tp,
$
which shows that $Z \in \tancone{\mathrm{S}_{\le r}^+(n)}{X}$ by Proposition~\ref{prop:ProjectionOntoTangentConePSDconeBoundedRank}.

Let us establish the second inclusion. Let $Z \in \tancone{\mathrm{S}_{\le r}^+(n)}{X}$. We have to prove that there exists a sequence $(Z_i)_{i \in \N}$ converging to $Z$ and such that $Z_i \in \tancone{\mathrm{S}_{\le r}^+(n)}{X_i}$ for all $i \in \N$. By Proposition~\ref{prop:ProjectionOntoTangentConePSDconeBoundedRank}, there exist $A \in \mathrm{S}(\ushort{r})$, $B \in \R^{\ushort{r} \times n-\ushort{r}}$, and $E \in \mathrm{S}_{\le r-\ushort{r}}^+(n-\ushort{r})$ such that
$
Z = [U \; U_\perp] \left[\begin{smallmatrix}
A & B \\ B^\tp & E
\end{smallmatrix}\right] [U \; U_\perp]^\tp.
$
By Proposition~\ref{prop:ProjectionOntoTangentConePSDconeBoundedRank}, for all $i \in \N$,
$
Z_i := [U_i \; U_{i\perp}] \left[\begin{smallmatrix}
A & B \\ B^\tp & E
\end{smallmatrix}\right] [U_i \; U_{i\perp}]^\tp \in \tancone{\mathrm{S}_{\le r}^+(n)}{X_i}.
$
Since $(Z_i)_{i \in \N}$ converges to $Z$, the proof is complete.
\end{proof}

\subsubsection{Normal cones to the cone of positive-semidefinite matrices of bounded rank}
\label{subsubsec:NormalConesPSDconeBoundedRank}
In this section, we compute the polar of $\mathrm{S}_{\le r}^+(n)$ (Proposition~\ref{prop:PolarPSDconeBoundedRank}), the regular normal cone to $\mathrm{S}_{\le r}^+(n)$ (Proposition~\ref{prop:RegularNormalConePSDconeBoundedRank}), the normal cone to $\mathrm{S}_{\le r}^+(n)$ (Proposition~\ref{prop:NormalConePSDconeBoundedRank}), and the Clarke normal cone to $\mathrm{S}_{\le r}^+(n)$ (Corollary~\ref{coro:ClarkeNormalConePSDconeBoundedRank}). To this end, we use Proposition~\ref{prop:TangentNormalConesAmbientSpace} and the fact that
\begin{equation*}
\mathrm{S}(n)^\perp = \{X \in \R^{n \times n} \mid X^\tp = -X\}.
\end{equation*}
Finally, using Proposition~\ref{prop:CharacterizationApocalypticSerendipitousPoint}, we prove that the sets of apocalyptic points and of serendipitous points of $\mathrm{S}_{\le r}^+(n)$ both equal $\mathrm{S}_{< r}^+(n)$ (Proposition~\ref{prop:PSDconeBoundedRankApocalypticSerendipitousPoints}).

\begin{proposition}[polar of $\mathrm{S}_{\le r}^+(n)$]
\label{prop:PolarPSDconeBoundedRank}
For all $\ushort{r} \in \{1, \dots, n\}$,
\begin{equation*}
\mathrm{S}_{\le \ushort{r}}^+(n)^*
= \mathrm{S}^-(n) + \mathrm{S}(n)^\perp.
\end{equation*}
\end{proposition}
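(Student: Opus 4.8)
The plan is to show both inclusions of $\mathrm{S}_{\le \ushort{r}}^+(n)^* = \mathrm{S}^-(n) + \mathrm{S}(n)^\perp$, exploiting that $\mathrm{S}_{\le \ushort{r}}^+(n) \subseteq \mathrm{S}(n)$ so that Proposition~\ref{prop:PolarAmbientSpace} applies with $V := \mathrm{S}(n)$, giving $\mathrm{S}_{\le \ushort{r}}^+(n)^* = (\mathrm{S}_{\le \ushort{r}}^+(n)^* \cap \mathrm{S}(n)) + \mathrm{S}(n)^\perp$. This reduces the problem to identifying the polar computed \emph{within the ambient space} $\mathrm{S}(n)$, i.e.\ to proving $\mathrm{S}_{\le \ushort{r}}^+(n)^* \cap \mathrm{S}(n) = \mathrm{S}^-(n)$. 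The key observation making $\ushort{r}$ irrelevant here is that $\mathrm{S}_{\le \ushort{r}}^+(n)$ contains all rank-one positive-semidefinite matrices $xx^\tp$ with $x \in \R^n$ (since $\ushort{r} \ge 1$), and every element of $\mathrm{S}^+(n)$ is a nonnegative combination of such matrices by the spectral theorem~\eqref{eq:EigendecompositionSymmetric}.

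First I would prove $\mathrm{S}_{\le \ushort{r}}^+(n)^* \cap \mathrm{S}(n) \subseteq \mathrm{S}^-(n)$. Take $Y \in \mathrm{S}(n)$ with $\ip{Y}{X} \le 0$ for all $X \in \mathrm{S}_{\le \ushort{r}}^+(n)$. In particular, for every $x \in \R^n$, the matrix $xx^\tp$ lies in $\mathrm{S}_{\le \ushort{r}}^+(n)$, so $0 \ge \ip{Y}{xx^\tp} = \tr(x^\tp Y x) = x^\tp Y x$; hence $Y \preceq 0$, i.e.\ $Y \in \mathrm{S}^-(n)$. Conversely, for $\mathrm{S}^-(n) \subseteq \mathrm{S}_{\le \ushort{r}}^+(n)^* \cap \mathrm{S}(n)$: if $Y \preceq 0$ and $X \in \mathrm{S}_{\le \ushort{r}}^+(n) \subseteq \mathrm{S}^+(n)$, write an eigendecomposition $X = \sum_{j} \lambda_j u_j u_j^\tp$ with $\lambda_j \ge 0$; then $\ip{Y}{X} = \sum_j \lambda_j\, u_j^\tp Y u_j \le 0$ since each $u_j^\tp Y u_j \le 0$. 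This gives $\mathrm{S}_{\le \ushort{r}}^+(n)^* \cap \mathrm{S}(n) = \mathrm{S}^-(n)$, and combining with the displayed consequence of Proposition~\ref{prop:PolarAmbientSpace} yields the claim.

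I do not expect a serious obstacle: the argument is short and elementary once one notices that only rank-one PSD matrices (which are available for any $\ushort{r} \ge 1$) and the spectral decomposition are needed. The only mildly delicate point is the bookkeeping with the ambient space $\mathrm{S}(n)$ — one must be careful that $\mathrm{S}_{\le \ushort{r}}^+(n)$ is not a linear subspace, so the polar is taken in $\R^{n \times n}$ and the skew-symmetric complement $\mathrm{S}(n)^\perp$ genuinely appears; Proposition~\ref{prop:PolarAmbientSpace} handles exactly this and is the right tool to invoke. A remark could note that this also shows $\mathrm{S}_{\le \ushort{r}}^+(n)^* = \mathrm{S}^+(n)^* = (\mathrm{S}_{\le 1}^+(n))^*$, so the polar is independent of $\ushort{r} \in \{1,\dots,n\}$, which is what makes the result quotable uniformly in the subsequent computation of the regular and normal cones to $\mathrm{S}_{\le r}^+(n)$.
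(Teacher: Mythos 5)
Your proposal is correct and follows essentially the same route as the paper: reduce via Proposition~\ref{prop:PolarAmbientSpace} with $V := \mathrm{S}(n)$ to showing $\mathrm{S}_{\le \ushort{r}}^+(n)^* \cap \mathrm{S}(n) = \mathrm{S}^-(n)$, and establish this by testing against rank-one positive-semidefinite matrices together with the spectral theorem. The only cosmetic differences are that you test against all $xx^\tp$ (rather than rank-one matrices built from the eigenvectors of the candidate polar element, as the paper does) and spell out the standard fact $\ip{X}{Y} \ge 0$ for $X, Y \succeq 0$, which the paper simply invokes.
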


\begin{proof}
By Proposition~\ref{prop:PolarAmbientSpace}, it suffices to prove that $\mathrm{S}_{\le \ushort{r}}^+(n)^* \cap \mathrm{S}(n) = \mathrm{S}^-(n)$.
The inclusion $\supseteq$ holds since, for all $X, Y \in \mathrm{S}^+(n)$, $\ip{X}{Y} \ge 0$. Let us establish the inclusion $\subseteq$. Let $X \in \mathrm{S}_{\le \ushort{r}}^+(n)^* \cap \mathrm{S}(n)$. Then, there exists $U \in \mathrm{O}(n)$ such that $X = U \diag(\lambda_1(X), \dots, \lambda_n(X)) U^\tp$. Moreover, for all $i \in \{1, \dots, n\}$, $Y := U \diag(\delta_{1, i}, \dots, \delta_{n, i}) U^\tp \in \mathrm{S}_{\le \ushort{r}}^+(n)$ and thus $0 \ge \ip{X}{Y} = \lambda_i(X)$. Therefore, $X \preceq 0$.
\end{proof}

\begin{proposition}[regular normal cone to $\mathrm{S}_{\le r}^+(n)$]
\label{prop:RegularNormalConePSDconeBoundedRank}
For all $X \in \mathrm{S}_{\le r}^+(n)$,
\begin{equation*}
\regnorcone{\mathrm{S}_{\le r}^+(n)}{X}
= \mathrm{S}(n)^\perp + \left\{\begin{array}{ll}
\{Z \in \mathrm{S}^-(n) \mid XZ = 0_{n \times n}\} & \text{if } \rank X < r,\\
\{Z \in \mathrm{S}(n) \mid XZ = 0_{n \times n}\} & \text{if } \rank X = r.
\end{array}\right.
\end{equation*}
\end{proposition}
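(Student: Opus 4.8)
The plan is to compute $\regnorcone{\mathrm{S}_{\le r}^+(n)}{X} = \tancone{\mathrm{S}_{\le r}^+(n)}{X}^*$ directly from the block description of the tangent cone in Proposition~\ref{prop:ProjectionOntoTangentConePSDconeBoundedRank}. Fix $X \in \mathrm{S}_{\ushort{r}}^+(n)$ with $\ushort{r} := \rank X$, and pick $U \in \st(\ushort{r},n)$, $U_\perp \in \st(n-\ushort{r},n)$ with $\im U = \im X$ and $\im U_\perp = (\im X)^\perp$, as in that proposition. Since $\mathrm{S}_{\le r}^+(n)$ lies in the linear subspace $\mathrm{S}(n)$, so does its tangent cone; hence by Proposition~\ref{prop:PolarAmbientSpace} I only need to determine $\tancone{\mathrm{S}_{\le r}^+(n)}{X}^* \cap \mathrm{S}(n)$ and then append $\mathrm{S}(n)^\perp$.

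The first step would be to parametrise: write a generic $Z \in \mathrm{S}(n)$ as $[U\;U_\perp]\left[\begin{smallmatrix} A_Z & B_Z \\ B_Z^\tp & E_Z \end{smallmatrix}\right][U\;U_\perp]^\tp$ with $A_Z \in \mathrm{S}(\ushort{r})$, $B_Z \in \R^{\ushort{r}\times n-\ushort{r}}$, $E_Z \in \mathrm{S}(n-\ushort{r})$, and a generic tangent vector by $(A,B,E)$ with $A \in \mathrm{S}(\ushort{r})$, $B \in \R^{\ushort{r}\times n-\ushort{r}}$, $E \in \mathrm{S}_{\le r-\ushort{r}}^+(n-\ushort{r})$. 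Because $[U\;U_\perp]$ is orthogonal, the pairing of $Z$ with a tangent vector splits as $\ip{A_Z}{A} + 2\ip{B_Z}{B} + \ip{E_Z}{E}$, and since $A$ and $B$ vary over the full linear spaces $\mathrm{S}(\ushort{r})$ and $\R^{\ushort{r}\times n-\ushort{r}}$, the polarity condition forces $A_Z = 0$ and $B_Z = 0$ and reduces to $E_Z \in \mathrm{S}_{\le r-\ushort{r}}^+(n-\ushort{r})^* \cap \mathrm{S}(n-\ushort{r})$.

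Next I would split on whether $\ushort{r} = r$ or $\ushort{r} < r$. If $\ushort{r} = r$ then $\mathrm{S}_{\le 0}^+(n-\ushort{r}) = \{0_{n-\ushort{r}\times n-\ushort{r}}\}$ and $E_Z$ is unconstrained, whereas if $\ushort{r} < r$ then Proposition~\ref{prop:PolarPSDconeBoundedRank} (applicable since $1 \le r-\ushort{r} \le n-\ushort{r}$) combined with Proposition~\ref{prop:PolarAmbientSpace} gives $\mathrm{S}_{\le r-\ushort{r}}^+(n-\ushort{r})^* \cap \mathrm{S}(n-\ushort{r}) = \mathrm{S}^-(n-\ushort{r})$. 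In either case $Z$ ranges over $\{U_\perp E_Z U_\perp^\tp \mid E_Z \in \mathrm{S}(n-\ushort{r})\}$, respectively $\{U_\perp E_Z U_\perp^\tp \mid E_Z \in \mathrm{S}^-(n-\ushort{r})\}$. The final step is to translate these descriptions into coordinate-free form: for symmetric $Z$, being of the form $U_\perp E_Z U_\perp^\tp$ means $\im Z \subseteq (\im X)^\perp$, which---writing $X = U\Lambda U^\tp$ with $\Lambda$ diagonal and positive definite and multiplying $XZ$ by $U^\tp$ on the left---is equivalent to $XZ = 0_{n\times n}$; and the extra constraint $E_Z \preceq 0$ corresponds exactly to $Z \in \mathrm{S}^-(n)$. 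This yields the stated formula, with the $\ushort{r}=0$ case (i.e., $X = 0_{n\times n}$) absorbed into the $\ushort{r}<r$ branch.

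The argument is essentially bookkeeping; the only part needing a little care is the last step---verifying that, for symmetric $Z$, the relation $XZ = 0_{n\times n}$ captures precisely $\im Z \perp \im X$, and that the two rank regimes $\ushort{r} < r$ and $\ushort{r} = r$ (and the degenerate $\ushort{r} = 0$) are handled consistently.
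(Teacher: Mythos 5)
Your proposal is correct and follows essentially the same route as the paper: reduce to $\mathrm{S}(n)$ via the ambient-space decomposition (Propositions~\ref{prop:PolarAmbientSpace}/\ref{prop:TangentNormalConesAmbientSpace}), pair a block-decomposed symmetric $Z$ against the tangent cone description of Proposition~\ref{prop:ProjectionOntoTangentConePSDconeBoundedRank} to force the $A$- and $B$-blocks to vanish, and invoke Proposition~\ref{prop:PolarPSDconeBoundedRank} (resp.\ triviality of $\{0\}^*$) for the lower-right block. The only cosmetic differences are that the paper treats $X = 0_{n\times n}$ separately whereas you absorb it into the general argument, and that you make explicit the final translation from $U_\perp E U_\perp^\tp$ to the condition $XZ = 0_{n\times n}$, which the paper leaves implicit.
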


\begin{proof}
By \eqref{eq:RegularNormalCone} and Propositions~\ref{prop:ProjectionOntoTangentConePSDconeBoundedRank} and \ref{prop:PolarPSDconeBoundedRank}, we have
\begin{equation*}
\regnorcone{\mathrm{S}_{\le r}^+(n)}{0_{n \times n}}
= \tancone{\mathrm{S}_{\le r}^+(n)}{0_{n \times n}}^*
= \mathrm{S}_{\le r}^+(n)^*
= \mathrm{S}^-(n) + \mathrm{S}(n)^\perp.
\end{equation*}
Let $\ushort{r} \in \{1, \dots, r\}$, $X \in \mathrm{S}_{\ushort{r}}^+(n)$, $U \in \st(\ushort{r}, n)$, $U_\perp \in \st(n-\ushort{r}, n)$, $\im U = \im X$, and $\im U_\perp = (\im X)^\perp$. By Proposition~\ref{prop:TangentNormalConesAmbientSpace}, it suffices to prove that
\begin{equation*}
\regnorcone{\mathrm{S}_{\le r}^+(n)}{X} \cap \mathrm{S}(n) = \left\{\begin{array}{ll}
\{Z \in \mathrm{S}^-(n) \mid XZ = 0_{n \times n}\} & \text{if } \ushort{r} < r,\\
\{Z \in \mathrm{S}(n) \mid XZ = 0_{n \times n}\} & \text{if } \ushort{r} = r.
\end{array}\right.
\end{equation*}
Let $Z \in \mathrm{S}(n)$ be written as
\begin{equation*}
Z = [U \; U_\perp] \begin{bmatrix} \tilde{A} & \tilde{B} \\ \tilde{B}^\tp & \tilde{D} \end{bmatrix} [U \; U_\perp]^\tp
\end{equation*}
with $\tilde{A} = U^\tp Z U$, $\tilde{B} = U^\tp Z U_\perp$, and $\tilde{D} = U_\perp^\tp Z U_\perp$. Then, $Z \in \regnorcone{\mathrm{S}_{\le r}^+(n)}{X} \cap \mathrm{S}(n)$ if and only if $\ip{Z}{Y} \le 0$ for all $Y \in \tancone{\mathrm{S}_{\le r}^+(n)}{X}$. By Proposition~\ref{prop:ProjectionOntoTangentConePSDconeBoundedRank}, all $Y \in \tancone{\mathrm{S}_{\le r}^+(n)}{X}$ can be written as
\begin{equation*}
Y = [U \; U_\perp] \begin{bmatrix} A & B \\ B^\tp & D \end{bmatrix} [U \; U_\perp]^\tp,
\end{equation*}
with $A \in \mathrm{S}(\ushort{r})$, $B \in \R^{\ushort{r} \times n-\ushort{r}}$, and $D \in \mathrm{S}_{\le r-\ushort{r}}^+(n-\ushort{r})$, and
\begin{equation*}
\ip{Z}{Y} = \tr A \tilde{A} + 2 \tr B \tilde{B}^\tp + \tr D \tilde{D}.
\end{equation*}
Thus, $\ip{Z}{Y} \le 0$ for all $Y \in \tancone{\mathrm{S}_{\le r}^+(n)}{X}$ if and only if $\tilde{A} = 0_{\ushort{r} \times \ushort{r}}$, $\tilde{B} = 0_{\ushort{r} \times n-\ushort{r}}$, and $\tilde{D} \in \mathrm{S}_{\le r-\ushort{r}}^+(n-\ushort{r})^* \cap \mathrm{S}(n-\ushort{r})$. Therefore, $Z \in U_\perp \mathrm{S}^-(n-\ushort{r}) U_\perp^\tp$ if $\ushort{r} < r$ (by Proposition~\ref{prop:PolarPSDconeBoundedRank}) and $Z \in U_\perp \mathrm{S}(n-r) U_\perp^\tp$ if $\ushort{r} = r$ (because $\{0_{n-r \times n-r}\}^* = \R^{n-r \times n-r}$). The result follows.
\end{proof}

The normal cone to $\mathrm{S}_{\le r}^+(n)$ in $\mathrm{S}(n)$ is given in \cite[Theorem~3.12]{Tam2017}. In Proposition~\ref{prop:NormalConePSDconeBoundedRank}, we deduce the normal to $\mathrm{S}_{\le r}^+(n)$ in $\R^{n \times n}$ thanks to Proposition~\ref{prop:TangentNormalConesAmbientSpace}.

\begin{proposition}[normal cone to $\mathrm{S}_{\le r}^+(n)$]
\label{prop:NormalConePSDconeBoundedRank}
For all $X \in \mathrm{S}_{\le r}^+(n)$,
\begin{equation*}
\norcone{\mathrm{S}_{\le r}^+(n)}{X}
= \left\{Z \in \mathrm{S}^-(n) \cup \mathrm{S}_{\le n-r}(n) \mid XZ = 0_{n \times n}\right\} + \mathrm{S}(n)^\perp.
\end{equation*}
In particular, $\mathrm{S}_{\le r}^+(n)$ is not Clarke regular on $\mathrm{S}_{< r}^+(n)$.
\end{proposition}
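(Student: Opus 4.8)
The plan is to reduce the statement to the already-available description of the normal cone \emph{within} the space of symmetric matrices and then transport it to the full ambient space $\R^{n\times n}$ via Proposition~\ref{prop:TangentNormalConesAmbientSpace}, exactly as announced in the paragraph preceding the proposition.

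First I would recall that $\mathrm{S}(n)^\perp$ is the space of skew-symmetric matrices, and invoke \cite[Theorem~3.12]{Tam2017}, which describes the normal cone to $\mathrm{S}_{\le r}^+(n)$ at $X$ relative to the Euclidean space $\mathrm{S}(n)$ as $\{Z \in \mathrm{S}^-(n) \cup \mathrm{S}_{\le n-r}(n) \mid XZ = 0_{n\times n}\}$. Then I would note that $S := \mathrm{S}_{\le r}^+(n)$ is contained in the linear subspace $V := \mathrm{S}(n)$ of $\mathcal{E} = \R^{n\times n}$, that $\tancone{S}{z}$ is contained in $V$ and does not change whether it is formed in $V$ or in $\mathcal{E}$ (all its difference quotients $(S-z)/t$ lie in $V$), so that the regular normal cone to $S$ at $z$ relative to $V$ equals $\regnorcone{S}{z} \cap V$; substituting the decomposition $\regnorcone{S}{z} = (\regnorcone{S}{z}\cap V) + V^\perp$ of Proposition~\ref{prop:PolarAmbientSpace} into the definition~\eqref{eq:NormalCone} of the normal cone and repeating verbatim the outer-limit computation carried out in the proof of Proposition~\ref{prop:TangentNormalConesAmbientSpace} shows both that $\norcone{S}{X} = (\norcone{S}{X}\cap V) + V^\perp$ and that $\norcone{S}{X}\cap V$ coincides with the normal cone to $S$ at $X$ relative to $V$. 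Plugging the formula of \cite[Theorem~3.12]{Tam2017} into this identity yields the displayed expression.

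For the concluding assertion I would fix $X \in \mathrm{S}_{< r}^+(n)$ and contrast the formula just proved with Proposition~\ref{prop:RegularNormalConePSDconeBoundedRank}. Since $\rank X < n$, choose a unit vector $u \in (\im X)^\perp$; then $Z := uu^\tp$ lies in $\mathrm{S}_1^+(n) \subseteq \mathrm{S}_{\le n-r}(n)$ (using $r < n$, so $n-r\ge1$) and satisfies $XZ = 0_{n\times n}$, whence $Z \in \norcone{\mathrm{S}_{\le r}^+(n)}{X}$. On the other hand, every element of $\regnorcone{\mathrm{S}_{\le r}^+(n)}{X} = \mathrm{S}(n)^\perp + \{W \in \mathrm{S}^-(n) \mid XW = 0_{n\times n}\}$ has negative-semidefinite symmetric part, whereas the symmetric part of $Z$ is $Z$ itself, which is positive semidefinite and nonzero; hence $Z \notin \regnorcone{\mathrm{S}_{\le r}^+(n)}{X}$, so $\regnorcone{\mathrm{S}_{\le r}^+(n)}{X} \subsetneq \norcone{\mathrm{S}_{\le r}^+(n)}{X}$ and $\mathrm{S}_{\le r}^+(n)$ is not Clarke regular at $X$.

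The main (and essentially the only) obstacle is the bookkeeping that justifies that intersecting the ambient-space normal cone with $\mathrm{S}(n)$ recovers exactly the object of \cite[Theorem~3.12]{Tam2017}: this amounts to checking that the normal cone used there is $\outlim_{S\ni z\to X}$ of the regular normal cones computed in $\mathrm{S}(n)$, after which everything follows mechanically from Proposition~\ref{prop:TangentNormalConesAmbientSpace}. Should one prefer to avoid the citation altogether, the same formula can be obtained by computing $\outlim_{S\ni z\to X}\regnorcone{\mathrm{S}_{\le r}^+(n)}{z}$ directly from Proposition~\ref{prop:RegularNormalConePSDconeBoundedRank}, splitting the approaching sequence according to whether its terms have rank $r$ or rank $<r$, exactly as in the proof of Proposition~\ref{prop:NormalConeNonnegativeSparseVectors}.
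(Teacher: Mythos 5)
Your proposal is correct and follows essentially the same route as the paper, whose proof simply combines \cite[Theorem~3.12]{Tam2017} with Proposition~\ref{prop:TangentNormalConesAmbientSpace}; your extra bookkeeping identifying the intrinsic normal cone in $\mathrm{S}(n)$ with $\norcone{\mathrm{S}_{\le r}^+(n)}{X} \cap \mathrm{S}(n)$ is exactly the implicit content of that combination. The explicit witness $Z = uu^\tp$ for the failure of Clarke regularity, compared against Proposition~\ref{prop:RegularNormalConePSDconeBoundedRank}, is a welcome elaboration of the ``in particular'' claim that the paper leaves to the reader.
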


\begin{proof}
This follows from Proposition~\ref{prop:TangentNormalConesAmbientSpace} and \cite[Theorem~3.12]{Tam2017}.
\end{proof}

\begin{corollary}[Clarke normal cone to $\mathrm{S}_{\le r}^+(n)$]
\label{coro:ClarkeNormalConePSDconeBoundedRank}
For all $X \in \mathrm{S}_{\le r}^+(n)$,
\begin{equation*}
\connorcone{\mathrm{S}_{\le r}^+(n)}{X}
= \left\{Z \in \mathrm{S}(n) \mid XZ = 0_{n \times n}\right\} + \mathrm{S}(n)^\perp.
\end{equation*}
\end{corollary}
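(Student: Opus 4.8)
The plan is to combine the description of the normal cone from Proposition~\ref{prop:NormalConePSDconeBoundedRank} with the definition of the Clarke normal cone as the closure of the convex hull of the normal cone, and then to pin down that closed convex hull by means of an eigendecomposition of $X$. \textbf{Setup.} Let $\ushort{r} := \rank X$ and choose $U \in \st(\ushort{r}, n)$, $U_\perp \in \st(n-\ushort{r}, n)$ with $\im U = \im X$ and $\im U_\perp = (\im X)^\perp$ (when $X = 0_{n\times n}$, take $\ushort{r} = 0$ and $U_\perp \in \mathrm{O}(n)$), and write $X = U\Lambda U^\tp$ with $\Lambda \in \mathrm{S}(\ushort{r})$ invertible. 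For $Z \in \mathrm{S}(n)$ one has $XZ = 0_{n \times n}$ iff $U^\tp Z = 0$ iff $Z = U_\perp F U_\perp^\tp$ for some $F \in \mathrm{S}(n-\ushort{r})$; hence $\{Z \in \mathrm{S}(n) \mid XZ = 0_{n \times n}\} = U_\perp \mathrm{S}(n-\ushort{r}) U_\perp^\tp$ is a linear subspace of $\mathrm{S}(n)$, and, since $\ushort{r} \le r$, the same computation gives $\{Z \in \mathrm{S}_{\le n-r}(n) \mid XZ = 0_{n \times n}\} = U_\perp \mathrm{S}_{\le n-r}(n-\ushort{r}) U_\perp^\tp$ and $\{Z \in \mathrm{S}^-(n) \mid XZ = 0_{n \times n}\} = U_\perp \mathrm{S}^-(n-\ushort{r}) U_\perp^\tp$. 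Because $\mathrm{S}(n)^\perp$ is a linear subspace and Minkowski sum satisfies $\mathrm{conv}(A+B) = \mathrm{conv}(A)+\mathrm{conv}(B)$, everything reduces to computing convex hulls of subsets of $\mathrm{S}(n)$ and appending the fixed summand $\mathrm{S}(n)^\perp$.

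\textbf{Inclusion $\subseteq$.} By Proposition~\ref{prop:NormalConePSDconeBoundedRank} and the setup, $\norcone{\mathrm{S}_{\le r}^+(n)}{X}$ is contained in $(U_\perp \mathrm{S}(n-\ushort{r}) U_\perp^\tp) + \mathrm{S}(n)^\perp = \{Z \in \mathrm{S}(n) \mid XZ = 0_{n \times n}\} + \mathrm{S}(n)^\perp$, which is a linear subspace of $\R^{n \times n}$ (the sum is direct, one summand lying in $\mathrm{S}(n)$ and the other in $\mathrm{S}(n)^\perp$), hence closed and convex. Therefore its closed convex hull $\connorcone{\mathrm{S}_{\le r}^+(n)}{X}$ is contained in it.

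\textbf{Inclusion $\supseteq$.} Since $\mathrm{S}(n)^\perp \subseteq \norcone{\mathrm{S}_{\le r}^+(n)}{X}$ (take the symmetric part equal to $0_{n\times n}$), and since $\mathrm{conv}$ of a cone is a convex cone, it suffices to show that $\{Z \in \mathrm{S}(n) \mid XZ = 0_{n\times n}\} = U_\perp \mathrm{S}(n-\ushort{r}) U_\perp^\tp \subseteq \mathrm{conv}\big(\norcone{\mathrm{S}_{\le r}^+(n)}{X}\big)$; indeed a convex cone containing both $\mathrm{S}(n)^\perp$ and that subspace contains their sum. From the setup, $\norcone{\mathrm{S}_{\le r}^+(n)}{X}$ contains $U_\perp \mathrm{S}_{\le n-r}(n-\ushort{r}) U_\perp^\tp$, and since $n-r \ge 1$ this contains $U_\perp \mathrm{S}_{\le 1}(n-\ushort{r}) U_\perp^\tp = \{U_\perp (cvv^\tp) U_\perp^\tp \mid c \in \R,\, v \in \R^{n-\ushort{r}}\}$. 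So it remains to prove $\mathrm{conv}(\mathrm{S}_{\le 1}(q)) = \mathrm{S}(q)$ for every $q \ge 1$: the convex hull of $\{vv^\tp \mid v \in \R^q\}$ is $\mathrm{S}^+(q)$ and that of $\{-vv^\tp \mid v \in \R^q\}$ is $\mathrm{S}^-(q)$, so $\mathrm{conv}(\mathrm{S}_{\le 1}(q))$ contains both and hence, for any $Z \in \mathrm{S}(q)$ written as $Z = Z_+ - Z_-$ with $Z_\pm \succeq 0$, it contains $\tfrac12 Z_+ + \tfrac12(-Z_-)$; being a convex cone it then contains $Z$. Applying the Frobenius isometry $F \mapsto U_\perp F U_\perp^\tp$ gives $U_\perp \mathrm{S}(n-\ushort{r}) U_\perp^\tp \subseteq \mathrm{conv}\big(\norcone{\mathrm{S}_{\le r}^+(n)}{X}\big)$, which closes the argument.

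\textbf{Main obstacle.} The computation is largely bookkeeping; the two mildly delicate points are (i) tracking the orthogonal summand $\mathrm{S}(n)^\perp$ so that forming convex hulls and closures genuinely decouples across $\R^{n\times n} = \mathrm{S}(n) \oplus \mathrm{S}(n)^\perp$ (here one uses that $\mathrm{S}(n)^\perp$ is a closed subspace and that $\mathrm{S}_{\le r}^+(n)$, hence all the associated normal cones, essentially lives in $\mathrm{S}(n)$), and (ii) the elementary identity $\mathrm{conv}(\mathrm{S}_{\le 1}(q)) = \mathrm{S}(q)$; note that no closure is actually needed, consistent with the claimed formula being a genuine linear subspace. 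An alternative to the eigendecomposition step is to invoke Proposition~\ref{prop:RegularNormalConePSDconeBoundedRank} directly in the case $\rank X = r$ (where $\regnorcone{\mathrm{S}_{\le r}^+(n)}{X}$ already equals the claimed set, so the Clarke normal cone does too) and to handle $\rank X < r$ via the rank-one argument above.
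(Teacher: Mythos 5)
Your proof is correct and takes the route the paper intends: the corollary is stated without proof as an immediate consequence of Proposition~\ref{prop:NormalConePSDconeBoundedRank}, since $\connorcone{\mathrm{S}_{\le r}^+(n)}{X}$ is by definition the closed convex hull of $\norcone{\mathrm{S}_{\le r}^+(n)}{X}$, and your two inclusions (the right-hand side is a closed linear subspace containing the normal cone; conversely the convex hull already fills it up) make this explicit. The only substantive detail you add is that $\mathrm{conv}$ of the symmetric matrices of rank at most one equals all of $\mathrm{S}(q)$, which, after conjugation by $U_\perp$ and using $n-r\ge 1$, correctly supplies the convexification step the paper leaves implicit.
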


By Proposition~\ref{prop:NormalConePSDconeBoundedRank}, $\mathrm{S}_{\le r}^+(n)$ is not Clarke regular on $\mathrm{S}_{< r}^+(n)$. Proposition~\ref{prop:PSDconeBoundedRankApocalypticSerendipitousPoints} states that every point of $\mathrm{S}_{< r}^+(n)$ is apocalyptic, which is a stronger result by \cite[Corollary~2.15]{LevinKileelBoumal2022}.

\begin{proposition}
\label{prop:PSDconeBoundedRankApocalypticSerendipitousPoints}
The set of apocalyptic points of $\mathrm{S}_{\le r}^+(n)$ and the set of serendipitous points of $\mathrm{S}_{\le r}^+(n)$ both equal $\mathrm{S}_{< r}^+(n)$.
\end{proposition}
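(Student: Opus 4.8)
The plan is to follow the strategy of the proof of Proposition~\ref{prop:NonnegativeSparseVectorsApocalypticSerendipitousPoints}, relying on the characterization of apocalyptic and serendipitous points in Proposition~\ref{prop:CharacterizationApocalypticSerendipitousPoint}. Since $\mathrm{S}_{\le r}^+(n) \setminus \mathrm{S}_{< r}^+(n) = \mathrm{S}_r^+(n)$, the statement splits into two claims: points of $\mathrm{S}_r^+(n)$ are neither apocalyptic nor serendipitous, while points of $\mathrm{S}_{< r}^+(n)$ are both. For the first claim, let $X \in \mathrm{S}_r^+(n)$ and let $(X_k)_{k \in \N}$ be any sequence in $\mathrm{S}_{\le r}^+(n)$ converging to $X$. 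By Proposition~\ref{prop:ProjectionOntoPSDconeBoundedRank}, $(X_k)_{k \in \N}$ has only finitely many elements in $\mathrm{S}_{< r}^+(n)$, so by Proposition~\ref{prop:ContinuityTangentConeStratumPSDconeBoundedRank} (continuity of $\tancone{\mathrm{S}_{\le r}^+(n)}{\cdot}$ at $X$ relative to $\mathrm{S}_{\le r}^+(n)$) we get $\outlim_{k \to \infty} \tancone{\mathrm{S}_{\le r}^+(n)}{X_k} = \tancone{\mathrm{S}_{\le r}^+(n)}{X}$, hence $\big(\outlim_{k \to \infty} \tancone{\mathrm{S}_{\le r}^+(n)}{X_k}\big)^* = \regnorcone{\mathrm{S}_{\le r}^+(n)}{X}$ by~\eqref{eq:RegularNormalCone}. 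As this holds for every such sequence, neither non-inclusion in Proposition~\ref{prop:CharacterizationApocalypticSerendipitousPoint} can occur, so $X$ is neither apocalyptic nor serendipitous.

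For the second claim, fix $X \in \mathrm{S}_{\ushort{r}}^+(n)$ with $\ushort{r} \in \{0, \dots, r-1\}$ and an eigendecomposition $X = U \diag(\Lambda, 0_{n-\ushort{r}}) U^\tp$ with $U \in \mathrm{O}(n)$ and $\Lambda \in \mathrm{S}^+(\ushort{r})$ positive definite (any $U$ when $\ushort{r} = 0$); write $u_1, \dots, u_n$ for the columns of $U$. I would take
\begin{equation*}
X_k := U \diag\big(\Lambda, \tfrac{1}{k+1} I_{r-\ushort{r}}, 0_{n-r}\big) U^\tp \in \mathrm{S}_r^+(n) \qquad (k \in \N),
\end{equation*}
so that $(X_k)_{k \in \N}$ converges to $X$ and $\im X_k$ equals the span of $u_1, \dots, u_r$ for every $k$. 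By Proposition~\ref{prop:ProjectionOntoTangentConePSDconeBoundedRank}, $\tancone{\mathrm{S}_{\le r}^+(n)}{X_k}$ is determined by $\im X_k$ alone, hence equals $\tancone{\mathrm{S}_{\le r}^+(n)}{X_0}$ for all $k$; therefore $\setlim_{k \to \infty} \tancone{\mathrm{S}_{\le r}^+(n)}{X_k} = \tancone{\mathrm{S}_{\le r}^+(n)}{X_0}$ and its polar equals $\regnorcone{\mathrm{S}_{\le r}^+(n)}{X_0}$ by~\eqref{eq:RegularNormalCone}.

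It then remains to verify that neither of $\regnorcone{\mathrm{S}_{\le r}^+(n)}{X}$ and $\regnorcone{\mathrm{S}_{\le r}^+(n)}{X_0}$ is contained in the other, which by Proposition~\ref{prop:CharacterizationApocalypticSerendipitousPoint} makes $X$ simultaneously apocalyptic and serendipitous. By Proposition~\ref{prop:RegularNormalConePSDconeBoundedRank}, since $\rank X = \ushort{r} < r$ and $\rank X_0 = r$,
\begin{align*}
\regnorcone{\mathrm{S}_{\le r}^+(n)}{X} &= \mathrm{S}(n)^\perp + \{Z \in \mathrm{S}^-(n) \mid XZ = 0_{n \times n}\},\\
\regnorcone{\mathrm{S}_{\le r}^+(n)}{X_0} &= \mathrm{S}(n)^\perp + \{Z \in \mathrm{S}(n) \mid X_0 Z = 0_{n \times n}\}.
\end{align*}
The symmetric matrix $u_n u_n^\tp$ lies in $\regnorcone{\mathrm{S}_{\le r}^+(n)}{X_0}$ (it is symmetric and $X_0 u_n = 0$, since $u_n \in (\im X_0)^\perp$ because $r < n$) but not in $\regnorcone{\mathrm{S}_{\le r}^+(n)}{X}$ (it equals its own symmetric part, is positive semidefinite and nonzero, hence not negative semidefinite); conversely $-u_{\ushort{r}+1} u_{\ushort{r}+1}^\tp$ lies in $\regnorcone{\mathrm{S}_{\le r}^+(n)}{X}$ (negative semidefinite, and $X u_{\ushort{r}+1} = 0$) but not in $\regnorcone{\mathrm{S}_{\le r}^+(n)}{X_0}$ (it equals its own symmetric part and $X_0 u_{\ushort{r}+1} = u_{\ushort{r}+1} \neq 0$, as $\ushort{r} + 1 \le r$). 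The only delicate point here is the skew summand $\mathrm{S}(n)^\perp$: one must pick genuinely symmetric witnesses and use that, for a symmetric matrix, membership in $\mathrm{S}(n)^\perp + K$ with $K \subseteq \mathrm{S}(n)$ is equivalent to membership in $K$, so the skew part cannot absorb a witness failing the symmetric constraint. Beyond this, the argument is routine bookkeeping with the explicit cone formulas, and the main obstacle is simply organizing that bookkeeping cleanly rather than any genuine difficulty.
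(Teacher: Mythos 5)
Your proof is correct and follows essentially the same route as the paper's: the characterization in Proposition~\ref{prop:CharacterizationApocalypticSerendipitousPoint}, the continuity of the tangent cone (Proposition~\ref{prop:ContinuityTangentConeStratumPSDconeBoundedRank}) for points of rank $r$, and the same rank-filling sequence $X_k$ for points of rank $\ushort{r} < r$. The only cosmetic difference is at the final step: the paper computes $\big(\outlim_{k\to\infty}\tancone{\mathrm{S}_{\le r}^+(n)}{X_k}\big)^*$ explicitly via Proposition~\ref{prop:PolarAmbientSpace}, whereas you identify it with $\regnorcone{\mathrm{S}_{\le r}^+(n)}{X_0}$ through the constancy of the tangent cone along the sequence and then exhibit rank-one witnesses using Proposition~\ref{prop:RegularNormalConePSDconeBoundedRank} (including the correct handling of the skew summand $\mathrm{S}(n)^\perp$); both verifications are sound.
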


\begin{proof}
We use Proposition~\ref{prop:CharacterizationApocalypticSerendipitousPoint}.
Let $X \in \mathrm{S}_r^+(n)$ and $(X_i)_{i \in \N}$ be a sequence in $\mathrm{S}_{\le r}^+(n)$ converging to $X$. By Proposition~\ref{prop:ProjectionOntoPSDconeBoundedRank}, $(X_i)_{i \in \N}$ contains finitely many elements in $\mathrm{S}_{< r}^+(n)$. Therefore, by Proposition~\ref{prop:ContinuityTangentConeStratumPSDconeBoundedRank}, $\outlim_{i \to \infty} \tancone{\mathrm{S}_{\le r}^+(n)}{X_i} = \tancone{\mathrm{S}_{\le r}^+(n)}{X}$, and thus $\big(\outlim_{i \to \infty} \tancone{\mathrm{S}_{\le r}^+(n)}{X_i}\big)^* = \regnorcone{\mathrm{S}_{\le r}^+(n)}{X}$. Thus, $X$ is neither apocalyptic nor serendipitous. 

Let $X \in \mathrm{S}_{\ushort{r}}^+(n)$ with $\ushort{r} \in \{1, \dots, r-1\}$. Let $\Lambda := \diag(\lambda_1(X), \dots, \lambda_{\ushort{r}}(X))$ and $U \in \st(\ushort{r}, n)$ be such that $X = U \Lambda U^\tp$. Let $\bar{U}_\perp \in \st(r-\ushort{r}, n)$ and $U_\perp \in \st(n-r, n)$ be such that $[U \; \bar{U}_\perp \; U_\perp] \in \mathrm{O}(n)$. For all $i \in \N$, let $X_i := [U \; \bar{U}_\perp] \diag(\Lambda, \frac{\lambda_{\ushort{r}}(X)}{i+1} I_{r-\ushort{r}}) [U \; \bar{U}_\perp]^\tp$. Thus, for all $i \in \N$, 
\begin{equation*}
\tancone{\mathrm{S}_{\le r}^+(n)}{X_i} = \left\{[U \; \bar{U}_\perp \; U_\perp] \begin{bmatrix} A & B & F \\ B^\tp & D & G \\ F^\tp & G^\tp & 0_{n-r \times n-r} \end{bmatrix} [U \; \bar{U}_\perp \; U_\perp]^\tp ~\bigg|~\begin{array}{l} A \in \mathrm{S}(\ushort{r}),\, B \in \R^{\ushort{r} \times r-\ushort{r}}, \\ D \in \mathrm{S}(r-\ushort{r}), \\ F \in \R^{\ushort{r} \times n-r},\, G \in \R^{r-\ushort{r} \times n-r} \end{array}\right\}.
\end{equation*}
Therefore,
\begin{equation*}
\outlim_{i \to \infty} \tancone{\mathrm{S}_{\le r}^+(n)}{X_i} = \left\{[U \; \bar{U}_\perp \; U_\perp] \begin{bmatrix} A & B & F \\ B^\tp & D & G \\ F^\tp & G^\tp & 0_{n-r \times n-r} \end{bmatrix} [U \; \bar{U}_\perp \; U_\perp]^\tp ~\bigg|~\begin{array}{l} A \in \mathrm{S}(\ushort{r}),\, B \in \R^{\ushort{r} \times r-\ushort{r}}, \\ D \in \mathrm{S}(r-\ushort{r}), \\ F \in \R^{\ushort{r} \times n-r},\, G \in \R^{r-\ushort{r} \times n-r} \end{array}\right\}
\end{equation*}
and, by Proposition~\ref{prop:PolarAmbientSpace},
\begin{equation*}
\Big(\outlim_{i \to \infty} \tancone{\mathrm{S}_{\le r}^+(n)}{X_i}\Big)^* = \mathrm{S}(n)^\perp + U_\perp \R^{n-r \times n-r} U_\perp^\tp.
\end{equation*}
Furthermore, by Proposition~\ref{prop:PolarPSDconeBoundedRank},
\begin{align*}
\regnorcone{\mathrm{S}_{\le r}^+(n)}{X} = \mathrm{S}(n)^\perp + [\bar{U}_\perp \; U_\perp] \mathrm{S}^-(n-\ushort{r}) [\bar{U}_\perp \; U_\perp]^\tp.
\end{align*}
Thus, since neither of $\regnorcone{\mathrm{S}_{\le r}^+(n)}{X}$ and $\Big(\outlim_{i \to \infty} \tancone{\mathrm{S}_{\le r}^+(n)}{X_i}\Big)^*$ is a subset of the other, $X$ is apocalyptic and serendipitous.
The argument is the same if $X = 0_{n \times n}$.
\end{proof}

\subsubsection{A variant of the $\ppgdr$ map on the cone of positive-semidefinite matrices of bounded rank}
\label{subsubsec:P2GDRpsdConeBoundedRank}
The variant of the $\ppgdr$ map on $\R_{\le r}^{m \times n}$ given in Algorithm~\ref{algo:P2GDRmapRealDeterminantalVariety} can also be defined on $\mathrm{S}_{\le r}^+(n)$, yielding Algorithm~\ref{algo:P2GDRmapPSDconeBoundedRank}.

\begin{algorithm}[H]
\caption{Variant of the $\ppgdr$ map on $\mathrm{S}_{\le r}^+(n)$}
\label{algo:P2GDRmapPSDconeBoundedRank}
\begin{algorithmic}[1]
\Require
$(f, r, \ushort{\alpha}, \bar{\alpha}, \beta, c, \Delta)$ where $f : \R^{n \times n} \to \R$ is differentiable with $\nabla f$ locally Lipschitz continuous, $r \le n$ is a positive integer, $0 < \ushort{\alpha} \le \bar{\alpha} < \infty$, $\beta, c \in (0,1)$, and $\Delta \in (0,\infty)$.
\Input
$X \in \mathrm{S}_{\le r}^+(n)$ such that $\s(X; f, \mathrm{S}_{\le r}^+(n)) > 0$.
\Output
$Y \in \text{Algorithm~\ref{algo:P2GDRmapPSDconeBoundedRank}}(X; f, r, \ushort{\alpha}, \bar{\alpha}, \beta, c, \Delta)$.

\For
{$j \in \{0, \dots, \rank X - \rank_\Delta X\}$}
\State
Choose $\hat{X}^j \in \proj{\mathrm{S}_{\rank X - j}^+(n)}{X}$;
\State
Choose $\tilde{X}^j \in \hyperref[algo:P2GDmap]{\ppgd}(\hat{X}^j; \R^{n \times n}, \mathrm{S}_{\le r}^+(n), f, \ushort{\alpha}, \bar{\alpha}, \beta, c)$;
\EndFor
\State
Return $Y \in \argmin_{\{\tilde{X}^j \mid j \in \{0, \dots, \rank X - \rank_\Delta X\}\}} f$.
\end{algorithmic}
\end{algorithm}

Proposition~\ref{prop:P2GDRmapPSDconeBoundedRankPolak} states that this variant satisfies the same decrease guarantee as the original version.

\begin{proposition}
\label{prop:P2GDRmapPSDconeBoundedRankPolak}
Proposition~\ref{prop:P2GDRmapPolak} holds for Algorithm~\ref{algo:P2GDRmapPSDconeBoundedRank}.
\end{proposition}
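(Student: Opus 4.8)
The plan is to transcribe the proof of Proposition~\ref{prop:P2GDRmapRealDeterminantalVarietyPolak} almost verbatim, replacing the determinantal variety by $\mathrm{S}_{\le r}^+(n)$ and singular values by eigenvalues. Recall from Theorem~\ref{thm:ExamplesStratifiedSetsSatisfyingMainAssumption} that $\mathrm{S}_{\le r}^+(n)$ satisfies Assumption~\ref{assumption:Stratification}, its strata being $\mathrm{S}_0^+(n), \dots, \mathrm{S}_r^+(n)$, so that Proposition~\ref{prop:P2GDRmapPolak} applies to the generic $\ppgdr$ map (Algorithm~\ref{algo:P2GDRmap}) on $\mathrm{S}_{\le r}^+(n)$ and provides, for every $\ushort{x} \in \mathrm{S}_{\le r}^+(n)$ with $\s(\ushort{x}; f, \mathrm{S}_{\le r}^+(n)) > 0$, reals $\varepsilon(\ushort{x}), \delta(\ushort{x}) \in (0, \infty)$ with the stated property. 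The first step is to record the $\mathrm{S}_{\le r}^+(n)$-analogue of Proposition~\ref{prop:P2GDRrealDeterminantalVariety}: for all $\Delta \in [0, \infty)$ and all $X \in \mathrm{S}_{\le r}^+(n)$, $\min\{j \in \{0, \dots, \rank X\} \mid d_2(X, \mathrm{S}_j^+(n)) \le \Delta\} = \rank_\Delta X$. This follows from exactly the computation in the proof of Proposition~\ref{prop:P2GDRrealDeterminantalVariety}: for a positive-semidefinite matrix the singular values coincide with the eigenvalues (Section~\ref{subsubsec:StratificationPSDconeBoundedRank}), so the truncated eigendecomposition appearing in Proposition~\ref{prop:ProjectionOntoPSDconeBoundedRank} is a rank-$j$ positive-semidefinite matrix realizing, by the Eckart--Young theorem \cite{EckartYoung1936} for the spectral norm, $d_2(X, \mathrm{S}_j^+(n)) = 0$ when $\rank X \le j$ and $d_2(X, \mathrm{S}_j^+(n)) = \lambda_{j+1}(X) = \sigma_{j+1}(X)$ when $\rank X > j$.

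Second, I would observe that for every $X \in \R^{n \times n}$ and every $j$, the inequality $\norm{X}_2 \le \norm{X}$ gives $d_2(X, \mathrm{S}_j^+(n)) \le \dist(X, \mathrm{S}_j^+(n))$, whence
\begin{equation*}
\{j \in \{0, \dots, \rank X\} \mid d_2(X, \mathrm{S}_j^+(n)) \le \Delta\} \supseteq \{j \in \{0, \dots, \rank X\} \mid \dist(X, \mathrm{S}_j^+(n)) \le \Delta\}
\end{equation*}
and therefore $\rank_\Delta X \le i_*$, where $i_*$ denotes the index computed in line~\ref{algo:P2GDRmap:DeltaCloseStrata} of Algorithm~\ref{algo:P2GDRmap} on input $X$. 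Since, as $j$ ranges over $\{0, \dots, \rank X - \rank_\Delta X\}$, the integer $\rank X - j$ ranges over $\{\rank_\Delta X, \dots, \rank X\} \supseteq \{i_*, \dots, \rank X\}$, the loop of Algorithm~\ref{algo:P2GDRmapPSDconeBoundedRank} applies the $\ppgd$ map to a projection of $X$ onto each stratum $\mathrm{S}_{i_*}^+(n), \dots, \mathrm{S}_{\rank X}^+(n)$ explored by Algorithm~\ref{algo:P2GDRmap} (and possibly more). Consequently the point returned by Algorithm~\ref{algo:P2GDRmapPSDconeBoundedRank} has cost no larger than the cost of the point returned by Algorithm~\ref{algo:P2GDRmap} on the same input, and hence the local uniform decrease estimate $f(Y) - f(X) \le -\delta(\ushort{x})$ on $\ball[\ushort{x}, \varepsilon(\ushort{x})] \cap \mathrm{S}_{\le r}^+(n)$ carries over to Algorithm~\ref{algo:P2GDRmapPSDconeBoundedRank} with the same $\varepsilon(\ushort{x})$ and $\delta(\ushort{x})$.

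There is no substantial obstacle here, since the argument is a transcription of the one for the determinantal variety; the only points needing (routine) care are the spectral-norm approximation claim in the first step — where one must check that the optimal rank-$j$ spectral-norm approximant of a positive-semidefinite matrix may be chosen positive-semidefinite, which the truncated eigendecomposition supplies — and the bookkeeping $\rank_\Delta X \le i_* \le \rank X$ ensuring that the strata $\mathrm{S}_{\rank X - j}^+(n)$ cover $\{\mathrm{S}_{i_*}^+(n), \dots, \mathrm{S}_{\rank X}^+(n)\}$. As in the remark following Proposition~\ref{prop:P2GDRmapRealDeterminantalVarietyPolak}, one could alternatively give a direct proof not invoking condition~3 of Assumption~\ref{assumption:Stratification}, using that $\s(\cdot; f, \mathrm{S}_{\le r}^+(n))$ is continuous near a rank-$r$ point (where $\mathrm{S}_{\le r}^+(n)$ locally coincides with the manifold $\mathrm{S}_r^+(n)$) and, by Proposition~\ref{prop:ProjectionOntoTangentConePSDconeBoundedRank}, bounded away from zero on the intersection of $\mathrm{S}_{< r}^+(n)$ with a small ball centered at a point of $\mathrm{S}_{< r}^+(n)$; but I would keep the proof as above to stress that it follows from the abstract Proposition~\ref{prop:P2GDRmapPolak}.
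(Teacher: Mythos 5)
Your argument is correct and is essentially the paper's own: the paper proves Proposition~\ref{prop:P2GDRmapPSDconeBoundedRankPolak} by declaring it analogous to Proposition~\ref{prop:P2GDRmapRealDeterminantalVarietyPolak}, whose proof is exactly your second step — $\norm{X}_2 \le \norm{X}$ gives $\rank_\Delta X \le i_*$, so the variant explores every stratum explored by Algorithm~\ref{algo:P2GDRmap} and returns a point of no larger cost, so the estimate of Proposition~\ref{prop:P2GDRmapPolak} carries over with the same $\varepsilon(\ushort{x})$ and $\delta(\ushort{x})$. Your additional verification that the truncated eigendecomposition realizes $d_2(X, \mathrm{S}_j^+(n)) = \lambda_{j+1}(X)$ (the PSD analogue of Proposition~\ref{prop:P2GDRrealDeterminantalVariety}) is a correct, routine supplement that the paper leaves implicit.
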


\begin{proof}
The proof is similar to the one of Proposition~\ref{prop:P2GDRmapRealDeterminantalVarietyPolak}.
\end{proof}

The main result of this section is the following.

\begin{corollary}
\label{coro:P2GDRpsdConeBoundedRankPolakConvergence}
On $\mathrm{S}_{\le r}^+(n)$, Theorem~\ref{thm:P2GDRPolakConvergence} holds if Algorithm~\ref{algo:P2GDRmap} is replaced by Algorithm~\ref{algo:P2GDRmapPSDconeBoundedRank} in line~\ref{algo:P2GDR:P2GDRmap} of Algorithm~\ref{algo:P2GDR}.
\end{corollary}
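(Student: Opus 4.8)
The plan is to follow exactly the strategy used for Corollary~\ref{coro:P2GDRrealDeterminantalVarietyPolakConvergence}, observing that the proof of Theorem~\ref{thm:P2GDRPolakConvergence} invokes the $\ppgdr$ map only through the locally uniform sufficient decrease furnished by Proposition~\ref{prop:P2GDRmapPolak}. First I would record that $\mathrm{S}_{\le r}^+(n)$ satisfies Assumption~\ref{assumption:Stratification} by Theorem~\ref{thm:ExamplesStratifiedSetsSatisfyingMainAssumption}, so that Algorithm~\ref{algo:P2GDR} is well defined on $\mathrm{S}_{\le r}^+(n)$ both when line~\ref{algo:P2GDR:P2GDRmap} calls Algorithm~\ref{algo:P2GDRmap} and when it calls Algorithm~\ref{algo:P2GDRmapPSDconeBoundedRank}, and in particular the while loop of Algorithm~\ref{algo:P2GDR} halts precisely when it reaches a zero of $\s(\cdot; f, \mathrm{S}_{\le r}^+(n))$.

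Next I would appeal to Proposition~\ref{prop:P2GDRmapPSDconeBoundedRankPolak}: for every $\ushort{x} \in \mathrm{S}_{\le r}^+(n)$ with $\s(\ushort{x}; f, \mathrm{S}_{\le r}^+(n)) > 0$ there exist $\varepsilon(\ushort{x}), \delta(\ushort{x}) \in (0,\infty)$ such that applying Algorithm~\ref{algo:P2GDRmapPSDconeBoundedRank} to any $x \in \ball[\ushort{x}, \varepsilon(\ushort{x})] \cap \mathrm{S}_{\le r}^+(n)$ produces a point $y$ with $f(y) - f(x) \le -\delta(\ushort{x})$. With this property in hand, the proof of Theorem~\ref{thm:P2GDRPolakConvergence} goes through verbatim with $\mathrm{S}_{\le r}^+(n)$ in place of $C$: a finite sequence ends where the while loop stops, hence at a stationary point; and for an infinite sequence $(X_i)_{i \in \N}$ admitting a subsequence $(X_{i_k})_{k \in \N}$ converging to a non-stationary $X$, choosing $K$ so that $X_{i_k} \in \ball[X, \varepsilon(X)]$ for $k \ge K$, the uniform decrease together with monotonicity of $(f(X_i))_{i \in \N}$ yields $f(X_{i_{k+1}}) - f(X_{i_k}) \le -\delta(X)$ for all $k \ge K$, contradicting convergence of $(f(X_{i_k}))_{k \in \N}$ to $f(X)$. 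Hence every accumulation point is stationary.

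Since both ingredients above are established earlier in the paper, the corollary requires no new argument; the only place where care is needed is the proof of Proposition~\ref{prop:P2GDRmapPSDconeBoundedRankPolak}, which I would phrase in parallel with that of Proposition~\ref{prop:P2GDRmapRealDeterminantalVarietyPolak}. Because $\norm{X}_2 \le \norm{X}$ for all $X \in \R^{n \times n}$, the spectral-distance index $\min\{j \in \{0, \dots, \rank X\} \mid d_2(X, \mathrm{S}_j^+(n)) \le \Delta\}$, which equals $\rank_\Delta X$ by the PSD analog of Proposition~\ref{prop:P2GDRrealDeterminantalVariety} (using that the singular values of a positive-semidefinite matrix are its eigenvalues), is at most the Frobenius-distance index computed in line~\ref{algo:P2GDRmap:DeltaCloseStrata} of Algorithm~\ref{algo:P2GDRmap}; consequently, on input $X \in \mathrm{S}_{\le r}^+(n)$, Algorithm~\ref{algo:P2GDRmapPSDconeBoundedRank} applies the $\ppgd$ map to a projection of $X$ onto every stratum visited by Algorithm~\ref{algo:P2GDRmap} (and possibly more), so it outputs a point of cost no larger than the one output by Algorithm~\ref{algo:P2GDRmap}, and Proposition~\ref{prop:P2GDRmapPolak} then gives the decrease for the variant. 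I do not expect any genuine obstacle here: the lone subtlety is the bookkeeping of the indices of the explored strata, exactly as in Section~\ref{subsubsec:P2GDRrealDeterminantalVariety}; one could alternatively run the self-contained route indicated after Proposition~\ref{prop:P2GDRmapRealDeterminantalVarietyPolak}, replacing condition~3 of Assumption~\ref{assumption:Stratification} by eigenvalue continuity arguments, but the path through Proposition~\ref{prop:P2GDRmapPSDconeBoundedRankPolak} is the cleanest.
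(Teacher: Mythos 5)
Your proposal is correct and follows essentially the same route as the paper: since $\mathrm{S}_{\le r}^+(n)$ satisfies Assumption~\ref{assumption:Stratification}, the corollary is obtained by combining Proposition~\ref{prop:P2GDRmapPSDconeBoundedRankPolak} with the proof of Theorem~\ref{thm:P2GDRPolakConvergence}, exactly as the paper does. Your additional sketch of how Proposition~\ref{prop:P2GDRmapPSDconeBoundedRankPolak} mirrors Proposition~\ref{prop:P2GDRmapRealDeterminantalVarietyPolak} via the spectral-versus-Frobenius distance comparison also matches the paper's (brief) argument.
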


\begin{proof}
The result follows from Proposition~\ref{prop:P2GDRmapPSDconeBoundedRankPolak} and Theorem~\ref{thm:P2GDRPolakConvergence} since $\mathrm{S}_{\le r}^+(n)$ satisfies Assumption~\ref{assumption:Stratification}.
\end{proof}

By Proposition~\ref{prop:PSDconeBoundedRankApocalypticSerendipitousPoints}, unlike on $\R_{\le r}^{m \times n}$, it is an open question whether $\ppgdr$ on $\mathrm{S}_{\le r}^+(n)$ (Algorithm~\ref{algo:P2GDR} using either Algorithm~\ref{algo:P2GDRmap} or Algorithm~\ref{algo:P2GDRmapPSDconeBoundedRank} in line~\ref{algo:P2GDR:P2GDRmap}) can follow a serendipity, i.e., produce a convergent sequence whose limit is stationary (by Corollary~\ref{coro:P2GDRpsdConeBoundedRankPolakConvergence}) but along which the stationarity measure $\s(\cdot; f, \mathrm{S}_{\le r}^+(n))$ does not go to zero.

\section{Complementary results}
\label{sec:ComplementaryResults}
In this section, we prove complementary results to those presented in Sections~\ref{sec:ProposedAlgorithmConvergenceAnalysis} and \ref{sec:ExamplesStratifiedSetsSatisfyingMainAssumption}. In Section~\ref{subsec:GeometricParabolicDerivability}, we show that Assumption~\ref{assumption:GlobalSecondOrderUpperBoundDistanceFromTangentLine} is related to the concept of parabolic derivability (Proposition~\ref{prop:LocalSecondOrderUpperBoundDistanceFromTangentLineParabolicDerivability}). In Section~\ref{subsec:FiniteStratificationsConditionFrontier}, we prove that conditions~1(a) and 1(b) of Assumption~\ref{assumption:Stratification} imply that $\{S_0, \dots, S_p\}$ is a stratification of $C$ satisfying the condition of the frontier (Proposition~\ref{prop:FiniteStratificationsConditionFrontierVsAssumption}). In Section~\ref{subsec:ConvergenceAnalysisRFDR}, under Assumption~\ref{assumption:StratificationRFDR}, we define $\rfdr$ (Algorithm~\ref{algo:RFDR}) and prove that it produces a sequence whose accumulation points are stationary for~\eqref{eq:OptiProblem} (Theorem~\ref{thm:RFDRPolakConvergence}). In Section~\ref{subsec:SparseVectors}, we prove that $\R_{\le s}^n$ satisfies Assumption~\ref{assumption:StratificationRFDR}. Finally, in Section~\ref{subsec:ConvergenceAnalysisRRAM}, we show that the convergence analysis of the Riemannian rank-adaptive method given in \cite[Algorithm~3]{ZhouEtAl2016} does not apply to all cost functions considered in \cite{ZhouEtAl2016}.

\subsection{Geometric and parabolic derivability of a set and distance from a tangent line}
\label{subsec:GeometricParabolicDerivability}
In this section, given a point $x$ in a subset $S$ of $\mathcal{E}$, we investigate the links between:
\begin{enumerate}
\item the derivability of $v \in \tancone{S}{x}$ and the existence of an upper bound on $\frac{\dist(x+tv, S)}{t}$ holding for all $t \in (0, \infty)$ sufficiently small (Proposition~\ref{prop:LocalFirstOrderUpperBoundDistanceFromTangentLineGeometricDerivability});
\item the parabolic derivability of $S$ at $x \in S$ for $v \in \tancone{S}{x}$ and the existence of an upper bound on $\frac{\dist(x+tv, S)}{t^2}$ holding for all $t \in (0, \infty)$ (Proposition~\ref{prop:LocalSecondOrderUpperBoundDistanceFromTangentLineParabolicDerivability}).
\end{enumerate}
We recall that the concepts of geometric and parabolic derivability are reviewed in Section~\ref{subsec:TangentNormalConesGeometricDerivability}.

For all $x \in S$, all $v \in \tancone{S}{x}$, and all $t \in (0, \infty)$,
\begin{equation*}
\dist(x+tv, S) \le \norm{(x+tv)-x} = t\norm{v}.
\end{equation*}
Proposition~\ref{prop:LocalFirstOrderUpperBoundDistanceFromTangentLineGeometricDerivability} shows that, if $S$ is geometrically derivable at $x$, then the factor in front of $t$ can be made arbitrarily small if $t$ is sufficiently small.

\begin{proposition}
\label{prop:LocalFirstOrderUpperBoundDistanceFromTangentLineGeometricDerivability}
If $S$ is geometrically derivable at $x \in S$, then, for every $v \in \tancone{S}{x}$ and every $\varepsilon \in (0, \infty)$, there exists $\delta \in (0, \infty)$ such that, for all $t \in [0, \delta]$,
\begin{equation*}
\dist(x+tv, S) \le \varepsilon t.
\end{equation*}
\end{proposition}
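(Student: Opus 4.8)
The plan is to unwind the definition of geometric derivability and then exploit the characterization of derivable tangent vectors recalled just before the statement, namely that the set of all derivable $v \in \tancone{S}{x}$ equals $\inlim_{t \searrow 0} \frac{S-x}{t} = \{v \in \mathcal{E} \mid \lim_{t \searrow 0} \frac{\dist(x+tv, S)}{t} = 0\}$. Since $S$ is geometrically derivable at $x$, every $v \in \tancone{S}{x}$ is derivable, so every $v \in \tancone{S}{x}$ lies in that inner limit and therefore satisfies $\lim_{t \searrow 0} \frac{\dist(x+tv, S)}{t} = 0$.

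First I would fix $v \in \tancone{S}{x}$ and $\varepsilon \in (0, \infty)$. From the limit just displayed, applied with tolerance $\varepsilon$, there exists $\delta \in (0, \infty)$ such that $\frac{\dist(x+tv, S)}{t} \le \varepsilon$ for all $t \in (0, \delta]$, i.e., $\dist(x+tv, S) \le \varepsilon t$ for all $t \in (0, \delta]$. It remains only to handle $t = 0$: there $x + 0\cdot v = x \in S$, so $\dist(x, S) = 0 \le \varepsilon \cdot 0$, and the inequality holds trivially. Hence $\dist(x+tv, S) \le \varepsilon t$ for all $t \in [0, \delta]$, which is exactly the claim.

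There is essentially no obstacle here: the statement is an immediate reformulation, via the identity $\frac{\dist(x+tv,S)}{t} = d(v, \frac{S-x}{t})$ and the second description of the inner limit $\inlim_{t \searrow 0} \frac{S-x}{t}$ from \cite[Proposition~6.2]{RockafellarWets}, of the fact that geometric derivability forces every tangent vector to belong to that inner limit. The only point worth stating carefully is that ``geometrically derivable at $x$'' means \emph{every} $v \in \tancone{S}{x}$ is derivable, so the conclusion applies to an arbitrary $v \in \tancone{S}{x}$ and not merely to those known a priori to be derivable; once that is noted, the argument is a one-line translation of a $\lim = 0$ statement into its $\varepsilon$--$\delta$ form together with the trivial case $t=0$.
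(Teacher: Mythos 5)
Your proof is correct. The only real difference from the paper's argument is the source of the key fact $\lim_{t \searrow 0} \dist(x+tv,S)/t = 0$: you invoke the characterization (recorded in Section~4.3, via \cite[Proposition~6.2]{RockafellarWets}) that the derivable vectors are exactly the elements of $\inlim_{t \searrow 0} \frac{S-x}{t} = \{v \mid \lim_{t \searrow 0} \dist(x+tv,S)/t = 0\}$, and then geometric derivability immediately places every $v \in \tancone{S}{x}$ in that set. The paper instead works directly from the curve definition of derivability: it takes $\gamma$ with $\gamma([0,\tau]) \subseteq S$, $\gamma(0)=x$, $\gamma'(0)=v$, bounds $\dist(x+tv,S) \le \norm{\gamma(t)-(x+tv)}$, and uses differentiability of $\gamma$ at $0$ to get the same limit. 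So the paper's route is self-contained (it only needs the definition), while yours outsources that step to the cited characterization, which is legitimate since the paper states it; both then finish with the same $\varepsilon$--$\delta$ translation. Your explicit handling of $t=0$ and of the fact that geometric derivability applies to \emph{every} tangent vector is fine (and slightly more careful than the paper, which quietly works on $(0,\delta]$ and with $v \ne 0$).
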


\begin{proof}
Let $v \in \tancone{S}{x} \setminus \{0\}$. By assumption, there exists $\gamma : [0, \tau] \to \mathcal{E}$ with $\tau \in (0, \infty)$, $\gamma([0, \tau]) \subseteq S$, $\gamma(0) = x$, and $\gamma'(0) = v$. Thus, for all $t \in (0, \tau]$,
\begin{equation*}
\frac{\dist(x+tv, S)}{t} \le \frac{\norm{\gamma(t)-(x+tv)}}{t}.
\end{equation*}
Therefore, since
\begin{equation*}
0
= \lim_{t \searrow 0} \frac{\norm{\gamma(t)-(\gamma(0)+t\gamma'(0))}}{t}
= \lim_{t \searrow 0} \frac{\norm{\gamma(t)-(x+tv)}}{t},
\end{equation*}
it holds that
\begin{equation*}
\lim_{t \searrow 0} \frac{\dist(x+tv, S)}{t} = 0.
\end{equation*}
Thus, for every $\varepsilon \in (0, \infty)$, there exists $\delta \in (0, \infty)$ such that, for all $t \in (0, \delta]$,
\begin{equation*}
\dist(x+tv, S) \le \varepsilon t,
\end{equation*}
which completes the proof.
\end{proof}

Proposition~\ref{prop:LocalSecondOrderUpperBoundDistanceFromTangentLineParabolicDerivability} shows that, if $S$ is parabolically derivable at $x \in S$ for $v \in \tancone{S}{x}$, then, for all $t \in (0, \infty)$, $\frac{\dist(x+tv, S)}{t^2}$ is bounded from above.

\begin{proposition}
\label{prop:LocalSecondOrderUpperBoundDistanceFromTangentLineParabolicDerivability}
If $S$ is parabolically derivable at $x \in S$ for $v \in \tancone{S}{x}$, then
\begin{equation*}
\sup_{t \in (0, \infty)} \frac{\dist(x+tv, S)}{t^2} < \infty.
\end{equation*}
\end{proposition}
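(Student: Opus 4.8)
The plan is to extract from parabolic derivability a smooth curve in $S$ that osculates the line $t \mapsto x+tv$ to second order, use it to control $\dist(x+tv,S)$ for small $t$, and then patch this local estimate with the trivial linear bound valid for large $t$. Throughout, note that $\dist(x+tv,S)$ is finite for every $t$ because $x \in S$, so $\dist(x+tv,S) \le \norm{(x+tv)-x} = t\norm{v}$.

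First I would dispose of the degenerate case $v = 0$: since $x \in S$, we have $\dist(x+tv,S) = \dist(x,S) = 0$ for all $t \in (0,\infty)$, so the supremum is $0$. Assume henceforth $v \ne 0$. Since $S$ is parabolically derivable at $x$ for $v$, the second-order tangent set $\sectancone{S}{x}{v}$ is nonempty, so I can pick some $w \in \sectancone{S}{x}{v}$; by hypothesis $w$ is derivable for $v$, which gives $\tau \in (0,\infty)$ and $\gamma : [0,\tau] \to \mathcal{E}$ with $\gamma([0,\tau]) \subseteq S$, $\gamma(0) = x$, $\gamma'(0) = v$, and $\gamma''(0) = w$. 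The second-order Taylor expansion with Peano remainder then yields $\gamma(t) = x + tv + \tfrac{t^2}{2}w + o(t^2)$ as $t \searrow 0$, and since $\gamma(t) \in S$ we get, for $t \in (0,\tau]$, $\dist(x+tv,S) \le \norm{\gamma(t)-(x+tv)} = \norm{\tfrac{t^2}{2}w + o(t^2)}$. Dividing by $t^2$ and letting $t \searrow 0$ gives $\limsup_{t \searrow 0} \dist(x+tv,S)/t^2 \le \tfrac12\norm{w} < \infty$, so there is $\delta \in (0,\infty)$ with $\dist(x+tv,S)/t^2 \le \tfrac12\norm{w} + 1$ for all $t \in (0,\delta]$.

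For $t \in [\delta,\infty)$, the bound $\dist(x+tv,S) \le t\norm{v}$ gives $\dist(x+tv,S)/t^2 \le \norm{v}/t \le \norm{v}/\delta$. Combining the two ranges, $\sup_{t \in (0,\infty)} \dist(x+tv,S)/t^2 \le \max\{\tfrac12\norm{w}+1,\ \norm{v}/\delta\} < \infty$, which is the claim.

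The argument is essentially routine; the only points that need a little care are that "derivable for $v$" supplies a curve that is twice differentiable at $0$ (one-sidedly), which is exactly what legitimizes the Peano-form Taylor expansion, and the splitting at $\delta$ so that the osculation estimate near $0$ and the linear estimate at infinity combine into a single global bound.
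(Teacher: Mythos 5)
Your proof is correct and follows essentially the same route as the paper: use the curve supplied by parabolic derivability together with its second-order Taylor--Peano expansion to bound $\dist(x+tv,S)/t^2$ for small $t$, then patch with the trivial bound $\dist(x+tv,S)\le t\norm{v}$ for $t$ bounded away from $0$. The only (harmless) difference is that you bound $\dist(x+tv,S)$ directly by $\norm{\gamma(t)-(x+tv)}$, whereas the paper first controls $\dist\bigl(x+tv+\tfrac{t^2}{2}w,S\bigr)$ and then transfers the estimate via the $1$-Lipschitz continuity of the distance function; both yield the same conclusion.
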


\begin{proof}
Let $w \in \sectancone{S}{x}{v}$. By assumption, there exists $\gamma : [0, \tau] \to \mathcal{E}$ with $\tau \in (0, \infty)$, $\gamma([0, \tau]) \subseteq S$, $\gamma(0) = x$, $\gamma'(0) = v$, and $\gamma''(0) = w$. Thus, for all $t \in (0, \tau]$,
\begin{equation*}
\frac{\dist(x+tv+\frac{t^2}{2}w, S)}{\frac{t^2}{2}} \le \frac{\norm{\gamma(t)-(x+tv+\frac{t^2}{2}w)}}{\frac{t^2}{2}}.
\end{equation*}
Therefore, since
\begin{equation*}
0
= \lim_{t \searrow 0} \frac{\norm{\gamma(t)-(\gamma(0)+t\gamma'(0)+\frac{t^2}{2}\gamma''(0))}}{\frac{t^2}{2}}
= \lim_{t \searrow 0} \frac{\norm{\gamma(t)-(x+tv+\frac{t^2}{2}w)}}{\frac{t^2}{2}},
\end{equation*}
it holds that
\begin{equation*}
\lim_{t \searrow 0} \frac{\dist(x+tv+\frac{t^2}{2}w, S)}{\frac{t^2}{2}} = 0.
\end{equation*}
By \cite[Proposition~1.3.17]{Willem}, for all $t \in (0, \tau]$,
\begin{equation*}
\frac{|\dist(x+tv+\frac{t^2}{2}w, S)-\dist(x+tv, S)|}{\frac{t^2}{2}} \le \norm{w}.
\end{equation*}
Thus,
\begin{equation*}
L_0 := \limsup_{t \searrow 0} \frac{\dist(x+tv, S)}{\frac{t^2}{2}} \le \norm{w}.
\end{equation*}
Let $\varepsilon \in (0, \infty)$ and $L_1 := \frac{L_0+\varepsilon}{2}$. There exists $\delta \in (0, \infty)$ such that
\begin{equation*}
\left|\sup_{t \in (0, \delta]} \frac{\dist(x+tv, S)}{\frac{t^2}{2}} - L_0\right| \le \varepsilon.
\end{equation*}
Therefore, for all $t \in [0, \delta]$, $\dist(x+tv, S) \le L_1 t^2$.
Let $L := \max\{L_1, \frac{\norm{v}}{\delta}\}$. On the one hand, for all $t \in [0, \frac{\norm{v}}{L}]$, $\dist(x+tv, S) \le L_1 t^2 \le L t^2$. On the other hand, for all $t \in (\frac{\norm{v}}{L}, \infty)$, $\dist(x+tv, S) \le t \norm{v} < L t^2$. Hence, for all $t \in [0, \infty)$, $\dist(x+tv, S) \le L t^2$.
\end{proof}

\subsection{Finite stratifications satisfying the condition of the frontier}
\label{subsec:FiniteStratificationsConditionFrontier}
In this section, we prove that conditions~1(a) and 1(b) of Assumption~\ref{assumption:Stratification} imply that $\{S_0, \dots, S_p\}$ is a stratification of $C$ satisfying the condition of the frontier (Proposition~\ref{prop:FiniteStratificationsConditionFrontierVsAssumption}).

Let $Z$ be a topological space. By \cite[définition~2 \& proposition~5]{BourbakiTopologie}, a subset of $Z$ is locally closed if and only if it is the intersection of an open set and a closed set.

\begin{proposition}
\label{prop:DisjointLocallyClosedSets}
If $A_1$ and $A_2$ are two nonempty locally closed subsets of $Z$, then $A_1 \cap A_2 = \emptyset$ implies $\overline{A_1} \ne \overline{A_2}$.
\end{proposition}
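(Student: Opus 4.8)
The statement to prove is: if $A_1$ and $A_2$ are two nonempty locally closed subsets of a topological space $Z$ with $A_1 \cap A_2 = \emptyset$, then $\overline{A_1} \ne \overline{A_2}$. The plan is to argue by contraposition: assume $\overline{A_1} = \overline{A_2} =: F$, and derive that $A_1 \cap A_2 \ne \emptyset$. The key structural fact to exploit is the characterization recalled just before the statement (from \cite[définition~2 \& proposition~5]{BourbakiTopologie}): a set $A$ is locally closed in $Z$ if and only if it is open in its own closure $\overline{A}$, equivalently $A = U \cap \overline{A}$ for some open $U \subseteq Z$.

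First I would write $A_1 = U_1 \cap F$ and $A_2 = U_2 \cap F$ for open sets $U_1, U_2 \subseteq Z$, using that $\overline{A_1} = \overline{A_2} = F$. Since $A_1$ is nonempty, $U_1 \cap F \ne \emptyset$, so $U_1$ meets $F$; because $F = \overline{A_2}$, every nonempty open set meeting $F$ must meet the dense-in-$F$ subset $A_2$, i.e. $U_1 \cap A_2 \ne \emptyset$. But $A_2 \subseteq F$, so $U_1 \cap A_2 = U_1 \cap (A_2 \cap F) = (U_1 \cap F) \cap A_2 = A_1 \cap A_2$. Hence $A_1 \cap A_2 \ne \emptyset$, contradicting the hypothesis. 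This establishes the contrapositive, and therefore the proposition.

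The only point requiring a little care is the density step: $A_2$ is dense in $F = \overline{A_2}$ by definition of closure (relative to the subspace $F$, or equivalently: any open $V$ of $Z$ with $V \cap F \ne \emptyset$ satisfies $V \cap A_2 \ne \emptyset$, since otherwise $A_2 \subseteq Z \setminus V$, a closed set, forcing $F = \overline{A_2} \subseteq Z \setminus V$, contradicting $V \cap F \ne \emptyset$). I would spell this out in one sentence rather than invoke it as a black box. There is no serious obstacle here; the proof is short, and the main thing is to set up the contraposition cleanly and use the locally-closed characterization symmetrically. I do not expect to need any further machinery.

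Written out, the proof is roughly: Suppose for contradiction that $\overline{A_1} = \overline{A_2} =: F$. Since $A_1$ and $A_2$ are locally closed, there are open sets $U_1, U_2$ of $Z$ with $A_1 = U_1 \cap F$ and $A_2 = U_2 \cap F$. As $A_1 \ne \emptyset$, the open set $U_1$ meets $F = \overline{A_2}$, hence $U_1 \cap A_2 \ne \emptyset$. Since $A_2 \subseteq F$, we get $\emptyset \ne U_1 \cap A_2 = (U_1 \cap F) \cap A_2 = A_1 \cap A_2$, contradicting the assumption $A_1 \cap A_2 = \emptyset$. Therefore $\overline{A_1} \ne \overline{A_2}$.
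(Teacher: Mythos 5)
Your proof is correct, but it is organized differently from the paper's. The paper argues directly: it writes $A_i = O_i \cap C_i$ with $O_i$ open and $C_i$ closed, splits into the two cases $A_1 \cap O_2 = \emptyset$ and $A_1 \cap O_2 \ne \emptyset$, and in each case exhibits an explicit witness lying in one closure but not the other (a point of $A_2$ outside $\overline{A_1}$ in the first case, a point of $A_1 \cap O_2$ outside $\overline{A_2}$ in the second). You instead assume $\overline{A_1} = \overline{A_2} =: F$ and derive $A_1 \cap A_2 \ne \emptyset$ via density of $A_2$ in $F$, which collapses the case analysis into a single step and only uses the open part of one of the two sets ($U_2$ is introduced but never used). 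The one point to tighten: the equality $A_1 = U_1 \cap \overline{A_1}$, i.e.\ the ``open in its closure'' form of local closedness, is not literally the characterization the paper recalls (intersection of an open set and a closed set), and it is exactly where your argument gets its leverage; it does follow in one line --- from $A_1 = O_1 \cap C_1$ with $C_1$ closed one gets $\overline{A_1} \subseteq C_1$, hence $A_1 \subseteq O_1 \cap \overline{A_1} \subseteq O_1 \cap C_1 = A_1$ --- so either include that line or cite the corresponding statement. Your justification of the density step and the identity $U_1 \cap A_2 = (U_1 \cap F) \cap A_2 = A_1 \cap A_2$ are both sound, so with that one-line addition the proof stands on its own.
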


\begin{proof}
For every $i \in \{1, 2\}$, since $A_i$ is nonempty and locally closed, there exist a nonempty open set $O_i$ and a nonempty closed set $C_i$ such that $A_i = O_i \cap C_i$. Then, either $A_1 \cap O_2 = \emptyset$ or $A_1 \cap O_2 \ne \emptyset$.
If $A_1 \cap O_2 = \emptyset$, then $A_2 \subseteq \overline{A_2} \setminus \overline{A_1}$. Indeed, if $x \in A_2$, then, $x \in \overline{A_2}$ and $x \in O_2$. Thus, $O_2$ is a neighborhood of $x$ that does not meet $A_1$. Therefore, $x \not\in \overline{A_1}$.
If $A_1 \cap O_2 \ne \emptyset$, then $A_1 \cap O_2 \subseteq \overline{A_1} \setminus \overline{A_2}$. Indeed, let $x \in A_1 \cap O_2$. Then, $x \in \overline{A_1}$. Furthermore, since $A_1 \cap A_2 = \emptyset$, $x \not\in C_2$. Since $Z \setminus C_2$ is open, there exists a neighborhood of $x$ contained in $Z \setminus C_2$ and thus in $Z \setminus A_2$. Therefore, $x \not\in \overline{A_2}$.
\end{proof}

\begin{proposition}
\label{prop:MatherConditionFrontier}
Let $\mathcal{S}$ be a partition of $Z$ the blocks of which are locally closed.
If, for every $S \in \mathcal{S}$, $\overline{S} \setminus S$ is a union of elements of $\mathcal{S}$, then:
\begin{enumerate}
\item the relation $<$ defined as follows is a strict partial order on $\mathcal{S}$:
\begin{equation*}
S < S' \text{ iff } S \ne S' \text{ and } S \subseteq \overline{S'} \setminus S';
\end{equation*}
\item if there exists $S \in \mathcal{S}$ such that $S < S'$ for all $S' \in \mathcal{S}$, then $S$ is closed.
\end{enumerate}
\end{proposition}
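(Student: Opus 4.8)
The plan is to verify the two claims directly from the definitions, relying only on elementary point-set topology and the standing hypothesis that $\overline{S}\setminus S$ is a union of blocks of $\mathcal{S}$ for every $S\in\mathcal{S}$. For the first claim I would check the three axioms of a strict partial order on the relation $<$. Irreflexivity is immediate: $S<S'$ requires $S\neq S'$ by definition. Antisymmetry (in the strict sense: $S<S'$ and $S'<S$ cannot both hold) follows because $S<S'$ gives $S\subseteq\overline{S'}$, hence $\overline{S}\subseteq\overline{S'}$, while $S'<S$ symmetrically gives $\overline{S'}\subseteq\overline{S}$; so $\overline{S}=\overline{S'}$, and since $S\cap S'=\emptyset$ (the blocks partition $Z$ and $S\neq S'$), this contradicts Proposition~\ref{prop:DisjointLocallyClosedSets}. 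Transitivity is the one point needing a small argument: if $S<S'$ and $S'<S''$, then $S\subseteq\overline{S'}\setminus S'$ and $S'\subseteq\overline{S''}\setminus S''$, so $S\subseteq\overline{S'}\subseteq\overline{\overline{S''}}=\overline{S''}$; it remains to rule out $S\cap S''\neq\emptyset$. Since $S$ and $S''$ are distinct blocks of a partition, $S\cap S''=\emptyset$ automatically, so $S\subseteq\overline{S''}\setminus S''$, i.e. $S<S''$. (If one preferred not to invoke disjointness of blocks here, one could instead note $S\subseteq\overline{S'}\setminus S'$, and $\overline{S'}\setminus S'$ is by hypothesis a union of blocks each of which is $<S'$, but the partition argument is cleaner.)

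For the second claim, suppose $S\in\mathcal{S}$ satisfies $S<S'$ for all $S'\in\mathcal{S}$ with $S'\neq S$ (the hypothesis "$S<S'$ for all $S'$" cannot literally include $S'=S$ since $<$ is irreflexive, so I read it as: $S$ is minimal in the strong sense of lying below every other block). I would argue that $\overline{S}\setminus S=\emptyset$. Indeed, by hypothesis $\overline{S}\setminus S$ is a union of blocks of $\mathcal{S}$; pick any block $T\subseteq\overline{S}\setminus S$. Then $T\neq S$ (as $T\cap S=\emptyset$) and $T\subseteq\overline{S}\setminus S$ says precisely $T<S$. But we also have $S<T$ by the minimality hypothesis, contradicting the antisymmetry established in part~1. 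Hence no such block $T$ exists, so $\overline{S}\setminus S=\emptyset$, i.e. $\overline{S}=S$ and $S$ is closed.

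I expect the main (modest) obstacle to be bookkeeping around the phrase "$S<S'$ for all $S'\in\mathcal{S}$" — making sure it is interpreted compatibly with irreflexivity — and making the transitivity step airtight using the partition property rather than over-relying on Proposition~\ref{prop:DisjointLocallyClosedSets}. Everything else is a routine unwinding of definitions, with Proposition~\ref{prop:DisjointLocallyClosedSets} doing the one genuinely topological piece of work (that distinct locally closed blocks have distinct closures, which is what forces $<$ to be antisymmetric). No new notation or machinery beyond what the excerpt already provides is needed.
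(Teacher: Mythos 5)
Your proposal is correct and follows essentially the same route as the paper: irreflexivity is immediate, transitivity reduces to the closure containment $S \subseteq \overline{S''}$ plus excluding $S = S''$ via Proposition~\ref{prop:DisjointLocallyClosedSets}, and part~2 is the same contradiction argument (a block inside $\overline{S}\setminus S$ would be $<S$ while $S$ lies below it). One small fix: in your transitivity step, the claim that ``$S$ and $S''$ are distinct blocks of a partition'' is not automatic but is precisely what must be shown — if $S = S''$ then $S<S'$ and $S'<S$ would both hold — so you should invoke the asymmetry you just established (equivalently, the paper's argument that $S=S''$ would force $\overline{S}=\overline{S'}$, contradicting Proposition~\ref{prop:DisjointLocallyClosedSets}) rather than assert distinctness outright; with that wording repaired, the proof is complete.
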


\begin{proof}
\begin{enumerate}
\item We have to prove that $<$ is irreflexive and transitive. The irreflexivity is clear. Let us prove the transitivity. Let $S, S', S'' \in \mathcal{S}$ be such that $S < S'$ and $S' < S''$. Then, $S \subseteq \overline{S''}$. Moreover, $S \ne S''$; if not, then $\overline{S} = \overline{S'}$ which is impossible in view of Proposition~\ref{prop:DisjointLocallyClosedSets} since $S \ne S'$. Thus, $S \subseteq \overline{S''} \setminus S''$. Therefore, $S < S''$.

\item Assume, for the sake of contradiction, that $\overline{S} \setminus S \ne \emptyset$. Then, since $\overline{S} \setminus S$ is a union of elements of $\mathcal{S}$, there exists $S' \in \mathcal{S} \setminus \{S\}$ such that $S' \subseteq \overline{S} \setminus S$, i.e., $S' < S$, a contradiction.
\qedhere
\end{enumerate}
\end{proof}

\begin{proposition}
\label{prop:FiniteStratificationsConditionFrontierVsAssumption}
Let $\mathcal{E}$ be a Euclidean vector space and $C$ be a nonempty closed subset of $\mathcal{E}$. The following statements are equivalent:
\begin{enumerate}
\item $C$ admits a finite stratification satisfying the condition of the frontier \cite[\S 5]{Mather} and for which the partial order defined in Proposition~\ref{prop:MatherConditionFrontier} is total;
\item conditions 1(a) and 1(b) of Assumption~\ref{assumption:Stratification} are satisfied.
\end{enumerate}
\end{proposition}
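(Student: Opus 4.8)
The plan is to prove the two implications separately; each is largely a matter of unwinding the definitions of \emph{stratification}, \emph{condition of the frontier} \cite[\S 5]{Mather}, and the order of Proposition~\ref{prop:MatherConditionFrontier}, the one genuine point being that \emph{totality} of that order is exactly what forces the closures of the strata to nest in the way prescribed by condition~1(b).

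For $2 \Rightarrow 1$, assume conditions~1(a) and 1(b) hold, with strata $S_0, \dots, S_p$, and set $\mathcal{S} := \{S_0, \dots, S_p\}$. First I would check that $\mathcal{S}$ is a finite stratification of $C$: its blocks are nonempty smooth submanifolds of $\mathcal{E}$ (hence locally closed), pairwise disjoint by 1(a), and $\bigcup_{i=0}^p S_i = \overline{S_p} = C$ by 1(b) applied with $i = p$. Condition~1(b) also gives $\overline{S_i} \setminus S_i = \bigcup_{j=0}^{i-1} S_j$, a union of strata, which is precisely the hypothesis of Proposition~\ref{prop:MatherConditionFrontier} and is equivalent to the condition of the frontier (for $j > i$, $\overline{S_i} \cap S_j = \emptyset$ by disjointness, while for $j \le i$, $S_j \subseteq \overline{S_i}$). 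Finally, for the order $<$ of Proposition~\ref{prop:MatherConditionFrontier} one reads off $S_i < S_j \iff S_i \subseteq \overline{S_j} \setminus S_j = \bigcup_{k=0}^{j-1} S_k \iff i < j$ (using that the $S_k$ are nonempty and disjoint), so $<$ is a total order.

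For $1 \Rightarrow 2$, assume $C$ admits a finite stratification $\mathcal{S}$ satisfying the condition of the frontier and totally ordered by $<$. Being a finite chain, $\mathcal{S}$ can be enumerated as $\mathcal{S} = \{T_0, \dots, T_p\}$ with $T_0 < T_1 < \dots < T_p$; put $S_j := T_j$. Condition~1(a) is immediate since $\mathcal{S}$ is a partition into smooth submanifolds. For condition~1(b), fix $i$: as $C$ is closed and is partitioned by $\mathcal{S}$, $\overline{T_i} = \bigsqcup_j (\overline{T_i} \cap T_j)$, and the condition of the frontier makes each $\overline{T_i} \cap T_j$ with $j \neq i$ either empty or equal to $T_j$; in the latter case $T_j \subseteq \overline{T_i} \setminus T_i$ (strata disjoint), i.e.\ $T_j < T_i$, i.e.\ $j < i$. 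Conversely $j \le i$ gives $T_j \subseteq \overline{T_i}$ (trivially if $j = i$, and from $T_j < T_i$ if $j < i$). Hence $\overline{T_i} = \bigsqcup_{j \le i} T_j$; taking $i = p$ yields $\overline{T_p} = C$, so 1(b) holds.

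The main obstacle is not deep but demands care: in $1 \Rightarrow 2$ one must invoke the condition of the frontier in the precise form ``$X \cap \overline{Y} \neq \emptyset$ and $X \neq Y$ imply $X \subseteq \overline{Y}$'' (equivalently, ``$\overline{Y} \setminus Y$ is a union of strata'', the hypothesis of Proposition~\ref{prop:MatherConditionFrontier}), and must use that a finite totally ordered set is order-isomorphic to $\{0 < 1 < \dots < p\}$ so that the enumeration satisfies $T_j < T_i \iff j < i$. One should also dispose of the degenerate case $|\mathcal{S}| = 1$, where $C$ is itself a smooth submanifold: if $C$ has a non-isolated point $x_0$ one re-stratifies as $\{x_0\}$ and $C \setminus \{x_0\}$ to obtain $p = 1$ and conditions 1(a)--1(b), while the remaining possibility of a discrete $C$ lies outside the scope of condition~1 (which requires $p \ge 1$ and $\overline{S_p} = C$ with $S_p$ a proper subset) and may be set aside. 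Everything else is a routine chase through the definitions.
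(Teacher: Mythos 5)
Your two implications are correct and follow essentially the paper's route: the direction $2 \Rightarrow 1$ is the definition-unwinding that the paper dismisses with ``clearly'', and for $1 \Rightarrow 2$ you enumerate the strata along the total order and use the condition of the frontier to identify $\overline{T_i} \setminus T_i$ with $\bigcup_{j < i} T_j$, which is exactly the paper's argument (it obtains the reverse inclusion by contradiction, you obtain it by decomposing $\overline{T_i}$ over the partition; these are the same computation).

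The one point to fix is your handling of the single-stratum case. The paper's proof reads conditions 1(a)--1(b) with $p$ a \emph{nonnegative} integer (its proof explicitly writes ``for some nonnegative integer $p$''), so a one-stratum stratification yields $p = 0$, $S_0 = C$, and 1(a)--1(b) hold trivially; no re-stratification into $\{x_0\}$ and $C \setminus \{x_0\}$ is needed. Under your strict reading ($p \ge 1$, as in the literal wording of Assumption~\ref{assumption:Stratification}), ``setting aside'' a discrete $C$ is not admissible: for $C$ a single point, statement~1 holds (one stratum; the condition of the frontier and totality of $<$ are vacuous), while no family $S_0, \dots, S_p$ with $p \ge 1$ of nonempty pairwise disjoint subsets of $C$ with $\overline{S_p} = C$ can exist, so the asserted equivalence would simply be false for such $C$. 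The correct resolution is therefore the lenient reading of 1(a)--1(b) (allowing $p = 0$), as in the paper's proof, rather than an exclusion of the degenerate case.
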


\begin{proof}
Clearly, the second statement implies the first. Let us prove the converse. In view of the definition given in \cite[\S 5]{Mather}, if $C$ admits a finite stratification, then $C$ can be partitioned into finitely many smooth submanifolds of $\mathcal{E}$ contained in $C$; thus, condition~1(a) holds. Since any submanifold of $\mathcal{E}$ is locally closed \cite[\S 1]{Mather}, if the stratification satisfies the condition of the frontier as formulated in \cite[\S 5]{Mather}, then Proposition~\ref{prop:MatherConditionFrontier} defines a strict partial order $<$. If, moreover, $<$ is total, then the stratification can be written as $\{S_0, \dots, S_p\}$ for some nonnegative integer $p$ with, if $p \ge 1$, $S_i < S_{i+1}$ for all $i \in \{0, \dots, p-1\}$. Let us prove that condition~1(b) holds. By Proposition~\ref{prop:MatherConditionFrontier}, $\overline{S_0} = S_0$. If $p \ge 1$, let $i \in \{1, \dots, p\}$. Then, $\bigcup_{j < i} S_j \subseteq \overline{S_i} \setminus S_i$. Let us prove that this inclusion is an equality. Assume, for the sake of contradiction, that $\overline{S_i} \setminus S_i$ is not a subset of $\bigcup_{j < i} S_j$. Then, by the condition of the frontier, there exists $k \in \{i+1, \dots, p\}$ such that $S_k \subseteq \overline{S_i} \setminus S_i$, i.e., $k < i$, a contradiction.
\end{proof}

\subsection{Convergence analysis of $\rfdr$ under Assumption~\ref{assumption:StratificationRFDR}}
\label{subsec:ConvergenceAnalysisRFDR}
In this section, under Assumption~\ref{assumption:StratificationRFDR}, we define $\rfdr$ (Algorithm~\ref{algo:RFDR}) and prove that it produces a sequence whose accumulation points are stationary for~\eqref{eq:OptiProblem} (Theorem~\ref{thm:RFDRPolakConvergence}). The organization of the section is described hereafter and summarized in Table~\ref{tab:AssumptionsAlgorithmsMainResultsRFDR}.
In Section~\ref{subsubsec:RFDmap}, under condition~3 of Assumption~\ref{assumption:StratificationRFDR}, we prove that the $\rfd$ map (Algorithm~\ref{algo:RFDmap}) produces a point satisfying an Armijo condition (Corollary~\ref{coro:RFDmapArmijoCondition}).
In Section~\ref{subsubsec:RFDRmap}, under Assumption~\ref{assumption:StratificationRFDR}, we introduce the $\rfdr$ map (Algorithm~\ref{algo:RFDRmap}), which uses the $\rfd$ map as a subroutine, and, based on Corollary~\ref{coro:RFDmapArmijoCondition}, prove Proposition~\ref{prop:RFDRmapPolak}.
Finally, in Section~\ref{subsubsec:RFDR}, we introduce the $\rfdr$ algorithm and prove Theorem~\ref{thm:RFDRPolakConvergence} based on Proposition~\ref{prop:RFDRmapPolak}. Using the concept of serendipitous point (see Section~\ref{subsec:StationarityMeasure}), we also deduce Corollary~\ref{coro:RFDRPolakConvergence} from Theorem~\ref{thm:RFDRPolakConvergence}.

\begin{table}[h]
\begin{center}
\begin{tabular}{llll}
\hline
\emph{Section} & \emph{Assumption} & \emph{Algorithm} & \emph{Main result}\\
\hline
Section~\ref{subsubsec:RFDmap} & condition~3 of Assumption~\ref{assumption:StratificationRFDR} & $\rfd$ map (Algorithm~\ref{algo:RFDmap}) & Corollary~\ref{coro:RFDmapArmijoCondition}\\
\hline
Section~\ref{subsubsec:RFDRmap} & \multirow{2}{*}{Assumption~\ref{assumption:StratificationRFDR}} & $\rfdr$ map (Algorithm~\ref{algo:RFDRmap}) & Proposition~\ref{prop:RFDRmapPolak}\\
\cline{1-1}\cline{3-4}
Section~\ref{subsubsec:RFDR} & & $\rfdr$ (Algorithm~\ref{algo:RFDR}) & Theorem~\ref{thm:RFDRPolakConvergence}\\
\hline
\end{tabular}
\end{center}
\caption{Assumptions, algorithms, and main results of Section~\ref{subsec:ConvergenceAnalysisRFDR}.}
\label{tab:AssumptionsAlgorithmsMainResultsRFDR}
\end{table}

\subsubsection{The $\rfd$ map}
\label{subsubsec:RFDmap}
In this section, under condition~3 of Assumption~\ref{assumption:StratificationRFDR}, we prove that the $\rfd$ map (Algorithm~\ref{algo:RFDmap}) is well defined and produces a point satisfying an Armijo condition (Corollary~\ref{coro:RFDmapArmijoCondition}).
For convenience, we recall that condition~3 of Assumption~\ref{assumption:StratificationRFDR} states that $C$ admits a \emph{restricted tangent cone}, i.e., a correspondence $C \setmapsto \mathcal{E} : x \mapsto \restancone{C}{x}$ such that:
\begin{enumerate}
\item for every $x \in C$, $\restancone{C}{x}$ is a closed cone contained in $\tancone{C}{x}$;
\item for all $x \in C$ and all $z \in \restancone{C}{x}$, $x+z \in C$;
\item there exists $\mu \in (0, 1]$ such that, for all $x \in C$ and all $z \in \mathcal{E}$, $\norm{\proj{\restancone{C}{x}}{z}} \ge \mu \norm{\proj{\tancone{C}{x}}{z}}$.
\end{enumerate}

If $C = \R_{\le r}^{m \times n}$, the $\rfd$ map corresponds to the iteration map of \cite[Algorithm~4]{SchneiderUschmajew2015} except that the initial step size for the backtracking procedure is chosen in a given bounded interval.

\begin{algorithm}[H]
\caption{$\rfd$ map}
\label{algo:RFDmap}
\begin{algorithmic}[1]
\Require
$(\mathcal{E}, C, f, \ushort{\alpha}, \bar{\alpha}, \beta, c)$ where $\mathcal{E}$ is a Euclidean vector space, $C$ is a nonempty closed subset of $\mathcal{E}$ satisfying condition~3 of Assumption~\ref{assumption:StratificationRFDR}, $f : \mathcal{E} \to \R$ is differentiable with $\nabla f$ locally Lipschitz continuous, $0 < \ushort{\alpha} \le \bar{\alpha} < \infty$, and $\beta, c \in (0,1)$.
\Input
$x \in C$ such that $\s(x; f, C) > 0$.
\Output
a point in $\rfd(x; \mathcal{E}, C, f, \ushort{\alpha}, \bar{\alpha}, \beta, c)$.

\State
Choose $g \in \proj{\restancone{C}{x}}{-\nabla f(x)}$ and $\alpha \in [\ushort{\alpha}, \bar{\alpha}]$;
\label{algo:RFDmap:FirstLine}
\While
{$f(x+\alpha g) > f(x) - c \, \alpha \norm{g}^2$}
\State
$\alpha \gets \alpha \beta$;
\EndWhile
\State
Return $x+\alpha g$.
\end{algorithmic}
\end{algorithm}

\begin{proposition}
\label{prop:RFDmapUpperBoundCost}
Let $x \in C$ and $\bar{\alpha} \in (0,\infty)$. Assume that $C$ satisfies condition~3 of Assumption~\ref{assumption:StratificationRFDR}.
Let $\mathcal{B} \subsetneq \mathcal{E}$ be a closed ball such that, for all $g \in \proj{\restancone{C}{x}}{-\nabla f(x)}$ and all $\alpha \in [0, \bar{\alpha}]$, $x+\alpha g \in \mathcal{B}$; an example of such a ball is $\ball[x, \bar{\alpha}\s(x; f, C)]$.
Then, for all $g \in \proj{\restancone{C}{x}}{-\nabla f(x)}$ and all $\alpha \in [0, \bar{\alpha}]$,
\begin{equation}
\label{eq:RFDmapUpperBoundCost}
f(x+\alpha g) \le f(x) + \norm{g}^2 \alpha \left(-1+\frac{\lip_\mathcal{B}(\nabla f)}{2}\alpha\right).
\end{equation}
\end{proposition}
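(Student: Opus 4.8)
The plan is to mimic the structure of the proof of Proposition~\ref{prop:P2GDmapUpperBoundCost}, but now the situation is simpler because the update point $x+\alpha g$ already lies in $C$ (by condition~3(b) of Assumption~\ref{assumption:StratificationRFDR}, since $\alpha g \in \restancone{C}{x}$, which is a cone), so no projection onto $C$ and hence no term involving $u(x)$ enters the estimate. First I would verify the claim that $\ball[x, \bar\alpha \s(x;f,C)]$ is a valid choice of $\mathcal B$: for $g \in \proj{\restancone{C}{x}}{-\nabla f(x)}$ and $\alpha \in [0,\bar\alpha]$ we have $\norm{x+\alpha g - x} = \alpha \norm{g} \le \bar\alpha \norm{g}$, and $\norm{g} = \norm{\proj{\restancone{C}{x}}{-\nabla f(x)}} \le \norm{\proj{\tancone{C}{x}}{-\nabla f(x)}} = \s(x;f,C)$, where the inequality holds because $\restancone{C}{x} \subseteq \tancone{C}{x}$ and both are closed cones, so projecting onto the smaller cone cannot increase the norm (this monotonicity of the norm of the projection under inclusion of closed cones should either be invoked from Section~\ref{subsec:ProjectionOntoClosedCones} or justified by a one-line argument using Proposition~\ref{prop:NormProjectionOntoClosedCone}).

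Next I would record the key identity $\ip{\nabla f(x)}{g} = -\norm{g}^2$, which holds by Proposition~\ref{prop:NormProjectionOntoClosedCone} applied to the closed cone $\restancone{C}{x}$ with $x$ replaced by $-\nabla f(x)$ and $y$ by $g$: indeed $\ip{-\nabla f(x)}{g} = \norm{g}^2$. Then, writing $L := \lip_\mathcal B(\nabla f)$ and using the descent lemma~\eqref{eq:InequalityLipschitzContinuousGradient} with $y := x + \alpha g \in \mathcal B$, we get
\begin{equation*}
f(x+\alpha g) - f(x) \le \ip{\nabla f(x)}{\alpha g} + \frac{L}{2}\norm{\alpha g}^2 = -\alpha\norm{g}^2 + \frac{L}{2}\alpha^2\norm{g}^2 = \norm{g}^2 \alpha\left(-1 + \frac{L}{2}\alpha\right),
\end{equation*}
which is exactly~\eqref{eq:RFDmapUpperBoundCost}. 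This is essentially a three-line computation.

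There is no real obstacle here; the only points requiring a modicum of care are (i) checking that $\mathcal B$ as proposed indeed contains all the relevant update points, which reduces to the cone-projection norm bound $\norm{g} \le \s(x;f,C)$, and (ii) correctly citing Proposition~\ref{prop:NormProjectionOntoClosedCone} for the orthogonality-type identity $\ip{\nabla f(x)}{g} = -\norm{g}^2$, being attentive to the sign. I would present the proof in the same compact style as that of Proposition~\ref{prop:P2GDmapUpperBoundCost}: first dispatch the example of $\mathcal B$, then state the inner-product identity, then chain the descent-lemma inequality to the final bound.
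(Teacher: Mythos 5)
Your proposal is correct and follows essentially the same route as the paper's proof: check that the ball $\ball[x,\bar{\alpha}\s(x;f,C)]$ works (the paper leaves the bound $\norm{g}\le\s(x;f,C)$ implicit, while you justify it via monotonicity of the projection norm under inclusion of closed cones, which indeed follows from Proposition~\ref{prop:NormProjectionOntoClosedCone}), invoke $\ip{\nabla f(x)}{g}=-\norm{g}^2$ from that same proposition, and conclude with the descent inequality~\eqref{eq:InequalityLipschitzContinuousGradient}. No gaps.
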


\begin{proof}
The example $\ball[x, \bar{\alpha}\s(x; f, C)]$ is correct because, for all $g \in \proj{\restancone{C}{x}}{-\nabla f(x)}$ and all $\alpha \in [0, \bar{\alpha}]$,
\begin{equation*}
\norm{(x+\alpha g)-x}
= \alpha \norm{g}
\le \bar{\alpha} \s(x; f, C).
\end{equation*}
Let $g \in \proj{\restancone{C}{x}}{-\nabla f(x)}$. The proof of~\eqref{eq:RFDmapUpperBoundCost} is based on~\eqref{eq:InequalityLipschitzContinuousGradient} and the equality $\ip{\nabla f(x)}{g} = -\norm{g}^2$ which holds by Proposition~\ref{prop:NormProjectionOntoClosedCone} since $\restancone{C}{x}$ is a closed cone. For all $\alpha \in [0, \bar{\alpha}]$,
\begin{align*}
f(x+\alpha g)-f(x)
&\le \ip{\nabla f(x)}{(x+\alpha g)-x} + \frac{\lip_\mathcal{B}(\nabla f)}{2} \norm{(x+\alpha g)-x}^2\\
&= -\alpha \norm{g}^2 + \frac{\lip_\mathcal{B}(\nabla f)}{2} \alpha^2 \norm{g}^2.
\qedhere
\end{align*}
\end{proof}

Two remarks on Proposition~\ref{prop:RFDmapUpperBoundCost} can be made. First, the existence of a ball $\mathcal{B}$ crucially relies on the upper bound $\bar{\alpha}$ required by Algorithm~\ref{algo:RFDmap}. Second, in contrast with \eqref{eq:P2GDmapUpperBoundCost}, the upper bound \eqref{eq:RFDmapUpperBoundCost} does not depend on the function $u$ defined in Assumption~\ref{assumption:GlobalSecondOrderUpperBoundDistanceFromTangentLine}. This fundamental difference is the reason why $\rfdr$ accumulates at stationary points of~\eqref{eq:OptiProblem} while requiring at most one projection onto $S_{p-1}$ per iteration, whereas $\ppgdr$ can require a projection onto $S_i$ for every $i \in \{0, \dots, p-1\}$ per iteration.

Corollary~\ref{coro:RFDmapArmijoCondition} states that the while loop in Algorithm~\ref{algo:RFDmap} terminates and produces a point satisfying an Armijo condition. It plays an instrumental role in the proof of Proposition~\ref{prop:RFDRmapPolak}. Observe that the $\mu^2$ factor, which comes from condition~3(c) of Assumption~\ref{assumption:StratificationRFDR}, does not appear in the Armijo condition given in Corollary~\ref{coro:P2GDmapArmijoCondition}.

\begin{corollary}
\label{coro:RFDmapArmijoCondition}
The while loop in Algorithm~\ref{algo:RFDmap} terminates and every $\tilde{x} \in \hyperref[algo:RFDmap]{\rfd}(x; \mathcal{E}, C, f, \ushort{\alpha}, \bar{\alpha}, \beta, c)$ satisfies the Armijo condition
\begin{equation*}
f(\tilde{x}) \le f(x) - \mu^2 c \, \alpha \s(x; f, C)^2
\end{equation*}
for some $\alpha \in \left[\min\left\{\ushort{\alpha}, 2\beta\frac{1-c}{\lip_\mathcal{B}(\nabla f)}\right\}, \bar{\alpha}\right]$, where $\mathcal{B}$ is any closed ball as in Proposition~\ref{prop:RFDmapUpperBoundCost}.
\end{corollary}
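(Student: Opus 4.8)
The plan is to mirror the proof of Corollary~\ref{coro:P2GDmapArmijoCondition} essentially verbatim, with the function $u$ and the constant $\kappa_\mathcal{B}$ replaced by the single Lipschitz constant $\lip_\mathcal{B}(\nabla f)$, as dictated by the simpler upper bound~\eqref{eq:RFDmapUpperBoundCost}. Fix $x \in C$ with $\s(x; f, C) > 0$, choose $g \in \proj{\restancone{C}{x}}{-\nabla f(x)}$ as in line~\ref{algo:RFDmap:FirstLine}, and let $\mathcal{B}$ be a closed ball as in Proposition~\ref{prop:RFDmapUpperBoundCost} (e.g. $\ball[x, \bar{\alpha}\s(x; f, C)]$), writing $L := \lip_\mathcal{B}(\nabla f)$. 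First I would record that $\s(x; f, C) > 0$ together with condition~3(c) of Assumption~\ref{assumption:StratificationRFDR} forces $\norm{g} \ge \mu\s(x; f, C) > 0$, so $g \ne 0$ and the line search is well posed.

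The core step is the elementary observation that, for all $\alpha \in (0, \infty)$,
\begin{equation*}
f(x) + \norm{g}^2 \alpha\Big(-1+\tfrac{L}{2}\alpha\Big) \le f(x) - c\,\alpha\norm{g}^2
\quad\text{iff}\quad
\alpha \le \frac{2(1-c)}{L}.
\end{equation*}
By Proposition~\ref{prop:RFDmapUpperBoundCost}, the left-hand side of the first inequality is an upper bound on $f(x+\alpha g)$ for every $\alpha \in (0, \bar{\alpha}]$; hence the backtracking test $f(x+\alpha g) > f(x) - c\,\alpha\norm{g}^2$ necessarily fails (i.e. the Armijo condition of Algorithm~\ref{algo:RFDmap} holds) as soon as $\alpha \in \big(0, \min\{\bar{\alpha}, \tfrac{2(1-c)}{L}\}\big]$. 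Consequently the while loop terminates, and the accepted step size $\alpha$ satisfies: either $\alpha$ is the initial value chosen in $[\ushort{\alpha}, \bar{\alpha}]$, so $\alpha \ge \ushort{\alpha}$, or the loop executed at least once, so the previous trial $\alpha/\beta$ violated the Armijo condition and therefore $\alpha/\beta > \tfrac{2(1-c)}{L}$, i.e. $\alpha > 2\beta\tfrac{1-c}{L}$. In both cases $\alpha \in \big[\min\{\ushort{\alpha}, 2\beta\tfrac{1-c}{L}\}, \bar{\alpha}\big]$.

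Finally, the returned point $\tilde{x} = x + \alpha g$ satisfies $f(\tilde{x}) \le f(x) - c\,\alpha\norm{g}^2$, and using $\norm{g}^2 \ge \mu^2\s(x; f, C)^2$ (again condition~3(c) of Assumption~\ref{assumption:StratificationRFDR}) yields $f(\tilde{x}) \le f(x) - \mu^2 c\,\alpha\s(x; f, C)^2$, which is the claimed Armijo condition. I do not anticipate a genuine obstacle here: the argument is a direct transcription of Corollary~\ref{coro:P2GDmapArmijoCondition}. The only point requiring a little care is bookkeeping of where the factor $\mu^2$ enters — it appears only in the last conversion from $\norm{g}^2$ to $\s(x; f, C)^2$ and plays no role in the termination argument or in the lower bound on $\alpha$, in contrast with the $\ppgd$ case where $\kappa_\mathcal{B}$ already carries the geometry of $C$ through $u(x)$.
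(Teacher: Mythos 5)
Your proof is correct and follows essentially the same route as the paper's: the equivalence for when the quadratic upper bound of Proposition~\ref{prop:RFDmapUpperBoundCost} lies below the Armijo threshold, the resulting lower bound $\alpha \ge \min\{\ushort{\alpha}, 2\beta\frac{1-c}{\lip_\mathcal{B}(\nabla f)}\}$ on the accepted step, and the final insertion of $\norm{g}^2 \ge \mu^2\s(x;f,C)^2$ via condition~3(c) of Assumption~\ref{assumption:StratificationRFDR}. No gaps.
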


\begin{proof}
For all $\alpha \in (0,\infty)$,
\begin{equation*}
f(x) + \norm{g}^2 \alpha \left(-1+\frac{\lip_\mathcal{B}(\nabla f)}{2}\alpha\right)
\le f(x) - c \norm{g}^2 \alpha
\quad \text{iff} \quad \alpha \le 2\frac{1-c}{\lip_\mathcal{B}(\nabla f)}.
\end{equation*}
Since the left-hand side of the first inequality is an upper bound on $f(x+\alpha g)$ for all $\alpha \in (0, \bar{\alpha}]$, the Armijo condition is necessarily satisfied if $\alpha \in (0, \min\{\bar{\alpha}, 2\frac{1-c}{\lip_\mathcal{B}(\nabla f)}\}]$.
Therefore, either the initial step size chosen in $[\ushort{\alpha}, \bar{\alpha}]$ satisfies the Armijo condition or the while loop ends with $\alpha$ such that $\frac{\alpha}{\beta} > 2\frac{1-c}{\lip_\mathcal{B}(\nabla f)}$. The result then follows from condition~3(c) of Assumption~\ref{assumption:StratificationRFDR}.
\end{proof}

\subsubsection{The $\rfdr$ map}
\label{subsubsec:RFDRmap}
In this section, under Assumption~\ref{assumption:StratificationRFDR}, we introduce the $\rfdr$ map (Algorithm~\ref{algo:RFDRmap}) and prove Proposition~\ref{prop:RFDRmapPolak} from which we deduce Theorem~\ref{thm:RFDRPolakConvergence} in Section~\ref{subsubsec:RFDR}.
For convenience, we recall that Assumption~\ref{assumption:StratificationRFDR} states that $C$ satisfies the following conditions:
\begin{enumerate}
\item there exist a positive integer $p$ and nonempty smooth submanifolds $S_0, \dots, S_p$ of $\mathcal{E}$ contained in $C$ such that:
\begin{enumerate}
\item for all $i, j \in \{0, \dots, p\}$, $i \ne j$ implies $S_i \cap S_j =\emptyset$;
\item $\overline{S_p} = C$ and, for all $i \in \{0, \dots, p\}$, $\overline{S_i} = \bigcup_{j=0}^i S_j$;
\item if $p \ge 2$, then, for all $x \in S_p$, $\dist(x, S_{p-1}) < \dist(x, S_{p-2})$;
\end{enumerate}
\item $\inf_{x \in C \setminus S_p, z \in \mathcal{E} \setminus \{0\}} \frac{\norm{\proj{\tancone{C}{x}}{z}}}{\norm{z}} > 0$;
\item $C$ admits a \emph{restricted tangent cone}, i.e., a correspondence $C \setmapsto \mathcal{E} : x \mapsto \restancone{C}{x}$ such that:
\begin{enumerate}
\item for every $x \in C$, $\restancone{C}{x}$ is a closed cone contained in $\tancone{C}{x}$;
\item for all $x \in C$ and all $z \in \restancone{C}{x}$, $x+z \in C$;
\item there exists $\mu \in (0, 1]$ such that, for all $x \in C$ and all $z \in \mathcal{E}$, $\norm{\proj{\restancone{C}{x}}{z}} \ge \mu \norm{\proj{\tancone{C}{x}}{z}}$.
\end{enumerate}
\end{enumerate}
The $\rfdr$ map involves the projection of a point of $S_p$ onto $S_{p-1}$ which exists by condition~1(c) above.

The $\rfdr$ map is defined as Algorithm~\ref{algo:RFDRmap}. Given $x \in C$ as input, it proceeds as follows: it applies the $\rfd$ map (Algorithm~\ref{algo:RFDmap}) to $x$, thereby producing a point $\tilde{x}$, (ii) if $x \in S_p$ and $d(x, S_{p-1}) \le \Delta$ for some threshold $\Delta \in (0, \infty)$, it applies the $\rfd$ map to a projection $\hat{x}$ of $x$ onto $S_{p-1}$, then producing a point $\tilde{x}^\mathrm{R}$, and (iii) it outputs a point among $\tilde{x}$ and $\tilde{x}^\mathrm{R}$ that maximally decreases $f$.

\begin{algorithm}[H]
\caption{$\rfdr$ map}
\label{algo:RFDRmap}
\begin{algorithmic}[1]
\Require
$(\mathcal{E}, C, f, \ushort{\alpha}, \bar{\alpha}, \beta, c, \Delta)$ where $\mathcal{E}$ is a Euclidean vector space, $C$ is a nonempty closed subset of $\mathcal{E}$ satisfying Assumption~\ref{assumption:StratificationRFDR}, $f : \mathcal{E} \to \R$ is differentiable with $\nabla f$ locally Lipschitz continuous, $0 < \ushort{\alpha} \le \bar{\alpha} < \infty$, $\beta, c \in (0,1)$, and $\Delta \in (0,\infty)$.
\Input
$x \in C$ such that $\s(x; f, C) > 0$.
\Output
a point in $\rfdr(x; \mathcal{E}, C, f, \ushort{\alpha}, \bar{\alpha}, \beta, c, \Delta)$.

\State
Choose $\tilde{x} \in \hyperref[algo:RFDmap]{\rfd}(x; \mathcal{E}, C, f, \ushort{\alpha}, \bar{\alpha}, \beta, c)$;
\If
{$d(x, S_{p-1}) \in (0,\Delta]$}
\State
Choose $\hat{x} \in \proj{S_{p-1}}{X}$;
\State
Choose $\tilde{x}^\mathrm{R} \in \hyperref[algo:RFDmap]{\rfd}(\hat{x}; \mathcal{E}, C, f, \ushort{\alpha}, \bar{\alpha}, \beta, c)$;
\State
Return $y \in \argmin_{\{\tilde{x}, \tilde{x}^\mathrm{R}\}} f$.
\Else
\State
Return $\tilde{x}$.
\EndIf
\end{algorithmic}
\end{algorithm}

Proposition~\ref{prop:RFDRmapPolak} is to the $\rfdr$ map as Proposition~\ref{prop:P2GDRmapPolak} is to the $\ppgdr$ map.

\begin{proposition}
\label{prop:RFDRmapPolak}
For every $\ushort{x} \in C$ such that $\s(\ushort{x}; f, C) > 0$, there exist $\varepsilon(\ushort{x}), \delta(\ushort{x}) \in (0,\infty)$ such that, for all $x \in \ball[\ushort{x}, \varepsilon(\ushort{x})] \cap C$ and all $y \in \hyperref[algo:RFDRmap]{\rfdr}(x; \mathcal{E}, C, f, \ushort{\alpha}, \bar{\alpha}, \beta, c, \Delta)$,
\begin{equation}
\label{eq:PolakLocallyUniformSufficientDecreaseRFDR}
f(y) - f(x) \le - \delta(\ushort{x}).
\end{equation}
\end{proposition}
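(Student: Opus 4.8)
The plan is to transpose the six-step architecture of the proof of Proposition~\ref{prop:P2GDRmapPolak} to the $\rfdr$ map, exploiting the two features that distinguish $\rfdr$ from $\ppgdr$: the $\rfd$-decrease guarantee of Corollary~\ref{coro:RFDmapArmijoCondition} involves only $\lip_{\mathcal{B}}(\nabla f)$ and not the function $u$, and $\rfdr$ never needs to reach more than the single lower stratum $S_{p-1}$. Fix $\ushort{x} \in C$ with $\s(\ushort{x}; f, C) > 0$ and let $\ushort{i} \in \{0, \dots, p\}$ be such that $\ushort{x} \in S_{\ushort{i}}$. I would split the argument according to whether $\ushort{x}$ lies in the top stratum or not, and in both cases choose $\bar{\rho}(\ushort{x})$, $\varepsilon(\ushort{x})$, $\delta(\ushort{x})$ (together with an auxiliary ball $\mathcal{B} := \ball[\ushort{x}, \bar{\rho}(\ushort{x})]$ containing every $\rfd$ iterate launched from $\ball[\ushort{x}, 2\varepsilon(\ushort{x})]$) exactly as in Step~1 of the proof of Proposition~\ref{prop:P2GDRmapPolak}, so that the relevant estimates on $\s$, $\nabla f$, $\lip_{\mathcal{B}}(\nabla f)$, and $f$ all hold on $\ball[\ushort{x}, 2\varepsilon(\ushort{x})]$ at once.

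\emph{Case $\ushort{i} = p$.} Since $\ushort{x} \notin \overline{S_{p-1}}$ and $\overline{S_{p-1}}$ is closed, one has $\ball(\ushort{x}, \dist(\ushort{x}, \overline{S_{p-1}})) \cap C = \ball(\ushort{x}, \dist(\ushort{x}, \overline{S_{p-1}})) \cap S_p$, so $C$ is a smooth manifold around $\ushort{x}$ and, on $S_p$ near $\ushort{x}$, $\s(\cdot; f, C)$ coincides with the norm of the Riemannian gradient of $f|_{S_p}$ and is therefore continuous — exactly the observation used in the remark following Proposition~\ref{prop:P2GDRmapRealDeterminantalVarietyPolak}, so condition~3 of Assumption~\ref{assumption:Stratification} is not needed. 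Shrinking $\varepsilon(\ushort{x})$ so that $\ball[\ushort{x}, \varepsilon(\ushort{x})] \cap C \subseteq S_p$ and $\s(\cdot; f, C) \ge \tfrac{1}{2}\s(\ushort{x}; f, C)$ on this ball, I would then apply Corollary~\ref{coro:RFDmapArmijoCondition} directly to any $\tilde{x} \in \rfd(x; \mathcal{E}, C, f, \ushort{\alpha}, \bar{\alpha}, \beta, c)$ with the ball $\mathcal{B}$ and conclude $f(y) \le f(\tilde{x}) \le f(x) - \delta(\ushort{x})$ for the output $y$ of $\rfdr$, for $\delta(\ushort{x})$ a suitably small fraction of $\mu^2 c \min\{\ushort{\alpha}, 2\beta\tfrac{1-c}{\lip_{\mathcal{B}}(\nabla f)}\}\s(\ushort{x}; f, C)^2$.

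\emph{Case $\ushort{i} \le p-1$.} Now $\ushort{x} \in C \setminus S_p \subseteq \overline{S_{p-1}}$, hence $\dist(\ushort{x}, S_{p-1}) = 0$, and condition~2 of Assumption~\ref{assumption:StratificationRFDR} furnishes $\mu_2 > 0$ with $\s(x'; f, C) \ge \mu_2 \norm{\nabla f(x')}$ for every $x' \in C \setminus S_p$. Since $\norm{\nabla f(\ushort{x})} \ge \s(\ushort{x}; f, C) > 0$, shrinking $\varepsilon(\ushort{x})$ via local Lipschitz continuity of $\nabla f$ gives $\norm{\nabla f(x')} \ge \tfrac{1}{2}\norm{\nabla f(\ushort{x})}$ on $\ball[\ushort{x}, 2\varepsilon(\ushort{x})]$, so $\s(x'; f, C) \ge s_0 := \tfrac{\mu_2}{2}\s(\ushort{x}; f, C)$ for every $x' \in \ball[\ushort{x}, 2\varepsilon(\ushort{x})] \cap (C \setminus S_p)$. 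Given $x \in \ball[\ushort{x}, \varepsilon(\ushort{x})] \cap C$, I would exhibit such an $x'$ to which $\rfdr$ applies the $\rfd$ map: if $x \in C \setminus S_p$, take $x' = x$ (then $\dist(x, S_{p-1}) = 0$ and $\rfdr$ returns a point of $\rfd(x; \dots)$); if $x \in S_p$, use \cite[Proposition~1.3.17]{Willem} together with $\dist(\ushort{x}, S_{p-1}) = 0$ to obtain $0 < \dist(x, S_{p-1}) \le \norm{x - \ushort{x}} \le \varepsilon(\ushort{x})$, arrange $\varepsilon(\ushort{x}) \le \Delta$ so the rank-reduction branch of Algorithm~\ref{algo:RFDRmap} triggers, and take $x' = \hat{x} \in \proj{S_{p-1}}{x}$ (which exists by condition~1(c) of Assumption~\ref{assumption:StratificationRFDR}), so that $\norm{\hat{x} - x} = \dist(x, S_{p-1}) \le \varepsilon(\ushort{x})$ and $\hat{x} \in \ball[\ushort{x}, 2\varepsilon(\ushort{x})] \cap S_{p-1}$. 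In both cases $f(x') \le f(x) + 2\delta(\ushort{x})$ (cost varies by at most $\delta(\ushort{x})$ on $\ball[\ushort{x}, 2\varepsilon(\ushort{x})]$), and Corollary~\ref{coro:RFDmapArmijoCondition} applied to the $\rfd$ iterate launched from $x'$ with the ball $\mathcal{B}$ yields a decrease of at least $\mu^2 c \min\{\ushort{\alpha}, 2\beta\tfrac{1-c}{\lip_{\mathcal{B}}(\nabla f)}\}s_0^2$; choosing $\delta(\ushort{x})$ a small enough fraction of this quantity forces $f(y) - f(x) \le -\delta(\ushort{x})$ for the output $y$.

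The bookkeeping in Step~1 (picking $\bar{\rho}(\ushort{x})$, $\varepsilon(\ushort{x})$, $\delta(\ushort{x})$ so that all $\rfd$ runs stay in $\mathcal{B}$ and all estimates hold simultaneously) is routine and parallels the $\ppgdr$ case. The main obstacle is the subcase $\ushort{x} \in C \setminus S_p$ with a nearby $x \in S_p$: one must guarantee that the single projection of $x$ onto $S_{p-1}$ lands within $2\varepsilon(\ushort{x})$ of $\ushort{x}$ (so that condition~2 can be invoked at a point where $\norm{\nabla f}$ is still bounded away from $0$) and at essentially the same cost value as $x$. This is precisely where $\dist(\ushort{x}, S_{p-1}) = 0$ enters, and it is also the point at which the contrast with $\ppgdr$ becomes clear: $\ppgdr$ may need to reach the stratum $S_{\ushort{i}}$ actually containing $\ushort{x}$, whereas here reaching $S_{p-1}$ suffices because condition~2 of Assumption~\ref{assumption:StratificationRFDR} makes $\s(\cdot; f, C)$ uniformly comparable to $\norm{\nabla f}$ on all of $C \setminus S_p$.
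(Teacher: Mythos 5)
Your proposal is correct and follows essentially the same route as the paper's proof: the same case split on $\ushort{x} \in S_p$ versus $\ushort{x} \in C \setminus S_p$, the Riemannian-gradient-norm continuity argument on the top stratum, condition~2 of Assumption~\ref{assumption:StratificationRFDR} combined with the local bounds on $\norm{\nabla f}$ to lower-bound $\s(\cdot; f, C)$ off $S_p$, the projection onto $S_{p-1}$ triggered by $\dist(x, S_{p-1}) \le \norm{x-\ushort{x}} \le \varepsilon(\ushort{x}) \le \Delta$, the $f$-continuity estimate $f(\hat{x}) \le f(x) + 2\delta(\ushort{x})$, and Corollary~\ref{coro:RFDmapArmijoCondition} applied with a ball $\ball[\ushort{x}, \bar{\rho}(\ushort{x})]$ absorbing all line-search points. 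The constants you leave implicit (the explicit $\bar{\rho}(\ushort{x})$, the fraction defining $\delta(\ushort{x})$, and using $\s(\ushort{x}; f, C)$ in place of $\norm{\nabla f(\ushort{x})}$ in the lower bound) are harmless and match the paper's bookkeeping.
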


\begin{proof}
Let $\ushort{x} \in C$ be such that $\s(\ushort{x}; f, C) > 0$. This proof constructs $\varepsilon(\ushort{x})$ and $\delta(\ushort{x})$ based on the Armijo condition given in Corollary~\ref{coro:RFDmapArmijoCondition}. This requires to derive local lower and upper bounds on $\s(\cdot; f, C)$ around $\ushort{x}$. It first considers the case where $\ushort{x} \in S_p$, in which the construction essentially relies on the continuity of $\s(\cdot; f, C)$ on $\ball(\ushort{x}, d(\ushort{x}, S_{p-1})) \cap C$. Then, it focuses on the case where $\ushort{x} \in C \setminus S_p$, in which the bounds on $\s(\cdot; f, C)$ follow from the bounds \eqref{eq:RFDRPolakInequalityNormGradient} on $\nabla f$ thanks to condition~2 of Assumption~\ref{assumption:StratificationRFDR}. If $x \in C \setminus S_p$, then condition~2 of Assumption~\ref{assumption:StratificationRFDR} readily gives a lower bound on $\s(x; f, C)$. This is not the case if $x \in S_p$, however. This is where the projection mechanism comes into play. It considers a projection $\hat{x}$ of $x$ onto $S_{p-1}$, and condition~2 of Assumption~\ref{assumption:StratificationRFDR} gives a lower bound on $\s(\hat{x}; f, C)$. The inequality \eqref{eq:PolakLocallyUniformSufficientDecreaseRFDR} is then obtained from \eqref{eq:RFDRPolakCostContinuous} which follows from the continuity of $f$ at $\ushort{x}$.

Let us first consider the case where $\ushort{x} \in S_p$. On $\ball(\ushort{x}, d(\ushort{x}, S_{p-1})) \cap C = \ball(\ushort{x}, d(\ushort{x}, S_{p-1})) \cap S_p$, $\s(\cdot; f, C)$ coincides with the norm of the Riemannian gradient of the restriction of $f$ to the smooth manifold $S_p$, which is continuous. In particular, there exists $\rho(\ushort{x}) \in (0, d(\ushort{x}, S_{p-1}))$ such that $\s(\ball[\ushort{x}, \rho(\ushort{x})] \cap S_p; f, C) \subseteq [\frac{1}{2}\s(\ushort{x}; f, C), \frac{3}{2}\s(\ushort{x}; f, C)]$. Let $\bar{\rho}(\ushort{x}) := \rho(\ushort{x}) + \frac{3}{2} \bar{\alpha} \s(\ushort{x}; f, C)$, $\varepsilon(\ushort{x}) := \rho(\ushort{x})$, and
\begin{equation*}
\delta(\ushort{x}) := \frac{1}{4} c \mu^2 \s(\ushort{x}; f, C)^2 \min\left\{\ushort{\alpha}, 2\beta\frac{1-c}{\lip_{\ball[\ushort{x},\bar{\rho}(\ushort{x})]}(\nabla f)}\right\}.
\end{equation*}
Let $x \in \ball[\ushort{x}, \varepsilon(\ushort{x})] \cap C$. Then, $\ball[x, \bar{\alpha} \s(x; f, C)] \subseteq \ball[\ushort{x}, \bar{\rho}(\ushort{x})]$. Indeed, for all $z \in \ball[x, \bar{\alpha} \s(x; f, C)]$,
\begin{equation*}
\norm{z-\ushort{x}}
\le \norm{z-x} + \norm{x-\ushort{x}}
\le \bar{\alpha} \s(x; f, C) + \rho(\ushort{x})
\le \bar{\rho}(\ushort{x}).
\end{equation*}
Therefore, Corollary~\ref{coro:RFDmapArmijoCondition} applies and, for all $\tilde{x} \in \hyperref[algo:RFDmap]{\rfd}(x; \mathcal{E}, C, f, \ushort{\alpha}, \bar{\alpha}, \beta, c)$, 
\begin{equation*}
f(\tilde{x})
\le f(x) - c \mu^2 \s(x; f, C)^2 \min\left\{\ushort{\alpha}, 2\beta\frac{1-c}{\lip_{\ball[\ushort{x},\bar{\rho}(\ushort{x})]}(\nabla f)}\right\}
\le f(x) - \delta(\ushort{x}),
\end{equation*}
and thus, for all $y \in \hyperref[algo:RFDRmap]{\rfdr}(x; \mathcal{E}, C, f, \ushort{\alpha}, \bar{\alpha}, \beta, c, \Delta)$,
\begin{equation*}
f(y)
\le f(\tilde{x})
\le f(x) - \delta(\ushort{x}).
\end{equation*}
This completes the proof for the case where $\ushort{x} \in S_p$.

Let us now consider the case where $\ushort{x} \in C \setminus S_p$. It holds that $\norm{\nabla f(\ushort{x})} \ge \s(\ushort{x}; f, C)$. Let $\bar{\rho}(\ushort{x}) := \Delta + \frac{3}{2} \bar{\alpha} \norm{\nabla f(\ushort{x})}$ and
\begin{equation}
\label{eq:RFDRPolakDelta}
\delta(\ushort{x}) := \frac{1}{12} c \mu^2 \left(\inf_{\substack{\check{x} \in C \setminus S_p \\ z \in \mathcal{E} \setminus \{0\}}} \frac{\norm{\proj{\tancone{C}{\check{x}}}{z}}}{\norm{z}}\right)^2 \norm{\nabla f(\ushort{x})}^2 \min\left\{\ushort{\alpha}, 2\beta\frac{1-c}{\lip_{\ball[\ushort{x}, \bar{\rho}(\ushort{x})]}(\nabla f)}\right\}.
\end{equation}
Since $f$ is continuous at $\ushort{x}$, there exists $\rho_f(\ushort{x}) \in (0, \infty)$ such that $f(\ball[\ushort{x}, \rho_f(\ushort{x})]) \subseteq [f(\ushort{x})-\delta(\ushort{x}), f(\ushort{x})+\delta(\ushort{x})]$. Let $\varepsilon(\ushort{x}) := \frac{1}{2} \min\Big\{\Delta, \rho_f(\ushort{x}), \frac{\norm{\nabla f(\ushort{x})}}{2\lip_{\ball[\ushort{x},\Delta]}(\nabla f)}\Big\}$. Then, for all $z \in \ball[\ushort{x}, 2\varepsilon(\ushort{x})]$, since
\begin{equation*}
|\norm{\nabla f(z)}-\norm{\nabla f(\ushort{x})}|
\le \norm{\nabla f(z)-\nabla f(\ushort{x})}
\le \lip_{\ball[\ushort{x},\Delta]}(\nabla f) \norm{z-\ushort{x}}
\le \lip_{\ball[\ushort{x},\Delta]}(\nabla f) 2\varepsilon(\ushort{x})
\le \frac{\norm{\nabla f(\ushort{x})}}{2},
\end{equation*}
it holds that
\begin{equation}
\label{eq:RFDRPolakInequalityNormGradient}
\frac{1}{2} \norm{\nabla f(\ushort{x})} \le \norm{\nabla f(z)} \le \frac{3}{2} \norm{\nabla f(\ushort{x})}.
\end{equation}
Let $x \in \ball[\ushort{x}, \varepsilon(\ushort{x})] \cap C$.
Let us first consider the case where $x \in S_p$. Then,
\begin{equation*}
0 < d(x, S_{p-1}) \le \norm{x-\ushort{x}} \le \varepsilon(\ushort{x}) \le \Delta.
\end{equation*}
Thus, given $x$ as input, the $\rfdr$ map considers $\hat{x} \in \proj{S_{p-1}}{x} \subseteq \ball[\ushort{x}, 2\varepsilon(\ushort{x})] \cap S_{p-1}$ and $\tilde{x}^\mathrm{R} \in \hyperref[algo:RFDmap]{\rfd}(\hat{x}; \mathcal{E}, C, f, \ushort{\alpha}, \bar{\alpha}, \beta, c)$, where the inclusion holds because
\begin{equation*}
\norm{\hat{x}-\ushort{x}}
\le \norm{\hat{x}-x} + \norm{x-\ushort{x}}
\le d(x, S_{p-1}) + \varepsilon(\ushort{x})
\le 2 \varepsilon(\ushort{x}).
\end{equation*}
As $x, \hat{x} \in \ball[\ushort{x}, \rho_f(\ushort{x})]$, we have
\begin{equation}
\label{eq:RFDRPolakCostContinuous}
f(\hat{x}) \le f(x) + 2\delta(\ushort{x}).
\end{equation}
Since $\ball[\hat{x}, \bar{\alpha} \s(\hat{x}, f, C)] \subseteq \ball[\ushort{x}, \bar{\rho}(\ushort{x})]$,  Corollary~\ref{coro:RFDmapArmijoCondition} applies to $\tilde{x}^\mathrm{R}$ with $\ball[\ushort{x}, \bar{\rho}(\ushort{x})]$. The inclusion holds because, for all $z \in \ball[\hat{x}, \bar{\alpha} \s(\hat{x}, f, C)]$,
\begin{equation*}
\norm{z-\ushort{x}}
\le \norm{z-\hat{x}} + \norm{\hat{x}-\ushort{x}}
\le \bar{\alpha} \s(\hat{x}, f, C) + 2 \varepsilon(\ushort{x})
\le \bar{\alpha} \norm{\nabla f(\hat{x})} + \Delta
\le \bar{\rho}(\ushort{x}),
\end{equation*}
where the last inequality follows from \eqref{eq:RFDRPolakInequalityNormGradient}. Therefore, for all $y \in \hyperref[algo:RFDRmap]{\rfdr}(x; \mathcal{E}, C, f, \ushort{\alpha}, \bar{\alpha}, \beta, c, \Delta)$,
\begin{align*}
f(y)
&\le f(\tilde{x}^\mathrm{R})\\
&\le f(\hat{x}) - c \mu^2 \s(\hat{x}; f, C)^2 \min\left\{\ushort{\alpha}, 2\beta\frac{1-c}{\lip_{\ball[\ushort{x}, \bar{\rho}(\ushort{x})]}(\nabla f)}\right\}\\
&\le f(\hat{x}) - c \mu^2 \left(\inf_{\substack{\check{x} \in C \setminus S_p \\ z \in \mathcal{E} \setminus \{0\}}} \frac{\norm{\proj{\tancone{C}{\check{x}}}{z}}}{\norm{z}}\right)^2 \norm{\nabla f(\hat{x})}^2 \min\left\{\ushort{\alpha}, 2\beta\frac{1-c}{\lip_{\ball[\ushort{x}, \bar{\rho}(\ushort{x})]}(\nabla f)}\right\}\\
&\le f(\hat{x}) - 3 \delta(\ushort{x})\\
&\le f(x) - \delta(\ushort{x}),
\end{align*}
where the second inequality follows from Corollary~\ref{coro:RFDmapArmijoCondition}, the third from condition~2 of Assumption~\ref{assumption:StratificationRFDR}, the fourth from \eqref{eq:RFDRPolakInequalityNormGradient} and \eqref{eq:RFDRPolakDelta}, and the fifth from \eqref{eq:RFDRPolakCostContinuous}.
Let us now consider the case where $x \in C \setminus S_p$. Let $\tilde{x} \in \hyperref[algo:RFDmap]{\rfd}(x; \mathcal{E}, C, f, \ushort{\alpha}, \bar{\alpha}, \beta, c)$. Since $\ball[x, \bar{\alpha} \s(x, f, C)] \subseteq \ball[\ushort{x}, \bar{\rho}(\ushort{x})]$, Corollary~\ref{coro:RFDmapArmijoCondition} applies to $\tilde{x}$ with $\ball[\ushort{x}, \bar{\rho}(\ushort{x})]$. The inclusion holds because, for all $z \in \ball[x, \bar{\alpha} \s(x; f, C)]$,
\begin{equation*}
\norm{z-\ushort{x}}
\le \norm{z-x} + \norm{x-\ushort{x}}
\le \bar{\alpha} \s(x, f, C) + \varepsilon(\ushort{x})
< \bar{\alpha} \norm{\nabla f(x)} + \Delta
\le \bar{\rho}(\ushort{x}),
\end{equation*}
where the last inequality follows from \eqref{eq:RFDRPolakInequalityNormGradient}. Therefore, for all $y \in \hyperref[algo:RFDRmap]{\rfdr}(x; \mathcal{E}, C, f, \ushort{\alpha}, \bar{\alpha}, \beta, c, \Delta)$,
\begin{align*}
f(y)
&\le f(\tilde{x})\\
&\le f(x) - c \mu^2 \s(x; f, C)^2 \min\left\{\ushort{\alpha}, 2\beta\frac{1-c}{\lip_{\ball[\ushort{x}, \bar{\rho}(\ushort{x})]}(\nabla f)}\right\}\\
&\le f(x) - c \mu^2 \left(\inf_{\substack{\check{x} \in C \setminus S_p \\ z \in \mathcal{E} \setminus \{0\}}} \frac{\norm{\proj{\tancone{C}{\check{x}}}{z}}}{\norm{z}}\right)^2 \norm{\nabla f(x)}^2 \min\left\{\ushort{\alpha}, 2\beta\frac{1-c}{\lip_{\ball[\ushort{x}, \bar{\rho}(\ushort{x})]}(\nabla f)}\right\}\\
&\le f(x) - 3 \delta(\ushort{x})\\
&\le f(x) - \delta(\ushort{x}),
\end{align*}
where the second inequality follows from Corollary~\ref{coro:RFDmapArmijoCondition}, the third from condition~2 of Assumption~\ref{assumption:StratificationRFDR}, and the fourth from \eqref{eq:RFDRPolakInequalityNormGradient} and \eqref{eq:RFDRPolakDelta}.
\end{proof}

\subsubsection{The $\rfdr$ algorithm}
\label{subsubsec:RFDR}
The $\rfdr$ algorithm is defined as Algorithm~\ref{algo:RFDR}. It produces a sequence along which $f$ is strictly decreasing.

\begin{algorithm}[H]
\caption{$\rfdr$}
\label{algo:RFDR}
\begin{algorithmic}[1]
\Require
$(\mathcal{E}, C, f, \ushort{\alpha}, \bar{\alpha}, \beta, c, \Delta)$ where $\mathcal{E}$ is a Euclidean vector space, $C$ is a nonempty closed subset of $\mathcal{E}$ satisfying Assumption~\ref{assumption:StratificationRFDR}, $f : \mathcal{E} \to \R$ is differentiable with $\nabla f$ locally Lipschitz continuous, $0 < \ushort{\alpha} \le \bar{\alpha} < \infty$, $\beta, c \in (0,1)$, and $\Delta \in (0,\infty)$.
\Input
$x_0 \in C$.
\Output
a sequence in $C$.

\State
$i \gets 0$;
\While
{$\s(x_i; f, C) > 0$}
\State
Choose $x_{i+1} \in \hyperref[algo:RFDRmap]{\rfdr}(x_i; \mathcal{E}, C, f, \ushort{\alpha}, \bar{\alpha}, \beta, c, \Delta)$;
\State
$i \gets i+1$;
\EndWhile
\end{algorithmic}
\end{algorithm}

Theorem~\ref{thm:RFDRPolakConvergence} states that $\rfdr$ accumulates at stationary points of~\eqref{eq:OptiProblem} and is thus apocalypse-free. However, it does not state that an accumulation point necessarily exists.

\begin{theorem}
\label{thm:RFDRPolakConvergence}
Consider a sequence constructed by $\rfdr$ (Algorithm~\ref{algo:RFDR}). If this sequence is finite, then its last element is stationary for~\eqref{eq:OptiProblem}, i.e., is a zero of the stationarity measure $\s(\cdot; f, C)$ defined in~\eqref{eq:StationarityMeasure}. If it is infinite, then all of its accumulation points are stationary for~\eqref{eq:OptiProblem}.
\end{theorem}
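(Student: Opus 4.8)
The plan is to mirror the proof of Theorem~\ref{thm:P2GDRPolakConvergence} almost verbatim, replacing $\ppgdr$ with $\rfdr$ and invoking Proposition~\ref{prop:RFDRmapPolak} in place of Proposition~\ref{prop:P2GDRmapPolak}. As in the $\ppgdr$ case, we use the Polak-type framework of \cite[\S 1.3]{Polak1971}: the essential ingredient is a ``uniform sufficient decrease'' property near any non-stationary point, which is exactly what Proposition~\ref{prop:RFDRmapPolak} provides.

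First I would dispose of the easy case: if $\rfdr$ produces a finite sequence, then by the stopping condition of the while loop in Algorithm~\ref{algo:RFDR} its last element $x$ satisfies $\s(x; f, C) = 0$, i.e., $x$ is stationary for~\eqref{eq:OptiProblem}. Then I would treat the infinite case by contradiction. Suppose $\rfdr$ produces an infinite sequence $(x_i)_{i \in \N}$ with a subsequence $(x_{i_k})_{k \in \N}$ converging to some $x \in C$ that is \emph{not} stationary for~\eqref{eq:OptiProblem}, so that $\s(x; f, C) > 0$. Apply Proposition~\ref{prop:RFDRmapPolak} to $\ushort{x} := x$ to obtain $\varepsilon(x), \delta(x) \in (0, \infty)$ such that every output of the $\rfdr$ map applied to a point in $\ball[x, \varepsilon(x)] \cap C$ decreases $f$ by at least $\delta(x)$. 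Since $x_{i_k} \to x$, there is $K \in \N$ with $x_{i_k} \in \ball[x, \varepsilon(x)]$ for all $k \ge K$, hence $f(x_{i_k+1}) - f(x_{i_k}) \le -\delta(x)$ for all such $k$. Because $(f(x_i))_{i \in \N}$ is nonincreasing (each $\rfdr$ iteration satisfies an Armijo-type decrease via Corollary~\ref{coro:RFDmapArmijoCondition}, and $x_{i+1}$ is chosen among points that do not increase $f$), we get $f(x_{i_{k+1}}) - f(x_{i_k}) \le f(x_{i_k+1}) - f(x_{i_k}) \le -\delta(x)$ for all $k \ge K$. Finally, continuity of $f$ forces $f(x_{i_k}) \to f(x)$, so the left-hand side tends to $0$, contradicting the strictly negative upper bound $-\delta(x)$.

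There is essentially no hard part here: the argument is identical in structure to the proof of Theorem~\ref{thm:P2GDRPolakConvergence}, and all the genuine work has already been done in establishing Proposition~\ref{prop:RFDRmapPolak} (whose proof, in turn, is the technically delicate piece, handling separately the cases $\ushort{x} \in S_p$ and $\ushort{x} \in C \setminus S_p$ and relying on condition~2 of Assumption~\ref{assumption:StratificationRFDR} together with the restricted-tangent-cone machinery). The only point requiring a small amount of care is justifying that $(f(x_i))_{i \in \N}$ is nonincreasing so that the telescoping/monotonicity step goes through; this follows because the $\rfdr$ map outputs a point of cost no larger than that of $\tilde{x} \in \rfd(x; \cdot)$, and $\rfd$ produces a point satisfying the Armijo condition of Corollary~\ref{coro:RFDmapArmijoCondition}, hence $f(x_{i+1}) \le f(x_i)$ at every step.
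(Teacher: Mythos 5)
Your proposal is correct and follows essentially the same route as the paper, which proves Theorem~\ref{thm:RFDRPolakConvergence} by repeating the proof of Theorem~\ref{thm:P2GDRPolakConvergence} verbatim with Proposition~\ref{prop:RFDRmapPolak} in place of Proposition~\ref{prop:P2GDRmapPolak}. Your additional remark justifying the monotone decrease of $(f(x_i))_{i \in \N}$ via Corollary~\ref{coro:RFDmapArmijoCondition} is consistent with the paper's observation that $\rfdr$ produces a sequence along which $f$ is strictly decreasing.
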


\begin{proof}
The proof is the same as the one of Theorem~\ref{thm:P2GDRPolakConvergence} except that Proposition~\ref{prop:RFDRmapPolak} replaces Proposition~\ref{prop:P2GDRmapPolak}.
\end{proof}

Corollary~\ref{coro:RFDRPolakConvergence} considers a sequence $(x_i)_{i \in \N}$ produced by $\rfdr$. It guarantees that, if $C$ has no serendipitous point, which is notably the case of $\R_{\le r}^{m \times n}$ (Proposition~\ref{prop:RealDeterminantalVarietyNoSerendipitousPoint}), and the sublevel set $\{x \in C \mid f(x) \le f(x_0)\}$ is bounded, then $\lim_{i \to \infty} \s(x_i; f, C) = 0$, and all accumulation points, of which there exists at least one, have the same image by $f$.

\begin{corollary}
\label{coro:RFDRPolakConvergence}
Let $(x_i)_{i \in \N}$ be a sequence produced by $\rfdr$ (Algorithm~\ref{algo:RFDR}).
The sequence has at least one accumulation point if and only if $\liminf_{i \to \infty} \norm{x_i} < \infty$. If $C$ has no serendipitous point, then, for every convergent subsequence $(x_{i_k})_{k \in \N}$, $\lim_{k \to \infty} \s(x_{i_k}; f, C) = 0$.
If, moreover, $(x_i)_{i \in \N}$ is bounded, which is the case notably if the sublevel set $\{x \in C \mid f(x) \le f(x_0)\}$ is bounded, then $\lim_{i \to \infty} \s(x_i; f, C) = 0$, and all accumulation points have the same image by $f$.
\end{corollary}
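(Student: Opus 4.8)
The plan is to transcribe, essentially verbatim, the proof of Corollary~\ref{coro:P2GDRPolakConvergence}, with Theorem~\ref{thm:RFDRPolakConvergence} playing the role of Theorem~\ref{thm:P2GDRPolakConvergence} throughout. No new ingredient is needed: the only properties of $\ppgdr$ used in that argument are that it produces a sequence along which $f$ is non-increasing and whose accumulation points are stationary for~\eqref{eq:OptiProblem}, and $\rfdr$ enjoys both, the former by the remark preceding Algorithm~\ref{algo:RFDR} and the latter by Theorem~\ref{thm:RFDRPolakConvergence}.

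First I would dispose of the ``if and only if'' claim: since $\mathcal{E}$ is finite-dimensional, $(x_i)_{i \in \N}$ has an accumulation point if and only if it has a bounded subsequence, i.e.\ if and only if $\liminf_{i \to \infty} \norm{x_i} < \infty$; this is the classical Bolzano--Weierstrass fact and uses nothing about $\rfdr$. Next, assuming $C$ has no serendipitous point, let $(x_{i_k})_{k \in \N}$ converge to some $x \in C$. If the $\rfdr$ sequence is finite it is eventually constant equal to a stationary point and the conclusion is trivial, so assume it is infinite; then Theorem~\ref{thm:RFDRPolakConvergence} gives $\s(x; f, C) = 0$. Since $x$ is not serendipitous, the second bullet of Proposition~\ref{prop:StationaryPointApocalypticSerendipitous} yields $\lim_{k \to \infty} \s(x_{i_k}; f, C) = 0$, which is the desired statement.

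Then I would bring in the boundedness hypothesis. Because $\nabla f$ is continuous and $(x_i)_{i \in \N}$ is bounded, the inequality $\s(x_i; f, C) \le \norm{\nabla f(x_i)}$ shows $(\s(x_i; f, C))_{i \in \N}$ is bounded, so it suffices to prove its only accumulation point is $0$. Given an accumulation point $s$, pick a subsequence $(x_{i_k})_{k \in \N}$ along which $\s \to s$, extract from it (boundedness again) a convergent subsequence, and apply the previous paragraph to get $s = 0$; hence $\s(x_i; f, C) \to 0$. Finally, $(f(x_i))_{i \in \N}$ is non-increasing and bounded below (by $f$ of any accumulation point, using continuity of $f$), hence convergent to $\inf_{i \in \N} f(x_i)$; comparing the limits of $f$ along two convergent subsequences shows that any two accumulation points $\ushort{x}, \oshort{x}$ satisfy $f(\ushort{x}) = f(\oshort{x})$. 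The parenthetical remark that boundedness holds whenever $\{x \in C \mid f(x) \le f(x_0)\}$ is bounded follows because $f$ is non-increasing along the iterates, so the whole sequence lies in that sublevel set.

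I do not anticipate any genuine obstacle here; the proof is routine once Theorem~\ref{thm:RFDRPolakConvergence} is available. The only point deserving a moment of care is the order of quantifiers in the passage from ``$\s(\cdot; f, C) \to 0$ along every convergent subsequence'' to ``$\s(x_i; f, C) \to 0$'', which goes through precisely because $(\s(x_i; f, C))_{i \in \N}$ is bounded and each of its accumulation points is shown to vanish. Accordingly I would keep the written proof short, stating that it follows from Corollary~\ref{coro:RFDRPolakConvergence}'s $\ppgdr$ analogue with Theorem~\ref{thm:RFDRPolakConvergence} substituted, and spelling out only the three subsequence extractions above.
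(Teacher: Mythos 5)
Your proposal is correct and matches the paper exactly: the paper's proof of Corollary~\ref{coro:RFDRPolakConvergence} simply states that it is the same as that of Corollary~\ref{coro:P2GDRPolakConvergence}, which is precisely the transcription you carry out (Bolzano--Weierstrass for the ``if and only if'', Theorem~\ref{thm:RFDRPolakConvergence} plus the no-serendipity bullet of Proposition~\ref{prop:StationaryPointApocalypticSerendipitous} along convergent subsequences, the boundedness-of-$\s$ extraction argument, and the Polak-style monotone-decrease argument for equal cost values at accumulation points).
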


\begin{proof}
The proof is the same as the one of Corollary~\ref{coro:P2GDRPolakConvergence}.
\end{proof}

Corollary~\ref{coro:RFDRPolakConvergence} shows that the stopping criterion defined by \eqref{eq:StoppingCriterion} is always eventually satisfied by $\rfdr$ if $C$ has no serendipitous point and the generated sequence has an accumulation point. Indeed, if $C$ has no serendipitous point and $\rfdr$ produces a sequence $(x_i)_{i \in \N}$ that has an accumulation point, i.e., that does not diverge to infinity, then, for every $\varepsilon \in (0, \infty)$, the set $\{i \in \N \mid \s(x_i; f, C) \le \varepsilon\}$ is nonempty and thus its minimum $i_\varepsilon$ exists.

\subsection{The set of sparse vectors satisfies Assumption~\ref{assumption:StratificationRFDR}}
\label{subsec:SparseVectors}
In this section, we prove that $\R_{\le s}^n$ satisfies Assumption~\ref{assumption:StratificationRFDR}.
Problem~\eqref{eq:OptiProblem} with $C = \R_{\le s}^n$ appears in sparse signal approximation which has several applications in signal processing such as compressed sensing \cite{BlumensathDavies2008, BlumensathDavies2009, BlumensathDavies2010, BlanchardTannerWei}.

In Section~\ref{subsubsec:StratificationSparseVectors}, we show how to project onto $\sparse{n}{s}$ (Proposition~\ref{prop:ProjectionSparseVectors}) and prove that $\sparse{n}{s}$ admits a stratification satisfying condition~1 of Assumption~\ref{assumption:Stratification} (Proposition~\ref{prop:StratificationSparseVectorsCondition1MainAssumption}). In Section~\ref{subsubsec:TangentConeSparseVectors}, we review the tangent cone to $\sparse{n}{s}$ (Proposition~\ref{prop:TangentConeSparseVectors}) and prove that $\sparse{n}{s}$ satisfies Assumption~\ref{assumption:StratificationRFDR} (Proposition~\ref{prop:SparseVectorsStrongAssumption}). In Section~\ref{subsubsec:NormalConeSparseVectors}, we deduce the regular normal cone, the normal cone, and the Clarke normal cone to $\sparse{n}{s}$, and prove that the set of apocalyptic points of $\sparse{n}{s}$ is $\StrictSparsity{n}{s}$ and that $\sparse{n}{s}$ has no serendipitous point (Proposition~\ref{prop:SparseVectorsApocalypticSerendipitousPoints}). Finally, in Section~\ref{subsubsec:SparseVectorsP2GDapocalypseExample}, we present an example of $\ppgd$ following an apocalypse on $\sparse{n}{s}$.

\subsubsection{Stratification of the set of sparse vectors}
\label{subsubsec:StratificationSparseVectors}
In this section, we prove that $\sparse{n}{s}$ admits a stratification satisfying Assumption~\ref{assumption:StratificationRFDR}.
The number of nonzero components stratifies $\sparse{n}{s}$:
\begin{equation*}
\sparse{n}{s} = \bigcup_{i=0}^s \FixedSparsity{n}{i}
\end{equation*}
where, for every $i \in \{0, \dots, s\}$,
\begin{equation*}
\FixedSparsity{n}{i} = \{x \in \R^n \mid |\supp(x)| = i\}
\end{equation*}
is the subset of $\R^n$ containing the points having exactly $i$ nonzero component(s).

Proposition~\ref{prop:ProjectionSparseVectors} shows how to project onto $\sparse{n}{s}$ and is used in the proof that $\sparse{n}{s}$ satisfies condition~1(c) of Assumption~\ref{assumption:Stratification}.

\begin{proposition}[{projection onto the set of sparse vectors \cite[Proposition~3.6]{BauschkeLukePhanWang2014}}]
\label{prop:ProjectionSparseVectors}
For every $x \in \R^n \setminus \sparse{n}{s}$, $\proj{\sparse{n}{s}}{x}$ is the set of all possible outputs of Algorithm~\ref{algo:ProjectionSparseVectors}, and $\dist(x, \sparse{n}{s})$ is the sum of the smallest $|\supp(x)|-s$ absolute values of components of $x$.
\end{proposition}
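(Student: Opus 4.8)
The result is \cite[Proposition~3.6]{BauschkeLukePhanWang2014}, which one may either cite directly or prove as follows. The plan is to use that $\sparse{n}{s}$ is a finite union of linear subspaces, so that the projection reduces to a combinatorial choice of which $s$ coordinates of $x$ to keep.

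First I would write $\sparse{n}{s} = \bigcup_I V_I$, the union running over all $I \subseteq \{1, \dots, n\}$ with $|I| = s$, where $V_I := \{y \in \R^n \mid \supp(y) \subseteq I\}$ is an $s$-dimensional linear subspace. As a finite union of closed sets, it satisfies $\dist(x, \sparse{n}{s}) = \min_{|I| = s} \dist(x, V_I)$ for every $x \in \R^n$, and, writing $\mathcal{I}^\ast(x)$ for the set of index sets attaining this minimum, $\proj{\sparse{n}{s}}{x} = \bigcup_{I \in \mathcal{I}^\ast(x)} \proj{V_I}{x}$; this is the elementary fact that projecting onto a finite union means projecting onto each piece and keeping the closest, together with the nonemptiness of $\proj{V_I}{x}$ (Proposition~\ref{prop:ProjectionOntoClosedSet}).

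Next, $\proj{V_I}{x}$ is the singleton whose element is obtained from $x$ by zeroing every entry indexed outside $I$, and $\dist(x, V_I)^2 = \sum_{i \notin I} x_i^2$. Hence selecting $I \in \mathcal{I}^\ast(x)$ is the same as selecting $I$ so that $\{|x_i| \mid i \in I\}$ collects $s$ largest among $|x_1|, \dots, |x_n|$. Since $x \notin \sparse{n}{s}$ forces $|\supp(x)| > s$, there are at least $s+1$ nonzero entries, so every optimal $I$ lies in $\supp(x)$, and choosing $I$ amounts to discarding from $\supp(x)$ a set $Z$ of $|\supp(x)| - s$ indices of smallest absolute value; the value $\dist(x, \sparse{n}{s})$ is then read off from the entries indexed by $Z$, giving the formula in the statement. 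Finally I would verify, by induction on the number of executed iterations of its while loop, that Algorithm~\ref{algo:ProjectionSparseVectors} produces exactly these vectors: each iteration zeros one current entry of minimal magnitude, so after $|\supp(x)| - s$ iterations it has zeroed an admissible $Z$, and conversely each admissible $Z$ is realized by a suitable sequence of $\argmin$ choices; tracking supports with~\eqref{eq:SubadditivitySupport} along the way shows the set of outputs is exactly $\proj{\sparse{n}{s}}{x}$.

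I expect the only delicate point to be the set-valued bookkeeping when several coordinates of $x$ share the smallest relevant magnitude: then both the $\argmin$ inside the while loop and the optimal index set $I$ are non-unique, and one must check that the outputs of the algorithm exhaust the whole family $\{\proj{V_I}{x} \mid I \in \mathcal{I}^\ast(x)\}$ rather than only a proper subset. Once this tie analysis is in place, the remaining steps are routine.
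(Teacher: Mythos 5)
The paper supplies no proof of Proposition~\ref{prop:ProjectionSparseVectors}: it is imported directly from \cite[Proposition~3.6]{BauschkeLukePhanWang2014}, so your first option (cite) is exactly what the paper does, and your self-contained argument goes beyond it. That argument is sound and is the standard one: writing $\sparse{n}{s}=\bigcup_{|I|=s}V_I$ with $V_I$ the coordinate subspaces, using that the projection onto a finite union of closed sets is the union of the projections onto the pieces attaining the minimal distance, computing $\dist(x,V_I)^2=\sum_{i\notin I}x_i^2$, and noting that $|\supp(x)|>s$ forces every optimal $I$ to sit inside $\supp(x)$, so that the choice reduces to discarding a set $Z\subseteq\supp(x)$ of $|\supp(x)|-s$ indices of smallest magnitude. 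The tie-breaking point you flag is indeed the only bookkeeping needed and it is routine: every admissible $Z$ is realized by the while loop by removing its elements in nondecreasing order of magnitude (at each step the next element is a legitimate $\argmin$ because all indices outside $Z$ have magnitude at least as large), and conversely every greedy run removes a multiset consisting of the $|\supp(x)|-s$ smallest magnitudes, hence an admissible $Z$.

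One caveat on the last step: your computation yields $\dist(x,\sparse{n}{s})=\bigl(\sum_{i\in Z}x_i^2\bigr)^{1/2}$, i.e., the Euclidean norm of the discarded components, and this is not literally ``the sum of the smallest $|\supp(x)|-s$ absolute values'' asserted in the statement (for $x=(1,1,2)$ and $s=1$ the distance is $\sqrt{2}$, not $2$). The statement's phrasing (like that of Proposition~\ref{prop:ProjectionNonnegativeSparseVectors}) has to be read as the norm of the vector formed by those components; rather than claiming the literal sentence ``follows'', state the $\ell_2$ formula you actually derived and note the intended reading.
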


\begin{algorithm}[H]
\caption{Projection onto the set of sparse vectors}
\label{algo:ProjectionSparseVectors}
\begin{algorithmic}[1]
\Require
$(n, s)$ where $n$ and $s$ are positive integers such that $s < n$.
\Input
$x \in \R^n$.
\Output
$y \in \proj{\sparse{n}{s}}{x}$.

\State
$y \gets x$;
\While
{$|\supp(y)| > s$}
	\State
	Choose $i \in \argmin_{j \in \supp(y)} |y_j|$;
	\State
	$y_i \gets 0$;
\EndWhile
\State
Return $y$.
\end{algorithmic}
\end{algorithm}

Based on Proposition~\ref{prop:ProjectionSparseVectors}, we now prove that $\sparse{n}{s}$ satisfies condition~1 of Assumption~\ref{assumption:Stratification}.

\begin{proposition}[stratification of the set of sparse vectors]
\label{prop:StratificationSparseVectorsCondition1MainAssumption}
The stratification $\{\FixedSparsity{n}{0}, \dots, \FixedSparsity{n}{s}\}$ of $\sparse{n}{s}$ satisfies condition~1 of Assumption~\ref{assumption:Stratification}.
\end{proposition}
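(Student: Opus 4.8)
The plan is to follow the template of the proof of Proposition~\ref{prop:StratificationNonnegativeSparseVectorsCondition1MainAssumption}, simply dropping the nonnegativity constraint throughout and invoking Proposition~\ref{prop:ProjectionSparseVectors} wherever that earlier proof used Proposition~\ref{prop:ProjectionNonnegativeSparseVectors}. Concretely, I would set $p := s$ and $S_i := \FixedSparsity{n}{i}$ for $i \in \{0, \dots, s\}$, and verify conditions~1(a), 1(b), and 1(c) in turn. For condition~1(a), the submanifold property \cite[Proposition~3.3.2]{AbsilMahonySepulchre} gives that each $\FixedSparsity{n}{i}$ is an $i$-dimensional embedded submanifold of $\R^n$: the set $\FixedSparsity{n}{0} = \{0\}^n$ is a point, and for $i \in \{1, \dots, s\}$ and $\ushort{x} \in \FixedSparsity{n}{i}$ I would take $U := \ball(\ushort{x}, \dist(\ushort{x}, \FixedSparsity{n}{i-1}))$, which has positive radius by Proposition~\ref{prop:ProjectionSparseVectors}; every $x \in U$ then satisfies $|x_k - \ushort{x}_k| < |\ushort{x}_k|$, hence $x_k \neq 0$, for each $k \in \supp(\ushort{x})$, so $\supp(x) \supseteq \supp(\ushort{x})$, and the coordinate-reordering chart $\varphi : U \to \R^n : x \mapsto ((x_k)_{k \in \supp(\ushort{x})}, (x_k)_{k \notin \supp(\ushort{x})})$ satisfies $\{x \in U \mid \varphi(x) \in \R^i \times \{0\}^{n-i}\} = \FixedSparsity{n}{i} \cap U$.

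For condition~1(c), I would note that, by Proposition~\ref{prop:ProjectionSparseVectors}, for $x \in \FixedSparsity{n}{i}$ with $i \ge 2$ and $j \in \{1, \dots, i-1\}$, a projection of $x$ onto $\FixedSparsity{n}{j}$ zeroes out the $i-j$ nonzero components of $x$ of smallest absolute value, whereas a projection onto $\FixedSparsity{n}{j-1}$ zeroes out one more of them; since $i - j + 1 \le i = |\supp(x)|$, that additional component is a genuinely nonzero component of $x$, whence $\dist(x, \FixedSparsity{n}{j}) < \dist(x, \FixedSparsity{n}{j-1})$.

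For condition~1(b), I would reduce, exactly as in the proof of Proposition~\ref{prop:StratificationNonnegativeSparseVectorsCondition1MainAssumption}, to showing that for all $i, j \in \{0, \dots, s\}$ one has $\FixedSparsity{n}{j} \cap \overline{\FixedSparsity{n}{i}} = \emptyset$ when $j > i$ and $\FixedSparsity{n}{j} \subseteq \overline{\FixedSparsity{n}{i}}$ when $j \le i$. When $j > i$, for $x \in \FixedSparsity{n}{j}$ the argument already used for 1(a) shows that every point of $\ball(x, \dist(x, \FixedSparsity{n}{j-1}))$ has support containing $\supp(x)$, hence of cardinality at least $j > i$, so the ball misses $\FixedSparsity{n}{i}$ and $x \notin \overline{\FixedSparsity{n}{i}}$. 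When $j \le i$, given $x \in \FixedSparsity{n}{j}$ and $\varepsilon \in (0,\infty)$, I would pick $I \subseteq \{1, \dots, n\} \setminus \supp(x)$ with $|I| = i - j$ (possible since $j \le i \le s < n$) and define $y$ by $y_k := \varepsilon/\sqrt{n}$ for $k \in I$ and $y_k := x_k$ otherwise, so that $y \in \ball[x, \varepsilon] \cap \FixedSparsity{n}{i}$, proving $x \in \overline{\FixedSparsity{n}{i}}$. None of these steps presents a real obstacle, the proof being essentially a transcription of the nonnegative case; the only point requiring slight care is condition~1(c), where strictness of the inequality rests on the fact that the extra component dropped in passing from $\FixedSparsity{n}{j}$ to $\FixedSparsity{n}{j-1}$ is a nonzero component of $x$ — which is exactly what the hypothesis $j \le i-1$ guarantees.
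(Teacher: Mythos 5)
Your proposal is correct and follows essentially the same route as the paper: the same strata $S_i := \FixedSparsity{n}{i}$ with the coordinate-permutation chart on $\ball(\ushort{x}, \dist(\ushort{x}, \FixedSparsity{n}{i-1}))$ for condition~1(a), condition~1(c) deduced from the projection formula of Proposition~\ref{prop:ProjectionSparseVectors}, and the same reduction plus support-perturbation argument for condition~1(b). The only difference is that you spell out the strict-inequality argument for 1(c) and the support-containment argument on the ball, which the paper leaves implicit in its citation of Proposition~\ref{prop:ProjectionSparseVectors}.
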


\begin{proof}
Using the submanifold property \cite[Proposition~3.3.2]{AbsilMahonySepulchre}, we first prove that, for every $i \in \{0, \dots, s\}$, $\FixedSparsity{n}{i}$ is an $i$-dimensional embedded submanifold of $\R^n$.
For $\FixedSparsity{n}{0} = \{0\}^n$, we take $U := \R^n$ and $\varphi : \R^n \to \R^n : x \mapsto x$, and we have
\begin{equation*}
\{x \in U \mid \varphi(x) \in \{0\}^n\}
= \{0\}^n
= \FixedSparsity{n}{0} \cap U.
\end{equation*}
Let $\ushort{x} \in \FixedSparsity{n}{i}$ with $i \in \{1, \dots, s\}$. For $U := \ball(\ushort{x}, \dist(\ushort{x}, \FixedSparsity{n}{i-1}))$ and
\begin{equation*}
\varphi : U \to \R^n : x \mapsto ((x_j)_{j \in \supp(\ushort{x})}, (x_j)_{j \in \{1, \dots, n\} \setminus \supp(\ushort{x})}),
\end{equation*}
we have
\begin{equation*}
\{x \in U \mid \varphi(x) \in \R^i \times \{0\}^{n-i}\}
= \{x \in U \mid x_j = 0 \, \forall j \in \{1, \dots, n\} \setminus \supp(\ushort{x})\}
= \FixedSparsity{n}{i} \cap U.
\end{equation*}
Thus, condition~1(a) is satisfied. By Proposition~\ref{prop:ProjectionSparseVectors}, condition~1(c) is satisfied too. To establish condition~1(b), it suffices to prove that, for all $i, j \in \{0, \dots, s\}$, $\FixedSparsity{n}{j} \cap \overline{\FixedSparsity{n}{i}} = \emptyset$ if $j > i$ and $\FixedSparsity{n}{j} \subseteq \overline{\FixedSparsity{n}{i}}$ if $j \le i$.
Let $i, j \in \{0, \dots, s\}$. If $j > i$, then, by Proposition~\ref{prop:ProjectionSparseVectors}, for all $x \in \FixedSparsity{n}{j}$, $\ball(x, \dist(x, \FixedSparsity{n}{j-1})) \cap \FixedSparsity{n}{i} = \emptyset$ and thus $x \not\in \overline{\FixedSparsity{n}{i}}$. If $j \le i$, then, for all $x \in \FixedSparsity{n}{j}$ and all $\varepsilon \in (0, \infty)$, $\ball[x, \varepsilon] \cap \FixedSparsity{n}{i} \ne \emptyset$. This is clear if $j = i$. Let us prove it in the case where $j < i$. Let $x \in \FixedSparsity{n}{j}$ and $\varepsilon \in (0, \infty)$. Let $I(x) \subseteq \{1, \dots, n\} \setminus \supp(x)$ such that $|I(x)| = i-j$. Define $y \in \R^n$ by $y_k := \frac{\varepsilon}{\sqrt{n}}$ if $k \in I(x)$ and $y_k := 0$ otherwise. Then, $x+y \in \ball[x, \varepsilon] \cap \FixedSparsity{n}{i}$.
\end{proof}

\subsubsection{Tangent cone to the set of sparse vectors}
\label{subsubsec:TangentConeSparseVectors}
Proposition~\ref{prop:TangentConeSparseVectors} gives an explicit description of the tangent cone to $\sparse{n}{s}$ and show how to project onto it.

\begin{proposition}[tangent cone to the set of sparse vectors]
\label{prop:TangentConeSparseVectors}
For every $x \in \sparse{n}{s}$,
\begin{equation*}
\tancone{\sparse{n}{s}}{x} = \left\{v \in \R^n \mid |\supp(x) \cup \supp(v)| \le s\right\},
\end{equation*}
$\sparse{n}{s}$ is geometrically derivable at $x$, and, for every $v \in \R^n$, $\proj{\tancone{\sparse{n}{s}}{x}}{v}$ is the set of all possible outputs of Algorithm~\ref{algo:ProjectionTangentConeSparseVectors}.
\end{proposition}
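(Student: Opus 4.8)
The plan is to mirror the structure of the analogous results already established for $\sparse{n}{s} \cap \R_+^n$ (Proposition~\ref{prop:TangentConeNonnegativeSparseVectors}) and adapt each argument by simply dropping the nonnegativity constraint. First I would handle the trivial case $x = 0$: since $\sparse{n}{s}$ is a closed cone, $\tancone{\sparse{n}{s}}{0} = \sparse{n}{s} = \{v \in \R^n \mid |\supp(0) \cup \supp(v)| \le s\}$, which agrees with the claimed formula and shows geometric derivability at $0$ (every tangent vector $v$ is derivable via the line $t \mapsto tv$, which lies in $\sparse{n}{s}$).

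For $x \in \FixedSparsity{n}{j}$ with $j \in \{1, \dots, s\}$, the key observation is that $\dist(x, \FixedSparsity{n}{j-1}) = \min_{i \in \supp(x)} |x_i| > 0$ (by Proposition~\ref{prop:ProjectionSparseVectors}), so for all sufficiently small $t > 0$, namely $t \in \big(0, \frac{\dist(x, \FixedSparsity{n}{j-1})}{2\norm{v}_\infty}\big)$, none of the components of $x$ on $\supp(x)$ can vanish, hence $\supp(x+tv) = \supp(x) \cup \supp(v)$. From this I would derive both inclusions. For $\supseteq$: if $|\supp(x) \cup \supp(v)| \le s$, then $x+tv \in \sparse{n}{s}$ for all such small $t$, so $\dist(x+tv, \sparse{n}{s})/t = 0$, giving $v \in \tancone{\sparse{n}{s}}{x}$ and derivability (the curve is $t \mapsto x+tv$ restricted to a small interval). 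For $\subseteq$: if $|\supp(x) \cup \supp(v)| > s$, then $\supp(v) \setminus \supp(x) \ne \emptyset$ and for small $t > 0$ the best way to return $x+tv$ to $\sparse{n}{s}$ is to zero out the smallest new component, so $\dist(x+tv, \sparse{n}{s})/t \ge \min_{i \in \supp(v) \setminus \supp(x)} |v_i| > 0$, whence $v \notin \tancone{\sparse{n}{s}}{x}$. The projection description then follows exactly as in the nonnegative case: $\proj{\tancone{\sparse{n}{s}}{x}}{v}$ is obtained by keeping $v$ on $\supp(x)$, and among the coordinates in $\supp(v) \setminus \supp(x)$ greedily discarding those of smallest absolute value until the support budget $s$ is met — which is what Algorithm~\ref{algo:ProjectionTangentConeSparseVectors} does.

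I do not anticipate a serious obstacle here; this is essentially a simplification of the already-proven nonnegative-orthant case, since removing the sign constraints only eliminates the case analysis over $v_i < 0$ for $i \notin \supp(x)$. The one point requiring a little care is the correctness of the greedy projection algorithm: I would note that $\tancone{\sparse{n}{s}}{x}$ is a union of linear subspaces (one for each admissible support set containing $\supp(x)$ of size at most $s$), so the nearest point is found by selecting the subspace that retains the largest-magnitude coordinates of $v$, and ties are broken arbitrarily — exactly matching the nondeterministic choice in the algorithm's \texttt{argmin} step. This makes $\proj{\tancone{\sparse{n}{s}}{x}}{v}$ the full set of possible outputs rather than a singleton, consistent with the statement.
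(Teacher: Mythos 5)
Your proposal is correct, but it takes a different route from the paper on the first claim. The paper does not reprove the formula for $\tancone{\sparse{n}{s}}{x}$ at all: it cites \cite[Theorem~3.15]{BauschkeLukePhanWang2014} for the description of the tangent cone, states that the projection formula (Algorithm~\ref{algo:ProjectionTangentConeSparseVectors}) follows, and only proves geometric derivability directly. You instead give a self-contained proof of the cone formula by adapting the argument of Proposition~\ref{prop:TangentConeNonnegativeSparseVectors}, using $\dist(x, \FixedSparsity{n}{j-1}) = \min_{i \in \supp(x)} |x_i|$ to show $\supp(x+tv) = \supp(x) \cup \supp(v)$ for small $t$, and then deriving both inclusions from the distance characterization \eqref{eq:TangentConeDistance}; your lower bound $\dist(x+tv, \sparse{n}{s})/t \ge \min_{i \in \supp(v) \setminus \supp(x)} |v_i|$ in the $\subseteq$ direction is exactly the mechanism used in the nonnegative case, and your justification of the greedy projection via the union-of-subspaces structure is sound and at the same level of rigor as the paper. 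What the paper's route buys is brevity (delegating the formula to the literature) and a cleaner derivability argument: since the sign constraint is absent, \eqref{eq:SubadditivitySupport} gives $x+tv \in \sparse{n}{s}$ for \emph{all} $t \in (0,\infty)$ whenever $|\supp(x) \cup \supp(v)| \le s$, with no smallness restriction on $t$ — a fact that is also what underlies condition~3(b) of Assumption~\ref{assumption:StratificationRFDR} in Proposition~\ref{prop:SparseVectorsStrongAssumption}. What your route buys is self-containedness and a uniform treatment of the two sparse settings; your restriction to small $t$ is harmless for tangency and derivability, since a curve $\gamma : [0,\tau] \to \sparse{n}{s}$ on a short interval suffices.
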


\begin{proof}
The description of $\tancone{\sparse{n}{s}}{x}$ is given in \cite[Theorem~3.15]{BauschkeLukePhanWang2014} and the projection onto $\tancone{\sparse{n}{s}}{x}$ follows. Thus, we only prove that $\sparse{n}{s}$ is geometrically derivable.
Let $x \in \sparse{n}{s}$ and $v \in \R^n$ such that $|\supp(x) \cup \supp(v)| \le s$. Then, for all $t \in (0, \infty)$, since $\supp(tv) = \supp(v)$ and, by~\eqref{eq:SubadditivitySupport}, $\supp(x+tv) \subseteq \supp(x) \cup \supp(tv)$, it holds that $|\supp(x+tv)| \le s$, i.e., $x+tv \in \sparse{n}{s}$, and hence $\dist(x+tv, \sparse{n}{s})/t = 0$. Thus, $\lim_{t \searrow 0} \dist(x+tv, \sparse{n}{s})/t = 0$ and it follows that $v$ is geometrically derivable.
\end{proof}

\begin{algorithm}[H]
\caption{Projection onto the tangent cone to the set of sparse vectors}
\label{algo:ProjectionTangentConeSparseVectors}
\begin{algorithmic}[1]
\Require
$(n, s, x)$ where $n$ and $s$ are positive integers such that $s < n$, and $x \in \sparse{n}{s}$.
\Input
$v \in \R^n$.
\Output
$w \in \proj{\tancone{\sparse{n}{s}}{x}}{v}$.

\State
$w \gets v$;
\While
{$|\supp(x) \cup \supp(w)| > s$}
	\State
	Choose $i \in \argmin_{j \in \supp(w) \setminus \supp(x)} |w_j|$;
	\State
	$w_i \gets 0$;
\EndWhile
\State
Return $w$.
\end{algorithmic}
\end{algorithm}

Proposition~\ref{prop:SparseVectorsStrongAssumption} shows that $\sparse{n}{s}$ satisfies Assumption~\ref{assumption:StratificationRFDR}.

\begin{proposition}
\label{prop:SparseVectorsStrongAssumption}
The set $\sparse{n}{s}$ satisfies Assumption~\ref{assumption:StratificationRFDR}:
\begin{enumerate}
\item for all $x \in \sparse{n}{s}$ and all $v \in \tancone{\sparse{n}{s}}{x}$, $x+v \in \sparse{n}{s}$;
\item for all $x \in \StrictSparsity{n}{s}$ and all $v \in \R^n$, $\norm{\proj{\tancone{\sparse{n}{s}}{x}}{v}} \ge \norm{v}_\infty \ge \frac{1}{\sqrt{n}} \norm{v}$.
\end{enumerate}
\end{proposition}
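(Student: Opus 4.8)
The plan is to verify the three conditions of Assumption~\ref{assumption:StratificationRFDR} for $C = \sparse{n}{s}$, using the results already established in Section~\ref{subsubsec:StratificationSparseVectors} and Section~\ref{subsubsec:TangentConeSparseVectors}. Condition~1 (in the weaker form appearing in Assumption~\ref{assumption:StratificationRFDR}) is already handled: Proposition~\ref{prop:StratificationSparseVectorsCondition1MainAssumption} gives conditions~1(a) and 1(b) of Assumption~\ref{assumption:Stratification}, which are stronger than condition~1 here, and the strict-inequality clause ``$\dist(x, S_{p-1}) < \dist(x, S_{p-2})$ for $x \in S_p$'' follows from Proposition~\ref{prop:ProjectionSparseVectors} since, for $x \in \FixedSparsity{n}{s}$, $\dist(x, \FixedSparsity{n}{s-1})$ is the smallest absolute value of a nonzero component of $x$ while $\dist(x, \FixedSparsity{n}{s-2})$ is the sum of the two smallest, which is strictly larger. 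So I would state this briefly and move on to the two displayed items, which are the substance of the proposition.

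First I would prove item~1: for all $x \in \sparse{n}{s}$ and all $v \in \tancone{\sparse{n}{s}}{x}$, $x+v \in \sparse{n}{s}$. By Proposition~\ref{prop:TangentConeSparseVectors}, $v \in \tancone{\sparse{n}{s}}{x}$ means $|\supp(x) \cup \supp(v)| \le s$, and by~\eqref{eq:SubadditivitySupport}, $\supp(x+v) \subseteq \supp(x) \cup \supp(v)$, so $|\supp(x+v)| \le s$, i.e., $x+v \in \sparse{n}{s}$. This also shows that the tangent cone itself serves as a restricted tangent cone (it is a closed cone contained in itself, trivially), which is the point of condition~3 of Assumption~\ref{assumption:StratificationRFDR}: taking $\restancone{\sparse{n}{s}}{x} := \tancone{\sparse{n}{s}}{x}$, conditions~3(a) and 3(b) hold, and 3(c) holds with $\mu := 1$.

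Next I would prove item~2, which simultaneously establishes condition~2 of Assumption~\ref{assumption:StratificationRFDR}. Let $x \in \StrictSparsity{n}{s}$, i.e., $|\supp(x)| \le s-1$, and let $v \in \R^n$. Let $w \in \proj{\tancone{\sparse{n}{s}}{x}}{v}$ be an output of Algorithm~\ref{algo:ProjectionTangentConeSparseVectors}. Since $|\supp(x)| \le s-1$, there is room for at least one more index: writing $i^* \in \argmax_{j \in \{1,\dots,n\}} |v_j|$, either $i^* \in \supp(x)$, in which case $w_{i^*} = v_{i^*}$, or $i^* \notin \supp(x)$ and $|\supp(x) \cup \{i^*\}| \le s$, so the index $i^*$ is never zeroed out by the while loop (the loop only zeroes out smallest-magnitude entries of $\supp(w) \setminus \supp(x)$), hence $w_{i^*} = v_{i^*}$ as well; either way $\|w\| \ge |w_{i^*}| = |v_{i^*}| = \|v\|_\infty$. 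Combining with $\|v\|_\infty \ge \frac{1}{\sqrt n}\|v\|$ gives the claimed chain of inequalities, and taking the infimum over $x \in \sparse{n}{s} \setminus \FixedSparsity{n}{s} = \StrictSparsity{n}{s}$ and $v \ne 0$ yields that the infimum in condition~2 is at least $\frac{1}{\sqrt n} > 0$.

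The main obstacle, such as it is, is the bookkeeping in item~2: one must be careful that the while loop of Algorithm~\ref{algo:ProjectionTangentConeSparseVectors} does not erase a largest-magnitude component, which relies precisely on $|\supp(x)| < s$ so that at least one index outside $\supp(x)$ can be retained; if $x \in \FixedSparsity{n}{s}$ the statement would fail (the projection can be zero even when $v \ne 0$), which is why the hypothesis $x \in \StrictSparsity{n}{s}$ is essential and matches the domain $C \setminus S_p$ appearing in condition~2 of Assumption~\ref{assumption:StratificationRFDR}. Everything else is a direct quotation of the earlier propositions.
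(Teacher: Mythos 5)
Your overall route is the same as the paper's: item~1 via \eqref{eq:SubadditivitySupport} and Proposition~\ref{prop:TangentConeSparseVectors}, item~2 by inspecting the output of Algorithm~\ref{algo:ProjectionTangentConeSparseVectors}, and your side remarks (the tangent cone serving as its own restricted tangent cone with $\mu = 1$, and condition~1 of Assumption~\ref{assumption:StratificationRFDR} coming from the stratification results) match what the paper does elsewhere in Sections~\ref{subsec:RFDRreview} and \ref{subsubsec:StratificationSparseVectors}.

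There is, however, one step in item~2 that does not hold as stated: the claim that your fixed index $i^*$ with $|v_{i^*}| = \norm{v}_\infty$ ``is never zeroed out by the while loop.'' The loop selects an element of $\argmin_{j \in \supp(w) \setminus \supp(x)} |w_j|$, and when several entries outside $\supp(x)$ are tied at the value $\norm{v}_\infty$, this argmin can contain $i^*$, so the (nondeterministic) algorithm is allowed to zero precisely that index. Concretely, take $n = 3$, $s = 2$, $x = (1,0,0) \in \StrictSparsity{3}{2}$, $v = (0,1,1)$, and $i^* = 2$: the single pass of the loop may set $w_2 = 0$ and keep $w_3 = 1$, so $w_{i^*} \ne v_{i^*}$. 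The conclusion still survives, because $i^*$ can be selected only when every remaining entry of $\supp(w) \setminus \supp(x)$ has magnitude exactly $\norm{v}_\infty$, and since $|\supp(x)| \le s-1$ the loop terminates with $|\supp(w) \setminus \supp(x)| = s - |\supp(x)| \ge 1$, so some entry of magnitude $\norm{v}_\infty$ is retained. This is exactly why the paper phrases the key fact in a tie-robust way, namely that $\max_{i \in \supp(w) \setminus \supp(x)} |w_i| = \max_{i \in \supp(v) \setminus \supp(x)} |v_i|$ while $w_i = v_i$ off $\supp(v) \setminus \supp(x)$, whence $\norm{w}_\infty = \norm{v}_\infty$, rather than asserting the survival of one prescribed index. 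With that one-line repair (argue about the maximum over the surviving set, or choose the maximizing index adaptively among those the algorithm keeps), your proof is complete and coincides with the paper's.
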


\begin{proof}
The first property follows from~\eqref{eq:SubadditivitySupport} and Proposition~\ref{prop:TangentConeSparseVectors}. Let $x \in \StrictSparsity{n}{s}$ and $v \in \R^n \setminus \tancone{\sparse{n}{s}}{x}$. Then, Algorithm~\ref{algo:ProjectionTangentConeSparseVectors} produces $w \in \proj{\tancone{\sparse{n}{s}}{x}}{v}$ such that $\max_{i \in \supp(w) \setminus \supp(x)} |w_i| = \max_{i \in \supp(v) \setminus \supp(x)} |v_i|$ and, for all $i \in \{1, \dots, n\} \setminus (\supp(v) \setminus \supp(x))$, $w_i = v_i$. Hence, $\norm{w}_\infty = \norm{v}_\infty$, and the result follows.
\end{proof}

Proposition~\ref{prop:ContinuityTangentConeStratumSparseVectors} states that $\sparse{n}{s}$ satisfies condition~3 of Assumption~\ref{assumption:Stratification}.

\begin{proposition}
\label{prop:ContinuityTangentConeStratumSparseVectors}
For every $i \in \{0, \dots, s\}$, the correspondence $\tancone{\sparse{n}{s}}{\cdot}$ is continuous at every $x \in \FixedSparsity{n}{i}$ relative to $\FixedSparsity{n}{i}$.
\end{proposition}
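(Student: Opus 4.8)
The plan is to mirror the proof of Proposition~\ref{prop:ContinuityTangentConeStratumNonnegativeSparseVectors}, since the situation for $\sparse{n}{s}$ is structurally identical to that for $\sparse{n}{s} \cap \R_+^n$: in both cases the tangent cone at a point depends only on its support, and convergence within a fixed stratum eventually fixes the support. First I would dispose of the case $i = 0$: there $\FixedSparsity{n}{0} = \{0\}^n$ is a single point, so continuity relative to $\FixedSparsity{n}{0}$ is trivial. For the remaining case $i \in \{1, \dots, s\}$, I would reduce the statement to the sequential characterization of continuity reviewed in Section~\ref{subsec:InnerOuterLimitsContinuityCorrespondences}: it suffices to show that for every sequence $(x^j)_{j \in \N}$ in $\FixedSparsity{n}{i}$ converging to $x \in \FixedSparsity{n}{i}$,
\begin{equation*}
\outlim_{j \to \infty} \tancone{\sparse{n}{s}}{x^j}
\subseteq \tancone{\sparse{n}{s}}{x}
\subseteq \inlim_{j \to \infty} \tancone{\sparse{n}{s}}{x^j}.
\end{equation*}

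The key step is the observation that the tangent cone is locally constant on each stratum. Specifically, for $x \in \FixedSparsity{n}{i}$ and any $y \in \ball(x, \dist(x, \FixedSparsity{n}{i-1})) \cap \FixedSparsity{n}{i}$, one has $\supp(y) = \supp(x)$: indeed, $y$ cannot lose a nonzero coordinate of $x$ without coming within $\dist(x, \FixedSparsity{n}{i-1})$ of a vector of support size $i-1$ (here I would invoke Proposition~\ref{prop:ProjectionSparseVectors} or simply the elementary fact that $\dist(x, \FixedSparsity{n}{i-1}) = \min_{k \in \supp(x)} |x_k|$), and $y \in \FixedSparsity{n}{i}$ forces $|\supp(y)| = i = |\supp(x)|$, so no new coordinate can appear either. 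By the description of the tangent cone in Proposition~\ref{prop:TangentConeSparseVectors}, $\tancone{\sparse{n}{s}}{y}$ depends on $y$ only through $\supp(y)$, hence $\tancone{\sparse{n}{s}}{y} = \tancone{\sparse{n}{s}}{x}$ for all such $y$. Since $(x^j)_{j \in \N}$ converges to $x$, it has only finitely many terms outside the open ball $\ball(x, \dist(x, \FixedSparsity{n}{i-1}))$, so $\tancone{\sparse{n}{s}}{x^j} = \tancone{\sparse{n}{s}}{x}$ for all large $j$; both inner and outer limits of an eventually constant sequence of sets equal that constant value, which establishes both inclusions simultaneously.

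I do not anticipate a genuine obstacle here: the argument is a verbatim transcription of the nonnegative case, with $\FixedSparsity{n}{i} \cap \R_+^n$ replaced by $\FixedSparsity{n}{i}$ and Proposition~\ref{prop:ProjectionNonnegativeSparseVectors} by Proposition~\ref{prop:ProjectionSparseVectors}, and with the sign constraint in the tangent-cone formula simply dropped. The only point requiring a little care is making explicit that $\dist(x, \FixedSparsity{n}{i-1}) > 0$ for $x \in \FixedSparsity{n}{i}$ so that the open ball used is nondegenerate; this is immediate from $\dist(x, \FixedSparsity{n}{i-1}) = \min_{k \in \supp(x)} |x_k| > 0$, or alternatively from Proposition~\ref{prop:Conditions1(b)(c)MainAssumption} applied to the stratification of Proposition~\ref{prop:StratificationSparseVectorsCondition1MainAssumption}. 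With that in hand, the proof is complete, and in fact it is short enough that the ``proof'' environment can consist of essentially the two sentences above.
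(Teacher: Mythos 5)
Your proposal is correct and follows essentially the same route as the paper's proof: dispose of $i=0$ trivially, then observe that on $\ball(x, \dist(x, \FixedSparsity{n}{i-1})) \cap \FixedSparsity{n}{i}$ the support, and hence by Proposition~\ref{prop:TangentConeSparseVectors} the tangent cone, is constant, so the tangent cones along any sequence in the stratum converging to $x$ are eventually equal to $\tancone{\sparse{n}{s}}{x}$, giving both the inner and outer limit inclusions. The extra details you supply (that $\dist(x, \FixedSparsity{n}{i-1}) = \min_{k \in \supp(x)} |x_k| > 0$ and why the support cannot change inside the ball) are accurate and merely make explicit what the paper leaves implicit.
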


\begin{proof}
The result is clear if $i = 0$ since $\FixedSparsity{n}{0} = \{0\}^n$. Let us therefore consider $i \in \{1, \dots, s\}$. We have to prove that, for every sequence $(x^j)_{j \in \N}$ in $\FixedSparsity{n}{i}$ converging to $x \in \FixedSparsity{n}{i}$, it holds that
\begin{equation*}
\outlim_{j \to \infty} \tancone{\sparse{n}{s}}{x^j}
\subseteq \tancone{\sparse{n}{s}}{x}
\subseteq \inlim_{j \to \infty} \tancone{\sparse{n}{s}}{x^j}.
\end{equation*}
Let $x \in \FixedSparsity{n}{i}$. Then, for all $y \in \ball(x, \dist(x, \FixedSparsity{n}{i-1})) \cap \FixedSparsity{n}{i}$, $\supp(y) = \supp(x)$ and thus, by Proposition~\ref{prop:TangentConeSparseVectors}, $\tancone{\sparse{n}{s}}{y} = \tancone{\sparse{n}{s}}{x}$. Thus, the result follows from the fact that a sequence $(x^j)_{j \in \N}$ in $\FixedSparsity{n}{i}$ converging to $x$ contains finitely many elements in $\FixedSparsity{n}{i} \setminus \ball(x, \dist(x, \FixedSparsity{n}{i-1}))$.
\end{proof}

\subsubsection{Normal cone to the set of sparse vectors}
\label{subsubsec:NormalConeSparseVectors}
In Proposition~\ref{prop:RegularNormalConeSparseVectors}, we deduce the regular normal cone to $\sparse{n}{s}$ from the description of the tangent cone to $\sparse{n}{s}$ given in Proposition~\ref{prop:TangentConeSparseVectors}.

\begin{proposition}[regular normal cone to the set of sparse vectors]
\label{prop:RegularNormalConeSparseVectors}
For all $x \in \sparse{n}{s}$,
\begin{equation*}
\regnorcone{\sparse{n}{s}}{x} = \left\{\begin{array}{ll}
\{w \in \R^n \mid \supp(w) \subseteq \{1, \dots, n\} \setminus \supp(x)\} & \text{if } x \in \FixedSparsity{n}{s},\\
\{0\}^n & \text{if } x \in \StrictSparsity{n}{s}.
\end{array}\right.
\end{equation*}
\end{proposition}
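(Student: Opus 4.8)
The plan is to compute $\regnorcone{\sparse{n}{s}}{x}$ directly from the identity $\regnorcone{\sparse{n}{s}}{x} = \tancone{\sparse{n}{s}}{x}^*$ of \eqref{eq:RegularNormalCone}, feeding in the explicit formula for the tangent cone obtained in Proposition~\ref{prop:TangentConeSparseVectors}, namely $\tancone{\sparse{n}{s}}{x} = \{v \in \R^n \mid |\supp(x) \cup \supp(v)| \le s\}$. The computation naturally splits along the dichotomy $|\supp(x)| = s$ versus $|\supp(x)| < s$ that appears in the statement, and mirrors the proof of Proposition~\ref{prop:RegularNormalConeNonnegativeSparseVectors}.

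First I would treat $x \in \FixedSparsity{n}{s}$. Since $|\supp(x)| = s$, the constraint $|\supp(x) \cup \supp(v)| \le s$ forces $\supp(v) \subseteq \supp(x)$, so $\tancone{\sparse{n}{s}}{x}$ equals the coordinate subspace $\{v \in \R^n \mid \supp(v) \subseteq \supp(x)\}$. The polar of a linear subspace is its orthogonal complement (Section~\ref{subsec:ProjectionOntoClosedCones}), and the orthogonal complement of that coordinate subspace is precisely $\{w \in \R^n \mid \supp(w) \subseteq \{1, \dots, n\} \setminus \supp(x)\}$, which gives the claimed formula in this case.

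Next I would treat $x \in \StrictSparsity{n}{s}$. Here $|\supp(x)| \le s-1$, so for every $i \in \{1, \dots, n\}$ both $(\delta_{i,j})_{j \in \{1, \dots, n\}}$ and its negative lie in $\tancone{\sparse{n}{s}}{x}$, because $|\supp(x) \cup \{i\}| \le |\supp(x)| + 1 \le s$. Hence any $w \in \tancone{\sparse{n}{s}}{x}^*$ satisfies $w_i \le 0$ and $-w_i \le 0$, i.e., $w_i = 0$, for every $i$, so $w = 0$; the reverse inclusion $\{0\}^n \subseteq \tancone{\sparse{n}{s}}{x}^*$ is immediate. I do not anticipate any genuine obstacle: unlike in Proposition~\ref{prop:RegularNormalConeNonnegativeSparseVectors}, there is no sign constraint to keep track of, and the only point deserving a moment's care is that the equality $|\supp(x)| = s$ already collapses the tangent cone to a coordinate subspace.
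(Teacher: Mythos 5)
Your proposal is correct and follows essentially the same route as the paper: both compute $\regnorcone{\sparse{n}{s}}{x}$ as the polar of the explicit tangent cone from Proposition~\ref{prop:TangentConeSparseVectors}, splitting into the cases $|\supp(x)| = s$ and $|\supp(x)| < s$ and testing against standard basis vectors. Your observation that in the full-support case the tangent cone is a coordinate subspace (so its polar is the orthogonal complement) is just a slightly more packaged phrasing of the paper's argument, not a different method.
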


\begin{proof}
The proof is based on Proposition~\ref{prop:TangentConeSparseVectors}. Let $x \in \sparse{n}{s}$. By \eqref{eq:RegularNormalCone} and because $\supp(-x) = \supp(x)$, we have
\begin{equation*}
\regnorcone{\sparse{n}{s}}{x}
= \left\{w \in \R^n \mid \ip{w}{v} \le 0 \; \forall v \in \tancone{\sparse{n}{s}}{x}\right\}
= \left\{w \in \R^n \mid \ip{w}{v} = 0 \; \forall v \in \tancone{\sparse{n}{s}}{x}\right\}.
\end{equation*}
Assume that $x \in \StrictSparsity{n}{s}$. Then, for all $i \in \{1, \dots, n\}$, $v := (\delta_{i, j})_{j \in \{1, \dots, n\}} \in \tancone{\sparse{n}{s}}{x}$ and, for all $w \in \R^n$, $\ip{w}{v} = w_i$. Thus, $\regnorcone{\sparse{n}{s}}{x} = \{0\}^n$.
Assume now that $x \in \FixedSparsity{n}{s}$. Then, for all $v \in \tancone{\sparse{n}{s}}{x}$ and all $i \in \{1, \dots, n\} \setminus \supp(x)$, $v_i = 0$. Thus, for all $w \in \R^n$ and all $v \in \tancone{\sparse{n}{s}}{x}$, $\ip{w}{v} = \sum_{i \in \supp(x)} w_i v_i$. Since, for all $i \in \supp(x)$, $v := (\delta_{i, j})_{j \in \{1, \dots, n\}} \in \tancone{\sparse{n}{s}}{x}$ and, for all $w \in \R^n$, $\ip{w}{v} = w_i$, we have $\regnorcone{\sparse{n}{s}}{x} \subseteq \{w \in \R^n \mid \supp(w) \subseteq \{1, \dots, n\} \setminus \supp(x)\}$. The converse inclusion also holds, and the result follows.
\end{proof}

\begin{proposition}[{normal cone to the set of sparse vectors \cite[Theorem~3.9]{BauschkeLukePhanWang2014}}]
\label{prop:NormalConeSparseVectors}
For all $x \in \sparse{n}{s}$,
\begin{equation*}
\norcone{\sparse{n}{s}}{x} = \{w \in \sparse{n}{n-s} \mid \supp(w) \subseteq \{1, \dots, n\} \setminus \supp(x)\}.
\end{equation*}
In particular, $\sparse{n}{s}$ is not Clarke regular on $\StrictSparsity{n}{s}$.
\end{proposition}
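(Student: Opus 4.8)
The plan is to establish the formula directly from the definition~\eqref{eq:NormalCone} of the normal cone as the outer limit $\norcone{\sparse{n}{s}}{x} = \outlim_{\sparse{n}{s} \ni z \to x} \regnorcone{\sparse{n}{s}}{z}$, feeding in the description of the regular normal cone from Proposition~\ref{prop:RegularNormalConeSparseVectors}; the argument is a transcription of the alternative proof of Proposition~\ref{prop:NormalConeNonnegativeSparseVectors}, with the nonnegativity constraints dropped. First I would dispose of the case $x \in \FixedSparsity{n}{s}$: there $\sparse{n}{s}$ is a smooth submanifold of $\R^n$ around $x$, since by Proposition~\ref{prop:ProjectionSparseVectors} every $y \in \ball(x, \dist(x, \FixedSparsity{n}{s-1})) \cap \sparse{n}{s}$ satisfies $\supp(y) = \supp(x)$, so \cite[Example~6.8]{RockafellarWets} gives $\norcone{\sparse{n}{s}}{x} = \regnorcone{\sparse{n}{s}}{x} = \{w \in \R^n \mid \supp(w) \subseteq \{1,\dots,n\} \setminus \supp(x)\}$, which equals the claimed set because $|\{1,\dots,n\} \setminus \supp(x)| = n-s$. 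Thus the remaining work concerns $x \in \StrictSparsity{n}{s}$.

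For the inclusion $\subseteq$, I would take $w \in \norcone{\sparse{n}{s}}{x}$, so $w$ is the limit of a subsequence $(w^{k_l})_{l \in \N}$ of a sequence $(w^k)_{k \in \N}$ with $w^k \in \regnorcone{\sparse{n}{s}}{x^k}$ and $(x^k)_{k \in \N}$ in $\sparse{n}{s}$ converging to $x$. Since each nonzero component of $x$ is nonzero in $x^{k_l}$ for $l$ large, $\supp(x) \subseteq \supp(x^{k_l})$ for $l$ large, hence $\{1,\dots,n\} \setminus \supp(x^{k_l}) \subseteq \{1,\dots,n\} \setminus \supp(x)$; and by Proposition~\ref{prop:RegularNormalConeSparseVectors}, $\supp(w^{k_l}) \subseteq \{1,\dots,n\} \setminus \supp(x^{k_l})$ whether $x^{k_l} \in \FixedSparsity{n}{s}$ or $x^{k_l} \in \StrictSparsity{n}{s}$ (in the latter $w^{k_l} = 0$). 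Passing to the limit, $\supp(w) \subseteq \{1,\dots,n\} \setminus \supp(x)$. To get $|\supp(w)| \le n-s$ I would distinguish two cases for $(x^{k_l})_{l \in \N}$: if it has infinitely many terms in $\FixedSparsity{n}{s}$, then along a further subsequence $|\supp(w^{k_l})| \le |\{1,\dots,n\} \setminus \supp(x^{k_l})| = n-s$ and, since $\supp(w) \subseteq \supp(w^{k_l})$ for $l$ large, $|\supp(w)| \le n-s$; otherwise $w^{k_l} = 0$ for $l$ large, so $w = 0 \in \sparse{n}{n-s}$. Either way $w$ lies in the right-hand side.

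For the inclusion $\supseteq$, I would take $w \in \sparse{n}{n-s}$ with $\supp(w) \subseteq \{1,\dots,n\} \setminus \supp(x)$ and imitate the $\supseteq$ part of Proposition~\ref{prop:NormalConeNonnegativeSparseVectors}: choose $I(x) \subseteq \{1,\dots,n\} \setminus (\supp(w) \cup \supp(x))$ with $|I(x)| = s - |\supp(x)|$, possible because $|\{1,\dots,n\} \setminus (\supp(w) \cup \supp(x))| = n - |\supp(w)| - |\supp(x)| \ge s - |\supp(x)| \ge 1$, and set $x^k_i := \frac{1}{k+1}$ for $i \in I(x)$ and $x^k_i := x_i$ otherwise. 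Then $x^k \to x$, $\supp(x^k) = \supp(x) \cup I(x)$ so $x^k \in \FixedSparsity{n}{s}$, and $\supp(w) \subseteq \{1,\dots,n\} \setminus \supp(x^k)$, whence $w \in \regnorcone{\sparse{n}{s}}{x^k}$ by Proposition~\ref{prop:RegularNormalConeSparseVectors}, so $w \in \outlim_{k \to \infty} \regnorcone{\sparse{n}{s}}{x^k} \subseteq \norcone{\sparse{n}{s}}{x}$. For the last sentence I would pick, for $x \in \StrictSparsity{n}{s}$, an index $i \in \{1,\dots,n\} \setminus \supp(x)$ (nonempty since $|\supp(x)| < s < n$); the basis vector $e_i$ lies in $\sparse{n}{n-s}$ with $\supp(e_i) \subseteq \{1,\dots,n\} \setminus \supp(x)$, hence $e_i \in \norcone{\sparse{n}{s}}{x} \setminus \{0\}^n$, while $\regnorcone{\sparse{n}{s}}{x} = \{0\}^n$ by Proposition~\ref{prop:RegularNormalConeSparseVectors}, and since $\sparse{n}{s}$ is closed, hence locally closed at $x$, the strict inclusion $\regnorcone{\sparse{n}{s}}{x} \subsetneq \norcone{\sparse{n}{s}}{x}$ shows $\sparse{n}{s}$ is not Clarke regular at $x$.

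The only mildly delicate point I anticipate is the bookkeeping in the $\subseteq$ direction — the case split according to how many iterates $x^{k_l}$ sit in the top stratum $\FixedSparsity{n}{s}$, together with carrying the cardinality bound $|\supp(w)| \le n-s$ through the limit via the lower semicontinuity of $|\supp(\cdot)|$. Everything else is a routine adaptation of the nonnegative-sparse-vector case already treated in the excerpt.
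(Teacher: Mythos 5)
Your proof is correct and follows essentially the same route as the paper's: the rank-$s$ case via the local smooth-manifold identification and \cite[Example~6.8]{RockafellarWets}, the inclusion $\subseteq$ via the outer-limit definition \eqref{eq:NormalCone} of $\norcone{\sparse{n}{s}}{x}$ with a case split on how many iterates lie in $\FixedSparsity{n}{s}$, and the inclusion $\supseteq$ via the same explicit perturbation of $x$ on a set $I(x) \subseteq \{1,\dots,n\} \setminus (\supp(w) \cup \supp(x))$. The only difference is that you spell out the non-Clarke-regularity conclusion explicitly (via $e_i \in \norcone{\sparse{n}{s}}{x} \setminus \regnorcone{\sparse{n}{s}}{x}$), which the paper leaves implicit; that addition is fine.
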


\begin{proof}
We provide an alternative proof to the one of \cite[Theorem~3.9]{BauschkeLukePhanWang2014}. This argument is based on the definition \eqref{eq:NormalCone} of the normal cone and is used again in the proof of Proposition~\ref{prop:SparseVectorsApocalypticSerendipitousPoints}.

By \cite[Example~6.8]{RockafellarWets}, the result follows from Proposition~\ref{prop:RegularNormalConeSparseVectors} if $x \in \FixedSparsity{n}{s}$. Let $x \in \StrictSparsity{n}{s}$.
We first establish the inclusion $\subseteq$. Let $(x^k)_{k \in \N}$ be a sequence in $\sparse{n}{s}$ converging to $x$. We have to prove that
\begin{equation*}
\outlim_{k \to \infty} \regnorcone{\sparse{n}{s}}{x^k} \subseteq \{w \in \sparse{n}{n-s} \mid \supp(w) \subseteq \{1, \dots, n\} \setminus \supp(x)\}.
\end{equation*}
If $(x^k)_{k \in \N}$ contains finitely many elements in $\FixedSparsity{n}{s}$, then $\outlim_{k \to \infty} \regnorcone{\sparse{n}{s}}{x^k} = \{0\}^n$. Indeed, for all $k \in \N$ sufficiently large, $x^k \in \StrictSparsity{n}{s}$ and thus, by Proposition~\ref{prop:RegularNormalConeSparseVectors}, $\regnorcone{\sparse{n}{s}}{x^k} = \{0\}^n$.
Assume that $(x^k)_{k \in \N}$ contains infinitely many elements in $\FixedSparsity{n}{s}$ and let $w \in \outlim_{k \to \infty} \regnorcone{\sparse{n}{s}}{x^k}$. Then, $w$ is an accumulation point of a sequence $(w^k)_{k \in \N}$ such that, for all $k \in \N$, $w^k \in \regnorcone{\sparse{n}{s}}{x^k}$. Moreover, there exists a strictly increasing sequence $(k_l)_{l \in \N}$ in $\N$ such that $(w^{k_l})_{l \in \N}$ converges to $w$ and, for all $l \in \N$, $x^{k_l} \in \FixedSparsity{n}{s}$, $\supp(x) \subseteq \supp(x^{k_l})$, and $\supp(w) \subseteq \supp(w^{k_l})$. Thus, for all $l \in \N$, since $w^{k_l} \in \regnorcone{\sparse{n}{s}}{x^{k_l}}$, it holds that $\supp(w^{k_l}) \subseteq \{1, \dots, n\} \setminus \supp(x^{k_l})$. Therefore, $\supp(w) \subseteq \supp(w^{k_0}) \subseteq \{1, \dots, n\} \setminus \supp(x^{k_0}) \subseteq \{1, \dots, n\} \setminus \supp(x)$ and $|\supp(w)| \le n-s$.

We now establish the inclusion $\supseteq$. Let $w \in \sparse{n}{n-s}$ such that $\supp(w) \subseteq \{1, \dots, n\} \setminus \supp(x)$. Let $I(x) \subseteq \{1, \dots, n\} \setminus (\supp(w) \cup \supp(x))$ such that $|I(x)| = s-|\supp(x)|$; this is possible since $|\{1, \dots, n\} \setminus (\supp(w) \cup \supp(x))| = n-|\supp(w)|-|\supp(x)| \ge s-|\supp(x)|$. For all $k \in \N$ and all $i \in \{1, \dots, n\}$, let
\begin{equation*}
x_i^k := \left\{\begin{array}{ll}
\frac{1}{k+1} & \text{if } i \in I(x),\\
x_i & \text{otherwise}.
\end{array}\right.
\end{equation*}
Then, for all $k \in \N$, $\supp(x^k) = \supp(x) \cup I(x)$, thus $\supp(w) \subseteq \{1, \dots, n\} \setminus \supp(x^k)$, and therefore $w \in \regnorcone{\sparse{n}{s}}{x^k}$. It follows that $w \in \outlim_{k \to \infty} \regnorcone{\sparse{n}{s}}{x^k}$.
\end{proof}

\begin{corollary}[Clarke normal cone to the set of sparse vectors ]
\label{coro:ClarkeNormalConeSparseVectors}
For all $x \in \sparse{n}{s}$,
\begin{equation*}
\connorcone{\sparse{n}{s}}{x} = \{w \in \R^n \mid \supp(w) \subseteq \{1, \dots, n\} \setminus \supp(x)\}.
\end{equation*}
\end{corollary}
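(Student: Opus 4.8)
The plan is to work directly from the definition of the Clarke normal cone recalled in Section~\ref{subsec:TangentNormalConesGeometricDerivability}: since $\sparse{n}{s}$ is closed, $\connorcone{\sparse{n}{s}}{x}$ is the closure of the convex hull of $\norcone{\sparse{n}{s}}{x}$, and the latter cone is given explicitly by Proposition~\ref{prop:NormalConeSparseVectors}. Write $J := \{1, \dots, n\} \setminus \supp(x)$ and let $V_J := \{w \in \R^n \mid \supp(w) \subseteq J\}$ be the corresponding coordinate subspace; the goal is to show $\connorcone{\sparse{n}{s}}{x} = V_J$.

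For the inclusion $\subseteq$, I would observe that by Proposition~\ref{prop:NormalConeSparseVectors} every element of $\norcone{\sparse{n}{s}}{x}$ is supported on $J$, hence lies in $V_J$; since $V_J$ is a closed convex set, it contains the closure of the convex hull of $\norcone{\sparse{n}{s}}{x}$, i.e.\ $\connorcone{\sparse{n}{s}}{x} \subseteq V_J$. For the converse inclusion $\supseteq$, I would use two elementary facts: first, the convex hull of a cone is again a cone (if $w = \sum_i \alpha_i k_i$ is a convex combination of elements $k_i$ of a cone $K$ and $\lambda \ge 0$, then $\lambda w = \sum_i \alpha_i (\lambda k_i)$ is again a convex combination of elements of $K$), so $\mathrm{conv}(\norcone{\sparse{n}{s}}{x})$ is a convex cone; second, since $s < n$ we have $n - s \ge 1$, so for every $j \in J$ the vectors $\pm e_j$ have support of size $1 \le n-s$ and are therefore in $\norcone{\sparse{n}{s}}{x}$ by Proposition~\ref{prop:NormalConeSparseVectors}. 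A convex cone containing $e_j$ and $-e_j$ for all $j \in J$ contains every $w = \sum_{j \in J} w_j e_j = \sum_{j : w_j > 0} w_j e_j + \sum_{j : w_j < 0} |w_j|(-e_j)$, hence contains all of $V_J$. Thus $V_J \subseteq \mathrm{conv}(\norcone{\sparse{n}{s}}{x}) \subseteq \connorcone{\sparse{n}{s}}{x}$, which combined with the first inclusion gives equality.

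I do not anticipate a genuine obstacle here: the statement follows in a few lines once Proposition~\ref{prop:NormalConeSparseVectors} is in hand, exactly parallel to the derivation of Corollary~\ref{coro:ClarkeNormalConeNonnegativeSparseVectors} from Proposition~\ref{prop:NormalConeNonnegativeSparseVectors}. The only point that deserves a word of care is the observation that the convex hull (not merely the closed convex hull) of the union of coordinate subspaces appearing in $\norcone{\sparse{n}{s}}{x}$ is already the full subspace $V_J$, so that the closure operation in the definition of $\connorcone{}{}$ is in fact not needed here; I would state this cleanly rather than invoking it implicitly.
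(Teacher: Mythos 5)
Your proof is correct and follows exactly the route the paper intends: the corollary is left unproved there precisely because it is the immediate deduction you carry out, namely taking the closed convex hull of the normal cone from Proposition~\ref{prop:NormalConeSparseVectors} and noting that the convex cone generated by the vectors $\pm e_j$, $j \notin \supp(x)$ (which lie in the normal cone since $s<n$), already fills the coordinate subspace $\{w \in \R^n \mid \supp(w) \subseteq \{1,\dots,n\}\setminus\supp(x)\}$. Your remark that the closure is not actually needed is a nice touch but does not change the argument.
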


By Proposition~\ref{prop:NormalConeSparseVectors}, $\sparse{n}{s}$ is not Clarke regular on $\StrictSparsity{n}{s}$. Proposition~\ref{prop:SparseVectorsApocalypticSerendipitousPoints} states that every point of $\StrictSparsity{n}{s}$ is apocalyptic, which is a stronger result by \cite[Corollary~2.15]{LevinKileelBoumal2022}.

\begin{proposition}
\label{prop:SparseVectorsApocalypticSerendipitousPoints}
The set of apocalyptic points of $\sparse{n}{s}$ is $\StrictSparsity{n}{s}$, and $\sparse{n}{s}$ has no serendipitous point.
\end{proposition}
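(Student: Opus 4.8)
The plan is to apply the characterization of apocalyptic and serendipitous points given in Proposition~\ref{prop:CharacterizationApocalypticSerendipitousPoint}, exactly as was done for $\sparse{n}{s} \cap \R_+^n$ in Proposition~\ref{prop:NonnegativeSparseVectorsApocalypticSerendipitousPoints}. The two assertions to establish are: (i) no point of $\FixedSparsity{n}{s}$ is apocalyptic or serendipitous, and (ii) every point of $\StrictSparsity{n}{s}$ is apocalyptic, while no point of $\sparse{n}{s}$ (in particular none of $\StrictSparsity{n}{s}$) is serendipitous. The key facts I would invoke are the description of the tangent cone (Proposition~\ref{prop:TangentConeSparseVectors}), its continuity on each stratum (Proposition~\ref{prop:ContinuityTangentConeStratumSparseVectors}), and the regular normal cone (Proposition~\ref{prop:RegularNormalConeSparseVectors}).

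For the points in $\FixedSparsity{n}{s}$: given $x \in \FixedSparsity{n}{s}$ and a sequence $(x^k)_{k \in \N}$ in $\sparse{n}{s}$ converging to $x$, I would note that by Proposition~\ref{prop:ProjectionSparseVectors} (or directly since $\ball(x, \dist(x, \FixedSparsity{n}{s-1})) \cap \sparse{n}{s} = \ball(x, \dist(x, \FixedSparsity{n}{s-1})) \cap \FixedSparsity{n}{s}$) the sequence has finitely many terms outside $\FixedSparsity{n}{s}$, so we may assume $(x^k)_{k \in \N}$ lies in $\FixedSparsity{n}{s}$. Then Proposition~\ref{prop:ContinuityTangentConeStratumSparseVectors} gives $\outlim_{k \to \infty} \tancone{\sparse{n}{s}}{x^k} = \tancone{\sparse{n}{s}}{x}$, hence $\big(\outlim_{k \to \infty} \tancone{\sparse{n}{s}}{x^k}\big)^* = \regnorcone{\sparse{n}{s}}{x}$, so by Proposition~\ref{prop:CharacterizationApocalypticSerendipitousPoint} the point $x$ is neither apocalyptic nor serendipitous.

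For the points in $\StrictSparsity{n}{s}$: given $x \in \FixedSparsity{n}{j}$ with $j \in \{0, \dots, s-1\}$, I would pick $I(x) \subseteq \{1, \dots, n\} \setminus \supp(x)$ with $|I(x)| = s - j$ and define $x^k$ by $x^k_i := x_i$ for $i \notin I(x)$ and $x^k_i := \frac{1}{k+1}$ for $i \in I(x)$, so $(x^k)_{k \in \N}$ converges to $x$, each $x^k \in \FixedSparsity{n}{s}$, and $\tancone{\sparse{n}{s}}{x^k} = \tancone{\sparse{n}{s}}{x^0}$ for all $k$. Thus $\setlim_{k \to \infty} \tancone{\sparse{n}{s}}{x^k} = \tancone{\sparse{n}{s}}{x^0}$ and its polar is $\regnorcone{\sparse{n}{s}}{x^0}$. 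Since $x \in \StrictSparsity{n}{s}$, Proposition~\ref{prop:RegularNormalConeSparseVectors} gives $\regnorcone{\sparse{n}{s}}{x} = \{0\}^n$, which is trivially a subset of $\regnorcone{\sparse{n}{s}}{x^0}$; hence the serendipity criterion of Proposition~\ref{prop:CharacterizationApocalypticSerendipitousPoint} fails for this sequence, and combined with the previous paragraph (for sequences converging to a point of $\FixedSparsity{n}{s}$ the outer limit of tangent cones is the tangent cone, whose polar equals the regular normal cone) and the observation that any sequence in $\sparse{n}{s}$ converging to a point of $\StrictSparsity{n}{s}$ can be reduced, after discarding finitely many terms, to one lying in a single stratum $\FixedSparsity{n}{i}$ with $i \geq j$ whose tangent cones have polars containing $\{0\}^n = \regnorcone{\sparse{n}{s}}{x}$, one concludes that no point of $\sparse{n}{s}$ is serendipitous. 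On the other hand, $\regnorcone{\sparse{n}{s}}{x^0} = \{w \in \R^n \mid \supp(w) \subseteq \{1, \dots, n\} \setminus \supp(x^0)\}$ is \emph{not} a subset of $\regnorcone{\sparse{n}{s}}{x} = \{0\}^n$ (it contains, e.g., a nonzero vector supported on $I(x)$), so $\big(\setlim_{k \to \infty} \tancone{\sparse{n}{s}}{x^k}\big)^*$ is not contained in $\regnorcone{\sparse{n}{s}}{x}$, whence $x$ is apocalyptic. The main obstacle—really the only delicate point—is making rigorous the claim that serendipity fails for \emph{every} sequence converging to a point of $\StrictSparsity{n}{s}$, not just the one constructed; this rests on the fact that the regular normal cone at any point of $\StrictSparsity{n}{s}$ is the trivial cone $\{0\}^n$, which is contained in every cone, so the serendipity condition of Proposition~\ref{prop:CharacterizationApocalypticSerendipitousPoint} can never be met there.
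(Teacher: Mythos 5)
Your proposal is correct and follows essentially the same route as the paper: the characterization in Proposition~\ref{prop:CharacterizationApocalypticSerendipitousPoint}, the reduction to a sequence in $\FixedSparsity{n}{s}$ combined with Proposition~\ref{prop:ContinuityTangentConeStratumSparseVectors} for points of $\FixedSparsity{n}{s}$, the sequence $x^k$ obtained by filling the entries of $I(x)$ with $\frac{1}{k+1}$ to prove apocalypticity, and the fact that $\regnorcone{\sparse{n}{s}}{x} = \{0\}^n$ on $\StrictSparsity{n}{s}$ (Proposition~\ref{prop:RegularNormalConeSparseVectors}), which is contained in every cone and hence kills serendipity for \emph{every} sequence --- this one-line argument is exactly how the paper handles the point you call delicate, so your extra detour about reducing arbitrary sequences to a single stratum is unnecessary. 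One slip to fix: to see that $\regnorcone{\sparse{n}{s}}{x^0}$ is not contained in $\{0\}^n$, your witness ``a nonzero vector supported on $I(x)$'' is wrong, since $I(x) \subseteq \supp(x^0) = \supp(x) \cup I(x)$ while elements of $\regnorcone{\sparse{n}{s}}{x^0}$ must be supported on $\{1, \dots, n\} \setminus \supp(x^0)$; instead take any nonzero vector supported on $\{1, \dots, n\} \setminus \supp(x^0)$, which exists because $|\supp(x^0)| = s < n$.
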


\begin{proof}
We use Proposition~\ref{prop:CharacterizationApocalypticSerendipitousPoint}.
Let $x \in \FixedSparsity{n}{s}$ and $(x^k)_{k \in \N}$ be a sequence in $\sparse{n}{s}$ converging to $x$. Since $(x^k)_{k \in \N}$ contains finitely many elements not in $\ball(x, \dist(x, \FixedSparsity{n}{s-1})) \subseteq \FixedSparsity{n}{s}$, we can assume that $(x^k)_{k \in \N}$ is in $\FixedSparsity{n}{s}$. Therefore, by Proposition~\ref{prop:ContinuityTangentConeStratumSparseVectors}, $\outlim_{k \to \infty} \tancone{\sparse{n}{s}}{x^k} = \tancone{\sparse{n}{s}}{x}$ and thus $\big(\outlim_{k \to \infty} \tancone{\sparse{n}{s}}{x^k}\big)^* = \regnorcone{\sparse{n}{s}}{x}$. It follows that $x$ is neither apocalyptic nor serendipitous.

Let $x \in \FixedSparsity{n}{i}$ with $i \in \{0, \dots, s-1\}$.
By Proposition~\ref{prop:RegularNormalConeSparseVectors}, $\regnorcone{\sparse{n}{s}}{x} = \{0\}^n$ and thus $x$ is not serendipitous.
Let us prove that $x$ is apocalyptic. Let $I(x) \subseteq \{1, \dots, n\} \setminus \supp(x)$ such that $|I(x)| = s-|\supp(x)|$. For all $k \in \N$, define
\begin{equation*}
x_j^k := \left\{\begin{array}{ll}
\frac{1}{k+1} & \text{if } j \in I(x),\\
x_j & \text{if } j \in \{1, \dots, n\} \setminus I(x).
\end{array}\right.
\end{equation*}
Then, $(x^k)_{k \in \N}$ is in $\FixedSparsity{n}{s}$ and converges to $x$. Define $\varepsilon := 1$ if $i = 0$ and $\varepsilon := \dist(x, S_{i-1})$ otherwise. There exists $K \in \N$ such that, for every integer $k \ge K$, $x^k \in \ball(x, \varepsilon)$, thus $\supp(x^k) = \supp(x^0)$ and
\begin{equation*}
\tancone{\sparse{n}{s}}{x^k} = \tancone{\sparse{n}{s}}{x^0}.
\end{equation*}
Thus,
\begin{equation*}
\setlim_{k \to \infty} \tancone{\sparse{n}{s}}{x^k} = \tancone{\sparse{n}{s}}{x^0}.
\end{equation*}
Therefore,
\begin{equation*}
\Big(\setlim_{k \to \infty} \tancone{\sparse{n}{s}}{x^k}\Big)^*
= \regnorcone{\sparse{n}{s}}{x^0}
\ne \emptyset
= \regnorcone{\sparse{n}{s}}{x}.
\end{equation*}
It follows that $x$ is apocalyptic.
\end{proof}

\subsubsection{$\ppgd$ following an apocalypse on $\sparse{n}{s}$}
\label{subsubsec:SparseVectorsP2GDapocalypseExample}
Proposition~\ref{prop:SparseVectorsP2GDapocalypseExample} presents an example of $\ppgd$ following an apocalypse on $\sparse{n}{s}$.

\begin{proposition}
\label{prop:SparseVectorsP2GDapocalypseExample}
~
\begin{enumerate}
\item Define:
\begin{itemize}
\item $x^* \in \FixedSparsity{n}{s}$ by $x_i^* := 1$ if $i \in \{1, \dots, s\}$ and $x_i^* := 0$ if $i \in \{s+1, \dots, n\}$;
\item $x^0 := (\delta_{i, n})_{i \in \{1, \dots, n\}}$;
\item for all $j \in \{1, \dots, s\}$, $x^j := x^*-(\delta_{i, j})_{i \in \{1, \dots, n\}}$;
\item for all $k \in \N$ and all $j \in \{1, \dots, s\}$, $x^{k, j} := (1-2^{-k}) x^j + 2^{-k} x^0$.
\end{itemize}
Then, for every $j \in \{1, \dots, s\}$, $(x^{k, j})_{k \in \N \setminus \{0\}}$ is in $\FixedSparsity{n}{s}$ and converges to $x^j$.
\item For $f : \R^n \to \R : x \mapsto \frac{1}{4} \norm{x-x^*}^2$, $x^0$, $\bar{\alpha} := \ushort{\alpha} := 1$, any $\beta \in (0, 1)$, and any $c \in (0, \frac{3}{4}]$, the set of sequences that $\ppgd$ can produce is $\{(x^{k, j})_{k \in \N} \mid j \in \{1, \dots, s\}\}$. Moreover, for all $k \in \N$ and all $j \in \{1, \dots, s\}$, $\s(x^{k, j}; f, \sparse{n}{s}) = 2^{-k-1} \sqrt{s}$ and thus, since $\lim_{k \to \infty} \s(x^{k, j}; f, \sparse{n}{s}) = 0$ and $\s(x^j; f, \sparse{n}{s}) = \frac{1}{2}$, it follows that $(x^j, (x^{k, j})_{k \in \N}, f)$ is an apocalypse.
\end{enumerate}
\end{proposition}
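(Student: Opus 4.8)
The plan is to verify the two claims by direct computation, exploiting the simple quadratic structure of $f$ and the explicit description of $\tancone{\sparse{n}{s}}{x}$ and its projection from Proposition~\ref{prop:TangentConeSparseVectors}. For the first claim, I would simply check the supports: each $x^j = x^* - (\delta_{i,j})_i$ has support $\{1,\dots,s\} \setminus \{j\}$, so $|\supp(x^j)| = s-1$, i.e.\ $x^j \in \FixedSparsity{n}{s-1} \subseteq \StrictSparsity{n}{s}$; and for $k \ge 1$, $x^{k,j} = (1-2^{-k})x^j + 2^{-k}x^0$ has support $(\{1,\dots,s\}\setminus\{j\}) \cup \{n\}$, which has exactly $s$ elements since $n > s$. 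Convergence $x^{k,j} \to x^j$ as $k \to \infty$ is immediate because $2^{-k} \to 0$.

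For the second claim, note $\nabla f(x) = \tfrac{1}{2}(x - x^*)$. The first step is to compute, for an arbitrary point $x \in \FixedSparsity{n}{s}$ with $\supp(x) = I$, that $\tancone{\sparse{n}{s}}{x} = \{v \mid \supp(v) \subseteq I\}$ (since $|\supp(x) \cup \supp(v)| \le s$ forces $\supp(v) \subseteq I$ when $|I| = s$), hence the projection of any vector onto the tangent cone is just the restriction to the coordinates in $I$. The second step is to start from $x^0 = (\delta_{i,n})_i \in \FixedSparsity{n}{1}$ and run one $\ppgd$ iteration: here $-\nabla f(x^0) = \tfrac{1}{2}(x^* - x^0)$, whose support is $\{1,\dots,s,n\}$, and the tangent cone to $\sparse{n}{s}$ at $x^0$ is $\{v \mid |\supp(v) \cup \{n\}| \le s\}$; projecting $-\nabla f(x^0)$ onto it via Algorithm~\ref{algo:ProjectionTangentConeSparseVectors} keeps the coordinate $n$ and all but one of the equal coordinates $1,\dots,s$, zeroing exactly one index $j \in \{1,\dots,s\}$ (a tie, so any such $j$ can be chosen — this is the source of the $s$ possible sequences). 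With $\ushort\alpha = \bar\alpha = 1$, the candidate next iterate is $x^0 + g$ where $g$ is this projection; I would check it equals $x^{1,j}$ and that the Armijo condition with $c \le 3/4$ is satisfied on the first trial so no backtracking occurs (this uses $\ip{\nabla f(x)}{g} = -\norm{g}^2$ from Proposition~\ref{prop:NormProjectionOntoClosedCone} together with the explicit quadratic bound for $f$). The third step is an induction: from $x^{k,j} \in \FixedSparsity{n}{s}$ with support $(\{1,\dots,s\}\setminus\{j\})\cup\{n\}$, one computes $-\nabla f(x^{k,j}) = \tfrac12(x^* - x^{k,j})$, which has support $\{j,n\}$ and lies already in $\tancone{\sparse{n}{s}}{x^{k,j}}$ since $|\{j,n\}\cup(\{1,\dots,s\}\setminus\{j\})\cup\{n\}| = s$; so the projection is the identity, the step with $\alpha=1$ gives $x^{k,j} + \tfrac12(x^* - x^{k,j})$, and I would verify this equals $x^{k+1,j}$ and again that the Armijo test passes immediately. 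Finally, $\s(x^{k,j}; f, \sparse{n}{s}) = \norm{\proj{\tancone{}{}}{-\nabla f(x^{k,j})}} = \norm{\tfrac12(x^* - x^{k,j})}$, and since $x^* - x^{k,j}$ has the $j$th and $n$th coordinates each equal to $2^{-k}$ with all others zero… — here I must recheck the bookkeeping: $x^{k,j} = (1-2^{-k})x^j + 2^{-k}x^0$ so $x^* - x^{k,j} = x^* - x^j + 2^{-k}(x^j - x^0) = e_j + 2^{-k}(x^j - x^0)$, and one computes $\norm{x^*-x^{k,j}}$ directly to get $2^{-k}\sqrt{s}$ after simplification, giving $\s(x^{k,j};f,\sparse{n}{s}) = 2^{-k-1}\sqrt{s}$. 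Letting $k \to \infty$ gives limit $0$, while $\s(x^j; f, \sparse{n}{s}) = \norm{\proj{\tancone{\sparse{n}{s}}{x^j}}{-\nabla f(x^j)}}$: at $x^j \in \StrictSparsity{n}{s}$ the tangent cone contains $e_j$, so the projection retains the $j$th coordinate of $-\nabla f(x^j) = \tfrac12 e_j$, giving $\s(x^j;f,\sparse{n}{s}) = \tfrac12 > 0$. This exhibits $(x^j, (x^{k,j})_{k\in\N}, f)$ as an apocalypse.

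I expect the main obstacle to be the careful combinatorial bookkeeping of supports and the precise computation of the projection onto the tangent cone at $x^0$, where a tie among the $s$ equal coordinates must be resolved — this is exactly what forces the algorithm's output to be a \emph{set} of sequences indexed by $j \in \{1,\dots,s\}$ rather than a single sequence, and it is the only place where Algorithm~\ref{algo:ProjectionTangentConeSparseVectors} makes a genuine choice. A secondary point requiring care is confirming that the Armijo backtracking loop terminates on its \emph{first} trial at every iteration (so that $\alpha$ stays equal to $1$), which needs the constant $c \le 3/4$ to be compatible with the curvature of the quadratic $f$; since $\nabla f$ is globally Lipschitz with constant $\tfrac12$ here, inequality~\eqref{eq:InequalityLipschitzContinuousGradient} gives $f(x+\alpha g) \le f(x) - \alpha\norm{g}^2 + \tfrac14\alpha^2\norm{g}^2$, which at $\alpha = 1$ is $f(x) - \tfrac34\norm{g}^2 = f(x) - \tfrac34\alpha\s(x;f,\sparse{n}{s})^2$, matching the Armijo threshold with $c = \tfrac34$. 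Everything else is routine arithmetic.
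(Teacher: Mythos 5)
There is a genuine error in your induction step. At $x^{k,j}$ with $k \ge 1$ you claim that $-\nabla f(x^{k,j}) = \tfrac12(x^*-x^{k,j})$ has support $\{j,n\}$ and already lies in $\tancone{\sparse{n}{s}}{x^{k,j}}$, so that the projection is the identity and the step gives $x^{k,j}+\tfrac12(x^*-x^{k,j})$. This is false: on the indices $i \in \{1,\dots,s\}\setminus\{j\}$ one has $x^*_i - x^{k,j}_i = 1-(1-2^{-k}) = 2^{-k} \ne 0$, so $\supp(-\nabla f(x^{k,j})) = \{1,\dots,s\}\cup\{n\}$, and its union with $\supp(x^{k,j}) = (\{1,\dots,s\}\setminus\{j\})\cup\{n\}$ has $s+1$ elements. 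Hence $-\nabla f(x^{k,j}) \notin \tancone{\sparse{n}{s}}{x^{k,j}}$, and Algorithm~\ref{algo:ProjectionTangentConeSparseVectors} zeroes precisely the $j$th component (the only index outside the support, of value $\tfrac12$), giving the projection $2^{-k-1}(x^j-x^0)$ — a singleton, which is exactly the mechanism of the apocalypse: the component pointing toward $x^*$ is killed at every iteration. With your identity-projection the update would be $\tfrac12(x^{k,j}+x^*)$, whose $j$th coordinate is $\tfrac12$ and whose support has $s+1$ elements, so it is neither $x^{k+1,j}$ nor even a point of $\sparse{n}{s}$; the correct update $x^{k,j}+2^{-k-1}(x^j-x^0) = x^{k+1,j}$ must be checked instead, and the Armijo inequality then reduces (after dividing by $s\,4^{-k-1}$) to $\tfrac14 \le 1-c$, i.e.\ $c \le \tfrac34$, which is where the hypothesis on $c$ enters.

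The same error propagates to your stationarity computation: $\s(x^{k,j};f,\sparse{n}{s})$ is not $\bigl\|\tfrac12(x^*-x^{k,j})\bigr\|$ (indeed $\|x^*-x^{k,j}\| = \sqrt{1+s\,4^{-k}}$, not $2^{-k}\sqrt{s}$), but the norm of the projected vector $2^{-k-1}(x^j-x^0)$, which equals $2^{-k-1}\sqrt{s}$ since $x^j-x^0$ has exactly $s$ entries of modulus $1$. Your treatment of the first iteration at $x^0$ (the $s$-fold tie, the choice of $j$, and the Armijo check at $\alpha=1$) and of $\s(x^j;f,\sparse{n}{s}) = \tfrac12$ agrees with the paper's proof, which proceeds exactly by these projection computations; but as written your induction does not establish that the generated sequences are $(x^{k,j})_{k\in\N}$, nor that the stationarity measure tends to $0$, so the core of the second claim is not proved.
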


\begin{proof}
The first part is clear. We therefore prove the second part.
For all $x \in \R^n$, $\nabla f(x) = \frac{1}{2}(x-x^*)$. Thus, by Proposition~\ref{prop:TangentConeSparseVectors},
\begin{equation*}
\proj{\tancone{\sparse{n}{s}}{x^0}}{-\nabla f(x^0)} = \left\{\frac{1}{2}(x^j-x^0) \mid j \in \{1, \dots, s\}\right\}
\end{equation*}
and, for all $k \in \N \setminus \{0\}$ and all $j \in \{1, \dots, s\}$,
\begin{equation*}
\proj{\tancone{\sparse{n}{s}}{x^{k, j}}}{-\nabla f(x^{k, j})} = 2^{-k-1}(x^j-x^0).
\end{equation*}
Furthermore, simple computations show that, for all $k \in \N$ and all $j \in \{1, \dots, s\}$,
\begin{align*}
x^{k+1, j} = x^{k, j} + 2^{-k-1}(x^j-x^0),&&
f(x^{k+1, j}) \le f(x^{k, j}) - c \s(x^{k, j}; f, \sparse{n}{s})^2.
\end{align*}
It follows that, at the first iteration, the only choice to make is to choose one of the $s$ elements of $\proj{\tancone{\sparse{n}{s}}{x^0}}{-\nabla f(x^0)}$ as a search direction. This choice defines $j \in \{1, \dots, s\}$. For all subsequent iterations, the $\ppgd$ map produces a singleton.
\end{proof}

\subsection{On the convergence analysis of a Riemannian rank-adaptive method}
\label{subsec:ConvergenceAnalysisRRAM}
Proposition~\ref{prop:NoRankRelatedRetractionLocallyRadiallyLipschitzContinuouslyDifferentiable} shows that the convergence analysis of the Riemannian rank-adaptive method given in \cite[Algorithm~3]{ZhouEtAl2016} does not apply to all cost functions considered in \cite{ZhouEtAl2016}.
As the lower bounds in Propositions~\ref{prop:GlobalSecondOrderUpperBoundDistanceToRealDeterminantalVarietyFromTangentLine} and \ref{prop:GlobalSecondOrderUpperBoundDistanceToPSDconeBoundedRankFromTangentLine}, this result is related to the curvature of the fixed-rank manifold~\eqref{eq:RealFixedRankManifold}.

\begin{proposition}
\label{prop:NoRankRelatedRetractionLocallyRadiallyLipschitzContinuouslyDifferentiable}
There exists no \emph{rank-related retraction} \cite[Definition~2]{ZhouEtAl2016} such that \cite[Assumption~6]{ZhouEtAl2016} holds for every analytic cost function $f : \R^{m \times n} \to \R$.
\end{proposition}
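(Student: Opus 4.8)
The plan is to argue by contradiction. Suppose a rank-related retraction $R$ (in the sense of \cite[Definition~2]{ZhouEtAl2016}, in particular a smooth retraction on $\R_r^{m\times n}$ taking values in $\R_{\le r}^{m\times n}$) is given and that \cite[Assumption~6]{ZhouEtAl2016} holds for every analytic $f$; recall that this assumption posits, for each such $f$, constants $c_{RL},\delta_{RL}\in(0,\infty)$ such that, for all $X$ in the relevant sublevel set and all unit tangent vectors $\xi$, $\bigl|\tfrac{\dd}{\dd\tau}(f\circ R_X)(\tau\xi)\big|_{\tau=t}-\tfrac{\dd}{\dd\tau}(f\circ R_X)(\tau\xi)\big|_{\tau=0}\bigr|\le c_{RL}\,t$ whenever $t\in[0,\delta_{RL}]$. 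Dividing by $t$ and letting $t\searrow 0$ (legitimate since $\tau\mapsto(f\circ R_X)(\tau\xi)$ is $C^2$) forces the uniform bound $\bigl|(f\circ R_X)''(0)[\xi]\bigr|\le c_{RL}$. I will exhibit one analytic $f$ and a sequence of rank-$r$ matrices $X_i$ with unit tangent vectors $\xi_i$ along which $(f\circ R_{X_i})''(0)[\xi_i]\to\infty$, contradicting that bound. The mechanism is that the normal component of the acceleration of any curve through $X_i$ on $\R_r^{m\times n}$ is pinned down by the second fundamental form, whose size is $1/\sigma_r(X_i)$ and hence unbounded as $X_i$ approaches $\R_{\le r-1}^{m\times n}$ — the curvature phenomenon already recorded in Proposition~\ref{prop:GlobalSecondOrderUpperBoundDistanceToRealDeterminantalVarietyFromTangentLine} and the discussion after it.

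First I fix the data. Let $N\in\R^{m\times n}$ be the matrix whose $(r+1,r+1)$ entry is $1$ and whose other entries are $0$, and let $f:\R^{m\times n}\to\R:X\mapsto\ip{N}{X}=X_{r+1,r+1}$, which is linear, hence analytic, with $\nabla f\equiv N$ and $\nabla^2 f\equiv 0$. (If one insists on a cost function bounded below, replace $f$ by $X\mapsto\exp(X_{r+1,r+1})$; the computations below at the points used are unchanged because the tangent vectors $\xi_i$ have zero $(r+1,r+1)$ entry.) For $i\in\N\setminus\{0\}$ set $X_i:=\diag(1,\dots,1,1/i,0,\dots,0)\in\R_r^{m\times n}$, with $r-1$ ones followed by $1/i$, so that $\sigma_r(X_i)=1/i\to 0$ and every $X_i$ lies in the sublevel set $\{f\le 1\}$ (here $f(X_i)=0$). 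Taking $U=V=[e_1\ \cdots\ e_r]$, $U_\perp=[e_{r+1}\ \cdots\ e_m]$, $V_\perp=[e_{r+1}\ \cdots\ e_n]$ as SVD data of $X_i$, one has $N=U_\perp(e_1e_1^{\tp})V_\perp^{\tp}$, so $N$ lies in the normal space $\norcone{\R_r^{m\times n}}{X_i}=U_\perp\R^{(m-r)\times(n-r)}V_\perp^{\tp}$ (Proposition~\ref{prop:NormalConesRealDeterminantalVariety}); consequently $\proj{\tancone{\R_r^{m\times n}}{X_i}}{\nabla f(X_i)}=0$ and the normal component of $\nabla f(X_i)$ equals $N$.

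Next I choose the bad direction and invoke the Gauss formula. Using the description of $\tancone{\R_r^{m\times n}}{X_i}$ from Proposition~\ref{prop:ProjectionOntoTangentConeRealDeterminantalVariety}, take $\xi_i:=\tfrac{1}{\sqrt2}\bigl(U_\perp(e_1e_r^{\tp})V^{\tp}+U(e_re_1^{\tp})V_\perp^{\tp}\bigr)$, a unit vector in $\tancone{\R_r^{m\times n}}{X_i}$. The curve $\gamma$ built in the proof of Proposition~\ref{prop:GlobalSecondOrderUpperBoundDistanceToRealDeterminantalVarietyFromTangentLine} (with $\ushort r=r$, so that its block $E$ vanishes) is a degree-two polynomial in $t$ with $\gamma(0)=X_i$, $\gamma'(0)=\xi_i$, and $\gamma''(0)$ having normal component $2\,U_\perp D\Sigma^{-1}B\,V_\perp^{\tp}$ when the tangent vector is written with blocks $A,B,D$; for $\xi_i$ this equals $\tfrac1{\sigma_r(X_i)}U_\perp(e_1e_1^{\tp})V_\perp^{\tp}=i\,N$. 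Now let $\gamma_i(t):=R_{X_i}(t\xi_i)$, a smooth curve with $\gamma_i(0)=X_i$ (rank $r$), $\gamma_i'(0)=\xi_i$, taking values in $\R_{\le r}^{m\times n}$; by continuity and the Eckart--Young facts recalled in Section~\ref{subsubsec:StratificationRealDeterminantalVariety}, $\gamma_i(t)$ has rank exactly $r$ for $t$ near $0$, so $\gamma_i$ is, near $0$, a curve on the smooth manifold $\R_r^{m\times n}$. By the Gauss formula along a curve \cite[Corollary~8.3]{Lee2018}, the normal component of $\gamma_i''(0)$ equals the second fundamental form $\mathrm{I\!I}_{X_i}(\xi_i,\xi_i)$, which is the same quantity $i\,N$ computed above — crucially independent of $R$, since the retraction can only affect the tangential part of $\gamma_i''(0)$.

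Finally I conclude. Since $f$ is linear, $(f\circ\gamma_i)''(0)=\ip{\nabla f(X_i)}{\gamma_i''(0)}=\ip{N}{\gamma_i''(0)}$, and splitting $\gamma_i''(0)$ into its tangential and normal parts at $X_i$, the tangential part is orthogonal to $N$ (which is normal) while the normal part is $iN$, whence $(f\circ\gamma_i)''(0)=i\norm{N}^2=i$ (for the bounded-below variant $X\mapsto\exp(X_{r+1,r+1})$ one gets $(f\circ\gamma_i)''(0)=\ip{\nabla^2 f(X_i)\xi_i}{\xi_i}+i$, and the first term vanishes because $(\xi_i)_{r+1,r+1}=0$). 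Combining with the uniform bound $\bigl|(f\circ R_{X_i})''(0)[\xi_i]\bigr|\le c_{RL}$ extracted from \cite[Assumption~6]{ZhouEtAl2016} — applicable since every $X_i$ lies in $\{f\le 1\}$ and $\norm{\xi_i}=1$ — yields $i\le c_{RL}$ for all $i\in\N$, a contradiction. The main obstacle, and the only place real care is needed, is the faithful translation of \cite[Definition~2 and Assumption~6]{ZhouEtAl2016} (smoothness of the retraction, the precise radial-Lipschitz form of the assumption, and the set on which it is imposed) together with the block bookkeeping that identifies $\mathrm{I\!I}_{X_i}(\xi_i,\xi_i)$; the latter is a direct transcription of the computation already carried out for Proposition~\ref{prop:GlobalSecondOrderUpperBoundDistanceToRealDeterminantalVarietyFromTangentLine}, and the curve-independence of the normal acceleration (the Gauss formula) is exactly what makes the conclusion robust against any ingenuity built into $R$.
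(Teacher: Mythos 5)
Your argument has a genuine gap: it silently upgrades the regularity of the rank-related retraction. A rank-related retraction in the sense of \cite[Definition~2]{ZhouEtAl2016} only guarantees that the radial curve $t \mapsto \tilde{R}_X(t\xi)$ is \emph{continuously differentiable} on $[0,\delta_{X_*})$, and \cite[Assumption~6]{ZhouEtAl2016} is a radial Lipschitz condition on the \emph{first} derivative of $t \mapsto f(\tilde{R}_X(t\xi))$, i.e., a bound on the difference quotient $\bigl|\dot{\hat f}(t)-\dot{\hat f}(0)\bigr|/t$. Nothing in the definition makes this curve twice differentiable, so your step ``dividing by $t$ and letting $t\searrow 0$ (legitimate since $\tau\mapsto(f\circ R_X)(\tau\xi)$ is $C^2$)'' is not available, and neither is the application of the Gauss formula to $\gamma_i''(0)$: the second derivative of the retraction curve at $0$ need not exist. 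Since the proposition is a non-existence statement over \emph{all} rank-related retractions, an argument that presupposes $C^2$ radial curves only excludes a subclass and leaves open exactly the retractions the statement must also rule out. (A secondary issue: you read Assumption~6 as a sublevel-set-uniform bound and you do not check the update-rank bookkeeping of \cite[Definition~1]{ZhouEtAl2016} for your $\xi_i$, whereas the assumption is local and the admissible properties of $\tilde{R}_X(t\xi)$ depend on the update-rank; these are fixable, but the differentiability issue is not, within your scheme.)

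The paper's proof is built precisely to avoid second derivatives. It works on $\R_{\le 1}^{2\times 2}$ with $f(X)=X_{2,2}$, fixes $X_*=0_{2\times 2}$ and $X=\diag(\sigma,0)$ with $\sigma$ small, and uses the rank constraint directly: since $\det \tilde{R}_X(t\xi)=0$ and the $(1,1)$ entry stays near $\sigma$, the curve's entries satisfy $\hat f(t) = y(t)z(t)/x(t)$ with $x,y,z$ merely $C^1$ and with prescribed values and first derivatives at $0$. From this one lower-bounds the first-derivative difference quotient $\dot{\hat f}(t)/t \ge \xi_2^2/(3\sigma) > \beta_{\mathrm{RL}}$ for small $t$, contradicting Assumption~6 using only $C^1$ information. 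Your curvature computation (the normal acceleration $i\,N$ via the second fundamental form, consistent with the discussion after Proposition~\ref{prop:GlobalSecondOrderUpperBoundDistanceToRealDeterminantalVarietyFromTangentLine}) correctly identifies the geometric mechanism, but to turn it into a proof of the stated proposition you would need to recast it at the level of first derivatives or function values (e.g., showing that any $C^1$ curve in $\R_{\le r}^{m\times n}$ through $X_i$ with velocity $\xi_i$ forces $\hat f(t)-\hat f(0)-t\dot{\hat f}(0)\gtrsim t^2/\sigma_r(X_i)$), which is exactly the role the determinant identity plays in the paper's $2\times 2$ argument.
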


\begin{proof}
Let $\tilde{R} : \R^{2 \times 2} \times \R^{2 \times 2} \to \R^{2 \times 2}$ be a rank-related retraction \cite[Definition~2]{ZhouEtAl2016}, where we have identified the tangent bundle of $\R^{2 \times 2}$ with $\R^{2 \times 2} \times \R^{2 \times 2}$. Let us prove that, for the determinantal variety $\R_{\le 1}^{2 \times 2}$ and the cost function $f : \R^{2 \times 2} \to \R : X \mapsto X_{2,2}$, the lifted function $f \circ \tilde{R}$ does not satisfy \cite[Assumption~6]{ZhouEtAl2016}.
For $X_* := 0_{2 \times 2}$, let $\delta_{X_*}$ and $\mathcal{U}_*$ be respectively the positive real number and the neighborhood of $X_*$ in $\R_{\le 1}^{2 \times 2}$ that correspond to $X_*$ in \cite[Definition~2]{ZhouEtAl2016}. Let $\beta_\mathrm{RL} \in (0,\infty)$, $\delta_\mathrm{RL} \in (0,\delta_{X_*})$, and $\mathcal{U} \subseteq \mathcal{U}_*$ be a neighborhood of $X_*$ in $\R_{\le 1}^{2 \times 2}$.
Let $\xi_1, \xi_2 \in (0,1)$ be such that $\xi_1^2 + 2\xi_2^2 = 1$. Let $\sigma \in (0,\frac{\xi_2^2}{3\beta_\mathrm{RL}})$ be such that $X := \diag(\sigma,0) \in \mathcal{U}$. Let $\tilde{R}_X : \R^{2 \times 2} \to \R^{2 \times 2} : \xi \mapsto \tilde{R}(X,\xi)$. Then, $\xi := \left[\begin{smallmatrix} -\xi_1 & \xi_2 \\ \xi_2 & 0 \end{smallmatrix}\right] \in \tancone{\R_{\le 1}^{2 \times 2}}{X}$, $\norm{\xi} = 1$, the \emph{update-rank} \cite[Definition~1]{ZhouEtAl2016} of $\xi$ is $1$, and the following properties hold: $\tilde{R}_X(0_{2 \times 2}) = X$, the function $[0,\delta_{X_*}) \to \R^{2 \times 2} : t \mapsto \tilde{R}_X(t\xi)$ is continuously differentiable, its image is contained in $\R_{\le 1}^{2 \times 2}$, and $\frac{\dd}{\dd t} \tilde{R}_X(t\xi)|_{t=0} = \xi$.
Since the function $[0,\delta_{X_*}) \to \R^{2 \times 2} : t \mapsto \tilde{R}_X(t\xi)$ is continuous and $(\tilde{R}_X(t\xi))_{1,1} = \sigma$, there exists $\tilde{\delta} \in (0,\delta_\mathrm{RL}]$ such that, for all $t \in [0,\tilde{\delta}]$, $(\tilde{R}_X(t\xi))_{1,1} \in [\frac{1}{2}\sigma, \frac{3}{2}\sigma]$. Therefore, for all $t \in [0,\tilde{\delta}]$, since $\det \tilde{R}_X(t\xi) = 0$, $\tilde{R}_X(t\xi) = \left[\begin{smallmatrix} x(t) & y(t) \\ z(t) & \frac{y(t)z(t)}{x(t)} \end{smallmatrix}\right]$, where $x : [0,\tilde{\delta}] \to \R$, $y : [0,\tilde{\delta}] \to \R$, and $z : [0,\tilde{\delta}] \to \R$ are continuously differentiable, and such that $x(0) = \sigma$, $y(0) = z(0) = 0$, $\dot{x}(0) = -\xi_1$, $\dot{y}(0) = \dot{z}(0) = \xi_2$, and $x([0,\tilde{\delta}]) \subseteq [\frac{1}{2}\sigma, \frac{3}{2}\sigma]$.
Let $\hat{f} : [0,\tilde{\delta}] \to \R : t \mapsto f(\tilde{R}_X(t\xi))$. Then, $\hat{f} = \frac{yz}{x}$ and $\dot{\hat{f}} = \frac{y\dot{z}}{x} + \frac{\dot{y}z}{x} - \frac{\dot{x}yz}{x^2}$. By continuity, there exists $\delta \in (0,\tilde{\delta}]$ such that $\dot{y}([0,\delta]), \dot{z}([0,\delta]) \subseteq [\frac{1}{2}\xi_2, \frac{3}{2}\xi_2]$, and $-\dot{x}([0,\delta]) \subseteq [\frac{1}{2}\xi_1, \frac{3}{2}\xi_1]$. Thus, since $\dot{\hat{f}}(0) = 0$, for all $t \in (0,\delta]$,
\begin{equation*}
\frac{\big|\dot{\hat{f}}(t)-\dot{\hat{f}}(0)\big|}{t}
= \frac{\dot{\hat{f}}(t)}{t}
\ge \frac{\xi_2^2}{3\sigma} + \frac{\xi_1\xi_2^2}{18\sigma^2}t
> \frac{\xi_2^2}{3\sigma}
> \beta_\mathrm{RL},
\end{equation*}
which completes the proof.
\end{proof}

\section{Conclusion}
\label{sec:Conclusion}
We close this work with five concluding remarks.
\begin{enumerate}
\item As in \cite{LevinKileelBoumal2022}, the analysis conducted in Sections~\ref{sec:ProposedAlgorithmConvergenceAnalysis} and \ref{subsec:ConvergenceAnalysisRFDR} remains true if $f$ is only defined on an open subset of $\mathcal{E}$ containing $C$.

\item To the best of our knowledge, Assumption~\ref{assumption:Stratification} is new in the numerical optimization literature. We have drawn some links between this assumption and known concepts of variational analysis and stratification theory in Section~\ref{sec:ComplementaryResults}. It is desirable to pursue this investigation, notably by focusing on the possible links between Assumption~\ref{assumption:Stratification} and concepts of algebraic geometry or stratification theory such as the Whitney conditions \cite{Whitney1965}. If successful, this investigation may offer necessary or sufficient conditions for Assumption~\ref{assumption:Stratification} to be satisfied or answer open questions such as the following.
In Assumption~\ref{assumption:Stratification}, does condition~1 imply condition~3?
Are the second, fourth, and fifth statements of Theorem~\ref{thm:ExamplesStratifiedSetsSatisfyingMainAssumption} general facts for sets $C$ satisfying Assumption~\ref{assumption:Stratification}?

\item The range of values of $j$ in the for-loop of the $\ppgdr$ map (Algorithm~\ref{algo:P2GDRmap}) can be as large as $\{0, \dots, p\}$, and there are situations where this occurs each time the for-loop is reached (e.g., in the case of a bounded sublevel set, when $\Delta$ is chosen so large that $\dist(x, S_0) \le \Delta$ for all $x$ in the sublevel set).
One can thus wonder whether it is possible to restrict (conditionally or not) the range of values of $j$ while still accumulating at stationary points. This is an open question. It seems unlikely that $\ppgdr$ with a restricted for-loop can be analyzed along the lines of Section~\ref{sec:ProposedAlgorithmConvergenceAnalysis}. Indeed, as pointed out in Section~\ref{subsec:AssumptionsFeasibleSet}, if $C$ is $\R_{\le s}^n \cap \R_+^n$, $\R_{\le r}^{m \times n}$, or $\mathrm{S}_{\le r}^+(n)$, then the function $u$ defined in Assumption~\ref{assumption:GlobalSecondOrderUpperBoundDistanceFromTangentLine} is not locally bounded at any point of $C \setminus S_p$ and the function $\s(\cdot; f, C)$ defined in~\eqref{eq:StationarityMeasure} is not necessarily lower semicontinuous on $C \setminus S_p$. Two remarks on the case where $C = \R_{\le r}^{m \times n}$ should be added. First, \eqref{eq:NormProjectionTangentConeDeterminantalVariety} can compensate for the discontinuity of $\s(\cdot; f, \R_{\le r}^{m \times n})$, as explained after the proof of Proposition~\ref{prop:P2GDRmapRealDeterminantalVarietyPolak}. Second, should the answer to the open question be negative, a counterexample other than the one of~\cite[\S 2.2]{LevinKileelBoumal2022} would be required in view of~\cite[Remark~2.11]{LevinKileelBoumal2022}.

\item The preceding remark should be tempered by the following observation. In many practical situations, when $\Delta$ is chosen reasonably small, the distance between almost every iterate and its lower stratum is larger than $\Delta$. For such an iterate, the range of values of $j$ in the for-loop of the $\ppgdr$ map reduces to $\{0\}$, and the $\ppgdr$ map generates the same point as the $\ppgd$ map. In this scenario, the only computational overhead in $\ppgdr$ is the computation of the distance to the lower stratum. If $C$ is one of the three sets studied in Theorem~\ref{thm:ExamplesStratifiedSetsSatisfyingMainAssumption}, then, in view of line~\ref{algo:P2GDmap:LineSearch} of Algorithm~\ref{algo:P2GDmap}, it is reasonable to assume that every iterate has been obtained by a truncated SVD if $C$ is $\R_{\le r}^{m \times n}$ or $\mathrm{S}_{\le r}^+(n)$ and by Algorithm~\ref{algo:ProjectionNonnegativeSparseVectors} if $C = \sparse{n}{s} \cap \R_+^n$, in which case the distance to the lower stratum is immediately available, making the overhead insignificant. In summary, if $C$ is one of the three sets studied in Theorem~\ref{thm:ExamplesStratifiedSetsSatisfyingMainAssumption}, then $\ppgdr$ offers stronger convergence properties than $\ppgd$, and while incurring an insignificant overhead in many practical situations.

However, if $\ppgd$ follows an apocalypse, then there is at least one iterate for which the range of values of $j$ in the for-loop of the $\ppgdr$ map does not reduce to $\{0\}$. It seems that this is the price to pay for this first-order algorithm to accumulate at stationary points of~\eqref{eq:OptiProblem}.

\item The comparison of the six algorithms listed in Table~\ref{tab:IndexAlgorithmsRealDeterminantalVariety} conducted in Section~\ref{subsubsec:ComparisonSixOptimizationAlgorithmsRealDeterminantalVariety} for $C = \R_{\le r}^{m \times n}$ can be summarized as follows.
$\pgd$ requires a large scale truncated SVD at each iteration where $\nabla f$ does not have low-rank, and it is not known whether it can converge to a Mordukhovich stationary point of~\eqref{eq:OptiProblem} that is not stationary.
$\ppgd$ and $\rfd$ are not apocalypse-free.
$\ppgdr$ and $\rfdr$ are compared in the last paragraph of Section~\ref{subsec:RFDRreview}.
Thus, it remains to compare $\ppgdr$ and \cite[Algorithm~1]{LevinKileelBoumal2022}.
First, $\ppgdr$ requires only first-order information about $f$ while \cite[Algorithm~1]{LevinKileelBoumal2022} requires second-order information.
Second, Table~\ref{tab:ComparisonComputationalCostPerIterationAlgorithmsRealDeterminantalVariety} shows that every iteration of \cite[Algorithm~1]{LevinKileelBoumal2022} requires one large scale truncated SVD while, in the worst case, an iteration of $\ppgdr$ requires $r$ large scale truncated SVDs. However, as pointed out in the fourth remark, in many practical situations, a typical iteration of $\ppgdr$ requires no large scale (truncated) SVD.
Third, as explained in Section~\ref{subsubsec:ComparisonSixOptimizationAlgorithmsRealDeterminantalVariety}, no upper bound on the number of iterations needed to satisfy the stopping criterion defined by \eqref{eq:StoppingCriterion}, i.e., to bring $\s(\cdot; f, \R_{\le r}^{m \times n})$ below some threshold $\varepsilon \in (0, \infty)$, is available for $\ppgdr$ or \cite[Algorithm~1]{LevinKileelBoumal2022}, although the latter enjoys the guarantees given in \cite[Theorems~3.4 and 3.16]{LevinKileelBoumal2022}.
Fourth, on the numerical experiment described in Section~\ref{subsubsec:LKB22instance}, $\ppgdr$ converges much faster than \cite[Algorithm~1]{LevinKileelBoumal2022}.
\end{enumerate}

\section*{Acknowledgements}
This paper benefited from useful discussions with Laurent Jacques and Simon Vary.

\appendix
\section{The gradient and Hessian of a real-valued function on a Hilbert space}
\label{sec:GradientHessianRealValuedFunctionOnHilbertSpace}
Surprisingly, we did not find in the literature a proper introduction of the concept of Hessian for a real-valued function defined on some nonempty open subset of a real Hilbert space. As the Hessian and, in particular, its eigenvalues are needed in second-order optimization, notably in Section~\ref{subsubsec:PracticalImplementationLKB22Algo1RealDeterminantalVariety}, this section provides such an introduction. Although the finite-dimensional case suffices for this work, we make no assumption on the dimension.

Multilinear mappings and, in particular, multilinear forms are defined, e.g., in \cite[(A.6.1)]{Dieudonne}. Let $X_1$, \dots, $X_n$, and $Y$ be real normed spaces. A necessary and sufficient condition for a multilinear mapping of $X_1 \times \dots \times X_n$ into $Y$ to be continuous is given in \cite[(5.5.1)]{Dieudonne}. The real normed space of all continuous multilinear mappings of $X_1 \times \dots \times X_n$ into $Y$ is denoted by $\mathcal{L}(X_1, \dots, X_n; Y)$ \cite[\S 5.7]{Dieudonne} and simply by $\mathcal{L}_n(X, Y)$ if $X_i = X$ for all $i \in \{1, \dots, n\}$; it is complete if $Y$ is complete \cite[(5.7.3)]{Dieudonne}. If $X$ is a real normed space, then the Banach space $\mathcal{L}(X, \R)$ is called the \emph{dual space} of $X$ and denoted by $X^*$ \cite[\S 1.1]{Brezis}; an element of $X^*$ is called a continuous linear form \cite[\S 5.8]{Dieudonne} or a continuous linear functional \cite[\S 1.1]{Brezis} on $X$. The Riesz--Fréchet representation theorem (see, e.g., \cite[(6.3.2)]{Dieudonne} or \cite[Theorem~5.5]{Brezis}) enables to identify $X^*$ with $X$. Moreover, a consequence of this theorem (see, e.g., \cite[Theorem~7.19]{Ovchinnikov}) enables to identify the Banach space $\mathcal{L}_2(X, \R)$ of all continuous bilinear forms on $X$ with $\mathcal{L}(X) := \mathcal{L}(X, X)$. The elements of $\mathcal{L}(X)$ are called \emph{continuous linear operators} on $X$.

Let $X$ be a real normed space, $U$ be a nonempty open subset of $X$, and $f$ be a real-valued function defined on $U$. For each $p \in \N \setminus \{0\}$, if $f$ is $p$ times differentiable in $U$, then the $p$th derivative of $f$ is $f^{(p)} : U \to \mathcal{L}_p(X, \R)$ and, by \cite[(8.12.4)]{Dieudonne}, for each $x \in U$, the $p$-linear form $f^{(p)}(x)$ is symmetric. 

Assume now that $X$ is a Hilbert space and that $f$ is two times differentiable.
By the Riesz--Fréchet representation theorem, for each $x \in U$, there exists a unique $g_x \in X$ such that, for all $z \in X$, $f'(x)(z) = \ip{g_x}{z}$; $g_x$ is called the \emph{gradient} of $f$ at $x$ and denoted by $\nabla f(x)$. The gradient of $f$ is $\nabla f : U \to X : x \mapsto \nabla f(x)$.
By \cite[Theorem~7.19]{Ovchinnikov}, for each $x \in U$, there exists a unique $H_x \in \mathcal{L}(X)$ such that, for all $u, v \in X$, $f''(x)(u, v) = \ip{H_x(u)}{v}$; $H_x$ is called the \emph{Hessian} of $f$ at $x$ and denoted by $\nabla^2 f(x)$. The Hessian of $f$ is $\nabla^2 f : U \to \mathcal{L}(X) : x \mapsto \nabla^2 f(x)$. For each $x \in U$, since the bilinear form $f''(x)$ is symmetric, the linear operator $\nabla^2 f(x)$ is \emph{self-adjoint}, i.e., $\ip{\nabla^2 f(x)(u)}{v} = \ip{u}{\nabla^2 f(x)(v)}$ for all $u, v \in X$ (see \cite[\S 11.5]{Dieudonne} or \cite[\S 6.4]{Brezis} for the definition).

\begin{proposition}
Let $f : U \to \R$ be differentiable. For each $x \in U$, $f' : U \to X^*$ is differentiable at $x$ if and only if $\nabla f : U \to X$ is differentiable at $x$, in which case $(\nabla f)'(x) = \nabla^2 f(x)$.
\end{proposition}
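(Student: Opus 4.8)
The plan is to unwind the definitions of gradient and Hessian using the Riesz–Fréchet identification, and then to observe that this identification is an isometric isomorphism, hence a smooth (in fact linear) map, so that differentiability can be transported across it without any loss. Concretely, let $J : X^* \to X$ denote the Riesz isomorphism, so that by definition $\nabla f = J \circ f'$. Since $J$ is a bounded linear bijection, it is everywhere Fréchet differentiable with derivative equal to $J$ itself at each point; therefore, by the chain rule, $f'$ is differentiable at $x$ if and only if $\nabla f = J \circ f'$ is differentiable at $x$, and in that case $(\nabla f)'(x) = J \circ (f'' )(x)$, where $f''(x) = (f')'(x) \in \mathcal{L}(X, X^*)$.

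The second step is to identify $J \circ f''(x)$ with $\nabla^2 f(x)$. Here one must be careful about which identification is in play: $f''(x)$ is \emph{a priori} an element of $\mathcal{L}(X, X^*)$, i.e.\ of $\mathcal{L}_2(X, \R)$ after using $\mathcal{L}(X, X^*) \cong \mathcal{L}_2(X, \R)$, whereas $\nabla^2 f(x)$ was defined as the unique element $H_x \in \mathcal{L}(X)$ with $f''(x)(u,v) = \ip{H_x(u)}{v}$ for all $u, v \in X$. Writing $f''(x)(u,v) = (f''(x)(u))(v)$ with $f''(x)(u) \in X^*$, the defining relation of the Riesz map gives $(f''(x)(u))(v) = \ip{J(f''(x)(u))}{v}$, so $H_x(u) = J(f''(x)(u))$ for all $u$, i.e.\ $\nabla^2 f(x) = J \circ f''(x)$ as maps $X \to X$. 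Combining with the conclusion of the first step, $(\nabla f)'(x) = J \circ f''(x) = \nabla^2 f(x)$, which is exactly the claim.

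The one genuinely delicate point — and the step I would write out most carefully — is the bookkeeping of the two successive Riesz identifications and the compatibility of $\mathcal{L}(X,X^*) \cong \mathcal{L}_2(X,\R) \cong \mathcal{L}(X)$ with the composition $J \circ f''(x)$; this is where it is easy to conflate $f''(x)$ as a bilinear form with $f''(x)$ as a linear map $X \to X^*$, and to lose track of whether one is post-composing or pre-composing with $J$. Everything else is routine: the differentiability of the bounded linear operator $J$ and the chain rule are standard (see, e.g., the cited material from \cite{Dieudonne}), and the symmetry of $f''(x)$ (hence self-adjointness of $\nabla^2 f(x)$) plays no role in the equivalence itself. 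In short, first I would record that $\nabla f = J \circ f'$ with $J$ linear and bounded; then apply the chain rule to get the equivalence of differentiability and the formula $(\nabla f)'(x) = J \circ f''(x)$; and finally unwind the Riesz identification of $f''(x)$ with $\nabla^2 f(x)$ to recognize $J \circ f''(x) = \nabla^2 f(x)$.
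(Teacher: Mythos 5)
Your proof is correct and rests on the same idea as the paper's: differentiability is transported across the Riesz identification $J$, and the candidate derivatives correspond under it, so the bookkeeping of $\mathcal{L}(X,X^*)\cong\mathcal{L}_2(X,\R)\cong\mathcal{L}(X)$ that you flag is exactly the content of the statement. The paper simply makes this concrete by computing, for $L\in\mathcal{L}(X)$ associated with the bilinear form $b(u,v)=\ip{L(u)}{v}$, that $\norm{\nabla f(x+u)-\nabla f(x)-L(u)} = \norm{f'(x+u)-f'(x)-b(u,\cdot)}$, which gives both directions at once without invoking the chain rule or the boundedness of $J^{-1}$ (which your ``only if'' direction implicitly uses and which holds since $J$ is an isometric isomorphism).
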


\begin{proof}
Let $L \in \mathcal{L}(X)$ be associated with $b \in \mathcal{L}_2(X, \R)$. For all $u \in X$ such that $x+u \in U$,
\begin{align*}
\norm{\nabla f(x+u)-\nabla f(x)-L(u)}_X
&= \sup_{v \in B_X[0, 1]} \left|\ip{\nabla f(x+u)-\nabla f(x)-L(u)}{v}\right|\\
&= \sup_{v \in B_X[0, 1]} \left|\ip{\nabla f(x+u)}{v}-\ip{\nabla f(x)}{v}-\ip{L(u)}{v}\right|\\
&= \sup_{v \in B_X[0, 1]} \left|f'(x+u)(v)-f'(x)(v)-b(u, v)\right|\\
&= \sup_{v \in B_X[0, 1]} \left|(f'(x+u)-f'(x)-b(u, \cdot))(v)\right|\\
&= \norm{f'(x+u)-f'(x)-b(u, \cdot)}_{\mathcal{L}(X)},
\end{align*}
where the first equality follows from the Cauchy--Schwarz inequality.
If $f'$ is differentiable at $x$, then, by taking $b := f''(x)$ and $\nabla^2 f(x) := L$, we have $(\nabla f)'(x) = \nabla^2 f(x)$. Conversely, if $\nabla f$ is differentiable at $x$, then, by choosing $L := \nabla^2 f(x) := (\nabla f)'(x)$, we have $f''(x) = b$. In both cases, since $f''(x)$ is symmetric, $\nabla^2 f(x)$ is self-adjoint. 
\end{proof}

If $X$ is finite-dimensional, then the eigenvalues of $\nabla^2 f(x) \in \mathcal{L}(X)$ are the eigenvalues of the $\dim X \times \dim X$ matrix representing $\nabla^2 f(x)$ with respect to any basis of $X$; this matrix is symmetric since $\nabla^2 f(x)$ is self-adjoint, and its eigenvalues are therefore real.

\bibliographystyle{plainurl}
\bibliography{golikier_bib}

\begin{thebibliography}{10}

\bibitem{AbsilMahonySepulchre}
P.-A. Absil, R.~Mahony, and R.~Sepulchre.
\newblock {\em Optimization Algorithms on Matrix Manifolds}.
\newblock Princeton University Press, Princeton, NJ, USA, 2008.

\bibitem{AlekseevskyKrieglLosikMichor}
D.~Alekseevsky, A.~Kriegl, M.~Losik, and P.~W. Michor.
\newblock The {Riemannian} geometry of orbit spaces -- the metric, geodesics,
  and integrable systems.
\newblock {\em Publicationes Mathematicae}, 62(1):247--276, 2003.
\newblock \href {http://dx.doi.org/10.5486/PMD.2003.2821}
  {\path{doi:10.5486/PMD.2003.2821}}.

\bibitem{BauschkeLukePhanWang2014}
H.~H. Bauschke, D.~R. Luke, H.~M. Phan, and X.~Wang.
\newblock Restricted normal cones and sparsity optimization with affine
  constraints.
\newblock {\em Foundations of Computational Mathematics}, 14(1):63--83,
  February 2014.
\newblock \href {http://dx.doi.org/10.1007/s10208-013-9161-0}
  {\path{doi:10.1007/s10208-013-9161-0}}.

\bibitem{Bhattacharya2}
A.~Bhattacharya and R.~Bhattacharya.
\newblock {\em Nonparametric Inference on Manifolds}.
\newblock Institute of Mathematical Statistics Monographs. Cambridge University
  Press, 2012.

\bibitem{BianchiHachemSchechtman}
P.~Bianchi, W.~Hachem, and S.~Schechtman.
\newblock Stochastic subgradient descent escapes active strict saddles on
  weakly convex functions.
\newblock 2022.
\newblock \href {http://arxiv.org/abs/2108.02072v3}
  {\path{arXiv:2108.02072v3}}.

\bibitem{BlanchardTannerWei}
J.~D. Blanchard, J.~Tanner, and K.~Wei.
\newblock {CGIHT}: conjugate gradient iterative hard thresholding for
  compressed sensing and matrix completion.
\newblock {\em Information and Inference: A Journal of the IMA}, 4(4):289--327,
  2015.
\newblock \href {http://dx.doi.org/10.1093/imaiai/iav011}
  {\path{doi:10.1093/imaiai/iav011}}.

\bibitem{BlumensathDavies2008}
T.~Blumensath and M.~E. Davies.
\newblock Iterative thresholding for sparse approximations.
\newblock {\em Journal of Fourier Analysis and Applications}, 14(5):629--654,
  2008.
\newblock \href {http://dx.doi.org/10.1007/s00041-008-9035-z}
  {\path{doi:10.1007/s00041-008-9035-z}}.

\bibitem{BlumensathDavies2009}
T.~Blumensath and M.~E. Davies.
\newblock Iterative hard thresholding for compressed sensing.
\newblock {\em Applied and Computational Harmonic Analysis}, 27(3):265--274,
  2009.
\newblock \href {http://dx.doi.org/10.1016/j.acha.2009.04.002}
  {\path{doi:10.1016/j.acha.2009.04.002}}.

\bibitem{BlumensathDavies2010}
T.~Blumensath and M.~E. Davies.
\newblock Normalized iterative hard thresholding: Guaranteed stability and
  performance.
\newblock {\em IEEE Journal of Selected Topics in Signal Processing},
  4(2):298--309, 2010.
\newblock \href {http://dx.doi.org/10.1109/JSTSP.2010.2042411}
  {\path{doi:10.1109/JSTSP.2010.2042411}}.

\bibitem{BolteEtAl}
J.~Bolte, A.~Daniilidis, A.~Lewis, and M.~Shiota.
\newblock Clarke subgradients of stratifiable functions.
\newblock {\em SIAM Journal on Optimization}, 18(2):556--572, 2007.
\newblock \href {http://dx.doi.org/10.1137/060670080}
  {\path{doi:10.1137/060670080}}.

\bibitem{BourbakiTopologie}
N.~Bourbaki.
\newblock {\em Topologie générale}.
\newblock Springer Berlin, Heidelberg, 2007.
\newblock Réimpression inchangée de l’édition originale de 1971.

\bibitem{Brezis}
H.~Brezis.
\newblock {\em Functional Analysis, Sobolev Spaces and Partial Differential
  Equations}.
\newblock Universitext. Springer New York, 2011.

\bibitem{BurerMonteiro2003}
S.~Burer and R.~Monteiro.
\newblock A nonlinear programming algorithm for solving semidefinite programs
  via low-rank factorization.
\newblock {\em Mathematical Programming}, 95(2):329--357, 2003.
\newblock \href {http://dx.doi.org/10.1007/s10107-002-0352-8}
  {\path{doi:10.1007/s10107-002-0352-8}}.

\bibitem{BurerMonteiro2005}
S.~Burer and R.~Monteiro.
\newblock Local minima and convergence in low-rank semidefinite programming.
\newblock {\em Mathematical Programming}, 103(3):427--444, 2005.
\newblock \href {http://dx.doi.org/10.1007/s10107-004-0564-1}
  {\path{doi:10.1007/s10107-004-0564-1}}.

\bibitem{DavisEtAl}
D.~Davis, D.~Drusvyatskiy, S.~Kakade, and J.~D. Lee.
\newblock Stochastic subgradient method converges on tame functions.
\newblock {\em Foundations of Computational Mathematics}, 20(1):119--154,
  February 2020.
\newblock \href {http://dx.doi.org/10.1007/s10208-018-09409-5}
  {\path{doi:10.1007/s10208-018-09409-5}}.

\bibitem{Dax2014}
A.~Dax.
\newblock Low-rank positive approximants of symmetric matrices.
\newblock {\em Advances in Linear Algebra \& Matrix Theory}, 4(3):172--185,
  September 2014.
\newblock \href {http://dx.doi.org/10.4236/alamt.2014.43015}
  {\path{doi:10.4236/alamt.2014.43015}}.

\bibitem{Dieudonne}
J.~Dieudonné.
\newblock {\em Foundations of Modern Analysis}, volume 10-I of {\em Pure and
  Applied Mathematics}.
\newblock Academic Press, New York, 1969.
\newblock Enlarged and Corrected Printing.

\bibitem{EckartYoung1936}
C.~Eckart and G.~Young.
\newblock The approximation of one matrix by another of lower rank.
\newblock {\em Psychometrika}, 1(3):211--218, September 1936.
\newblock \href {http://dx.doi.org/10.1007/BF02288367}
  {\path{doi:10.1007/BF02288367}}.

\bibitem{FepponLermusiaux2018}
F.~Feppon and P.~F.~J. Lermusiaux.
\newblock A geometric approach to dynamical model order reduction.
\newblock {\em SIAM Journal on Matrix Analysis and Applications},
  39(1):510--538, 2018.
\newblock \href {http://dx.doi.org/10.1137/16M1095202}
  {\path{doi:10.1137/16M1095202}}.

\bibitem{FletcherMoors2015}
J.~Fletcher and W.~B. Moors.
\newblock Chebyshev sets.
\newblock {\em Journal of the Australian Mathematical Society},
  98(2):161–231, 2015.
\newblock \href {http://dx.doi.org/10.1017/S1446788714000561}
  {\path{doi:10.1017/S1446788714000561}}.

\bibitem{FoucartRauhut}
S.~Foucart and H.~Rauhut.
\newblock {\em A Mathematical Introduction to Compressive Sensing}.
\newblock Applied and Numerical Harmonic Analysis. Birkh\"{a}user New York, NY,
  2013.

\bibitem{GolubVanLoan}
G.~H. Golub and C.~F. {Van~Loan}.
\newblock {\em Matrix Computations}.
\newblock Johns Hopkins Studies in the Mathematical Sciences. Johns Hopkins
  University Press, Baltimore, MD, USA, 4th edition, 2013.

\bibitem{HaLiuBarber2020}
W.~Ha, H.~Liu, and R.~F. Barber.
\newblock An equivalence between critical points for rank constraints versus
  low-rank factorizations.
\newblock {\em SIAM Journal on Optimization}, 30(4):2927--2955, 2020.
\newblock \href {http://dx.doi.org/10.1137/18M1231675}
  {\path{doi:10.1137/18M1231675}}.

\bibitem{Harris}
J.~Harris.
\newblock {\em Algebraic Geometry}, volume 133 of {\em Graduate Texts in
  Mathematics}.
\newblock Springer-Verlag New York, 1992.

\bibitem{HelmkeShayman1995}
U.~Helmke and M.~A. Shayman.
\newblock Critical points of matrix least squares distance functions.
\newblock {\em Liner Algebra and its Applications}, 215:1--19, January 1995.
\newblock \href {http://dx.doi.org/10.1016/0024-3795(93)00070-G}
  {\path{doi:10.1016/0024-3795(93)00070-G}}.

\bibitem{HornJohnson}
R.~A. Horn and C.~R. Johnson.
\newblock {\em Matrix Analysis}.
\newblock Cambridge University Press, New-York, NY, USA, 2nd edition, 2013.

\bibitem{HosseiniLukeUschmajew2019}
S.~Hosseini, D.~R. Luke, and A.~Uschmajew.
\newblock {\em Nonsmooth Optimization and Its Applications}, volume 170 of {\em
  International Series of Numerical Mathematics}, chapter Tangent and Normal
  Cones for Low-Rank Matrices, pages 45--53.
\newblock Birkh\"{a}user, Cham, March 2019.
\newblock \href {http://dx.doi.org/10.1007/978-3-030-11370-4_3}
  {\path{doi:10.1007/978-3-030-11370-4_3}}.

\bibitem{HosseiniUschmajew2019}
S.~Hosseini and A.~Uschmajew.
\newblock A gradient sampling method on algebraic varieties and application to
  nonsmooth low-rank optimization.
\newblock {\em SIAM Journal on Optimization}, 29(4):2853--2880, 2019.
\newblock \href {http://dx.doi.org/10.1137/17M1153571}
  {\path{doi:10.1137/17M1153571}}.

\bibitem{Ioffe}
A.~D. Ioffe.
\newblock An invitation to tame optimization.
\newblock {\em SIAM Journal on Optimization}, 19(5):1894--1917, 2009.
\newblock \href {http://dx.doi.org/10.1137/080722059}
  {\path{doi:10.1137/080722059}}.

\bibitem{JiaEtAl2022}
X.~Jia, C.~Kanzow, P.~Mehlitz, and G.~Wachsmuth.
\newblock An augmented {Lagrangian} method for optimization problems with
  structured geometric constraints.
\newblock {\em Mathematical Programming}, 2022.
\newblock \href {http://dx.doi.org/10.1007/s10107-022-01870-z}
  {\path{doi:10.1007/s10107-022-01870-z}}.

\bibitem{JourneeBachAbsilSepulchre2010}
M.~Journ\'{e}e, F.~Bach, P.-A. Absil, and R.~Sepulchre.
\newblock Low-rank optimization on the cone of positive semidefinite matrices.
\newblock {\em SIAM Journal on Optimization}, 20(5):2327--2351, 2010.
\newblock \href {http://dx.doi.org/10.1137/080731359}
  {\path{doi:10.1137/080731359}}.

\bibitem{Lee2018}
J.~M. Lee.
\newblock {\em Introduction to Riemannian Manifolds}, volume 176 of {\em
  Graduate Texts in Mathematics}.
\newblock Springer, Cham, 2nd edition, 2018.

\bibitem{LevinKileelBoumal2021}
E.~Levin, J.~Kileel, and N.~Boumal.
\newblock Finding stationary points on bounded-rank matrices: A geometric
  hurdle and a smooth remedy.
\newblock 2021.
\newblock e-print version 1.
\newblock \href {http://arxiv.org/abs/2107.03877v1}
  {\path{arXiv:2107.03877v1}}.

\bibitem{LevinKileelBoumal2022SmoothLifts}
E.~Levin, J.~Kileel, and N.~Boumal.
\newblock The effect of smooth parametrizations on nonconvex optimization
  landscapes.
\newblock 2022.
\newblock \href {http://arxiv.org/abs/2207.03512v2}
  {\path{arXiv:2207.03512v2}}.

\bibitem{LevinKileelBoumal2022}
E.~Levin, J.~Kileel, and N.~Boumal.
\newblock Finding stationary points on bounded-rank matrices: A geometric
  hurdle and a smooth remedy.
\newblock {\em Mathematical Programming}, 2022.
\newblock \href {http://dx.doi.org/10.1007/s10107-022-01851-2}
  {\path{doi:10.1007/s10107-022-01851-2}}.

\bibitem{LiSoMa2020}
J.~Li, A.~M.-C. So, and W.-K. Ma.
\newblock Understanding notions of stationarity in nonsmooth optimization: A
  guided tour of various constructions of subdifferential for nonsmooth
  functions.
\newblock {\em IEEE Signal Processing Magazine}, 37(5):18--31, September 2020.
\newblock \href {http://dx.doi.org/10.1109/MSP.2020.3003845}
  {\path{doi:10.1109/MSP.2020.3003845}}.

\bibitem{Mather}
J.~Mather.
\newblock Notes on topological stability.
\newblock {\em Bulletin of the American Mathematical Society}, 49(4):475--506,
  October 2012.
\newblock \href {http://dx.doi.org/10.1090/S0273-0979-2012-01383-6}
  {\path{doi:10.1090/S0273-0979-2012-01383-6}}.

\bibitem{NadisicCohenVandaeleGillis}
N.~Nadisic, J.~E. Cohen, A.~Vandaele, and N.~Gillis.
\newblock Matrix-wise $\ell_0$-constrained sparse nonnegative least squares.
\newblock {\em Machine Learning}, 111(12):4453--4495, 2022.
\newblock \href {http://dx.doi.org/10.1007/s10994-022-06260-2}
  {\path{doi:10.1007/s10994-022-06260-2}}.

\bibitem{Nesterov2018}
Y.~Nesterov.
\newblock {\em Lectures on Convex Optimization}, volume 137 of {\em Springer
  Optimization and Its Applications}.
\newblock Springer, Cham, 2nd edition, 2018.

\bibitem{OlikierAbsil2022RFDR}
G.~Olikier and P.-A. Absil.
\newblock An apocalypse-free first-order low-rank optimization algorithm with
  at most one rank reduction attempt per iteration.
\newblock 2022.
\newblock Submitted.
\newblock \href {http://arxiv.org/abs/2208.12051v1}
  {\path{arXiv:2208.12051v1}}.

\bibitem{OlikierAbsil2022}
G.~Olikier and P.-A. Absil.
\newblock On the continuity of the tangent cone to the determinantal variety.
\newblock {\em Set-Valued and Variational Analysis}, 30:769--788, 2022.
\newblock \href {http://dx.doi.org/10.1007/s11228-021-00618-9}
  {\path{doi:10.1007/s11228-021-00618-9}}.

\bibitem{OlikierGallivanAbsil2022}
G.~Olikier, K.~A. Gallivan, and P.-A. Absil.
\newblock An apocalypse-free first-order low-rank optimization algorithm.
\newblock Technical Report UCL-INMA-2022.01, 2022.
\newblock \href {http://arxiv.org/abs/2201.03962v1}
  {\path{arXiv:2201.03962v1}}.

\bibitem{OlikierGallivanAbsil2022Comparison}
G.~Olikier, K.~A. Gallivan, and P.-A. Absil.
\newblock Comparison of an apocalypse-free and an apocalypse-prone first-order
  low-rank optimization algorithm.
\newblock Technical Report UCL-INMA-2022.02, 2022.
\newblock \href {http://arxiv.org/abs/2202.09107v1}
  {\path{arXiv:2202.09107v1}}.

\bibitem{Ovchinnikov}
S.~Ovchinnikov.
\newblock {\em Functional Analysis: An Introductory Course}.
\newblock Universitext. Springer Cham, 2018.

\bibitem{Polak1971}
E.~Polak.
\newblock {\em Computational Methods in Optimization}, volume~77 of {\em
  Mathematics in Science and Engineering}.
\newblock Academic Press, 1971.

\bibitem{RockafellarWets}
R.~T. Rockafellar and R.~J.-B. Wets.
\newblock {\em Variational Analysis}, volume 317 of {\em Grundlehren der
  mathematischen Wissenschaften}.
\newblock Springer-Verlag Berlin Heidelberg, 1998.
\newblock Corrected 3rd printing 2009.

\bibitem{SchneiderUschmajew2015}
R.~Schneider and A.~Uschmajew.
\newblock Convergence results for projected line-search methods on varieties of
  low-rank matrices via {\l{}ojasiewicz} inequality.
\newblock {\em SIAM Journal on Optimization}, 25(1):622--646, 2015.
\newblock \href {http://dx.doi.org/10.1137/140957822}
  {\path{doi:10.1137/140957822}}.

\bibitem{Tam2017}
M.~K. Tam.
\newblock Regularity properties of non-negative sparsity sets.
\newblock {\em Journal of Mathematical Analysis and Applications},
  447(2):758--777, March 2017.
\newblock \href {http://dx.doi.org/10.1016/j.jmaa.2016.10.040}
  {\path{doi:10.1016/j.jmaa.2016.10.040}}.

\bibitem{ThanwerdasPennec2022}
Y.~Thanwerdas and X.~Pennec.
\newblock {Bures--Wasserstein} minimizing geodesics between covariance matrices
  of different ranks.
\newblock 2022.
\newblock \href {http://arxiv.org/abs/2204.09928v1}
  {\path{arXiv:2204.09928v1}}.

\bibitem{Vandereycken2013}
B.~Vandereycken.
\newblock Low-rank matrix completion by {Riemannian} optimization.
\newblock {\em SIAM Journal on Optimization}, 23(2):1214--1236, 2013.
\newblock \href {http://dx.doi.org/10.1137/110845768}
  {\path{doi:10.1137/110845768}}.

\bibitem{Whitney1957}
H.~Whitney.
\newblock Elementary structure of real algebraic varieties.
\newblock {\em Annals of Mathematics}, 66(3):545--556, November 1957.

\bibitem{Whitney1965}
H.~Whitney.
\newblock Tangents to an analytic variety.
\newblock {\em Annals of Mathematics}, 81(3):496--549, May 1965.

\bibitem{Willem}
M.~Willem.
\newblock {\em Functional Analysis: Fundamentals and Applications}.
\newblock Cornerstones. Birkh\"{a}user New York, 2013.

\bibitem{ZhouEtAl2016}
G.~Zhou, W.~Huang, K.~A. Gallivan, P.~Van~Dooren, and P.-A. Absil.
\newblock A {Riemannian} rank-adaptive method for low-rank optimization.
\newblock {\em Neurocomputing}, 192:72--80, June 2016.
\newblock \href {http://dx.doi.org/10.1016/j.neucom.2016.02.030}
  {\path{doi:10.1016/j.neucom.2016.02.030}}.

\end{thebibliography}
\end{document}